\documentclass[11pt,a4paper]{amsart}
\usepackage[left=3cm,bottom=3cm,right=3cm,top=3cm]{geometry}
\usepackage{amsmath} 
\usepackage{dsfont}
\usepackage{hyperref}
\usepackage{graphicx}
\usepackage[margin=10pt,font=small,labelfont=bf,labelsep=endash]{caption}
\usepackage{mathrsfs}
\usepackage[makeroom]{cancel}
\usepackage[utf8]{inputenc}
\usepackage{verbatim}
\usepackage{amssymb}
\usepackage{datetime}
\usepackage{slashed}
\usepackage{color}
\usepackage{stmaryrd}
\usepackage{cancel}
\sloppy 

\numberwithin{equation}{section}
\newtheorem{lemma}{Lemma}[section]
\newtheorem{defi}[lemma]{Definition}
\newtheorem{cor}[lemma]{Corollary}
\newtheorem{prop}[lemma]{Proposition}
\newtheorem{theorem}[lemma]{Theorem}
\newtheorem{rem}[lemma]{Remark}
\newcommand{\p}{\phi}
\newcommand{\f}{\psi}
\newcommand{\Hh}{\mathcal{H}}
\newcommand{\R}{\mathbb{R}}
\newcommand{\E}{\mathbb{E}}
\newcommand{\K}{\widehat{\mathbb{P}}_0}
\newcommand{\T}{\mathbf{T}}
\newcommand{\dr}{\mathrm{d}}
\newcommand{\w}{\omega^b_a}
\newcommand{\F}{F^{\text{hom}}}
\newcommand{\Ff}{F^{\text{inh}}}
\newcommand{\Vv}{\mathbb{V}}
\newcommand{\M}{\mathbf{M}}

\begin{document}
\title[Minkowski stability for the massless EV-system]{Asymptotic Stability of Minkowski Space-Time with non-compactly supported massless Vlasov matter}
\author[L.~Bigorgne, D.~Fajman, J.~Joudioux, J.~Smulevici, M.~Thaller]{L\'eo Bigorgne, David Fajman, J\'er\'emie Joudioux, Jacques Smulevici,\\ Maximilian Thaller}
\date{\today}
\maketitle

\begin{abstract}
We prove the global asymptotic stability of the Minkowski space for the massless Einstein-Vlasov system in wave coordinates. In contrast with previous work on the subject, no compact support assumptions on the initial data of the Vlasov field in space or the momentum variables are required. In fact, the initial decay in $v$ is optimal. The present proof is based on vector field and weighted vector field techniques for Vlasov fields, as developed in previous work of Fajman, Joudioux, and Smulevici, and heavily relies on several structural properties of the massless Vlasov equation, similar to the null and weak null conditions. To deal with the weak decay rate of the metric, we propagate well-chosen hierarchized weighted energy norms which reflect the strong decay properties satisfied by the particle density far from the light cone. A particular analytical difficulty arises at top order, when we do not have access to improved pointwise decay estimates for certain metric components. This difficulty is resolved using a novel hierarchy in the massless Einstein-Vlasov system, which exploits the propagation of different growth rates for the energy norms of different metric components.
\end{abstract}
\tableofcontents

\newpage

\section{Introduction}
 
\subsection{Stability of the Minkowski space for Einstein-matter systems}
The nonlinear stability of the Minkowski space, first established in the fundamental work of Christodoulou and Klainerman \cite{CKM}, is one of the most important results in mathematical relativity. There are by now several well-established strategies to address this problem, such as the original approach of \cite{CKM} or the one by Lindblad and Rodnianski \cite{LR10} based on the formulation of the Einstein equations in wave coordinates.  
These pioneering works were generalized in different ways to more general sets of initial perturbations as well as to various Einstein-matter models \cite{bz-09,FJS3,hv-17,Lindblad,LFM,s-14,w-18, MR3805301,ip:ekg}. 

On the other hand, not all Einstein-matter systems have Minkowski space as an attractor. The Einstein-dust system leads to the well known Oppenheimer-Snyder collapse for initial data arbitrarily close to Minkowski space, while the Euler equations will generally lead to the formation of shocks\footnote{On the other hand, shock formation can be avoided in the presence of expansion \cite{FOW21,hs-15, rs-13, s-12,s-13}. } even in the absence of coupling with gravity.  
 
 A realistic matter model which is widely used in general relativity and avoids shock formation on any fixed background spacetime is that of collisionless matter considered in Kinetic theory, which, when coupled to gravity, constitutes the \emph{Einstein-Vlasov system} (EVS). 
In the case when the individual particles in the ensemble are massive this system models distributions of stars, galaxies or galaxy clusters and constitutes an accurate model for the large scale structure of spacetime. It admits a large variety of nontrivial static solutions \cite{rr-93, MR1735298, MR3210151, MR2842969, fj:evbss} which are potential attractors other than Minkowski space. 
 
The study of the nonlinear stability problem for Minkowski space for the EVS was initiated by Rein and Rendall in the spherically symmetric setting \cite{rr-92} and recently established without symmetry restrictions for certain complementary regimes of initial perturbations \cite{FJS3,Lindblad}. Other stability results for the massive EVS were established in the cosmological setting \cite{af-17,DF17,DF18,r-13}.


\subsection{The \emph{massless} Einstein-Vlasov system}
The EVS is also used to model ensembles of self-gravitating photons or other massless particles, when the corresponding mass parameter $m$ is set to zero. 
The system then takes the following form, 
\begin{equation} \label{massless_evs}
\begin{aligned}
R_{\mu\nu}(x)-\frac{1}{2} R g_{\mu\nu}(x)&= \int_{\mathcal \pi^{-1}(x)} f v_{\mu}v_{\nu} \dr \mu_{\mathcal \pi^{-1}(x)}, \quad \forall x \in \mathcal M,\\
\T_{g} (f)(x,v)&=0, \quad \forall (x,v) \in \mathcal{P},
\end{aligned}
\end{equation}
for $(\mathcal M, g)$ a Lorentzian manifold and $f$ a massless Vlasov field. Here, $\T_g$ denotes the Liouville vector field and $\mathcal P\subset T^{\star} \mathcal M$ is the fiber bundle consisting of all the future light cones of the spacetime. We refer to $\mathcal P$ as the {\em co-mass shell}\footnote{This is a small abuse of language, since the particles have no mass here.}. The fibre of $\mathcal P$ over $x \in \mathcal M$ is denoted by $\pi^{-1}(x)$ and $\dr \mu_{\mathcal \pi^{-1}(x)}$ is the natural volume form on $\pi^{-1}(x)$ arising from the metric $g$. For a comprehensive geometric introduction to relativistic Vlasov fields, see for example \cite{sz-14}. While the massless system formally differs from the massive system only by changing the support of $f$ from timelike to null vectors, the behaviour of its solutions differs substantially in several key points.

The first stability result of Minkowski space for the massless EVS in spherical symmetry was established by Dafermos \cite{d-06} and later generalised to the case without any symmetry assumptions by Taylor \cite{Taylor}. In both cases, initial data are restricted to distributions of particles with compact support in momentum variables and space. This implies in particular that the particles stay in the wave zone, while the spacetime remains vacuum in interior and exterior regions. For a global existence result in spherical symmetry without necessarily small (but strongly outgoing) initial data cf.~\cite{fs-17}. Note that, for initial data with generic momenta, a smallness assumption is nevertheless necessarily required since the massless system does possess steady states for sufficiently large data \cite{aft-17}.  


In the present paper we consider the nonlinear stability problem of Minkowski spacetime for the Einstein-Vlasov system with massless particles \emph{without any compact support assumptions}, neither for the distribution function nor for the metric perturbation. This removes any restrictions related to the semi-global features observed in \cite{d-06,Taylor} and allows for arbitrary initial particle distributions including standard Maxwellians, which are excluded by compact momentum support assumptions. Moreover, metric perturbations and matter field are coupled initially in all regions and the propagation of these general initial conditions is captured by the solutions we consider. For the metric, the spatial decay rates of the initial perturbations we consider coincide with those of \cite{LR10}. 

\subsection{The main result}

 The precise statement is given in Subsection \ref{subsecmainthe}. It can be summarized as follows.

\begin{theorem} (Main theorem, rough version) \label{thm-main} \\
Consider smooth and asymptotically flat initial data $(\Sigma_0, \mathring g, \mathring k, \mathring f)$, where $\Sigma_0 \approx \mathbb R^3$, to the massless Einstein-Vlasov system which are sufficiently close to the ones of Minkowski spacetime $(\R^3, \delta, 0,0)$. Then, the unique maximal Cauchy development $(\mathcal{M},g,f)$ arising from such data is geodesically complete and asymptotically approaches Minkowski spacetime.
\end{theorem}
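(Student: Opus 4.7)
The plan is to set up a bootstrap argument in the wave gauge, following the Lindblad--Rodnianski strategy for the Einstein equations coupled with the vector field method for massless Vlasov fields developed in the work of Fajman--Joudioux--Smulevici. First, I would reduce the geometric problem to a quasilinear system of wave equations for the metric perturbation $h_{\mu\nu} := g_{\mu\nu} - m_{\mu\nu}$ (with $m$ the Minkowski metric) sourced by the stress-energy tensor $T_{\mu\nu}[f] = \int f v_\mu v_\nu \, \dr \mu_{\pi^{-1}(x)}$, coupled to the transport equation $\T_g(f)=0$ on the co-mass shell $\mathcal{P}$. Global existence would then be obtained via a continuity argument based on bootstrap assumptions controlling weighted $L^2$ norms of $h$ and its derivatives, together with weighted $L^1_{x,v}$ norms of $f$ and its derivatives along commuting vector fields, up to some top order $N$.

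The second step is to choose appropriate families of commuting vector fields and energy norms. For the metric, I would commute the wave equation with the Minkowskian Killing and conformal Killing fields (translations, rotations, boosts, scaling), and use the Lindblad--Rodnianski weighted energy estimate with weight $(1+|t-r|)^{2\gamma}$ in the exterior and $(1+t+r)^{2\delta}$ logarithmic growth in the interior, to accommodate the weak null condition. For the Vlasov field, I would employ the complete lifts of these vector fields to $T^\star \mathcal{M}$, which commute with the massless geodesic flow up to error terms controlled by $h$ and its derivatives. Crucially, to compensate for the weak $t^{-1+\delta}$ decay of $h$ and get integrable error terms for $f$, I would introduce a hierarchy of weights of the form $z := \sum |v_A|/v^0$ and analogous quantities vanishing at the light cone, reflecting the strong decay of particles in the interior and the transversal-null structure of the massless Vlasov equation.

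The main analytical body consists in closing the bootstrap, which I would organize in four substeps. (i) From energy bounds on $f$, derive pointwise decay in $(t,x)$ for the velocity-integrated quantities $T_{\mu\nu}[f]$ via a Klainerman--Sobolev-type inequality for Vlasov fields, exploiting the fact that weights like $|t-r|$ yield additional decay after velocity integration because of the null support of $f$. (ii) Plug these estimates into the reduced Einstein equations, and propagate the metric energies using Lindblad--Rodnianski-type estimates, where the null/weak-null structure of the semilinear Einstein nonlinearities $P(\partial h, \partial h) + Q(h)(\partial h, \partial h)$ matches the decay of the Vlasov source. (iii) From energy bounds on $h$, obtain pointwise decay of $h$ via Klainerman--Sobolev, good enough to close the transport equation estimates for $f$. (iv) Iterate.

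The genuine difficulty, as highlighted in the abstract, will be at top order, where the pointwise decay rate for $\partial h$ coming from Klainerman--Sobolev is not sufficient to close the Vlasov energy at the highest derivative count: there is a derivative loss caused by the weak $t^{-1}\log t$ decay of the ``bad'' component $h_{LL}$. My resolution would follow the novel hierarchy idea announced in the abstract: propagate different growth rates $(1+t)^{\delta_k}$ for the top-order energies of different null components of $h$, using the fact that the tangential components $h_{LT}$, $h_{TT}$ admit improved estimates from their own reduced equations and from elliptic-type structure on outgoing cones. The challenge will be to align these metric hierarchies with a matching hierarchy for $f$-energies indexed by the number of ``bad'' versus ``good'' commutators, so that each energy is controlled by the next one in a triangular fashion, allowing a Gr\"onwall loop to close. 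Once all bootstrap bounds are improved, geodesic completeness and asymptotic convergence to Minkowski follow from standard arguments based on the decay of the curvature and the boundedness of the optical functions.
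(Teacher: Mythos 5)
Your proposal matches the paper's strategy closely: wave-coordinate reduction in the Lindblad--Rodnianski framework, commutation by Minkowskian (conformal) Killing fields and their complete lifts, hierarchized weighted $L^1_{x,v}$ energies for the Vlasov field, weighted Klainerman--Sobolev and Lindblad--Rodnianski-type energy estimates, and a final top-order hierarchy on null components of $h$. Two of your details are, however, slightly off in ways that matter for actually carrying out the argument.

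First, the weight $z$ in the paper is not of the form $\sum |v_A|/v^0$; rather it is built from the scalar quantities $x^\alpha w_\beta - x^\beta w_\alpha$, $x^\lambda w_\lambda$, and the Morawetz-type weight $\mathbf{m}=(t^2+r^2)w_0+2tx^iw_i$, all of which are conserved along the flat transport operator $\T_\eta$, and $z$ satisfies $1 \lesssim z$ together with $1+|t-r|\lesssim z$. It is therefore \emph{large} away from the light cone, and provides the crucial extra $t-r$-decay for velocity averages in the interior. A weight that ``vanishes at the light cone'' would not give you the conserved quantity you need to propagate; the mechanism linking $z$ to the null structure of $v$ is the separate inequality $|w_L|/w^0 \lesssim z^2/(1+t+r)^2$.

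Second, $h_{LL}$ is not the \emph{bad} component: thanks to the wave gauge condition, the $\mathcal{L}\mathcal{T}$ (and hence $LL$) components enjoy improved bounds of the form $|\nabla h|_{\mathcal{L}\mathcal{T}}\lesssim |\overline{\nabla} h|+\ldots$. The top-order obstruction comes instead from the term $(t+r)|v||\overline\nabla\mathcal{L}_Z^I(h^1)|_{\mathcal{L}\mathcal{L}}|\partial_{t,x}f|$, which involves a \emph{good} null component and a \emph{good} derivative, but cannot be traded for extra $(t+r)^{-1}$ decay at top order because all commutations are already spent. The paper's resolution is precisely what you guessed in spirit: propagate a dedicated $L^2$ energy $\mathcal{E}^{1+2\gamma,1}_{N,\mathcal{L}\mathcal{L}}[h^1]$ that grows at a rate $(1+t)^\delta$ independent of the other top-order rates, made possible by the observation that $\widetilde\square_g(\mathcal{L}_Z^I h^1)_{LL}$ decays strongly near the light cone. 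No elliptic estimates on outgoing cones are used.
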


In the massive case, metric perturbations and particles travel at different speeds, in particular in a uniform sense when velocities are bounded away strictly from the speed of light. In contrast, for the massless system this decoupling does not occur, which creates substantial new difficulties\footnote{Note that, in return, the massive case also contains independent difficulties, in particular, the components of the energy-momentum tensor do not decay arbitrarily fast in the interior region, contrary to the massless case. } in comparison with the massive system. We resolve these problems by a number of new techniques in the realm of the vector field method for relativistic transport equations \cite{FJS} discussed in the following section.

\subsection{The vector field method for transport equations and technical aspects}
The vector field method for relativistic transport equations was developed recently to provide a robust technique which yields sharp estimates on velocity averages of kinetic matter in spacetimes with geometries close to Minkowski spacetime \cite{FJS}. It is based on the commutation properties of complete lifts of Killing fields of Minkowski spacetime with the transport operator.
The method has the additional feature to be compatible with the corresponding method for the wave equation introduced by Klainerman, which constitutes the foundation of most stability results of Minkowski spacetime. For a classical version cf.~\cite{s-14}. The vector field method for transport equations has in the meantime been applied successfully to the Vlasov-Nordstr\"om system \cite{FJS2} and the massive Einstein-Vlasov system in \cite{FJS3}. In a series of papers, \cite{dim4,dim3,massless,ext}, the method has also been extended to the Vlasov-Maxwell system in various contexts, in particular, without the need of any compact support assumptions. 

In the present paper, we apply the method to the massless Einstein-Vlasov system. In particular, we introduce fundamental improvements, which are tailored to the structure of the system in the massless case, which we will lay out in the following. 

\subsubsection{Null structures}
The vector field method is based on the commutation properties of the transport operator $\mathbf T_g$ with the complete lifts of Killing fields of Minkowski spacetime. The perturbation of the transport operator, defined loosely by the difference between the transport operator in curved space and that of Minkowski spacetime, $\mathbf T_g-\mathbf T_\eta$, creates an error term in the commutator with the complete lifts and in turn obstructing terms in the resulting energy estimates. 

The first crucial structure in the transport part of the massless system is the \emph{null structure} of the perturbation terms. There are roughly three distinct sources of null structures. Two of them arise from the decomposition of the metric components and the momentum variables with respect to a null frame. The third arises from the identification of null forms for products involving $(t,x)$-derivatives of the metric components and $v$-derivatives of the Vlasov field. These null structures are all discussed in Subsection \ref{subsecnullintro}.

It can be shown, as for the Vlasov-Maxwell system \cite{massless}, that this structure is conserved under commutation with complete lifts. What is crucial in a subsequent step is to assure that this null structure can be exploited at all levels of regularity, which is not straightforward to validate. A particular difficulty occurs when well-behaved components of the metric perturbation need to be estimated in energy. In that case the bulk energies of Lindblad and Rodnianski are insufficient to close the estimates. We return to this issue below.

\subsubsection{A null structure in the energy-momentum tensor and its consequence for propagation of the metric perturbation}
The energy momentum tensor for massless particles is trace-free. As a consequence of that, the 4-Ricci tensor is proportional to the energy-momentum tensor. From the aforementioned null structure in the momentum components, after decomposition on a standard null frame, we obtain a system of wave equations where certain matter source terms enjoy improved decay in comparison with a generic energy-momentum tensor term. This structure is another characteristic feature of the massless system. To our knowledge, in the massive case, matter source terms are usually taken of the generic type and an underlying hierarchy was never exploited. 


To derive suitable energy estimates for the frame components of the metric, we consider additional energy norms for the metric components. The resulting estimates are better than the generic ones due to the fast decaying matter source terms and improved null properties satisfied by the semi-linear terms of the Einstein equations. It is those energy norms that in turn can be used to estimate the good frame components of the metric perturbation when the source terms in the Vlasov equation are analysed at top order. Moreover, compared to the proof of Lindblad-Rodnianski \cite{LR10}, thanks to these norms, we do not need Hörmander's $L^1-L^{\infty}$-estimate.


\subsubsection{Strong $(t-r)$-decay for velocity averages}

In order to close the energy estimates for the particle density, we have to deal with the weak decay rate of the perturbation part of the metric in the interior of the light cone. In the case of Vlasov fields with compact support, massless particles will follow straight lines parallel to the light cone, so that the support of the Vlasov field is located close to the light cone. We capture this effect in the non-compactly supported case using hierarchized weighted-energy norms for the Vlasov field, similar to those considered in \cite{ext}. The extra weights allow us to prove strong decay away from the wave zone, i.e.~when $t-r$ is large.



\subsubsection{The Lie derivative}
As in \cite{Lindblad}, we commute the Einstein equations with Lie derivatives. Following a strategy initially developed for the Vlasov-Maxwell system in \cite{dim4}, we also write the error terms arising in the commutation of the Vlasov equation in terms of Lie derivatives of the metric components. Compared to \cite{FJS3}, this reduces the complexity of the error terms, and fully conserves the null structure of the system after commutation, which appears to be crucial in our proof. Moreover, it also allows to avoid many hierarchies considered in \cite{LR10} in the commuted Einstein equations and in \cite{FJS3} in the commuted Vlasov equation.

\subsubsection{Decay loss and $v$-derivatives}
At the linear level, derivatives in $v$ do not commute well with the massless transport operator, so that one should expect that the presence of terms of the form $\partial_{v^i} \widehat{Z}^I f$ in the source term of the Vlasov equation to be problematic. In the massive case \cite{FJS3,Lindblad}, the introduction of improved commutators seemed necessary to deal with the similar issue. Here, this issue can be resolved essentially by using the null structure of the system, the strong decay in $t-r$ of the Vlasov field and a hierarchy of growth in $t$ at the top order. 

\subsubsection{The Morawetz weight}

The Morawetz vector field, which has been used extensively as a multiplier in the study of wave equations (cf.~\cite{Kl84,Mo61})
gives rise to a momentum weight $\mathbf{m}$ (defined in \eqref{morawetz weight}), which is in the kernel of the flat transport operator and in turn yields a conserved quantity in Minkowski spacetime.
Its potential use in stability problems has been pointed out in \cite{rVP}. In the present paper we provide the first application
for this weight by utilising it to construct auxiliary energies, which allow for an absorption of $|t-r|$ growth in the primary energy estimates for the distribution. It constitutes an essential 
ingredient to the hierarchized energy scheme, which we use to close the estimates.

\subsection{Acknowledgements} 
This material is based upon work supported by the Swedish Research Council under grant no. 2016-06596 while L.B. was in residence at Institut Mittag-Leffler in Djursholm, Sweden during the fall semester 2019. L.B. also acknowledges the support of partial funding by the ERC grant MAFRAN 2017-2022. D.F.~gratefully acknowledges support of the Austrian Science Fund (FWF) through the Project \emph{Geometric transport equations and the non-vacuum Einstein flow} (P 29900-N27). J.S.~acknowledges funding from the European Research Council under the European Union’s Horizon 2020 research and innovation program (project GEOWAKI, grant agreement 714408). M.T.~thanks the Laboratoire Jacques-Louis Lions at Sorbonne Université, Paris for hospitality during a visit April - June 2019. M.T.~has received financial support of the G.~S.~Magnusons fond foundation (grant numbers MG2018-0077, MG2019-0109) which is gratefully acknowledged.

\section{Strategy of the proof and outline of the paper}

\subsection{The Cauchy problem in wave coordinates and initial data}

It is well-known that the Einstein equations can be formulated as a Cauchy problem and in the case of the Einstein-Vlasov system, the well-posedness is guaranteed by a theorem of Choquet-Bruhat \cite{c-71}. See also \cite{Svedberg2} for the massless case. A detailed formulation of the Cauchy problem for the Einstein-Vlasov system can be found in \cite{r-13}. \par
Consider a smooth $3$-dimensional manifold $\Sigma$ with a Riemannian metric $\mathring{g}$, a symmetric covariant $2$-tensor $\mathring k$ and a function $\mathring f$ defined on $T\Sigma$ (or equivalently on $T^\star\Sigma$), with all data assumed to be smooth and such that the constraint equations (see \cite{r-13} for details) are satisfied. The Cauchy problem then consists in constructing a $4$-dimensional manifold $\mathcal M$ with Lorentz metric $g$, a smooth function $f$ defined on $\mathcal{P}$, satisfying the Einstein-Vlasov system (\ref{massless_evs}), and an embedding $i:\Sigma \to \mathcal M$ such that $i^*g = \mathring{g}$, $i^*k = \mathring{k}$, $f\circ \mathrm{pr}_{\Sigma}^{-1} = \mathring f$, where $k$ is the second fundamental form of $i(\Sigma)$ in $(\mathcal M, g)$ and the function $\mathrm{pr}_{\Sigma}^{-1} :  T^\star\Sigma \rightarrow \mathcal{P}$ is defined as follows. Let $\pi: \mathcal{P} \subset T^\star \mathcal M \rightarrow \mathcal M$ the canonical projection. Given $p \in T^\star \Sigma$, there exists a unique $q^{\perp}(p)\in T^\star i(\Sigma)$ such that $p=i^\star q^\perp(p)$ and then a unique $q^{||}$ proportional to the normal to $i(\Sigma)$ at $\pi(q^\perp(p))$ such that $q^{\perp}(p)+q^{||}(p)=:\mathrm{pr}_{\Sigma}^{-1}(p) \in \pi^{-1}(i(\Sigma))$. 
	

Analogous to \cite{ LR05,LR10}, we consider here {\em wave coordinates}, i.e.~we choose coordinates $(t=x^0,x^1,x^2,x^3)$, on $\mathcal M$  which satisfy
\begin{equation} \label{wave_coordinate_cond}
\forall \; 0 \leq \mu \leq 3, \qquad \Box_g x^\mu = 0, 
\end{equation}
where  $\Box_g=g^{\alpha \beta}D_\alpha D_\beta$ is the wave operator associated to the metric $g$. An element $v\in T^\star\mathcal M$ can then be written as $v = v_\mu \mathrm dx^\mu$ and this gives rise to coordinates $(x^\mu, v_\nu)$, $\mu, \nu = 0,\dots,3$ on $T^\star\mathcal M$. \par
 
The class of initial data which is considered in the following is asymptotically flat and small in the following sense. Let $M > 0$ be a constant\footnote{With our convention, $M$ is twice the ADM mass of the initial data.}. Following \cite{LR10}, we make the ansatz
\begin{equation} \label{ansatz_g}
g = \eta + h^0 + h^1,
\end{equation}
where $\eta$ denotes the Minkowski metric while the perturbation $h^0 + h^1$ consists of the \emph{Schwarzschild part} $h^0_{\alpha\beta} = \chi(\frac{r}{1+t}) \frac{M}{r} \delta_{\alpha\beta}$, and the perturbation $h^1$. The function $\chi$ is smooth and chosen such that $\chi(s) = 0$ if $s\leq \frac 14$ and $\chi(s)=1$ if $s\geq \frac{1}{2}$. \par
In wave coordinates, the evolution equations can be written as a system of quasilinear wave equations, the \emph{reduced equations}, taking the form
\begin{equation} \label{wave_eq_g}
\widetilde\Box_g g_{\mu\nu} = F_{\mu\nu}(g)(\nabla g, \nabla g) -2 T[f]_{\mu\nu}, \qquad 0 \leq \mu, \nu  \leq 3, \quad \widetilde\Box_g := g^{\alpha\beta}\partial_{x^\alpha} \partial_{x^\beta},
\end{equation}
where $\nabla$ denotes the covariant derivative of the flat Minkowski space-time. An initial data set $(\Sigma_0, \mathring g, \mathring k, \mathring f)$ gives rise to initial data of the reduced equations coupled to the Vlasov equation via
\begin{equation} \label{initial_data_1}
g_{ij} |_{t=0} = \mathring g_{ij}, \quad g_{00}|_{t=0} = -a^2, \quad g_{0i} |_{t=0} = 0, \quad a(x)^2 = 1-\chi(r) \frac M r, \quad f|_{t=0} = \mathring f,
\end{equation} 
and
\begin{align}
\partial_t g_{ij} |_{t=0} &= -2a \mathring k_{ij}, \quad \partial_t g_{00} |_{t=0} = 2a^3 \mathring g^{ij} \mathring k_{ij},  \label{initial_data_2_1} \\
\partial_t g_{0i} |_{t=0} &= a^2 \mathring g^{jk} \partial_j \mathring g_{ik} - \frac{a^2}{2} \mathring g^{jk} \partial_i \mathring g_{jk} - a\partial_i a.  \label{initial_data_2_2}
\end{align}
One can show that, with the choice \eqref{initial_data_2_1}--\eqref{initial_data_2_2} the wave coordinate condition \eqref{wave_coordinate_cond} is satisfied by $(g_{\mu\nu}, \partial_t g_{\mu\nu})|_{t=0}$, see, for example, \cite[Section $4$]{LR05}.

In view of the decomposition \eqref{ansatz_g}, the equations \eqref{wave_eq_g} can be rewritten as a system for the components of $h^1$, with extra source terms depending on $ h^0$.
Thus, the unknowns of the reduced Einstein-Vlasov system are $h^1$ and the distribution function $f$. The initial data will be chosen small in the sense that the mass parameter $M$ and certain energy norms of $h^1$ and $f$ are bounded by a small constant $\epsilon>0 $. 

\subsection{Vector fields}

Let $$\mathbb{K} \hspace{1mm} := \hspace{1mm} \{\partial_t, \partial_{x^1}, \partial_{x^2}, \partial_{x^3}, \Omega_{12}, \Omega_{13}, \Omega_{23}, \Omega_{01}, \Omega_{02}, \Omega_{03}, S \},$$
be an ordered set of conformal Killing vector fields of Minkowski spacetime, where 
$$\Omega_{ij} = x^i \partial_j - x^j \partial_i, \qquad \Omega_{0k} = x^k \partial_t + t \partial_k, \qquad S = x^\mu \partial_\mu, \qquad \partial_{\mu} := \partial_{x^{\mu}}.$$ We consider an ordering on $\mathbb{K}=\{Z^1, \cdots , Z^{11} \} $ and for any multi-index $I = (I_1, \dots, I_{|I|})$ of length $|I|$ we denote the high order Lie derivative $\mathcal{L}_Z^{I_1} \dots \mathcal{L}_Z^{I_{|I|}}$ by $\mathcal{L}_Z^I$. Let also
$$\widehat{\mathbb P}_0 \hspace{1mm} := \hspace{1mm} \{\partial_t, \partial_{x^1}, \partial_{x^2}, \partial_{x^3}, \widehat{\Omega}_{12}, \widehat{\Omega}_{13}, \widehat{\Omega}_{23}, \widehat{\Omega}_{01}, \widehat{\Omega}_{02}, \widehat{\Omega}_{03}, S \} \hspace{1mm} = \hspace{1mm} \{ \widehat{Z}^1, \dots \widehat{Z}^{11} \},$$
where 
\begin{align}
\widehat{\Omega}_{ij} &= x^i \partial_j - x^j \partial_i + v_i \partial_{v_j} - v_j \partial_{v_i}, \\
\widehat{\Omega}_{0k} &= x^k \partial_t + t\partial_k + |v| \partial_{v_k}, \qquad |v| = \sqrt{|v_1|^2 + |v_2|^2 + |v_3|^2}
\end{align}
and we denote $\widehat{Z}^{I_1} \dots Z^{I_{|I|}}$ by $\widehat{Z}^I$. Moreover, we work with the null frame $\mathcal{U} = \{L,\underline L, e_1,e_2\}$, where $L=\partial_t + \partial_r$, $\underline L=\partial_t - \partial_r$, and $(e_1,e_2)$ form an orthonormal basis of the tangent space to the $2$-spheres of constant $t$ and $r$. We define $\mathcal{T}=\{L,e_1,e_2\}$ as the set of the basis vectors which are tangent to the light cone and we denote $\mathcal{L} =\{L\}$. 

Let $k$ be a symmetric covariant $2$-tensor field and $\mathcal{V}, \mathcal{W} \in \{\mathcal{U}, \mathcal{T}, \mathcal{L}\}$. At any point $(t,x)$, we define 
\begin{align*}
|\nabla k|_{\mathcal{V} \mathcal{W}}(t,x)&:=  \!\sum_{U \in \mathcal{U}, V \in \mathcal{V}, W \in \mathcal{W}} \left| \nabla_U(k)(V,W) \right|\!(t,x)=\!\sum_{U\in\mathcal{U}, V \in \mathcal{V}, W\in\mathcal{W}} \! \left| \partial_{x^{\alpha}} k_{\beta\lambda}(t,x) U^{\alpha} V^{\beta} W^{\lambda} \right|\!, \\
|\overline \nabla k|_{\mathcal{V} \mathcal{W}}(t,x)  &:= \! \sum_{T \in \mathcal{T}, V \in \mathcal{V}, W \in \mathcal{W}} \left| \nabla_T(k)(V,W) \right|\!(t,x)=\!\sum_{T\in\mathcal{T}, V \in \mathcal{V}, W\in\mathcal{W}} \! \left| \partial_{x^{\alpha}} k_{\beta \lambda} (t,x)T^{\alpha} V^{\beta} W^{\lambda} \right|\!.
\end{align*}
Finally, we denote by $\Sigma_t$ the hypersurface of constant $t$, i.e.
$$ \Sigma_t \hspace{1mm} := \hspace{1mm} \{ (\tau,x) \in \R^{1+3} \; /\; \tau=t \},$$
and we introduce, for any $(a,b) \in \R^2$, the weight function
\begin{equation}\label{defomega}
\omega_a^b =\omega_a^b(t,r) : = \left\{ \begin{array}{ll} \frac{1}{(1+|t-r|)^a}, & t \geq r,  \\ (1+|t-r|)^b, & t < r. \end{array} \right. 
\end{equation}

\subsection{Detailed statement of the main theorem}\label{subsecmainthe}

Our main result can then be formulated as follows.

\begin{theorem} (Main theorem, complete version) \label{main-thm_detailed} \\
Let $N\geq 13$, $0<\gamma < \frac{1}{20}$ and $(\Sigma_0 , \mathring g_{ij}, \mathring k_{ij}, \mathring f)$ be an initial data set to the massless Einstein-Vlasov system such that $\Sigma_0 \approx \R^3$, where $M>0$ and giving rise to initial data $(h^1_{\mu\nu} |_{t=0}, \partial_t h^1_{\mu\nu} |_{t=0},  f |_{t=0})$ of the reduced Einstein-Vlasov system through \eqref{initial_data_1}-\eqref{initial_data_2_2}. Consider $\epsilon >0$ and assume that the following smallness assumptions are satisfied 
\begin{align*}
M^2+ \sum_{|I|\leq N+2} \left( \left\| (1+r)^{\frac{1}{2} + \gamma + |I|} \nabla \nabla^I \mathring h^1\right\|^2_{L^2(\mathbb R_x^3)} + \left\| (1+r)^{\frac{1}{2} + \gamma + |I|} \nabla^I \mathring k\right\|^2_{L^2(\mathbb R_x^3)} \right) & \leq \epsilon,  \\
 \sum_{|I| + |J|\leq N+3} \left\|(1+r)^{\frac{2}{3}N+10 + |I|} (1+|v|)^{1+|J|} \partial_x^I \partial_v^J \mathring{f}\right\|_{L^1(\mathbb R_x^3 \times \mathbb R_v^3)} & \leq \epsilon .
\end{align*}
There exists a constant $\epsilon_0 > 0$ such that if $\epsilon \leq \epsilon_0$, then the maximal Cauchy development $(g,f)$ arising from such data is geodesically complete and asymptotes to the Minkowski space-time. 

Moreover, there exists a global system of wave coordinates $(t, x^1, x^2, x^3)$,  and a constant $0 < \delta(\epsilon) < \frac{\gamma}{20}$, with $ \delta(\epsilon) \rightarrow_{\epsilon \to 0} 0$,
in which the following energy bounds hold. 

For the Vlasov field, $\forall \; t \in \R_+$,
\begin{align*}
\sum_{|I| \leq N-1} \int_{\Sigma_t} \int_{\R^3_v} \left| \widehat{Z}^I f \right| |v| \, \dr v \dr x \hspace{1mm} &\lesssim \hspace{1mm} \epsilon \, (1+t)^{\frac{\delta}{2}}, \\
\sum_{|I| = N} \int_{\Sigma_t} \int_{\R^3_v} \left| \widehat{Z}^I f \right| |v| \, \dr v \dr x \hspace{1mm} &\lesssim \hspace{1mm} \epsilon \, (1+t)^{\frac{1}{2} + \delta}.
\end{align*}
For the metric perturbation $h^1$, $\forall \; t \in \R_+$,
\begin{align*}
\sum_{|J| \leq N-1}  \int_{\Sigma_t} \left| \nabla \mathcal{L}_Z^J(h^1) \right|^2 \omega_0^{1+2\gamma} \dr x   & \lesssim \epsilon \; (1+t)^{2\delta}, \\
\sum_{|J| \leq N-1}  \int_{\Sigma_t} \left| \nabla \mathcal{L}_Z^J(h^1) \right|^2_{\mathcal{T} \mathcal{U}} \omega_{2\gamma}^{1+\gamma} \dr x   & \lesssim \epsilon \; (1+t)^{\delta}, \\
\sum_{|J| = N}  \int_{\Sigma_t} \frac{\left| \nabla \mathcal{L}_Z^J(h^1) \right|^2}{1+t+r} \omega_{\gamma}^{2+2\gamma} \dr x   & \lesssim \epsilon \; (1+t)^{2\delta}, \\
\sum_{|J| \leq N}  \int_{\Sigma_t}\left| \nabla \mathcal{L}_Z^J(h^1) \right|^2_{\mathcal{L} \mathcal{L}} \omega_{1+2\gamma}^1 \dr x    & \lesssim \epsilon \; (1+t)^{\delta}.
\end{align*}
\end{theorem}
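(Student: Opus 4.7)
The plan is a standard continuity/bootstrap argument organized around the coupled wave-transport system obtained from (\ref{wave_eq_g}) and the second equation of (\ref{massless_evs}). On an interval $[0,T)$ I would postulate, as bootstrap assumptions, all of the listed energy bounds with the constant in front of $\epsilon$ slightly enlarged and with the growth exponents $\delta$ enlarged by a definite amount. The goal would then be to recover the same inequalities with the original constants, so that the set of $T$ where the assumptions hold is open, closed, and non-empty, hence equal to $[0,\infty)$, from which geodesic completeness and the asymptotic behaviour would follow via standard arguments \emph{à la} Lindblad--Rodnianski.

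Next I would commute the two equations. The reduced Einstein equations are commuted with $\mathcal{L}_Z^I$ for $Z\in\mathbb{K}$, producing a system of quasilinear wave equations for the components of $\mathcal{L}_Z^I h^1$ with right-hand sides consisting of the $F_{\mu\nu}$-type semilinear terms, lower order contributions coming from $h^0$, and Lie derivatives of $T[f]$. The Vlasov equation is commuted with complete lifts $\widehat{Z}^I$ from $\widehat{\mathbb{P}}_0$, expressing the error term $[\mathbf{T}_g,\widehat{Z}^I]f$ entirely through Lie derivatives $\mathcal{L}_Z^J h^1$ so as to preserve the null structure described in the introduction. Combining these commuted equations with weighted Klainerman--Sobolev inequalities adapted to $\widehat{\mathbb{P}}_0$ and $\mathbb{K}$ respectively, the bootstrap assumptions are converted into sharp pointwise decay of $\int |\widehat{Z}^J f|\dr v$, of $\nabla\mathcal{L}_Z^J h^1$, and of the good null components $|\nabla\mathcal{L}_Z^J h^1|_{\mathcal{T}\mathcal{U}}$ and $|\nabla\mathcal{L}_Z^J h^1|_{\mathcal{L}\mathcal{L}}$, with extra $(1+|t-r|)$ decay coming from the hierarchized weights.

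Armed with these pointwise bounds, I would close the two families of energy inequalities separately. For the Vlasov field, one multiplies the commuted transport equation by $\mathrm{sgn}(\widehat{Z}^I f)$, integrates against the natural measure, and uses the null structure together with the strong $(t-r)$ decay of $f$ to absorb every source term into a Grönwall inequality producing $(1+t)^{\delta/2}$ below top order and $(1+t)^{1/2+\delta}$ at order $N$. For the metric, the three energies stated in the theorem --- the bulk energy, the improved $\mathcal{T}\mathcal{U}$ energy with weight $(1+|t-r|)^{-2\gamma}$, and the $\mathcal{L}\mathcal{L}$ energy with weight $(1+|t-r|)^{-1-2\gamma}$ --- are propagated simultaneously, each using the tracelessness of $T[f]$ and the improved null behaviour of the matter source together with the decay of the semilinear $F_{\mu\nu}$ to gain a power of $(1+t+r)$ in the worst nonlinearity; the good components are controlled by means of the wave equations satisfied by $h^1_{LL}$, $h^1_{LT}$, which carry faster decaying sources.

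The main obstacle will be the top-order estimate, where one cannot afford to lose derivatives on either side. There, the standard scheme would put $N$ vector fields on $h$ inside the Vlasov source (through $\partial_{v^i}\widehat{Z}^I f$), forcing a coefficient $\nabla\mathcal{L}_Z^N h$ that has no improved pointwise decay at all. The resolution, as announced in Subsections 1.4.2 and 1.4.5, is to exploit the novel hierarchy in the Einstein equations: the $\mathcal{T}\mathcal{U}$ and $\mathcal{L}\mathcal{L}$ energies are propagated with the slower growth rate $(1+t)^\delta$ while the full energy is allowed to grow as $(1+t)^{2\delta}$, and only the well-decaying components are ever multiplied against the top-order Vlasov term after an integration by parts in $v$ that trades $\partial_v$ for $t-r$ using the strong $(1+|t-r|)^{1+\gamma}$ weight carried by $f$. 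This, together with the use of $(1+r)^{\frac{2}{3}N+10}$ moments on the initial Vlasov data to feed the hierarchized weighted norms, should just barely close the top order and finish the bootstrap.
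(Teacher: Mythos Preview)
Your outline captures the overall bootstrap architecture correctly, and the use of Lie derivatives for the metric commutation, the null decomposition, and the hierarchy of energies ($\mathcal{U}\mathcal{U}$, $\mathcal{T}\mathcal{U}$, $\mathcal{L}\mathcal{L}$) are all as in the paper. Two points, however, are either misdescribed or genuinely missing.

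First, the handling of the $\partial_{v}$ derivatives in the commuted Vlasov equation is not done by ``integration by parts in $v$''. The paper instead uses the algebraic identity $\partial_{v_i}=|v|^{-1}\widehat{\Omega}_{0i}-|v|^{-1}(x^i\partial_t+t\partial_i)$ and its refined null decomposition (Lemma~\ref{vderivative}): the radial part $(\nabla_v\psi)^r$ picks up only a factor $|t-r|$ rather than $t$, and this, combined with the $z$-weighted hierarchy indexed by the number $I^P$ of homogeneous vector fields, is what makes the error terms integrable. At top order the specific obstruction is the term $(t+r)|v|\,|\overline{\nabla}\mathcal{L}_Z^I h^1|_{\mathcal{L}\mathcal{L}}\,|\partial_{t,x}f|$ (see \eqref{Error10} and Subsection~\ref{subsectoporder}); it is closed not by any manipulation of $f$ but by the independently propagated $\mathcal{L}\mathcal{L}$-energy $\mathcal{E}^{1+2\gamma,1}_{N,\mathcal{L}\mathcal{L}}[h^1]$, whose growth exponent can be chosen independently of the other bootstrap constants.

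Second, and more seriously, your sketch omits the step that feeds the Vlasov estimates back into the wave equations: one must bound $\int_0^t\int_{\Sigma_\tau}|\mathcal{L}_Z^I T[f]|^2\,\omega\,\dr x\,\dr\tau$ for $|I|$ up to $N$. The Vlasov bootstrap only gives $L^1_{x,v}$ control of $\widehat{Z}^K f$; Klainerman--Sobolev converts this to pointwise decay of $\int_v|\widehat{Z}^K f|\,\dr v$ only when $|K|\le N-4$. For $|K|\ge N-3$ one has neither pointwise nor $L^2$ bounds a priori, and the argument does not close without an additional mechanism. The paper resolves this (Section~\ref{sec14}) by writing the vector $F=(\widehat{Z}^{I_i}f)_{|I_i|\ge N-5}$ as $F^{\mathrm{hom}}+F^{\mathrm{inh}}$, where $F^{\mathrm{hom}}$ solves a homogeneous transport system (so can be commuted three more times and estimated pointwise via Klainerman--Sobolev) and $F^{\mathrm{inh}}=K\cdot Y$ with $Y$ built from low-order quantities and $K$ a matrix satisfying its own transport equation. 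The $L^2$ bound on $\int_v|F^{\mathrm{inh}}|\,\dr v$ then follows from Cauchy--Schwarz in $v$, pointwise decay of $\int_v|Y|\,\dr v$, and an $L^1$ estimate on $|K|^2 Y$. Without this (or an equivalent device) your scheme cannot bound the matter source in the top-order metric energy estimate.
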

\begin{rem}
	On top of the above energy bounds, we also prove pointwise decay estimates on $h_1$ and its derivatives, see Propositions \ref{decaymetric} and \ref{estinullcompo}. We note that the decay rates we state on certain null components of $\nabla h^1$ (see \eqref{eq:TU1}) are weaker near the light cone than those obtained by Lindblad-Rodnianski \cite{LR10}. This is because we can close our main estimates without using the $L^1-L^{\infty}$-decay estimate of Hörmander. Of course, a posteriori, one can upgrade these rates to those of \cite[Subsection $10.2$]{LR10} to obtain that for any $|J| \leq N-5$ and for all $(t,x) \in \R_+ \times \R^3$
$$ \left| \nabla \mathcal{L}_Z^J(h^1) \right|_{\mathcal{T} \mathcal{U}} \!(t,x) \hspace{1mm} \lesssim \hspace{1mm} \frac{\sqrt{\epsilon}}{1+t+r}, \qquad \qquad \left| \nabla \mathcal{L}_Z^J(h^1) \right| \!(t,x) \hspace{1mm} \lesssim \hspace{1mm} \frac{\sqrt{\epsilon} \log (3+t)}{1+t+r}.$$
\end{rem}
\begin{rem}
At the top order, the strong growth of the energy norm of $f$ leads to a strong growth of the $L^2$-norm of the perturbation of the metric. For a technical reason and in order to avoid a much stronger decay hypothesis on $h^1(0,\cdot)$, we, in some sense, include this strong growth through the weight $(1+t+r)^{-1}$ into the top order energy norm of $h^1$. Not all top order norms actually need to grow: the small growth on the $\mathcal{LL}$-top energy norm for $h^1$ can in fact be removed at the expense of a more carefull analysis of the error terms. 
\end{rem}


The proof of the main theorem is based on vector field methods and a continuity argument so that it essentially consists in improving bootstrap assumptions on well-chosen energy norms of $h^1$ and $f$. The global-in-time existence then follows by standard arguments. As we use a vector field method, we then need to
\begin{itemize}
\item commute the equations by high order derivatives composed by elements of $\mathbb{K}$ for the Einstein equations and $\K$ for the Vlasov equations,
\item perform energy estimates to propagate weighted $L^2$-norms of $h^1$ and weighted $L^1$-norms of $f$,
\item obtain pointwise decay estimates for the solutions through Klainerman-Sobolev type inequalities and
\item estimate all the error terms arising from the energy estimates using the decay estimates. 
\end{itemize}

As is usual for these type of problems, the main sources of difficulty arise from
\begin{itemize}
	\item the bad behaviour near the light cone and the weak decay rate of $h_1$ in the interior region $t > r$, 
	\item the bad commutation properties of the Vlasov equation, in particular, generating error terms containing $\partial_v$ derivatives of $f$,
	\item the top order estimates, where some of the structural properties of the equations cannot be used anymore. 
	\end{itemize}
We present below some key technical ingredients of the proof that address in particular the issues above. 

\subsection{$L^1$-estimates for the Vlasov field}
\subsubsection{Naive estimate}

As $\widehat{Z}$, the complete lift of a Killing vector field\footnote{The case of $S$, which is merely a conformal Killing vector field, is slightly different but does not create more complicated error terms.} $Z$, commutes with the flat relativistic transport operator $\T_{\eta} := |v| \partial_t+v_i\partial_{v_i}$ and since $|g-\eta|$ is expected to be small, commuting $\T_g(f)=0$ with $\widehat{Z}$ should create controllable error terms. However, a naive estimate leads to
$$ \left| \T_g \left( \widehat{Z} f \right) \right| \hspace{1mm} \lesssim \hspace{1mm} \sum_{0 \leq \mu , \nu \leq 3} \left| Z(h_{\mu \nu} ) \right| |\partial_{t,x} f||v|+\left|\partial_{t,x} Z(h_{\mu \nu} ) \right| |\partial_{v} f||v|+\left|\partial_{t,x} (h_{\mu \nu} ) \right| |\partial_{v} f||v|$$
and, during the proof, we will have
$$ \left| Z(h_{\mu \nu} ) \right| \hspace{0.5mm} \lesssim \hspace{0.5mm} \sqrt{\epsilon} \frac{(1\!+\!|t\!-\!r|)^{\frac{1}{2}}}{(1\!+\!t\!+\!r)^{1-\delta}}, \qquad \left|\partial_{t,x} Z(h_{\mu \nu} ) \right|+\left|\partial_{t,x} (h_{\mu \nu} ) \right| \hspace{0.5mm} \lesssim \hspace{0.5mm}  \frac{\sqrt{\epsilon}}{(1\!+\!t\!+\!r)^{1-\delta}(1\!+\!|t\!-\!r|)^{\frac{1}{2}}},$$
so that, since $ |\partial_{v} f| \lesssim (t+r)  |\partial_{t,x} f|+\sum_{\widehat{Z} \in \K}  |\widehat{Z}f|$,
\begin{equation}\label{naiveesti}
 \int_0^t \! \int_{\Sigma_{\tau}} \! \int_{\R^3_v} \left| \T_g \left( \widehat{Z} f \right) \right| \dr v \dr x \dr \tau \hspace{0.4mm} \lesssim \hspace{0.4mm} \int_0^t \! \int_{\Sigma_{\tau}} \! \int_{\R^3_v} \! \frac{\sqrt{\epsilon}(1+\tau+r)^{\delta}}{\sqrt{1+|\tau-r|}}  |\partial_{t,x} f| |v| \dr v \dr x \dr \tau + \text{better terms.} 
\end{equation}
Controlling the left-hand side is necessary to close the energy estimates for $f$ using a Grönwall type inequality. However, with the above naive estimate, there are two obstacles preventing us to do so.
\begin{enumerate}
\item The decay rate degenerates near the light cone $t=r$. As mentioned earlier, we will deal with this issue by taking advantage of the null structure of the equations.
\item The decay rate is not integrable (and not even almost integrable). Even if we could transform the $t-r$ decay into a $t+r$ one, the overall $t$ decay is too weak to derive an estimate such as $\| \widehat{Z} f \|_{L^1_{x,v}} \lesssim \epsilon (1+t)^{\eta}$ for any $\widehat{Z} \in \K$, with $\eta \ll 1$.
\end{enumerate}

\subsubsection{The null structure of the Vlasov equation}\label{subsecnullintro}

Let us denote $g^{-1}-\eta^{-1}$ by $H$ and $v_0+|v|$ by $\Delta v$. Then, the deviation of $\T_g$ from the flat relativistic transport operator is
\begin{equation}\label{errorintro}
 \T_g-\T_{\eta} \hspace{1mm} = \hspace{1mm} -\Delta v \partial_t+  v_{\alpha} H^{\alpha \beta} \partial_{x^{\beta}} - \frac{1}{2} \nabla_i (H)^{\alpha \beta} v_{\alpha} v_{\beta} \cdot \partial_{v_i} .
\end{equation}
Now, recall
\begin{itemize}
\item that the derivatives of $H$ tangential to the light cone can be compared to those of $h$ and have a better behavior than the others. More precisely, 
$$ |\nabla_L H|(t,x)+|\nabla_{e_1} H|(t,x)+|\nabla_{e_2} H| (t,x) \hspace{1mm} \lesssim \hspace{1mm} \sqrt{\epsilon} \frac{(1+|t-r|)^{\frac{1}{2}}}{(1+t+r)^{2-\delta}}.$$
It will be important to notice that a similar property holds for $|L f|$.
\item from \cite[Section 8]{LR10} and the wave gauge condition that the $\mathcal{L} \mathcal{T}$ components of $H$ enjoy improved decay estimates near the light cone,
$$ \qquad |H|_{\mathcal{L} \mathcal{T}}\!(t,x) \hspace{1mm} \lesssim \hspace{1mm} \sqrt{\epsilon} \frac{(1+|t-r|)^{\frac{1}{2}+\delta}}{1+t+r}, \qquad |\nabla H|_{\mathcal{L} \mathcal{T}}\!(t,x) \hspace{1mm} \lesssim \hspace{1mm} \sqrt{\epsilon} \frac{(1+|t-r|)^{\frac{1}{2}+\delta}}{(1+t+r)^{2-2\delta}}.$$
We will prove that $\nabla_{e_A} (H)_{LL}$ decays even faster near the light cone, which will be crucial in our proof.
\item from \cite[Proposition $2.9$]{dim4}, that certain null components of $v$ behave better than others. In particular, in the flat case where $v_0=-|v|$, one can control\footnote{The exponent $\frac{9}{8}$ appearing in the denominator could be replaced by any number $a>1$.}
$$  \int_0^t \int_{\Sigma_{\tau}} \int_{\R^3_v} \frac{|\widehat{Z}f |}{(1+|t-r|)^{\frac{9}{8}}} |v_L| \dr v \dr x \dr \tau$$
by the initial energy of $|\widehat{Z}f|$, so that, in the presence of $v_L$, we can exploit the decay in $t-r$ in order to close the energy estimates. Moreover, the angular components satisfy, still in the flat case, $|v_A | \lesssim \sqrt{|v| |v_L|}$, so that angular components also behave better than generic ones.
\item from \cite[Lemma $4.2$]{dim4}, that $\frac{x^i}{r} \partial_{v_i} f$ behaves better than $\partial_{v_k} f$ near the light cone since $|\frac{x^i}{r} \partial_{v_i} f | \lesssim |t-r| |\partial_{t,x} f|+\sum_{\widehat{Z} \in \K} |\widehat{Z} f |$.
\item from \cite[Subsection $4.2$]{FJS3}, that $\Delta v$ satisfies a kind of null condition. In our case, we have
$$ |\Delta v | \hspace{1mm} = \hspace{1mm} |H(v,v)|  \hspace{1mm} \lesssim \hspace{1mm} |H|_{\mathcal{L} \mathcal{T}} |v| +|H| |v_L|.$$
\end{itemize}
Now note that a naive estimate of \eqref{errorintro} gives us
$$ |\T_g(f)-\T_{\eta} (f)| \hspace{1mm} \lesssim \hspace{1mm} \sqrt{\epsilon} \frac{(1+t+r)^{\delta}}{\sqrt{1+|t-r|}}|\partial_{t,x} f| + \frac{\sqrt{\epsilon}}{(1+t+r)^{1-\delta} \sqrt{1+|t-r|}} \sum_{\widehat{Z} \in \K} | \widehat{Z} f |$$ 
whereas, expanding all the error terms according to a null frame and taking advantage of the improved properties satisfied by the good null components of the solutions, we obtain
\begin{align*}
 |\T_g(f)-\T_{\eta}& (f)|  \lesssim   \sqrt{\epsilon} \frac{(1+|t-r|)^{\frac{1}{2}}}{1+t+r} \!\left( (1+|t-r|)^{\delta}|v||\partial_{t,x} f|+(1+t+r)^{2\delta} \sqrt{|v||v_L|}|\partial_{t,x} f| \right) \\ &\hspace{-0.3mm}+ \frac{\sqrt{\epsilon}}{(1+t+r) \sqrt{1+|t-r|}} \sum_{\widehat{Z} \in \K} \left( (1+|t-r|)^{\delta}|v||\widehat{Z} f|+(1+t+r)^{2\delta} |v_L||\widehat{Z} f| \right) \! .
 \end{align*}
 This last estimate is much better since either the decay rate is almost integrable for $t \approx r$ or the Vlasov field is multiplied by $\sqrt{|v| |v_L|}$, which allows to use part of the decay in $t-r$. This indicates how important the structure of the non-linearities is and how important it is to conserve them by commutation. By differentiating the metric by Lie derivatives, we will obtain that\footnote{The commutation formulas for the scaling and the Lorentz boosts contain more terms which can be handled in a similar way as those of \eqref{comintrorot}-\eqref{comintrotran}.}
\begin{eqnarray}
\qquad \hspace{2mm} \T_g ( \widehat{\Omega}_{ij}f) \hspace{-2.3mm} & = & \hspace{-2.3mm} - \widehat{\Omega}_{ij}(\Delta v) g^{0 \beta} \partial_{x^{\beta}} f \! - \! v_{\alpha}\mathcal{L}_{\Omega_{ij}}(H)^{\alpha \beta} \partial_{x^{\beta}} f \! + \! \frac{1}{2} \nabla_i \left( \mathcal{L}_{ \Omega_{ij}}(H) \right)^{\alpha \beta}v_{\alpha} v_{\beta } \partial_{v_i} f , \label{comintrorot} \\ 
\qquad \hspace{2mm} \T_g (\partial_{x^{\mu}}f) \hspace{-2.3mm} & = & \hspace{-2.3mm} - \partial_{x^{\mu}}(\Delta v) g^{0 \beta} \partial_{x^{\beta}} f \! - \! v_{\alpha}\mathcal{L}_{\partial_{x^{\mu}}}(H)^{\alpha \beta} \partial_{x^{\beta}} f \! + \! \frac{1}{2} \nabla_i \left( \mathcal{L}_{ \partial_{x^{\mu}}}(H) \right)^{\alpha \beta}v_{\alpha} v_{\beta}  \partial_{v_i} f , \label{comintrotran}
\end{eqnarray}
which improves the commutation formula obtained in \cite{FJS3}, where the quantities controlled, $Z (h_{\mu \nu})$, are not geometric, and where the full structure of the non-linearities were not preserved.
This will allow us to improve our naive estimate \eqref{naiveesti} in the following way
\begin{align}
\nonumber \int_0^t \! \int_{\Sigma_{\tau}} \! \int_{\R^3_v} \left| \T_g \left( \widehat{Z} f \right) \right| \dr v \dr x & \dr \tau  \hspace{1mm} \lesssim \hspace{1mm}  \int_0^t \! \int_{\Sigma_{\tau}} \frac{\sqrt{\epsilon}(1+|\tau-r|)^{\frac{1}{2}+\delta}}{1+\tau+r}  |\partial_{t,x} f| |v| \dr v \dr x \dr \tau \\ &+ \int_0^t \! \int_{\Sigma_{\tau}} \! \frac{\sqrt{\epsilon}(1+|\tau-r|)^{\frac{1}{2}}}{(1+\tau+r)^{1-4\delta}}  |\partial_{t,x} f| |v_L| \dr v \dr x \dr \tau + \text{better terms,} \label{errorintro2}
 \end{align}
 so that we can expect to propagate the bound $\| \widehat{Z}f (t, \cdot) \|_{L^1_{x,v}} \lesssim \epsilon (1+t)^{\eta}$, with $\eta \ll 1$ independent of $\delta$, provided that we can improve the decay in $t-r$ of the velocity averages of $f$ and its derivatives. Note that we will take $\eta = \frac{\delta}{2}$ during the proof.
\subsubsection{Dealing with the non integrable decay rate}\label{subsecintrononint} 
Even after exploiting the null structure as explained above, we are still left with error terms which are not time-integrable and therefore with energy norms a priori growing in time. 
We will circumvent this difficulty by following the strategy of \cite{ext} and we will then consider hierarchized weighted $L^1$-norms. It essentially relies on the following two properties.
\begin{enumerate}
\item The translations $\partial_{\mu}$, when applied to solutions of a wave equation, provide an extra decay far from the light cone compared to the other commutation vector fields. In view of \eqref{comintrorot}-\eqref{comintrotran}, we can expect the following improved behavior for $\T_g(\partial_{x^{\mu}} f )$,
$$ | \T_g ( \partial_{x^{\mu}} f ) | \hspace{1mm} \sim \hspace{1mm} (1+|t-r|)^{-1} | \T_g( \widehat{\Omega}_{ij} f )|,$$
which would considerably improve the estimate \eqref{errorintro2} for $\widehat{Z} = \partial_{x^{\mu}}$. Since the worst source terms of $\T_g( \widehat{Z} f )$, for any $\widehat{Z} \in \K$, contain only standard derivatives $\partial_{t,x}f$ of the particle density, the system composed by the commuted Vlasov equations is in some sense triangular.
\item The weight $\mathbf{m} := \big| 1+ \left( (t^2+r^2)-2tr \frac{x^i}{r} \frac{v_i}{|v|} \right)^2 \big|^{\frac{1}{4}}$ can be used in order to obtain stronger decay on $f$. It essentially\footnote{The overall exponent $1/4$ is here only for homogeneity, so that $\mathbf{m}  \sim t$, for $t \gg r$.} arises from the contraction of the Morawetz conformal Killing vector field $\overline{K}=(t^2+r^2)\partial_t+2tr \partial_r$ with the flat velocity current. 
It satisfies in particular
$$ \T_{\eta} (\mathbf{m}) \hspace{1mm} = \hspace{1mm} 0, \qquad 1+|t-r| \hspace{1mm} \lesssim \hspace{1mm} \mathbf{m}$$
so that one can expect $\T_g( \mathbf{m}^n f)$ to be small and then propagate $L^1$-norms of $f$ weighted by $\mathbf{m}^n$. 
\end{enumerate}
As a consequence of these two observations, we will then be able to prove an estimate such as $\| \mathbf{m}^{\frac{2}{3}} \partial_{t,x} f (t, \cdot ) \|_{L^1_{x,v}} \lesssim \epsilon (1+t)^{\eta}$. This will then allow us to improve the estimate \eqref{errorintro2} by
\begin{align*}
 \int_0^t \! \int_{\Sigma_{\tau}} \! \int_{\R^3_v} \left| \T_g \left( \widehat{Z} f \right) \right| \dr v \dr x & \dr \tau  \hspace{1mm}  \lesssim \hspace{1mm}  \int_0^t \! \int_{\Sigma_{\tau}} \frac{\sqrt{\epsilon} \; \mathbf{m}^{\frac{2}{3}} |\partial_{t,x} f| |v|}{(1+\tau+r)(1+|\tau-r|)^{\frac{1}{6}-\delta}}  \dr v \dr x \dr \tau \\ &+ \int_0^t \! \int_{\Sigma_{\tau}} \! \frac{\sqrt{\epsilon} \;\mathbf{m}^{\frac{2}{3}} |\partial_{t,x} f| |v_L|}{(1+\tau+r)^{1-4\delta}(1+|\tau-r|)^{\frac{1}{6}}}  \dr v \dr x \dr \tau + \text{better terms}, 
 \end{align*}
and then prove $\| \widehat{Z} f (t, \cdot ) \|_{L^1_{x,v}} \lesssim \epsilon (1+t)^{\eta}$. Since we will have to consider higher order derivatives, in order to apply this strategy, we will rather consider energy norms of the form $\| \mathbf{m}^{Q-\frac{2}{3}I^P}\widehat{Z}^I f (t, \cdot ) \|_{L^1_{x,v}}$, with $Q >0$ sufficiently large and where $I^P$ is the number of homogeneous vector fields composing $\widehat{Z}^I$.

\subsection{Study of the metric perturbation $h^1$} \label{sect_intro_h}

As already observed by Lindblad \cite{l-17}, differentiating the metric by Lie derivatives considerably simplifies the study of the Einstein equations. The two main arguments for using the Lie derivative are presented in this section.

\subsubsection{The wave gauge condition is preserved by commutation with $\mathcal{L}_Z^J$, where $Z^J \in \mathbb{K}^{|J|}$} More precisely, the wave gauge condition $\square_g x^{\nu}=0$ leads to
$$ \nabla^{\mu} \left(h - \frac{1}{2} \mathrm{tr}( h) \eta +  \mathcal O(|h|^2) \right)_{\mu\nu} = 0$$ 
and one can prove (see Subsection \ref{subsecwgc}) that this property is preserved by differentiation by the Lie derivative, i.e.
$$\forall \hspace{0.5mm} |J| \leq N, \hspace{1cm} \nabla^\mu \left(\mathcal{L}^J_Z (h) - \frac{1}{2} \mathrm{tr}(\mathcal{L}^J_Z h) \eta + \mathcal{L}^J_Z \left( \mathcal O(|h|^2) \right) \right)_{\mu\nu} = 0.$$
This implies in particular, with $\overline{\nabla} := (\nabla_L, \nabla_{e_1}, \nabla_{e_2})$ containing the good derivatives of the null frame (those tangential to the light cone), that for any $|J| \leq N$,
$$ | \nabla \mathcal{L}_Z^J(h)|_{\mathcal{L} \mathcal{T}} \hspace{1mm} \lesssim \hspace{1mm} | \overline{\nabla} \mathcal{L}_Z^J(h)|+\sum_{|K_1|+|K_2| \leq |J|} |  \mathcal{L}_Z^{K_1}(h)|| \nabla \mathcal{L}_Z^{K_2}(h)|.$$
In \cite{LR10} (and in \cite{FJS3}), this property was obtained for $\nabla h$ but could not be directly obtained for its derivatives, since the quantities controlled, $Z^I(h_{\mu \nu})$, were not geometric. For the purpose of this article, it is crucial to derive improved estimated on the null components of the higher order derivatives of $h$ in order to close the energy estimates. Otherwise, certain error terms of the commuted Vlasov equations would lack too much $t+r$ decay.\begin{rem}
In \cite{LR10}, a lack of $(t+r)^{\delta}$-decay in the error terms of the commuted Einstein equations was circumvented by considering several hierarchies so that $\| \nabla Z^I h^1_{\mu \nu} (t, \cdot) \|_{L^2} \lesssim \epsilon (1+t)^{\delta_{|I|}}$, with $\delta_{|I|} \ll 1$ growing with $|I|$. In our case the lack of decay seems to be much worse (recall the naive estimate \eqref{errorintro}) and this prevents us to consider such hierarchies between the energy norms at top order.
\end{rem}
\begin{rem}
Several analogies exist between the Einstein equations and the Maxwell equations
$$ \nabla^{\mu} F_{\mu \nu} \hspace{1mm} = \hspace{1mm} J_{\nu}, \qquad \nabla^{\mu} {}^* \! F_{\mu \nu} \hspace{1mm} = \hspace{1mm} 0,$$
where the electromagnetic field $F$ is a $2$-form, ${}^* \! F$ is its Hodge dual and the source term $J$ is a current. In particular, studying the Einstein equations in wave coordinates has to be compared to considering the Maxwell equations in the Lorenz gauge. This means that we work with a potential $A$ satisfying $\dr  A=F$ and the Lorenz gauge condition $\nabla^{\mu} A_{\mu} =0$, which has to be compared to the wave gauge condition since it gives $|\nabla (A)_L| \lesssim |\overline{\nabla} A|$. Moreover, we noticed in \cite{dim4} that
$$ \forall \; Z \in \mathbb{K}, \qquad \left( \dr A = F \quad \text{and} \quad \nabla^{\mu} A_{\mu}=0 \right) \Rightarrow \left(  \dr \mathcal{L}_Z(A) =  \mathcal{L}_Z(F) \quad \text{and} \quad \nabla^{\mu} \mathcal{L}_Z(A)_{\mu}=0 \right),$$
so that commuting with $\mathcal{L}_Z$ conserves the Maxwell equations as well as the Lorenz gauge condition.
\end{rem}

\subsubsection{The null structure of the Einstein equations}
For the study of the Einstein equations \eqref{wave_eq_g}, all the error terms arising after commutation will have sufficient decay outside the wave zone. To control the error terms near the wave zone, one of course, needs to exploit the null structure and the weak null structure of the equations. 

Indeed, one cannot propagate $L^2$-estimate on $h^1$ by performing naive estimates. It was shown in \cite{LR10} that $F_{\mu \nu}(h)(\nabla h , \nabla h)$ is composed of cubic terms which decay strongly, of quadratic terms $Q_{\mu \nu}(\nabla h , \nabla h)$, which are a linear combination of standard null forms, and other quadratic terms $P(\nabla_{\mu} h, \nabla_{\nu} h)$ which contain semi-linear terms satisfying
$$|P(\nabla_\mu h, \nabla_\nu h)| \lesssim | \nabla h |_{\mathcal{T} \mathcal{U}}^2  +|\nabla h|_{\mathcal{L}\mathcal{L}} |\nabla h|+|\nabla h| |\nabla h |_{\mathcal{L}\mathcal{L}}.$$
Since the wave gauge condition holds, the problem arises from the term $| \nabla h |_{\mathcal{T} \mathcal{U}}^2$. To deal with it, the proof of \cite{LR10} used the $L^1-L^{\infty}$- estimate of Hörmander which yields $| \nabla h |_{\mathcal{T} \mathcal{U}} \lesssim \epsilon (1+t)^{-1}$. We provide in this paper an alternative way for treating this issue, which seems in fact necessary in order to deal with the top order energy estimates for the Vlasov field (see Subsection \ref{subsectoporder}). The $L^2$ bound that we will have on $h^1$ is
$$ \overline{\mathcal{E}}^{\gamma, 1+2\gamma}[h^1](t) \hspace{1mm} := \hspace{1mm} \int_{\Sigma_t} \! |\nabla h^1|^2 \omega_{0}^{1+2\gamma} \dr x + \int_0^t \! \int_{\Sigma_t} \! \frac{|\overline{\nabla} h^1|^2}{1+|\tau-r|} \omega_{\gamma}^{1+2 \gamma} \dr x \dr \tau \hspace{1mm} \lesssim \hspace{1mm} \epsilon \; (1+t)^{2 \delta}, \quad \delta < \gamma,$$
where
$$ \omega_a^b(t,r) \lesssim (1+|t-r|)^{-a} \mathds{1}_{r \leq t}+(1+|t-r|)^b \mathds{1}_{r > t}, \qquad (a,b) \in \R_+^2.$$
We then observe that for any $(T,U) \in \mathcal{T} \times \mathcal{U}$, $P(\nabla_T h, \nabla_U h)$ satisfies the null condition and that $T[f]_{TU}$, due to the presence of the good component $v_T$ in the integrand, decays much faster near the light cone than $|T[f]|$. As a consequence, we will be able to prove that
$$ \mathcal{E}^{2\gamma, 1+\gamma}_{\mathcal{T} \mathcal{U}}[h^1](t) \hspace{1mm} := \hspace{1mm} \int_{\Sigma_t} |\nabla h^1|^2_{\mathcal{T} \mathcal{U}} \omega_{2\gamma}^{1+\gamma} \dr x + \int_0^t \int_{\Sigma_t} \frac{|\overline{\nabla} h^1|^2_{\mathcal{T} \mathcal{U}}}{1+|\tau-r|} \omega_{2\gamma}^{1+ \gamma} \dr x \dr \tau \hspace{1mm} \lesssim \hspace{1mm} \epsilon \; (1+t)^{ \kappa},$$
where $\kappa \ll 1$ can be chosen independently of $\delta$, allowing us to control sufficiently well the error term $| \nabla h |_{\mathcal{T} \mathcal{U}}^2$. During the proof, we will take $\kappa = \delta$.
\begin{rem}
These estimates reflect that, even estimated in $L^2$, $|\nabla h^1|_{\mathcal{T} \mathcal{U}}$ has a better behavior than $\nabla h^1$ for $t \approx r$. As no improvement can be obtained far from the light cone, this property can only be captured if the $L^2$-norm of $|\nabla h^1|_{\mathcal{T} \mathcal{U}}$ carries a weaker weight in $t-r$ than the one of $\nabla h^1$.
\end{rem}

Again, it is then important to prove that the structure of the source terms of the Einstein equations are conserved by commutation with $\mathcal{L}_Z^J$. As noticed in \cite{l-17}, we have for a Killing vector field\footnote{The case of the scaling vector field leads to additional non problematic terms.} $Z$,
\begin{align*}
\mathcal{L}_Z \left(P(\nabla_{\mu} h, \nabla_{\nu} k ) \right) & \hspace{1mm} = \hspace{1mm} P(\nabla_{\mu} \mathcal{L}_Z h, \nabla_{\nu} k )+P(\nabla_{\mu} h, \nabla_{\nu} \mathcal{L}_Z k ), \\
\mathcal{L}_Z \left(Q_{\mu \nu}(\nabla h, \nabla k ) \right) & \hspace{1mm} = \hspace{1mm} Q_{\mu \nu}(\nabla \mathcal{L}_Z h, \nabla k )+Q_{\mu \nu}(\nabla h, \nabla \mathcal{L}_Z k ).
\end{align*}
Moreover, the structure of the commutator 
$$[\widetilde{\square}_g , \mathcal{L}_Z ](h_{\mu \nu}) \hspace{1mm} = \hspace{1mm} \mathcal{L}_Z(H)^{\alpha \beta} \nabla_{\alpha} \nabla_{\beta} h_{\mu \nu}$$
is also preserved by the action of $\mathcal{L}_Z^J$ and the cubic terms as well as $\widetilde{\square}_g h^0_{\mu \nu}$ can be easily handled. Similarly, one can prove that
$$ \mathcal{L}_Z ( T[f])_{\mu \nu} \hspace{1mm} = \hspace{1mm} T[\widehat{Z}f]_{\mu \nu}+\text{good terms},$$
so that $\mathcal{L}_Z ( T[f])$ enjoys the same improved properties as $T[f]$ in the good null directions.

\subsection{The top order estimates}\label{subsectoporder}

After commuting the Vlasov equation by $\widehat{Z}^I$, with $|I|=N$ and where $N$ is the maximal number of commutations, a specific difficulty appears with the error terms of the form 
$$(t+r)|v|| \overline{\nabla} \mathcal{L}_Z^I(h^1) |_{\mathcal{L} \mathcal{L}} |\partial_{t,x} f|,$$ where all the null structure is contained in the $h^1$-factor. Since $|I|=N$, one cannot gain $t+r$ decay by expressing the good derivatives $\overline{\nabla}$ in terms of the commutation vector fields anymore. Since the estimate 
$$ \int_{\R^3_v} |\partial_{t,x} f| |v| \dr v \hspace{1mm} \lesssim \hspace{1mm} \frac{\epsilon}{(1+t+r)^{2-\frac{\delta}{2}} (1+|t-r|)^3},$$
holds, we have
$$ \int_0^t \! \int_{\Sigma_{\tau}} \! \int_{\R^3_v} (t+r)|v|| \overline{\nabla} \mathcal{L}_Z^I(h^1) |_{\mathcal{L} \mathcal{L}} |\partial_{t,x} f| \dr v \dr x \dr \tau \lesssim \left| \int_0^t \! \int_{\Sigma_{\tau}} \frac{| \overline{\nabla} \mathcal{L}_Z^I(h^1) |_{\mathcal{L} \mathcal{L}}^2}{(1+|\tau-r|)^4}  \dr x \dr \tau \right|^{\frac{1}{2}} \! \epsilon (1+t)^{\frac{1+\delta}{2}}.$$
Then, even the energy bound $\mathcal{E}^{2\gamma,1+\gamma}_{\mathcal{T} \mathcal{U}}[\mathcal{L}_Z^I h^1](t) \lesssim \epsilon (1+t)^{\kappa}$ would not allow us to close the energy estimates at top order. Indeed, we would obtain $\| \widehat{Z}^I f (t, \cdot) \|_{L^1_{x,v}} \lesssim \epsilon (1+t)^{\frac{1+\delta+\kappa}{2}}$, leading to $\overline{\mathcal{E}}^{\gamma,1+2\gamma}[\mathcal{L}_Z^I h^1](t) \lesssim \epsilon (1+t)^{1+\delta+\kappa}$. Even though $|T[\widehat{Z}^I f]|_{\mathcal{T} \mathcal{U}}$ has a good behavior, this would prevent us to prove a better estimate than $\mathcal{E}^{2\gamma,1+\gamma}_{\mathcal{T} \mathcal{U}}[\mathcal{L}_Z^I h^1](t) \lesssim  C \epsilon (1+t)^{\kappa+\delta}$. Since $\delta >0$, we would then fail to improve all the bootstrap assumptions. The idea to resolve this problem is then to notice that $\widetilde{\square}_g (\mathcal{L}_Z^I h^1)_{LL}$ strongly decays near the light cone, so that one can propagate the bound
$$  \int_{\Sigma_t} |\nabla \mathcal{L}_Z^I (h^1)|_{\mathcal{L} \mathcal{L}} \omega_{1+2\gamma}^{1} \dr x + \int_0^t \int_{\Sigma_t} \frac{|\overline{\nabla} \mathcal{L}_Z^I ( h^1)|_{\mathcal{L} \mathcal{L}}}{1+|\tau-r|} \omega_{1+2\gamma}^{1} \dr x \dr \tau \hspace{1mm} \lesssim \hspace{1mm} \epsilon \; (1+t)^{ \eta_0},$$
where $\eta_0 \ll 1$ can be chosen independently of all the other bootstrap assumptions.

\subsection{Organization of the paper}

In Section \ref{sec3}, we introduce the notations used in this article. Useful results for the analysis of the null structure of the equations concerning the commutation vector fields, the velocity current $v$ and the weights preserved by the free transport operator are presented. We also introduce the energy norms used to study the solutions. In Section \ref{sectionpreliminaryanalysis}, we study the consequences of the wave gauge condition and the source terms of the commuted Einstein equations. Section \ref{sect_vlasov_commutator} is devoted to the commutation formula of the Vlasov equation, as well as its analysis and in Section \ref{sectioncommutationvlasovenergy}, we compute the derivatives of the energy momentum tensor $T[f]$. The energy estimates used for the metric perturbation are proved in Section \ref{sec7} and the one for the particle density is derived in Section \ref{sec8}. We set-up the bootstrap assumptions in Section \ref{sec9}. In Section \ref{sec10}, we prove pointwise decay estimates for the null components of $h^1$ and its derivatives and we use them to bound all the source terms of the Einstein equations but for the contribution of $T[f]$ in Section \ref{section_bounds}. In section \ref{sec12} (respectively Section \ref{sect_l1}), we improve the bootstrap assumptions on $h^1$ (respectively $f$). Finally, in Section \ref{sec14}, we prove the required estimates on the $L^2$-norm of $T[f]$ in order to close the energy estimates.

\section{Preliminaries}\label{sec3}

In this section, we set-up the problem and introduce basic mathematical tools and notations. 

\subsection{Basic notations}

We will use two sets of coordinates on $\R^{1+3}$, the Cartesian $(t,x^1,x^2,x^3)$, in which the metric $\eta$ of Minkowski spacetime satisfies $\eta=diag(-1,1,1,1)$, and null coordinates $(\underline{u},u,\omega_1,\omega_2)$, where
$$\underline{u}=t+r, \hspace{10mm} u=t-r$$
and $(\omega_1,\omega_2)$ are spherical variables, which are spherical coordinates on the spheres $(t,r)=constant$. These coordinates are defined globally on $\R^{1+3}$ apart from the usual degeneration of spherical coordinates and at $r=0$. We will use the notation $\nabla$ for the covariant differentiation in Minkowski spacetime. We denote by $\slashed{\nabla}$ the intrinsic covariant differentiation on the spheres $(t,r)=constant$ and by $(e_1,e_2)$ an orthonormal basis of their tangent spaces. Capital Roman indices such as $A$ or $B$ will always correspond to spherical variables. The null derivatives are defined by
$$L=\partial_t+\partial_r \hspace{4mm} \text{and} \hspace{4mm} \underline{L}=\partial_t-\partial_r, \hspace{4mm} \text{so that} \hspace{4mm} L(\underline{u})=2, \hspace{3mm} L(u)=0, \hspace{3mm} \underline{L}( \underline{u})=0, \hspace{4mm} \underline{L}(u)=2.$$
With respect to the null frame $\{L, \underline L, e_1, e_2\}$, the Minkowski metric has the following components 
\begin{equation*} 
\begin{aligned}
\eta(L,L) &= \eta(\underline L, \underline L) = \eta(L,e_A) = \eta(\underline L, e_A) = 0, \\
\eta(L,\underline L) &= \eta(\underline L ,L) = -2, \qquad \eta(e_A,e_B) = \delta_{AB}.
\end{aligned}
\end{equation*}
We define further $\overline{\nabla}=(\nabla_L,\nabla_{e_1},\nabla_{e_2})$, the derivatives tangential to the light cone, as well as $\mathcal{U} = \{L, \underline L, e_1 ,e_2\}$, $\mathcal{T} = \{L, e_1, e_2\}$ and $\mathcal{L} = \{L\}$, which will be useful in order to study the behavior of certain tensor fields in null directions. For that purpose, we introduce for a symmetric $(0,2)$-tensor field of Cartesian components $k_{\alpha \beta}$,
\begin{align}
\nonumber | k |_{\mathcal{V} \mathcal{W}} \hspace{1mm} &:= \hspace{1mm} \sum_{V \in \mathcal{V}, W \in \mathcal{W}} \left| k(V,W) \right| \hspace{1mm} = \hspace{1mm}\sum_{V \in \mathcal{V}, W \in \mathcal{W}} \left| k_{\alpha \beta} V^{\alpha} W^{\beta} \right|,  \\ \nonumber
|\nabla k |_{\mathcal{V} \mathcal{W}} \hspace{1mm} &:= \hspace{1mm} \sum_{U\in \mathcal{U}, V \in \mathcal{V}, W \in \mathcal{W}} \left| \nabla_U (k)(V,W) \right| \hspace{1mm} = \hspace{1mm} \sum_{U\in \mathcal{U}, V \in \mathcal{V}, W \in \mathcal{W}} \left| \partial_{\mu} (k_{\alpha \beta})U^{\mu} V^{\alpha} W^{\beta} \right|,  \\
|\overline{\nabla} k |_{\mathcal{V} \mathcal{W}} \hspace{1mm} &:= \hspace{1mm} \sum_{T\in \mathcal{T}, V \in \mathcal{V}, W \in \mathcal{W}} \left| \nabla_T (k)(V,W) \right| \hspace{1mm} = \hspace{1mm} \sum_{T\in \mathcal{T}, V \in \mathcal{V}, W \in \mathcal{W}} \left| \partial_{\mu} (k_{\alpha \beta})T^{\mu} V^{\alpha} W^{\beta} \right|. \nonumber
\end{align}
If $\mathcal{V} = \mathcal{W} = \mathcal{U}$, we will drop the subscript $\mathcal{U} \mathcal{U}$. For instance, $|k| := |k|_{\mathcal{U} \mathcal{U}}$. 

As we study massless particles, the distribution functions considered in this paper will not be defined for $v=0$ so we introduce $\R^3_v:= \R^3 \setminus \{ 0 \}$. 

We will use the notation $D_1 \lesssim D_2$ for an inequality such as $ D_1 \leq C D_2$, where $C>0$ is a positive constant independent of the solutions but which could depend on $N \in \mathbb{N}$, the maximal order of commutation, and fixed parameters ($\delta$, $\gamma$,...). We will raise and lower indices using the Minkowski metric $\eta$. For instance, $x^{\mu} = x_{\nu} \eta^{\nu \mu}$ and, for a current $p$,
$$ p_L = -2p^{\underline L}, \qquad p_{\underline L} = -2p^L, \qquad p_A = p^A.$$ 
The only exception is made for the metric $g$, where in this case, $g^{\mu \nu}$ will denote the $(\mu , \nu)$ component of $g^{-1}$. 

Finally, we extend the Kronecker symbol to vector fields, i.e.~if $X$ and $Y$ are two vector fields, $\delta_X^Y=0$ if $X \neq Y$ and $\delta_X^Y=1$ otherwise.

\subsection{Vlasov fields in the cotangent bundle formulation} \label{se:vfcf} 
Our framework for the study of the Vlasov equation and the Vlasov field is adapted from the one developed in \cite{FJS3}  and is thus based on the co-tangent formulation of the Vlasov equation. The presentation below follows closely that of \cite{FJS3}, but takes into account the fact that we consider here massless particles only. 

Let $(\mathcal M,g)$ be a smooth time-oriented, oriented, $4$-dimensional Lorentzian manifold. We denote by $\mathcal{P}$ the following subset of the cotangent bundle $T^\star \mathcal M$
$$
\mathcal{P}: =\left\{(x,v) \in T^\star M\; :\, g^{-1}_x(v,v)=0\,\,\mathrm{and}\,\,v\,\,\mathrm{future\,\,oriented} \right\}.
$$
Note in particular that for $v$ to be a future oriented covector, necessarily $v\neq 0$. $\mathcal{P}$ is a smooth $7$-dimensional manifold, as the level set of a smooth function with non-vanishing gradient. 

In the massive case, $\mathcal{P}$ is often referred to as the $co-\emph{massshell}$. By an abuse of language, we will keep calling $\mathcal{P}$ the co-massshell, even in the present massless case. We will denote by $\pi$ the canonical projection $\pi: \mathcal{P} \rightarrow \mathcal M.$

Given a coordinate system on $\mathcal M$, $(U,x^\alpha)$ with $U \subset M$, we obtain a local coordinate system on $T^\star \mathcal M$, by considering the coordinates $v^\alpha$ conjugate to the $x^\alpha$ such that  for any $x \in U \subset \mathcal M$, any $v \in T_x^\star \mathcal M$
$$
v= v_\alpha d x^\alpha.
$$


We now assume that there exist local coordinates $(x^\alpha)$ such that $x^0=t$ is a smooth time function, i.e.~its gradient is past directed and timelike. In that case, the algebraic equation
$$v_\alpha v_\beta g^{\alpha \beta}=0  \mathrm{\,\,and\,\,} v_\alpha \mathrm{\,\,future\,\,directed}\vspace{0.15cm}$$
can be solved for $v_0$ by
$$v_0= - (g^{00})^{-1} \left(  g^{0j}v_j - \sqrt{(g^{0j}v_j)^2+(-g^{00})g^{ij}v_iv_j} \right) < 0.$$

It follows that $(x^\alpha, v_i)$, $1 \le i \le 3$ are smooth coordinates on $\mathcal{P}$ and for any $x \in \mathcal M$, $(v_i)$, $1 \le i \le 3$ are smooth coordinates on $\pi^{-1}(x)$. Note that the requirement that $v \neq 0$, implies that $v_i \in \mathbb{R}^3 \setminus \{ 0\}$. We thus define $\mathbb{R}^3_v:=  \mathbb{R}^3 \setminus \{ 0\}$. All integrations in $v$ can be performed using the $(v_i)$ coordinates in which case, the domain of integration will always be $\mathbb{R}^3_v$.

With respect to these coordinates, we introduce a volume form $d\mu_{\pi^{-1}(x)}$ on $\pi^{-1}(x)$ defined by
$$
d\mu_{\pi^{-1}(x)} = \frac{\sqrt{-\det g^{-1}}}{v_\beta g^{\beta 0}} dv_1\wedge dv_2 \wedge dv_3.
$$

For any sufficiently regular distribution function $f: \mathcal{P} \rightarrow \mathbb{R}$, we define its energy-momentum tensor as the tensor field
\begin{equation}
\label{eq-enmt}
	T_{\alpha \beta}[f](x)= \int_{\pi^{-1}(x)} v_\alpha v_\beta f  d\mu_{\pi^{-1}(x)}.
\end{equation}
For the above integral to be well-defined, one needs $f(x,\cdot)$ to be locally integrable in $v$, to decay sufficiently fast in $v$ as $|v| \rightarrow + \infty$, as well as $|v|f$ to be integrable near $0$, in view of the fact that the volume form $d\mu_{\pi^{-1}(x)}$ becomes singular near $v=0$. All distribution functions considered in this paper will always be such that these properties hold. Moreover, we will also require $f$ to possess additional decay in $x$ and $v$, so that we can perform the various integration by parts needed. In any case, one can assume for simplicity for the computations to hold that all distribution functions are smooth, compactly supported, with a support away from $v=0$, and then use the standard approximation arguments to obtain the results in the non-compactly supported case. 



The Vlasov field $f$ is required to solve the \emph{Vlasov equation}, which can be written in the $(x^\alpha, v_i)$ coordinate system as
\begin{equation} \label{eq:vl}
\T_g(f):= g^{\alpha \beta} v_\alpha \partial_{x^\beta} f - \frac{1}{2}v_\alpha v_\beta \partial_{x^i} g^{\alpha \beta} \partial_{v_i} f=0.
\end{equation}

It follows from the Vlasov equation that the energy-momentum tensor is divergence free and more generally, for any sufficiently regular distribution function $k: \mathcal{P}\rightarrow \mathbb{R}$,
$$
g^{\alpha \gamma} D_{\gamma} \, T_{\alpha \beta}[k]= \int_v \T_g(k) v_\beta d\mu_{\pi^{-1}(x)},
$$
where $D$ is the covariant differentiation in $(\R^{1+3},g)$.


\subsection{The system of equations}

We decompose the metric as
$$g_{\mu\nu} \hspace{1mm} = \hspace{1mm} \eta_{\mu\nu} + h_{\mu\nu}  \hspace{1mm} = \hspace{1mm} \eta_{\mu\nu} + h^0_{\mu\nu} + h^1_{\mu\nu},$$
where 
$$h^0_{\alpha\beta} \hspace{1mm} = \hspace{1mm} \chi\!\left(\frac{r}{1+t}\right) \frac{M}{r} \delta_{\alpha\beta}$$
is the {\em Schwarzschild part}, and $\chi :\R \rightarrow \R$ is a smooth cutoff function such that $\chi(s) = 0$ if $s\leq \frac{1}{4}$ and $\chi(s)=1$ if $s\geq \frac{1}{2}$. For the inverse metric we will use the decomposition
$$g^{\mu\nu} = \eta^{\mu\nu} + H^{\mu\nu}, \quad H^{\mu\nu} = \chi\!\left(\frac{r}{1+t}\right) \frac{M}{r} \delta^{\mu\nu} + H_1^{\mu\nu} = (h^0)^{\mu \nu} + H_1^{\mu \nu}.$$
The relation between $h^1$ and $H_1$ is made precise in Section \ref{sect_diff_hh}. Define the reduced wave operator
$$\widetilde\Box_g \hspace{1mm} = \hspace{1mm} g^{\alpha\beta} \partial_\alpha \partial_\beta.$$
In wave coordinates $(x^0,x^1,x^2,x^3)$, we have $\Box_g x^\nu=0$ by definition, so that (see \cite[Section 3]{LR05})
\begin{equation}\label{wavegaugecondition}
\forall \; \nu \in \llbracket 0, 3 \rrbracket, \qquad  \partial_\mu\left(g^{\mu\nu} \sqrt{|\det g|}\right) = 0.
\end{equation}
The massless Einstein-Vlasov system then reads
\begin{subequations}
\begin{align}
\widetilde \Box_g h^1_{\mu\nu} \hspace{1mm} &= \hspace{1mm} F_ {\mu\nu}(h)(\nabla h, \nabla h) - \widetilde \Box_g h_{\mu\nu}^0 - 2T[f]_{\mu\nu}, \label{EV1} \\
\mathbf{T}_g (f) \hspace{1mm} &= \hspace{1mm} 0, \label{EV2}
\end{align}
\end{subequations}
where
\begin{align*}
\mathbf{T}_g \hspace{1mm} &= \hspace{1mm} g^{\alpha\beta} v_\alpha \partial_\beta - \frac{1}{2} \partial_{x^i} g^{\alpha\beta} v_\alpha v_\beta \partial_{v_i} , \\
T[f]_{\mu\nu} \hspace{1mm} &= \hspace{1mm} \int_{\mathbb R_v^3} f v_\mu v_\nu \frac{\sqrt{|\det g^{-1}|}}{g^{0\alpha} v_\alpha} \mathrm dv_1 \mathrm dv_2 \mathrm dv_3.
\end{align*}

Moreover, according to \cite[Lemma 3.2]{LR05} the semi-linear terms can be divided in three parts
$$F_{\mu\nu}(h)(\nabla h, \nabla h) = P(\nabla_\mu h, \nabla_\nu h) + Q_{\mu\nu}(\nabla h,\nabla h) + G_{\mu\nu}(h)(\nabla h, \nabla h),$$
where $P(\nabla_\mu h, \nabla_\nu h)$, $Q_{\mu\nu}(\nabla h,\nabla h)$ and $G_{\mu\nu}(h)(\nabla h, \nabla h)$ are $(0,2)$-tensor fields, the indices $(\mu, \nu)$ refers to their components in the wave coordinates system $(t,x)$, and $P, Q, G$ are defined as follows. 
\begin{itemize}
\item $P$ contains the source terms which do not satisfy the null condition and is given by
\begin{equation} \label{p_exp}
P(\nabla_\mu h, \nabla_\nu k) := \frac{1}{4} \eta^{\alpha\alpha'} \partial_\mu h_{\alpha \alpha'} \eta^{\beta \beta'} \partial_\nu k_{\beta \beta'} - \frac{1}{2} \eta^{\alpha \alpha'} \eta^{\beta \beta'} \partial_\mu h_{\alpha\beta} \partial_\nu k_{\alpha' \beta'}.
\end{equation}
\item $Q$ is a combination of the standard null forms and is given by
\begin{align}
Q_{\mu\nu}(\nabla h, \nabla k) &:=  \eta^{\alpha' \alpha} \eta^{\beta\beta'} \partial_\alpha h_{\beta\mu} \partial_{\alpha'} k_{\beta' \nu} - \eta^{\alpha' \alpha} \eta^{\beta\beta'} \left(\partial_\alpha h_{\beta\mu} \partial_{\beta'} k_{\alpha'\nu} - \partial_{\beta'} h_{\beta\mu} \partial_\alpha k_{\alpha' \nu}\right) \label{structure_q} \\
&\quad + \eta^{\alpha' \alpha} \eta^{\beta\beta'} \left(\partial_\mu h_{\alpha' \beta'} \partial_\alpha k_{\beta\nu} -  \partial_\alpha h_{\alpha' \beta'} \partial_\mu k_{\beta\nu}\right) \nonumber \\
&\quad + \eta^{\alpha' \alpha} \eta^{\beta\beta'} \left(\partial_\nu h_{\alpha' \beta'} \partial_\alpha k_{\beta \mu} - \partial_\alpha h_{\alpha' \beta'} \partial_\nu k_{\beta\mu}\right) \nonumber \\
&\quad + \frac{1}{2} \eta^{\alpha' \alpha} \eta^{\beta\beta'} \left(\partial_{\beta'} h_{\alpha\alpha'} \partial_\mu k_{\beta\nu} - \partial_\mu h_{\alpha\alpha'} \partial_{\beta'} k_{\beta\nu} \right) \nonumber \\
&\quad + \frac{1}{2} \eta^{\alpha' \alpha} \eta^{\beta\beta'} \left(\partial_{\beta'} h_{\alpha\alpha'} \partial_\nu k_{\beta\mu} - \partial_\nu h_{\alpha\alpha'} \partial_{\beta'} k_{\beta\mu}\right). \nonumber
\end{align}
\item 
Finally, $G_{\mu\nu}(h)(\nabla h, \nabla h)$ contains cubic and quartic terms and can be written as a linear combination of
\begin{equation}\label{structure_g}
 H^{\alpha \beta} \partial_{\xi} h_{\mu \nu} \partial_{\sigma} h_{\lambda \kappa}, \qquad H^{\alpha_0 \beta_0} H^{\alpha \beta} \partial_{\xi} h_{\mu \nu} \partial_{\sigma} h_{\lambda \kappa}, 
\end{equation}
where all the indices are taken in $\llbracket 0, 3 \rrbracket$. 
\end{itemize}
The null structure of the quadratic terms is of fundamental importance and is described in the following result.
\begin{lemma} \label{lem_bounds_pqg}
Let $k$ and $q$ be $(0,2)$-tensor fields. Then
\begin{align*}
\left| P  \left( \nabla k, \nabla q \right) \right| \hspace{1mm} &\lesssim \hspace{1mm} | \nabla k |_{\mathcal{T} \mathcal{U}} | \nabla q |_{\mathcal{T} \mathcal{U}} +|\nabla k|_{\mathcal L \mathcal L} |\nabla q|+|\nabla k| |\nabla q |_{\mathcal L \mathcal L}, \\
|P(\nabla k, \nabla q)|_{\mathcal{T} \mathcal{U}}+ \left| Q  \left( \nabla k, \nabla q \right) \right| \hspace{1mm} &\lesssim \hspace{1mm} \left|\overline \nabla k\right| |\nabla q| + |\nabla k| \left|\overline\nabla q \right|, \\
|P(\nabla k, \nabla q)|_{\mathcal{L} \mathcal{L}}+|Q(\nabla k, \nabla q)|_{\mathcal L \mathcal L} \hspace{1mm} &\lesssim \hspace{1mm} |\nabla k| |\overline{\nabla} q |_{\mathcal{T} \mathcal{U}}+ |\overline{\nabla} k |_{\mathcal{T} \mathcal{U}} |\nabla q|.
\end{align*} 
\end{lemma}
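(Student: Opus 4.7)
The plan is to proceed by a case-by-case analysis in the null frame $\{L,\underline L,e_1,e_2\}$, in which the only non-vanishing components of $\eta^{-1}$ are $\eta^{L\underline L}=\eta^{\underline L L}=-\tfrac{1}{2}$ and $\eta^{e_Ae_A}=1$. Two elementary observations will be used throughout: (i) for any $(0,2)$-tensor $m$, the trace $\eta^{\alpha\alpha'}m_{\alpha\alpha'}=-m_{L\underline L}+m_{AA}$ only involves components with both indices in $\mathcal T\mathcal U$; (ii) if $\mu\in\mathcal T$, then $\partial_\mu$ is tangential to the light cone and contributes to $|\overline{\nabla}\cdot|$.

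For the first inequality, I expand the two pieces of $P$ in \eqref{p_exp} in the null frame. The trace-product term is immediately controlled by $|\nabla k|_{\mathcal T\mathcal U}|\nabla q|_{\mathcal T\mathcal U}$ by observation (i). For the bilinear term $\eta^{\alpha\alpha'}\eta^{\beta\beta'}\partial_\mu k_{\alpha\beta}\partial_\nu q_{\alpha'\beta'}$, I enumerate the nine pairings arising from the three admissible $(\alpha,\alpha')$ possibilities $(L,\underline L),(\underline L,L),(e_A,e_A)$ and the analogous three for $(\beta,\beta')$. Exactly two of these, namely $(L,\underline L;L,\underline L)$ and $(\underline L,L;\underline L,L)$, produce the uncontrollable pairs $k_{LL}q_{\underline L\underline L}$ and $k_{\underline L\underline L}q_{LL}$, which give rise to the $|\nabla k|_{\mathcal L\mathcal L}|\nabla q|$ and $|\nabla k||\nabla q|_{\mathcal L\mathcal L}$ contributions. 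In every other configuration each tensor factor carries at least one index in $\mathcal T$, so both lie in $\mathcal T\mathcal U$ and are absorbed into $|\nabla k|_{\mathcal T\mathcal U}|\nabla q|_{\mathcal T\mathcal U}$.

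For the second inequality, the bound on $|P(\nabla k,\nabla q)|_{\mathcal T\mathcal U}$ is immediate from (ii): when $T\in\mathcal T$, the outer derivative $\partial_T$ in $P_{TU}$ is tangential, so $|P_{TU}|\lesssim|\overline{\nabla}k||\nabla q|+|\nabla k||\overline{\nabla}q|$. For $|Q(\nabla k,\nabla q)|$, each of the six lines of \eqref{structure_q} has the shape of a standard Klainerman null form $\partial_\alpha\phi\,\partial_\beta\psi-\partial_\beta\phi\,\partial_\alpha\psi$ contracted via $\eta^{-1}$ on its inner indices; expanding such a null form in the null frame always produces at least one tangential derivative $\partial_L$ or $\partial_{e_A}$ on one of the two factors, which yields the stated bound after the outer contractions.

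For the third inequality, setting $\mu=\nu=L$ forces both outer derivatives in $P_{LL}$ and in every line of $Q_{LL}$ to be tangential. For $P_{LL}$, the case analysis of the second paragraph specializes: in the two otherwise bad configurations the factor $k_{LL}$ (resp.\ $q_{LL}$) lies in $\mathcal L\mathcal L\subset\mathcal T\mathcal U$, so in every surviving term at least one tensor factor is in $\mathcal T\mathcal U$, producing the refined estimate $|\overline{\nabla}k|_{\mathcal T\mathcal U}|\overline{\nabla}q|+|\overline{\nabla}k||\overline{\nabla}q|_{\mathcal T\mathcal U}$, which implies the claim. For $Q_{LL}$, inspecting \eqref{structure_q} shows that every term contains a tensor factor of the form $k_{\bullet L}$ or $q_{\bullet L}$; the index $L\in\mathcal L\subset\mathcal T$ automatically places this factor in $\mathcal T\mathcal U$, and combined with the null-form structure this produces the desired bound. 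The main technical obstacle is purely one of bookkeeping: verifying the structural argument for each of the six lines of \eqref{structure_q} and each null-frame configuration of \eqref{p_exp} is mechanical but tedious, while no new conceptual input is needed beyond the observations (i) and (ii).
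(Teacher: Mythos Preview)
Your approach matches the paper's: a direct null-frame expansion of $P$ equivalent to the paper's single pointwise bound
\[
|P(\nabla_V k,\nabla_W q)|\ \lesssim\ |\nabla_V k|_{\mathcal T\mathcal U}\,|\nabla_W q|_{\mathcal T\mathcal U}+|\nabla_V(k)_{LL}|\,|\nabla_W q|+|\nabla_V k|\,|\nabla_W(q)_{LL}|,
\]
which then specializes to all three $P$ estimates, together with the reduction of $Q_{\mu\nu}$ to standard null forms $\mathcal N_0,\mathcal N_{\alpha\beta}$ applied to components carrying at least one index equal to $\mu$ or $\nu$.

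There is one small gap in your $Q_{LL}$ step. Combining ``one factor carries an $L$ tensor index'' with the black-box null-form estimate gives terms of the shape $|\overline{\nabla} k|\,|\nabla q|_{\mathcal T\mathcal U}$ (tangential derivative on $k$, $\mathcal T\mathcal U$ on $q$), and this does \emph{not} imply the stated bound $|\overline{\nabla} k|_{\mathcal T\mathcal U}|\nabla q|+|\nabla k||\overline{\nabla} q|_{\mathcal T\mathcal U}$. What actually closes the argument is a finer structural point you do not use: in each line of \eqref{structure_q} the inner $\eta^{-1}$ contractions pair every $\underline L$ with an $L$, so whenever a bad derivative $\nabla_{\underline L}$ or a bad tensor index $\underline L$ appears on the factor \emph{without} the external $L$, the paired index forces a compensating $L$ (tangential derivative on the other factor, or an $L$ tensor index on this one). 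For instance, in the second line of \eqref{structure_q} with $\mu=\nu=L$, the only way $k_{\alpha'\beta'}$ can equal $k_{\underline L\underline L}$ is if $\alpha'=\beta'=\underline L$, forcing $\alpha=\beta=L$ and leaving only $(\nabla_L k)_{\underline L\underline L}(\nabla_L q)_{LL}\le|\nabla k|\,|\overline{\nabla} q|_{\mathcal T\mathcal U}$. The paper's proof is equally implicit on this point, but your stated mechanism as written is not quite enough; the bookkeeping must track the $\eta^{-1}$ pairings, not just the external $L$.
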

\begin{proof}
According to \eqref{p_exp} and since $\eta^{\underline{L} \underline{L}}=\eta^{\underline{L}A}=0$, we have for any $(V,W) \in \mathcal{U}^2$,
$$ |P(\nabla_V k, \nabla_W q)| \hspace{1mm} \lesssim \hspace{1mm} | \nabla_V k|_{\mathcal{T} \mathcal{U}} |\nabla_W q|_{\mathcal{T} \mathcal{U}}+ | \nabla_V (k)_{LL}| |\nabla_W q|+| \nabla_V k| |\nabla_W (q)_{LL}|.$$
This implies all the inequalities which concern $P(\nabla k, \nabla q )$. Note now that, for any Cartesian component $(\mu , \nu )$, $Q_{\mu \nu}\left( \nabla k, \nabla q \right)$ can be written as linear combination of
$$ \mathcal{N}_0( h_{\lambda_1 \lambda_2}, h_{\lambda_3 \lambda_4}), \qquad \mathcal{N}_{\alpha \beta} ( h_{\lambda_1 \lambda_2}, h_{\lambda_3 \lambda_4}), \qquad 0 \leq \alpha < \beta \leq 3, \quad (\lambda_1,\lambda_2,\lambda_3,\lambda_4) \in \llbracket 0, 3 \rrbracket^4,$$
where at least one of the $\lambda_i$ is equal to $\mu$ or to $\nu$ and 
$$\mathcal{N}_0(\phi,\psi) \hspace{1mm} = \hspace{1mm} - \partial_{t} \phi \partial_{t} \psi+\partial_1 \phi \partial_1 \psi+\partial_2 \phi \partial_2 \psi+\partial_3 \phi \partial_3 \psi, \qquad \mathcal{N}_{\alpha \beta}(\phi,\psi)  \hspace{1mm} = \hspace{1mm} \partial_{\alpha} \phi \partial_{\beta} \psi - \partial_{\beta} \phi \partial_{\alpha} \psi$$
are the standard null forms. They satisfy (see \cite[Chapter 2]{Sogge} for a proof), for any $\alpha < \beta$,
$$ |\mathcal{N}_0(\phi,\psi) |+|\mathcal{N}_{\alpha \beta}(\phi,\psi) | \hspace{1mm} \lesssim \hspace{1mm} |\nabla \phi||\overline{\nabla} \psi|+|\overline{\nabla} \phi | |\nabla \psi|.$$
\end{proof}

\subsection{Commutation vector fields for wave equations}

Let $\mathbb{P}$ be the generators of the Poincar\'e algebra, i.e. the set containing
\begin{alignat*}{5}
& \bullet \; \text{the translations\footnotemark} \qquad  && \partial_{\mu}, \qquad &&0 \leq \mu \leq 3, \\
& \bullet \; \text{the rotations} \qquad && \Omega_{ij}=x^i\partial_{j}-x^j \partial_i, \qquad &&1 \leq i < j \leq 3, \\
& \bullet \; \text{the hyperbolic rotations} \qquad && \Omega_{0k}=t\partial_{k}+x^k \partial_t, \qquad && 1 \leq k \leq 3,
\end{alignat*}
\footnotetext{In this article, we will denote $\partial_{x^i}$, for $1 \leq i \leq 3$, by $\partial_{i}$ and sometimes $\partial_t$ by $\partial_0$.}which are Killing vector fields of Minkowski spacetime. We also consider $\mathbb{K}:= \mathbb{P} \cup \{ S \}$, where $S=x^{\mu} \partial_{\mu}$ is the scaling vector field which is merely a conformal Killing vector field. The elements of $\mathbb{P}$ are well known to commute with the flat wave operator $\square_{\eta}=-\partial_t^2+\partial_1^2+\partial^2_2+\partial_3^2$ and we also have $[\square_\eta, S]=2\square_\eta$. 

We consider an ordering on $\mathbb{K}=\{Z^1,\dots,Z^{11} \}$ such that $Z^{11}=S$ and we define, for any multi-index $J \in \llbracket 1, 11 \rrbracket^n$ of length $n \in \mathbb{N}^*$, $Z^J = Z^{J_1} \dots Z^{J_n}$. By convention, if $|J|=0$, $Z^J \p =\p$. Similarly, $\nabla^J_Z$ will denote $\nabla_{Z^{J_1}} \dots \nabla_{Z^{J_n}}$.

When commuting the system \eqref{EV1}-\eqref{EV2}, we will use the Lie derivative to differentiate the metric $g$ in order to preserve the structure of the equations. In coordinates, the Lie derivative $\mathcal{L}_X(k)$ of a tensor field $k^{\alpha_1 \cdots \alpha_n}_{\beta_1 \cdots \beta_m}$ with respect to a vector field $X$ is given by
\begin{multline}\label{defLie} 
\mathcal{L}_X k^{\alpha_1 \cdots \alpha_n}_{\beta_1 \cdots \beta_m} = X\left(k^{\alpha_1 \cdots \alpha_n}_{\beta_1 \cdots \beta_m}\right) - k^{\mu \alpha_2 \cdots \alpha_n}_{\beta_1 \cdots \beta_m} \partial_\mu X^{\alpha_1} - \dots - k^{\alpha_1 \cdots \alpha_{n-1} \mu}_{\beta_1 \cdots \beta_m} \partial_\mu X^{\alpha_n} \\ + k^{\alpha_1 \cdots \alpha_n}_{\mu \beta_2 \cdots \beta_m} \partial_{\beta_1} X^\mu + \dots + k^{\alpha_1 \cdots \alpha_n}_{\beta_1 \cdots \beta_{m-1} \mu} \partial_{\beta_m} X^{\mu}.
\end{multline}
For $Z^J \in \mathbb{K}^{|J|}$, we define $\mathcal{L}_Z^J(k)=\mathcal{L}_{Z^{J_1}} \dots \mathcal{L}_{Z^{J_n}}(k)$. Note that that for $n \in \mathbb{N}$, we have 
\begin{equation}\label{equinormLie}
\sum_{|J| \leq n} \left| \nabla^J_Z( k) \right| \hspace{1mm} \lesssim  \hspace{1mm} \sum_{|J| \leq n}  \left| \mathcal{L}_Z^J(k) \right| \hspace{1mm} \lesssim \hspace{1mm} \sum_{|J| \leq n} \left| \nabla^J_Z( k) \right|.
 \end{equation}
The following standard lemma can be obtained using
\begin{equation}\label{eq:extradecay}
(t-r) \underline{L} = S-\frac{x^i}{r} \Omega_{0i}, \hspace{1cm} (t+r)L = S+\frac{x^i}{r}\Omega_{0i}, \hspace{1cm} e_A = \frac{1}{r} C^{ij}_A \cdot \Omega_{ij},
\end{equation}
where $C_A^{ij}$ are bounded smooth functions of $(\omega_1,\omega_2)$, and
$$(t-r)\partial_t = \frac{t}{t+r}S-\frac{x^i}{t+r} \Omega_{0i}, \hspace{1cm} \partial_i = -\frac{x^i}{t+r}S+\frac{t}{t+r} \Omega_{0i}-\frac{x^j}{t+r} \Omega_{ij}.$$
\begin{lemma} \label{extradecay}
For any sufficiently regular function $\phi : [0,T[ \times \R^3 \rightarrow \R$, there holds
$$\forall \hspace{0.5mm} (t,x) \in [0,T[ \times \R^3, \qquad (1+ |t-r|) |\nabla \phi | + (1+t+r) |\overline\nabla \phi| \hspace{1mm} \lesssim \hspace{1mm} \sum_{Z\in\mathbb{K}} \left|Z \phi\right|.$$
\end{lemma}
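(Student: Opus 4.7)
The strategy is to apply directly the algebraic identities displayed just above the lemma, which express the coordinate derivatives $\partial_\mu$ and the null frame derivatives $L, e_A$ as combinations of elements of $\mathbb{K}$ with bounded coefficients and appropriate weights in $t \pm r$. First, since $\partial_0,\partial_1,\partial_2,\partial_3 \in \mathbb{K}$, the trivial bounds
$$|\nabla\phi| \le \sum_{\mu=0}^{3}|\partial_\mu\phi| \le \sum_{Z\in\mathbb{K}}|Z\phi|, \qquad |\overline{\nabla}\phi| \le |\nabla\phi| \lesssim \sum_{Z\in\mathbb{K}}|Z\phi|$$
hold everywhere (the second since $L$ and the $e_A$'s have unit length with respect to $\eta$). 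This handles the constant contributions in the factors $(1+|t-r|)$ and $(1+t+r)$, and reduces the proof to establishing the weighted bounds $|t-r|\,|\nabla\phi|\lesssim\sum|Z\phi|$ and $(t+r)|\overline{\nabla}\phi|\lesssim\sum|Z\phi|$.

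For the first weighted bound, starting from $S = t\partial_t + x^k\partial_k$ and $\Omega_{0i} = t\partial_i + x^i\partial_t$, simple algebraic manipulations yield
$$(t^2-r^2)\partial_t = tS - x^i\Omega_{0i}, \qquad (t^2-r^2)\partial_i = -x^i S + t\Omega_{0i} + x^j\Omega_{ij},$$
from which, dividing by $t+r$, one obtains
$$(t-r)\partial_t = \frac{t}{t+r}S - \frac{x^i}{t+r}\Omega_{0i}, \qquad (t-r)\partial_i = -\frac{x^i}{t+r}S + \frac{t}{t+r}\Omega_{0i} + \frac{x^j}{t+r}\Omega_{ij}.$$
Since the coefficients $\tfrac{t}{t+r}, \tfrac{x^i}{t+r}, \tfrac{x^j}{t+r}$ are all bounded by $1$, the triangle inequality immediately produces $|t-r|\,|\nabla\phi|\lesssim\sum_{Z\in\mathbb{K}}|Z\phi|$ pointwise on $\mathbb{R}_+\times\mathbb{R}^3$.

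For the null derivative bound, I would invoke the identities
$$(t+r)L = S + \frac{x^i}{r}\Omega_{0i}, \qquad e_A = \frac{1}{r}C^{ij}_A\Omega_{ij},$$
valid where $r>0$, with $\tfrac{x^i}{r}$ and $C^{ij}_A$ bounded. The first gives $(t+r)|L\phi|\lesssim\sum|Z\phi|$ at once, while the second gives only $r|e_A\phi|\lesssim\sum|Z\phi|$. To promote the angular bound to $(t+r)|e_A\phi|\lesssim\sum|Z\phi|$, I split into two regions. If $r\ge t/2$ then $t+r\le 3r$, hence $(t+r)|e_A\phi|\le 3 r|e_A\phi|\lesssim\sum|Z\phi|$. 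If $r<t/2$ then $t-r>t/2$, so $t+r<2t<4(t-r)\le 4|t-r|$, and combining $|e_A\phi|\le|\nabla\phi|$ with the preceding step gives $(t+r)|e_A\phi|\le 4|t-r|\,|\nabla\phi|\lesssim\sum_{Z\in\mathbb{K}}|Z\phi|$. The only point that requires any real thought is this last region-split for the angular derivatives, which is forced upon us by the singularity of $e_A = \tfrac{1}{r}C^{ij}_A\Omega_{ij}$ at $r=0$; all other coefficients entering the decomposition are globally bounded, so no further analytical subtleties arise.
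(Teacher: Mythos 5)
Your proof is correct and follows the same route the paper indicates: express $\partial_\mu$, $L$, and $e_A$ in terms of elements of $\mathbb K$ with bounded, appropriately weighted coefficients, and handle the degeneracy of $e_A=\tfrac1r C^{ij}_A\Omega_{ij}$ near $r=0$ with the region split $r\gtrless t/2$. One small remark: $L=\partial_t+\partial_r$ is null, not of unit length with respect to $\eta$, so your stated justification for $|\overline\nabla\phi|\lesssim|\nabla\phi|$ is inaccurate; the bound is nevertheless immediate because $L$ and the $e_A$ are combinations of $\partial_t,\partial_1,\partial_2,\partial_3$ with coefficients bounded by $1$.
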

The purpose of the following result is to generalize Lemma \ref{extradecay} to tensor fields.

\begin{lemma}\label{extradecayLie}
Let $k_{\mu \nu}$ be a sufficiently regular symmetric tensor field defined on $[0,T[ \times \R^3$. Then, the following estimates hold, where $Z^J \in \mathbb{K}^{|J|}$. For all $(t,x) \in [0,T[ \times \R^3$,
\begin{align}
\left| \nabla k \right| & \lesssim \sum_{|J| \leq 1} \frac{\left| \mathcal{L}^J_Z k \right|}{1+ |t-r|}, \hspace{4cm} \left| \overline{\nabla} k \right|  \lesssim \sum_{|J| \leq 1} \frac{\left| \mathcal{L}^J_Z k \right|}{1+t+r} \label{eq:extradecayLie} .
\end{align}
For all $(t,x) \in [0,T[ \times \R^3$ such that $r \geq \frac{t+1}{2}$,
\begin{align}
\left| \nabla k \right|_{\mathcal{T} \mathcal{U}} & \lesssim \frac{\left|  k \right|}{1+t+r}+ \sum_{|J| \leq 1} \frac{\left| \mathcal{L}^J_Z k \right|_{\mathcal{T} \mathcal{U}}}{1+ |t-r|}, \label{eq:extradecayLie2}\\ 
\left| \nabla k \right|_{\mathcal{L} \mathcal{T}}& \lesssim \frac{\left|  k \right|_{\mathcal{T} \mathcal{U}}}{1+t+r}+ \sum_{|J| \leq 1} \frac{\left| \mathcal{L}^J_Z k \right|_{\mathcal{L} \mathcal{T}}}{1+ |t-r|}, \hspace{2cm} \left| \overline{\nabla} k \right|_{\mathcal{L} \mathcal{T}}  \lesssim \sum_{|J| \leq 1} \frac{\left| \mathcal{L}^J_Z k \right|_{\mathcal{T} \mathcal{U}}}{1+t+r} \label{eq:extradecayLie3}\\ 
\left| \nabla k \right|_{\mathcal{L} \mathcal{L}} & \lesssim \frac{\left|  k \right|_{\mathcal{L} \mathcal{T}}}{1+t+r}+ \sum_{|J| \leq 1} \frac{\left| \mathcal{L}^J_Z k \right|_{\mathcal{L} \mathcal{L}}}{1+ |t-r|}, \hspace{2cm} \left| \overline{\nabla} k \right|_{\mathcal{L} \mathcal{L}}  \lesssim \sum_{|J| \leq 1} \frac{\left| \mathcal{L}^J_Z k \right|_{\mathcal{L} \mathcal{T}}}{1+t+r}. \label{eq:extradecayLie4}
\end{align}
This implies in particular the following weaker but more convenient estimates, which hold for any $(\mathcal{V}, \mathcal{W}) \in \{ (\mathcal{U}, \mathcal{U}), (\mathcal{T}, \mathcal{U} ), (\mathcal{L}, \mathcal{T}), (\mathcal{L} , \mathcal{L} ) \}$ and for all $(t,x) \in [0,T[ \times \R^3$,
\begin{equation}\label{eq:extradecayLie5}
 \left| \nabla k \right|_{\mathcal{V} \mathcal{W}}  \lesssim \sum_{|J| \leq 1} \frac{\left| \mathcal{L}^J_Z k \right|}{1+t+r}+ \frac{\left| \mathcal{L}^J_Z k \right|_{\mathcal{V} \mathcal{W}}}{1+ |t-r|}, \hspace{2cm} \left| \overline{\nabla} k \right|_{\mathcal{V} \mathcal{W}}  \lesssim \sum_{|J| \leq 1} \frac{\left| \mathcal{L}^J_Z k \right|}{1+t+r}
\end{equation}
\end{lemma}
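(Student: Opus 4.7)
The plan is to reduce the tensor estimates to the scalar Lemma \ref{extradecay}, using the algebraic relation between the covariant and the Lie derivative of a $(0,2)$-tensor. For any vector fields $Z,V,W$, torsion-freeness of $\nabla$ yields $[Z,V] = \nabla_Z V - \nabla_V Z$, hence
\[
(\nabla_Z k)(V,W) \hspace{1mm} = \hspace{1mm} (\mathcal{L}_Z k)(V,W) - k(\nabla_V Z, W) - k(V, \nabla_W Z).
\]
For $Z \in \mathbb{K}$, the Cartesian components $\partial_\mu Z^\nu$ form a constant matrix (respectively $\delta_\mu^\nu$ for $S$), so the correction terms are pointwise bounded by $|k|$ and, crucially, their null-frame structure is dictated by which subspace of $\mathcal{U}$ the vectors $\nabla_V Z, \nabla_W Z$ lie in.

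For the estimate \eqref{eq:extradecayLie}, I would work component-wise in Cartesian coordinates. Lemma \ref{extradecay} applied to each scalar $k_{\alpha\beta}$ gives $(1+|t-r|)|\partial k_{\alpha\beta}| \lesssim \sum_Z |Z k_{\alpha\beta}|$, and \eqref{defLie} directly yields $|Z k_{\alpha\beta} - \mathcal{L}_Z k_{\alpha\beta}| \lesssim |k|$. Summing over Cartesian indices reproduces the $|J|=0$ contribution $|k|$ already included in the right-hand sum $\sum_{|J|\leq 1}|\mathcal{L}_Z^J k|$. The bound for $\overline{\nabla} k$ with denominator $1+t+r$ follows in identical fashion from the second half of Lemma \ref{extradecay}.

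For the refined null-frame bounds \eqref{eq:extradecayLie2}--\eqref{eq:extradecayLie4}, which are asserted only in the exterior region $r \geq (t+1)/2$ where $r \sim 1+t+r$, the key step is to invoke the frame identities \eqref{eq:extradecay} to write, for instance,
\[
(t-r)(\nabla_{\underline L} k)(V,W) \hspace{1mm} = \hspace{1mm} (\nabla_S k)(V,W) - \tfrac{x^i}{r}(\nabla_{\Omega_{0i}} k)(V,W),
\]
together with the analogous identities for $(t+r)\nabla_L$ and $r\nabla_{e_A}$. Each $(\nabla_Z k)(V,W)$ appearing on the right is then rewritten via the identity of the first paragraph as $(\mathcal{L}_Z k)(V,W)$ plus the corrections $-k(\nabla_V Z, W) - k(V, \nabla_W Z)$. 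The crucial computation is to determine in which subspace of the null frame $\nabla_V Z$ sits: a direct calculation gives $\nabla_V S = V$; $\nabla_L \Omega_{ij}$ is tangent to the sphere of radius $r$ and $\nabla_L \Omega_{0k}$ lies in $\mathrm{span}\,\mathcal{T}$; while $\nabla_{e_A}\Omega_{0k}$ is proportional to $\partial_t \in \mathrm{span}\,\mathcal{U}$. Keeping track of these inclusions, one verifies that when the left-hand side is restricted to $V \in \mathcal{T}$ or $W \in \mathcal{L}$, the correction terms $k(\nabla_V Z, W)$ only involve $k$ evaluated in correspondingly better null slots, producing exactly the $|k|_{\mathcal{T}\mathcal{U}}/(1+t+r)$ or $|k|_{\mathcal{L}\mathcal{T}}/(1+t+r)$ contributions on the right of \eqref{eq:extradecayLie2}--\eqref{eq:extradecayLie4}. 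The main obstacle is combinatorial rather than conceptual: one must check case-by-case that $\nabla_V Z$ respects the null filtration $\mathcal{L} \subset \mathcal{T} \subset \mathcal{U}$ in the appropriate sense. Finally \eqref{eq:extradecayLie5} is obtained by combining \eqref{eq:extradecayLie2}--\eqref{eq:extradecayLie4} with \eqref{eq:extradecayLie}, noting that in the complementary interior region $r \leq (t+1)/2$ one has $1+t+r \sim 1+|t-r|$, so that the unrefined bound \eqref{eq:extradecayLie} already suffices there.
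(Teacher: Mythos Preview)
Your approach is correct and follows the same underlying strategy as the paper: reduce to the scalar Lemma \ref{extradecay} via the frame identities \eqref{eq:extradecay}, convert between $\nabla_Z k$ and $\mathcal{L}_Z k$, and track how the correction terms behave under the null filtration $\mathcal{L}\subset\mathcal{T}\subset\mathcal{U}$. The difference is organizational. The paper proceeds in two passes: first it writes $(\nabla_U k)_{VW}=\nabla_U(k_{VW})-k(\nabla_U V,W)-k(V,\nabla_U W)$ and invokes a lemma of the form $\nabla_U V\in r^{-1}\cdot\mathrm{span}\,\mathcal{V}^-$ for $V\in\mathcal{V}$; then it applies Lemma \ref{extradecay} to the scalar $k_{VW}$ and controls the resulting $Z(k_{VW})$ via $[Z,V]$, for which it records the key structural fact that $[Z,V]$ splits into a same-level part with $O(1)$ coefficients and a lower-level part with $O(|t-r|/r)$ coefficients. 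You instead bypass both auxiliary lemmas by decomposing $U$ directly as a combination of $Z\in\mathbb{K}$ and applying the single identity $(\nabla_Z k)(V,W)=(\mathcal{L}_Z k)(V,W)-k(\nabla_V Z,W)-k(V,\nabla_W Z)$; the filtration analysis is then carried by $\nabla_V Z$ rather than by $\nabla_U V$ and $[Z,V]$ separately. Your route is slightly more streamlined, and the cancellation you implicitly rely on (for instance $(x^i/r)\nabla_{e_A}\Omega_{0i}=0$, so the a priori problematic $\partial_t$ contribution never survives with a bad weight) is genuine. The paper's two-pass structure has the mild advantage of isolating the two sources of error more transparently, but both arguments deliver the same bounds.
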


\begin{proof}
By Lemma \ref{extradecay} and since, for any $Z \in \mathbb{K}$, $\left|	\nabla_Z  k \right| \lesssim |\mathcal{L}_Z k| + | k|$, we have
$$
(1+ |t-r|)\left| \nabla k \right| + (1+ t + r ) \left| \overline{\nabla} k \right| \hspace{1mm} \lesssim \hspace{1mm}  \sum_{Z \in \mathbb{K}} \left| \nabla_Z k \right| \hspace{1mm}  \lesssim \hspace{1mm}  \left|  k \right|+ \sum_{Z \in \mathbb{K}} \left| \mathcal{L}_Z k\right|,
$$
which implies \eqref{eq:extradecayLie}. Suppose now that $r \geq \frac{1+t}{2}$. Define the operation ``$-$'', by
$$\mathcal{L}^- := \mathcal{T}, \qquad \mathcal{T}^- := \mathcal{U}, \qquad \mathcal{U}^- := \mathcal{U}.$$
With this notation, we claim that for $\mathcal{V} \in\{\mathcal{L}, \mathcal{T}, \mathcal{U}\}$ and $V \in \mathcal{V}$,
\begin{align}
\forall \hspace{0.5mm} U \in \mathcal{U}, \quad \nabla_U V & =  \sum_{X\in\mathcal{V}^-} a_X X, \qquad |a_X| \lesssim \frac{1}{r}, \label{eq:Vminus}\\
 \forall \hspace{0.5mm} Z \in \mathbb{K}, \quad [Z,V] & = \sum_{W\in \mathcal{V}} b_W W + \sum_{X\in\mathcal{V}^-} d_X X, \qquad |b_W| \lesssim \frac{t+r}{r}, \quad |d_X| \lesssim \frac{|t-r|}{r}. \label{eq:LieVminus}
\end{align}
Indeed, the first inequality comes from $\nabla_L W = \nabla_{\underline{L}} W =0$ for any $W \in \mathcal{U}$ and $\nabla_{e_A} L= - \nabla_{e_A} \underline{L} = \frac{e_A}{r}$ as well as $\nabla_{e_A} e_B = \slashed{\Gamma}^D_{BA}e_D -\frac{1}{2r}\delta_A^B (L-\underline{L})$, where $\slashed{\Gamma}^D_{AB}$ are the connection coefficients in the $e_A$ basis of the sphere of radius $r$. The second one follows from
\begin{align} 
\nonumber &[\partial_t,L]=[\partial_t, \underline{L}]  = 0, \quad [\partial_t , e_A]=0 , \quad [S,L]  = -L,  \quad [S, \underline{L}]=- \underline{L} , \quad [S,e_A]=-e_A,  \\ \nonumber & [\Omega_{ij}, L]=[\Omega_{ij}, \underline{L}] = 0 , \quad [\Omega_{ij}, e_A]= -e_A(\Omega^B_{ij})e_B-\Omega_{ij}^B[e_A,e_B]^D e_D , \quad \Omega_{ij}^B = \langle \Omega_{ij} , e_B \rangle, \\ \nonumber & [\Omega_{0i}, L] = \frac{t-r}{r} \langle \partial_i,  e_B \rangle \delta^{AB} e_A-\frac{x^i}{r}L, \quad [\Omega_{0i}, \underline{L}] = \frac{t+r}{r} \langle \partial_i, e_B \rangle  \delta^{AB} e_A+\frac{x^i}{r} \underline{L},\\ \nonumber &
 [\Omega_{0i},e_A] = -\frac{\langle \partial_i, e_A \rangle}{2r}((t+r)L-(t-r) \underline{L})+t \langle \partial_i, e_C \rangle \delta^{BC} \slashed{\Gamma}^D_{BA}e_D , \\ \nonumber &  [\partial_i,L]=- [\partial_i,\underline{L}] =\frac{1}{r}(\partial_i-\frac{x^i}{r}\partial_r)= \frac{1}{r} \left( \frac{x^j}{r} \Omega_{ij} \right)
\end{align}
and the fact that $[\partial_i,e_A]=C_A^j \frac{\partial_j}{r}$, where $C_A^j$ are bounded functions of $x$.

For $U,V,W\in\mathcal{U}$ we have
$$\nabla_U (k)_{VW} =  \nabla_U (k_{VW}) -k(\nabla_{U} V, W)- k( V, \nabla_{U} W).$$
Using \eqref{eq:Vminus}, we obtain, as $1+t+r \lesssim r$ on $\{r\geq \frac{1+t}{2}\}$,
\begin{align*}
\sum_{V\in\mathcal{V}, W\in\mathcal{W}} \left| \nabla (k)_{VW} \right|  &\lesssim \sum_{V\in\mathcal{V}, W \in \mathcal{W}} \left| \nabla (k_{VW}) \right| + \frac{\left| k \right|_{\mathcal{V}^- \mathcal{W}} + \left| k \right|_{\mathcal{V} \mathcal{W}^-}}{1+t+r}, \\
\sum_{V\in\mathcal{V}, W\in \mathcal{W}} \left| \overline{\nabla} (k)_{VW} \right| &\lesssim \sum_{V\in\mathcal{V}, W\in\mathcal{W}} \left| \overline{\nabla} (k_{VW}) \right| + \frac{\left|k\right|_{\mathcal{V}^- \mathcal{W}} + \left| k \right|_{\mathcal{V} \mathcal{W}^-}}{1+t+r},
\end{align*}
where $\mathcal{V}, \mathcal{W} \in \{\mathcal{U}, \mathcal{T}, \mathcal{L}\}$. It then only remains to bound $\left| \nabla (k_{VW}) \right|$ and $\left| \overline{\nabla} (k_{VW}) \right|$. Start by noticing that, by Lemma \ref{extradecay},
$$ (1+|t-r|) \left| \nabla (k_{VW}) \right|   + (1+t+r) \left| \overline{\nabla} (k_{VW}) \right| \lesssim \sum_{Z \in \mathbb{K}} \left| \nabla_Z (k_{VW}) \right|.$$
Now, for $Z \in \mathbb{K}$, we have
$$Z (k_{VW}) = \mathcal{L}_Z (k)(V,W) +  k \left( [Z,V],W \right) + k \left( V, [Z,W] \right),$$
so that, using \eqref{eq:LieVminus} and that $1+t+r \lesssim r$ on $\{r\geq \frac{1+t}{2}\}$,
$$\sum_{V\in\mathcal{V}, W\in\mathcal{W}} \left| \nabla_Z (k_{VW}) \right| \lesssim \left| \mathcal{L}_Z k \right|_{\mathcal{V} \mathcal{W}} + |k|_{\mathcal{V} \mathcal{W}} + \frac{1+|t-r|}{1+t+r} \left( |k|_{\mathcal{V}^- \mathcal{W}} + |k|_{\mathcal{V} \mathcal{W}^-} \right).$$
\end{proof}
The following two results will be useful in order to commute the Einstein equations geometrically.
\begin{lemma} \label{lem_com_lie}
Let $k$ be a $(0,2)$ tensor fields, so that $\nabla k$ and $\nabla \nabla k$ are respectively $(0,3)$ and $(0,4)$ tensor fields of cartesian components
$$\left(\nabla k\right)_{\lambda \mu \nu} \hspace{1mm} = \hspace{1mm} \partial_{\lambda} k_{\mu \nu}, \qquad \left(\nabla \nabla k\right)_{\xi \lambda \mu \nu} \hspace{1mm} = \hspace{1mm} \partial_{\xi} \partial_{\lambda} k_{\mu \nu}.$$
For all $Z \in \mathbb{K}$, we have
$$\mathcal{L}_Z \left( \nabla  k \right) = \nabla \left( \mathcal{L}_Z k \right) \qquad \text{and} \qquad  \mathcal{L}_Z \left( \nabla \nabla k \right) = \nabla \nabla \left( \mathcal{L}_Z k \right).$$
\end{lemma}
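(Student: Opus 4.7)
The plan is to prove the first identity by direct computation in Cartesian coordinates, exploiting the special algebraic structure of the vector fields in $\mathbb{K}$, then to deduce the second identity by iteration. The key fact is that every $Z \in \mathbb{K}$ has components which are at most linear in the coordinates $(x^0,x^1,x^2,x^3)$: indeed $\partial_\mu$ has constant components, $\Omega_{ij}$, $\Omega_{0k}$ and $S = x^\mu \partial_\mu$ all have linear components. Consequently
\begin{equation*}
\partial_\alpha \partial_\beta Z^\gamma \;=\; 0, \qquad \forall\; \alpha,\beta,\gamma \in \llbracket 0,3\rrbracket, \quad \forall\; Z \in \mathbb{K}.
\end{equation*}

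The first step is to expand both sides of $\mathcal{L}_Z(\nabla k) = \nabla(\mathcal{L}_Z k)$ in Cartesian coordinates. Using the general formula \eqref{defLie} for the Lie derivative applied to the $(0,3)$-tensor $\nabla k$ whose components are $(\nabla k)_{\lambda \mu \nu} = \partial_\lambda k_{\mu\nu}$, one has
\begin{equation*}
\mathcal{L}_Z(\nabla k)_{\lambda\mu\nu} \;=\; Z^\beta \partial_\beta \partial_\lambda k_{\mu\nu} \,+\, \partial_\lambda Z^\alpha \,\partial_\alpha k_{\mu\nu} \,+\, \partial_\mu Z^\alpha \,\partial_\lambda k_{\alpha\nu} \,+\, \partial_\nu Z^\alpha \,\partial_\lambda k_{\mu\alpha}.
\end{equation*}
On the other hand, differentiating the expression for $\mathcal{L}_Z k$ and commuting $\partial_\beta$ with $\partial_\lambda$,
\begin{equation*}
\partial_\lambda(\mathcal{L}_Z k)_{\mu\nu} \;=\; Z^\beta \partial_\beta \partial_\lambda k_{\mu\nu} + \partial_\lambda Z^\beta \partial_\beta k_{\mu\nu} + \partial_\mu Z^\alpha \partial_\lambda k_{\alpha\nu} + \partial_\nu Z^\alpha \partial_\lambda k_{\mu\alpha} + k_{\alpha\nu} \partial_\lambda \partial_\mu Z^\alpha + k_{\mu\alpha} \partial_\lambda \partial_\nu Z^\alpha.
\end{equation*}
Subtracting the two expressions, every term cancels in pairs except for the last two, which vanish identically thanks to $\partial\partial Z = 0$. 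This proves $\mathcal{L}_Z(\nabla k) = \nabla(\mathcal{L}_Z k)$.

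For the second identity, I would simply iterate: since $\nabla k$ is a $(0,3)$-tensor field on Minkowski space, the same computation applied to $\nabla k$ (which is structurally identical, as the argument only used the Lie derivative formula, the commutation of partial derivatives, and $\partial\partial Z = 0$) yields $\mathcal{L}_Z(\nabla \nabla k) = \nabla(\mathcal{L}_Z(\nabla k)) = \nabla \nabla(\mathcal{L}_Z k)$, where in the last step we used the first identity. There is essentially no obstacle here beyond bookkeeping of indices; the only structural input is the linearity of the components of the conformal Killing fields, which ensures the second-order correction terms involving $\partial\partial Z$ vanish. It is worth noting that the argument does not require $Z$ to be Killing — it only requires $\partial^2 Z = 0$ in Cartesian coordinates, which explains why the scaling vector field $S$ is treated on the same footing as the genuine Killing fields in $\mathbb{P}$.
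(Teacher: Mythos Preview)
Your proof is correct and follows essentially the same approach as the paper: both expand $\mathcal{L}_Z(\nabla k)$ and $\nabla(\mathcal{L}_Z k)$ in Cartesian coordinates using \eqref{defLie}, and observe that the difference consists of terms involving $\partial_\alpha\partial_\beta Z^\gamma$, which vanish since the components of $Z\in\mathbb{K}$ are at most linear. Your iteration argument for the second identity is also in line with the paper, which simply remarks that both relations follow from the same computation.
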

\begin{proof}
Both relations follow from \eqref{defLie} and the fact that $\partial_{\alpha} Z^{\beta}$ is constant for any $(\alpha , \beta) \in \llbracket 0,3 \rrbracket^2$ and $ Z \in \mathbb{K}$. Let us give more details for the first one. For cartesian components $(\alpha , \mu , \nu)$, we have
$$\mathcal{L}_Z \left( \nabla  k \right)_{\alpha \mu \nu} \hspace{1mm} = \hspace{1mm} Z \left( \partial_{\alpha} k_{\mu \nu}\right)+\partial_{\alpha} (Z^{\lambda}) \partial_{\lambda} k_{\mu \nu} +\partial_{\mu} (Z^{\lambda}) \partial_{\alpha} k_{\lambda \nu}+\partial_{\nu} (Z^{\lambda}) \partial_{\alpha} k_{\mu \lambda}$$
and, since $\left( \nabla \mathcal{L}_Z k \right)_{\alpha \mu \nu}  = \partial_{\alpha} \left( \mathcal{L}_Z (k)_{\mu \nu} \right)$,
\begin{align*} \left( \nabla \mathcal{L}_Z k \right)_{\alpha \mu \nu} & \hspace{1mm} = \hspace{1mm} \partial_{\alpha} (Z^{\lambda}) \partial_{\lambda} (k_{\mu \nu})+Z \partial_{\alpha} (k_{\mu \nu})+\partial_{\alpha} (\partial_{\mu} Z^{\lambda})  k_{\lambda \nu} +\partial_{\mu} (Z^{\lambda}) \partial_{\alpha} ( k_{\lambda \nu}) \\
& \quad \hspace{1mm} +\partial_{\alpha} (\partial_{\nu} Z^{\lambda})  k_{\mu \lambda }  +\partial_{\nu} (Z^{\lambda}) \partial_{\alpha} ( k_{ \mu \lambda }).
\end{align*}
To derive the equality $\nabla \mathcal{L}_Z k = \mathcal{L}_Z \nabla k$, it only remains to remark that $\partial_{\sigma} \partial_{\rho} Z^{\lambda}=0$ for all $0 \leq \sigma, \rho, \lambda \leq 3$.
\end{proof}

\begin{lemma} \label{lem_s}
Let $k$ and $q$ be two sufficiently regular $(0,2)$-tensor fields. For any permutation $\sigma \in \mathfrak{S}_6$, the $(0,2)$-tensor field $R^{\sigma} (\nabla k, \nabla q)$ defined by
$$ R^{\sigma}_{\alpha_1 \alpha_2} (\nabla k, \nabla q) \hspace{1mm} := \hspace{1mm} \eta^{\alpha_3\alpha_4}\eta^{\alpha_5\alpha_6} \nabla_{\alpha_{\sigma(1)}} k_{\alpha_{\sigma(2)} \alpha_{\sigma(3)}} \nabla_{\alpha_{\sigma(4)}} q_{\alpha_{\sigma(5)} \alpha_{\sigma(6)}}$$
satisfies 
$$ \forall \; Z \in \mathbb{K}, \quad \mathcal{L}_Z \left( R^{\sigma} (\nabla k, \nabla q)\right) \hspace{1mm} = \hspace{1mm} R^{\sigma} (\nabla \mathcal{L}_Z k, \nabla q)+R^{\sigma} (\nabla k, \nabla \mathcal{L}_Z q)-4 \delta_Z^S R^{\sigma} (\nabla k, \nabla q).$$
\end{lemma}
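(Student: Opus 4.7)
The strategy is to view $R^{\sigma}(\nabla k, \nabla q)$ as a suitable contraction of the tensor product
\[
\eta^{-1} \otimes \eta^{-1} \otimes \nabla k \otimes \nabla q
\]
along indices prescribed by the permutation $\sigma$, and then to exploit the fact that the Lie derivative is a derivation on the tensor algebra which commutes with contractions. Applying the Leibniz rule to $\mathcal{L}_Z$ on this fourfold tensor product produces exactly four terms: two in which $\mathcal{L}_Z$ falls on one of the two factors $\eta^{-1}$, and two in which it falls on $\nabla k$ or $\nabla q$. The permutation $\sigma$ and the contractions are unaffected, so each of these terms is again of the form $R^\sigma(\cdot,\cdot)$, possibly with one of the two $\eta^{-1}$ factors replaced by $\mathcal{L}_Z \eta^{-1}$.

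The next step is to evaluate $\mathcal{L}_Z \eta^{-1}$ for $Z \in \mathbb{K}$. For any Killing field $Z \in \mathbb{P}$, one has $\mathcal{L}_Z \eta = 0$, and hence $\mathcal{L}_Z \eta^{-1} = 0$, so only the last two terms survive. For the scaling $S = x^\mu \partial_\mu$, the coordinate formula \eqref{defLie} gives directly
\[
\mathcal{L}_S \eta^{\alpha\beta} \; = \; -\eta^{\mu\beta}\partial_\mu S^\alpha - \eta^{\alpha\mu}\partial_\mu S^\beta \; = \; -2\eta^{\alpha\beta}.
\]
Consequently, when $Z = S$ each of the two terms containing $\mathcal{L}_S \eta^{-1}$ contributes $-2 R^\sigma(\nabla k, \nabla q)$, totalling $-4 R^\sigma(\nabla k, \nabla q)$ and accounting precisely for the $-4\delta_Z^S$ correction in the statement.

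It remains to rewrite the two surviving terms in the form stated. Here I invoke Lemma \ref{lem_com_lie}, which gives $\mathcal{L}_Z (\nabla k) = \nabla (\mathcal{L}_Z k)$ and $\mathcal{L}_Z (\nabla q) = \nabla (\mathcal{L}_Z q)$ for every $Z \in \mathbb{K}$. Substituting back, one obtains
\[
\mathcal{L}_Z R^\sigma(\nabla k, \nabla q) \; = \; R^\sigma(\nabla \mathcal{L}_Z k, \nabla q) + R^\sigma(\nabla k, \nabla \mathcal{L}_Z q) - 4\delta_Z^S R^\sigma(\nabla k, \nabla q),
\]
as required. The argument is essentially algebraic and does not present a real obstacle; the only delicate bookkeeping is to make sure the homogeneity of $S$ contributes exactly the factor $-4$, which reflects the two conformal weights carried by the two occurrences of the inverse Minkowski metric $\eta^{-1}$ in the definition of $R^\sigma$.
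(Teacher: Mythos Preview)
Your proof is correct and follows essentially the same approach as the paper: apply the Leibniz rule for $\mathcal{L}_Z$ to the contraction of $\eta^{-1}\otimes\eta^{-1}\otimes\nabla k\otimes\nabla q$, use $\mathcal{L}_Z\eta^{-1}=-2\delta_Z^S\eta^{-1}$ for the two metric factors, and invoke Lemma~\ref{lem_com_lie} to rewrite $\mathcal{L}_Z(\nabla k)$ and $\mathcal{L}_Z(\nabla q)$.
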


\begin{proof}
Let $Z \in \mathbb{K}$. Using that the Lie derivative commute with contractions, we get
\begin{align*}
\mathcal{L}_Z \left( R^{\sigma} (\nabla k, \nabla q)\right) \hspace{1mm} & = \hspace{1mm} \mathcal{L}_Z(\eta^{-1})^{\alpha_3\alpha_4}\eta^{\alpha_5\alpha_6} \nabla_{\alpha_{\sigma(1)}} k_{\alpha_{\sigma(2)} \alpha_{\sigma(3)}} \nabla_{\alpha_{\sigma(4)}} q_{\alpha_{\sigma(5)} \alpha_{\sigma(6)}} \\ & \quad \hspace{1mm} +\eta^{\alpha_3\alpha_4}\mathcal{L}_Z(\eta^{-1})^{\alpha_5\alpha_6} \nabla_{\alpha_{\sigma(1)}} k_{\alpha_{\sigma(2)} \alpha_{\sigma(3)}} \nabla_{\alpha_{\sigma(4)}} q_{\alpha_{\sigma(5)} \alpha_{\sigma(6)}} \\ & \quad \hspace{1mm} +\eta^{\alpha_3\alpha_4}\eta^{\alpha_5\alpha_6} \mathcal{L}_Z \left( \nabla k \right)_{\alpha_{\sigma(1)} \alpha_{\sigma(2)} \alpha_{\sigma(3)}} \nabla_{\alpha_{\sigma(4)}} q_{\alpha_{\sigma(5)} \alpha_{\sigma(6)}}  \\ & \quad \hspace{1mm} +\eta^{\alpha_3\alpha_4}\eta^{\alpha_5\alpha_6} \nabla_{\alpha_{\sigma(1)}} k_{\alpha_{\sigma(2)} \alpha_{\sigma(3)}} \mathcal{L}_Z \left( \nabla q \right)_{\alpha_{\sigma(4)} \alpha_{\sigma(5)} \alpha_{\sigma(6)}}.
\end{align*}
The result then ensues from $\mathcal{L}_Z(\eta^{-1})=-2 \delta_Z^S \eta^{-1}$ as well as $\mathcal{L}_Z(\nabla k)=\nabla (\mathcal{L}_Z k)$ and $\mathcal{L}_Z(\nabla q)=\nabla (\mathcal{L}_Z q)$, which comes from Lemma \ref{lem_com_lie}.
\end{proof}

\subsection{Analysis on the co-tangent bundle}\label{secliftcomplet}

As in \cite{FJS}, we will commute the Vlasov equation using the complete lift $\widehat{Z}$ of the Killing vector fields $Z \in \mathbb{P}$ of Minkowski spacetime. They are given by
\begin{alignat*}{3}
\widehat{\partial}_{\mu} & = \partial_{\mu} , \qquad && 0 \leq \mu \leq 3, \\
\widehat{\Omega}_{ij} &= x^i \partial_{j} - x^j \partial_{i} + v_i \partial_{v_j} - v_j \partial_{v_i}, \qquad && 1 \leq i < j \leq 3, \\
\widehat \Omega_{0k} &= t\partial_{k} + x^k\partial_t + |v| \partial_{v_k}, \qquad && 1 \leq k \leq 3
\end{alignat*} 
and they commute with the flat massless relativistic transport operator $\T_{\eta} := |v| \partial_t+v_1 \partial_1+v_2\partial_2+v_3\partial_3$ (see \cite[Section 2.7]{FJS} for more details). Even if the complete lift $\widehat{S}$ of $S$ satisfies $[\T_{\eta},\widehat{S}]=0$, we will rather commute the Vlasov equation with $S$, which verifies $[\T_{\eta},S]=\T_{\eta}$, for technical reason (see Lemma \ref{vderivative} below). We then introduce the ordered set
$$\widehat{\mathbb{P}}_0 \hspace{1mm} := \hspace{1mm} \{ \widehat{Z} \hspace{1mm} / \hspace{1mm} Z \in \mathbb{P} \} \cup \{S \} \hspace{1mm} = \hspace{1mm} \{ \widehat{Z}^1, \dots , \widehat{Z}^{11} \},$$
where $\widehat{Z}^{11} = S$ and $\widehat{Z}^i = \widehat{Z^{i}}$ if $i \in \llbracket 1,10 \rrbracket$, so that for any multi-index $J \in \llbracket 1,11 \rrbracket^n$, $\widehat{Z}^J := \widehat{Z}^{J_1} \dots \widehat{Z}^{J_n}$. For simplicity, we will denote by $\widehat{Z}$ an arbitrary element of $\widehat{\mathbb{P}}_0$, even if the scaling vector field $S$ is not the complete lift of a vector field $X^{\mu} \partial_{x^{\mu}}$ of the tangent bundle of Minkowski spacetime. Similarly, we will use the following convention, mostly to write concisely the commutation formula. For any $\widehat{Z} \in \widehat{\mathbb{P}}_0$, if $\widehat{Z} \neq S$, then $Z$ will stand for the Killing vector field which has $\widehat{Z}$ as complete lift and if $\widehat{Z}=S$, then we will take $Z=S$. The sets 
$$\{ \Omega_{12}, \Omega_{13}, \Omega_{23}, \Omega_{01} , \Omega_{02} ,\Omega_{03}, S \}, \qquad  \{ \widehat{\Omega}_{12}, \widehat{\Omega}_{13}, \widehat{\Omega}_{23}, \widehat{\Omega}_{01} , \widehat{\Omega}_{02} ,\widehat{\Omega}_{03}, S \}$$
contain all the homogeneous vector fields of $\mathbb{K}$ and $\widehat{\mathbb{P}}_0$. As suggested by Lemma \ref{extradecay}, $\partial_{\mu} \phi$ has a better behavior than $Z \phi$ for $Z$ an arbitrary element of $\mathbb{K}$. It will then be important, in order to exploit several hierarchies in the commuted Vlasov equation, to count the number of homogeneous vector fields which hit the particle density $f$ in the error terms. Given a multi-index $J$ so that $Z^J \in \mathbb{K}^{|J|}$ and $\widehat{Z}^J \in \widehat{\mathbb{P}}_0^{|J|}$, we denote by $J^P$ (respectively $J^T$) the number of homogeneous vector fields (respectively translations) composing $Z^J$ and $\widehat{Z}^J$. For instance, if 
$$ \widehat{Z}^J = \partial_t \widehat{\Omega}_{12} S \partial_2 \partial_1, \qquad J^T=3 \quad \text{and} \quad J^P=2.$$
The following technical lemma will be in particular useful for commuting the energy momentum tensor $T[f]$ and then the Einstein equations. It illustrates the compatibility between the commutation vector fields of the wave equation and those of the relativistic transport equation.
\begin{lemma}\label{Zderivativeint}
Let $\psi : [0,T[ \times \R^3_x \times \R^3_v \rightarrow \R$ be a sufficiently regular function and $Z \in \mathbb{P}$. Then,
$$ Z \left( \int_{\R^3_v} \psi \frac{dv}{|v|} \right) \hspace{2mm} = \hspace{2mm} \int_{\R^3_v} \widehat{Z} \psi \frac{dv}{|v|} , \hspace{1.5cm} S \left( \int_{\R^3_v} \psi \frac{dv}{|v|} \right) \hspace{2mm} = \hspace{2mm} \int_{\R^3_v} S \psi \frac{dv}{|v|}.$$
\end{lemma}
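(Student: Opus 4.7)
The plan is to verify the two identities by a case analysis on the vector fields in $\mathbb{P} \cup \{S\}$. The first observation is that translations $\partial_\mu$ and the scaling $S = t\partial_t + x^i\partial_i$ contain no $v$-derivatives, so they commute with integration in $v$ by differentiation under the integral sign. Since $\widehat{\partial}_\mu = \partial_\mu$, this immediately handles these cases, covering in particular the identity for $S$ stated separately.

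The substantive cases are the rotations $\Omega_{ij}$ and the Lorentz boosts $\Omega_{0k}$, whose complete lifts differ from their flat counterparts by a $v$-component. For the rotation, exchanging the $x$-derivatives with the integral reduces the identity to proving
\begin{equation*}
\int_{\R^3_v} \left( v_i \partial_{v_j} - v_j \partial_{v_i} \right) \psi \, \frac{dv}{|v|} \,=\, 0.
\end{equation*}
The natural approach is to integrate each term by parts in $v$; a direct computation yields $-\int \psi \, \partial_{v_j}(v_i/|v|)\, dv = -\int \psi \bigl( \delta_{ij}/|v| - v_i v_j/|v|^3 \bigr)\, dv$, and the claim then follows from the manifest symmetry of this expression in $(i,j)$ upon antisymmetrisation. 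For the boost $\widehat{\Omega}_{0k} = \Omega_{0k} + |v|\partial_{v_k}$, the same reduction leads to
\begin{equation*}
\int_{\R^3_v} |v|\, \partial_{v_k}\psi \, \frac{dv}{|v|} \,=\, \int_{\R^3_v} \partial_{v_k}\psi \, dv \,=\, 0,
\end{equation*}
which is just the divergence theorem on $\R^3_v$.

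The one analytic subtlety is justifying the integration by parts, since the weight $1/|v|$ is singular at the excluded point $v=0$ and the integration domain is $\R^3 \setminus \{0\}$. The hard part is making this rigorous in a manner compatible with the functional class considered in Subsection \ref{se:vfcf} (where $\psi$ is only assumed locally integrable with $|v|\psi$ integrable near $v=0$ and sufficient decay as $|v|\to\infty$). For the boost identity no singular weight survives after the cancellation, so only the decay in $v$ at infinity is needed. For the rotation identity, the relevant flux integrand $v_i\psi/|v|$ is uniformly bounded in $v$, hence the flux across a sphere of radius $\epsilon$ around the origin is $O(\epsilon^2) \to 0$, while the flux at infinity vanishes by decay. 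A standard truncation/approximation argument, first restricting to $\epsilon < |v| < R$ and then letting $\epsilon \to 0$, $R \to \infty$, makes the computation rigorous under the stated regularity hypotheses.
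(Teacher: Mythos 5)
Your proof is correct and takes essentially the same approach as the paper: both proceed case-by-case through translations, the scaling, rotations and boosts, observe that the vertical parts of the complete lifts produce $v$-integrals that vanish after integration by parts, and both appeal to the approximation argument of Subsection \ref{se:vfcf} to justify the boundary terms at $v=0$. Your boost computation is marginally more streamlined than the paper's, which first computes $\widehat{\Omega}_{0k}(\psi/|v|) = \widehat{\Omega}_{0k}\psi/|v| - v_k\psi/|v|^2$ and then separately evaluates $\int |v|\partial_{v_k}(\psi/|v|)\,dv$ by parts, obtaining two cancelling copies of $\int v_k\psi/|v|^2\,dv$, whereas you observe directly that $(\widehat{\Omega}_{0k} - \Omega_{0k})\psi/|v| = \partial_{v_k}\psi$ whose $v$-integral is immediately zero; this is a minor organisational difference, not a different idea.
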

\begin{proof}
Let, for any Killing vector field $Z \in \mathbb{P}$, $Z^w := \widehat{Z}-Z$. We have,
$$ Z \left( \int_{\R^3_v} \psi \frac{dv}{|v|} \right) \hspace{2mm} = \hspace{2mm} \int_{\R^3_v} \widehat{Z} \left( \frac{\psi}{|v|} \right) dv - \int_{\R^3_v} Z^w \left( \frac{\psi}{|v|} \right) dv, \hspace{0.8cm} S \left( \int_{\R^3_v} \psi \frac{dv}{|v|} \right) \hspace{2mm} = \hspace{2mm} \int_{\R^3_v} S \psi \frac{dv}{|v|}.$$
It then remains to note that, 
$$ \partial_{\mu} \left( \frac{\psi}{|v|} \right) =  \frac{\partial_{\mu} \psi}{|v|} , \hspace{1cm} \widehat{\Omega}_{ij} \left( \frac{\psi}{|v|} \right) =  \frac{\widehat{\Omega}_{ij} \psi}{|v|} , \hspace{1cm} \widehat{\Omega}_{0k} \left( \frac{\psi}{|v|} \right)= \frac{\widehat{\Omega}_{0k} \psi}{|v|}-\frac{v_k}{|v|^2} \psi.$$
and, by integration by parts in $v$,
$$ \int_{\R^3_v} \left( v_i \partial_{v_j}   -v_j \partial_{v_i} \right) \left( \frac{\psi}{|v|} \right) dv \hspace{2mm} = \hspace{2mm} 0, \hspace{1cm} \int_{\R^3_v} |v| \partial_{v_k}  \left( \frac{\psi}{|v|} \right)  dv  \hspace{2mm} = \hspace{2mm} - \int_{\R^3_v} \frac{v_k}{|v|^2} \psi dv.$$
\end{proof}

In order to treat the curved part of the metric as pure perturbation, we define the one form
$$w = -|v| \mathrm dx^0 + v_1 \mathrm dx^1 + v_2 \mathrm dx^2 + v_3 \mathrm dx^3, \qquad |v| = \sqrt{|v_1|^2+|v_2|^2+|v_3|^2}.$$
Using that $w_U = w_{\mu} U^{\mu}=\eta(w,U)$ for any vector field $U$, we directly obtain
\begin{equation}
w_0=-|v|, \quad w_L = w_0 + \frac{x^i}{r} w_i, \quad w_{\underline L} = w_0 - \frac{x^i}{r} w_i, \quad |\slashed w| := \sqrt{ w_A w^A}.
\end{equation}
As \cite{FJS}, we introduce the set of weights
$$\mathbf{k}_0 \hspace{1mm} = \hspace{1mm} \{w_{\mu} \, / \, 0 \leq \mu \leq 3 \} \cup \{  x^{\lambda} w_\lambda \} \cup \{ x^{i} w_{j} - x^{j} w_{i} \, / \, 1 \leq i < j \leq 3 \} \cup \{ tw_k+x^kw_0 \, / \, 1 \leq k \leq 3 \}$$
and we consider, as suggested by \cite[Remark 2.3]{rVP},
\begin{equation}\label{morawetz weight}
\mathbf{m} := (t^2+r^2)w_0+2tx^iw_i = \frac{(t+r)^2}{2} w_L+ \frac{(t-r)^2}{2} w_{\underline{L}}.
\end{equation}
All the above weights are obtained by contracting the current $w$ with the conformal Killing vector fields of Minkowski spacetime. They are preserved along the flow of $\T_{\eta}$ and will be used in order to obtain strong improved decay estimates for the distribution function. In particular, $\mathbf{m}$ has to be compared with the Morawetz vector field $\frac{(t+r)^2}{2} L+ \frac{(t-r)^2}{2} \underline{L}$ when used as a multiplier for the wave equation. Note that $\mathbf{m} \le 0$, so that we often work with $| \mathbf{m}|$. 

We now define $z$ as an overall positive weight, by
\begin{equation} \label{def_z_weights}
z := \left( \sum_{\mathfrak{z} \in \mathbf{k}_0} \frac{\mathfrak z^4}{|v|^4} + \frac{\mathbf{m}^2}{|v|^2} \right)^{\frac 14},
\end{equation}
so that
\begin{equation}\label{bound_weight_z}
\forall \hspace{0.5mm} \mathfrak{z} \in \mathbf{k}_0, \quad \frac{| \mathfrak{z}|}{|v|} \leq z \hspace{1cm} \text{and} \hspace{1cm} \frac{|\mathbf{m}|}{|v|} \leq z^2.
\end{equation}
Note also that $\T_{\eta} (z)=0$ and moreover, since $\frac{|w_0|}{|v|} =1$, $\sum_{\mathfrak{z} \in \mathbf{k}_0} |\mathfrak{z}| \lesssim |v| ( 1+t+r) $ and $|\mathfrak{m}| \leq |v| (1+t+r)^2$,  we have
\begin{equation}\label{prop_z}
1 \le z \lesssim 1+t+r.
\end{equation}
The following lemma illustrates how the null components of $w$ and the weight $z$ interact.

\begin{lemma} \label{lem_wwl}
The following estimates hold
\begin{eqnarray*}
\frac{|w_{\underline{L}}|}{w^0} &\lesssim& \frac{z^2}{(1+|t-r|)^2}, \qquad \frac{|w_L|}{w^0} \lesssim \frac{z^2}{(1+t+r)^2}, \\
\left| \slashed w \right| &\lesssim& \sqrt{w^0 |w_L|}, 
\end{eqnarray*}
from which it follows that
$$
 \frac{|\slashed{w}|}{w^0} \lesssim \frac{z}{1+t+r} \quad \text{and}  \quad 1 \lesssim \frac{z}{1+|t-r|}.
 $$
\end{lemma}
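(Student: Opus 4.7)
My starting observation is that $w$ is a null covector with respect to $\eta$: by definition of $|v|$, $\eta^{-1}(w,w) = -|v|^2 + |v_1|^2 + |v_2|^2 + |v_3|^2 = 0$. Expanding the same quantity in the null frame yields the shell relation
$$ w_L \, w_{\underline L} = |\slashed w|^2. $$
Cauchy--Schwarz moreover gives $|\tfrac{x^i}{r} v_i| \le |v|$, so both $w_L = -|v| + \tfrac{x^i}{r} v_i$ and $w_{\underline L} = -|v| - \tfrac{x^i}{r} v_i$ are non-positive; their sum equals $-2|v| = -2 w^0$, so that $|w_L| + |w_{\underline L}| = 2 w^0$ and in particular $|w_L|, |w_{\underline L}| \le 2 w^0$. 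The estimate $|\slashed w| \lesssim \sqrt{w^0 |w_L|}$ is then immediate: $|\slashed w|^2 = |w_L| \, |w_{\underline L}| \le 2 w^0 |w_L|$.

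For the two bounds on $|w_L|/w^0$ and $|w_{\underline L}|/w^0$, I would use the weight $\mathbf{m}$. By its definition,
$$ \mathbf{m} = \tfrac{(t+r)^2}{2} w_L + \tfrac{(t-r)^2}{2} w_{\underline L}, $$
and since both summands are non-positive one has $(t+r)^2 |w_L| \le 2 |\mathbf{m}|$ and $(t-r)^2 |w_{\underline L}| \le 2 |\mathbf{m}|$. Combining with $|\mathbf{m}|/|v| \le z^2$ from \eqref{bound_weight_z} and $w^0 = |v|$ gives $|w_L|/w^0 \lesssim z^2/(t+r)^2$ and $|w_{\underline L}|/w^0 \lesssim z^2/(t-r)^2$. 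Upgrading the denominators to $(1+t+r)^2$ and $(1+|t-r|)^2$ is harmless: when either is $O(1)$ one simply uses the trivial bound $|w_L|/w^0, |w_{\underline L}|/w^0 \le 2$ together with $z \ge 1$ from \eqref{prop_z}.

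The two derived inequalities then follow. The bound $|\slashed w|/w^0 \lesssim z/(1+t+r)$ is obtained by dividing $|\slashed w| \lesssim \sqrt{w^0 |w_L|}$ by $w^0$ and inserting the estimate on $|w_L|/w^0$. The inequality $1 \lesssim z/(1+|t-r|)$ is the delicate point, since it is a \emph{lower} bound on $z$ rather than an upper bound. My plan is to extract it by a pigeonhole argument using the identity $|w_L| + |w_{\underline L}| = 2 w^0$: at least one of $|w_L|, |w_{\underline L}|$ is $\ge w^0$. If $|w_{\underline L}| \ge w^0$, the proved upper bound gives $1 \le |w_{\underline L}|/w^0 \lesssim z^2/(1+|t-r|)^2$, hence $1 + |t-r| \lesssim z$. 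If instead $|w_L| \ge w^0$, then $1 \lesssim z^2/(1+t+r)^2$, which is even stronger since $1+|t-r| \le 1+t+r$. Either way $1 \lesssim z/(1+|t-r|)$. I expect this pigeonhole step to be the only non-routine part of the argument; everything else is just the null shell relation combined with the definition of $\mathbf{m}$.
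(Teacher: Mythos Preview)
Your proof is correct and follows essentially the same approach as the paper: the null shell relation $w_L w_{\underline L} = |\slashed w|^2$, the sign structure of $w_L, w_{\underline L}$, and the decomposition of $\mathbf{m}$. The only cosmetic differences are that the paper absorbs the ``$+1$'' in one line via the identity $\tfrac{1+(t+r)^2}{2}|w_L| + \tfrac{1+(t-r)^2}{2}|w_{\underline L}| = w^0 - \mathbf{m} \lesssim w^0 z^2$, and for the final inequality writes $w^0 \lesssim \sqrt{|w_L| w^0} + \sqrt{|w_{\underline L}| w^0}$ rather than your (equivalent) pigeonhole split.
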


\begin{proof}
Since $w_L \leq 0$ and $w_{\underline{L}} \leq 0$, we have
$$\frac{1+|t+r|^2}{2} |w_L|+\frac{1+|t-r|^2}{2} | w_{\underline{L}} | = w^0-\frac{(t+r)^2}{2}w_L-\frac{(t-r)^2}{2} w_{\underline{L}} = w^0-\mathbf{m} \lesssim w^0z^2,$$
which proves the first two inequalities. 

For the third inequality, we use the mass shell relation for the flat spacetime
\begin{equation*}
0 = \eta^{\mu\nu}w_\mu w_\nu = - w_L w_{\underline L} + \eta^{AB} w_A w_B,
\end{equation*}
from which it follows that
\begin{equation*}
|\slashed{w}|^2=\left| \eta^{AB} w_A w_B\right| \le |w_L| |w_{\underline L} | = |w_L| \left|w_0 - \frac{x^i}{r} w_i\right| \lesssim |w_L| w^0.
\end{equation*}
The fourth estimate then ensues from the third and the second one. For the last inequality, we use $w^0 \lesssim |w_{\underline{L}}|+|w_L| \lesssim \sqrt{|w_{\underline{L}}|w^0}+\sqrt{|w_L|w^0}$ and then apply the first two inequalities.
\end{proof}

The following Lemma illustrates the good interactions between the weights $\mathfrak{z} \in \mathbf{k}_0$, $\mathbf{m}$ and the vector fields $\widehat{Z} \in \widehat{\mathbb{K}}$.

\begin{lemma} \label{lem_derivation_z}
For all $\mu \in \llbracket 0 , 3 \rrbracket$, $1 \leq i < j \leq 3$ and $k \in \llbracket 1, 3 \rrbracket$, we have
$$
|\partial_{\mu} (z) | \lesssim 1, \quad \left|S (z)\right| \lesssim z, \quad \left|\widehat{\Omega}_{ij} (z)\right| \lesssim z, \quad \left|\widehat{\Omega}_{0k} (z)\right| \lesssim z.
$$
\end{lemma}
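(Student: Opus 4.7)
Since $z \geq 1$, I will work with $z^{4}$ instead of $z$ and use $\widehat Z(z) = \widehat Z(z^{4})/(4 z^{3})$. By the Leibniz rule,
\begin{equation*}
\widehat Z(z^{4}) = \sum_{\mathfrak z\in \mathbf k_{0}}\!\left(\frac{4\mathfrak z^{3}\widehat Z(\mathfrak z)}{|v|^{4}} - \frac{4\mathfrak z^{4}\widehat Z(|v|)}{|v|^{5}}\right) + \frac{2\mathbf m\, \widehat Z(\mathbf m)}{|v|^{2}} - \frac{2\mathbf m^{2}\widehat Z(|v|)}{|v|^{3}}.
\end{equation*}
Combined with the pointwise bounds $|\mathfrak z| \leq |v| z$ and $|\mathbf m| \leq |v| z^{2}$ from \eqref{bound_weight_z}, the goal reduces to showing (i) $|\widehat Z(|v|)|\lesssim |v|$; (ii) $|\widehat Z(\mathfrak z)|\lesssim |v| z$ for each $\mathfrak z\in \mathbf k_{0}$; and (iii) $|\widehat Z(\mathbf m)|\lesssim |v| z^{2}$. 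For translations, the stronger facts $\partial_{\mu}|v|=0$, $|\partial_{\mu}\mathfrak z|\lesssim |v|$, $|\partial_{\mu}\mathbf m|\lesssim |v|z$ save one power of $z$ and yield the improved bound $|\partial_{\mu}z|\lesssim 1$.

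\textbf{Action on the individual weights.} Claims (i)--(ii) and their translation analogue follow from short case-by-case calculations. For translations, $\partial_{\mu}\mathfrak z$ is either $0$ or $\pm w_{\nu}$; for $S=x^{\lambda}\partial_{\lambda}$, $S(|v|)=0$ and each $\mathfrak z$ is either $v$-only (so $S(\mathfrak z)=0$) or homogeneous of degree $1$ in $(t,x)$ (so $S(\mathfrak z)=\mathfrak z$); for the rotations $\widehat{\Omega}_{ij}$, $\widehat{\Omega}_{ij}(|v|)=0$ and each weight is mapped into $\mathrm{span}(\mathbf k_{0})$. For $\widehat\Omega_{0k}$, starting from $\widehat\Omega_{0k}(w_{0})=-w_{k}$, $\widehat\Omega_{0k}(w_{j})=-\delta_{k}^{j}w_{0}$ and $\widehat\Omega_{0k}(|v|)=v_{k}$, one checks for instance that $\widehat\Omega_{0k}(x^{\lambda}w_{\lambda})=0$ (a Lorentz scalar) and $\widehat\Omega_{0k}(tw_{j}+x^{j}w_{0})=-(x^{j}w_{k}-x^{k}w_{j})$, so the image again lies in $\mathbf k_{0}$. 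For the weight $\mathbf m$ itself, spherical symmetry gives $\widehat{\Omega}_{ij}(\mathbf m)=0$, degree-two homogeneity gives $S(\mathbf m)=2\mathbf m$, and $\partial_{0}\mathbf m=2\, x^{\lambda}w_{\lambda}$, $\partial_{j}\mathbf m=2(tw_{j}+x^{j}w_{0})$ (both in $\mathbf k_{0}$); this settles every case except the boosts.

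\textbf{The main obstacle: $\widehat\Omega_{0k}(\mathbf m)$.} A direct expansion gives $\widehat\Omega_{0k}(\mathbf m) = 2x^{k}(x^{\lambda}w_{\lambda}) + (t^{2}-r^{2})w_{k}$; decomposing $w_{k}$ and $x^{\lambda}w_{\lambda}$ on the null frame and regrouping, one obtains
\begin{equation*}
\widehat{\Omega}_{0k}(\mathbf{m}) = \frac{x^{k}}{r}\cdot\frac{(t+r)^{2}}{2}\, w_{L} - \frac{x^{k}}{r}\cdot\frac{(t-r)^{2}}{2}\, w_{\underline L} + (t^{2}-r^{2})\, \slashed{w}_{k}.
\end{equation*}
The first two terms are pointwise dominated by $|\mathbf m|\leq |v| z^{2}$. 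The hard term is the last one: combining $|\slashed{w}|\lesssim\sqrt{|v||w_{L}|}$ with $(t+r)^{2}|w_{L}|\leq 2|\mathbf m|\lesssim |v| z^{2}$, both from Lemma \ref{lem_wwl}, yields $(t+r)|\slashed w|\lesssim |v| z$, and the extra ingredient $1+|t-r|\lesssim z$ from the same lemma upgrades the factor $|t-r|\cdot |v| z$ to $|v| z^{2}$. Plugging these estimates back into the Leibniz formula bounds every summand by $z^{4}$ (respectively by $z^{3}$ for translations, since the $\widehat Z(|v|)$ terms vanish and $\partial_{\mu}$ saves one power of $z$ on $\mathfrak z$ and $\mathbf m$); dividing by $4z^{3}$ gives the four claimed inequalities.
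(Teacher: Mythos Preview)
Your proof is correct. The overall structure---reducing to bounds on $\widehat Z(\mathfrak z)$, $\widehat Z(|v|)$ and $\widehat Z(\mathbf m)$ via the Leibniz rule on $z^4$---is the same as the paper's (the paper groups $\mathfrak z/|v|$ together in \eqref{eq:widehatY} but this is cosmetic), and the easy cases are handled identically.

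The genuine difference is in the treatment of $\widehat\Omega_{0k}(\mathbf m)$. You expand $\widehat\Omega_{0k}(\mathbf m)$ directly on the null frame as $\frac{x^k}{r}\frac{(t+r)^2}{2}w_L-\frac{x^k}{r}\frac{(t-r)^2}{2}w_{\underline L}+(t^2-r^2)\slashed w_k$, bound the first two pieces by $|\mathbf m|$, and control the tangential piece via $|\slashed w|\lesssim\sqrt{|v||w_L|}$, $(t+r)^2|w_L|\le 2|\mathbf m|$ and $|t-r|\lesssim z$. The paper instead decomposes the \emph{operator} $\widehat\Omega_{0k}$ into its radial part $\frac{x^k}{r}\frac{x^q}{r}\widehat\Omega_{0q}$ and tangential parts $\frac{x^j}{r}(\frac{x_j}{r}\widehat\Omega_{0k}-\frac{x_k}{r}\widehat\Omega_{0j})$, computes each piece acting on $\mathbf m$, and recognises the tangential output as a combination of weights in $\mathbf k_0$ with an extra $|t-r|$ factor. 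Both routes ultimately rely on $1+|t-r|\lesssim z$ from Lemma~\ref{lem_wwl}. Your null-frame decomposition of the \emph{result} is arguably more direct and transparent; the paper's radial/tangential split of the \emph{vector field} is a device that would transfer more mechanically to other weights built from the conformal group.
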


\begin{proof}
Consider a vector field $\widehat{Y}=Y^{\mu}_x \partial_{x^{\mu}}+Y_v^i \partial_{v_i}$ and use \eqref{bound_weight_z} in order to get
\begin{align}
\left| \widehat{Y} (z) \right| = \frac{1}{z^3} \left| \widehat{Y}\!\left( \frac{\mathbf{m}}{|v|} \right) \frac{\mathbf{m}}{2|v|} + \sum_{ \mathfrak{z} \in \mathbf{k}_0} \widehat{Y}\!\left( \frac{\mathfrak{z}}{|v|} \right) \frac{ \mathfrak{z}^3 } {|v|^3} \right| & \leq  \frac{\left| \widehat{Y}\!\left( \frac{\mathbf{m}}{|v|} \right) \right| }{z} + \sum_{ \mathfrak{z} \in \mathbf{k}_0} \left| \widehat{Y}\!\left( \frac{\mathfrak{z}}{|v|} \right) \right|. \label{eq:widehatY}
 \end{align}
A straightforward computation reveals that for all $\mathfrak{z} \in \mathbf{k}_0$, $\widehat Z \in\widehat{\mathbb P}_0$, there holds $\widehat{Z}(\mathfrak{z}) \in \mathrm{span}\{\mathbf{k}_0 \}$, and consequently
\begin{equation}\label{eq:widehatY2}
\left| \widehat{Z} \left( \frac{\mathfrak{z}}{|v|} \right) \right| \lesssim z.
\end{equation}
For the weight $\mathbf{m}$, one can check that
\begin{equation}\label{eq:widehatY3}
\partial_t (\mathbf{m}) = 2 x^{\mu} w_{\mu}, \quad \partial_i (\mathbf{m}) = -2(x^iw^0-tw^i), \quad S (\mathbf{m}) = 2\mathbf{m}, \quad \widehat{\Omega}_{ij} (\mathbf{m}) = 0.
\end{equation}
We then obtain the first three inequalities of the lemma by taking $\widehat{Y}= \partial_{\mu}$, $S$ and $\widehat{\Omega}_{ij}$ in \eqref{eq:widehatY} and using \eqref{eq:widehatY2}--\eqref{eq:widehatY3}. For the Lorentz boosts, we use the decomposition
\begin{equation}\label{eq:decompoY}
\widehat{\Omega}_{0k} = \frac{x^k}{r} \frac{x^q}{r} \widehat{\Omega}_{0q}+\frac{x^j}{r} \left( \frac{x_j}{r} \widehat{\Omega}_{0k}-\frac{x_k}{r}\widehat{\Omega}_{0j} \right). 
\end{equation}
Now, note that for $1 \leq k \leq 3$,
\begin{equation}\label{eq:widehatY4}
\widehat{\Omega}_{0k} (\mathbf{m}) = 2t x^k w_0+2x^k x^i w_i+(t^2-r^2) w_k, \qquad \widehat{\Omega}_{0k} \left( \frac{1}{|v|} \right) = -\frac{w_k}{|v|^2}.   
\end{equation}
We then deduce
\begin{align*}
\frac{x^q}{r}\widehat{\Omega}_{0q} (\mathbf{m}) \hspace{1mm} &= \hspace{1mm}  2trw_0+2rx^i w_i+(t^2-r^2)\frac{x^q}{r} w_q \hspace{1mm} = \hspace{1mm} 2tr w_0+(t^2+r^2) \frac{x^k}{r} w_k \\ 
\hspace{1mm} &= \hspace{1mm}  \mathbf{m}-\mathbf{m}+2tr w_0+(t^2+r^2) \frac{x^q}{r} w_q \hspace{1mm} = \hspace{1mm} \mathbf{m}-(t-r)^2 w_0+(t-r)^2 \frac{x^q}{r} w_q,
\end{align*}
so that, taking $\widehat{Y}= \frac{x^q}{r}\widehat{\Omega}_{0q}$ in \eqref{eq:widehatY} and using \eqref{bound_weight_z}, \eqref{eq:widehatY2} as well as $(1+|t-r|) \lesssim z$ (see Lemma \ref{lem_wwl}), we obtain
\begin{equation}\label{eq:decompoY1}
\left| \frac{x^q}{r}\widehat{\Omega}_{0q} (z) \right| \lesssim \frac{| \mathbf{m}|}{|v|z}+\frac{(t-r)^2}{z}+z \lesssim z.
\end{equation}
We also obtain from \eqref{eq:widehatY4} that
\begin{align}
\frac{x^j}{r}\widehat{\Omega}_{0k} (\mathbf{m})-\frac{x^k}{r}\widehat{\Omega}_{0j} (\mathbf{m}) &= \frac{t^2-r^2}{r} (x^j w^k-x^kw^j), \\
&= \frac{t^2-r^2}{t} \left(\frac{x^j}{r}(tw^k-x^kw^0)- \frac{x^k}{r}(tw^j-x^jw^0) \right).  \nonumber
\end{align}
Since $|t-r| \lesssim z$ and using that $(x^j w^k-x^kw^j) \in \mathbf{k}_0$ and $(tw^i-x^iw^0)  \in \mathbf{k}_0$, we obtain from the last two equalities
$$\left| \frac{x^j}{r}\widehat{\Omega}_{0k} (\mathbf{m})-\frac{x^k}{r}\widehat{\Omega}_{0j} (\mathbf{m}) \right| \lesssim |t-r| \frac{t+r}{\max(t,r)} \sum_{\mathfrak{z} \in \mathbf{k}_0 } | \mathfrak{z} | \lesssim |v| z^2.$$
Combining this last inequality with \eqref{eq:widehatY}, applied with $\widehat{Y} = \frac{x^j}{r}\widehat{\Omega}_{0k} -\frac{x^k}{r}\widehat{\Omega}_{0j}$, and \eqref{eq:widehatY2}, we get
\begin{equation}\label{eq:decompoY2} 
\left| \frac{x^j}{r}\widehat{\Omega}_{0k} (z)-\frac{x^k}{r}\widehat{\Omega}_{0j} (z) \right| \lesssim z.
\end{equation}
The estimate $|\widehat{\Omega}_{0k}(z)| \lesssim z$ then directly ensues from \eqref{eq:decompoY}, \eqref{eq:decompoY1} and \eqref{eq:decompoY2}.
\end{proof}

\subsection{Decomposition of $\partial_v$} \label{sect_dec_v}

In this subsection, we state the decompositions and estimates that will allow us to deal with error terms of the form $\partial_{x^i} \phi \partial_{v_i} \psi$ which appear in the commuted Vlasov equation (see Section \ref{sect_vlasov_commutator}), where $\phi$ is a function on $\mathcal M$ and $\psi$ is a function on $\mathcal P$. We start by introducing the notation
$$\nabla_v \psi : = \partial_{v_1} \psi \partial_{x^1}+\partial_{v_2}\psi \partial_{x^2}+\partial_{v_3}\psi \partial_{x^3}.$$
The $v$ derivatives are not part of the commutation vector fields and will be transformed using
\begin{equation}\label{eq:partial_v}
\partial_{v_i} \hspace{1mm} = \hspace{1mm} \frac{\widehat{\Omega}_{0i}}{|v|} - \frac{1}{|v|} \left(x^i \partial_t + t \partial_{x^i}\right),
\end{equation}
so that, for $\psi$ a sufficiently regular solution to the free relativistic massless transport equation $w^{\mu} \partial_{\mu} \psi =0$, $|\nabla_v \psi|$ essentially behaves as $(t+r)|\nabla_{t,x} \psi|$. In the following lemma, we prove that the radial component
$$ \left(\nabla_v \psi \right)^r  \hspace{1mm} = \hspace{1mm} \frac{x^i}{r} \partial_{v_i} \psi$$
has a better behavior near the light cone.
\begin{lemma}\label{vderivative}
For the radial component of $\nabla_v$ the following estimates hold
\begin{equation}
\left| \left(\nabla_v \psi \right)^r \right| \lesssim \frac{1}{|v|} \sum_{\widehat{Z} \in \widehat{\mathbb{P}}_0} \left|\widehat Z \psi \right| + \frac{|t-r|}{|v|} \left|\nabla_{t,x} \psi \right|, \qquad \left| \left(\nabla_v z \right)^r \right|  \lesssim \frac{z}{|v|}. \label{radial_v}
\end{equation}
Let $A$ denote a spherical frame field index. The angular part verifies the weaker estimates
\begin{equation}
\left| \left(\nabla_v \psi \right)^A \right| \lesssim \frac{1}{|v|} \sum_{\widehat{Z} \in \widehat{\mathbb{P}}_0} \left|\widehat Z  \psi \right| + \frac{t}{|v|} \left|\nabla_{t,x} \psi \right|, \qquad \left| \left(\nabla_v z \right)^A \right| \lesssim \frac{z+t}{|v|} . \label{angular_v}
\end{equation}
\end{lemma}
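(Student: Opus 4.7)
The plan is to start from the identity \eqref{eq:partial_v}, which rewrites $\partial_{v_i}$ as $|v|^{-1}\widehat{\Omega}_{0i} - |v|^{-1}(x^i\partial_t + t\partial_{x^i})$, and then to project onto the radial and angular directions separately. The key point is that the ``bad'' spacetime piece $x^i\partial_t + t\partial_{x^i}$ behaves very differently under the two projections, and the whole lemma amounts to identifying those two different behaviours.

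For the radial component $(\nabla_v\psi)^r = \frac{x^i}{r}\partial_{v_i}\psi$, the contraction produces
\[
\frac{x^i}{r}\bigl(x^i\partial_t + t\partial_{x^i}\bigr) \;=\; r\partial_t + t\partial_r,
\]
and I would then use the elementary identity $r\partial_t + t\partial_r = S - (t-r)\underline{L}$ (which one checks from $S = t\partial_t + r\partial_r$ and $\underline L = \partial_t - \partial_r$). Substituting back yields
\[
(\nabla_v\psi)^r \;=\; \frac{1}{|v|}\Bigl(\tfrac{x^i}{r}\widehat{\Omega}_{0i}\psi - S\psi + (t-r)\underline{L}\psi\Bigr),
\]
and since $|x^i/r|\leq 1$, $S$ and $\widehat{\Omega}_{0i}$ belong to $\widehat{\mathbb P}_0$, and $|\underline L\psi|\lesssim|\nabla_{t,x}\psi|$, this immediately produces the first inequality of \eqref{radial_v}.

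For the angular component $(\nabla_v\psi)^A = e_A^i\partial_{v_i}\psi$, I would exploit that $e_A$ is tangent to the spheres $\{t=\text{const},\;r=\text{const}\}$, so $e_A^i x^i = \langle e_A, x\rangle = 0$. Hence the $x^i\partial_t$ term drops out entirely, and one is left with
\[
(\nabla_v\psi)^A \;=\; \frac{e_A^i\,\widehat{\Omega}_{0i}\psi}{|v|} - \frac{t}{|v|}\,e_A\psi,
\]
which gives \eqref{angular_v} since $|e_A\psi|\lesssim|\nabla_{t,x}\psi|$. Note that here one only gets a factor of $t$, not $|t-r|$, because there is no scaling vector field to absorb the $r\partial_t$-type contribution that was crucial in the radial case.

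Finally, for the estimates on $z$, I would apply the two inequalities just proved with $\psi = z$ and invoke Lemma~\ref{lem_derivation_z} to bound $|\widehat{Z}z|\lesssim z$ for every $\widehat Z\in\widehat{\mathbb P}_0$, together with the sharper bound $|\partial_{t,x}z|\lesssim 1$. For the radial estimate this yields $|(\nabla_v z)^r|\lesssim |v|^{-1}(z+|t-r|)$, and combining with $1+|t-r|\lesssim z$ from Lemma~\ref{lem_wwl} gives the claimed bound $|(\nabla_v z)^r|\lesssim z/|v|$. For the angular estimate the same procedure yields $|(\nabla_v z)^A|\lesssim (z+t)/|v|$ directly, with no need to absorb the $t$ factor. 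The proof is essentially just algebra; the only mildly non-obvious step is recognizing the identity $r\partial_t + t\partial_r = S - (t-r)\underline L$ that produces the $|t-r|$ gain in the radial direction.
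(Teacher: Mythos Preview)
Your proof is correct and follows essentially the same approach as the paper. The only minor difference is in the angular case: the paper writes $e_A$ in terms of the rotations $\frac{x^i}{r}\partial_j-\frac{x^j}{r}\partial_i$ and observes that the $x^i\partial_t$ contributions cancel by antisymmetry, whereas you use the equivalent observation $e_A^i x^i=0$ directly; both routes isolate the same cancellation and yield the same bound.
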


\begin{proof}
Since
$$\frac{x^i}{r} \partial_{v_i}  = \frac{x^i}{r |v|} \widehat\Omega_{0i} - \frac{1}{|v|} (r\partial_t + t\partial_r) = \frac{x^i}{r |v|} \widehat\Omega_{0i} - \frac{1}{|v|} S +\frac{t-r}{|v|} \underline{L},$$
the assertion \eqref{radial_v} follows by Lemma \ref{lem_derivation_z}. For the first inequality of \eqref{angular_v}, recall that the vector field $e_A$ can be written as $e_A=C^A_{ij}\left( \frac{x^i}{r}\partial_{x^j}-\frac{x^j}{r} \partial_{x^i} \right)$, where $C^A_{ij}$ are bounded functions of $x$, so that, using \eqref{eq:partial_v},
$$ \left| \left(\nabla_v \psi \right)^A \right| \lesssim \sum_{i < j} \left| \frac{x^i}{r}\partial_{v_j} \psi -\frac{x^j}{r} \partial_{v_i} \psi \right| \lesssim \frac{1}{|v|} \sum_{\widehat{Z} \in \widehat{\mathbb{P}}_0} \left|\widehat Z  \psi \right| + \frac{t}{|v|} \left|\nabla_{t,x} \psi \right|. $$
The second inequality of \eqref{angular_v} is obtained by applying the last estimate to $\psi=z$ and using again Lemma \ref{lem_derivation_z}.
\end{proof}
Similar to the case of the wave equation, we can then deduce that $L \psi$ enjoys improved decay near the light cone. More precisely,
\begin{equation}\label{decayL}
|L \psi| \hspace{1mm} \lesssim \hspace{1mm} \frac{|t-r|}{1+t+r}|\nabla_{t,x} \psi | + \frac{1}{1+t+r} \sum_{\widehat{Z} \in \widehat{\mathbb{P}}_0 } | \widehat{Z} \psi |.
\end{equation}
This can be obtained by combining the previous Lemma with the relation 
$$(t+r)L = S+\frac{x^i}{r}\Omega_{0i} = S+\frac{x^i}{r}\widehat{\Omega}_{0i} - |v| \left( \nabla_v \psi \right)^r.$$
\subsection{The energy norms}\label{Subsectenergies}
We define here the energy norms both for the distribution function $f$ and the metric perturbation $h^1$. First, recall the definition \eqref{defomega} of the weight function $\omega_a^b$. Then, define, for all sufficiently regular function $\psi : [0,T[ \times \R^3_x \times R^3_v \rightarrow \R$ and symmetric $(0,2)$-tensor field $k$,
\begin{align}
\mathbb{E}^{a,b}[\psi](t) &:= \int_{\Sigma_t} \int_{\mathbb R_v^3} \left| \psi \right| |v| \mathrm dv \, \omega_a^b \, \mathrm dx + \int_0^t \int_{\Sigma_\tau} \int_{\mathbb R_v^3} \frac{|\psi|}{1+|u|} |w_L| \mathrm dv \, \omega_{a}^{b} \mathrm dx \mathrm d\tau, \label{def_mbb_e} \\ \nonumber
\mathcal{E}^{a,b}_{\mathcal{V} \mathcal{W}}[k](t) &:= \int_{\Sigma_t} \left| \nabla k \right|_{\mathcal{V} \mathcal{W}}^2 \omega_a^b \mathrm dx + \int_0^t \int_{\Sigma_\tau} \left| \overline\nabla k \right|_{\mathcal{V} \mathcal{W}}^2 \frac{\omega_{a}^{b}}{1+|u|} \mathrm dx \mathrm d\tau, \\ \nonumber
\mathring{\mathcal{E}}^{a,b}[k](t) &:= \int_{\Sigma_t} \frac{\left| \nabla k \right|^2}{1+t+r} \omega_a^b \mathrm dx + \int_0^t \int_{\Sigma_\tau} \frac{\left| \overline\nabla k \right|^2}{1+\tau+r} \frac{\omega_{a}^{b}}{1+|u|} \mathrm dx \mathrm d\tau,
\end{align}
where $\mathcal{V}$, $\mathcal{W} \in \{ \mathcal{U}, \mathcal{T}, \mathcal{L} \}$. If $\mathcal{V}=\mathcal{W}$ are equal to $\mathcal{U}$, we omit the subscript ${\mathcal{U}\mathcal{U}}$. For $a,b \in \R_+^*$, an integer $n \geq 0$ and a real number $\ell \geq \frac{2}{3}n$, we define the energies
\begin{align}
\mathbb{E}^{\ell}_{n}[\psi](t) \hspace{1mm} &:= \hspace{1mm} \sum_{|I|\leq n} \mathbb{E}^{\frac{1}{8}, \frac{1}{8}}\left[z^{\ell - \frac{2}{3}I^P} \widehat{Z}^I \psi \right]\!(t), \label{def_e_vlasov} \\ \nonumber
\overline{\mathcal{E}}_{n}^{a,b}[k](t)  \hspace{1mm} &:= \hspace{1mm}  \sum_{|J| \leq n} \left( \mathcal{E}^{a,b}\!\left[\mathcal{L}_Z^J k \right]\!(t) + \int_{\Sigma_t} |\nabla \mathcal{L}_Z^J( k)|^2 \mathrm dx\right),  \\  \nonumber
\mathring{\mathcal{E}}_{n}^{a,b}[k](t)  \hspace{1mm} &:= \hspace{1mm}  \sum_{|J| \leq n}\mathring{\mathcal{E}}^{a,b}\!\left[\mathcal{L}_Z^J k \right]\!(t),  \\ \nonumber
\mathcal{E}_{n, \mathcal{T} \mathcal{U}}^{a,b}[k](t)  \hspace{1mm} &:= \hspace{1mm}  \sum_{|J| \leq n} \mathcal{E}^{a,b}_{\mathcal{T} \mathcal{U}} \!\left[ \mathcal{L}_Z^J k \right]\!(t),  \\ \nonumber
\mathcal{E}_{n, \mathcal{L} \mathcal{L}}^{a,b}[k](t)  \hspace{1mm} &:= \hspace{1mm}  \sum_{|J| \leq n} \mathcal{E}^{a,b}_{\mathcal{L} \mathcal{L}} \!\left[ \mathcal{L}_Z^J k \right]\!(t).  
\end{align}
\begin{rem}
During the proof of Theorem \ref{main-thm_detailed}, as we will take $\ell \geq \frac{1}{8}$ and since $1+|t-r| \lesssim z$ according to Lemma \ref{lem_wwl}, the energy norm $\E_n^{\ell}[f]$ will control $ \int_{\Sigma_t} \int_{\R^3_v} \big| \widehat{Z}^I f \big| \dr v \dr x $ for any $|I| \leq n$.
\end{rem}

\subsection{Functional inequalities}
We end this section with some functional inequalities, starting with the following Hardy type inequality, which essentially follows from a similar one of \cite{LR10}. 

\begin{lemma} \label{lem_hardy}
Let $k$ be a sufficiently regular symmetric $(0,2)$ tensor field defined on $[0,T[ \times \R^3$. Consider $0 \leq \alpha \leq 2$, $b > 1$, $a >-1$, and $\mathcal{V}, \mathcal{W} \in \{\mathcal{L}, \mathcal{T}, \mathcal{U}\}$. Then for all $t\in [0,T[$ there holds
$$\int_{r=0}^{+ \infty} \frac{|k|_{\mathcal{V} \mathcal{W}}^2}{(1+t+r)^{\alpha}(1+|t-r|)^2} \omega^{b}_{a} r^2 \mathrm dr \hspace{1mm} \lesssim \hspace{1mm} \int_{r=0}^{+ \infty} \frac{| \nabla k|^2_{\mathcal{V} \mathcal{W}}}{(1+t+r)^{\alpha}} \omega^b_a r^2 \mathrm dr. $$
\end{lemma}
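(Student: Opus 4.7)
My plan is to reduce the tensorial Hardy estimate to a scalar one-dimensional Hardy inequality, and then prove the latter by integration by parts in $r$, following the approach of \cite{LR10}. For any $V \in \mathcal{V}$, $W \in \mathcal{W}$, the Cartesian components $V^{\alpha}, W^{\beta}$ depend only on the angular variables and not on $r$ (for instance $L^{0} = 1$ and $L^{i} = x^{i}/r$ which are functions of $\omega$ only), hence $\partial_{r}(k_{\alpha \beta} V^{\alpha} W^{\beta}) = V^{\alpha} W^{\beta} \partial_{r} k_{\alpha \beta}$ and therefore $|\partial_{r} \phi| \le |\nabla k|_{\mathcal{V} \mathcal{W}}$ pointwise for $\phi := k_{\alpha \beta} V^{\alpha} W^{\beta}$. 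After summing over $V, W$, it suffices to prove the scalar estimate
$$
\int_{0}^{+\infty} \frac{\phi^{2}}{(1+t+r)^{\alpha}(1+|t-r|)^{2}} \omega_{a}^{b} \, r^{2} \, \mathrm{d}r \;\lesssim\; \int_{0}^{+\infty} \frac{|\partial_{r} \phi|^{2}}{(1+t+r)^{\alpha}} \omega_{a}^{b} \, r^{2} \, \mathrm{d}r. \qquad (\star)
$$
To this end I split the LHS at $r = t$ into $I_{1} = \int_{0}^{t}$ and $I_{2} = \int_{t}^{\infty}$. On $[0,t]$ the integrand contains the factor $(1+t-r)^{-a-2} = \partial_{r}\bigl[(1+t-r)^{-a-1}/(a+1)\bigr]$, where the primitive is well defined thanks to $a > -1$; on $[t, \infty)$ the factor is $(1+r-t)^{b-2} = \partial_{r}\bigl[(1+r-t)^{b-1}/(b-1)\bigr]$, valid since $b > 1$. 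Both primitives are bounded by $\omega_{a}^{b}/(1+|t-r|)$ up to a constant depending only on $a,b$.

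Setting $\Psi(r) = r^{2}/(1+t+r)^{\alpha}$, IBP yields two boundary contributions: at $r = 0$, which vanishes since $\Psi(0) = 0$; at $r = \infty$, which vanishes under the standing regularity/decay assumption implicit in the finiteness of the RHS of $(\star)$; and at $r = t$, producing $\phi^{2}(t)\Psi(t)$ with coefficient $1/(a+1)$ from $I_{1}$ and $-1/(b-1)$ from $I_{2}$. The remaining bulk takes the schematic form
$$
I_{1} + I_{2} \;=\; \Bigl[\tfrac{1}{a+1} - \tfrac{1}{b-1}\Bigr]\phi^{2}(t)\,\Psi(t) \;-\; \int_{0}^{+\infty} \partial_{r}\bigl(\phi^{2} \Psi\bigr) \mu(r) \, \mathrm{d}r,
$$
with $|\mu(r)| \lesssim \omega_{a}^{b}(r)/(1+|t-r|)$. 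Expanding $\partial_{r}(\phi^{2}\Psi) = 2 \phi \partial_{r}\phi\,\Psi + \phi^{2} \Psi'$ and applying Cauchy--Schwarz to the cross term gives $\int |\phi \partial_{r}\phi|\Psi \omega_{a}^{b}/(1+|t-r|)\,\mathrm{d}r \le (I_{1}+I_{2})^{1/2} R^{1/2}$, where $R$ is the RHS of $(\star)$; Young's inequality then produces an absorbable $\epsilon (I_{1}+I_{2})$ and a controlled $C_{\epsilon} R$. The remaining term $\int \phi^{2} |\Psi'| \mu\,\mathrm{d}r$ uses the bound $|\Psi'(r)| \lesssim r/(1+t+r)^{\alpha}$, for which the hypothesis $\alpha \le 2$ is essential, and can again be handled by Cauchy--Schwarz combined with the previous estimate.

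The main obstacle I expect is the trace term $\phi^{2}(t)\Psi(t)$ at the light cone, which does not cancel between the two halves unless $a + 1 = b - 1$. I plan to absorb it via the fundamental theorem of calculus: writing $\phi^{2}(t) = -2 \int_{t}^{+\infty} \phi \partial_{r}\phi \, \mathrm{d}r$ (using the decay at infinity) and pairing with the weight $\Psi(t)$, Cauchy--Schwarz together with the fact that $\Psi(r) \sim \Psi(t)$ for $r$ close to $t$ in a layer of width comparable to $1$ yields $\phi^{2}(t)\Psi(t) \lesssim \epsilon(I_{1}+I_{2}) + C_{\epsilon} R$, which closes the estimate. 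The careful bookkeeping of the constants $a+1, b-1$ and the use of $\alpha \le 2$ to control $|\Psi'|$ are the sole delicate points; every other step is a routine weighted $1$D integration by parts.
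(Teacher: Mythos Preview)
Your reduction to the scalar inequality is exactly what the paper does: it applies the scalar Hardy inequality from \cite[Lemma~13.1]{LR10} to each component $k_{VW}$ and then uses that the frame vectors are $\partial_r$-parallel (so $\partial_r(k_{VW}) = (\nabla_{\partial_r}k)_{VW}$), which is precisely your observation that $V^\alpha, W^\beta$ depend only on angular variables. The paper stops there; you re-prove the scalar estimate, and your overall scheme is the standard one, but two steps need correction.

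First, the term $\int \phi^{2} |\Psi'| \mu\,\mathrm{d}r$: your proposed Cauchy--Schwarz bound does not close. Near $r=0$ the ratio of $|\Psi'|\mu$ to the integrand of $I_1$ behaves like $(1+t-r)/r$, which is unbounded, so this term is not controlled by $\epsilon(I_1+I_2) + C_\epsilon R$ through a magnitude estimate. The correct use of $\alpha \le 2$ is the \emph{sign}: one computes
\[
\Psi'(r) = r(1+t+r)^{-\alpha-1}\bigl[2(1+t) + (2-\alpha)r\bigr] \ge 0,
\]
so $-\int \phi^2 \Psi' m_j\,\mathrm{d}r \le 0$ has the favourable sign and is simply dropped. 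The magnitude bound $|\Psi'| \lesssim r/(1+t+r)^\alpha$ holds for every $\alpha \ge 0$ and is not where the hypothesis enters.

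Second, the trace term at $r=t$: your fundamental-theorem-of-calculus fix does work, but the ``layer of width $1$'' reasoning is not the right one. What is needed is again that $\Psi$ is increasing (from $\alpha \le 2$), so $\Psi(t) \le \Psi(r)$ for $r \ge t$ and the constant weight can be brought inside the integral before Cauchy--Schwarz. A cleaner route avoids the trace term entirely: integrate by parts to estimate $(a+1)I_1$ and $(b-1)I_2$ separately, using primitives $(1+t-r)^{-a-1}$ and $-(1+r-t)^{b-1}$ which both equal $\pm 1$ at $r=t$. After dropping the $\Psi'$ contributions by sign, the two trace contributions $\pm\phi^2(t)\Psi(t)$ cancel exactly upon adding, and one is left with $\min(a+1,b-1)(I_1+I_2) \le 2(I_1+I_2)^{1/2}R^{1/2}$, which closes immediately.
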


\begin{proof}
Let $\mathcal{V}, \mathcal{W} \in \{\mathcal{L}, \mathcal{T}, \mathcal{U}\}$ and $(V,W) \in \mathcal{V} \times \mathcal{W}$. Then, applying the Hardy type inequality proved in \cite[Appendix B, Lemma 13.1]{LR10}, we obtain
\begin{equation*}
\int_{r=0}^{+ \infty} \frac{|k_{VW}|^2}{(1+t+r)^{\alpha}(1+|t-r|)^2} \omega^{b}_{a} r^2 \mathrm dr \lesssim  \int_{r=0}^{+ \infty} \frac{| \partial_r( k_{VW})|^2}{(1+t+r)^{\alpha}} \omega^b_a r^2 \mathrm dr. 
\end{equation*}
Since $\nabla_{\partial_r} V =\nabla_{\partial_r} W=0$, we have $| \partial_r( k_{VW})|=|\nabla_{\partial_r}(k)_{VW}|$ and the result follows from the definition of $| \nabla k |_{\mathcal{V} \mathcal{W}}$.
\end{proof}

The following technical result will be useful to prove boundedness for energy norms.

\begin{lemma}\label{LemBulk}
Let $C>0$, $\overline{\kappa} >0$, $\underline{\kappa} >0$ such that $\overline{\kappa} \neq \underline{\kappa}$ and $g : [0,T[ \times \R^3 \rightarrow \R_+$ be a sufficiently regular function satisfying
$$\forall \; t \in [0,T[, \quad  \int_0^t \int_{\Sigma_{\tau}} g \mathrm dx \mathrm d \tau \hspace{1mm} \leq \hspace{1mm} C (1+t)^{\overline{\kappa}}.$$
Then, there exists $C^{\overline{\kappa}}_{\underline{\kappa}} \geq C$ such that
$$\forall \; t \in [0,T[, \quad \int_0^t \int_{\Sigma_{\tau}} \frac{g(\tau,x)}{(1+\tau)^{\underline{\kappa}}} \mathrm dx \mathrm d \tau \hspace{1mm} \leq \hspace{1mm} C^{\overline{\kappa}}_{\underline{\kappa}} (1+t)^{\max(0,\overline{\kappa}-\underline{\kappa})}.$$
\end{lemma}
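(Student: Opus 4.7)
My plan is to reduce the lemma to a one-dimensional calculus fact via integration by parts (or Abel-type manipulation) on the single-variable function obtained by integrating $g$ over $\Sigma_\tau$.

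First, I would introduce the nondecreasing function
\[
G(t) := \int_0^t \int_{\Sigma_\tau} g(\tau,x)\,\mathrm dx\,\mathrm d\tau,
\]
which by hypothesis satisfies $G(0)=0$ and $G(t)\le C(1+t)^{\overline\kappa}$. Since $g\geq 0$, the quantity we want to bound is
\[
I(t) := \int_0^t \frac{1}{(1+\tau)^{\underline\kappa}}\,\mathrm dG(\tau).
\]
Integration by parts yields
\[
I(t) \;=\; \frac{G(t)}{(1+t)^{\underline\kappa}} \;+\; \underline\kappa \int_0^t \frac{G(\tau)}{(1+\tau)^{\underline\kappa+1}}\,\mathrm d\tau,
\]
and after inserting the bound on $G$ we obtain
\[
I(t) \;\le\; C(1+t)^{\overline\kappa-\underline\kappa} \;+\; C\underline\kappa \int_0^t (1+\tau)^{\overline\kappa-\underline\kappa-1}\,\mathrm d\tau.
\]

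Next I would split according to the sign of $\overline\kappa-\underline\kappa$, using the hypothesis $\overline\kappa\neq\underline\kappa$ precisely to avoid the logarithmic borderline case. If $\overline\kappa>\underline\kappa$, the remaining time integral equals $(\overline\kappa-\underline\kappa)^{-1}\bigl((1+t)^{\overline\kappa-\underline\kappa}-1\bigr)$, and the two contributions combine into $C^{\overline\kappa}_{\underline\kappa}(1+t)^{\overline\kappa-\underline\kappa}$, with
\[
C^{\overline\kappa}_{\underline\kappa} \;=\; C\Bigl(1+\tfrac{\underline\kappa}{\overline\kappa-\underline\kappa}\Bigr),
\]
matching $\max(0,\overline\kappa-\underline\kappa)=\overline\kappa-\underline\kappa$. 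If instead $\overline\kappa<\underline\kappa$, the integrand is integrable at infinity and gives at most $(\underline\kappa-\overline\kappa)^{-1}$, while $(1+t)^{\overline\kappa-\underline\kappa}\le 1$; both terms are then uniformly bounded by a constant depending only on $C$, $\overline\kappa$, $\underline\kappa$, which is the desired estimate for $\max(0,\overline\kappa-\underline\kappa)=0$. Taking $C^{\overline\kappa}_{\underline\kappa}$ to be the maximum of these two explicit constants and $C$ itself yields the claim.

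The only subtle point is the integration-by-parts step, since $G$ is merely absolutely continuous (sufficient regularity of $g$ is assumed). One can either invoke Stieltjes integration directly or approximate: differentiate $\tau\mapsto G(\tau)(1+\tau)^{-\underline\kappa}$ almost everywhere, since
\[
\frac{\mathrm d}{\mathrm d\tau}\!\left(\frac{G(\tau)}{(1+\tau)^{\underline\kappa}}\right)
\;=\; \frac{G'(\tau)}{(1+\tau)^{\underline\kappa}} - \underline\kappa\,\frac{G(\tau)}{(1+\tau)^{\underline\kappa+1}},
\]
with $G'(\tau)=\int_{\Sigma_\tau}g(\tau,x)\,\mathrm dx$, and integrate from $0$ to $t$. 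No step is really an obstacle; the only thing one has to be careful about is the case distinction so as to produce a bound of the correct form $(1+t)^{\max(0,\overline\kappa-\underline\kappa)}$ and to exhibit a constant $C^{\overline\kappa}_{\underline\kappa}\geq C$ (which is automatic from the formulas above).
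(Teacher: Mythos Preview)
Your proof is correct and follows essentially the same route as the paper: both introduce the accumulated integral $G(t)$, integrate by parts in $\tau$, insert the hypothesis $G(\tau)\le C(1+\tau)^{\overline\kappa}$, and evaluate the resulting elementary integral to obtain the constant $C\bigl(1+\underline\kappa/|\overline\kappa-\underline\kappa|\bigr)$. The only difference is that you make the case distinction and the regularity justification more explicit than the paper does.
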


\begin{proof}
This follows from a integration by parts in the variable $\tau$,
\begin{align*}
\int_0^t \int_{\Sigma_{\tau}} \frac{g(\tau,x)}{(1+\tau)^{\underline{\kappa}}} \mathrm dx \mathrm d \tau &= \left[ \frac{\int_0^{\tau} \int_{\Sigma_s} g(s,x) \mathrm dx \mathrm d s }{(1+\tau)^{\underline{\kappa}}} \right]_0^t - \int_0^t \frac{-\underline{\kappa} }{(1+\tau)^{\underline{\kappa}+1}} \int_0^{\tau} \int_{\Sigma_{s}} g(s,x) \mathrm dx \mathrm d s  \mathrm d \tau \\
& \leq C (1+t)^{\overline{\kappa} - \underline{\kappa}}+C \cdot \underline{\kappa}  \int_0^{\tau} (1+\tau)^{\overline{\kappa} - \underline{\kappa}-1} \mathrm d \tau \\
&\leq \left( C+ \frac{C \cdot \underline{\kappa}}{|\overline{\kappa}-\underline{\kappa}|} \right) (1+t)^{\max(0,\overline{\kappa}-\underline{\kappa})}.
\end{align*}
\end{proof}

Recall the decomposition \eqref{ansatz_g}, where $\chi$ is a smooth cutoff function such that $\chi =0$ on $ ]-\infty, \frac{1}{4}]$ and $\chi=1$ on $[\frac{1}{2},+\infty[$. It will be useful to control the derivatives of the cut-off $\chi\left( \frac{r}{t+1} \right)$ which is the content of the next lemma. 
\begin{lemma}\label{lem_tech}
For any $Z^J \in \mathbb{K}^{|J|}$ with $|J| \geq 1$, there exists a constant $C_J>0$ such that
$$\left| Z^J \left( \chi \left( \frac{r}{t+1} \right) \right) \right| \hspace{1mm}  \leq \hspace{1mm} \frac{C_J}{(1+t+r)^{J^T}} \, \mathds{1}_{\frac{1+t}{4} \leq r  \leq \frac{1+t}{2}}.$$
\end{lemma}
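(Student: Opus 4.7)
The plan is to reduce $Z^J\!\left(\chi(r/(t+1))\right)$ to a finite sum of products of derivatives of $\phi := r/(t+1)$ via an iterated chain rule, then bound those products pointwise by an elementary symbol-type estimate.

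A standard induction on $|J|$ (non-commutative Fa\`a di Bruno) gives
$$Z^J\bigl(\chi(\phi)\bigr) \;=\; \sum_{\sigma} c_\sigma\, \chi^{(k)}(\phi)\,\prod_{i=1}^{k}Z^{I_i}\phi,$$
where the finite sum runs over configurations $\sigma = (k, I_1, \ldots, I_k)$ with $1 \le k \le |J|$, $|I_i|\ge 1$, $\sum_i|I_i| = |J|$, and crucially $\sum_i I_i^T = J^T$. Since $\chi^{(k)}$ is smooth and supported in $[\tfrac14,\tfrac12]$ for every $k\ge 1$, each factor $\chi^{(k)}(\phi)$ is uniformly bounded and vanishes outside the annulus $\mathcal{A} := \{\tfrac{1+t}{4}\le r \le \tfrac{1+t}{2}\}$; this immediately accounts for the indicator function in the lemma.

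The core pointwise estimate I would then establish is
$$\left|\partial^\alpha Z^I\phi\right|\;\lesssim_{\alpha,I}\;(1+t+r)^{-I^T-|\alpha|} \quad \text{on } \mathcal{A}, \qquad |I|\ge 0, \ \alpha \in \mathbb{N}^{4}.$$
This is proved by induction on $|I|$. The base case $|I|=0$ is checked by direct computation, using $r \sim 1+t \sim 1+t+r$ on $\mathcal{A}$. For the inductive step, $\partial_\mu$ improves the decay rate by one power of $(1+t+r)$ trivially, while each homogeneous vector field $\Omega_{ij}$, $\Omega_{0k}$, $S$ \emph{preserves} the decay rate: each of these is the product of a polynomial of degree at most one in $(t,x)$ (whose absolute value is $\lesssim 1+t+r$ on $\mathcal{A}$) with a first-order partial derivative (which improves decay by one power of $(1+t+r)$), so the two effects cancel, and a Leibniz expansion maintains the bound for every $\partial^\alpha$. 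Combining with $Z^T \in \{0,1\}$ according to whether $Z$ is homogeneous or a translation yields the displayed bound with $I^T$ on the right.

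Plugging these back into the Fa\`a di Bruno expansion, each summand satisfies on $\mathcal{A}$
$$\bigl|\chi^{(k)}(\phi)\bigr|\prod_{i=1}^{k}\bigl|Z^{I_i}\phi\bigr| \;\lesssim\; \prod_i (1+t+r)^{-I_i^T} \;=\; (1+t+r)^{-J^T},$$
and vanishes off $\mathcal{A}$; summing the finitely many contributions concludes the proof. The only slightly delicate ingredient is the symbol-type induction in the second step, but it amounts to routine Leibniz bookkeeping and presents no serious analytic difficulty.
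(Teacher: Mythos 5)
Your proof is correct and follows essentially the same strategy as the paper: both arguments track the number of translations $J^T$ through iterated derivatives of $r/(t+1)$, observing that each translation gains one power of $(1+t+r)^{-1}$ while homogeneous vector fields preserve the decay, and both read off the indicator from the fact that $\chi^{(k)}$ vanishes outside $[\tfrac14,\tfrac12]$ for $k\ge 1$. Your Fa\`a di Bruno expansion together with the symbol estimate $|\partial^\alpha Z^I\phi|\lesssim (1+t+r)^{-I^T-|\alpha|}$ simply makes explicit the bookkeeping the paper compresses into the observation that $Z^J(\chi(r/(t+1)))$ is a quotient of polynomials whose degrees differ by $-J^T$.
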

\begin{proof}
For any $\mu \in \llbracket 0,3 \rrbracket$, we have $\partial_{x^\alpha} (x^{\mu})=\delta_{\mu}^{\alpha}$ and for any homogeneous vector field $Z \in \mathbb{K}$, $Z(x^{\mu})=0$ or there exists $0 \leq \nu \leq 3$ such that $Z(x^{\mu})=\pm x^{\nu}$. Hence, in view of support considerations, there exist two polynomials $P_{n_1}(t,x)$ and $P_{n_2}(1+t,r)$ of degree $n_1$ and $n_2$, such that
$$\left| Z^J \left( \chi \left( \frac{r}{t+1} \right) \right) \right| \hspace{1mm} \leq \hspace{1mm} \frac{|P_{n_1}(t,x)|}{| P_{n_2}(1+t,r)|} \mathds{1}_{\left\{\frac{1}{4} \leq \frac{r}{t+1}  \leq \frac{1}{2}\right\}}, \qquad n_1-n_2 =-J^T.$$
since $1+t+r \lesssim r$ and $1+t+r \lesssim t$ if $\frac{1}{4} \leq \frac{r}{t+1}  \leq \frac{1}{2}$, the result follows.
\end{proof}

We will need the following, weighted version, of the Klainerman-Sobolev inequality. 

\begin{prop} \label{KSwave}
Let $k$ be a sufficiently regular tensor field defined on $[0,T[ \times \R^3$. Then, for all $(t,x) \in [0,T[ \times \R^3$,
$$|k |(t,x) \hspace{1mm} \lesssim \hspace{1mm} \frac{1}{(1+t+r)(1+|t-r|)^{\frac{1}{2}}  |\omega_a^b|^{\frac {1}{2}}} \sum_{|J|\leq 2} \left\|  \left| \mathcal{L}_Z^J (k) \right| \sqrt{\w} \right\|_{L^2(\Sigma_t)}.$$
\end{prop}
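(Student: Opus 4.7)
The plan is to reduce the tensorial weighted Klainerman--Sobolev inequality to the corresponding scalar statement applied componentwise, and then to pass from Cartesian derivatives back to Lie derivatives using the fact that $\partial_\mu Z^\nu$ is constant for every $Z \in \mathbb{K}$.

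First, I would establish (or invoke) the weighted scalar Klainerman--Sobolev inequality: for any sufficiently regular $\phi:[0,T[\times \R^3 \to \R$,
\begin{equation*}
|\phi(t,x)| \lesssim \frac{1}{(1+t+r)(1+|t-r|)^{\frac{1}{2}} |\omega_a^b|^{\frac{1}{2}}} \sum_{|J|\leq 2} \left\| (Z^J \phi)\sqrt{\omega_a^b} \right\|_{L^2(\Sigma_t)}.
\end{equation*}
This is proved by the standard dyadic argument: split into the interior region $r \leq (1+t)/2$ and the exterior annuli $2^{k-1} \leq 1+|t-r| \leq 2^{k+1}$. On each piece one applies the usual Sobolev embedding $H^2 \hookrightarrow L^\infty$ after pulling back by a rescaling which turns the vector fields $Z \in \mathbb{K}$ into a uniformly elliptic system of generators. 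The weight $\omega_a^b$ depends only on $u=t-r$ and satisfies $|Z(\omega_a^b)| \lesssim \omega_a^b$ for each $Z \in \mathbb{K}$ (indeed $\partial_\mu, \Omega_{ij}$ essentially annihilate it, while $S,\Omega_{0k}$ produce factors of $u \cdot (\omega_a^b)' \lesssim \omega_a^b$), so commuting $\sqrt{\omega_a^b}$ through the vector fields costs only lower-order terms of the same size, and on each dyadic annulus $\omega_a^b$ is essentially constant.

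Next, I apply this scalar inequality to every Cartesian component $k_{\alpha\beta}$, $0\leq \alpha,\beta \leq 3$, and use $|k| \lesssim \sum_{\alpha,\beta}|k_{\alpha\beta}|$ to obtain
\begin{equation*}
|k|(t,x) \lesssim \frac{1}{(1+t+r)(1+|t-r|)^{\frac{1}{2}} |\omega_a^b|^{\frac{1}{2}}} \sum_{|J|\leq 2} \sum_{\alpha,\beta} \left\| (Z^J k_{\alpha\beta})\sqrt{\omega_a^b} \right\|_{L^2(\Sigma_t)}.
\end{equation*}
It then remains to convert the Cartesian derivatives $Z^J k_{\alpha\beta}$ into $\mathcal{L}_Z^{J'}k$. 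From \eqref{defLie},
\begin{equation*}
Z(k_{\alpha\beta}) = (\mathcal{L}_Z k)_{\alpha\beta} - (\partial_\alpha Z^\mu)\, k_{\mu\beta} - (\partial_\beta Z^\mu)\, k_{\alpha\mu},
\end{equation*}
and for every $Z \in \mathbb{K}$ the coefficients $\partial_\mu Z^\nu$ are constants. A straightforward induction on $|J|$ therefore gives
\begin{equation*}
|Z^J k_{\alpha\beta}| \lesssim \sum_{|J'|\leq |J|} |\mathcal{L}_Z^{J'} k|,
\end{equation*}
which is compatible with the equivalence \eqref{equinormLie}. Substituting this estimate into the previous display and summing over $\alpha,\beta$ yields the proposition.

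The only delicate point is the scalar weighted Klainerman--Sobolev inequality, and within it the management of the weight: one must check that the derivatives of $\omega_a^b$ do not deteriorate the estimate, which is handled by the bound $|Z(\omega_a^b)| \lesssim \omega_a^b$ and the fact that on each dyadic shell $\omega_a^b$ varies only by a bounded factor. Everything else is purely algebraic and follows from the constancy of $\partial_\mu Z^\nu$ on $\mathbb{K}$.
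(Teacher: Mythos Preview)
Your proposal is correct and the reduction from tensors to scalars via componentwise application and the equivalence \eqref{equinormLie} is exactly what the paper does. For the scalar weighted inequality, however, your route and the paper's diverge.

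The paper does \emph{not} redo a dyadic-annuli argument. Instead it takes the classical unweighted Klainerman--Sobolev inequality \eqref{KSclassical} as a black box and applies it to $\phi$ multiplied by carefully chosen cut-offs and by a \emph{smoothed} version of the weight. Concretely, in the far-interior region $|x|\le (1+t)/4$ it applies \eqref{KSclassical} to $\phi(t,y)\,(1-\chi(|y|/(1+t)))$ and uses that $\omega_a^b\sim (1+t)^{-a}$ there; near and outside the cone it introduces $\tau_-:=(1+|t-r|^2)^{1/2}$ and applies \eqref{KSclassical} to $\chi(r-t)\,\tau_-^{b/2}\phi$ and to $\chi(t-r+2)\,\chi(2r/(1+t))\,\tau_-^{-a/2}\phi$, then checks via Lemma~\ref{lem_tech} and explicit computations of $Z^K(t-r)$ that the commutators are harmless. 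The reason for the smoothed weight $\tau_-$ is precisely that $\omega_a^b$ is not $C^2$ across $t=r$ (the paper flags this in the proof of Proposition~\ref{KSvlasov}), so your claim that one can simply ``commute $\sqrt{\omega_a^b}$ through the vector fields'' would fail at second order if taken literally; your dyadic-annuli viewpoint actually sidesteps this more cleanly, since on each annulus the weight is essentially constant and need not be differentiated at all. In short: your approach is sound and perhaps conceptually more robust with respect to the weight's regularity, while the paper's approach is shorter because it outsources the analytic work to the classical inequality and only has to control Leibniz errors from the cut-offs and the smooth $\tau_-$.
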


\begin{proof}
It is sufficient to prove the proposition for scalar functions $\p$ since we can apply the inequality to each cartesian component of $k$ and then use that
$$  \sum_{|J| \leq 2}  \left| \nabla_Z^J (k) \right|  \hspace{1mm} \lesssim \hspace{1mm} \sum_{|J| \leq 2}  \left| \mathcal{L}_Z^J (k) \right|. $$
Recall the classical Klainerman-Sobolev inequality
\begin{equation}\label{KSclassical}
|\psi(t,x) | \hspace{1mm} \lesssim \hspace{1mm} (1+t+r)^{-1} (1+|t-r|)^{-\frac{1}{2}} \sum_{|J|\leq 2} \left\| Z^J \psi \right\|_{L^2(\Sigma_t)}
\end{equation}
and that $\chi$ is a smooth cutoff function such that $\chi =0$ on $ ]-\infty, \frac{1}{4}]$ and $\chi=1$ on $[\frac{1}{2},+\infty[$. Consider first $(t,x) \in [0,T[ \times \R^3$ such that $ |x| \leq \frac{1+t}{4}$. Applying \eqref{KSclassical} to $\psi(t,y) = \phi (t,y) \cdot \left(1-\chi \left( \frac{|y|}{1+t} \right) \right) $ gives, using Leibniz formula and Lemma \ref{lem_tech},
$$|\p|(t,x) \hspace{1mm} \lesssim \hspace{1mm} \frac{(1+t)^{a/2}}{(1+t+r) (1+|t-r|)^{\frac{1}{2}}} \sum_{|J|\leq 2} \left\| Z^J \left(  \phi   \right)(t,y) \cdot(1+t)^{-a/2} \right\|_{L^2\left( |y| \leq \frac{1+t}{2}\right)}\!.$$
As $(1+t)^{-a} \lesssim \w (t,y) \lesssim (1+t)^{-a}$ for all $|y| \leq \frac{1+t}{2}$, we obtain the result for the region considered. Consider now $(t,x) \in [0,T[ \times \R^3$ such that $ |x| \leq \frac{1+t}{4}$ and let us introduce $\tau_- := (1+|t-r|^2)^{\frac{1}{2}}$ for regularity issues. Applying the classical Klainerman-Sobolev inequality \eqref{KSclassical} to $\chi (r-t) \tau_-^{\frac{b}{2}} \phi$ and $\chi (t-r+2) \chi \left( \frac{2 r}{1+t} \right) \tau_-^{-\frac{a}{2}} \phi$, we obtain, for all $(t,x) \in [0,T[ \times \R^3$,
\begin{align*}
|\w|^{\frac{1}{2}} | \phi|(t,x)  & \lesssim  \tau_-^{-\frac{a}{2}} \chi (t-|x|+2) \chi \left( \frac{2 |x|}{1+t} \right) | \phi|(t,x)+\tau_-^{\frac{b}{2}} \chi (|x|-t) | \phi|(t,x) \\
& \lesssim  \frac{1}{(1+t+r) (1+|t-r|)^{\frac{1}{2}}} \! \sum_{|J|\leq 2} \left| \int_{\Sigma_t} \! \left| Z^J  \! \left( \! \chi (t-r+2) \chi \! \left( \frac{2 r}{1+t} \right) \! \tau_-^{-\frac{a}{2}} \phi  \right)  \right|^2 \! \dr x \right|^{\frac{1}{2}} \\ & \quad + \frac{1}{(1+t+r) (1+|t-r|)^{\frac{1}{2}}} \! \sum_{|J|\leq 2} \left| \int_{\Sigma_t} \! \left| Z^J \! \left(  \chi (r-t) \tau_-^{\frac{b}{2}} \phi  \right)  \right|^2 \dr  x \right|^{\frac{1}{2}}\! .
\end{align*}
Note that
\begin{itemize}
\item for $K \ge 1$, $ \left| Z^K \left( \chi \! \left( \frac{2 r}{1+t} \right) \! \right) \right| \lesssim \mathds{1}_{\frac{1+t}{8} \leq r \leq \frac{1+t}{4}}$, which can be obtained by following the proof of Lemma \ref{lem_tech}. In particular, we have $r^{-1} \lesssim (1+t+r)^{-1}$ on the support of the two integrands on the right-hand side of the previous inequality.
\item $\partial_t (t-r)=1$, $\partial_i (t-r)=-\frac{x^i}{r}$, $\Omega_{ij} (t-r)=0$, $\Omega_{0k}(t-r)=-\frac{x^k}{r}(t-r)$ and $S(t-r)=t-r$, so that
$$ \forall \; |K| \leq 2, \qquad \left| Z^K ( t-r) \right| \lesssim \left(1+\frac{1}{r}+\frac{t}{r} \right)|t-r|.$$
\item $|\chi'(r-t)|+|\chi'(t-r+2)| \leq 2 \| \chi'\|_{L^{\infty}} \mathds{1}_{\frac{1}{4} \leq r-t \leq \frac{7}{4}}$, so that $t-r$ is bounded on the support of $\chi'(r-t)$ and $\chi'(t-r+2)$,
\item $\chi (r-t) \tau_-^{\frac{b}{2}}+\chi (t-r+2) \tau_-^{-\frac{a}{2}} \leq 2 \sqrt{\w}$,.
\end{itemize}
We then obtain
$$ \int_{\Sigma_t} \left| Z^J  \! \left( \! \chi (t-r+2) \chi \! \left( \frac{2 r}{1+t} \right)  \tau_-^{-\frac{a}{2}} \phi \! \right) \! \right|^2\! +\left| Z^J \! \left( \! \chi (r-t) \tau_-^{\frac{b}{2}} \phi \! \right) \! \right|^2 \dr  x \hspace{1mm} \lesssim \hspace{1mm} \sum_{|I| \leq 2} \int_{\Sigma_t} \left| Z^I   \phi \right|^2 \w \dr  x,$$
which implies the result.
\end{proof}

Furthermore, we will need a slight improvement of the Klainerman-Sobolev inequality for massless Vlasov fields originally proved in \cite{FJS}.

\begin{prop} \label{KSvlasov}
Let $(a,b,c) \in \R^3$ and $f :[0,T[ \times \R^3 \times \R^3_v \rightarrow \R$ be a sufficiently regular function. Then, for all $(t,x) \in [0,T[ \times \R^3$,
\begin{equation*}
\int_{\mathbb R_v^3} z^c|f|(t,x,v) \,|v| \mathrm dv \lesssim \frac{1}{(1+t+r)^{2}(1+|t-r|)\omega_a^b} \sum_{|I|\leq 3} \int_{\Sigma_t} \int_{\mathbb R_v^3} z^c \left|\widehat Z^I f\right| \, |v| \mathrm dv\, \omega_a^b \mathrm dx.
\end{equation*}
We point out that the constant hidden by $\lesssim$ depends linearly on $(|a|+|b|+|c|+1)^3$.
\end{prop}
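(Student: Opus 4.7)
The plan is to adapt the proof of the Klainerman-Sobolev inequality for waves (Proposition \ref{KSwave}) to velocity averages, by combining the classical three-dimensional Sobolev embedding $W^{3,1}(\R^3)\hookrightarrow L^\infty(\R^3)$ with a region-dependent rescaling adapted to the Minkowski geometry, together with the commutation formula of Lemma \ref{Zderivativeint}. Setting
$$\Phi(t,x):=\int_{\R^3_v}z^c|f|(t,x,v)\,|v|\,\dr v,$$
the two key observations are: (i) $x$-derivatives of $\Phi$ can be transferred to $\widehat Z$-derivatives of the integrand through Lemma \ref{Zderivativeint}, and (ii) an appropriate rescaling inside and outside the light cone yields the $(1+t+r)^{-2}(1+|t-r|)^{-1}$ decay directly from the three-dimensional Sobolev embedding. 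The non-smoothness of $|f|$ is handled by the usual regularization $|f|\mapsto (f^2+\varepsilon)^{1/2}$ and a limit $\varepsilon\to 0$, which I suppress throughout.

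Iterating Lemma \ref{Zderivativeint} yields
$$Z^I\Phi(t,x) \;=\; \int_{\R^3_v}\widehat Z^I\bigl(z^c|f||v|^2\bigr)\,\tfrac{\dr v}{|v|},\qquad Z^I\in\mathbb K^{|I|},$$
and the bounds $|\widehat Z^L z^c|\lesssim(|c|+1)^{|L|}z^c$ (from Lemma \ref{lem_derivation_z}) together with the elementary $|\widehat Z^L|v|^2|\lesssim |v|^2$ then give, after a Leibniz expansion, the pointwise estimate
$$|Z^I\Phi|(t,x)\;\lesssim\;(|c|+1)^{|I|}\sum_{|K|\leq|I|}\int_{\R^3_v}z^c|\widehat Z^K f||v|\,\dr v.$$
It therefore suffices to prove a weighted $L^1$-to-$L^\infty$ Sobolev inequality of the form
$$\Phi(t,x)\,\omega_a^b(t,x) \;\lesssim\; \frac{1}{(1+t+r)^2(1+|t-r|)}\sum_{|I|\leq 3}\int_{\Sigma_t}|Z^I\Phi|\,\omega_a^b\,\dr x,$$
for any scalar function $\Phi$ on $[0,T[\times\R^3$ whose derivatives can be related to $Z$-derivatives via Lemma \ref{extradecay}.

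For this, I split into the interior region $\{r\leq (1+t)/2\}$ and the exterior region $\{r>(1+t)/2\}$ via a cutoff $\chi(\tfrac{r}{1+t})$, whose derivatives are controlled by Lemma \ref{lem_tech}. In the interior, the rescaling $y=x/(1+t)$ shrinks the relevant half-ball to unit size; the classical Sobolev embedding on $\R^3_y$ combined with the Jacobian $(1+t)^{-3}$ and the identification $\partial_y\sim (1+t)\partial_x\sim Z$ from Lemma \ref{extradecay} (since $(1+t)\lesssim 1+|t-r|$ in this region) produces exactly $(1+t)^{-3}\sim(1+t+r)^{-2}(1+|t-r|)^{-1}$. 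In the exterior, a null-adapted rescaling with scales $(1+|u|,1+\underline u,1+\underline u)$ in the $(\partial_u,e_1,e_2)$ directions, together with the pointwise identities $|\partial_u\phi|\lesssim(1+|u|)^{-1}|Z\phi|$ and $|e_A\phi|\lesssim(1+\underline u)^{-1}|Z\phi|$ from Lemma \ref{extradecay}, and with the Jacobian of this anisotropic rescaling, produces the required $(1+\underline u)^{-2}(1+|u|)^{-1}$ decay. The weight $\omega_a^b$ is comparable at all points inside each rescaled unit box, so it can be pulled out of the $L^1$ norm with a purely numerical loss.

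The main obstacle is the exterior rescaling: the three available derivatives must be allocated so that at most one acts in the bad $\partial_u$-direction, because otherwise the embedding would yield $(1+|t-r|)^{-3}$ decay instead of the balanced $(1+t+r)^{-2}(1+|t-r|)^{-1}$. Careful organization using an anisotropic Sobolev embedding $W^{1,1}_u W^{2,1}_\omega \hookrightarrow L^\infty$ on $\R\times S^2$ yields the correct allocation. The polynomial dependence of the constant on $(|a|+|b|+|c|+1)^3$ then follows because each of the three commutations introduces at most a linear factor, either from $\widehat Z z^c$ (giving $|c|$) or from the cutoff and weight derivatives associated with $\omega_a^b$ (giving $|a|+|b|$).
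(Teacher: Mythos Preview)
Your overall strategy—region-dependent rescaling combined with an anisotropic Sobolev embedding, passing $Z$-derivatives through the velocity integral via Lemma \ref{Zderivativeint}—is sound and is essentially the standard route. The genuine gap is the step where you assert that $|\widehat Z^L z^c|\lesssim(|c|+1)^{|L|}z^c$ follows ``from Lemma \ref{lem_derivation_z}''. That lemma only establishes the first-order bound $|\widehat Z(z)|\lesssim z$. To iterate to $|L|=2,3$ you would need $|\widehat Z^I z|\lesssim z$ for $|I|\geq 2$: indeed the Leibniz expansion of $\widehat Z_1\widehat Z_2(z^c)$ contains the term $c\,z^{c-1}\,\widehat Z_1\widehat Z_2(z)$, which cannot be bounded by $z^c$ without the second-order estimate on $z$. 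The paper explicitly flags this obstruction at the very start of its proof, and the delicate decomposition \eqref{eq:decompoY} used in Lemma \ref{lem_derivation_z} to handle $\widehat\Omega_{0k}(z)$ (which relies on specific algebraic cancellations in $\widehat\Omega_{0k}(\mathbf m)$) does not iterate in any obvious way.

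The paper circumvents this by never hitting $z^c$ with more than one derivative at a time. Instead of the equality $Z^I\Phi=\int\widehat Z^I(z^c|f||v|^2)\tfrac{dv}{|v|}$ followed by a full Leibniz expansion, it uses the one-sided first-order bound \eqref{eq:newstep},
\[
\Bigl|Z\Bigl(\int_{\R^3_v} z^c |g|\,|v|\,\omega_a^b\,\dr v\Bigr)\Bigr|\ \lesssim\ \int_{\R^3_v} z^c |g|\,|v|\,\omega_a^b\,\dr v \ +\ \int_{\R^3_v} z^c |\widehat Z g|\,|v|\,\omega_a^b\,\dr v,
\]
which only invokes $|\widehat Z(z^c)|\lesssim|c|z^c$ and $|\widehat Z(\omega_a^b)|\lesssim\omega_a^b$ (note that $\omega_a^b$ is not $C^3$ either, so the same obstruction arises for it—your device of pulling $\omega_a^b$ out of the rescaled box sidesteps that one, but not the $z^c$ issue). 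The right-hand side is a sum of integrals of the same form, so this inequality can be substituted for each individual application of Lemma \ref{Zderivativeint} inside the classical velocity-average Klainerman--Sobolev argument; that argument only ever needs one-sided $L^1$-control of the $Z$-differentiated integral, never its exact value. With this modification your proof goes through, but as written the Leibniz step is unjustified.
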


\begin{proof}
As we do not have the inequality $|\widehat{Z}^I(z)| \lesssim z$ at our disposal if $|I| \geq 2$ and since $\omega_a^b$ is not $C^3$ class, one cannot apply a standard $L^1$ Klainerman-Sobolev inequality for velocity averages to $z^c f \omega_a^b$ and derive the result. In fact, one just have to slightly modify one step of its proof.

Remark that $|\widehat{Z} ( \w)| \lesssim \w$ for all $\widehat{Z} \in \widehat{\mathbb{P}}_0$ (this follows from $|\widehat{Z}(t-r)| \lesssim 1+|t-r|$). Hence, since $|\widehat{Z}(z^c)| \lesssim z^c$ according to Lemma \ref{lem_derivation_z}, we obtain, applying Lemma \ref{Zderivativeint},
\begin{equation}\label{eq:newstep}
\forall \hspace{0.5mm} \widehat{Z} \in \widehat{\mathbb{P}}_0, \quad  Z \left( \int_{\R^3_v} z^c |f| |v| \w \dr v \right) \lesssim \int_{\R^3_v} z^c |f| |v| \w \dr v+\int_{\R^3_v} z^c |\widehat{Z}f| |v| \w \dr v .
\end{equation}
Following the proof of \cite[Proposition 3.6]{massless}, with $f$ formally replaced by $z^c |v| f \w$, and using \eqref{eq:newstep} instead of Lemma \ref{Zderivativeint}, each time where this lemma is applied in \cite[Proposition 3.6]{massless}, we get the result.
\end{proof}

\section{Preliminary analysis for the study of the metric coefficients}\label{sectionpreliminaryanalysis}

In this section, we recall standard analytical properties of the metric coefficients in wave coordinates, independently of the Vlasov field. Most of the material of this section can be found in either \cite{LR10} or \cite{Lindblad}. In order to be self-contained, we present here not only the statements but also detailed proofs.

We fix, for all Sections \ref{sectionpreliminaryanalysis}-\ref{sectioncommutationvlasovenergy}, a sufficiently regular metric $g$ and its decomposition as
\begin{equation}\label{eq:decompog}
g=\eta+h=\eta+h^0+h^1, \quad \text{where} \quad h^0_{\mu \nu}=\chi \left( \frac{r}{1+t} \right)\frac{M}{r} \delta_{\mu \nu}, \qquad g^{-1} = \eta^{-1}+H. 
\end{equation}
We assume that $g$ is defined on $[0,T[ \times \R^3$, satisfies the wave gauge condition \eqref{wavegaugecondition} and verifies the following regularity conditions. For an integer $N \geq 6$ and $0 <\epsilon \leq \frac{1}{4}$ small enough, $M \leq \sqrt{\epsilon}$ and
\begin{equation}\label{eq:conditiong}
\forall \hspace{0.5mm} t \in [0,T[, \hspace{1mm} \forall \hspace{0.5mm} |J| \leq N, \quad \mathcal{L}_Z^J (h)  \in L^2(\Sigma_t), \qquad \forall \hspace{0.5mm} |J| \leq N-3, \quad \left\| \mathcal{L}_Z^J (h) \right\|_{L^{\infty}_{t,x}} \leq \sqrt{\epsilon}.
\end{equation}
These conditions, which will be verified during the proof of Theorem \ref{main-thm_detailed} for $N \geq 6$ (see the bootstrap assumption \eqref{boot2} and the decay estimates of Propositions \ref{decaymetric}-\ref{decaySchwarzschild}) and $\epsilon >0$, ensure that all the quantities considered in the next three sections are well-defined. In particular, the series of functions appearing below will be convergent in $L^2(\Sigma_t)$.

Let us start by estimating pointwise the Schwarzschild part and its derivatives.
\begin{prop}\label{decaySchwarzschild0}
For all $Z^J \in \mathbb{K}^{|J|}$, there exists $C_{J} >0$ such that for all $(t,x) \in \R_+ \times \R^3$,
\begin{equation}
\left| \mathcal{L}_{Z}^J (h^0) \right| (t,x)  \leq C_J\frac{M}{1+t+r} \qquad \mathrm{and} \qquad \left| \nabla \mathcal{L}_{Z}^J (h^0) \right| (t,x)  \leq  C_J\frac{M}{(1+t+r)^2}.
\end{equation}
\end{prop}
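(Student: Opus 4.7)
The proof should be a direct, slightly tedious computation exploiting two observations: first, the support property that both $\chi(r/(1+t))$ and its derivatives are supported in $\{r \geq (1+t)/4\}$, on which $1+t+r \lesssim r$; second, the fact that $h^0_{\mu\nu} = \phi(t,x) \delta_{\mu\nu}$, where $\phi = \chi(r/(1+t)) M/r$ and $\delta_{\mu\nu}$ is the constant Kronecker symbol. The plan is to proceed in two steps: first estimate the scalar $\phi$ and its derivatives, then deduce the tensorial bound by unpacking the Lie derivative.

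I would first show, by induction on $|J|$, that there exists $C_J>0$ such that $|Z^J(\phi)| \leq C_J M/(1+t+r)$ and $|Z^J(\nabla \phi)| \leq C_J M/(1+t+r)^2$. For the base case, one computes directly: on the support, $r \sim 1+t+r$, so $|\phi| \leq M/r \lesssim M/(1+t+r)$, while direct computation of $\partial_\mu \phi$ (splitting between the term where $\partial_\mu$ hits $\chi$ and the term where it hits $M/r$) together with Lemma~\ref{lem_tech} gives $|\nabla \phi| \lesssim M/(1+t+r)^2$. For the inductive step, by Leibniz
\[
Z^J(\phi) = \sum_{J_1+J_2=J} c_{J_1,J_2}\, Z^{J_1}\!\left(\chi\!\left(\tfrac{r}{1+t}\right)\right) Z^{J_2}(M/r),
\]
Lemma~\ref{lem_tech} handles the $\chi$-factor (yielding bounded factors on the support), while each $Z \in \mathbb{K}$ applied to $M/r$ yields another function bounded by $M/r$ on the support: indeed $\partial_\mu(M/r) = O(M/r^2)$, $\Omega_{ij}(M/r)=0$, $\Omega_{0k}(M/r) = -t M x^k /r^3$ with $t \lesssim r$ on the support, and $S(M/r)=-M/r$. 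The same reasoning applied to $\nabla \phi$ gives the improved $(1+t+r)^{-2}$ decay.

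To pass from $\phi$ to the $(0,2)$-tensor $h^0$, I would iterate \eqref{defLie}: since $h^0_{\mu\nu} = \phi\, \delta_{\mu\nu}$ and the coefficients $\partial_\mu Z^\alpha$ of any $Z \in \mathbb{K}$ are constants, one shows by induction that $\mathcal{L}_Z^J(h^0)_{\mu\nu}$ is a finite linear combination, with bounded constant coefficients, of terms of the form $Z^K(\phi)\, \delta_{\alpha\beta}$ with $|K| \leq |J|$. The pointwise bound on $\mathcal{L}_Z^J(h^0)$ then follows from the estimate on $Z^K(\phi)$ established above. For the gradient, I would use Lemma~\ref{lem_com_lie} to commute: $\nabla \mathcal{L}_Z^J(h^0) = \mathcal{L}_Z^J(\nabla h^0)$, and run the same unfolding, now using the improved bound on $Z^K(\nabla \phi)$.

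There is no genuine obstacle here; the main point is bookkeeping. The only thing one must watch carefully is that the support restriction is preserved when one applies a vector field to $\phi$ (because $Z(\chi(r/(1+t)))$ lives in an even smaller region, by Lemma~\ref{lem_tech}) and that the boost $\Omega_{0k}$, which is the only field carrying an explicit factor of $t$, does not destroy the decay—this is exactly where the support condition $t \lesssim r$ is used.
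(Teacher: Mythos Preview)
Your proposal is correct and follows essentially the same approach as the paper: both split $h^0_{\mu\nu}=\phi\,\delta_{\mu\nu}$, apply the Leibniz rule to separate $\chi(r/(1+t))$ from $M/r$, invoke Lemma~\ref{lem_tech} for the cutoff, estimate $Z^K(M/r)$ directly using that $t\lesssim r$ on the support, and then reduce the tensor estimate to the scalar one via the constancy of $\partial_\mu Z^\alpha$. The only cosmetic difference is that the paper tracks $J^T$ and $J^P$ explicitly and writes $Z^K(1/r)$ as an explicit polynomial ratio, whereas you argue more informally field by field; for the gradient, the paper applies the first estimate to $\partial_\mu Z^J$, while you commute $\nabla$ inside via Lemma~\ref{lem_com_lie}, which is equivalent.
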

\begin{proof}
Let $Z^{J_0} \in \mathbb{K}^{|J_0|}$ and recall that $h^0_{\mu \nu} = \chi( \frac{r}{t+1}) \frac{M}{r} \delta_{\mu \nu}$. Recall also that $J_0^T$ (respectively $J_0^P$) is the number of translations (respectively homogeneous vector fields) composing $Z^{J_0}$. By the Leibniz rule we have,
\begin{equation}\label{eq:decayh00} \left| \mathcal{L}_Z^{J_0}( h^0) \right| \hspace{1mm} \lesssim \hspace{1mm} \sum_{0 \leq \mu, \nu \leq 3} \sum_{\begin{subarray}{} |I| \leq |J_0| \\ I^T=J_0^T \end{subarray}} | Z^{I} h^0_{\mu \nu}| \lesssim M \sum_{\begin{subarray}{} |Q|+|K| \leq |J_0| \\ Q^T+K^T= J^T_0 \end{subarray}} \left| Z^Q \left( \chi \left( \frac{r}{t+1} \right) \right) Z^{K} \left( \frac{1}{r} \right) \right|.
\end{equation}
By Lemma \ref{lem_tech} and a straightforward computation, we have
\begin{equation}\label{eq:decayh01}
\left| Z^Q \left( \chi \left( \frac{r}{t+1} \right) \right) \right| \hspace{1mm} \leq \hspace{1mm} C_Q \frac{\mathds{1}_{\left\{\frac{1}{4} \leq \frac{r}{t+1}  \leq \frac{1}{2}\right\}}}{(1+t+r)^{Q^T}}  ,  \qquad \left| Z^{K} \left( \frac{1}{r} \right) \right| \hspace{1mm}  \leq \hspace{1mm} \frac{|P_{K^P} (t,r,\frac{x}{r})|}{r^{|K|+1}},
\end{equation}
where $P_{K^P}(t,r,\frac{x}{r})$ is a certain polynomial in $(t,r,\frac{x}{r})$ which has degree $K^P$ in $(t,r)$. Applying this to $Z^{J_0}=Z^J$ and using that $1+t+r \lesssim r$ on the support of $h^0$ as well as $1+t+r \lesssim t+1$ if $\frac{1}{4} \leq \frac{r}{t+1}  \leq \frac{1}{2}$, we obtain the first estimate. For the second one, note that 
$$ \left| \nabla \mathcal{L}_Z^J( h^0) \right| \hspace{1mm} \lesssim \hspace{1mm} \sum_{0 \leq \mu \leq 3} \left| \mathcal{L}_{\partial_{\mu}}\mathcal{L}_Z^J( h^0) \right|$$
and apply \eqref{eq:decayh00}-\eqref{eq:decayh01} to $Z^{J_0} = \partial_{\mu} Z^J$ for all $\mu \in \llbracket 0, 3 \rrbracket$.
\end{proof}

\subsection{Difference between $H$ and $h$} \label{sect_diff_hh}

In this subsection, we study the difference between $H^{\mu \nu}:=g^{\mu \nu} - \eta^{\mu \nu}$ and $h^{\mu \nu}:=h_{\alpha \beta} \eta^{\alpha \mu} \eta^{\beta \nu} $. For this, let us define 
$$H_1^{\mu \nu} := g^{\mu \nu}- \eta^{\mu \nu}+(h^0)^{\mu \nu}, \quad \text{so that} \quad g^{\mu \nu}=(\eta_{\mu \nu}+h^0_{\mu \nu}+h^1_{\mu \nu} )^{-1} = \eta^{\mu \nu}-(h^0)^{\mu \nu}+H_1^{\mu \nu}. $$ 
Using the expansion in Taylor series of the inverse matrix function, we then obtain
\begin{eqnarray}
\nonumber H^{\mu \nu} & = & -\eta^{\mu \alpha} h_{\alpha \beta} \eta^{\beta \nu} + \mathcal{O}^{\mu \nu } ( |h|^2 ) \hspace{2mm} = \hspace{2mm} - h^{\mu \nu}+ \mathcal{O}^{\mu \nu } ( |h|^2 ), \\
\nonumber H_1^{\mu \nu} & = &  -\eta^{\mu \alpha} h^1_{\alpha \beta} \eta^{\beta \nu} + \mathcal{O}^{\mu \nu } ( |h|^2 ) \hspace{2mm} = \hspace{2mm} - (h^1)^{\mu \nu}+ \mathcal{O}^{\mu \nu } ( |h|^2 ), \hspace{5mm} \text{where} \\ \nonumber
 \mathcal{O}^{\mu \nu } ( |h|^2 ) & = & \sum_{n=2}^{+ \infty} (-1)^n  \eta^{\mu \alpha} h_{\alpha \beta_1} \prod_{i=2}^n  (\eta^{\beta_{i-1} \alpha} h_{\alpha \beta_i} )\eta^{\beta_n \nu} \hspace{1.5mm} = \hspace{1.5mm} \sum_{n=2}^{+ \infty} (-1)^n  {h^{\mu}}_{ \beta_1} \prod_{i=2}^n  ({h^{\beta_{i-1}}}_{ \beta_i} )\eta^{\beta_n \nu}.
\end{eqnarray}
The goal now is to compare $H$ with $h$ and $H_1$ with $h^1$. In order to unify the treatment of these two cases, we consider $(\mathfrak{H}, \mathfrak{h}) \in \{ (H_1,h^1),(H,h) \}$. Recall now, as the elements of $\mathbb{K} \setminus \{S \}$ are Killing vector fields and since $S$ is a conformal Killing vector field of factor $2$, that, when acting on the contravariant tensor $\eta^{\mu \nu}$,
\begin{equation}\label{LieZMinko}
 \forall \hspace{0.5mm} Z \in \mathbb{K} , \qquad \mathcal{L}_Z (\eta^{-1})^{\mu \nu} =- 2 \delta^S_Z \eta^{\mu \nu}.
\end{equation}  
As the Lie derivative commutes with contraction, this implies
$$ \forall \hspace{0.5mm} Z \in \mathbb{K} , \quad \mathcal{L}_Z( \overline{\mathfrak{h}} )^{\mu \nu}=\eta^{\mu \alpha} \mathcal{L}_Z(\mathfrak{h})_{\alpha \beta} \eta^{\beta \nu}- 4 \delta^S_Z  \eta^{\mu \alpha} \mathfrak{h}_{\alpha \beta} \eta^{\beta \nu}, \hspace{1.5cm} \overline{\mathfrak{h}}^{\mu \nu} := \eta^{\mu \alpha} \mathfrak{h}_{\alpha \beta} \eta^{\beta \nu}.$$
Iterating the previous arguments, we then obtain 
\begin{eqnarray} 
\quad \forall \hspace{0.5mm} Z^J \in \mathbb{K}^{|J|} , \hspace{1mm} \exists \hspace{0.5mm} C^J_M \in \mathbb{Z}, \quad \mathcal{L}^J_Z( \overline{\mathfrak{h}} )^{\mu \nu}& = & \mathcal{L}_Z^J(\mathfrak{h})^{\mu \nu} +\sum_{|M| < |J|} C_M^J \mathcal{L}^M_Z( \mathfrak{h} )^{\mu \nu}, \label{eq:unan} \\ 
\nabla \mathcal{L}^J_Z( \overline{\mathfrak{h}} )^{\mu \nu} & = & \nabla \mathcal{L}_Z^J(\mathfrak{h})^{\mu \nu} +\sum_{|M| < |J|} C_M^J \nabla \mathcal{L}^M_Z( \mathfrak{h} )^{\mu \nu} , \label{eq:daou} \\ \overline{\nabla} \mathcal{L}^J_Z( \overline{\mathfrak{h}} )^{\mu \nu}& = & \overline{\nabla} \mathcal{L}_Z^J(\mathfrak{h})^{\mu \nu} +\sum_{|M| < |J|} C_M^J \overline{\nabla} \mathcal{L}^M_Z( \mathfrak{h} )^{\mu \nu}. \label{eq:tri}
\end{eqnarray}
Moreover, using \eqref{LieZMinko}, we also obtain that
\begin{equation}\label{eq:trihanter}
 \mathcal{L}_Z^J(\mathcal{O}(|h|^2)) = \sum_{n=2}^{+ \infty} (-1)^n \sum_{|J_1|+...+|J_n| \leq |J|} C^J_{J_1,...,J_n} \eta^{\mu \alpha} \mathcal{L}_Z^{J_1}(h)_{\alpha \beta_1} \prod_{i=2}^n  (\eta^{\beta_{i-1} \alpha} \mathcal{L}_Z^{J_1}(h)_{\alpha \beta_i} )\eta^{\beta_n \nu},
 \end{equation}
where $C^J_{J_1,...,J_n} \in \mathbb{Z}$. Consequently, since we have $|\mathcal{L}_Z^K(h)| \leq \frac{1}{2}$ for all $|K| \leq N-3$ by the condition \eqref{eq:conditiong}, there holds
\begin{equation*}
\forall	\hspace{0.5mm} |J| \leq N, \hspace{1cm} \left| \mathcal{L}_Z^J(\mathcal{O}(|h|^2)) \right| \hspace{2mm} \lesssim \hspace{2mm} \sum_{|J_1|+|J_2| \leq |J|} \left| \mathcal{L}_Z^{J_1}(h) \right| \left| \mathcal{L}_Z^{J_2}(h) \right|  .
\end{equation*}
Similarly, one can prove that
\begin{eqnarray*}
\forall	\hspace{0.5mm} |J| \leq N, \hspace{1cm} \left| \nabla \mathcal{L}_Z^J(\mathcal{O}(|h|^2)) \right| & \lesssim & \sum_{|J_1|+|J_2| \leq |J|} \left| \mathcal{L}_Z^{J_1}(h) \right| \left| \nabla \mathcal{L}_Z^{J_2}(h) \right| ,  \\
\left| \overline{\nabla} \mathcal{L}_Z^J(\mathcal{O}(|h|^2)) \right| & \lesssim & \sum_{|J_1|+|J_2| \leq |J|} \left| \mathcal{L}_Z^{J_1}(h) \right| \left| \overline{\nabla} \mathcal{L}_Z^{J_2}(h) \right|. 
\end{eqnarray*}
We then immediately obtain the following result.
\begin{prop}\label{Prop_H_to_h}
Let $N \geq 6$, assume that \eqref{eq:conditiong} holds and consider $(\mathfrak{H}, \mathfrak{h}) \in \{ (H_1,h^1), (H,h) \}$.  Then, for all $|J| \leq N$ and $(U,V) \in \mathcal{U}^2$, we have
\begin{eqnarray}
\nonumber \left|  \mathcal{L}_Z^J(\mathfrak{H})_{UV}-\mathcal{L}_Z^J(\mathfrak{h})_{UV} \right| & \lesssim & \sum_{|M| < |J|}\left| \mathcal{L}_Z^M(\mathfrak{h})_{UV} \right|+  \sum_{|J_1|+|J_2| \leq |J|} \left| \mathcal{L}_Z^{J_1}(h) \right| \left|  \mathcal{L}_Z^{J_2}(h) \right|, \\
\nonumber \left|  \nabla \mathcal{L}_Z^J(\mathfrak{H})_{UV}- \nabla\mathcal{L}_Z^J(\mathfrak{h})_{UV} \right| & \lesssim & \sum_{|M| < |J|}\left| \nabla \mathcal{L}_Z^M(\mathfrak{h})_{UV} \right|+  \sum_{|J_1|+|J_2| \leq |J|} \left| \mathcal{L}_Z^{J_1}(h) \right| \left| \nabla \mathcal{L}_Z^{J_2}(h) \right|, \\ 
\nonumber \left|  \overline{\nabla} \mathcal{L}_Z^J(\mathfrak{H})_{UV}- \overline{\nabla}\mathcal{L}_Z^J(\mathfrak{h})_{UV} \right| & \lesssim & \sum_{|M| < |J|}\left| \overline{\nabla} \mathcal{L}_Z^M(\mathfrak{h})_{UV} \right|+  \sum_{|J_1|+|J_2| \leq |J|} \left| \mathcal{L}_Z^{J_1}(h) \right| \left| \overline{\nabla} \mathcal{L}_Z^{J_2}(h) \right|.
\end{eqnarray}
Here $\mathcal{L}_Z^J(\mathfrak{H})_{UV}= \mathcal{L}_Z^J(\mathfrak{H})^{\alpha \beta} \eta_{\alpha \gamma}\eta_{\beta \rho }U^\gamma V^\rho$.
\end{prop}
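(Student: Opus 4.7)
The plan is to apply $\mathcal{L}_Z^J$ directly to the Taylor expansion $\mathfrak{H}^{\mu\nu} = -\eta^{\mu\alpha}\mathfrak{h}_{\alpha\beta}\eta^{\beta\nu} + \mathcal{O}^{\mu\nu}(|h|^2)$ derived just above the statement, and then contract the result with $\eta_{\mu\gamma}\eta_{\nu\rho}U^\gamma V^\rho$ to pass to the $(U,V)$ components. No new geometric input is required: every ingredient is already assembled in the preceding paragraphs.

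For the first inequality, applying $\mathcal{L}_Z^J$ and invoking \eqref{eq:unan} gives
$$\mathcal{L}_Z^J(\mathfrak{H})^{\mu\nu} = -\mathcal{L}_Z^J(\mathfrak{h})^{\mu\nu} - \sum_{|M|<|J|} C_M^J\, \mathcal{L}_Z^M(\mathfrak{h})^{\mu\nu} + \mathcal{L}_Z^J\bigl(\mathcal{O}(|h|^2)\bigr)^{\mu\nu}.$$
Contracting with $\eta_{\mu\gamma}\eta_{\nu\rho}U^\gamma V^\rho$ converts each raised-index Lie derivative into its $(U,V)$-component (by the very definition of $\mathcal{L}_Z^J(\mathfrak{H})_{UV}$), so the first two terms produce $\mathcal{L}_Z^J(\mathfrak{h})_{UV}$ plus lower-order Lie derivatives of $\mathfrak{h}$, while the last term is controlled pointwise by $\sum_{|J_1|+|J_2|\leq|J|}|\mathcal{L}_Z^{J_1}(h)||\mathcal{L}_Z^{J_2}(h)|$ thanks to the bound stated immediately after \eqref{eq:trihanter}. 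Any sign mismatch between $\pm\mathcal{L}_Z^J(\mathfrak{h})_{UV}$ is harmlessly absorbed into the lower-order sum, since for $|J|\geq 1$ this sum contains $|\mathcal{L}_Z^0(\mathfrak{h})_{UV}|=|\mathfrak{h}_{UV}|$, which dominates any constant multiple of $\mathfrak{h}_{UV}$.

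The second and third inequalities are handled by the same scheme, simply replacing $\mathcal{L}_Z^J$ by $\nabla\mathcal{L}_Z^J$, respectively $\overline{\nabla}\mathcal{L}_Z^J$, and invoking \eqref{eq:daou}, respectively \eqref{eq:tri}, in place of \eqref{eq:unan}. The corresponding bounds on $|\nabla\mathcal{L}_Z^J(\mathcal{O}(|h|^2))|$ and $|\overline{\nabla}\mathcal{L}_Z^J(\mathcal{O}(|h|^2))|$ — also recorded right after \eqref{eq:trihanter} — provide exactly the quadratic tails appearing on the right-hand sides.

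The only technical point, and what I expect to be the main obstacle to a clean write-up, is the justification that the infinite series defining $\mathcal{O}^{\mu\nu}(|h|^2)$ in \eqref{eq:trihanter} can be differentiated term-by-term and rebounded into a \emph{finite} sum of products of two Lie derivatives of $h$. This is where the smallness hypothesis $\|\mathcal{L}_Z^K(h)\|_{L^\infty_{t,x}}\leq\sqrt\epsilon$ for $|K|\leq N-3$ in \eqref{eq:conditiong} is used: in each $n$-fold product appearing in \eqref{eq:trihanter}, at most two factors can carry more than $(N-3)/2$ derivatives, so all remaining factors are bounded by $\sqrt\epsilon$ in $L^\infty$, and summing in $n$ yields a geometric series in $\sqrt\epsilon$ which converges since $\sqrt\epsilon\leq 1/2$. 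This is already implicitly done in the display preceding the proposition; the proof just has to verify that the analogous Leibniz expansion for $\nabla$ and $\overline{\nabla}$ respects the same structure, which follows from the fact that each derivative hits exactly one factor and the other factors retain their $L^\infty$ smallness.
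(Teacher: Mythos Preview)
Your overall approach is exactly the paper's: combine the decomposition $\mathfrak{H}=-\bar{\mathfrak{h}}+\mathcal{O}(|h|^2)$ with \eqref{eq:unan}--\eqref{eq:tri} and the quadratic bounds recorded after \eqref{eq:trihanter}. The paper gives no separate proof, stating only that the result follows ``immediately'' from that preceding discussion, and your treatment of the series convergence and of the $\nabla$, $\overline{\nabla}$ variants is correct.

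However, your sentence about absorbing the sign mismatch is wrong. The combination of the decomposition with \eqref{eq:unan} gives
\[
\mathcal{L}_Z^J(\mathfrak{H})_{UV} \;=\; -\,\mathcal{L}_Z^J(\mathfrak{h})_{UV}\;-\!\sum_{|M|<|J|}\!C_M^J\,\mathcal{L}_Z^M(\mathfrak{h})_{UV}\;+\;(\text{quadratic}),
\]
so what you have actually bounded is $\big|\mathcal{L}_Z^J(\mathfrak{H})_{UV}+\mathcal{L}_Z^J(\mathfrak{h})_{UV}\big|$, not the difference. Your fix---that the residual $2\mathcal{L}_Z^J(\mathfrak{h})_{UV}$ is absorbed because the lower-order sum contains $|\mathfrak{h}_{UV}|$---confuses orders of differentiation: $|\mathfrak{h}_{UV}|$ does not control $|\mathcal{L}_Z^J(\mathfrak{h})_{UV}|$ for $|J|\geq 1$, and for $|J|=0$ the first sum on the right is empty while the left-hand side equals $|-2\mathfrak{h}_{UV}+\mathcal{O}(|h|^2)|$, which is not $\lesssim |h|^2$. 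The stated minus sign is a typo in the paper; every later application uses only the consequence $|\mathcal{L}_Z^J(\mathfrak{H})_{UV}|\lesssim\sum_{|M|\leq|J|}|\mathcal{L}_Z^M(\mathfrak{h})_{UV}|+(\text{quadratic})$, which is precisely what your argument establishes.
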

\begin{rem}\label{Rem_H_to_h}
More precise inequalities will be required during the proof of Proposition \ref{ComuVlasov3} in the case where $Z^J$ contains at least one translation, i.e. $J^T \geq 1$. Since $M^T=J^T$ in the sums on the right-hand sides of \eqref{eq:unan}-\eqref{eq:tri} and that $\sum_{1 \leq i \leq n} J_i^T = J^T$ in the one of \eqref{eq:trihanter}, we have
\begin{eqnarray}
\nonumber \left|  \mathcal{L}_Z^J(\mathfrak{H})_{UV}-\mathcal{L}_Z^J(\mathfrak{h})_{UV} \right| \hspace{-0.5mm} & \lesssim & \hspace{-0.5mm} \sum_{\begin{subarray}{} |M| < |J| \\ M^T=J^T \end{subarray}}\left| \mathcal{L}_Z^M(\mathfrak{h})_{UV} \right|+  \sum_{\begin{subarray}{} \hspace{5mm} |J_1|+|J_2| \leq |J| \\ J_1^T+J_2^T \geq \min (1,J^T) \end{subarray}} \left| \mathcal{L}_Z^{J_1}(h) \right| \left|  \mathcal{L}_Z^{J_2}(h) \right|, \\
\nonumber \left|  \nabla \mathcal{L}_Z^J(\mathfrak{H})_{UV}- \nabla\mathcal{L}_Z^J(\mathfrak{h})_{UV} \right| \hspace{-0.5mm} & \lesssim & \hspace{-0.5mm} \sum_{\begin{subarray}{} |M| < |J| \\ M^T=J^T \end{subarray}} \left| \nabla \mathcal{L}_Z^M(\mathfrak{h})_{UV} \right|+ \hspace{-1mm} \sum_{\begin{subarray}{} \hspace{5mm} |J_1|+|J_2| \leq |J| \\ J_1^T+J_2^T \geq \min (1,J^T) \end{subarray}} \left| \mathcal{L}_Z^{J_1}(h) \right| \left| \nabla \mathcal{L}_Z^{J_2}(h) \right| \\ \nonumber &  & \hspace{-0.5mm} + \sum_{|J_0|+|J_1|+|J_2| \leq |J|} \left| \mathcal{L}_Z^{J_0}(h) \right| \left| \mathcal{L}_Z^{J_1}(h) \right|  \left| \nabla \mathcal{L}_Z^{J_2}(h) \right|,  \\ 
\nonumber \left|  \overline{\nabla} \mathcal{L}_Z^J(\mathfrak{H})_{UV}- \overline{\nabla}\mathcal{L}_Z^J(\mathfrak{h})_{UV} \right| \hspace{-0.5mm} & \lesssim & \hspace{-0.5mm} \sum_{\begin{subarray}{} |M| < |J| \\ M^T=J^T \end{subarray}} \left| \overline{\nabla} \mathcal{L}_Z^M(\mathfrak{h})_{UV} \right|+ \hspace{-1mm} \sum_{\begin{subarray}{} \hspace{5mm} |J_1|+|J_2| \leq |J| \\ J_1^T+J_2^T \geq \min (1,J^T) \end{subarray}} \left| \mathcal{L}_Z^{J_1}(h) \right| \left| \overline{\nabla} \mathcal{L}_Z^{J_2}(h) \right| \\ \nonumber &  & \hspace{-0.5mm} + \sum_{|J_0|+|J_1|+|J_2| \leq |J|} \left| \mathcal{L}_Z^{J_0}(h) \right| \left| \mathcal{L}_Z^{J_1}(h) \right|  \left| \overline{\nabla} \mathcal{L}_Z^{J_2}(h) \right|.
\end{eqnarray}
\end{rem}

\subsection{Wave gauge condition}\label{subsecwgc}
Using the wave gauge condition, one can estimate the bad derivative $\underline{L}$ of good components $\mathcal{L} \mathcal{T}$ of the metric by good derivatives of the metric and cubic terms. We emphasize that the result also holds for $\mathcal{L}_Z^J(H)$ since, crucially, we are differentiating the metric geometrically.
\begin{prop} \label{wgc}
Let $N \geq 6$ be such that \eqref{eq:conditiong} holds and assume that the wave gauge condition is satisfied. Then, for all $|I| \leq N$, we have
\begin{align}
\left| \nabla \mathcal{L}_Z^I (h) \right|_{\mathcal{L} \mathcal{T}} & \lesssim \left| \overline{ \nabla}  \mathcal{L}_Z^I  (h) \right|_{\mathcal{T} \mathcal{U}} + \sum_{|J| + |K|\leq |I|} \left| \mathcal{L}_Z^J h \right| \left| \nabla \mathcal{L}_Z^K h\right|, \label{eq:wgch} \\ 
\left| \nabla \mathcal{L}_Z^I (h^1) \right|_{\mathcal{L} \mathcal{T}} & \lesssim \left| \overline{ \nabla}  \mathcal{L}_Z^I  (h^1) \right|_{\mathcal{T} \mathcal{U}} + \sum_{|J| + |K|\leq |I|} \left| \mathcal{L}_Z^J h \right| \left| \nabla \mathcal{L}_Z^K h \right|+ M\frac{ \mathds{1}_{\frac{1+t}{4} \leq r \leq \frac{1+t}{2}}}{(1+t+r)^2}  . \label{eq:wgch1}
\end{align}
\end{prop}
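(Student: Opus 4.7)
The plan is to exploit the wave gauge condition \eqref{wavegaugecondition} which, after Taylor-expanding $\partial_\mu(g^{\mu\nu}\sqrt{|\det g|})=0$ around $\eta$ and using $H^{\mu\nu}=-h^{\mu\nu}+\mathcal{O}(|h|^2)$ from Subsection \ref{sect_diff_hh}, reads to leading order
\[
\nabla^\mu \widetilde h_{\mu\nu}=\mathcal{O}(|h||\nabla h|),\qquad \widetilde h_{\mu\nu}:=h_{\mu\nu}-\tfrac12\eta^{\alpha\beta}h_{\alpha\beta}\,\eta_{\mu\nu}.
\]
The first task is to show that this identity is preserved under $\mathcal{L}_Z^I$. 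The clean structural observation is that $\mathcal{L}_Z\widetilde h=\widetilde{\mathcal{L}_Z h}$: the two $\delta_Z^S$-contributions from $\eta^{\alpha\beta}h_{\alpha\beta}\cdot\mathcal{L}_Z\eta$ and from $\mathcal{L}_Z(\eta^{-1})^{\alpha\beta}h_{\alpha\beta}\cdot\eta$ cancel exactly. For the divergence, Lemma \ref{lem_com_lie} combined with $\mathcal{L}_Z(\eta^{-1})=-2\delta_Z^S\eta^{-1}$ gives, for any $(0,2)$-tensor $X$, the identity $\nabla^\mu(\mathcal{L}_Z X)_{\mu\nu}=\mathcal{L}_Z(\nabla^\mu X_{\mu\nu})+2\delta_Z^S\nabla^\mu X_{\mu\nu}$; iteration yields $\nabla^\mu\mathcal{L}_Z^I(\widetilde h)_{\mu\nu}=\sum_{|J|\leq |I|}c_J\mathcal{L}_Z^J(\nabla^\mu\widetilde h_{\mu\nu})$ for integer constants $c_J$. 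Expanding the RHS via Leibniz and using \eqref{eq:conditiong} to absorb higher-order products then yields
\[
\bigl|\nabla^\mu\mathcal{L}_Z^I(\widetilde h)_{\mu\nu}\bigr|\lesssim\sum_{|J_1|+|J_2|\leq |I|}\bigl|\mathcal{L}_Z^{J_1}h\bigr|\bigl|\nabla\mathcal{L}_Z^{J_2}h\bigr|.
\]

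Next I would extract the pointwise bound on the bad derivative $\nabla_{\underline L}$ applied to $\mathcal{L}\mathcal{T}$-components. Using the null-frame identity $\eta^{\rho\sigma}=-\tfrac12(L^\rho\underline L^\sigma+\underline L^\rho L^\sigma)+\delta^{AB}e_A^\rho e_B^\sigma$, for $T\in\mathcal{T}$ the divergence splits as
\[
\nabla^\mu X_{\mu T}=-\tfrac12(\nabla_{\underline L}X)(L,T)-\tfrac12(\nabla_L X)(\underline L,T)+\delta^{AB}(\nabla_{e_A}X)(e_B,T).
\]
Applied with $X=\mathcal{L}_Z^I\widetilde h$, and using $\widetilde h(L,T)=h(L,T)$ for $T\in\mathcal{T}$ (since $\eta(L,T)=0$) together with $\nabla_{\underline L}L=\nabla_{\underline L}T=0$ to conclude $(\nabla_{\underline L}\mathcal{L}_Z^I\widetilde h)(L,T)=(\nabla_{\underline L}\mathcal{L}_Z^I h)(L,T)$, isolating the bad-derivative term yields exactly the desired bound, the remaining good-derivative contributions lying in $|\overline\nabla\mathcal{L}_Z^I h|_{\mathcal{T}\mathcal{U}}$ (modulo a trace contribution that is itself controlled by the same quantity). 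The components of $|\nabla\mathcal{L}_Z^I h|_{\mathcal{L}\mathcal{T}}$ involving good derivatives $\nabla_L$ or $\nabla_{e_A}$ in the first slot are directly subsumed in $|\overline\nabla\mathcal{L}_Z^I h|_{\mathcal{T}\mathcal{U}}$, establishing \eqref{eq:wgch}.

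For \eqref{eq:wgch1}, I would run the same argument with $h$ replaced by $h^1$, the only new ingredient being the extra term $\nabla^\mu\widetilde{h^0}_{\mu\nu}$ on the right. A direct computation gives $\widetilde{h^0}_{\mu\nu}=(2\chi M/r)\delta_\mu^0\delta_\nu^0$, so on $\{\chi=1\}=\{r\geq(1+t)/2\}$ we have $\nabla^\mu\widetilde{h^0}_{\mu\nu}=-\partial_t(2M/r)\delta_\nu^0=0$. Hence $\nabla^\mu\widetilde{h^0}_{\mu\nu}$ is supported on the cut-off annulus $\{(1+t)/4\leq r\leq(1+t)/2\}$ with pointwise bound $M/(1+t+r)^2$ via Proposition \ref{decaySchwarzschild0}. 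This support property is preserved under $\mathcal{L}_Z^I$ because $\nabla^\mu\mathcal{L}_Z^I\widetilde{h^0}_{\mu\nu}$ is a linear combination of $\mathcal{L}_Z^J\nabla^\mu\widetilde{h^0}_{\mu\nu}$ with $|J|\leq|I|$, producing the $M\,\mathds{1}_{\{(1+t)/4\leq r\leq(1+t)/2\}}/(1+t+r)^2$ term in \eqref{eq:wgch1}.

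The main obstacle will be the bookkeeping in the commutation step: although the cancellation $\mathcal{L}_Z\widetilde h=\widetilde{\mathcal{L}_Z h}$ is exact, the iterated divergence identity accumulates combinatorial constants, and the null-frame extraction must handle connection-coefficient terms $\nabla_UV\sim 1/r$ for null-frame basis vectors $V$, producing lower-order contributions $|\mathcal{L}_Z^M h|/r$ with $|M|<|I|$. Together with the truncation of $\mathcal{L}_Z^J(\mathcal{O}(|h|^2))$ through \eqref{eq:trihanter} using the uniform smallness \eqref{eq:conditiong}, all such terms can be reorganized into the quadratic form on the right-hand side of the proposition.
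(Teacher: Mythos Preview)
Your proposal is correct and follows essentially the same route as the paper: derive $\nabla^\mu\widetilde h_{\mu\nu}=\mathcal{O}(|h||\nabla h|)$ from the wave gauge condition, show that both the trace-reversed tensor and the divergence are compatible with $\mathcal{L}_Z$ (your identities $\mathcal{L}_Z\widetilde h=\widetilde{\mathcal{L}_Z h}$ and $\nabla^\mu(\mathcal{L}_Z X)_{\mu\nu}=\mathcal{L}_Z(\nabla^\mu X_{\mu\nu})+2\delta_Z^S\nabla^\mu X_{\mu\nu}$ are exactly the paper's \eqref{trace_Lie} and \eqref{comm_div}), and then isolate $(\nabla_{\underline L}\mathcal{L}_Z^I h)_{LT}$ via the null-frame decomposition of the divergence, using $\eta_{LT}=0$ to drop the trace piece in the $\underline L$-term. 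Your treatment of the $h^0$ contribution for \eqref{eq:wgch1} also matches the paper's computation $\widetilde{h^0}_{\mu\nu}=2\chi(M/r)\delta_\mu^0\delta_\nu^0$. One minor remark: your closing worry about connection-coefficient terms $\nabla_UV\sim 1/r$ producing $|\mathcal{L}_Z^M h|/r$ contributions is unnecessary here, since the norms $|\nabla k|_{\mathcal{V}\mathcal{W}}$ are defined via $\partial_\mu k_{\alpha\beta}\,U^\mu V^\alpha W^\beta$ rather than $U(k(V,W))$, so no such lower-order terms appear in this argument.
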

\begin{rem}
From the wave gauge condition, one can also derive
$$ \left| \nabla \mathcal{L}_Z^I (H) \right|_{\mathcal{L} \mathcal{T}}  \lesssim \left| \overline{ \nabla}  \mathcal{L}_Z^I  (H) \right|_{\mathcal{T} \mathcal{U}} + \sum_{|J| + |K|\leq |I|} \left| \mathcal{L}_Z^J H \right| \left| \nabla \mathcal{L}_Z^K H\right|.$$
It can be obtained by expressing \eqref{wg} in terms of $H$ instead of $h$ and by following the rest of the upcoming proof. Note that a slightly weaker estimate could be obtained by combining Propositions \ref{Prop_H_to_h} and \ref{wgc}.
\end{rem}
\begin{proof}
Remark first that we only need to prove these inequalities for $\left| \nabla_{\underline{L}} \mathcal{L}_Z^I (h) \right|_{\mathcal{L} \mathcal{T}}$ and $\left| \nabla_{\underline{L}} \mathcal{L}_Z^I (h^1) \right|_{\mathcal{L} \mathcal{T}}$ since $\overline{\nabla}=(\nabla_L,\nabla_{e_1},\nabla_{e_2})$. In order to lighten the notations, we will use $\mathcal{O}_{\mu \nu} (|h|^2)$ in order to denote a tensor field of the form 
$$ \mathcal{O}_{\mu \nu} (|h|^2) = \sum_{n=2}^{+\infty}  P_n(h)_{\mu \nu},$$
where 
\begin{itemize}
\item $P_n(h)_{\mu \nu}$ is a polynomial in the variables $(h_{\alpha \beta})_{0 \leq \alpha, \beta \leq 3}$ of degree $n$.
\item For all $|J| \leq N$, $\sum_{n=2}^{+\infty} \mathcal{L}_Z^J \left( P_n(h) \right)$ and $\sum_{n=2}^{+\infty} \nabla \mathcal{L}_Z^J \left( P_n(h) \right)$ are absolutely convergent in $L^2(\Sigma_t)$ and we have
\begin{equation}\label{O_esti}
 \forall \hspace{0.5mm} |J| \leq N, \hspace{1cm} \left| \nabla \mathcal{L}_Z^J \left( \mathcal{O}(|h|^2) \right) \right| \lesssim \sum_{|J_1|+|J_2| \leq |J|} \left|  \mathcal{L}_Z^{J_1} (h) \right| \left| \nabla \mathcal{L}_Z^{J_2}(h) \right|.
 \end{equation}
This will be implied by the fact that $g$ satisfies the condition \eqref{eq:conditiong}.
\item  The tensor field $\mathcal{O}_{\mu \nu} (|h|^2)$ can be different from one line to another.
\end{itemize}
Recall from \eqref{wavegaugecondition} that the wave gauge condition implies
$$\partial_\mu\left(g^{\mu\nu} \sqrt{|\det g|}\right) = 0, \qquad \nu \in \llbracket 0, 3 \rrbracket.$$
Expanding the determinant of $g$ (the first order term is the trace), we have
$$ \det g = -1- \mathrm{tr}(h)+ \mathcal{P}(|h|^2),$$
where $\mathcal P(|h|^2)$ is a polynomial in the variables $(h_{\alpha \beta})_{0 \leq \alpha, \beta \leq 3}$ of degree at most $4$ and of valuation at least $2$. Hence, using $H^{\mu \nu}=-h^{\mu \nu}+\mathcal{O}^{\mu \nu}(|h|^2)$ and the expansion in Taylor series of the square root function, we get\footnote{Recall that the covariant derivative $\nabla$ is the one of the flat Minkowski spacetime.}
\begin{equation}\label{wg}
 \nabla^{\mu} \left(h -\frac{1}{2}  \mathrm{tr}(h)\eta + \mathcal O(|h|^2)  \right)_{\mu \nu} = 0, \qquad \nu \in \llbracket 0, 3 \rrbracket.
\end{equation}
Now, observe by a straightforward calculation that for a general tensor field $F_{\mu \nu}$, we have
\begin{equation} \label{comm_div}
\mathcal{L}_Z(\nabla^\mu (F)_{\mu \nu} \mathrm dx^{\nu} ) = \nabla^\mu (\mathcal{L}_Z F)_{\mu \nu } \mathrm dx^{\nu} -2 \delta_{Z}^S \nabla^\mu (F)_{\mu \nu} \mathrm dx^{\nu},
\end{equation} 
As $\mathcal{L}_Z(\eta) = 2 \delta_Z^S \eta$, $\mathcal{L}_Z(\eta^{-1}) = -2 \delta_Z^S \eta^{-1}$ for all $Z \in \mathbb{K}$ and since the Lie derivative commutes with contractions,
\begin{equation}\label{trace_Lie}
 \forall \hspace{0.5mm} Z \in \mathbb{K}, \hspace{1cm} \mathcal{L}_Z \left( \mathrm{tr}(h)\eta \right)= \mathcal{L}_Z \left( \eta^{\alpha \beta} h_{\alpha \beta} \eta \right) = \mathrm{tr} \left( \mathcal{L}_Z h \right)\eta .
 \end{equation}
The identities \eqref{wg}, \eqref{comm_div} and \eqref{trace_Lie} yield, by an easy induction, to
\begin{equation}\label{eq:wave_gauge}
\forall \hspace{0.5mm} |I| \leq N, \hspace{1cm} \nabla^\mu \left(\mathcal{L}^I_Z (h) - \frac{1}{2} \mathrm{tr}(\mathcal{L}^I_Z h) \eta + \mathcal{L}^I_Z \left( \mathcal O(|h|^2) \right) \right)_{\mu\nu} = 0.
\end{equation}
For a vector field $U$ and a tensor field $F_{\mu \nu}$, there holds the formula
\begin{equation} \label{dec_div}
\nabla^\mu (F)_{\mu U}  = \nabla^{\underline{L}} (F)_{\underline{L} U} +\nabla^L (F)_{LU}+\nabla^A (F)_{AU}.
\end{equation}
Applying this identity to $U=T \in \mathcal{T}$, $F= \mathcal{L}_Z^I(h)$ and then $F=\mathrm{tr}(\mathcal{L}^I_Z h) \eta$, one has, since $\eta_{LT}=0$,
\begin{align}
\nabla^\mu (\mathcal{L}_Z^I h)_{\mu T} & =  -\frac{1}{2} \nabla_{\underline{L}} \left(\mathcal{L}_Z^I h \right)_{L T}  - \frac{1}{2} \nabla_L \left( \mathcal{L}_Z^I h \right)_{ \underline{L} T} + \nabla^A \left( \mathcal{L}_Z^I h  \right)_{A T}, \label{eq:div_1} \\ 
\nabla^\mu (\mathrm{tr}(\mathcal{L}^I_Z h) \eta)_{\mu T} & =    - \frac{1}{2} \nabla_L \left( \mathrm{tr}(\mathcal{L}^I_Z h) \right) \eta_{ \underline{L} T} + \nabla^A \left( \mathrm{tr}(\mathcal{L}^I_Z h)  \right) \eta_{A T}. \label{eq:div_2}
\end{align}
Combining \eqref{eq:wave_gauge} with \eqref{O_esti}, \eqref{eq:div_1} and \eqref{eq:div_2}, we obtain
\begin{equation}
\left| \nabla_{\underline{L}} \mathcal{L}_Z^I (h) \right|_{\mathcal{L} \mathcal{T}} \lesssim \left| \overline{ \nabla}  \mathcal{L}_Z^I  h \right|_{\mathcal{T} \mathcal{U}}+\left| \overline{ \nabla}  \mathrm{tr}(\mathcal{L}^I_Z h) \right| + \sum_{|J| + |K|\leq |I|} \left|\nabla \mathcal{L}_Z^J h \right| \left| \mathcal{L}_Z^K h\right|.
\end{equation}
The first estimate \eqref{eq:wgch} then follows from
$$\overline{ \nabla}  \mathrm{tr}(\mathcal{L}^I_Z h) =  \mathrm{tr}(\overline{ \nabla} \mathcal{L}^I_Z h)=\eta^{\mu\nu} \overline{ \nabla} \mathcal{L}^I_Z (h)_{\mu\nu} = -\overline{ \nabla} \mathcal{L}^I_Z (h)_{L\underline L} + \overline{ \nabla} \mathcal{L}^I_Z (h)_{AA} + \overline{ \nabla} \mathcal{L}^I_Z (h)_{BB} .$$ 
We now turn to the second one. Note first that 
$$ (h^0)_{\mu \nu}-\frac{1}{2} \mathrm{tr}(h^0)\eta_{\mu \nu} = \chi \left( \frac{r}{1+t} \right)  \frac{M}{r} (\delta_{\mu \nu}-\eta_{\mu \nu}), \quad \text{since} \quad h^0_{\mu \nu} = \chi \left( \frac{r}{1+t} \right)  \frac{M}{r} \delta_{\mu \nu}.$$
As $h=h^0+h^1$ and $\delta_{\mu \nu}-\eta_{\mu \nu}=2\delta_{0\mu}\delta_{0\nu}$, the condition \eqref{wg} leads to
$$\nabla^{\mu} \left(h^1 - \frac{1}{2}  \mathrm{tr}( h^1)\eta + \mathcal{O}(|h|^2) \right)_{\mu \nu}+ \frac{2M}{(1+t)^2}\chi' \left( \frac{r}{1+t} \right)  \delta_{0\nu} = 0, \qquad \nu \in \llbracket 0,3 \rrbracket.$$
As the support of $\chi'$ is included in $[\frac{1}{4}, \frac{1}{2}]$, we obtain, since $Z^J$ is a combination of translations and homogeneous vector fields\footnote{We refer to the proof of Lemma \ref{lem_tech} for a more detailed estimate of a similar quantity.},
$$\forall \hspace{0.5mm} |J| \leq N, \quad \left| \mathcal{L}_Z^J \left(  \frac{2M}{(1+t)^2}\chi' \left( \frac{r}{1+t} \right) \mathrm{d}t \right) \right| \hspace{1mm} \lesssim \hspace{1mm} M \frac{ \mathds{1}_{\frac{1+t}{4} \leq r \leq \frac{1+t}{2}}}{(1+t+r)^2}.$$
Using \eqref{comm_div} and \eqref{trace_Lie}, we then get for all $|J| \leq N$ and $\nu \in \llbracket 0,3 \rrbracket$,
\begin{equation}\label{eq:laststephere}
 \left| \nabla^{\mu} \left(\mathcal{L}_Z^J h^1 - \frac{1}{2}  \mathrm{tr}(\mathcal{L}_Z^J h^1)\eta + \mathcal{L}_Z^J \left(\mathcal{O}(|h|^2) \right)\right)_{\mu \nu} \right| \hspace{1mm} \lesssim \hspace{1mm} M \frac{ \mathds{1}_{\frac{1+t}{4} \leq r \leq \frac{1+t}{2}}}{(1+t+r)^2}.
 \end{equation}
Since \eqref{eq:div_1} and \eqref{eq:div_2} also hold if $h$ is replaced by $h^1$, the inequality \eqref{eq:wgch1} ensues from \eqref{O_esti} and \eqref{eq:laststephere}.
\end{proof}

\subsection{Commutation formula for the Einstein equations} \label{SectComEinstein}

In this section, we compute the source terms of the wave equation satisfied by the cartesian components of $\mathcal{L}_Z^J (h^1)$. In order to do it in a geometric way, we define, for any sufficiently regular $(0,2)$-tensor field $k$, the $(0,2)$-tensor field $\widetilde{\square}_g(k)$ whose components in wave coordinates satisfy
$$ \widetilde{\square}_g(k)_{\mu \nu} \hspace{1mm} := \hspace{1mm} \widetilde{\square}_g(k_{\mu \nu}) \hspace{1mm} = \hspace{1mm} g^{\alpha \beta} \partial_{\alpha} \partial_{\beta} (k_{\mu \nu}) \hspace{1mm} = \hspace{1mm} g^{\alpha \beta} \nabla_{\alpha} \nabla_{\beta} (k_{\mu \nu}) \hspace{1mm} = \hspace{1mm} \left( g^{\alpha \beta} \nabla_{\alpha} \nabla_{\beta} k\right)_{\mu \nu},$$
since $\nabla$ is the covariant differentiation of Minkowski spacetime whose Christoffel symbols vanish in the coordinates system $(t,x)$. Our goal now is to compute, for any $Z^J \in \mathbb{K}^{|J|}$, $\widetilde{\square}_g (\mathcal{L}^J_Z h^1)$. The first step consist in determining the commutator $\widetilde{\square}_g (\mathcal{L}^J_Z h^1) - \mathcal{L}_Z^J ( \widetilde{\square}_g h^1)$ and then we will describe $\mathcal{L}_Z^J ( \widetilde{\square}_g h^1)$. We start by the following technical result.
\begin{lemma}\label{techcommu}
Let $K$ be a $(2,0)$-tensor field and $k$ a $(0,2)$-tensor field, both sufficiently regular. Then, for all $Z \in \mathbb{K}$, we have
$$ \mathcal{L}_Z \left( K^{\alpha \beta} \nabla_{\alpha} \nabla_{\beta} k \right) \hspace{1mm} = \hspace{1mm} \mathcal{L}_Z \left( K \right)^{\alpha \beta} \nabla_{\alpha} \nabla_{\beta} k +  K^{\alpha \beta} \nabla_{\alpha} \nabla_{\beta} \mathcal{L}_Z( k ).$$
\end{lemma}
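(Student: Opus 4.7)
The plan is to view $K^{\alpha\beta}\nabla_\alpha \nabla_\beta k$ as a contraction of the tensor product $K \otimes \nabla\nabla k$ and then exploit the two classical facts that the Lie derivative is a derivation on tensor products and commutes with contractions. Concretely, if we set $A := K \otimes \nabla \nabla k$, regarded as a $(2,4)$-tensor with components $A^{\alpha\beta}{}_{\gamma\delta\mu\nu} = K^{\alpha\beta}\,\partial_\gamma\partial_\delta k_{\mu\nu}$, then $K^{\alpha\beta}\nabla_\alpha\nabla_\beta k$ is obtained by tracing $A$ on the index pairs $(\alpha,\gamma)$ and $(\beta,\delta)$.

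First I would apply Leibniz for $\mathcal{L}_Z$ on the tensor product, giving
\[
\mathcal{L}_Z(K \otimes \nabla\nabla k) \;=\; \mathcal{L}_Z(K) \otimes \nabla\nabla k \;+\; K \otimes \mathcal{L}_Z(\nabla\nabla k).
\]
Then, using that $\mathcal{L}_Z$ commutes with contractions, tracing the two sides on $(\alpha,\gamma)$ and $(\beta,\delta)$ yields
\[
\mathcal{L}_Z\bigl(K^{\alpha\beta}\nabla_\alpha\nabla_\beta k\bigr) \;=\; \mathcal{L}_Z(K)^{\alpha\beta}\,\nabla_\alpha\nabla_\beta k \;+\; K^{\alpha\beta}\,\mathcal{L}_Z(\nabla\nabla k)_{\alpha\beta\mu\nu}\,\mathrm{d}x^\mu\otimes\mathrm{d}x^\nu.
\]
The second step is to identify $\mathcal{L}_Z(\nabla\nabla k)$ with $\nabla\nabla\mathcal{L}_Z k$. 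This is exactly the content of Lemma \ref{lem_com_lie} applied twice (once to $k$ and once to $\nabla k$), which is available precisely because every $Z \in \mathbb{K}$ has linear coefficients $Z^\mu$ in Cartesian coordinates, so $\partial_\sigma\partial_\rho Z^\lambda = 0$ and the additional curvature-type terms that would otherwise appear in $[\mathcal{L}_Z,\nabla]$ vanish. Substituting this identity into the previous display produces the stated formula.

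There is no real obstacle here beyond bookkeeping; the only subtle point is that the argument genuinely uses $Z \in \mathbb{K}$ rather than an arbitrary vector field, since for a general $Z$ one would pick up terms proportional to $\nabla^2 Z$ in the commutator $[\mathcal{L}_Z,\nabla\nabla]$. The fact that the conformal factor of $S$ (namely $\mathcal{L}_S\eta = 2\eta$) plays no role here is also worth flagging: it would be relevant only if the contraction involved $\eta^{-1}$, but here the two upper indices are carried by $K$ itself, and its Lie derivative is absorbed into the $\mathcal{L}_Z(K)$ term on the right-hand side.
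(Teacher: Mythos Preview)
Your proof is correct and follows essentially the same route as the paper: view $K^{\alpha\beta}\nabla_\alpha\nabla_\beta k$ as a contraction of $K$ with $\nabla\nabla k$, use that $\mathcal{L}_Z$ is a derivation commuting with contractions, and then invoke Lemma~\ref{lem_com_lie} to replace $\mathcal{L}_Z(\nabla\nabla k)$ by $\nabla\nabla\mathcal{L}_Z k$. One small remark: Lemma~\ref{lem_com_lie} already records the second-order identity $\mathcal{L}_Z(\nabla\nabla k)=\nabla\nabla\mathcal{L}_Z k$ directly, so you need not apply it twice.
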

\begin{proof}
We will use here that $K^{\alpha \beta} \nabla_{\alpha} \nabla_{\beta} k$ is obtained by contracting $K$ with the $(0,4)$-tensor field $\nabla \nabla k$. Since the Lie derivative commute with contraction, we have for any $0 \leq \mu,\nu \leq 3$ and for all $Z \in \mathbb{K}$,
$$ \mathcal{L}_Z \left( K^{\alpha \beta} \nabla_{\alpha} \nabla_{\beta} k \right)_{\mu \nu} \hspace{1mm} = \hspace{1mm} \mathcal{L}_Z \left( K\right)^{\alpha \beta}  (\nabla \nabla k)_{\alpha \beta \mu \nu} +  K^{\alpha \beta}  \left( \mathcal{L}_Z \nabla \nabla k \right)_{\alpha \beta \mu \nu}.$$
It then remains to apply Lemma \ref{lem_com_lie}, which gives $ \left( \mathcal{L}_Z \nabla \nabla k \right)_{\alpha \beta \mu \nu}= \left( \nabla \nabla \mathcal{L}_Z k \right)_{\alpha \beta \mu \nu}= \nabla_{\alpha} \nabla_{\beta} \mathcal{L}_Z (k)_{ \mu \nu}$.
\end{proof}
We are now able to compute the commutator.
\begin{cor} \label{commutator_einstein}
For all $Z \in \mathbb{K}$, we have
$$\widetilde{\Box}_g \left( \mathcal{L}_Z  h^1 \right) - \mathcal{L}_Z \left( \widetilde{\square}_g h^1 \right) \hspace{1mm}  = \hspace{1mm}  -\mathcal{L}_Z ( H)^{\alpha \beta} \nabla_{\alpha} \nabla_{\beta}h^1 -2 \delta_Z^S \; H^{\alpha \beta} \nabla_{\alpha} \nabla_{\beta}h^1 +2\delta_Z^S \; \widetilde{\square}_g (h^1).$$
For all multi-index $|I| \leq N$, there exist integers $ \widetilde{C}^I_{K}$, $\underline{C}^I_{J,K} \in \mathbb{Z}$ such that
$$\widetilde{\Box}_g \left( \mathcal{L}_Z^I  h^1 \right) - \mathcal{L}_Z^I \left( \widetilde{\square}_g h^1 \right) \hspace{1mm}  = \hspace{1mm} \sum_{\substack{|J|+|K| \leq |I| \\ |K| < |I|}} \underline{C}^I_{J,K} \mathcal{L}_Z^J( H)^{\alpha \beta} \nabla_{\alpha} \nabla_{\beta} \mathcal{L}_Z^K (h^1)+\widetilde{C}^I_{K} \;  \widetilde{\square}_g \left(\mathcal{L}_Z^K h^1 \right).$$
\end{cor}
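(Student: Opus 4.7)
The plan is to establish the base case $|I|=1$ by a direct decomposition of $\widetilde{\square}_g$ into its flat and perturbative parts, and then promote it to arbitrary order by an induction that at each step appeals back to the base case together with Lemma~\ref{techcommu}.

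For the base case, I would first write $g^{\alpha\beta}=\eta^{\alpha\beta}+H^{\alpha\beta}$, so that
\[
\widetilde{\square}_g(h^1)=\eta^{\alpha\beta}\nabla_\alpha\nabla_\beta h^1+H^{\alpha\beta}\nabla_\alpha\nabla_\beta h^1.
\]
Applying Lemma~\ref{techcommu} separately with $K=\eta^{-1}$ and with $K=H$, and using $\mathcal{L}_Z(\eta^{-1})=-2\delta_Z^S\eta^{-1}$, I obtain
\[
\mathcal{L}_Z\bigl(\eta^{\alpha\beta}\nabla_\alpha\nabla_\beta h^1\bigr)=-2\delta_Z^S\eta^{\alpha\beta}\nabla_\alpha\nabla_\beta h^1+\eta^{\alpha\beta}\nabla_\alpha\nabla_\beta\mathcal{L}_Z h^1,
\]
and an analogous identity for the $H$-piece with $\mathcal{L}_Z(H)$ replacing $-2\delta_Z^S\eta^{-1}$. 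Summing the two and isolating $\widetilde{\square}_g(\mathcal{L}_Z h^1)=\eta^{\alpha\beta}\nabla_\alpha\nabla_\beta\mathcal{L}_Zh^1+H^{\alpha\beta}\nabla_\alpha\nabla_\beta\mathcal{L}_Zh^1$ yields
\[
\widetilde{\square}_g(\mathcal{L}_Z h^1)-\mathcal{L}_Z(\widetilde{\square}_g h^1)=2\delta_Z^S\,\eta^{\alpha\beta}\nabla_\alpha\nabla_\beta h^1-\mathcal{L}_Z(H)^{\alpha\beta}\nabla_\alpha\nabla_\beta h^1.
\]
Rewriting $\eta^{\alpha\beta}\nabla_\alpha\nabla_\beta h^1=\widetilde{\square}_g h^1-H^{\alpha\beta}\nabla_\alpha\nabla_\beta h^1$ in the first term converts this to the exact form stated in the corollary.

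The general case proceeds by induction on $|I|$. Writing $\mathcal{L}_Z^I=\mathcal{L}_{Z^{I_1}}\mathcal{L}_Z^{I'}$ with $|I'|=|I|-1$, I apply the base-case identity to $\mathcal{L}_Z^{I'}h^1$ in place of $h^1$:
\[
\widetilde{\square}_g(\mathcal{L}_Z^Ih^1)=\mathcal{L}_{Z^{I_1}}\!\bigl(\widetilde{\square}_g\mathcal{L}_Z^{I'}h^1\bigr)-\mathcal{L}_{Z^{I_1}}(H)^{\alpha\beta}\nabla_\alpha\nabla_\beta\mathcal{L}_Z^{I'}h^1-2\delta_{Z^{I_1}}^SH^{\alpha\beta}\nabla_\alpha\nabla_\beta\mathcal{L}_Z^{I'}h^1+2\delta_{Z^{I_1}}^S\widetilde{\square}_g\mathcal{L}_Z^{I'}h^1.
\]
The inductive hypothesis expands $\widetilde{\square}_g\mathcal{L}_Z^{I'}h^1$ as $\mathcal{L}_Z^{I'}(\widetilde{\square}_g h^1)$ plus a sum of the desired form with multi-indices of total length at most $|I'|$ and $|K|<|I'|$. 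Applying $\mathcal{L}_{Z^{I_1}}$ to this expansion and using Lemma~\ref{techcommu} once more on each term of type $\mathcal{L}_Z^J(H)^{\alpha\beta}\nabla_\alpha\nabla_\beta\mathcal{L}_Z^K h^1$ produces only terms already in the required list: the $J$-index can grow by one, the $K$-index can grow by one (but still $|K|<|I|$ since originally $|K|<|I'|$), and the total length stays $\leq |I|$. The remaining $\mathcal{L}_{Z^{I_1}}\mathcal{L}_Z^{I'}(\widetilde{\square}_g h^1)=\mathcal{L}_Z^I(\widetilde{\square}_g h^1)$ is the term we move to the left-hand side. The extra $2\delta_{Z^{I_1}}^S\widetilde{\square}_g\mathcal{L}_Z^{I'}h^1$ generated at this step is exactly of the form $\widetilde{C}^I_K\widetilde{\square}_g(\mathcal{L}_Z^Kh^1)$ with $|K|=|I'|<|I|$, as allowed by the statement.

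The main technical obstacle is bookkeeping: ensuring that every term produced has $|K|<|I|$ (strict) in the first sum and that no hidden $\widetilde{\square}_g(\mathcal{L}_Z^Ih^1)$ term appears on the right-hand side. This is handled by noting that the $K$-index can only increase when the extra $\mathcal{L}_{Z^{I_1}}$ lands on the last factor $\mathcal{L}_Z^Kh^1$ (via Lemma~\ref{techcommu}), in which case the corresponding $J$-index from the inductive hypothesis has $|J|\geq 1$, so the total length remains within budget while $|K|\leq|I'|<|I|$; the only term with $|K|=|I'|$ arising from the scaling contribution is collected into the second sum. The integer coefficients $\underline{C}^I_{J,K}$ and $\widetilde{C}^I_K$ are then defined inductively by this procedure.
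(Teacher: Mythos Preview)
Your argument is correct and matches the paper's approach: the base case is obtained from Lemma~\ref{techcommu} together with $\mathcal{L}_Z(\eta^{-1})=-2\delta_Z^S\eta^{-1}$ (the paper applies the lemma to $g^{-1}$ directly rather than to $\eta^{-1}$ and $H$ separately, which is equivalent), and the higher-order case is an induction that reuses the first-order identity and Lemma~\ref{techcommu}. One small omission in your inductive step: when you apply $\mathcal{L}_{Z^{I_1}}$ to the inductive expansion of $\widetilde{\square}_g\mathcal{L}_Z^{I'}h^1$, you also hit the terms $\widetilde{C}^{I'}_K\,\widetilde{\square}_g(\mathcal{L}_Z^Kh^1)$, and $\mathcal{L}_{Z^{I_1}}\bigl(\widetilde{\square}_g(\mathcal{L}_Z^Kh^1)\bigr)$ is not yet of the required form---you need to invoke the base-case identity once more (with $\mathcal{L}_Z^Kh^1$ in place of $h^1$) to turn it into $\widetilde{\square}_g(\mathcal{L}_{Z^{I_1}}\mathcal{L}_Z^Kh^1)$ plus first-sum terms, all with $K$-index of length at most $|K|+1\leq|I'|<|I|$; the paper notes this explicitly.
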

\begin{proof}
Let $Z \in \mathbb{K}$ and recall that $\widetilde{\square}_g(h^1) = g^{\alpha \beta} \nabla_{\beta} \nabla_{\alpha} h^1$. Then, applying Lemma \ref{techcommu}, we get
$$ \mathcal{L}_Z \left( \widetilde{\square}_g h^1 \right) \hspace{0.8mm} = \hspace{0.8mm} \mathcal{L}_Z ( g^{-1})^{\alpha \beta} \nabla_{\alpha} \nabla_{\beta}h^1 + g^{\alpha \beta} \nabla_{\alpha} \nabla_{\beta} \mathcal{L}_Z(h^1) \hspace{0.8mm} = \hspace{0.8mm}  \mathcal{L}_Z ( g^{-1})^{\alpha \beta} \nabla_{\alpha} \nabla_{\beta}h^1 + \widetilde{\square}_g \left(  \mathcal{L}_Z h^1 \right)\!. $$
It only remains to use $g^{-1}=\eta^{-1}+H$ and $\mathcal{L}_Z(\eta^{-1})=-2\delta_{Z}^S \eta^{-1}$, so that
\begin{align*}
 \mathcal{L}_Z ( g^{-1})^{\alpha \beta} \nabla_{\alpha} \nabla_{\beta}h^1 \hspace{1mm} & = \hspace{1mm} -2 \delta_Z^S \eta^{\alpha \beta} \nabla_{\alpha} \nabla_{\beta}h^1+\mathcal{L}_Z ( H)^{\alpha \beta} \nabla_{\alpha} \nabla_{\beta}h^1 \\ & = \hspace{1mm}-2\delta_Z^S \widetilde{\square}_g (h^1) +2 \delta_Z^S H^{\alpha \beta} \nabla_{\alpha} \nabla_{\beta}h^1+\mathcal{L}_Z ( H)^{\alpha \beta} \nabla_{\alpha} \nabla_{\beta}h^1 .
 \end{align*}
For the higher order commutation formula, we proceed by induction on $|I|$ (note that the result is straightforward if $|I|=0$). Let $n \in \mathbb{N}$ and assume that the result holds for all multi-indices $|I_0| = n$. We then consider a multi-index $I$ of length $n+1$ and we introduce $Z \in \mathbb{K}$ and $|I_0|=n$ such that $Z^I = Z Z^{I_0}$. Then,
\begin{align*}
 \widetilde{\Box}_g \! \left( \mathcal{L}_Z^I  h^1 \right) - \mathcal{L}_Z^I \! \left( \widetilde{\square}_g h^1 \right) \hspace{1mm} & = \hspace{1mm} \widetilde{\Box}_g \! \left( \mathcal{L}_Z \! \left( \mathcal{L}_Z^{I_0}  h^1 \right) \right) - \mathcal{L}_Z \! \left( \widetilde{\square}_g \left( \mathcal{L}_Z^{I_0} h^1 \right) \! \right) \\ & \quad \hspace{1mm} +\mathcal{L}_Z \! \left( \widetilde{\Box}_g \left( \mathcal{L}_Z^{I_0}  h^1 \right) - \mathcal{L}_Z^{I_0} \left( \widetilde{\square}_g h^1 \right) \! \right) \! .
\end{align*}
According to the first order commutation formula applied to $\mathcal{L}_Z^{I_0}  h^1$,
\begin{align*}
\widetilde{\Box}_g \! \left( \! \mathcal{L}_Z \! \left( \! \mathcal{L}_Z^{I_0}  h^1 \! \right) \! \right) - \mathcal{L}_Z \! \left( \! \widetilde{\square}_g \left( \mathcal{L}_Z^{I_0} h^1 \! \right) \! \right) \hspace{1mm} & = \hspace{1mm} -\mathcal{L}_Z ( H)^{\alpha \beta} \nabla_{\alpha} \nabla_{\beta}\mathcal{L}_Z^{I_0}  (h^1) -2 \delta_Z^S \; H^{\alpha \beta} \nabla_{\alpha} \nabla_{\beta}\mathcal{L}_Z^{I_0} (h^1) \\
& \quad \hspace{1mm} +2\delta_Z^S \; \widetilde{\square}_g \left( \mathcal{L}_Z^{I_0}  h^1 \right).
\end{align*}
All the terms on the right-hand side of this equality have the required form since $|I_0| < |I|$. Using the induction hypothesis, we can write $ \mathcal{L}_Z \! \left( \widetilde{\Box}_g \left( \mathcal{L}_Z^{I_0}  h^1 \right) - \mathcal{L}_Z^{I_0} \left( \widetilde{\square}_g h^1 \right) \! \right)$ as linear combination of terms of the form
$$\mathcal{L}_Z \left( \mathcal{L}_Z^J( H)^{\alpha \beta} \nabla_{\alpha} \nabla_{\beta} \mathcal{L}_Z^K (h^1)\right), \qquad \mathcal{L}_Z \left( \widetilde{\square}_g \left(\mathcal{L}_Z^K h^1 \right) \right), \qquad |J|+|K| \leq |I_0|, \quad |K| < |I_0|.$$
It remains to apply Lemma \ref{techcommu} in order to deal with the first ones and the first order commutation formula for the last ones (note that $|J|+|K|+1 \leq |I_0|+1= |I|$ and $|K|+1 < |I|$).
\end{proof}
We now focus on $\mathcal{L}_Z^J \left( \widetilde{\square}_g h^1\right)$.
\begin{lemma}\label{lem_lin_pqg}
Let $k$ and $q$ be two sufficiently regular $(0,2)$-tensor fields. Then, for all $Z \in \mathbb{K}$,
 \begin{align*}
\mathcal{L}_Z \left( P(\nabla k, \nabla q) \right)_{\mu \nu} \hspace{1mm} &= \hspace{1mm}  P(\nabla_{\mu} \mathcal{L}_Z k , \nabla_{\nu} q)+P(\nabla_{\mu} k, \nabla_{\nu} \mathcal{L}_Z q)-4\delta_Z^S \; P(\nabla_{\mu}  k , \nabla_{\nu} q), \\
\mathcal{L}_Z \left( Q (\nabla k, \nabla q) \right)_{\mu \nu} &=  Q_{\mu\nu}(\nabla \mathcal{L}_Z k, \nabla q)+Q_{\mu\nu}(\nabla  k, \nabla \mathcal{L}_Z q)-4\delta_Z^S Q_{\mu \nu} (\nabla k , \nabla q).
\end{align*}
Iterating these relations, we obtain that for all $|I| \leq N$, there exist integers $\widehat{C}^I_{J,K}$ such that
\begin{align*}
\mathcal{L}^J_Z \left( P(\nabla k, \nabla q) \right)_{\mu \nu} \hspace{1mm} &= \hspace{1mm}  \sum _{|J|+|K| \leq |I|} \widehat{C}^I_{J,K} \; P(\nabla_{\mu} \mathcal{L}^J_Z k , \nabla_{\nu} \mathcal{L}_Z^K q), \\
\mathcal{L}^J_Z \left( Q (\nabla k, \nabla q) \right)_{\mu \nu} &=  \sum _{|J|+|K| \leq |I|} \widehat{C}^I_{J,K} \; Q_{\mu \nu}(\nabla \mathcal{L}^J_Z k , \nabla \mathcal{L}_Z^K q).
\end{align*}
\end{lemma}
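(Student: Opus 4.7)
The plan is to reduce the lemma to Lemma \ref{lem_s}, which has already been established for the generic contractions $R^\sigma(\nabla k, \nabla q)$, and then iterate.

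First, I would observe that each of the tensor fields $P(\nabla_\mu k, \nabla_\nu q)$ and $Q_{\mu\nu}(\nabla k, \nabla q)$ can be written as a \emph{finite real linear combination} of tensor fields of the form $R^\sigma(\nabla k, \nabla q)$ introduced in Lemma \ref{lem_s}. For $P$ this is immediate from the expression \eqref{p_exp}: each summand there has exactly two factors of $\eta^{-1}$ contracting among the six indices $(\mu,\nu)$ (the free ones) and the indices on $\nabla k$, $\nabla q$. Renaming indices identifies each summand with some $R^\sigma(\nabla k, \nabla q)$. For $Q$, the same remark applies to \eqref{structure_q}: each of the six summands is of the form $\eta^{\alpha\alpha'}\eta^{\beta\beta'}\nabla_{(\cdot)}k_{(\cdot)(\cdot)}\nabla_{(\cdot)}q_{(\cdot)(\cdot)}$ with the free indices $\mu,\nu$ placed in specific positions, which exhibits it as an $R^\sigma$.

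Once this is done, the first-order formulas follow directly by applying Lemma \ref{lem_s} to each constituent $R^\sigma$ and summing back, since the factor $-4\delta_Z^S R^\sigma(\nabla k,\nabla q)$ combines across the sum into $-4\delta_Z^S\, P(\nabla_\mu k,\nabla_\nu q)$ (respectively $-4\delta_Z^S\, Q_{\mu\nu}(\nabla k,\nabla q)$). So the two base identities hold.

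For the iterated formulas I would argue by induction on $|I|$. The case $|I|=0$ is trivial (take $\widehat{C}^I_{0,0}=1$). For the induction step, write $\mathcal{L}_Z^I = \mathcal{L}_Z \mathcal{L}_Z^{I'}$ with $|I'| = |I|-1$, apply the induction hypothesis to get a linear combination $\sum \widehat{C}^{I'}_{J,K} P(\nabla_\mu \mathcal{L}_Z^J k, \nabla_\nu \mathcal{L}_Z^K q)$, then apply the base case to each term: each application of $\mathcal{L}_Z$ produces three terms of the required shape (with $J$ or $K$ incremented, or a scaling contribution $-4\delta_Z^S$ keeping the same $(J,K)$). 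All coefficients produced at each step are integers, so the resulting $\widehat{C}^I_{J,K}$ are integers. The analogous argument works verbatim for $Q$.

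There is no real obstacle here: the only subtlety is the bookkeeping to make sure that the Lie derivative does not interact with the fixed $\eta^{-1}$ contractions in a way that destroys the structural form. This is precisely what Lemma \ref{lem_s} already packaged for us, and the factor $-4\delta_Z^S$ there arises from the two $\eta^{-1}$ factors (each contributing $-2\delta_Z^S$ via $\mathcal{L}_Z(\eta^{-1})=-2\delta_Z^S\eta^{-1}$). Since both $P$ and $Q$ share this exact structure (two $\eta^{-1}$'s and two first derivatives of $2$-tensors), the scaling coefficient $-4\delta_Z^S$ is the same for both.
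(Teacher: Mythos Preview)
Your proposal is correct and follows exactly the paper's approach: the paper's proof simply states that the result follows directly from the definitions \eqref{p_exp}--\eqref{structure_q} of $P$ and $Q$ together with Lemma~\ref{lem_s}. Your write-up just makes explicit the observation that each summand in $P$ and $Q$ is an $R^\sigma$, and spells out the straightforward induction for the iterated formula.
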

\begin{proof}
This directly follows from the definition of $P(\nabla k, \nabla q)$ and $Q(\nabla k, \nabla q)$ \eqref{p_exp}-\eqref{structure_q} as well as Lemma \ref{lem_s}.
\end{proof}
We then deduce the commutation formula for the Einstein equations \eqref{EV1}.
\begin{prop}\label{ComuEin}
Let $Z^I \in \mathbb{K}^{|I|}$ with $|I| \leq N$. Then, there exists integers $C^I_{J,K}$ and $\overline{C}^I_{J,K}$ such that, for any $(\mu, \nu) \in \llbracket 0,3 \rrbracket^2$,
\begin{align*}
\widetilde{\square}_g \left( \mathcal{L}_Z^I (h^1)_{\mu \nu} \right) \hspace{1mm} & = \hspace{1mm} \sum_{\substack{|J|+|K| \leq |I| \\ |K| < |I|}} C^I_{J,K} \; \mathcal{L}_Z^J( H)^{\alpha \beta} \nabla_{\alpha} \nabla_{\beta} \mathcal{L}_Z^K (h^1) \\ & \quad \hspace{1mm} + \sum _{|J|+|K| \leq |I|} \overline{C}^I_{J,K} \; P(\nabla_{\mu} \mathcal{L}^J_Z k , \nabla_{\nu} \mathcal{L}_Z^K q)+ \overline{C}^I_{J,K} \; Q_{\mu \nu}(\nabla \mathcal{L}^J_Z k , \nabla \mathcal{L}_Z^K q) \\ & \quad \hspace{1mm}+ \sum_{|J| \leq |I|}\mathcal{L}_Z^J \left( G(h)(\nabla h, \nabla h ) \right)_{\mu \nu}- \mathcal{L}_Z^J \left(\widetilde{\square}_g  h^0 \right)_{\mu \nu} -2 \mathcal{L}_Z^J \left( T[f] \right)_{\mu \nu}.
\end{align*}
The derivatives of $T[f]$ and $\widetilde{\square}_g h^0$ will be computed in Section \ref{sectioncommutationvlasovenergy} and Proposition \ref{prop_ss}. For the cubic terms, we have under the assumption \eqref{eq:conditiong},
$$\left|  \mathcal{L}_Z^I \left( G(h)(\nabla h, \nabla h ) \right) \right| \hspace{1mm} \lesssim \hspace{1mm} \sum_{|J_1|+|J_2|+|J_3| \leq |I|}  \left| \mathcal{L}_Z^{J_1} h \right| \left| \nabla \mathcal{L}_Z^{J_2} h \right| \left| \nabla \mathcal{L}_Z^{J_3} h \right|.$$
\end{prop}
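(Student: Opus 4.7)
The plan is to combine three previously established ingredients: the reduced Einstein equation \eqref{EV1}, the commutator identity from Corollary \ref{commutator_einstein}, and the Lie-derivative action on the quadratic null forms from Lemma \ref{lem_lin_pqg}. I would proceed by induction on $|I|$, the base case $|I|=0$ reducing to \eqref{EV1} itself.

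For the inductive step, first rearrange Corollary \ref{commutator_einstein} into
\[
\widetilde{\Box}_g(\mathcal{L}_Z^I h^1)_{\mu\nu} = \mathcal{L}_Z^I(\widetilde{\Box}_g h^1)_{\mu\nu} + \sum_{\substack{|J|+|K|\leq |I|\\ |K|<|I|}}\underline{C}^I_{J,K}\,\mathcal{L}_Z^J(H)^{\alpha\beta}\nabla_\alpha\nabla_\beta \mathcal{L}_Z^K(h^1)_{\mu\nu} + \sum_{|K|<|I|}\widetilde{C}^I_K\,\widetilde{\Box}_g(\mathcal{L}_Z^K h^1)_{\mu\nu}.
\]
The middle family already matches the form claimed in the proposition. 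To each $\widetilde{\Box}_g(\mathcal{L}_Z^K h^1)$ with $|K|<|I|$ I would apply the inductive hypothesis: it expands into exactly the combination claimed, and substituting back and collecting yields the full expansion, with the integer constants $C^I_{J,K}$, $\overline{C}^I_{J,K}$ redefined accordingly.

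The remaining task is to expand $\mathcal{L}_Z^I(\widetilde{\Box}_g h^1)$. Substituting the Einstein equation \eqref{EV1} together with the decomposition $F=P+Q+G$ produces five families. For the $P$ and $Q$ families, Lemma \ref{lem_lin_pqg} directly provides the expansions $\sum_{|J|+|K|\leq |I|}\overline{C}^I_{J,K}\bigl(P(\nabla_\mu \mathcal{L}_Z^J h,\nabla_\nu \mathcal{L}_Z^K h)+Q_{\mu\nu}(\nabla\mathcal{L}_Z^J h,\nabla\mathcal{L}_Z^K h)\bigr)$, with integer coefficients that absorb the $-4\delta_Z^S$ contributions arising whenever a scaling vector field is applied. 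The remaining terms $\mathcal{L}_Z^J(G(h)(\nabla h,\nabla h))$, $\mathcal{L}_Z^J(\widetilde{\Box}_g h^0)$ and $\mathcal{L}_Z^J(T[f])$ are kept symbolic, in line with the statement.

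For the cubic pointwise bound I would invoke the structure \eqref{structure_g}: $G(h)(\nabla h,\nabla h)$ is a linear combination of $H^{\alpha\beta}\partial_\xi h\,\partial_\sigma h$ and $H^{\alpha_0\beta_0}H^{\alpha\beta}\partial_\xi h\,\partial_\sigma h$. Distributing $\mathcal{L}_Z^I$ by the Leibniz rule, commuting $\mathcal{L}_Z$ past $\nabla$ on cartesian components via Lemma \ref{lem_com_lie}, using the equivalence \eqref{equinormLie} between $|\nabla_Z^J k|$ and $|\mathcal{L}_Z^J k|$, and controlling each factor $|\mathcal{L}_Z^M H|$ by $\sum_{|M'|\leq |M|}|\mathcal{L}_Z^{M'} h|$ via Proposition \ref{Prop_H_to_h}, one obtains the claimed trilinear estimate; the smallness assumption \eqref{eq:conditiong} ensures that when one of the $H$-factors in the quartic-type term carries few derivatives, its factor is uniformly bounded and absorbed into the implicit constant. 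The main difficulty is really not analytic but combinatorial: one must keep careful track of how the induction collapses the nested $\widetilde{\Box}_g(\mathcal{L}_Z^K h^1)$ contributions so that the final formula displays $\widetilde{\Box}_g$ only once on the left-hand side and recovers the $K=I$ Einstein-equation contribution with overall coefficient $1$.
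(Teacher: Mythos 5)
Your proposal is correct and follows essentially the same route as the paper's proof: induction on $|I|$, with Corollary~\ref{commutator_einstein} supplying the commutator $\widetilde{\square}_g\mathcal{L}_Z^I - \mathcal{L}_Z^I\widetilde{\square}_g$, the reduced Einstein equation~\eqref{EV1} inserted into $\mathcal{L}_Z^I(\widetilde{\square}_g h^1)$, and Lemma~\ref{lem_lin_pqg} expanding the $P$ and $Q$ families; the cubic bound is obtained exactly as in the paper via~\eqref{structure_g}, the Leibniz rule, Lemma~\ref{lem_com_lie}, the equivalence~\eqref{equinormLie}, Proposition~\ref{Prop_H_to_h}, and the smallness hypothesis~\eqref{eq:conditiong} to absorb one low-order $H$-factor in the quartic-type contributions. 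Your closing remark about the $K=I$ term appearing with coefficient exactly $1$ is not actually needed — the proposition only asserts existence of integer coefficients, and the $\mathcal{L}_Z^J(G)$, $\mathcal{L}_Z^J(\widetilde{\square}_g h^0)$, $\mathcal{L}_Z^J(T[f])$ sums picked up by unwinding the inductive hypothesis really do acquire integer multiplicities, which do not affect the subsequent estimates.
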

\begin{proof}
The commutation formula for the Einstein equations \eqref{EV1} follows from an induction on $|I|$ relying on Corollary \ref{commutator_einstein} and Lemma \ref{lem_lin_pqg}. For the estimate for the cubic terms, we obtain from \eqref{structure_g} and the definition of the Lie derivative \eqref{defLie} that $ \mathcal{L}_Z^I \left( G(h)(\nabla h, \nabla h ) \right)_{\mu \nu}$ can be bounded by a linear combination of terms of the form
$$  \left(1+ \left| Z^{J_0} H^{\alpha_0 \beta_0} \right| \right) \left| Z^{J_1} H^{\alpha_1 \beta_1} \right| \left| Z^{J_2} \partial_{\xi_2} h_{\lambda_2 \kappa_2} \right| \left| Z^{J_3} \partial_{\xi_3} h_{\lambda_3 \kappa_3} \right|,$$
where all the multi-indices are in $\llbracket 0,3 \rrbracket$ and $|J_0|+|J_1|+|J_2|+|J_3| \leq |I|$. Note now, using \eqref{equinormLie} and Lemma \ref{lem_com_lie} that
\begin{align*}
 \left| Z^{J_i} H^{\alpha_i \beta_i} \right| \hspace{1mm} & \leq \hspace{1mm} \left| \nabla_Z^{J_i} H \right| \lesssim \sum_{|K_i| \leq |J_i|} \left| \mathcal{L}_Z^{K_i} H \right|, \\
  \left| Z^{J_j} \partial_{\xi_j} h_{\lambda_j \kappa_j} \right| \hspace{1mm} & \leq \hspace{1mm} \left| \nabla_Z^{J_j} \nabla h \right| \lesssim \sum_{|K_j| \leq |J_j|} \left| \mathcal{L}_Z^{K_j} \nabla h \right| =  \sum_{|K_j| \leq |J_j|} \left| \nabla \mathcal{L}_Z^{K_j}  h \right|.
\end{align*}
Finally, without loss of generality, we can assume that $|J_0| \leq N-3$, so that, using Proposition \ref{Prop_H_to_h} and the assumption \eqref{eq:conditiong}, $\left| Z^{J_0} H^{\alpha_0 \beta_0} \right| \lesssim 1$. This concludes the proof.
\end{proof}

\section{Commutation of the Vlasov equation} \label{sect_vlasov_commutator}

The purpose of this section is to compute the commutator $[\mathbf T_g , \widehat{Z}^I]$, for $\widehat{Z}^I \in \widehat{\mathbb{P}}_0^{|I|}$. The commutation formula obtained here is more geometric than the one used in \cite{FJS3}. In the spirit of \cite{massless} for the Vlasov-Maxwell system (see in particular Subsection $2.5$), we express the error terms using Lie derivatives of the metric instead  of derivatives of its Cartesian components. We recall the following notations
\begin{eqnarray}
\nonumber (w_0, w_1, w_2, w_3) & = & (-|v|, v_1, v_2, v_3), \quad |v| = \sqrt{v_1^2 + v_2^2 + v_3^2} \\ \nonumber
\Delta v & := & v_0-w_0 \hspace{2mm} = \hspace{2mm}  v_0+|v|, \\ \nonumber
\mathbf T_g & := & v_{\mu} g^{\mu \nu} \partial_{\nu}-\frac{1}{2} v_{\alpha} v_{\beta} \partial_i g^{\alpha \beta} \partial_{v_i}
\end{eqnarray}
and we consider for all this section a sufficiently regular symmetric tensor field $\Hh^{\mu \nu}$ and a sufficiently regular function $\f :[0,T[ \times \R^3_x \times \R^3_v \rightarrow \R$. We define the vertical parts $S^w$ and $Z^w$, for $Z \in \mathbb{P}$ a Killing, respectively conformal Killing, vector field, by
$$ S^w \hspace{2mm} := \hspace{2mm} 0 \hspace{1cm} \text{and} \hspace{1cm} Z^w \hspace{2mm} := \hspace{2mm} \widehat{Z}-Z. $$
For instance, $\Omega_{01}^w = -w_0 \partial_{v_1}$. Recall also that, in order to simplify the presentation of the commutation formula, we use the following convention. For any $\widehat{Z} \in \widehat{\mathbb{P}}_0$, if $\widehat{Z} \neq S$, then we denote by $Z$ the Killing vector field which has $\widehat{Z}$ as its complete lift and if $\widehat{Z}=S$, then we set $Z=S$. Finally, we extend the Kronecker symbol to vector fields $(X,Y)$, i.e. $\delta_X^Y =1$ if $X=Y$ and $\delta_X^Y =0$ otherwise.

\subsection{Geometric notations}\label{secgeonot}

In order to clearly identify the structure of the error terms in the commuted equations, let us rewrite the two parts composing the operator $\mathbf T_g$. For this, we will denote the differential in the spacetime variables $(t,x)$ of $\psi$ by $\dr \psi$ and we recall that $\nabla \Hh$ denotes the covariant derivative of $\Hh$ with respect to the Minkowski metric. We then have
$$ \dr \f \hspace{2mm} := \hspace{2mm} \partial_{\mu} \f \dr x^{\mu}, \hspace{1cm} v \hspace{2mm} = \hspace{2mm} v_{\mu} \dr x^{\mu}, \hspace{1cm} \nabla \Hh \hspace{2mm} = \hspace{2mm} \partial_{x^\lambda} \Hh^{\mu \nu} \mathrm dx^{\lambda} \otimes \partial_{x^\mu} \otimes \partial_{x^\nu}. $$
With these notations,
\begin{eqnarray}
v_{\mu} \Hh^{\mu \nu} \partial_{\nu} \f & = & \Hh (v, \dr \f) \label{notgeo1}, \\ 
v_{\alpha} v_{\beta} \partial_i \Hh^{\alpha \beta} \partial_{v_i} \f & = & \nabla_i ( \Hh ) (v,v) \cdot  \partial_{v_i} \f, \label{notgeo2bis} \\
v_{\alpha} v_{\beta} \partial^{\mu} \Hh^{\alpha \beta} \frac{v_{\mu}}{v_0} & = & \nabla^{\mu} ( \Hh ) (v,v) \cdot  \frac{v_{\mu}}{v_0}. \label{notgeo3bis} 
\end{eqnarray}
Similar identities hold if $v$ is replaced by $w=w_{\mu} \mathrm dx^{\mu}$. Note that the transport operator can then be rewritten as
\begin{equation}\label{Vlasovdecompo}
\mathbf T_g( \f) \hspace{2mm} = \hspace{2mm} \widetilde{\mathbf T}_g( \f)-\frac{1}{2}  \nabla_i (H)(v,v)  \cdot \partial_{v_i} \f,
\end{equation}
with
\begin{equation}\label{Vlasovdecompo2}
 \widetilde{\mathbf T}_g ( \f ) \hspace{2mm} := \hspace{2mm} g^{-1}(v,d \f ) \hspace{2mm} = \hspace{2mm} \mathbf T_{\eta} ( \f) -\Delta v \partial_t \f +H(v,d \f) 
\end{equation}
and where $\mathbf T_{\eta}= |v| \partial_t + v^i \partial_i \f = w^{\mu} \partial_{\mu} $ is the massless relativistic transport operator with respect to the Minkowski metric. Let us mention that the quantity \eqref{notgeo3bis} will appear as an error term in the commutator $[\mathbf T_g, \widehat{\Omega}_{0k}]$. We now prove a technical lemma which contains useful identities.
\begin{lemma}\label{LemmaCom}
 Let $\theta = \theta_{\mu} d x^{\mu}$ and $\overline{\theta} = \overline{\theta}_{\mu} d x^{\mu}$ be two $1$-forms and $\widehat{Z} \in \widehat{\mathbb P}_0$. Then,
\begin{align}
&\Hh (\mathcal{L}_{Z}(w), \theta )+ \Hh(Z^w(w),  \theta ) = \delta_{\widehat{Z}}^S \Hh(w, \theta ), \label{eq:LemmaCom1} \\
&\mathcal{L}_{Z }(\nabla_i \Hh) (\theta , \overline{\theta} ) \cdot \partial_{v_i} \f+ \nabla_i (\Hh) ( \theta , \overline{\theta} ) \cdot \widehat{Z} \partial_{v_i} \f \label{eq:LemmaCom2} \\
& \qquad = \nabla_i \left( \mathcal{L}_{  Z}(\Hh) \right) ( \theta , \overline{\theta} ) \cdot \partial_{v_i} \f+ \nabla_i(\Hh) ( \theta , \overline{\theta} ) \cdot  \partial_{v_i} \widehat{Z} \f \nonumber \\
&\qquad \quad - \delta_{\widehat{Z}}^S \nabla_i (\Hh) (\theta , \overline{\theta} ) \cdot \partial_{v_i} \f  + \delta_{\widehat{Z}}^{\widehat{\Omega}_{0k}} \nabla^{\mu} (\Hh) (\theta , \overline{\theta} ) \cdot \frac{w_{\mu}}{w_0} \partial_{v_k} \f, \nonumber  \\
& \mathcal{L}_{Z}(\nabla^{\mu} \Hh)(\theta , \overline{\theta}) \cdot \frac{w_{\mu}}{w_0} + \nabla^{\mu}(\Hh)(\theta , \overline{\theta}) \cdot  \widehat{Z}  \left(  \frac{w_{\mu}}{w_0}  \right) \label{eq:LemmaCom3} \\
&\qquad = \nabla^{\mu} \left( \mathcal{L}_{ Z}(\Hh) \right) (\theta , \overline{\theta}) \cdot \frac{w_{\mu}}{w_0} - \delta_{\widehat{Z}}^S \nabla^{\mu}(\Hh)(\theta , \overline{\theta}) \cdot \frac{w_{\mu}}{w_0}  +\delta_{\widehat{Z}}^{\widehat{\Omega}_{0k}} \frac{w_k}{w_0} \nabla^{\mu}(\Hh)(\theta , \overline{\theta}) \cdot \frac{w_{\mu}}{w_0}. \nonumber 
\end{align}
\end{lemma}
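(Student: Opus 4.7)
The plan is to verify all three identities by direct case-by-case computation on the generators of $\widehat{\mathbb{P}}_0$, namely translations, spatial rotations $\widehat{\Omega}_{ij}$, Lorentz boosts $\widehat{\Omega}_{0k}$, and the scaling vector field $S$. All three statements are compatibility relations between the complete-lift vector fields, the covariant derivative $\nabla$, and the flat dual current $w$, and can be regarded as the precise bookkeeping needed to commute $\widehat{Z}$ through the vertical part of $\mathbf{T}_g$ while keeping the structure geometric.

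For \eqref{eq:LemmaCom1}, since $w_\mu$ depends only on $v$ we have $Z(w_\mu) = 0$ for every $Z \in \mathbb{K}$, so that $\mathcal{L}_Z(w)_\mu = w_\nu\, \partial_\mu Z^\nu$ and the identity reduces to
\[
w_\nu\, \partial_\mu Z^\nu + Z^w(w_\mu) = \delta_{\widehat{Z}}^S\, w_\mu.
\]
This is checked generator by generator using the formulas of Subsection~\ref{secliftcomplet}: translations give $0=0$; for $Z = \Omega_{ij}$ the horizontal contribution $w_j\delta_\mu^i - w_i\delta_\mu^j$ exactly cancels the vertical one $\Omega_{ij}^w(w_\mu) = w_i\delta_\mu^j - w_j\delta_\mu^i$; an analogous cancellation occurs for $Z = \Omega_{0k}$; and for $Z = S$ one has $S^w = 0$ and $\mathcal{L}_S(w) = w$, matching $\delta_{\widehat{S}}^S\,w$.

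Identities \eqref{eq:LemmaCom2} and \eqref{eq:LemmaCom3} are treated in parallel and rely on two elementary inputs. First, viewing $\nabla_i \Hh$ and $\nabla^\mu \Hh$ as $(2,0)$-tensors for each fixed label $i$ or $\mu$, and using $\partial_\rho \partial_\sigma Z^\alpha = 0$ for $Z \in \mathbb{K}$, a coordinate calculation yields
\[
\mathcal{L}_Z(\nabla_i \Hh) = \nabla_i(\mathcal{L}_Z \Hh) - \partial_i Z^\sigma\, \nabla_\sigma \Hh, \qquad \mathcal{L}_Z(\nabla^\mu \Hh) = \nabla^\mu(\mathcal{L}_Z \Hh) - \eta^{\mu\nu}\partial_\nu Z^\sigma\, \nabla_\sigma \Hh.
\]
Second, from $\widehat{Z} = Z + Z^w$ one obtains the commutator $[\widehat{Z}, \partial_{v_i}]$ directly: it vanishes for $\widehat{Z} \in \{\partial_\mu, S\}$, equals $\delta_{ki}\partial_{v_j} - \delta_{ji}\partial_{v_k}$ for $\widehat{Z} = \widehat{\Omega}_{jk}$, and equals $\frac{w_i}{w_0}\partial_{v_k}$ for $\widehat{Z} = \widehat{\Omega}_{0k}$; the values of $\widehat{Z}(w_\mu/w_0)$ needed for \eqref{eq:LemmaCom3} are obtained similarly from the explicit form of $\widehat{Z}$.

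Substituting these into the left-hand side of \eqref{eq:LemmaCom2} (resp.\ \eqref{eq:LemmaCom3}) via $\widehat{Z}\,\partial_{v_i} = \partial_{v_i}\widehat{Z} + [\widehat{Z}, \partial_{v_i}]$, and comparing term by term with the right-hand side, the claim reduces to checking, on each generator, that the $\partial_i Z^\sigma$ Lie-derivative correction combines with the commutator (resp.\ the $\widehat{Z}(w_\mu/w_0)$ contribution) to reproduce exactly the $\delta_{\widehat{Z}}^S$ and $\delta_{\widehat{Z}}^{\widehat{\Omega}_{0k}}$ terms on the right. Translations contribute nothing on either side; for $Z = S$ only the Lie-derivative correction survives and, since $\partial_i S^\sigma = \delta_i^\sigma$ (resp.\ $\eta^{\mu\nu}\partial_\nu S^\sigma = \eta^{\mu\sigma}$), produces exactly $-\delta_{\widehat{Z}}^S \nabla_i \Hh$ (resp.\ $-\delta_{\widehat{Z}}^S \nabla^\mu \Hh$); for $Z = \Omega_{jk}$ the two contributions cancel identically after antisymmetrisation; for $Z = \Omega_{0m}$ they combine into $\nabla^\mu \Hh\,\frac{w_\mu}{w_0}\partial_{v_m}$ (resp.\ $\frac{w_m}{w_0}\nabla^\mu \Hh\,\frac{w_\mu}{w_0}$), matching the right-hand side. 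The argument is entirely mechanical; the only genuine obstacle is the bookkeeping, especially in \eqref{eq:LemmaCom3}, where the raised label $\mu$ must be contracted consistently with $\eta^{-1}$ on both sides of the equality.
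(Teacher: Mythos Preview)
Your proposal is correct and follows essentially the same approach as the paper. Both proofs proceed by case-by-case verification on the generators of $\widehat{\mathbb{P}}_0$, using the same key ingredients: the explicit formulas for $\mathcal{L}_Z(w)$ and $Z^w(w)$ for \eqref{eq:LemmaCom1}, and for \eqref{eq:LemmaCom2}--\eqref{eq:LemmaCom3} the commutator identity $[\mathcal{L}_Z,\nabla_i]=\nabla_{[Z,\partial_i]}$ (which you wrote equivalently as $\mathcal{L}_Z(\nabla_i\Hh)=\nabla_i(\mathcal{L}_Z\Hh)-\partial_iZ^\sigma\nabla_\sigma\Hh$), the commutators $[\widehat{Z},\partial_{v_i}]$, and the values $\widehat{Z}(w_\mu/w_0)$.
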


\begin{proof}
As the Cartesian components of $w$ do not depend on $(t,x)$, we have $\mathcal{L}_Z(w)=w_{\mu} \partial_{\nu} Z^{\mu} \mathrm dx^{\nu}$. We then deduce
\begin{align}
\mathcal{L}_{\partial_{\nu}} (w) &= 0, \qquad &\partial_{\nu}^w(w) &= 0, \label{translationw} \\ 
\mathcal{L}_S (w) &=  w, \qquad & S^w(w) &= 0, \label{scalingw} \\ 
\mathcal{L}_{\Omega_{ij}}(w) &= -w_i \mathrm dx^j + w_j \mathrm dx^i, \qquad &\Omega_{ij}^w(w) &= w_i \mathrm dx^j-w_j \mathrm dx^i, \label{rotationw} \\ 
 \mathcal{L}_{\Omega_{0k}}(w) & = w_0 \mathrm dx^k + w_k \mathrm dt, \qquad &\Omega_{0k}^w(w) &= -w_k \mathrm dt-w_0 \mathrm dx^k, \label{boostw}
\end{align}
and then that
$$\Hh(\mathcal{L}_{Z}(w), \theta )+ \Hh (Z^w(w),  \theta ) \hspace{2mm} = \hspace{2mm} \delta_{\widehat{Z}}^S \Hh (w, \theta ).$$
In order to compute (\ref{eq:LemmaCom2}) and (\ref{eq:LemmaCom3}), let us introduce
\begin{eqnarray}
\nonumber \mathfrak{R}_Z & := & \mathcal{L}_{Z }(\nabla_i \Hh) (\theta , \overline{\theta}) \cdot \partial_{v_i} \f+ \nabla_i (\Hh) (\theta , \overline{\theta}) \cdot \widehat{Z} \partial_{v_i} \f ,\\ \nonumber
 \mathfrak{Q}_{Z} & := & \mathcal{L}_{Z}( \nabla^{\mu} \Hh) (\theta , \overline{\theta}) \cdot \frac{w_{\mu}}{w_0} +\nabla^{\mu} (\Hh)(\theta , \overline{\theta}) \cdot \widehat{Z} \left( \frac{w_{\mu}}{w_0} \right) 
 \end{eqnarray}
and remark, since $\nabla_i =\mathcal{L}_{\partial_i}$ and $\nabla^{\mu} = \eta^{\mu \lambda} \mathcal{L}_{\partial_{\lambda}}$, that
$$ [\mathcal{L}_Z , \nabla_i ] = \nabla_{[Z,\partial_i]} \hspace{1cm} \text{and} \hspace{1cm} [\mathcal{L}_Z , \nabla^{\mu} ]= \eta^{\mu \lambda} \nabla_{[Z,\partial_{\lambda}]}.$$
Note now that $[\partial_{\nu}, \partial_{\lambda}]= [\partial_{\nu}, \partial_{v_i}]=0$ and $\partial_{\nu} \left( \frac{w_{\mu}}{w_0} \right) =0 $ implies
\begin{eqnarray}
\nonumber \mathfrak{R}_{\partial_{\nu}} &  = & \nabla_i \left( \mathcal{L}_{ \partial_{\nu} }(\Hh)\right) (\theta , \overline{\theta}) \cdot \partial_{v_i} \f+\nabla_i(\Hh) (\theta , \overline{\theta}) \cdot \partial_{v_i} \partial_{\nu} \f , \\ \nonumber
\mathfrak{Q}_{\partial_{\nu}} & = & \nabla^{\mu} \left( \mathcal{L}_{ \partial_{\nu} }(\Hh) \right) (\theta , \overline{\theta}) \cdot \frac{w_{\mu}}{w_0}.
\end{eqnarray}
Since $[S,\partial_{\lambda}]= -\partial_{\lambda}$, $[S ,\partial_{v_i}]=0$ and $S^w \left( \frac{w_{\mu}}{w_0} \right) = 0$, we have
\begin{eqnarray}
\nonumber \mathfrak{R}_S & = & \nabla_i \left( \mathcal{L}_{ S}(\Hh) \right) (\theta , \overline{\theta}) \cdot \partial_{v_i} \f+ \nabla_i(\Hh) (\theta , \overline{\theta}) \cdot \partial_{v_i} S \f   -\nabla_i(\Hh) (\theta , \overline{\theta}) \cdot \partial_{v_i} \f, \\ \nonumber
\mathfrak{Q}_S & = & \nabla^{\mu} \left( \mathcal{L}_{ S}(\Hh) \right) (\theta , \overline{\theta}) \cdot \frac{w_{\mu}}{w_0}-\nabla^{\mu} (\Hh) (\theta , \overline{\theta}) \cdot \frac{w_{\mu}}{w_0} .
\end{eqnarray}
As $[\Omega_{kl},\partial_{\lambda}]=-\delta_{\lambda}^k \partial_l+\delta_{\lambda}^l \partial_k$, $[\widehat{\Omega}_{kl} , \partial_{v_i} ] = -\delta_i^k \partial_{v_l}+\delta_i^l \partial_{v_k}$ and $\widehat{\Omega}_{kl} \left( \frac{w_{\mu}}{w_0} \right) = \delta_{\mu}^l \frac{w_k}{w_0}-\delta_{\mu}^k \frac{w_l}{w_0}$, one gets
\begin{eqnarray}
\nonumber \mathfrak{R}_{\Omega_{kl}} & = & \nabla_i \left( \mathcal{L}_{ \Omega_{kl}}(\Hh) \right) (\theta , \overline{\theta}) \cdot \partial_{v_i} \f+\nabla_i(\Hh) (\theta , \overline{\theta}) \cdot \partial_{v_i} \widehat{\Omega}_{kl} \f , \\ \nonumber
\mathfrak{Q}_{\Omega_{kl}} & = & \nabla^{\mu} \left( \mathcal{L}_{\Omega_{kl}}(\Hh) \right)(\theta , \overline{\theta}) \cdot \frac{w_{\mu}}{w_0}.
\end{eqnarray}
Using $[\Omega_{0k},\partial_{\lambda}]=- \delta_{\lambda}^k \partial_t-\delta_{\lambda}^0 \partial_k$, $[\widehat{\Omega}_{0k}, \partial_{v_i} ]=\frac{w_i}{w_0} \partial_{v_k}$, $\widehat{\Omega}_{0k} \left( \frac{w_0}{w_0} \right) = 0$ and $\widehat{\Omega}_{0k} \left( \frac{w_{j}}{w_0} \right) = -\delta_{j}^k +\frac{w_j w_k}{(w_0)^2}$, we obtain
\begin{eqnarray}
\nonumber \mathfrak{R}_{\Omega_{0k}} &  = &  \nabla_i \left( \mathcal{L}_{ \Omega_{0k}}(\Hh) \right) (\theta , \overline{\theta}) \cdot \partial_{v_i} \f+\nabla_i(\Hh) (\theta , \overline{\theta}) \cdot \partial_{v_i} \widehat{\Omega}_{0k} \f  + \nabla^{\mu}(\Hh) (\theta , \overline{\theta}) \cdot \frac{w_{\mu}}{w_0}\partial_{v_k} \f, \\ \nonumber
\mathfrak{Q}_{\Omega_{0k}} & = & \nabla^{\mu} \left( \mathcal{L}_{ \Omega_{0k}}(\Hh) \right) (\theta , \overline{\theta}) \cdot \frac{w_{\mu}}{w_0} +\frac{w_k}{w_0}\nabla^{\mu}(\Hh) (\theta , \overline{\theta}) \cdot \frac{w_{\mu}}{w_0} .
\end{eqnarray}
\end{proof}

\subsection{Commutation formula for $\widetilde{\mathbf{T}}_g$}

We start by deriving a commutation formula for the first part $\widetilde{\mathbf{T}}_g$ of the transport operator. To this end, we first decompose it as
 $$ \widetilde{\mathbf T}_g(\f) \hspace{2mm} = \hspace{2mm} \mathbf T_\eta (\f)+\Delta v g^{-1}(\dr t,d \f)+H(w, d \f). $$
 The following lemma is a prerequisite for Lemma \ref{ComtildeTg}.
 
\begin{lemma}\label{LemCom1}
Let $\widehat{Z} \in  \widehat{ \mathbb P}_0$ and $0 \leq \mu \leq 3$. Then,
\begin{eqnarray}
\nonumber \widehat{Z} \left( \Hh(w, \dr \f) \right) & = & \Hh(w, \dr \widehat{Z} \f)+\mathcal{L}_{Z}(\Hh)(w, \dr \f)+ \delta^S_{\widehat{Z}} \Hh(w, \dr \f) ,\\ \nonumber  \widehat{Z} \left( \Hh ( \mathrm dx^{\mu}, \dr \f ) \right) & = &  \Hh ( \mathrm dx^{\mu}, \dr \widehat{Z} \f )  +  \mathcal{L}_{Z}(\Hh) ( \mathrm dx^{\mu}, \dr  \f )   + \partial_{\nu}(Z^{\mu})  \Hh ( \mathrm dx^{\nu}, \dr  \f ) .
\end{eqnarray}
\end{lemma}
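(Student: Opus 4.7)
The proof is essentially a direct computation using the Leibniz rule and the definition of the Lie derivative, combined with identity \eqref{eq:LemmaCom1} already established in Lemma \ref{LemmaCom}. I will treat the two identities in parallel, writing $\Hh(w, \dr \f) = \Hh^{\alpha \beta} w_\alpha \partial_\beta \f$ and $\Hh(\dr x^{\mu}, \dr \f) = \Hh^{\mu \beta} \partial_\beta \f$ in coordinates.

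First, I would observe that since $\Hh^{\alpha\beta}$ is a function of $(t,x)$ only while $w_\alpha$ depends only on $v$, the derivation $\widehat{Z} = Z + Z^w$ splits cleanly as $\widehat{Z}(\Hh^{\alpha\beta}) = Z(\Hh^{\alpha\beta})$ and $\widehat{Z}(w_\alpha) = Z^w(w_\alpha)$. For $\widehat{Z}(\partial_\beta \f)$ I use the commutator $[\widehat{Z}, \partial_\beta] = [Z,\partial_\beta] = -\partial_\beta(Z^\mu) \partial_\mu$, valid because $Z^w$ involves only $\partial_{v_i}$ with coefficients independent of $(t,x)$. Applying Leibniz to $\widehat{Z}(\Hh^{\alpha\beta} w_\alpha \partial_\beta \f)$ therefore yields
\begin{equation*}
\widehat{Z}(\Hh(w,\dr\f)) = Z(\Hh^{\alpha\beta}) w_\alpha \partial_\beta \f + \Hh^{\alpha\beta} Z^w(w_\alpha) \partial_\beta \f + \Hh(w,\dr \widehat{Z}\f) - \Hh^{\alpha\beta} w_\alpha \partial_\beta(Z^\mu) \partial_\mu \f.
\end{equation*}

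Next, I would invoke the coordinate expression $\mathcal{L}_Z(\Hh)^{\alpha\beta} = Z(\Hh^{\alpha\beta}) - \Hh^{\mu\beta}\partial_\mu Z^\alpha - \Hh^{\alpha\mu}\partial_\mu Z^\beta$ to replace $Z(\Hh^{\alpha\beta})$. The term $\Hh^{\alpha\mu}\partial_\mu Z^\beta w_\alpha \partial_\beta \f$ produced by the Lie correction cancels with the commutator contribution $-\Hh^{\alpha\beta} w_\alpha \partial_\beta(Z^\mu)\partial_\mu\f$ after relabeling. Since $\mathcal{L}_Z(w) = w_\alpha \partial_\mu(Z^\alpha) \dr x^\mu$ (because the components $w_\alpha$ are $v$-dependent only), the remaining Lie correction $\Hh^{\mu\beta}\partial_\mu Z^\alpha w_\alpha \partial_\beta\f$ is exactly $\Hh(\mathcal{L}_Z(w), \dr\f)$. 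Grouping the surviving terms,
\begin{equation*}
\widehat{Z}(\Hh(w,\dr\f)) = \mathcal{L}_Z(\Hh)(w,\dr\f) + \Hh(\mathcal{L}_Z(w),\dr\f) + \Hh(Z^w(w),\dr\f) + \Hh(w,\dr\widehat{Z}\f),
\end{equation*}
and the first identity of the lemma follows by applying \eqref{eq:LemmaCom1} with $\theta = \dr\f$ to combine the two middle terms into $\delta^S_{\widehat{Z}} \Hh(w,\dr\f)$.

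The second identity is strictly simpler: since $\dr x^\mu$ has constant components, the analogues of the $w$-terms drop out. Repeating the same computation with $w_\alpha$ replaced by $\delta^\mu_\alpha$ gives
\begin{equation*}
\widehat{Z}(\Hh(\dr x^\mu,\dr\f)) = \mathcal{L}_Z(\Hh)(\dr x^\mu,\dr\f) + \Hh^{\nu\beta}\partial_\nu(Z^\mu)\partial_\beta\f + \Hh(\dr x^\mu,\dr\widehat{Z}\f),
\end{equation*}
after the same cancellation between the second Lie correction and the commutator term; the middle term is precisely $\partial_\nu(Z^\mu)\Hh(\dr x^\nu,\dr\f)$. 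The only subtlety, and the one aspect worth checking carefully, is the bookkeeping of the index manipulations showing the cancellation, but no genuine obstacle arises; the proof is purely algebraic, reducing everything to \eqref{eq:LemmaCom1}.
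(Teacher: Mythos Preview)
Your proof is correct and follows essentially the same route as the paper. The only cosmetic difference is that you compute in coordinates (expanding $\mathcal{L}_Z(\Hh)^{\alpha\beta}$ and using the commutator $[\widehat{Z},\partial_\beta] = -\partial_\beta(Z^\mu)\partial_\mu$), whereas the paper phrases the Leibniz expansion invariantly as $\mathcal{L}_Z(\Hh)(w,\dr\f) + \Hh(\mathcal{L}_Z(w),\dr\f) + \Hh(w,\mathcal{L}_Z(\dr\f)) + \Hh(Z^w(w),\dr\f) + \Hh(w,Z^w(\dr\f))$ and then verifies $\mathcal{L}_Z(\dr\f) + Z^w(\dr\f) = \dr(\widehat{Z}\f)$ case by case; the two computations are equivalent, and both reduce the $w$-terms via \eqref{eq:LemmaCom1}.
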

\begin{proof}
We have, as $Z^w := \widehat{Z}-Z$,
\begin{eqnarray}
\nonumber \widehat{Z} \left( \Hh(w, \dr \f) \right) & = & \mathcal{L}_{Z}(\Hh)(w, \dr \f)+\Hh(\mathcal{L}_Z(w), \dr  \f)+\Hh(w,\mathcal{L}_Z( \dr  \f)) \\ \nonumber
& &+\Hh(Z^w(w), \dr  \f)+\Hh(w,Z^w (\dr \f)).
\end{eqnarray}
Applying the identity \eqref{eq:LemmaCom1} of Lemma \ref{LemmaCom}, we get
$$\Hh(\mathcal{L}_{Z}(w), \dr  \f)+\Hh(Z^w(w), \dr  \f) \hspace{2mm} = \hspace{2mm} \delta_{\widehat{Z}}^S \Hh(w, \dr \f).$$

We also have, since $\mathcal{L}_Z ( \dr \f ) = \dr\, \mathcal{L}_Z (  \f )$, that
\begin{align}
\mathcal{L}_{\partial_{\nu}} ( \dr \f )+ \partial_{\nu}^w( \dr \f) & \hspace{1mm}    = \hspace{1mm}  \dr ( \partial_{\nu} \f) , \label{transla2} \\
\mathcal{L}_S ( \dr \f)+S^w( \dr \f) & \hspace{1mm}    = \hspace{1mm}  \dr (S\f) , \label{scaling2} \\  
\mathcal{L}_{\Omega_{ij}}( \dr \f)+\Omega_{ij}^w( \dr \f) & \hspace{1mm}   = \hspace{1mm}  \dr (\widehat{\Omega}_{ij} \f),  \label{rot2} \\ 
\mathcal{L}_{\Omega_{0k}} (\dr \f)+\Omega_{0k}^w(  \dr \f) & \hspace{1mm}   = \hspace{1mm}  \dr (\widehat{\Omega}_{0k} \f ) , \label{boost2}
\end{align}
which leads in particular to 
$$\Hh(w,\mathcal{L}_{Z}( \dr \f))+\Hh(w,Z^w( \dr \f)) \hspace{2mm} = \hspace{2mm} \Hh(w, \dr \widehat{Z} \f)$$
and then concludes the first part of the proof. The second formula follows from
$$\widehat{Z} \left( \Hh ( \mathrm dx^{\mu}, \dr  \f ) \right) = \mathcal{L}_{Z}(\Hh)(\mathrm dx^{\mu}, \dr \f )+ \Hh ( \mathcal{L}_Z(\mathrm dx^{\mu}),\dr \f ) + \Hh ( \mathrm dx^{\mu}, \mathcal{L}_Z( \dr \f) )+ \Hh ( \mathrm dx^{\mu}, Z^w ( \dr \f) ),$$
the equalities \eqref{transla2}-\eqref{boost2} and $\mathcal{L}_Z(\dr x^{\mu} )=\partial_{\nu} Z^{\mu} \dr x^{\nu}$.
\end{proof}

We then derive the commutation formula for the operator $\widetilde{\mathbf T}_g$.

\begin{lemma}\label{ComtildeTg}
Let $\widehat{Z} \in \widehat{\mathbb P}_0$. Then,
\begin{eqnarray}
\nonumber [\widetilde{\mathbf T}_g, \widehat{Z}](\f) & = &  -\mathcal{L}_Z(H)(w,\dr \f )-\Delta v \mathcal{L}_Z(g^{-1})(\dr t, \dr \f ) -\widehat{Z}( \Delta v ) g^{-1}(\dr t, \dr \f) \\ \nonumber
& & + \delta_{\widehat{Z}}^S \widetilde{\mathbf T}_g ( \f )-2\delta_{\widehat{Z}}^S H(w, \dr \f )-2\delta_{\widehat{Z}}^S \Delta v g^{-1}(\dr t, \dr \f )- \delta_{\widehat{\Omega}_{0k}}^{\widehat{Z}} \Delta v g^{-1}(\mathrm dx^k, \dr \f).
\end{eqnarray}
If $\widehat{Z}^I \in \widehat{\mathbb P}_0^{|I|}$, there exists integers $C^I_Q$, $C_{J, K}^{I}$ and $C_{\mu,J_1, J_2, K}^{I}$ such that
\begin{eqnarray}
\nonumber [\widetilde{\mathbf T}_g, \widehat{Z}^I](\f) & = & \sum_{\begin{subarray}{} |Q| \leq |I|-1 \\ \hspace{1mm} Q^P \leq I^P \end{subarray}} C^I_Q \widehat{Z}^Q \left( \widetilde{\mathbf T}_g ( \f ) \right)+\sum_{\begin{subarray}{} |J|+|K| \leq |I| \\ \hspace{1mm} |K| \leq |I|-1 \end{subarray}} C^{I}_{J,K}\mathcal{L}^J_Z(H)(w, \dr \widehat{Z}^K \f ) \\ \nonumber
& & + \sum_{\begin{subarray}{} |J_1|+|J_2|+|K| \leq |I| \\ \hspace{5mm} |K| \leq |I|-1 \end{subarray}} C_{\mu, J_1,J_2, K}^{I} \widehat{Z}^{J_1}( \Delta v ) \mathcal{L}_Z^{J_2}(g^{-1})(\mathrm dx^{\mu} , \dr \widehat{Z}^K \f),
\end{eqnarray}
where the multi-indices $J$, $J_1$, $J_2$ and $K$ in the last two sums satisfy one of the following two conditions,
\begin{enumerate}
\item either $K^P < I^P$,
\item or $K^P = I^P$ and $J^T \geq 1$, $J_1^T+J_2^T \geq 1$.
\end{enumerate}
\end{lemma}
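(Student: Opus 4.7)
The strategy is to exploit the decomposition $\widetilde{\T}_g(\f) = \T_\eta(\f) + H(w, \dr\f) + \Delta v\, g^{-1}(\dr t, \dr\f)$ coming from \eqref{Vlasovdecompo2} together with the identity $v = w + \Delta v\, \dr t$, and to commute $\widehat{Z}$ through each of the three pieces separately. The flat transport operator satisfies $[\T_\eta, \widehat{Z}] = 0$ whenever $\widehat{Z}$ is the complete lift of a Killing vector field and $[\T_\eta, S]\f = \T_\eta \f$ by a direct computation from $S = x^\nu \partial_\nu$, which together contribute a term $\delta_{\widehat{Z}}^S \T_\eta(\f)$ to $[\widetilde{\T}_g, \widehat{Z}](\f)$. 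For $\widehat{Z}(H(w, \dr\f))$ I invoke the first identity of Lemma \ref{LemCom1} with $\Hh = H$, which produces the principal term $\mathcal{L}_Z(H)(w, \dr\f)$ together with a $\delta_{\widehat{Z}}^S H(w, \dr\f)$ contribution. For the third piece, Leibniz expansion gives $\widehat{Z}(\Delta v)\, g^{-1}(\dr t, \dr\f) + \Delta v\, \widehat{Z}(g^{-1}(\dr t, \dr\f))$, and the second identity of Lemma \ref{LemCom1} with $\Hh = g^{-1}$ and $\mu = 0$ handles the second summand; the extra term $\partial_\nu(Z^0)\, g^{-1}(\dr x^\nu, \dr\f)$ vanishes for translations and spatial rotations, reduces to $g^{-1}(\dr x^k, \dr\f)$ when $\widehat{Z} = \widehat{\Omega}_{0k}$ (since $\Omega_{0k}^0 = x^k$), and to $g^{-1}(\dr t, \dr\f)$ when $\widehat{Z} = S$ (since $S^0 = t$), accounting precisely for the $\delta_{\widehat{Z}}^{\widehat{\Omega}_{0k}}$ and $\delta_{\widehat{Z}}^S$ contributions in the stated formula.

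Assembling these three computations gives a raw identity in which the only summand not already of the claimed shape is $\delta_{\widehat{Z}}^S \T_\eta(\f)$. Substituting back $\T_\eta(\f) = \widetilde{\T}_g(\f) - H(w, \dr\f) - \Delta v\, g^{-1}(\dr t, \dr\f)$ absorbs the flat operator into $\widetilde{\T}_g$ and \emph{doubles} the two pre-existing $\delta_{\widehat{Z}}^S$-coefficients in front of $H(w, \dr\f)$ and $\Delta v\, g^{-1}(\dr t, \dr\f)$, producing precisely the coefficients $-2\delta_{\widehat{Z}}^S$ appearing in the first-order commutation formula.

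For the higher-order formula I proceed by induction on $|I|$, writing $\widehat{Z}^I = \widehat{Z}^{i_1}\widehat{Z}^{I'}$ with $|I'| = |I|-1$ and using $[\widetilde{\T}_g, \widehat{Z}^I] = \widehat{Z}^{i_1}[\widetilde{\T}_g, \widehat{Z}^{I'}] + [\widetilde{\T}_g, \widehat{Z}^{i_1}]\widehat{Z}^{I'}$. Combining the induction hypothesis applied to $[\widetilde{\T}_g, \widehat{Z}^{I'}]$ with the first-order formula applied to $\widehat{Z}^{I'}\f$, the task reduces to pushing one further $\widehat{Z}^{i_1}$ through the three types of generic error terms. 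Expressions of the form $\widehat{Z}^{i_1}(\mathcal{L}_Z^J(H)(w, \dr \widehat{Z}^K \f))$ and $\widehat{Z}^{i_1}(\mathcal{L}_Z^{J_2}(g^{-1})(\dr x^\mu, \dr\widehat{Z}^K\f))$ are treated by reapplying the two identities of Lemma \ref{LemCom1} with $\Hh = \mathcal{L}_Z^J H$ or $\Hh = \mathcal{L}_Z^{J_2} g^{-1}$, producing (after possibly incrementing $|J|$, $|J_2|$ or $|K|$ by one) again contributions of the three prescribed types; the terms $\widehat{Z}^{i_1}(\widehat{Z}^Q(\widetilde{\T}_g \f))$ are manifestly of the first form.

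The main obstacle is the combinatorial bookkeeping for the multi-index dichotomy ``$K^P < I^P$, or $K^P = I^P$ together with $J^T \geq 1$ (respectively $J_1^T + J_2^T \geq 1$)''. I propagate it through the induction by distinguishing the type of the appended vector field $\widehat{Z}^{i_1}$: if it is a translation, then absorbing it into the coefficient factor increments $J^T$ (or $J_1^T + J_2^T$) while leaving $K^P$ unchanged, whereas if it is a homogeneous vector field, absorbing it into $\widehat{Z}^K$ increments $K^P$, and absorbing it into the coefficient increments $J^P$ (or $J_1^P + J_2^P$) without affecting $J^T$. Consequently $K^P$ can attain its maximum $I^P$ only when every homogeneous vector field of $\widehat{Z}^I$ has been routed into $\widehat{Z}^K$; the constraint $|K| \leq |I|-1$ then forces at least one translation of the original string to have been routed into the coefficient factor, yielding $J^T \geq 1$ or $J_1^T + J_2^T \geq 1$ and closing the induction.
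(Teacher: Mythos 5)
Your proof is correct and follows essentially the same route as the paper's: both decompose $\widetilde{\mathbf T}_g$ via \eqref{Vlasovdecompo2}, commute $\widehat{Z}$ through each of the three pieces using $[\T_\eta,\widehat Z]=\delta^S_{\widehat Z}\T_\eta$ and the two identities of Lemma \ref{LemCom1} (with $\Hh=H$ and $\Hh=g^{-1}$, $\mu=0$), and then run the induction off $[\widetilde{\mathbf T}_g,\widehat Z\widehat Z^{I'}]=[\widetilde{\mathbf T}_g,\widehat Z]\widehat Z^{I'}+\widehat Z[\widetilde{\mathbf T}_g,\widehat Z^{I'}]$. Your remark that substituting $\T_\eta\f=\widetilde{\mathbf T}_g\f-H(w,\dr\f)-\Delta v\,g^{-1}(\dr t,\dr\f)$ is what produces the $-2\delta^S_{\widehat Z}$ coefficients, and your explicit tracking of whether the appended vector field $\widehat Z^{i_1}$ is routed into $\widehat Z^K$ (incrementing $K^P$) or into the coefficient factors (incrementing $J^T$ or $J_1^T+J_2^T$ when $\widehat Z^{i_1}$ is a translation), make the constraint ``$K^P<I^P$ or ($K^P=I^P$ with $J^T\geq 1$, $J_1^T+J_2^T\geq 1$)'' a little more transparent than the paper's Remark \ref{conditionKP}, but the argument is the same one.
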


\begin{rem}\label{conditionKP}
Combining the first order commutation formula with the identity \eqref{equation:higherorgercom}, written below, one can check that $\widehat{Z}^K$ and $\widehat{Z}^Q$ (respectively $Z^J$, $Z^{J_2}$ and $\widehat{Z}^{J_1}$) is built by at most $|I|-1$ (respectively at most $|J|$, at most $|J_2|$ and at most $|J_1|$) of the vector fields composing $\widehat{Z}^I$, so that $K^P \leq I^P$ and $Q^P \leq I^P$. If $K^P = I^P$, this means that there is at least one translation in $\widehat{Z}^I$ which is part of $Z^J$ and either $Z^{J_2}$ or $\widehat{Z}^{J_1}$, i.e. $J^T \geq 1$ and $J_1^T+J_2^T \geq 1$.
\end{rem}

\begin{proof}
Let $\widehat{Z} \in \mathbb P_0$ and recall from Subsection \ref{secliftcomplet} that
\begin{equation}\label{eq:s1}
[\T_{\eta}, \widehat{Z} ] \hspace{2mm} = \hspace{2mm} \delta_{\widehat{Z}}^S \T_{\eta}.
\end{equation}
Applying the first equality of Lemma \ref{LemCom1} to $\Hh=H$ and the second one to $\Hh=g^{-1}$ and $\mu=0$, we get
\begin{align}\label{eq:s2}
\widehat{Z} \left( H(w, d \f) \right) \hspace{2mm} & =  \hspace{2mm}  H(w, d \widehat{Z} \f)+\mathcal{L}_{Z}(H)(w, d \f)+ \delta^S_{\widehat{Z}} H(w, d \f) ,\\  \nonumber
\widehat{Z} \left( \Delta v g^{-1} ( \dr t, d \f ) \right) \hspace{2mm} & =  \hspace{2mm}  \Delta v g^{-1} ( \dr t, d \widehat{Z} \f )+\widehat{Z} \left( \Delta v \right) g^{-1} ( \dr t, d \f )  + \Delta v \mathcal{L}_{Z}(g^{-1}) ( \dr t, d  \f ) \\ 
 & \quad \hspace{4mm} + \Delta v \delta_{\widehat{Z}}^S  g^{-1} ( \dr t, d  \f )+ \Delta v \delta_{\widehat{\Omega}_{0k}}^{\widehat{Z}}  g^{-1} ( \mathrm dx^k, d  \f ) . \label{eq:s3}
\end{align}
The first order commutation formula directly follows from \eqref{eq:s1}, \eqref{eq:s2} and \eqref{eq:s3}. The higher order formula can be proved similarly by performing an induction on $|I|$, using
\begin{equation}\label{equation:higherorgercom}
[\widetilde{\mathbf T}_g, \widehat{Z} \widehat{Z}^{I}] \hspace{2mm} = \hspace{2mm} [\widetilde{\mathbf T}_g, \widehat{Z} ]\widehat{Z}^{I}+\widehat{Z}[\widetilde{\mathbf T}_g,  \widehat{Z}^{I}]
\end{equation} and applying the first equality (respectively the second equality) of Lemma \ref{LemCom1} to $\widehat{Z}^K \f$ and $\Hh = \mathcal{L}_Z^J(H)$ (respectively $ \Hh = \mathcal{L}_Z^{J_2}(g^{-1})$ ), for well-chosen multi-indices $J$, $J_2$ and $K$.
\end{proof}
\begin{rem}
Expressing the error terms in the commutation formula using $v$ instead of $w$, we find, since $\mathcal{L}_Z(\eta^{-1})=-2 \delta_S^Z \eta^{-1}$,
\begin{align*}
[\widetilde{\mathbf T}_g, \widehat{Z}](\f) \hspace{1mm}& =  \hspace{1mm}\delta_{\widehat{Z}}^S \widetilde{\mathbf T}_g ( \f ) -\mathcal{L}_Z(H)(v,d \f )-\widehat{Z}( \Delta v ) g^{-1}(\dr t, d \f)  \\& \quad \hspace{1mm} -2\delta_{\widehat{Z}}^S H(v, d \f )  - \delta_{\widehat{\Omega}_{0k}}^{\widehat{Z}} \Delta v g^{-1}(\mathrm dx^k, d \f).
\end{align*}
\end{rem}

\subsection{Commutation formula for the transport operator}

In view of Lemma \ref{ComtildeTg} it remains to study the action of $\widehat{Z}^I$ on the term 
\begin{multline*}
-\frac{1}{2} \nabla_i ( H)( v, v ) \cdot \partial_{v_i} \f  \\= -\frac{1}{2} \nabla_i(H)(w,w) \cdot \partial_{v_i} \f -\frac{1}{2} |\Delta v |^2   \nabla_i (H)^{00} \cdot \partial_{v_i} \f-\Delta v   \nabla_i(H)(\dr t,w) \cdot \partial_{v_i} \f.
\end{multline*}
The following identities will then be useful in order to determine $[\T_g,\widehat{Z}^I]$.

\begin{lemma}\label{LemCom2}
Let $\widehat{Z} \in \widehat{\mathbb{P}}_0$ and $(\mu, \nu) \in \llbracket 0 , 3 \rrbracket^2$. We have,
\begin{align}
\widehat{Z} \left( \nabla_i(\Hh)(w,w) \cdot \partial_{v_i} \f \right) &= \nabla_i(\Hh)(w,w) \cdot \partial_{v_i} \widehat{Z} \f + \nabla_i \left( \mathcal{L}_{ Z}(\Hh) \right) (w,w) \cdot \partial_{v_i} \f \label{LemCom2:eq1}\\ 
\nonumber &\quad +\delta^S_{ \widehat{Z}} \nabla_i (\Hh)(w,w) \cdot \partial_{v_i} \f + \delta_{\widehat{Z}}^{ \widehat{\Omega}_{0k}} \nabla^{\lambda}(\Hh)(w,w) \cdot \frac{w_{\lambda}}{w_0} \partial_{v_k} \f, \\
\widehat{Z} \left(  \nabla_i (\Hh)^{\mu \nu} \cdot \partial_{v_i} \f \right) & = \nabla_i (\Hh)(\mathrm dx^{\mu},\mathrm dx^{\nu}) \cdot \partial_{v_i} \widehat{Z} \f+ \nabla_i \left( \mathcal{L}_{ Z} (\Hh) \right)( \mathrm dx^{\mu},\mathrm dx^{\nu}) \cdot \partial_{v_i} \f \label{LemCom2:eq2} \\ \nonumber
&\quad + \partial_{\lambda} Z^{\mu} \nabla_i (\Hh)(d x^{\lambda},d x^{\nu}) \cdot \partial_{v_i} \f + \partial_{\lambda} Z^{\nu} \nabla_i (\Hh)(d x^{\mu},d x^{\lambda}) \cdot \partial_{v_i} \f \\ \nonumber
&\quad -\delta_{\widehat{Z}}^S \nabla_i (\Hh)( \mathrm dx^{\mu},\mathrm dx^{\nu}) \cdot \partial_{v_i} \f  + \delta_{\widehat{Z}}^{\widehat{\Omega}_{0k}} \nabla^{\lambda} (\Hh)( \mathrm dx^{\mu},\mathrm dx^{\nu}) \cdot \frac{w_{\lambda}}{w_0} \partial_{v_k} \f, \\ 
\widehat{Z} \left(   \nabla_i ( \Hh)(\mathrm dx^{\mu},w) \cdot \partial_{v_i} \f \right) & =  \nabla_i \left( \mathcal{L}_{ Z}( \Hh) \right) (\mathrm dx^{\mu},w) \cdot \partial_{v_i} \f+ \nabla_i ( \Hh)(d x^{\mu}, w) \cdot \partial_{v_i} \widehat{Z} \f   \label{LemCom2:eq3} \\ 
\nonumber  &\quad + \partial_{\lambda} Z^{\mu} \nabla_i( \Hh)(\mathrm dx^{\lambda},w) \cdot \partial_{v_i} \f+\delta_{ \widehat{Z}}^{\widehat{\Omega}_{0k}}  \nabla^{\lambda} ( \Hh)(d x^{\mu}, w) \cdot \frac{w_{\lambda}}{w_0} \partial_{v_k} \f .
\end{align}
\end{lemma}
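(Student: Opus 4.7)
The plan is to establish each of the three identities by direct computation, paralleling the proof of Lemma \ref{LemmaCom} and using it as a building block. For each identity we write the target quantity as $\nabla_i(\Hh)(\theta,\overline{\theta})\cdot \partial_{v_i}\f$ with $(\theta,\overline{\theta})$ equal to $(w,w)$, $(\dr x^\mu,\dr x^\nu)$, or $(\dr x^\mu,w)$ respectively, apply Leibniz's rule to split
\[
\widehat{Z}\!\left(\nabla_i(\Hh)^{\alpha\beta}\theta_\alpha\overline{\theta}_\beta\, \partial_{v_i}\f\right)
=Z(\nabla_i\Hh^{\alpha\beta})\,\theta_\alpha\overline{\theta}_\beta\,\partial_{v_i}\f
+\nabla_i\Hh^{\alpha\beta}\bigl(Z^w(\theta)_\alpha\overline{\theta}_\beta+\theta_\alpha Z^w(\overline{\theta})_\beta\bigr)\partial_{v_i}\f
+\nabla_i\Hh(\theta,\overline{\theta})\,\widehat{Z}(\partial_{v_i}\f),
\]
and then reorganise the three contributions.

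For the first contribution, I would use Lemma \ref{lem_com_lie} together with the tensorial definition \eqref{defLie} of the Lie derivative applied to the $(2,1)$-tensor $\nabla\Hh$ to get
\[
Z(\nabla_i\Hh^{\alpha\beta})=\nabla_i(\mathcal{L}_Z\Hh)^{\alpha\beta}+\nabla_i\Hh^{\lambda\beta}\partial_\lambda Z^\alpha+\nabla_i\Hh^{\alpha\lambda}\partial_\lambda Z^\beta-\nabla_\lambda\Hh^{\alpha\beta}\partial_i Z^\lambda.
\]
For the second contribution, I will use the formulas \eqref{translationw}--\eqref{boostw} recalled in the proof of Lemma \ref{LemmaCom}, which yield $\mathcal{L}_Z(w)+Z^w(w)=\delta_{\widehat Z}^S w$, so that on the $w$-arguments (when present) the $Z^w$-action precisely cancels the corresponding $\mathcal{L}_Z w$ pieces produced by the first contribution, up to a $\delta_{\widehat Z}^S$-term. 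On the $\dr x^\mu$-arguments in identities \eqref{LemCom2:eq2}--\eqref{LemCom2:eq3} the $Z^w$-action vanishes, and the $\partial_\lambda Z^\mu$-terms from the first contribution appear explicitly as in the statement.

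For the third contribution, I write $\widehat Z(\partial_{v_i}\f)=\partial_{v_i}\widehat Z\f+[\widehat Z,\partial_{v_i}]\f$ and evaluate the commutator case by case: it vanishes for $\widehat Z\in\{\partial_\mu,S\}$, equals $-\delta_i^k\partial_{v_l}+\delta_i^l\partial_{v_k}$ for $\widehat Z=\widehat\Omega_{kl}$, and equals $(w_i/w_0)\partial_{v_k}$ for $\widehat Z=\widehat\Omega_{0k}$. I then combine this commutator piece with the last term $-\nabla_\lambda\Hh(\theta,\overline\theta)\partial_i Z^\lambda\,\partial_{v_i}\f$ from the first contribution, using the explicit form of $\partial_i Z^\lambda$ for each $Z\in\mathbb{K}$. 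In the translation case everything cancels, in the scaling case the leftover is exactly $-\delta_{\widehat Z}^S\nabla_i\Hh(\theta,\overline\theta)\partial_{v_i}\f$ for \eqref{LemCom2:eq2} and cancels for \eqref{LemCom2:eq1} and \eqref{LemCom2:eq3}, in the rotation case the two contributions exactly cancel, and in the boost case they combine via the identity
\[
-\nabla_0\Hh(\theta,\overline{\theta})+\frac{w_i}{w_0}\nabla_i\Hh(\theta,\overline{\theta})=\eta^{\lambda\sigma}\nabla_\sigma\Hh(\theta,\overline{\theta})\,\frac{w_\lambda}{w_0}=\nabla^{\lambda}(\Hh)(\theta,\overline{\theta})\,\frac{w_\lambda}{w_0}
\]
to give precisely the $\delta_{\widehat Z}^{\widehat\Omega_{0k}}$-term stated.

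The only real obstacle is bookkeeping: one must carefully track that the correction terms arising from $[\mathcal{L}_Z,\nabla_i]=\nabla_{[Z,\partial_i]}$ in the first contribution cancel exactly against the $[\widehat Z,\partial_{v_i}]$ commutators from the third contribution. This cancellation is the analytic manifestation of the geometric fact that, for $Z\in\mathbb{P}$, the complete lift $\widehat Z$ is designed to preserve the structure of the relativistic transport operator, and that the scaling $S$ behaves as a conformal Killing vector of weight one.
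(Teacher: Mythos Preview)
Your proposal is correct and follows essentially the same approach as the paper. The only difference is organisational: the paper writes the Leibniz expansion geometrically as
\[
\widehat{Z}\bigl(\nabla_i(\Hh)(w,w)\cdot\partial_{v_i}\f\bigr)
=\mathcal{L}_Z(\nabla_i\Hh)(w,w)\cdot\partial_{v_i}\f
+2\nabla_i(\Hh)(\mathcal{L}_Z(w)+Z^w(w),w)\cdot\partial_{v_i}\f
+\nabla_i(\Hh)(w,w)\cdot\widehat{Z}\partial_{v_i}\f
\]
and then invokes identities \eqref{eq:LemmaCom1} and \eqref{eq:LemmaCom2} of Lemma~\ref{LemmaCom} directly, whereas you work componentwise via the formula for $Z(\nabla_i\Hh^{\alpha\beta})$ and do the $[\mathcal{L}_Z,\nabla_i]$ versus $[\widehat Z,\partial_{v_i}]$ bookkeeping by hand. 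In effect you are re-deriving \eqref{eq:LemmaCom2} inside this proof rather than citing it; the computations are identical, and your case-by-case verification of the $\delta_{\widehat Z}^S$ and $\delta_{\widehat Z}^{\widehat\Omega_{0k}}$ leftovers is exactly what underlies \eqref{eq:LemmaCom2}.
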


\begin{proof}
We have, using again the notation $Z^w = \widehat{Z}-Z$,
\begin{eqnarray}
\nonumber \widehat{Z} \left( \nabla_i(\Hh) (w,w) \cdot \partial_{v_i} \f \right) & = & \mathcal{L}_{Z }(\nabla_i \Hh) (w,w) \cdot \partial_{v_i} \f +2\nabla_i (\Hh) (\mathcal{L}_Z(w),w) \cdot \partial_{v_i} \f \\ \nonumber & &+2\nabla_i(\Hh) (Z^w(w),w) \cdot \partial_{v_i} \f +\nabla_i(\Hh) (w,w) \cdot \widehat{Z} \partial_{v_i} \f.
\end{eqnarray}
The first equality (\ref{LemCom2:eq1}) then follows from identities \eqref{eq:LemmaCom1} and \eqref{eq:LemmaCom2} of Lemma \ref{LemmaCom}. In order to get the second formula (\ref{LemCom2:eq2}), notice, as $\nabla_i (\Hh)^{\mu \nu} \partial_{v_i} \f = \nabla_i (\Hh) (\mathrm dx^{\mu}, \mathrm dx^{\nu}) \partial_{v_i} \f$, that
\begin{eqnarray}
\nonumber \widehat{Z} \left(  \nabla_i (\Hh)^{\mu \nu} \partial_{v_i} \f \right) & = &  \nabla_i (\Hh)(\mathrm dx^{\mu},\mathrm dx^{\nu}) \widehat{Z} \partial_{v_i}  \f+\mathcal{L}_{Z } ( \nabla_i \Hh)(\mathrm dx^{\mu},\mathrm dx^{\nu}) \partial_{v_i}  \f \\ \nonumber
& & + \nabla_i (\Hh)(\mathcal{L}_Z(\mathrm dx^{\mu}),\mathrm dx^{\nu}) \partial_{v_i} \f+ \nabla_i (\Hh)(\mathrm dx^{\mu},\mathcal{L}_Z(\mathrm dx^{\nu})) \partial_{v_i} \f .
\end{eqnarray}
It then remains to use $\mathcal{L}_Z ( d x^{\alpha} ) = \partial_{\lambda} Z^{\alpha} \mathrm dx^{\lambda}$ and apply \eqref{eq:LemmaCom2}.
Similarly, we have
\begin{eqnarray}
\nonumber \widehat{Z} \left(   \nabla_i ( \Hh )(\mathrm dx^{\mu},w) \partial_{v_i} \f \right) & = &  \nabla_i ( \Hh )(\mathrm dx^{\mu},w) \widehat{Z} \partial_{v_i}  \f + \mathcal{L}_{Z }(\nabla_i \Hh)(d x^{\mu},w) \partial_{v_i} \f   \\ \nonumber & & \hspace{-3.5cm} +\nabla_i ( \Hh)(\mathcal{L}_Z(\mathrm dx^{\mu}),w) \partial_{v_i} \f +\nabla_i ( \Hh)(\mathrm dx^{\mu},\mathcal{L}_Z(w)) \partial_{v_i} \f +\nabla_i ( \Hh)(\mathrm dx^{\mu},Z^w(w)) \partial_{v_i} \f 
\end{eqnarray}
and the third identity (\ref{LemCom2:eq3}) then ensues from \eqref{eq:LemmaCom1} and \eqref{eq:LemmaCom2}. 
\end{proof}

We are now able to compute the first order commutation formula. In fact we will state it in two different ways. The second one has the advantage of being more concise whereas the first one will be more adapted to the problem studied in this paper and for the purpose of deriving the higher order formula.

\begin{prop}\label{ComuVlasov1}
Let $\widehat{Z} \in \widehat{\mathbb{P}}_0$. Then,
\begin{eqnarray}
\nonumber [\T_g, \widehat{Z}](\f) \hspace{-1mm} & = & \hspace{-1mm} -\mathcal{L}_Z(H)(w, \dr \f )-\Delta v \mathcal{L}_Z(g^{-1})(\dr t,\dr \f ) -\widehat{Z}( \Delta v ) g^{-1}(\dr t, \dr \f) \\ \nonumber
& & \hspace{-1mm} + \frac{1}{2} \nabla_i \left( \mathcal{L}_{ Z}(H) \right)(w,w) \cdot \partial_{v_i} \f +\frac{|\Delta v |^2}{2} \nabla_i \left( \mathcal{L}_{ Z}(H) \right)^{00} \cdot \partial_{v_i} \f  \\ \nonumber & & +\Delta v \nabla_i \left( \mathcal{L}_{ Z}(H) \right)(\dr t,w) \cdot \partial_{v_i} \f +\Delta v \widehat{Z}( \Delta v ) \nabla_i \left( \mathcal{L}_{ Z}(H) \right)^{00} \cdot \partial_{v_i} \f  \\ \nonumber
& & \hspace{-1mm} +\widehat{Z} (\Delta v )   \nabla_i (H)(\dr t,w) \cdot  \partial_{v_i} \f + \delta_{\widehat{Z}}^S \Big( \T_g ( \f )-2 H(w, \dr \f )-2 \Delta v g^{-1}(\dr t, \dr \f ) \Big)\\  \nonumber & & \hspace{-1mm} + \delta_{\widehat{Z}}^S \Big(\nabla_i(H)(w,w) \cdot \partial_{v_i} \f +  |\Delta v |^2   \nabla_i (H)^{00} \! \cdot \partial_{v_i} \f+ 2 \Delta v   \nabla_i(H)(\dr t,w) \cdot \partial_{v_i} \f \Big) \\ \nonumber & & + \delta^{\widehat{\Omega}_{0k}}_{\widehat{Z}} \left( - \Delta v g^{-1}(\mathrm dx^k, \dr \f) +  \frac{1}{2}  \nabla^{\mu} \left( H \right) (w,w) \cdot \frac{w_{\mu}}{w_0} \partial_{v_k} \f \right)  \\ \nonumber
& & \hspace{-1mm} +\delta_{\widehat{Z}}^{\widehat{\Omega}_{0k}} \Delta v \left( \nabla_i \left( H \right)(\mathrm dx^k,w) \cdot  \partial_{v_i} \f +\Delta v  \nabla_i \left( H \right)^{k0} \cdot \partial_{v_i} \f \right) 
\\ \nonumber
& & \hspace{-1mm} +\delta_{\widehat{Z}}^{\widehat{\Omega}_{0k}} \Delta v \left(  \nabla^{\mu} \left( H \right) (\dr t,w) \cdot \frac{w_{\mu}}{w_0}  \partial_{v_k} \f+ \frac{\Delta v}{2}  \nabla^{\mu} \left( H \right)^{00} \cdot \frac{w_{\mu}}{w_0} \partial_{v_k} \f \right) .
\end{eqnarray}
Alternatively, expressing the error terms using $v$ instead of $w$, we get
\begin{eqnarray}
\nonumber [\T_g, \widehat{Z}](\f) & = &   - \mathcal{L}_{Z}(H)(v, \dr \f)+\frac{1}{2} \nabla_i \left( \mathcal{L}_{ Z}(H) \right)(v,v) \cdot \partial_{v_i} \f - \widehat{Z}(\Delta v) g^{-1}(\dr t , d\f) \\ \nonumber
& &  +\widehat{Z} (\Delta v )  \nabla_i (H)(\dr t,v) \cdot  \partial_{v_i} \f +  \frac{1}{2}\delta_{\widehat{Z}}^{\widehat{\Omega}_{0k}} \nabla^{\mu} \left( H \right) (v,v) \cdot \frac{v_{\mu}}{v_0}  \partial_{v_k} \f   \\ \nonumber & &+\delta^S_{\widehat{Z}} \left(  \T_g(\f)  -2 H(v, \dr \f ) +\nabla_i(H)(v,v) \cdot \partial_{v_i} \f \right)  \\ \nonumber
& & \hspace{-5mm} -\delta_{\widehat{Z}}^{\widehat{\Omega}_{0k}} \Delta v \left( g(\mathrm dx^k, \dr \f ) -  \nabla_i \left( H \right)(\mathrm dx^k,v) \cdot \partial_{v_i} \f  +  \frac{1}{2|v|} \nabla^i (H) (v,v) \cdot \frac{v_i}{v_0} \partial_{v_k} \f \right) .
\end{eqnarray}

\end{prop}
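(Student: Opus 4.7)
The plan is to combine the commutation formula for $\widetilde{\mathbf{T}}_g$ from Lemma~\ref{ComtildeTg} with the three identities of Lemma~\ref{LemCom2} applied to the vertical part $-\tfrac12 \nabla_i(H)(v,v)\cdot\partial_{v_i}\psi$ of $\mathbf{T}_g$. The starting point is the decomposition \eqref{Vlasovdecompo}, namely
$$\mathbf{T}_g(\psi) \;=\; \widetilde{\mathbf{T}}_g(\psi) \;-\; \tfrac{1}{2}\nabla_i(H)(v,v)\cdot\partial_{v_i}\psi,$$
and, since $v=w+\Delta v\,\mathrm dt$, the further splitting
$$\nabla_i(H)(v,v) \;=\; \nabla_i(H)(w,w) \;+\; |\Delta v|^2\,\nabla_i(H)^{00} \;+\; 2\,\Delta v\,\nabla_i(H)(\mathrm dt,w).$$
This reduces the computation of $[\mathbf{T}_g,\widehat Z](\psi)$ to that of four commutators: the one of $\widetilde{\mathbf{T}}_g$ with $\widehat Z$, already given by Lemma~\ref{ComtildeTg}, and three additional ones obtained by commuting $\widehat Z$ with each of the three vertical terms above.

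The three vertical commutators are handled respectively by the three identities \eqref{LemCom2:eq1}, \eqref{LemCom2:eq2} and \eqref{LemCom2:eq3} of Lemma~\ref{LemCom2}, applied with $\mathcal{H}=H$. Concretely, for the $(w,w)$-piece I would invoke \eqref{LemCom2:eq1}; for the $|\Delta v|^2\nabla_i(H)^{00}$-piece I use \eqref{LemCom2:eq2} with $\mu=\nu=0$ after noting that $\partial_\lambda Z^0=\delta_0^\lambda\delta^S_{\widehat Z}+\delta_{\widehat Z}^{\widehat\Omega_{0k}}\delta_k^\lambda$; for the $\Delta v\,\nabla_i(H)(\mathrm dt,w)$-piece I apply \eqref{LemCom2:eq3} with $\mu=0$, taking again the above value of $\partial_\lambda Z^0$. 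In each case there are additional contributions coming from $\widehat Z(\Delta v)$ and $\widehat Z(|\Delta v|^2)=2\Delta v\,\widehat Z(\Delta v)$, which produce the terms of the form $\widehat Z(\Delta v)\,\nabla_i(H)(\mathrm dt,w)\cdot\partial_{v_i}\psi$ and their analogues in the final formula.

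The main bookkeeping obstacle is to keep track of the several sources of the Kronecker-type correction terms $\delta^S_{\widehat Z}$ and $\delta^{\widehat\Omega_{0k}}_{\widehat Z}$: they appear simultaneously in the $\widetilde{\mathbf{T}}_g$-commutator through Lemma~\ref{ComtildeTg}, in the identity \eqref{eq:LemmaCom1} used in Lemma~\ref{LemCom2} whenever $\mathcal{L}_Z(w)$ is encountered, in the identity \eqref{eq:LemmaCom2} used to commute $\widehat Z$ with the $\partial_{v_i}$-derivative, and finally in the identity \eqref{eq:LemmaCom3} whenever the $\widehat\Omega_{0k}$-derivative of $w_\mu/w_0$ is non-trivial. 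Carefully grouping these terms and re-inserting $\mathbf{T}_g(\psi)$ in the $\delta^S_{\widehat Z}$-block (using $\widetilde{\mathbf{T}}_g(\psi)=\mathbf{T}_g(\psi)+\tfrac12\nabla_i(H)(v,v)\cdot\partial_{v_i}\psi$ re-expanded in the $w$, $\Delta v$ variables) yields the first displayed formula.

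The alternative form is then a direct algebraic consequence: one simply reassembles $w$ and $\Delta v\,\mathrm dt$ back into $v$ in each of the error terms, using the identities $\mathcal{L}_Z(\eta^{-1})=-2\delta^S_{\widehat Z}\eta^{-1}$ to handle the piece $\Delta v\,\mathcal{L}_Z(g^{-1})(\mathrm dt,\mathrm d\psi)$ and combining it with $\mathcal{L}_Z(H)(w,\mathrm d\psi)$ to produce $\mathcal{L}_Z(H)(v,\mathrm d\psi)$ modulo $\delta^S_{\widehat Z}$-corrections; the same reassembly is performed inside the $\partial_{v_i}$-terms. The non-trivial point in this second step is only to verify that the $\delta^S_{\widehat Z}$ and $\delta^{\widehat\Omega_{0k}}_{\widehat Z}$ contributions match on both sides, which is a direct verification coordinate by coordinate.
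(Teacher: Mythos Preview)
Your proposal is correct and follows essentially the same approach as the paper: the paper's proof simply states that the first formula follows from Lemma~\ref{ComtildeTg} together with Lemma~\ref{LemCom2} applied with $\mathcal{H}=H$ and $(\mu,\nu)=(0,0)$, which is exactly the strategy you describe (you are in fact more explicit about which of the three identities \eqref{LemCom2:eq1}--\eqref{LemCom2:eq3} is used for each piece). For the alternative formula the paper also reassembles $v=w+\Delta v\,\mathrm dt$ as you do, and in addition records the identity $\nabla^{\mu} H(v,v)\cdot\tfrac{w_{\mu}}{w_0}=\nabla^{\mu} H(v,v)\cdot\tfrac{v_{\mu}}{v_0}-\tfrac{\Delta v}{|v|}\nabla^{i} H(v,v)\cdot\tfrac{v_i}{v_0}$, which is the concrete algebraic step producing the last $\delta_{\widehat Z}^{\widehat\Omega_{0k}}$-term; this is the only detail you left implicit in your ``direct verification''.
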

\begin{proof}
The first commutation formula follows from Lemma \ref{ComtildeTg} and Lemma \ref{LemCom2} applied to $\Hh = H$ and $(\mu,\nu)=(0,0)$. The second formula can be obtained from the first one using that $v=w+\Delta v \dr t$ and
\begin{eqnarray}
\nonumber \nabla^{\mu} H (v,v) \cdot \frac{w_{\mu}}{w_0} & = & \nabla^{\mu} H (v,v) \cdot \frac{v_{\mu}}{v_0} -\left( \frac{1}{v_0}-\frac{1}{w_0} \right) \nabla^i H (v,v) \cdot v_i \\ \nonumber
& = & \nabla^{\mu} H (v,v) \cdot \frac{v_{\mu}}{v_0}  - \frac{\Delta v}{|v|}  \nabla^i H (v,v) \cdot \frac{v_i}{v_0},
\end{eqnarray}
since $w_0 = -|v|$ and $\Delta v= v_0-w_0$.
\end{proof}
\begin{rem}
Even if the second commutation formula might seem to be more convenient, we will work with the first one for two reasons.
\begin{itemize}
\item The second and higher order formulas are not more concise when expressed in terms of $v$ instead of $w$.
\item Working with $w$ instead of $v$ is more adapted to our method since no inequality analogous to $\frac{|w_L|}{w^0} \lesssim \frac{z^2}{(1+t+r)^2}$ holds for the component $v_L$. Indeed, according to Lemma \ref{Deltav} proved below and $|\slashed{w}| \lesssim \sqrt{|v| |w_L|}$ (see Lemma \ref{lem_wwl}), we have, if $g$ satisfies \eqref{eq:conditiong} and for $\epsilon$ small enough, 
$$|v_L-w_L| \hspace{2mm} = \hspace{2mm} |\Delta v | \hspace{2mm} \lesssim \hspace{2mm} \frac{1}{|v|}|H(w,w)| \lesssim |w_L| |H|+\sqrt{|v| |w_L|} |H|_{\mathcal{L} \mathcal{T}} +|v||H_{LL}|.$$
Although we will have, during the proof of Theorem \ref{main-thm_detailed}, $|w_L| |H|+\sqrt{|v| |w_L|} |H|_{\mathcal{L} \mathcal{T}} \lesssim |v| \frac{z^2}{(1+t+r)^2}$, the term $|v||H_{LL}|$ will not behave sufficiently well near the light cone. Because of the Schwarzschild part, $|H_{LL}|$ cannot decay faster than $(1+t+r)^{-1}$ and no decay can be extracted from the weight $z$ if $t\approx r$ without a good component of the flat velocity vector $w_L$ or $\slashed{w}$.
\end{itemize}
\end{rem}
Due to the new error terms generated by the Lorentz boosts, the following additional identities are required in order to compute the higher order commutation formula.

\begin{lemma}\label{LemCom3}
Let $\widehat{Z} \in \widehat{\mathbb P}_0$, $(\lambda, \nu) \in \llbracket 0 , 3 \rrbracket^2$ and $q \in \llbracket 1,3 \rrbracket$. Then,
\begin{align*}
\nonumber \widehat{Z} \! \left( \nabla^{\mu} (\Hh)(w,w) \cdot \frac{w_{\mu}}{w_0} \partial_{v_q} \f \right) \hspace{1mm} & = \hspace{1mm} \nabla^{\mu} (\Hh)(w,w) \cdot \frac{w_{\mu}}{w_0} \partial_{v_q} \widehat{Z} \f  + \nabla^{\mu} \left( \mathcal{L}_{ Z}(\Hh) \right)(w,w) \cdot \frac{w_{\mu}}{w_0} \partial_{v_q} \f   \\ 
&\quad \hspace{1mm} + C^q_{\widehat{Z},k}(w) \nabla^{\mu} (\Hh)(w,w) \cdot \frac{w_{\mu}}{w_0} \partial_{v_k} \f, \\
\widehat{Z} \! \left(  \nabla^{\mu} (\Hh)^{\lambda \nu} \cdot \frac{w_{\mu}}{w_0} \partial_{v_q} \f \right)  \hspace{1mm} & = \hspace{1mm} \nabla^{\mu}(\Hh)^{\lambda \nu} \cdot \frac{w_{\mu}}{w_0} \partial_{v_q} \widehat{Z} \f + \nabla^{\mu} \left( \mathcal{L}_{ Z }(\Hh) \right)^{\lambda \nu} \cdot \frac{w_{\mu}}{w_0} \partial_{v_q} \f   \\
&\quad \hspace{1mm}+ C^{q, \lambda, \nu}_{\widehat{Z},k,\alpha , \beta}(w) \nabla^{\mu} (\Hh)^{\alpha \beta} \cdot \frac{w_{\mu}}{w_0} \partial_{v_k} \f, \\
\widehat{Z} \! \left(  \nabla^{\mu} (\Hh)(\mathrm dx^{\lambda},w) \cdot \frac{w_{\mu}}{w_0} \partial_{v_q} \f \right) \hspace{1mm} & = \hspace{1mm} \nabla^{\mu} (\Hh)(\mathrm dx^{\lambda},w) \cdot \frac{w_{\mu}}{w_0} \partial_{v_q} \widehat{Z} \f  \\
&\quad \hspace{1mm} + \nabla^{\mu} \left( \mathcal{L}_{  Z}(\Hh) \right) (\mathrm dx^{\lambda},w) \cdot \frac{w_{\mu}}{w_0} \partial_{v_q} \f \\
&\quad \hspace{1mm} + C^{q, \lambda}_{\widehat{Z},k,\alpha }(w) \nabla^{\mu} (\Hh)(\mathrm dx^{\alpha},w) \cdot \frac{w_{\mu}}{w_0} \partial_{v_k} \f,
\end{align*}
where the functions $ C^q_{\widehat{Z},k }(w)$, $ C^{q, \lambda, \nu}_{\widehat{Z},k,\alpha, \beta }(w)$ and $ C^{q, \lambda}_{\widehat{Z},k,\alpha }(w)$ are linear combinations of elements of $\{\frac{w_{\mu}}{w_0} \hspace{1mm} / \,\,0 \leq \mu \leq 3 \}$.
\end{lemma}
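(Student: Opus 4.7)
The plan is to apply the Leibniz rule for the first-order differential operator $\widehat{Z} = Z + Z^w$ to each of the three products. Each left-hand side has the shape $T(\theta,\bar\theta)\cdot \tfrac{w_\mu}{w_0}\cdot \partial_{v_q}\psi$ with $T = \nabla^\mu(\Hh)$ and $(\theta,\bar\theta)$ equal to $(w,w)$, $(dx^\lambda,dx^\nu)$, or $(dx^\lambda,w)$ respectively. The action of $\widehat{Z}$ then splits into three pieces: the action on the tensorial factor $T(\theta,\bar\theta)$, the action on $\tfrac{w_\mu}{w_0}$, and the action on $\partial_{v_q}\psi$.

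For the tensorial factor, the key step is to invoke identity \eqref{eq:LemmaCom3} of Lemma \ref{LemmaCom} together with \eqref{eq:LemmaCom1} (when $\theta$ or $\bar\theta$ equals $w$, so that the contributions $\mathcal{L}_Z(w)$ and $Z^w(w)$ must be absorbed). The effect of \eqref{eq:LemmaCom3} is that $\mathcal{L}_Z(\nabla^\mu\Hh)(\theta,\bar\theta)\cdot\tfrac{w_\mu}{w_0}$ together with the $Z^w$-action on $\tfrac{w_\mu}{w_0}$ collapses to $\nabla^\mu(\mathcal{L}_Z\Hh)(\theta,\bar\theta)\cdot\tfrac{w_\mu}{w_0}$ modulo two explicit Kronecker-type corrections: a $-\delta^S_{\widehat Z}$ term and a $\delta^{\widehat\Omega_{0k}}_{\widehat Z}\tfrac{w_k}{w_0}$ term, both of which are exactly of the advertised form $C(w)\,\nabla^\mu(\Hh)(\theta,\bar\theta)\cdot\tfrac{w_\mu}{w_0}$ with $C(w)$ a linear combination of components of $\tfrac{w}{w_0}$. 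In the second and third identities, the extra Leibniz contribution $\mathcal{L}_Z(dx^\lambda) = \partial_\rho Z^\lambda\,dx^\rho$ simply shifts the free index and produces additional terms already of the allowed shape.

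For the action on $\partial_{v_q}\psi$, I would record the elementary commutators
\[
[\partial_\mu,\partial_{v_q}] = 0, \quad [S,\partial_{v_q}]=0, \quad [\widehat\Omega_{ij},\partial_{v_q}] = -\delta^i_q\partial_{v_j} + \delta^j_q\partial_{v_i}, \quad [\widehat\Omega_{0k},\partial_{v_q}] = \tfrac{w_q}{w_0}\partial_{v_k},
\]
so $\widehat Z\partial_{v_q}\psi = \partial_{v_q}\widehat Z\psi + [\widehat Z,\partial_{v_q}]\psi$ produces only a $v$-derivative reindexing with coefficients that are constants (for rotations) or components of $\tfrac{w}{w_0}$ (for boosts). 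These commutator contributions then reorganize as $C^q_{\widehat Z,k}(w)\,\nabla^\mu(\Hh)(\theta,\bar\theta)\cdot\tfrac{w_\mu}{w_0}\,\partial_{v_k}\psi$, joining the corrections already collected from the tensorial factor.

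The only real obstacle is bookkeeping, not genuine difficulty: in the second identity both free indices $(\lambda,\nu)$ must be tracked through the Leibniz contributions $\mathcal{L}_Z(dx^\lambda)$ and $\mathcal{L}_Z(dx^\nu)$, which each shift one free index. These shifts are absorbed by enlarging the summation over $(\alpha,\beta)$ in the asserted coefficient $C^{q,\lambda,\nu}_{\widehat Z,k,\alpha,\beta}(w)$; the first identity avoids this entirely because both arguments are fixed to $w$, and the third is intermediate. Once the index shifts are organized and all residual factors are checked to be linear combinations of $\{w_\mu/w_0\}$, collecting terms yields the three displayed formulas.
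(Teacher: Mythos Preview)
Your proposal is correct and follows essentially the same approach as the paper: expand via the Leibniz rule, invoke identities \eqref{eq:LemmaCom1} and \eqref{eq:LemmaCom3} of Lemma \ref{LemmaCom} together with $\mathcal{L}_Z(dx^\lambda)=\partial_\rho Z^\lambda\,dx^\rho$ for the tensorial factor, and then use the commutators $[\widehat Z,\partial_{v_q}]$ that you list. The paper organizes the computation by first treating the scalar $\nabla^\mu(\Hh)(\theta,\bar\theta)\cdot\tfrac{w_\mu}{w_0}$ as a unit and then appending the $\partial_{v_q}\psi$ factor, but this is only a bookkeeping choice and the substance is identical.
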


\begin{proof}
Note first that
\begin{align*}
\nonumber \widehat{Z} \left( \nabla^{\mu}(\Hh)(w,w) \cdot \frac{w_{\mu}}{w_0}  \right) &= \mathcal{L}_{Z}( \nabla^{\mu} \Hh)(w,w) \cdot \frac{w_{\mu}}{w_0} +2 \nabla^{\mu} (\Hh)(\mathcal{L}_Z(w),w) \cdot \frac{w_{\mu}}{w_0}  \\ \nonumber
&\quad + \nabla^{\mu} (\Hh)(w,w) \cdot Z^w \left( \frac{w_{\mu}}{w_0} \right)+2 \nabla^{\mu} (\Hh)(Z^w(w),w) \cdot \frac{w_{\mu}}{w_0}, \\ 
\nonumber \widehat{Z} \left(  \nabla^{\mu} (\Hh)^{\lambda \nu} \cdot \frac{w_{\mu}}{w_0}  \right) &= \nabla^{\mu} (\Hh)^{\lambda \nu} \cdot Z^w \left( \frac{w_{\mu}}{w_0} \right) + \mathcal{L}_{Z }(\nabla^{\mu} \Hh)(\mathrm dx^{\lambda},\mathrm dx^{\nu}) \cdot \frac{w_{\mu}}{w_0} \\ 
&\quad+ \nabla^{\mu} (\Hh)(\mathcal{L}_Z(\mathrm dx^{\lambda}), \mathrm dx^{\nu}) \cdot \frac{w_{\mu}}{w_0}+\nabla^{\mu} (\Hh)(\mathrm dx^{\lambda}, \mathcal{L}_Z(\mathrm dx^{\nu})) \cdot \frac{w_{\mu}}{w_0}, \\ \nonumber
\widehat{Z} \left(  \nabla^{\mu} (\Hh)(\mathrm dx^{\lambda},w) \cdot \frac{w_{\mu}}{w_0}  \right) &= \nabla^{\mu}(\Hh) (\mathrm dx^{\lambda}, w) \cdot Z^w \left( \frac{w_{\mu}}{w_0} \right)+ \mathcal{L}_{Z  }(\nabla^{\mu} \Hh)(\mathrm dx^{\lambda},w) \cdot \frac{w_{\mu}}{w_0}  \\
&\quad + \nabla^{\mu}(\Hh)(\mathcal{L}_Z(\mathrm dx^{\lambda}),w) \cdot \frac{w_{\mu}}{w_0} \\
&\quad + \nabla^{\mu} (H)\left(\mathrm dx^{\lambda},\mathcal{L}_Z(w)+Z^w(w)\right) \cdot \frac{w_{\mu}}{w_0}  .
\end{align*}
Then use the identities \eqref{eq:LemmaCom1} and \eqref{eq:LemmaCom3} of Lemma \ref{LemmaCom}, $\mathcal{L}_Z(\mathrm dx^{\lambda})=\partial_{\alpha} Z^{\lambda} d x^{\alpha}$
and, in order to deal with $\widehat{Z} \partial_{v_q} f$,
$$
[\partial_{\nu}, \partial_{v_q}] = [S, \partial_{v_q} ] = 0, \quad [\widehat{\Omega}_{kl}, \partial_{v_q} ] = -\delta_q^k \partial_{v_l} +\delta_q^l \partial_{v_k}, \quad [\widehat{\Omega}_{0k}, \partial_{v_q}]  = \frac{w_q}{w_0} \partial_{v_k} f.
$$
\end{proof}

We are now ready to describe the error terms of the higher order commutator $[\mathbf T_g, \widehat{Z}^I]$ in full detail.

\begin{prop}\label{ComuVlasov2}
Let $\widehat{Z}^I \in \widehat{\mathbb P}_0^{|I|}$. Then, $[\mathbf T_g,\widehat{Z}^I ]( \f)$ can be written as a linear combination with polynomial coefficients in $\frac{w_{\xi}}{w_0}$, $0 \leq \xi \leq 3$, of the following terms,
\begin{eqnarray}
& \bullet & \widehat{Z}^{I_0} \left( \T_g ( \psi ) \right), \hspace{1cm} |I_0| \leq |I| - 1, \quad I_0^P \leq I^P-1, \label{eq:error0} \\
& \bullet & \mathcal{L}^J_{Z} (H)(w, \dr \widehat{Z}^K \f),  \label{eq:error1}\\
& \bullet & \nabla_i \left( \mathcal{L}_Z^J H \right)\!(w,w) \cdot \partial_{v_i} \widehat{Z}^K \f , \label{eq:error2} \\ 
& \bullet & \nabla^{\lambda}\!\left( \mathcal{L}_Z^J H \right)\!(w,w) \cdot \frac{w_{\lambda}}{w_0} \, \partial_{v_q} \widehat{Z}^K \f , \label{eq:error3} \\ 
& \bullet & \widehat{Z}^{M_1} ( \Delta v ) \, \mathcal{L}_Z^Q (g^{-1})( \dr x^{\mu}, \dr \widehat{Z}^K \f), \label{eq:error4} \\ 
& \bullet & \widehat{Z}^{M_1} ( \Delta v ) \, \nabla_i\!\left( \mathcal{L}_Z^Q H \right)\!(d x^{\mu}, w) \cdot \partial_{v_i} \widehat{Z}^K \f , \label{eq:error5} \\ 
& \bullet & \widehat{Z}^{M_1} ( \Delta v ) \widehat{Z}^{M_2} ( \Delta v ) \, \nabla_i\!\left( \mathcal{L}_Z^Q H \right)^{\mu \nu} \cdot \partial_{v_i} \widehat{Z}^K \f , \label{eq:error6} \\ 
& \bullet & \widehat{Z}^{M_1} (\Delta v ) \, \nabla^{\lambda}\!\left( \mathcal{L}_Z^Q H\right)\!(\mathrm dx^{\mu},w) \cdot \frac{w_{\lambda}}{w_0}  \partial_{v_q} \widehat{Z}^K \f, \label{eq:error7} \\ 
& \bullet & \widehat{Z}^{M_1} (\Delta v ) \widehat{Z}^{M_2} (\Delta v ) \, \nabla^{\lambda}\!\left( \mathcal{L}_Z^Q H \right)^{\mu \nu} \cdot \frac{w_{\lambda}}{w_0}  \partial_{v_q} \widehat{Z}^K \f, \label{eq:error8} 
\end{eqnarray}
where,
$$q \in \llbracket 1,3 \rrbracket, \quad (\mu, \nu) \in \llbracket 0,3\rrbracket^2, \quad |J|+|K| \leq |I|, \quad |M_1|+|M_2|+|Q|+|K| \leq |I|, \quad |K| \leq |I|-1.$$
Moreover $K$, $J$, $Q$ and $M_1$ satisfy the following condition
\begin{enumerate}
\item either $K^P < I^P$,
\item or $K^P= I^P$ and then $J^T \geq 1$, $Q^T+M_1^T \geq 1$.
\end{enumerate}
For the term \eqref{eq:error3}, $J$ and $K$ satisfy the improved condition
$$ |J|+|K| \leq |I|-1 \hspace{1cm} \text{and} \hspace{1cm} K^P < I^P.$$
\end{prop}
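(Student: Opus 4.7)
The plan is to proceed by induction on $|I|$, using the first-order commutation formula of Proposition \ref{ComuVlasov1} as the base case and the three substitution Lemmas \ref{LemCom1}, \ref{LemCom2} and \ref{LemCom3} as the inductive engine. For $|I|=0$ there is nothing to prove, and for $|I|=1$ each of the ten lines displayed in Proposition \ref{ComuVlasov1} falls into exactly one of the patterns \eqref{eq:error0}--\eqref{eq:error8} with all multi-indices of length at most one: in particular the $\delta^S_{\widehat{Z}}\,\mathbf{T}_g(f)$ contribution is of type \eqref{eq:error0} with $|I_0|=0$, while the boost-generated term $\delta^{\widehat{\Omega}_{0k}}_{\widehat{Z}}\nabla^{\mu}(H)(w,w)\frac{w_\mu}{w_0}\partial_{v_k}\f$ is of type \eqref{eq:error3} with $|J|=|K|=0$, so the improved condition $|J|+|K|\le |I|-1=0$ is satisfied.

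For the inductive step, write $\widehat{Z}^I=\widehat{Z}^{i_1}\widehat{Z}^{I'}$ with $|I'|=|I|-1$ and decompose
$$[\mathbf{T}_g,\widehat{Z}^{i_1}\widehat{Z}^{I'}] \;=\; [\mathbf{T}_g,\widehat{Z}^{i_1}]\,\widehat{Z}^{I'} \;+\; \widehat{Z}^{i_1}\,[\mathbf{T}_g,\widehat{Z}^{I'}].$$
The first summand is handled by applying Proposition \ref{ComuVlasov1} to $\widehat{Z}^{I'}\f$ in place of $\f$: every term it produces has $\widehat{Z}^K=\widehat{Z}^{I'}$, hence $|K|=|I|-1$ and all other multi-indices of length $\leq 1$, which fits into the required catalogue trivially. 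The second summand is expanded by invoking the inductive hypothesis and then commuting $\widehat{Z}^{i_1}$ past each term: Lemma \ref{LemCom1} treats the terms \eqref{eq:error1} and \eqref{eq:error4} where $\widehat{Z}$ acts on expressions of the form $\mathcal{H}(w,\mathrm{d}\f)$ or $\Delta v\,g^{-1}(\mathrm{d}x^\mu,\mathrm{d}\f)$; Lemma \ref{LemCom2} treats the $\partial_{v_i}$-type terms \eqref{eq:error2}, \eqref{eq:error5}, \eqref{eq:error6}; and Lemma \ref{LemCom3} treats the boost-generated terms \eqref{eq:error3}, \eqref{eq:error7}, \eqref{eq:error8}, producing the polynomial coefficients in $w_\xi/w_0$ advertised in the statement.

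Each single commutation of $\widehat{Z}^{i_1}$ with an error term of the inductive list has precisely three possible effects: (a) it upgrades $\mathcal{L}_Z^J(H)\mapsto \mathcal{L}_{Z^{i_1}}\mathcal{L}_Z^J(H)$, increasing $|J|$ (resp.\ $|Q|$) by one and leaving $|K|$ unchanged; (b) it adds $\widehat{Z}^{i_1}$ in front of $\widehat{Z}^K\f$ or $\widehat{Z}^{M_j}(\Delta v)$, increasing $|K|$ or $|M_j|$ by one; (c) in the special case $\widehat{Z}^{i_1}=\widehat{\Omega}_{0k}$, it adds, on top of (a) or (b), an extra source term of the same form but carrying an additional $w_\lambda/w_0$ factor, keeping all lengths unchanged. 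In all three cases the combined length $|J|+|K|$ (resp.\ $|M_1|+|M_2|+|Q|+|K|$) grows by at most one, so the global bounds $|J|+|K|\le |I|$, $|M_1|+|M_2|+|Q|+|K|\le|I|$ and $|K|\le|I|-1$ are preserved. The conservation of the index condition---either $K^P<I^P$, or $K^P=I^P$ together with $J^T\ge 1$ and $Q^T+M_1^T\ge 1$---follows by tracking whether the newly added $\widehat{Z}^{i_1}$ is a translation: if it is, it either joins $K$ (case (b), in which $K^P$ and $I^P$ are both left unchanged) or it joins one of $J$, $Q$, $M_1$ (cases (a), (c), in which $J^T$ or $Q^T+M_1^T$ increases), and if it is not a translation then both $I^P$ and the composition of $\widehat{Z}^K$ are compatibly updated. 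The improved condition for \eqref{eq:error3}, namely $|J|+|K|\le|I|-1$ and $K^P<I^P$, is preserved because such a term was first produced in the base case with $|J|=|K|=0$ and from there $|J|+|K|$ strictly grows under each application of $\widehat{Z}^{i_1}$.

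The routine work is the case-by-case invocation of Lemmas \ref{LemCom1}--\ref{LemCom3}; the genuine difficulty is the bookkeeping of the multi-index constraints, in particular the sharp $I^P$-condition encoding that ``translations in $\widehat{Z}^I$ either remain on $\f$ or become translations in the coefficient factors,'' which is crucial in Section \ref{sect_vlasov_commutator} for the subsequent closure of the energy estimates. Once one verifies that each of the three elementary moves (a)--(c) respects this invariant, the proof is complete.
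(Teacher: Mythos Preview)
Your overall architecture is correct and matches the paper's: induction on $|I|$ via the Leibniz identity for commutators, with Proposition \ref{ComuVlasov1} as the base case and Lemmas \ref{LemCom1}--\ref{LemCom3} driving the inductive step. The paper organises the induction slightly differently---it separates $\mathbf{T}_g=\widetilde{\mathbf{T}}_g+(\mathbf{T}_g-\widetilde{\mathbf{T}}_g)$ and invokes the pre-computed higher-order formula for $\widetilde{\mathbf{T}}_g$ (Lemma \ref{ComtildeTg}) before treating the $\partial_{v}$-part---but this is a cosmetic difference; your direct splitting $[\mathbf{T}_g,\widehat{Z}^{i_1}]\widehat{Z}^{I'}+\widehat{Z}^{i_1}[\mathbf{T}_g,\widehat{Z}^{I'}]$ is equally valid.

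There is, however, a genuine gap in your treatment of the improved condition for \eqref{eq:error3}. Your argument reads ``such a term was first produced in the base case with $|J|=|K|=0$ and from there $|J|+|K|$ strictly grows under each application of $\widehat{Z}^{i_1}$.'' This misses two points. First, terms of type \eqref{eq:error3} are freshly created at \emph{every} inductive step, not only in the base case: they appear in the first summand $[\mathbf{T}_g,\widehat{Z}^{i_1}]\widehat{Z}^{I'}\f$ whenever $\widehat{Z}^{i_1}$ is a Lorentz boost (with $|J|=0$ and $K=I'$, so $|K|=|I|-1$), and they are also generated in the second summand when a boost $\widehat{Z}^{i_1}$ acts on a term of type \eqref{eq:error2} via the last line of \eqref{LemCom2:eq1}. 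In the latter case the new \eqref{eq:error3}-term inherits the $J$ and $K$ of the \eqref{eq:error2}-term it came from, so one must check that the \eqref{eq:error2}-bound $|J|+|K|\le |I'|$ yields $|J|+|K|\le |I|-1$, which it does. Second, and more importantly, you give no argument at all for the condition $K^P<I^P$ on \eqref{eq:error3}. The key observation (which the paper states explicitly) is that terms of type \eqref{eq:error3} are only ever produced by Lorentz boosts---they simply do not appear in Proposition \ref{ComuVlasov1} when $\widehat{Z}^{i_1}$ is a translation. Since a boost is homogeneous, each fresh creation of an \eqref{eq:error3}-term raises $I^P$ by one while leaving $K^P$ bounded by the old $I^P$ (or $(I')^P$), forcing the strict inequality. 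Once created, the propagation via Lemma \ref{LemCom3} preserves it by your cases (a)--(c). You should add this paragraph; without it the sharp condition on \eqref{eq:error3}, which the paper flags as ``of fundamental importance'' in the remark following the proposition, is unproved.
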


\begin{proof}
The result follows from an induction on $|I|$, relying on
$$ [\mathbf T_g , \widehat{Z} \widehat{Z}^I ] \hspace{2mm} = \hspace{2mm} [\widetilde{\mathbf T}_g , \widehat{Z} \widehat{Z}^I ]+[\mathbf T_g-\widetilde{\mathbf T}_g , \widehat{Z}] \widehat{Z}^I +\widehat{Z} [ \mathbf T_g-\widetilde{\mathbf T}_g , \widehat{Z}^I],$$
Lemma \ref{ComtildeTg} as well as several applications of Lemmas \ref{LemCom2} and \ref{LemCom3}. 

The conditions on the multi-indices are easy to check when $|I|=1$ (see Proposition \ref{ComuVlasov1}). In that case there holds $|K|=K^P=0$. So, if $\widehat{Z}^I =\widehat{Z}$ is a homogeneous vector field, we have $K^P < I^P=1$. Otherwise, $\widehat{Z}^I$ is a translation $\partial_{x^{\mu}}$ and each source term contains either the factor $\mathcal{L}_{\partial_{x^{\mu}}}(H)$ or $\partial_{x^{\mu}} ( \Delta v)$. Moreover, $K^P < I^P$ always holds for the terms of the form \eqref{eq:error3} since they do not appear when $\widehat{Z}^I=\partial_{x^{\mu}}$. One can check during the induction, and more precisely when we apply Lemmas \ref{LemCom2} and \ref{LemCom3}, that these conditions hold for all $I$ (the general principle is explained in Remark \ref{conditionKP}). 
\end{proof}

\begin{rem}
As mentioned in Subsection \ref{subsecintrononint}, we would not be able to close the energy estimates for the Vlasov field without taking advantage on the conditions on $K^P$ and $I^P$ given in Proposition \ref{ComuVlasov2}. 

We also point out that the condition $K^P < I^P$ for the terms \eqref{eq:error3} is of fundamental importance. We would not be able to handle such terms if the case $K^P=I^P$ was possible, even if we had at the same time $J^T \geq 1$.
\end{rem}

\subsection{Null structure of the error terms in the commuted Vlasov equation}

The aim of this subsection is to describe the null structure of the terms given by Proposition \ref{ComuVlasov2}. We start by estimating $\widehat{Z}^M ( \Delta v )$, which will be useful in order to deal with \eqref{eq:error4}-\eqref{eq:error8}.

\begin{lemma}\label{Deltav}
Let $N \geq 6$, $\widehat{Z}^M \in \widehat{\mathbb P}_0^{|M|}$ with $|M| \leq N$ and assume that the metric $g$ satisfies the wave gauge condition and \eqref{eq:conditiong}. Then, if $\epsilon$ is sufficiently small, we have
\begin{equation} \label{assertion_Deltav}
\left| \widehat{Z}^M( \Delta v ) \right| \lesssim \sum_{\substack{|J|+|K| \leq |M| \\ J^T \geq \min(1,M^T) }} |w_L||\mathcal{L}^J_Z(H)|+ |v|  |\mathcal{L}^J_Z(H)|_{\mathcal{L} \mathcal{T}} + |v||\mathcal{L}^J_Z(H)||\mathcal{L}^K_Z(H)|.
\end{equation}
\end{lemma}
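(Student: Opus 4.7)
I would begin from the mass shell relation $g^{\mu\nu} v_\mu v_\nu = 0$ combined with the decomposition $v_\alpha = w_\alpha + \Delta v\,\delta_\alpha^0$ and the flat null relation $\eta^{\mu\nu} w_\mu w_\nu = 0$. Substituting $g^{-1} = \eta^{-1} + H$ and expanding yields the implicit algebraic identity
\begin{equation}\label{plan:dvrel}
2|v|\,\Delta v + H(w,w) \;=\; -2\Delta v\, H^{0\mu} w_\mu + (\Delta v)^2 (1 - H^{00}),
\end{equation}
which will serve as the basic structural relation throughout.

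\textbf{Base case $|M|=0$.} The smallness of $H$ from \eqref{eq:conditiong} lets one absorb the linear-in-$\Delta v$ correction into the left-hand side of \eqref{plan:dvrel}, giving $|\Delta v| \lesssim |H(w,w)|/|v| + |\Delta v|^2/|v|$; a crude preliminary bound $|\Delta v| \lesssim |v|\,|H|$ (obtained from \eqref{plan:dvrel} in the same way) then controls the quadratic remainder by $|v|\,|H|^2$. To finish, I would null-decompose $H(w,w)$: expanding the covector $w$ in the frame $\{L,\underline L,e_A\}$ produces six terms, and invoking $|w_{\underline L}|,|w_A|\lesssim |v|$ and $|\slashed w|^2 \lesssim |v|\,|w_L|$ from Lemma \ref{lem_wwl}, only the pure $|H_{LL}|$ contribution carries the weight $w_{\underline L}^2$ and no factor of $|w_L|$ or $|\slashed w|$; every other null component of $H$ is paired with at least one such factor. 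This yields
\[
|H(w,w)| \;\lesssim\; |v|^2\,|H|_{\mathcal L \mathcal T} + |v|\,|w_L|\,|H|,
\]
so that $|\Delta v| \lesssim |v|\,|H|_{\mathcal L \mathcal T} + |w_L|\,|H| + |v|\,|H|^2$, matching the claim at order zero.

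\textbf{Induction.} For $|M|\geq 1$, I would apply $\widehat{Z}^M$ to \eqref{plan:dvrel} and solve for $\widehat{Z}^M(\Delta v)$. Three structural inputs are essential. First, $\widehat{Z}(|v|) = 0$ for every $\widehat{Z} \in \widehat{\mathbb{P}}_0$ except $\widehat{\Omega}_{0k}$, for which $\widehat{\Omega}_{0k}(|v|) = w_k$; hence derivatives of $|v|$ only produce factors of size at most $|v|$ and never generate derivatives of $H$. Second, the commutation identities of Lemma \ref{LemmaCom} (applied as in Lemma \ref{LemCom1}) expand $\widehat{Z}^M(H(w,w))$ as a linear combination with polynomial coefficients in $w_\mu/w_0$ of terms $\mathcal{L}_Z^J(H)(w,w)$, $|J|\leq |M|$, each of which admits the same null-frame decomposition as in the base case with $H$ replaced by $\mathcal{L}_Z^J H$. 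Third, differentiating the nonlinear right-hand side of \eqref{plan:dvrel} produces products $\widehat{Z}^{M_1}(\Delta v)\,\widehat{Z}^{M_2}(\Delta v)$ and $\widehat{Z}^{M_1}(\Delta v)\,\widehat{Z}^{M_2}(H^{0\mu}w_\mu)$ with $|M_i|<|M|$; feeding the inductive hypothesis into the $\widehat{Z}^{M_i}(\Delta v)$ factors converts these into precisely the quadratic-in-$H$ contributions $|v|\,|\mathcal{L}_Z^J H|\,|\mathcal{L}_Z^K H|$ of the stated bound. After dividing by $2|v| - O(|H|\,|v|) \gtrsim |v|$, the induction closes.

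\textbf{Main obstacle.} The delicate point is the bookkeeping of the condition $J^T \geq \min(1,M^T)$. Since both $|v|$ and the components $w_\alpha$ are translation-invariant, any translation $\partial_{x^\mu}$ initially present in $\widehat{Z}^M$ cannot be absorbed into the weights $w_\mu/w_0$ nor into the factors of $|v|^{-1}$ that arise along the way; it must ultimately land on $H$. Hence whenever $M^T \ge 1$, at least one translation survives inside some $\mathcal{L}_Z^J H$, enforcing $J^T \ge 1$. Threading this accounting consistently through the Leibniz expansions of the induction step, while preserving the null decomposition of the base case at every order, is the main bookkeeping task.
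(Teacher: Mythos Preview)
Your approach is essentially the paper's: both derive the base case from the mass shell relation expanded via $v=w+\Delta v\,dt$, null-decompose $H(w,w)$ using $|\slashed w|^2\lesssim |v||w_L|$, and then differentiate the relation and close by induction through the commutation identities of Lemma~\ref{LemmaCom}, tracking $J^T$ exactly as you describe. The one cosmetic difference is that the paper works with the factored form $(v_0-|v|)\,\Delta v=H(v,v)$ together with $|v_0-|v||\geq |v|$, which directly yields $|\Delta v|\leq |H(v,v)|/|v|$ and so avoids the $(\Delta v)^2/|v|$ remainder in your identity \eqref{plan:dvrel}; your route still works, but you implicitly need the a~priori input $|v_0|\sim |v|$ (equivalently $v_0<0$ and the mass shell) to dispose of that quadratic term before the crude bound $|\Delta v|\lesssim |v||H|$ can be extracted.
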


\begin{proof}
According to Proposition \ref{Prop_H_to_h} and \eqref{eq:conditiong}, we have
\begin{equation}\label{decayHDeltav}
\forall \hspace{0.5mm} |J| \leq N-3, \quad \forall \hspace{0.5mm} (t,x) \in [0,T[ \times \R^3, \qquad \left| \mathcal{L}_Z^J(H) \right|(t,x) \hspace{1mm} \lesssim \hspace{1mm} \sqrt{\epsilon}.
\end{equation}
Hence, as $g^{-1}(v,v)=g^{\alpha \beta}v_{\alpha} v_{\beta} =0$, we get
$$\left| v_0^2 - |v|^2 \right|= |H(v,v)| \lesssim \sqrt{\epsilon}|v|^2+\sqrt{\epsilon} v_0^2,$$
which implies, since $w_0=-|v|$ and if $\epsilon$ is sufficiently small,
\begin{equation}\label{v0andDeltav}
 -2|v| \leq v_0 \leq -\frac{1}{2}|v| \qquad \text{and} \qquad |\Delta v | \leq 3|v|.
\end{equation}
Consequently,
$$(v_0-|v|)\Delta v \hspace{2mm} = \hspace{2mm} v_0^2-|v|^2 \hspace{2mm} = \hspace{2mm}  H^{\mu \nu} v_{\mu} v_{\nu} \hspace{2mm} = \hspace{2mm}  H(v,v),$$
so that, as $ | v_0 -|v| | \geq |v|$ and $v=w+ \Delta v \dr t$,
$$|\Delta v | \hspace{2mm} \leq \hspace{2mm} \frac{|H(v,v)|}{|v|} \hspace{2mm} \lesssim  \hspace{2mm} \frac{|H(w,w)|}{|v|} + |\Delta v | |H|.$$
As $|H| \lesssim \sqrt{\epsilon}$, we obtain, if $\epsilon$ is sufficiently small, that $ |\Delta v | \hspace{2mm} \leq \hspace{2mm} 2\frac{|H(w,w)|}{|v|}$. Now, recall from Lemma \ref{lem_wwl} that $w^Aw_A \lesssim |v| |w_L|$, which implies
\begin{equation}\label{eq:Deltav1}
 |\Delta v | \hspace{1mm} \lesssim \hspace{1mm}  \frac{|H(w,w)|}{|v|} \hspace{1mm} \lesssim \hspace{1mm}  |H|_{\mathcal{L} \mathcal{T}} |v|+\frac{1}{|v|}|H^{AB}w_A w_B|+|H||w_L| \hspace{1mm} \lesssim \hspace{1mm} |H|_{\mathcal{L} \mathcal{T}} |v|+ |H||w_L|
 \end{equation}
and the result holds for $|M|=0$. The next step consists in proving an inequality which will allow us to prove the result by induction in $|M|$.  The starting point is the decomposition
$$0 \hspace{2mm} = \hspace{2mm} g^{-1}(v,v) \hspace{2mm} = \hspace{2mm} g^{-1}(w,w)+|\Delta v|^2 g^{00}+2\Delta v g^{-1}(\dr t,w).$$
Now, using $\mathcal{L}_Z (\dr t) = \delta_{\widehat{Z}}^S \dr t+\delta^{\widehat{Z}}_{\widehat{\Omega}_{0k}} \mathrm dx^k$ and \eqref{eq:LemmaCom1}, we get
\begin{eqnarray}
\nonumber \widehat{Z} \left( g^{-1}(w,w) \right) & = & \mathcal{L}_Z(g^{-1})(w,w)+2g^{-1}(\mathcal{L}_Z(w)+Z^w(w),w) \\ \nonumber
& = & \mathcal{L}_Z(g^{-1})(w,w)+2 \delta_{\widehat{Z}}^S g^{-1}(w,w), \\ \nonumber
\widehat{Z} \left( |\Delta v|^2 g^{00} \right) & = & 2 \widehat{Z} \left( \Delta v \right) \Delta v g^{00}+|\Delta v |^2 \mathcal{L}_Z (g^{-1})^{00} + 2 \delta_{\widehat{Z}}^S |\Delta v |^2 g^{00}+ 2 \delta^{\widehat{Z}}_{\widehat{\Omega}_{0k}} |\Delta v |^2 g^{k0}, \\ \nonumber
\widehat{Z} \left( \Delta v g^{-1}(\dr t,w) \right) & = & \widehat{Z} \left( \Delta v  \right) g^{-1}(\dr t,w) +  \Delta v \mathcal{L}_Z(g^{-1})(\dr t,w) \\ \nonumber
& &+2 \delta_{\widehat{Z}}^S \Delta v g^{-1}(\dr t,w)+ \delta^{\widehat{Z}}_{\widehat{\Omega}_{0k}} \Delta v g^{-1}(\mathrm dx^k,w).
\end{eqnarray}
It then follows that
$$2\widehat{Z}( \Delta v ) g^{-1}(\dr t,v) \hspace{2mm} = \hspace{2mm} - \mathcal{L}_Z(g^{-1})(v,v)-2\delta^{\widehat{Z}}_S g^{-1}(v,v)-2\delta^{\widehat{Z}}_{\widehat{\Omega}_{0k}} \Delta v g^{-1}(\mathrm dx^k,v).$$
Iterating the process, one can prove that, for all $\widehat{Z}^M \in \mathbb P_0^{|M|}$,
\begin{eqnarray}
\nonumber \left| \widehat{Z}^M( \Delta v ) g^{-1}(\dr t,v) \right| \hspace{-2mm} & \lesssim & \hspace{-2.4mm} \sum_{\substack{ |J| \leq |M| \\ J^T= M^T }} \hspace{-0.5mm} | \mathcal{L}^J_Z(g^{-1})(v,v) |+ \hspace{-0.5mm} \sum_{0 \leq \mu \leq 3} \sum_{\substack{|I|+|J| \leq |M| \\ I^T+J^T=M^T \\ |I| <|M| }} \hspace{-0.5mm} \left| \widehat{Z}^I( \Delta v ) \mathcal{L}_Z^J(g^{-1})(\mathrm dx^{\mu},v) \right|  \\ \nonumber 
& & +  \sum_{\substack{ |I|+|J|+|K| \leq |M| \\ I^T+J^T+K^T=M^T \\ |I|, |K| <|M| }} \left| \widehat{Z}^I( \Delta v ) \widehat{Z}^K( \Delta v )\right| \left| \mathcal{L}_Z^J(g^{-1}) \right| .
\end{eqnarray}
Using both \eqref{decayHDeltav} and \eqref{v0andDeltav} we get $|v| \leq 3|g^{-1}(\mathrm dt,v)| \leq 9 |v|$. Hence, as $v=w+\Delta v \dr t$, we obtain
\begin{equation}\label{eq:iterDeltav}
\left| \widehat{Z}^M( \Delta v ) \right|  \lesssim  \sum_{\begin{subarray}{} |J| \leq |M| \\ J^T= M^T \end{subarray}} \hspace{-1.3mm} \frac{| \mathcal{L}^J_Z(g^{-1})(w,w) |}{|v|}+ \hspace{-0.3mm} \sum_{\substack{ |I|+|J|+|K| \leq |M| \\ I^T+J^T \geq \min(1,M^T) \\  |I|, |K| <|M| }} \hspace{-2.6mm} \frac{\left| \widehat{Z}^I( \Delta v ) \right|}{|v|} \left| \mathcal{L}_Z^J(g^{-1}) \right| ( |v|+|\widehat{Z}^K( \Delta v )| ) \hspace{-0.1mm}.
\end{equation}
Consider now $N_0 \leq N-1$ and suppose that (\ref{assertion_Deltav}) holds for all $|I| \leq N_0$. Then, let $M$ be a multi-index satisfying $|M|=N_0+1$. As $\mathcal{L}_Z(\eta^{-1}) = -2 \delta_Z^S \eta^{-1}$, we have
\begin{equation*}
| \mathcal{L}^J_Z(g^{-1})(w,w) | \hspace{2mm} \lesssim \hspace{2mm}  | \mathcal{L}^J_Z(H)(w,w) |+| \eta^{-1}(w,w) | \hspace{2mm} = \hspace{2mm}  | \mathcal{L}^J_Z(H)(w,w) |.
\end{equation*} 
Following the computations made in \eqref{eq:Deltav1}, we then get
\begin{equation}\label{eq:iterDeltav2}
 \frac{1}{|v|} |\mathcal{L}^J_Z(g^{-1})(w,w)| \hspace{2mm} \lesssim \hspace{2mm} |\mathcal{L}^J_Z(H)|_{\mathcal{L} \mathcal{T}} |v|+|\mathcal{L}^J_Z(H)||w_L|.
 \end{equation}
In order to bound the second sum on the right-hand side of \eqref{eq:iterDeltav}, start by noticing that, since $\mathcal{L}_Z(\eta^{-1})=-2 \delta_Z^S \eta^{-1}$,
$$\left| \mathcal{L}_Z^{J}(g^{-1}) \right| \hspace{1mm} \lesssim \hspace{1mm} \left\{ \begin{array}{ll} | \mathcal{L}_Z^{J}(H) | & \text{if $J^T \geq 1$} \\ | \mathcal{L}_Z^{J}(H) |+| \eta^{-1} | & \text{if $J^T=0$} \end{array} \right..$$ Now, by the induction hypothesis,
$$\forall \hspace{0.5mm} |I| < |M|, \hspace{1cm} \left| \widehat{Z}^I( \Delta v ) \right|  \hspace{2mm} \lesssim \hspace{2mm} \sum_{\begin{subarray}{} |I_1|+|I_2| \leq |I| \\ I_1^T \geq \min(1,I^T) \end{subarray}}  |v| \left| \mathcal{L}_Z^{I_1}(H) \right| \left(1+\left| \mathcal{L}_Z^{I_2}(H) \right| \right),$$
so that, using $| \mathcal{L}_Z^{I_0}(H)| \lesssim 1 $ if $|I_0| \leq N-3$, 
\begin{align*} 
\nonumber \sum_{\begin{subarray}{} |I|+|J|+|K| \leq |M| \\ I^T+J^T\geq \min (1,M^T) \\ \hspace{3mm} |I|, |K| <|M| \end{subarray}} \frac{\left| \widehat{Z}^I( \Delta v ) \right|}{|v|} \left| \mathcal{L}_Z^J(H) \right| (|v|+|\widehat{Z}^K( \Delta v )|) & \lesssim \sum_{\begin{subarray}{} |I|+|J| \leq |M| \\ I^T \geq \min(1,M^T) \end{subarray}} \hspace{-1.1mm} |v| \left| \mathcal{L}_Z^I(H) \right|\left| \mathcal{L}_Z^J(H) \right| \hspace{-0.5mm} , \\ \nonumber
\sum_{\substack{ |I|+|K| \leq |M| \\ I^T \geq \min(1,M^T) \\ |I|, |K| <|M| }} \frac{\left| \widehat{Z}^I( \Delta v ) \right|}{|v|} \left| \eta^{-1} \right| |\widehat{Z}^K( \Delta v )| & \lesssim \sum_{\substack{  |I|+|J| \leq |M| \\ I^T \geq \min(1,M^T) }} \hspace{-1.1mm} |v| \left| \mathcal{L}_Z^I(H) \right|\left| \mathcal{L}_Z^J(H) \right| \hspace{-0.5mm} .
\end{align*}
The claim then follows from \eqref{eq:iterDeltav}, \eqref{eq:iterDeltav2}, the last two inequalities
and 
$$\sum_{\substack{ |I| < |M| \\ I^T \geq \min(1,M^T) }} \hspace{-2mm}| \widehat{Z}^I( \Delta v ) | |\eta^{-1}| \lesssim \sum_{\substack{ |J|+|K| < |M| \\ J^T \geq \min(1,M^T) }} \hspace{-2mm} |w_L||\mathcal{L}^J_Z(H)|+ |v|  |\mathcal{L}^J_Z(H)|_{\mathcal{L} \mathcal{T}} + |v||\mathcal{L}^J_Z(H)||\mathcal{L}^K_Z(H)|,$$ which is a direct consequence of the induction hypothesis.
\end{proof}

In the next lemma, we deal with the remaining error terms given by \eqref{eq:error1}, \eqref{eq:error2} and \eqref{eq:error3} by expanding them with respect to the null frame $(L,\underline{L},e_1,e_2)$.

\begin{lemma}\label{nullstructure}
The following estimates hold,
\begin{align*}
\left| \Hh ( w, \dr \f ) \right| \hspace{0.5mm} &\lesssim \hspace{0.5mm} |v|\frac{|\Hh|}{1+t+r}\left(|t-r| |\nabla \f|+\sum_{\widehat{Z} \in \widehat{\mathbb P}_0} | \widehat{Z} \f | \right) + |v||\Hh|_{\mathcal{L} \mathcal{T}} |\nabla \f | \\
&\quad \hspace{0.5mm} + \sqrt{|v||w_L|}|\Hh|_{\mathcal{T} \mathcal{U}} |\nabla \f|, \\
\left|  \nabla_i ( \Hh ) (w,w) \cdot \partial_{v_i} \f  \right| \hspace{0.5mm} &\lesssim \hspace{0.5mm} \left( |w_L| |\nabla \Hh |  +|v||\nabla \Hh |_{\mathcal{L} \mathcal{T}} \right) \left( |t-r| | \nabla \f|+\sum_{\widehat{Z} \in \widehat{\mathbb P}_0} | \widehat{Z} \f | \right) \\
&\quad \hspace{0.5mm} + \left( \sqrt{|v|| w_L|} |\overline{\nabla} \Hh |  + |v| | \overline{\nabla} \Hh |_{\mathcal{L} \mathcal{L}} \right) \hspace{-0.8mm} \left( t | \nabla \f|+\sum_{\widehat{Z} \in \widehat{\mathbb P}_0} | \widehat{Z} \f | \right) \! , \\ 
\left| \nabla^{\mu} ( \Hh)(w,w) \cdot \frac{w_{\mu}}{|v|}  \partial_{v_q} \f \right| \hspace{0.5mm} & \lesssim \hspace{0.5mm} \left( \frac{|w_L|^2}{|v|} |\nabla \Hh |  +|w_L||\nabla \Hh |_{\mathcal{L} \mathcal{T}} \right) \hspace{-1mm} \left( (t+r) | \nabla \f|+\sum_{\widehat{Z} \in \widehat{\mathbb P}_0} | \widehat{Z} \f | \right) \\ 
&\quad \hspace{0.5mm} + \left(\sqrt{|v||w_L|} | \overline{\nabla} \Hh |  +|v|| \overline{\nabla} \Hh |_{\mathcal{L} \mathcal{L}} \right) \hspace{-0.8mm} \left( (t+r) | \nabla \f|+\sum_{\widehat{Z} \in \widehat{\mathbb P}_0} | \widehat{Z} \f | \right) \! .
\end{align*}
\end{lemma}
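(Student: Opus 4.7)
The plan is to prove all three estimates via a common strategy: expand each expression in the null frame $\{L, \underline L, e_1, e_2\}$ and systematically exploit three kinds of improved behavior --- the improved decay of the components $w_L$ and $|\slashed w|$ from Lemma \ref{lem_wwl}, the improved decay of good derivatives $L\f,\, e_A\f,\, \overline\nabla \Hh$ via Lemma \ref{extradecay}, and the improved behavior of the good null components $|\Hh|_{\mathcal{L}\mathcal{T}}$ and $|\Hh|_{\mathcal{T}\mathcal{U}}$ --- together with the radial/angular decomposition of $\partial_{v_i}\f$ supplied by Lemma \ref{vderivative}.

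For the first estimate, I expand
\[
\Hh(w, \dr\f) \;=\; \sum_{E,F \in \{L,\underline L, e_1, e_2\}} \Hh^{EF}_{\mathrm{fr}}\, w(E)\, (F\f),
\]
where $\Hh^{EF}_{\mathrm{fr}}$ denotes the frame components. The worst pairing $w_{\underline L}(\underline L \f)$, in which both factors fail to gain decay, is coupled to $\Hh(L,L)$, which is controlled by $|\Hh|_{\mathcal{L}\mathcal{T}}$, and this yields the second term $|v|\,|\Hh|_{\mathcal{L}\mathcal{T}}|\nabla\f|$. Any pairing containing a good derivative $L\f$ or $e_A\f$ gains $(1+t+r)^{-1}$ through Lemma \ref{extradecay} in the form $(1+t+r)|\overline\nabla\f| \lesssim \sum_{\widehat Z}|\widehat Z\f|$, producing the first term. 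Pairings involving $w_A$ use $|w_A| \lesssim \sqrt{|v||w_L|}$ and see a coefficient $\Hh(\cdot, e_A)$ controlled by $|\Hh|_{\mathcal{T}\mathcal{U}}$; pairings involving $w_L$, which have coefficient $\Hh(\cdot, \underline L) \in \mathcal{T}\mathcal{U}$, are absorbed into the same third term via the elementary observation $|w_L| \leq \sqrt{|v|\cdot |w_L|}$, valid because $|w_L| \leq 2|v|$.

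For the second estimate, I write
\[
\nabla_i(\Hh)(w,w)\cdot\partial_{v_i}\f \;=\; (\nabla_v\f)^r\,\partial_r\bigl(\Hh(w,w)\bigr) + \sum_A (\nabla_v\f)^A\, e_A\bigl(\Hh(w,w)\bigr),
\]
and then split $\partial_r = \tfrac{1}{2}(L - \underline L)$. The angular contribution already benefits twice: $e_A$ is a good derivative on $\Hh$, and Lemma \ref{vderivative} bounds $(\nabla_v\f)^A$ by $(t|\nabla\f| + \sum|\widehat Z\f|)/|v|$, so only the cost of good derivatives $\overline\nabla\Hh$ is paid. For the radial term the dangerous $\underline L$-derivative is paired with $(\nabla_v\f)^r$, which by Lemma \ref{vderivative} is bounded by the strictly smaller quantity $(|t-r||\nabla\f| + \sum|\widehat Z\f|)/|v|$; this is the gain that compensates for the bad derivative. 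Having isolated the derivative structure, I expand $\Hh(w,w)$ in the null frame using Lemma \ref{lem_wwl}: the product $(w,w)$ contributes $|w_{\underline L}|^2 \lesssim |v|^2$ coupled with $|\Hh|_{\mathcal{L}\mathcal{L}}$, plus mixed and angular terms whose $w$-factors are small ($|w_L|$ or $|\slashed w|^2 \lesssim |v||w_L|$), producing exactly the two terms on the right-hand side of the stated bound.

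The third estimate is handled in parallel; the structural observation is that $\nabla^\mu(\Hh)\cdot w_\mu/|v|$ is, up to a factor, the directional derivative of $\Hh$ along $w^\sharp/|v|$, whose null-frame components are $-w_{\underline L}/(2|v|) = O(1)$ along $L$ (good derivative, order-one coefficient), $-w_L/(2|v|)$ along $\underline L$ (bad derivative, tiny coefficient), and $O\bigl(\sqrt{|w_L|/|v|}\bigr)$ along the angular directions. Consequently $\nabla_{\underline L}\Hh$ enters only with coefficient $|w_L|/|v|$, which after contraction with $(w,w)$ matches the factor $|w_L|^2/|v|$ multiplying $|\nabla\Hh|$ in the stated bound. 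The factor $\partial_{v_q}\f$ is bounded crudely using \eqref{eq:partial_v} by $((t+r)|\nabla\f| + \sum|\widehat Z\f|)/|v|$, accounting for the $(t+r)|\nabla\f|$ weight in the conclusion. The main obstacle throughout is organizational rather than conceptual: carefully bookkeeping the null-frame expansion and using, in each term, the best available bound among $|\Hh|$, $|\Hh|_{\mathcal{T}\mathcal{U}}$, $|\Hh|_{\mathcal{L}\mathcal{T}}$ (and similarly for $\overline\nabla\Hh$), so that the three distinct structural gains are never wasted.
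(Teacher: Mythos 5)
Your sketch reproduces the paper's proof step for step: null-frame expansion of $\Hh(w,\dr\f)$ together with $|w_A|\lesssim\sqrt{|v||w_L|}$ and \eqref{decayL} for the first estimate, and the radial/angular split $\nabla_i(\Hh)(w,w)\,\partial_{v_i}\f = \nabla_{\partial_r}(\Hh)(w,w)(\nabla_v\f)^r + \nabla_A(\Hh)(w,w)(\nabla_v\f)^A$ combined with Lemma \ref{vderivative} for the second and third. One bookkeeping detail deserves care: to match the stated radial term $|w_L||\nabla\Hh| + |v||\nabla\Hh|_{\mathcal{L}\mathcal{T}}$ you must use the estimate $|\mathcal{G}(w,w)|\lesssim |v||w_L||\mathcal{G}| + |v|^2|\mathcal{G}|_{\mathcal{L}\mathcal{T}}$, which puts the mixed terms $w_{\underline L}w_L$ and $w_{\underline L}w_A$ into the $\mathcal{L}\mathcal{T}$ bucket (their coefficients $\mathcal{G}_{L\underline L}$, $\mathcal{G}_{LA}$ lie in $\mathcal{L}\mathcal{T}$). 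Your description, coupling only $w_{\underline L}^2$ to a good component and calling the rest small, corresponds instead to $|\mathcal{G}(w,w)|\lesssim |v|\sqrt{|v||w_L|}|\mathcal{G}| + |v|^2|\mathcal{G}|_{\mathcal{L}\mathcal{L}}$, which the paper reserves for the angular derivative where the generic coefficient $\sqrt{|v||w_L|}$ suffices; for the radial part that variant gives $\sqrt{|v||w_L|}$ in place of $|w_L|$ and would not reproduce the statement as written.
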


\begin{proof}
The first inequality follows from
\begin{eqnarray}
\nonumber  \Hh ( w, \dr \f )  & = & \Hh^{\underline{L} \hspace{0.1mm} \underline{L}} w_{\underline{L}} \underline{L} \f + \Hh^{\underline{L} L}(w_{\underline{L}} L \f + w_L\underline{L} \f )+ \Hh^{\underline{L} A} ( w_{\underline{L}} e_A ( \f)+w_A \underline{L} \f ) \\ \nonumber
&  & + \Hh^{L \hspace{0.1mm} L} w_{L} L \f +  \Hh^{ L A}( w_L e_A ( \f)+w_A  L \f )+\Hh^{AB}w_A e_B( \f )
\end{eqnarray}
and from Lemma \ref{lem_wwl} as well as \eqref{decayL}, which give
$$|w_A| \hspace{2mm} \lesssim \hspace{2mm} \sqrt{|v| |w_L|} \hspace{1cm} \text{and} \hspace{1cm}  | L \f | \hspace{2mm} \lesssim \hspace{2mm} \frac{|t-r|}{1+t+r} |\nabla \f |+ \frac{1}{1+t+r} \sum_{\widehat{Z} \in \mathbb P_0} |\widehat{Z} \f |.$$ 
Remark now that for a symmetric tensor $\mathcal{G}^{\mu \nu}$,
$$\mathcal{G}(w,w) \hspace{2mm} = \hspace{2mm}  \mathcal{G}^{\underline{L} \hspace{0.1mm} \underline{L}} w_{\underline{L}}^2+\mathcal{G}^{L L}w_L^2+\mathcal{G}^{AB}w_A w_B+2\mathcal{G}^{\underline{L} L}w_{\underline{L}} w_L+2\mathcal{G}^{\underline{L} A}w_{\underline{L}} w_A+2 \mathcal{G}^{L \hspace{0.1mm} A}w_L w_A .$$
Consequently, using again that $|w_A| \lesssim \sqrt{|v| |w_L|}$, we get
\begin{eqnarray}\label{eq:nullstruct3}
| \mathcal{G}(w,w) | & \lesssim & |v||w_L||\mathcal{G} | +|v|^2|\mathcal{G} |_{\mathcal{L} \mathcal{T}}, \\
| \mathcal{G}(w,w) | & \lesssim & |v| \sqrt{|v| |w_L|}|\mathcal{G} |+|v|^2|\mathcal{G} |_{\mathcal{L} \mathcal{L}}. \label{eq:nullstruct3bis}
\end{eqnarray}
Recall from Lemma \ref{vderivative} that
\begin{equation}\label{eq:nullstruct4}
 \left| \left( \nabla_v \f \right)^r \right|\lesssim \frac{|t-r|}{|v|} | \nabla \f |+ \frac{1}{|v|} \sum_{\widehat{Z} \in \mathbb P_0} |\widehat{Z} \f |,\qquad
 \left| \left( \nabla_v \f \right)^A \right| \lesssim \frac{t}{|v|} | \nabla \f | + \frac{1}{|v|} \sum_{\widehat{Z} \in \mathbb P_0} |\widehat{Z} \f |.
 \end{equation}
The last two estimates then result from \eqref{eq:nullstruct3}, \eqref{eq:nullstruct3bis}, \eqref{eq:nullstruct4} and
\begin{align*}
\nabla_i ( \Hh)(w,w) \cdot \partial_{v_i} \f  \hspace{2mm} & = \hspace{2mm} \nabla_{\partial_r} ( \Hh)(w,w) \left( \nabla_v \f \right)^r+\nabla_{A} ( \Hh)(w,w) \left( \nabla_v \f \right)^A, \\
\nabla^{\mu} ( \Hh)(w,w) \cdot \frac{w_{\mu}}{|v|} \hspace{2mm} & = \hspace{2mm} -\frac{1}{2}\nabla_L ( \Hh)(w,w) \frac{w_{\underline{L}}}{|v|}-\frac{1}{2}\nabla_{\underline{L}} ( \Hh)(w,w) \frac{w_L}{|v|}+\nabla^{A} ( \Hh)(w,w) \frac{w_A}{|v|}.
\end{align*}
\end{proof}

\subsection{Final classification of the error terms}

In this section, we list all the error terms that appear in the commuted equations in such a way that we will able to easily estimate them when we try to improve all the bootstrap assumptions on the energy norms of the Vlasov field. 


\begin{prop}\label{ComuVlasov3}
Let $N \geq 6$ be such that the metric $g$ satisfies \eqref{eq:conditiong}, assume that the wave gauge condition holds and consider $\widehat{Z}^I \in \widehat{\mathbb P}_0^{|I|}$ with $|I| \leq N$. Then, $[\mathbf T_g,\widehat{Z}^I ]( \f)$ can be bounded by a linear combination of terms taken in the following families.

\noindent \textbf{The terms arising from the source terms}
\begin{equation}
\big| \widehat{Z}^{I_0} \left( \T_g ( \psi ) \right) \big|, \qquad |I_0| \leq |I|-1, \quad I_0^P \leq I^P-1. \label{Errorsourceterm}
\end{equation}

\noindent \textbf{The terms arising from the Schwarzschild part,}
\begin{eqnarray}
 \widehat{\mathfrak{S}}_{I,0}^{K} & := & M \frac{ |v| }{(1+t+r)^2} \left| \widehat{Z} \widehat{Z}^{K} \f \right|,   \label{Error00}\\
 \mathfrak{S}_{I,00}^{K} & := & M \frac{ |v| }{1+t+r} \left| \nabla \widehat{Z}^{K} \f \right|,  \label{Error0}\\ 
  \widehat{\mathfrak{S}}_{I,1}^{J,K} & := & M \frac{ |v| }{(1+t+r)^2} \left| \mathcal{L}^J_{Z}(h^1) \right| \left| \widehat{Z} \widehat{Z}^{K} \f \right|,  \label{Error00bis} \\
    \widehat{\mathfrak{S}}_{I,2}^{J,K} & := & M \frac{ |v| }{1+t+r} \left| \nabla \mathcal{L}^J_{Z}(h^1) \right| \left| \widehat{Z} \widehat{Z}^{K} \f \right|,  \label{Error00bisbis} \\
 \mathfrak{S}_{I,3}^{J,K} & := & M \frac{ |v| }{1+t+r} \left| \mathcal{L}^J_{Z}(h^1) \right| \left|  \nabla \widehat{Z}^{K} \f \right|,  \label{Error0bis}\\ 
  \mathfrak{S}_{I,4}^{J,K} & := & M |v| \frac{ |t-r| }{1+t+r} \left|\nabla \mathcal{L}^J_{Z}(h^1) \right| \left|  \nabla \widehat{Z}^{K} \f \right|,  \label{Error0bisbis}\\ 
 \mathfrak{S}_{I,5}^{J,K} & := & M|v| \left| \overline{\nabla} \mathcal{L}^J_{Z}(h^1) \right| \left|  \nabla \widehat{Z}^{K} \f \right|,  \label{Error000}\\
 \mathfrak{S}_{I,6}^{Q,J,K} & := & M  |v| | \mathcal{L}^Q_{Z}(h^1) | \left|\nabla \mathcal{L}^J_{Z}(h^1) \right| \left|  \nabla \widehat{Z}^{K} \f \right|,  \label{Error0000}
 \end{eqnarray}
 where, $\widehat{Z} \in \widehat{\mathbb P}_0$,
\begin{itemize}
\item $|Q|+|J|+|K| \leq |I|$, \hspace{1cm} $|K| \leq |I|-1$, \hspace{1cm} $K^P \leq I^P$.
\end{itemize}
\textbf{ The quadratic terms,}
 \begin{eqnarray}
  \widehat{ \mathfrak{E}}_{I,1}^{J,K} & := & |w_L| \left| \nabla \mathcal{L}^J_{Z}(h^1) \right| \left| \widehat{Z} \widehat{Z}^K \f \right|,  \label{Error5}\\  
\widehat{\mathfrak{E}}_{I,2}^{J,K} & := & |v| \left(\left| \nabla \mathcal{L}^J_{Z}(h^1) \right|_{\mathcal{L} \mathcal{T}}+\left| \overline{\nabla} \mathcal{L}^J_{Z}(h^1) \right| \right) \left| \widehat{Z} \widehat{Z}^K \f \right|,  \label{Error6} \\
 \widehat{ \mathfrak{E}}_{I,3}^{J,K} & := &  \frac{|v|}{1+t+r} \left| \mathcal{L}^J_{Z}(h^1) \right| \left| \widehat{Z} \widehat{Z}^K \f \right|,  \label{Error4}\\ 
 \mathfrak{E}_{I,4}^{J,K} & := & |v| \frac{|t-r|}{1+t+r} \left| \mathcal{L}^J_{Z}(h^1) \right| \left| \nabla \widehat{Z}^K \f \right|,  \label{Error1}\\ 
  \mathfrak{E}_{I,5}^{J,K} & := & |v| \left| \mathcal{L}^J_{Z}(h^1) \right|_{\mathcal{L} \mathcal{T}}  \left| \nabla \widehat{Z}^K \f \right|,  \label{Error2}\\ 
   \mathfrak{E}_{I,6}^{J,K} & := &  \sqrt{|v| |w_L|} \left| \mathcal{L}^J_{Z}(h^1) \right| \left| \nabla \widehat{Z}^K \f \right|,  \label{Error3}\\ 
\mathfrak{E}_{I,7}^{J,K} & := & |t-r||w_L| \left| \nabla \mathcal{L}^J_{Z}(h^1) \right| \left| \nabla \widehat{Z}^K \f \right|,  \label{Error7}\\ 
\mathfrak{E}_{I,8}^{J,K} & := & |t-r||v| \left| \nabla \mathcal{L}^J_{Z}(h^1) \right|_{\mathcal{L} \mathcal{T}} \left| \nabla \widehat{Z}^K \f \right|,  \label{Error8}\\ 
\mathfrak{E}_{I,9}^{J,K} & := & (t+r) \sqrt{|v| |w_L|} \left| \overline{\nabla}  \mathcal{L}^J_{Z}(h^1) \right| \left| \nabla \widehat{Z}^K \f \right|,  \label{Error9}\\ 
\mathfrak{E}_{I,10}^{J,K} & := & (t+r)|v| \left| \overline{\nabla}  \mathcal{L}^J_{Z}(h^1) \right|_{\mathcal{L} \mathcal{L}} \left| \nabla \widehat{Z}^K \f \right|,  \label{Error10}
\end{eqnarray}
 where, $\widehat{Z} \in \widehat{\mathbb P}_0$,
\begin{itemize}
\item $|J|+|K| \leq |I|$, \hspace{2cm} $|K| \leq |I|-1$.
\item $K$ and $J$ satisfy one of the following conditions.
\begin{enumerate}
\item Either $K^P < I^P$,
\item or $K^P= I^P$ and $J^T \geq 1$.
\end{enumerate}
\end{itemize}
\begin{eqnarray}
\mathfrak{E}_{I,11}^{J,K} & := &  (t+r)\frac{|w_L|^2}{|v|} \left| \nabla \mathcal{L}^J_{Z}(h^1) \right| \left| \nabla \widehat{Z}^K \f \right|,  \label{Error11}
\end{eqnarray}
where
\begin{itemize}
\item $|J|+|K| \leq |I|$, \hspace{1cm} $|K| \leq |I|-1$, \hspace{1cm} $K^P < I^P$.
\end{itemize}
\textbf{The cubic terms,}
\begin{eqnarray}
\widehat{\mathfrak{E}}_{I,12}^{M,J,K} & := & \frac{|v|}{1+t+r} \left| \mathcal{L}^M_{Z}(h^1) \right| \left|  \mathcal{L}^J_{Z}(h^1) \right| \left| \widehat{Z} \widehat{Z}^K \f \right|,  \label{Error14bis} \\ 
\widehat{\mathfrak{E}}_{I,13}^{M,J,K} & := & |v| \left| \mathcal{L}^M_{Z}(h^1) \right| \left| \nabla \mathcal{L}^J_{Z}(h^1) \right| \left| \widehat{Z} \widehat{Z}^K \f \right|,  \label{Error14} 
\end{eqnarray}
where, $\widehat{Z} \in \widehat{\mathbb P}_0$,
\begin{itemize}
\item $|M|+|J|+|K| \leq |I|$, \hspace{1cm} $|K| \leq |I|-1$, \hspace{1cm} $K^P \leq I^P$.
\end{itemize}
\begin{eqnarray} 
\mathfrak{E}_{I,14}^{M,J,K} & := & |v| \left| \mathcal{L}^M_{Z}(h^1) \right| \left|  \mathcal{L}^J_{Z}(h^1) \right| \left| \nabla \widehat{Z}^K \f \right|,  \label{Error13}\\ 
\mathfrak{E}_{I,15}^{M,J,K} & := & |t-r||v| \left| \mathcal{L}^M_{Z}(h^1) \right| \left| \nabla \mathcal{L}^J_{Z}(h^1) \right| \left| \nabla \widehat{Z}^K \f \right|,  \label{Error15} \\
\mathfrak{E}_{I,16}^{M,J,K} & := & (t+r)|w_L| \left| \mathcal{L}^M_{Z}(h^1) \right| \left| \nabla \mathcal{L}^J_{Z}(h^1) \right| \left| \nabla \widehat{Z}^K \f \right|,  \label{Error16} \\
\mathfrak{E}_{I,17}^{M,J,K} & := & (t+r) |v| \left|  \mathcal{L}^M_{Z}(h^1) \right| \left| \overline{\nabla} \mathcal{L}^J_{Z}(h^1) \right| \left| \nabla \widehat{Z}^K \f \right|,  \label{Error17} 
\end{eqnarray}
 where
\begin{itemize}
\item $|M|+|J|+|K| \leq |I|$, \hspace{1cm} $|K| \leq |I|-1$.
\item $K$, $M$ and $J$ satisfy one of the following conditions.
\begin{enumerate}
\item Either $K^P < I^P$,
\item or $K^P= I^P$ and $M^T+J^T \geq 1$.
\end{enumerate}
\end{itemize}
\textbf{The quartic terms,}
\begin{eqnarray}
\mathfrak{E}_{I,18}^{Q,M,J,K} & := & (t+r)|v|| \mathcal{L}^{Q}_Z(h^1)| |\mathcal{L}^{M}_Z(h^1)| |\nabla \mathcal{L}^J_Z(h^1)||\nabla \widehat{Z}^K \f| \label{Error19} ,
\end{eqnarray}
 where
 \begin{itemize}
\item $|Q|+|M|+|J|+|K| \leq |I|$, \hspace{1cm} $|K| \leq |I|-1$, \hspace{1cm} $K^P \leq I^P$.
\end{itemize}
\end{prop}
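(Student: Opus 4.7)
The plan is to apply Proposition \ref{ComuVlasov2}, which already rewrites $[\mathbf{T}_g, \widehat{Z}^I](\psi)$ as a linear combination (with polynomial coefficients in $w_\xi/w_0$) of the eight families \eqref{eq:error0}--\eqref{eq:error8}, and then to reduce each of those families to a sum of the explicit error terms in the statement. The source family \eqref{eq:error0} matches \eqref{Errorsourceterm} with no additional work. For the remaining seven families, the systematic procedure is: split $H = -h^0 + H_1$ (and $g^{-1} = \eta^{-1}+H$); apply Proposition \ref{decaySchwarzschild0} to control all Lie derivatives of $h^0$ pointwise by $M/(1+t+r)$ and $M/(1+t+r)^2$, producing the Schwarzschild families $\widehat{\mathfrak{S}}_{I,0}$--$\mathfrak{S}_{I,6}$; and use Proposition \ref{Prop_H_to_h} together with Remark \ref{Rem_H_to_h} to trade every $\mathcal{L}_Z^J(H_1)$ for $\mathcal{L}_Z^{J'}(h^1)$ modulo quadratic $h$-terms, which will be the source of the cubic and quartic families.

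With this splitting in place, the null structure is extracted by applying Lemma \ref{nullstructure} to the three shapes $\mathcal{L}_Z^J(H)(w,\mathrm{d}\widehat{Z}^K\psi)$, $\nabla_i(\mathcal{L}_Z^J H)(w,w)\,\partial_{v_i}\widehat{Z}^K\psi$, and $\nabla^\lambda(\mathcal{L}_Z^J H)(w,w)\,(w_\lambda/w_0)\,\partial_{v_q}\widehat{Z}^K\psi$ coming from \eqref{eq:error1}--\eqref{eq:error3}. The decomposition into $|w_L||\nabla h|$, $|v||\nabla h|_{\mathcal{LT}}$, $\sqrt{|v||w_L|}|\overline{\nabla}h|$, and $|v||\overline{\nabla}h|_{\mathcal{LL}}$ produced by the lemma, combined with the $\partial_v$-decomposition of Lemma \ref{vderivative} (which provides the $|t-r||\nabla\psi|$ and $(t+r)|\nabla\psi|$ factors together with $\widehat{Z}\psi$-terms), matches exactly the families $\widehat{\mathfrak{E}}_{I,1}$--$\mathfrak{E}_{I,11}$. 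For \eqref{eq:error4}--\eqref{eq:error8}, which carry one or two factors $\widehat{Z}^{M_a}(\Delta v)$, the strategy is to bound each such factor via Lemma \ref{Deltav} by a sum of terms of the form $|w_L||\mathcal{L}_Z^{J_1}h|$, $|v||\mathcal{L}_Z^{J_1}h|_{\mathcal{LT}}$, or $|v||\mathcal{L}_Z^{J_1}h||\mathcal{L}_Z^{J_2}h|$; after distributing the remaining $\nabla\mathcal{L}_Z^Q H$, $w_\lambda/w_0$, and $\partial_{v_i}\widehat{Z}^K\psi$ factors (and again using Lemma \ref{vderivative} for the latter), the products fall into the quadratic families when a single $\Delta v$-factor and its linear part are used, and into the cubic/quartic families $\widehat{\mathfrak{E}}_{I,12}$--$\mathfrak{E}_{I,18}$ whenever two $\Delta v$-factors are present or the quadratic correction in Lemma \ref{Deltav} is invoked.

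The main obstacle will be the multi-index accounting: the conditions ``either $K^P < I^P$, or $K^P = I^P$ together with $J^T \ge 1$ (and $M_1^T+Q^T \ge 1$ for the $\Delta v$-families)'' inherited from Proposition \ref{ComuVlasov2} must be preserved through every reduction step. Each time a factor $\mathcal{L}_Z^J(H_1)$ is converted into $\mathcal{L}_Z^{J'}(h^1)$ plus lower-order corrections, or a factor $\widehat{Z}^{M_a}(\Delta v)$ is expanded into pieces involving $\mathcal{L}_Z^{J_1}h$, the $J^T$-tracking clauses of Remark \ref{Rem_H_to_h} and Lemma \ref{Deltav} (namely $J_1^T \ge \min(1,M^T)$) ensure that whenever we sacrifice a translation on an $H$- or $\Delta v$-factor to render it geometric, that translation reappears on one of the $h^1$-factors. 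The stricter constraint $K^P < I^P$ of \eqref{Error11} is inherited directly from the corresponding constraint already present for \eqref{eq:error3} in Proposition \ref{ComuVlasov2}, which is why only the family \eqref{Error11} carries this strict inequality.
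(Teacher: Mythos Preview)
Your overall strategy is correct and matches the paper's proof closely: start from Proposition \ref{ComuVlasov2}, apply Lemma \ref{nullstructure} to \eqref{eq:error1}--\eqref{eq:error3}, apply Lemma \ref{Deltav} to the $\Delta v$-factors in \eqref{eq:error4}--\eqref{eq:error8}, and use Proposition \ref{Prop_H_to_h}/Remark \ref{Rem_H_to_h} together with Proposition \ref{decaySchwarzschild0} to pass from $H$ to $h^1$ plus Schwarzschild and cubic/quartic corrections. The index accounting via Remark \ref{Rem_H_to_h} and the $J_1^T \ge \min(1,M^T)$ clause of Lemma \ref{Deltav} is exactly how the paper propagates the $K^P = I^P \Rightarrow J^T \ge 1$ conditions.

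There is, however, one genuine gap in your claim that the output of Lemma \ref{nullstructure} applied to \eqref{eq:error3} ``matches exactly the families $\widehat{\mathfrak{E}}_{I,1}$--$\mathfrak{E}_{I,11}$''. The third inequality of Lemma \ref{nullstructure} produces, among others, the piece
\[
(t+r)\,|w_L|\,\bigl|\nabla \mathcal{L}_Z^Q(H)\bigr|_{\mathcal{L}\mathcal{T}}\,\bigl|\nabla \widehat{Z}^K\psi\bigr|,
\]
which does \emph{not} appear in the list \eqref{Error5}--\eqref{Error11}: the closest candidate $\mathfrak{E}_{I,9}^{J,K}$ carries $|\overline{\nabla}\,\cdot\,|$, not $|\nabla\,\cdot\,|_{\mathcal{L}\mathcal{T}}$. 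The paper handles this auxiliary term by invoking the wave gauge condition (Proposition \ref{wgc}) to bound $|\nabla h|_{\mathcal{L}\mathcal{T}} \lesssim |\overline{\nabla}h|_{\mathcal{T}\mathcal{U}} + \sum |\mathcal{L}_Z^{J_1}h||\nabla\mathcal{L}_Z^{J_2}h|$; the $|\overline{\nabla}h|$ part, combined with $|w_L|\le\sqrt{|v||w_L|}$, then lands in $\mathfrak{E}_{I,9}^{J,K}$, while the quadratic correction lands in $\mathfrak{E}_{I,16}^{M,J,K}[h]$. This is precisely why the statement of the proposition assumes the wave gauge condition, and you should make this step explicit.
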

\begin{rem}
	To clarify the analysis, we have denoted by $\widehat{\mathfrak{S}}$ or $\widehat{ \mathfrak{E}}$, the error terms that contain factors of the form $\left| \widehat{Z} \widehat{Z}^K \f \right|$, and by $\mathfrak{S}$ or $\mathfrak{E}$, error terms containing $\left| \nabla \widehat{Z}^K \f \right|$, so that we know that the last derivative hitting $\psi$ is a translation. 
	\end{rem}

\begin{proof}
	Since $g$ verifies \eqref{eq:conditiong} and in view of Proposition \ref{Prop_H_to_h}, we will use throughout this proof that
\begin{equation}\label{estiinfinity}
\forall \hspace{0.5mm} |Q| \leq N-3, \qquad \left| \mathcal{L}_Z^Q(H) \right|+ \left| \mathcal{L}_Z^Q(h) \right| \hspace{1mm} \lesssim \hspace{1mm} \sqrt{\epsilon}.
\end{equation}
Consider a multi-index $I$ such that $|I| \leq N$. In order to clarify the analysis, let us introduce a notation. Fix $q \in \llbracket 4 , 11 \rrbracket$ and multi-indices $(J,K)$ satisfying the conditions presented in the proposition which are associated to $\mathfrak{E}_{I,q}^{J,K}$. Then, for a sufficiently regular tensor field $k$, denote by $\mathfrak{E}^{J,K}_{I,q}[k]$ the quantity corresponding to $\mathfrak{E}_{I,q}^{J,K}$, but where $h^1$ is replaced by $k$. For instance,
$$\mathfrak{E}_{I,5}^{J,K}[k] \hspace{2mm} = \hspace{2mm} |v| \left| \mathcal{L}^J_{Z}(k) \right|_{\mathcal{L} \mathcal{T}} \left| \nabla \widehat{Z}^K \f \right|.$$
We define similarly $\widehat{\mathfrak{E}}^{J,K}_{I,q}[k]$, $\mathfrak{E}^{M,J,K}_{I,q}[k]$, $\widehat{\mathfrak{E}}^{M,J,K}_{I,q}[k]$ and $\mathfrak{E}^{Q,M,J,K}_{I,18}[k]$. Then we make two important observations.
\begin{enumerate}
\item For all $q \in \llbracket 4 , 11 \rrbracket$, $\mathfrak{E}^{J,K}_{I,q}[H]$ is a linear combination of $\mathfrak{E}^{J_0,K}_{I,q}[h]$ and lower order terms $\mathfrak{E}^{M_0,J_0,K}_{I,p}[h]$ and $\mathfrak{E}^{Q_0,M_0,J_0,K}_{I,18}[h]$, where $p \in \llbracket 14 , 17 \rrbracket$ and $(J_0,K)$, $(M_0,J_0,K)$ as well as $(Q_0,M_0,J_0,K)$ satisfy the conditions presented in the proposition. This follows from Remark \ref{Rem_H_to_h}, so that, for instance,
$$|v| \left|  \mathcal{L}^J_{Z}(H) \right|_{\mathcal{L} \mathcal{T}} | \nabla \widehat{Z}^K \f | \hspace{1mm} \lesssim \hspace{1mm} \sum_{\substack{ |J_0| \leq |J| \\ J_0^T = J^T }} \mathfrak{E}_{I,5}^{J_0,K}[h] +  \! \sum_{\substack{ |M_0|+|J_0| \leq |J| \\ M_0^T+J_0^T \geq \min (1, J^T) }}  \mathfrak{E}_{I,14}^{M_0,J_0,K} [h].$$
Similar relations can be obtained, using also \eqref{estiinfinity}, for $\widehat{\mathfrak{E}}^{J,K}_{I,q}[H]$, $\mathfrak{E}^{M,J,K}_{I,q}[H]$, $\widehat{\mathfrak{E}}^{M,J,K}_{I,q}[H]$ and $\mathfrak{E}^{Q,M,J,K}_{I,18}[H]$.
\item For all $ n \in \llbracket 1 , 3 \rrbracket$ and $ q \in \llbracket 4 , 11 \rrbracket$, we have
$$\widehat{\mathfrak{E}}_{I,n}^{J,K}[h] \hspace{2mm} \lesssim \hspace{2mm} \widehat{\mathfrak{E}}_{I,n}^{J,K}[h^1]+\widehat{\mathfrak{S}}_{I,0}^{K} \hspace{2mm} = \hspace{2mm} \widehat{\mathfrak{E}}_{I,n}^{J,K}+\widehat{\mathfrak{S}}_{I,0}^{K}, \qquad \quad \mathfrak{E}_{I,q}^{J,K}[h] \hspace{2mm} \lesssim \hspace{2mm} \mathfrak{E}_{I,q}^{J,K}+\mathfrak{S}_{I,00}^{K}. $$
This ensues from the decomposition $h=h^1+h^0$ and Proposition \ref{decaySchwarzschild0}, which gives that, for all $|J|$,
$$|\mathcal{L}_Z^J(h^0)| \hspace{2mm} \lesssim \hspace{2mm} \frac{M}{1+t+r}, \hspace{1cm} |\nabla \mathcal{L}_Z^J(h^0)| \hspace{2mm} \lesssim \hspace{2mm} \frac{M}{(1+t+r)^2}.$$
Similar inequalities hold for $\mathfrak{E}^{M,J,K}_{I,q}[h]$, $\widehat{\mathfrak{E}}^{M,J,K}_{I,q}[h]$ and $\mathfrak{E}^{Q,M,J,K}_{I,18}[h]$. For instance,
\begin{align*}
\widehat{\mathfrak{E}}^{M,J,K}_{I,13}[h] & \lesssim \widehat{\mathfrak{E}}^{M,J,K}_{I,13}[h^1]+ \widehat{\mathfrak{S}}^{J,K}_{I,2}[h^1]+\widehat{\mathfrak{S}}^{M,K}_{I,1}+\widehat{\mathfrak{S}}^{K}_{I,0} , \\
\mathfrak{E}^{M,J,K}_{I,17}[h] & \lesssim \mathfrak{E}^{M,J,K}_{I,17}[h^1]+ \mathfrak{S}^{J,K}_{I,5}+\mathfrak{S}^{M,K}_{I,3}+\mathfrak{S}^{K}_{I,00} , \\
\mathfrak{E}^{Q,M,J,K}_{I,18}[h] & \lesssim \mathfrak{E}^{Q,M,J,K}_{I,18}[h^1]+ \mathfrak{S}^{M,J,K}_{I,6}[h]+ \mathfrak{S}^{Q,J,K}_{I,6}+\mathfrak{S}^{M,K}_{I,3}+\mathfrak{S}^{Q,K}_{I,3}+\mathfrak{S}^{J,K}_{I,4}+\mathfrak{S}^{K}_{I,00}.
\end{align*}
For the quartic terms, we have sometimes estimated one of the two factor of the form $|\mathcal{L}^{I_0}(h^1)|$ by $\sqrt{\epsilon}$ and $(1+\tau+r)^{-1}$ by $1$. We specify that two cases need to be considered for $\mathfrak{E}^{M,J,K}_{I,16}[h]$. Indeed,
\begin{equation}\label{mathfrak16}
 \mathfrak{E}^{M,J,K}_{I,16}[h] \lesssim \mathfrak{E}^{M,J,K}_{I,16}[h^1]+ \mathfrak{S}^{M,K}_{I,3}+\mathfrak{S}^{K}_{I,00}+(t+r)|w_L| |\mathcal{L}^M_Z(h^0)| |\nabla \mathcal{L}_Z^J(h^1) | |\nabla \widehat{Z}^Kf |.
 \end{equation}
Then, the last term is bounded by $\widehat{\mathfrak{E}}^{J,K}_{I,1}$ if $K^P < I^P$. Otherwise $K^P=I^P$ and $M^T+J^T \geq 1$, so that it can be bounded by $\widehat{\mathfrak{E}}^{J,K}_{I,3}$ if $M^T \geq 1$ and by $\widehat{\mathfrak{E}}^{J,K}_{I,1}$ if $J^T \geq 1$.
\end{enumerate}
The remainder of the proof then consists in bounding the terms written in Proposition \ref{ComuVlasov2} by \eqref{Errorsourceterm} and those of \eqref{Error5}-\eqref{Error19}, with $h^1$ replaced by $H$. For that purpose, we will use several times Lemmas \ref{Deltav} and \ref{nullstructure}. Until the end of this section, each time that we will refer to one of the terms \eqref{Error5}-\eqref{Error19}, $h^1$ has to be replaced by $H$.
\begin{itemize}
\item The terms \eqref{eq:error0} can be controlled by those of the form \eqref{Errorsourceterm}.
\item The terms \eqref{eq:error1} can be estimated, using the first inequality of Lemma \ref{nullstructure}, by a linear combination of terms of the form \eqref{Error4}-\eqref{Error3}.
\item The terms \eqref{eq:error2} can be bounded, according to the second estimate of Lemma \ref{nullstructure}, by terms of the form \eqref{Error5}-\eqref{Error6} and \eqref{Error7}-\eqref{Error10}.
\item Using the third inequality of Lemma \ref{nullstructure}, one can bound the terms \eqref{eq:error3} by a linear combination of terms of the form \eqref{Error5}-\eqref{Error6}, \eqref{Error7}-\eqref{Error11} and
$$\mathfrak{Aux}^{Q,K}_I[H]:= (t+r)|w_L| \left| \nabla \mathcal{L}^Q_{Z}(H) \right|_{\mathcal{L} \mathcal{T}} \left| \nabla \widehat{Z}^K \f \right|, \quad K^P < I^P,$$
$|Q|+|K| \leq |I|$, $|K| \leq |I|-1$. Applying Proposition \ref{Prop_H_to_h}, we obtain
$$ \mathfrak{Aux}^{Q,K}_I[H] \lesssim \sum_{|J| \leq |Q|} \mathfrak{Aux}^{J,K}_I[h] + \sum_{|M|+|J| \leq |Q|} \mathfrak{E}^{M,J,K}_{I,16}[h],$$
so that, using the wave gauge condition (see Proposition \ref{wgc}),
$$\mathfrak{Aux}^{J,K}_I[H] \lesssim \sum_{|J| \leq |Q|} (t+r)|w_L| \left| \overline{\nabla} \mathcal{L}^Q_{Z}(h) \right| \left| \nabla \widehat{Z}^K \f \right| + \sum_{|M|+|J| \leq |Q|} \mathfrak{E}^{M,J,K}_{I,16}[h].$$
Use $|w_L| \leq \sqrt{|v| |w_L|}$ as well as the decomposition $h=h^0+h^1$ and the pointwise decay estimates on $h^0$ given by Proposition \ref{decaySchwarzschild0} in order to get, since $K^P < I^P$,
$$\mathfrak{Aux}^{J,K}_I[H] \lesssim \mathfrak{S}^K_{I,00}+ \sum_{|J| \leq |Q|} \mathfrak{E}^{J,K}_{I,9} + \sum_{|M|+|J| \leq |Q|} \mathfrak{E}^{M,J,K}_{I,16}[h].$$
Finally, it remains to estimate $\mathfrak{E}^{M,J,K}_{I,16}[h]$ through the inequality \eqref{mathfrak16}.
\item Applying Lemma \ref{Deltav}, one can control the terms \eqref{eq:error4} by a linear combination of
$$\left( |w_L||\mathcal{L}^M_Z(H)|+ |v|  |\mathcal{L}^M_Z(H)|_{\mathcal{L} \mathcal{T}} + |v||\mathcal{L}^M_Z(H)||\mathcal{L}^Q_Z(H)|\right)|\mathcal{L}^J_Z(g^{-1})||\nabla \widehat{Z}^K \f| ,$$
with $|M|+|Q|+|J|+|K| \leq |I|$, $|K| \leq |I|-1$ and $K^P < I^P$ or $K^P=I^P$ and $J^T+M^T \geq 1$. Recall the relation $\mathcal{L}_Z( \eta^{-1} ) = -2 \delta_Z^S \eta^{-1}$, so that
\begin{itemize}
\item if $Z^J \neq S^{|J|}$, then $\mathcal{L}^J_Z(g^{-1})=\mathcal{L}^J_Z(H)$ and we obtain terms of the form \eqref{Error13}. For this, we use that $|\mathcal{L}^R_Z(H)| \lesssim 1$ for all $|R| \leq N-3$ in order to deal with the quartic terms.
\item Otherwise $|\mathcal{L}^J_Z(g^{-1})|\lesssim |\mathcal{L}^J_Z(H)|+|\eta^{-1}|$ and we still get terms of the form \eqref{Error13} as well as, since $|\eta^{-1}| \lesssim 1$, \eqref{Error2} and \eqref{Error3}.
\end{itemize}
\item According to Lemma \ref{Deltav}, one can estimate \eqref{eq:error6} and \eqref{eq:error8} by terms of the form
$$|v|^2 | \mathcal{L}^{Q_1}_Z(H)| |\mathcal{L}^{Q_2}_Z(H)| |\nabla \mathcal{L}^J_Z(H)||\nabla_v \widehat{Z}^K \f|,$$
with $|Q_1|+|Q_2|+|J|+|K| \leq |I|$, $|K| \leq |I|-1$ and $K^P \leq I^P$. Using that
$$|v| |\nabla_v \widehat{Z}^K \f| \hspace{2mm} \lesssim \hspace{2mm} (t+r)|\nabla \widehat{Z}^K \f|+\sum_{\widehat{Z} \in \mathbb P_0} |\widehat{Z} \widehat{Z}^K \f |,$$
which comes from \eqref{eq:partial_v}, we finally get quartic terms of the form \eqref{Error19} and, using \eqref{estiinfinity}, cubic terms \eqref{Error14}.
\item Finally, since for two functions $\phi$ and $\psi$, there holds
\begin{eqnarray}
\nonumber \nabla_i \phi \cdot \partial_{v_i} \phi  & = & \nabla_{\partial_r} \phi \left( \nabla_v \psi \right)^r+ \nabla_A \phi \left( \nabla_v \psi \right)^A, \\ \nonumber
  \nabla^{\mu} \phi \cdot w_{\mu}  & = & -\frac{1}{2} \nabla_L \phi w_{\underline{L}}-\frac{1}{2}\nabla_{\underline{L}} \phi w_L+ \nabla^A \phi w_A, 
\end{eqnarray}
we can bound, using \eqref{eq:nullstruct4}, the terms \eqref{eq:error5} and \eqref{eq:error7} by
\begin{eqnarray}
\nonumber  & &  |\widehat{Z}^{M_1} ( \Delta v ) | | \nabla \mathcal{L}^J_Z(H) | | \widehat{Z} \widehat{Z}^K \f |+ \\ \nonumber
 & & \left(|t-r|| \nabla \mathcal{L}^J_Z(H) |+(t+r)| \overline{\nabla} \mathcal{L}^J_Z(H) |+ (t+r)\frac{|w_L|}{|v|}| \nabla \mathcal{L}^J_Z(H) | \right) |\widehat{Z}^{M_1} ( \Delta v ) |   | \nabla \widehat{Z}^K \f |,
\end{eqnarray}
with $|M_1|+|J|+|K| \leq |I|$, $|K| \leq |I|-1$ and $K^P < I^P$ or $K^P=I^P$ and $M_1^T+J^T \geq 1$. The estimate
$$|\widehat{Z}^{M_1} ( \Delta v ) |  \lesssim \sum_{\begin{subarray}{} |M|+|Q| \leq |M_1| \\ M^T \geq \min (1, M_1^T) \end{subarray}} |v| | \mathcal{L}_Z^M H|\left(1+| \mathcal{L}_Z^Q(H)|\right),$$
which follows from Lemma \ref{Deltav}, leads to terms of the form \eqref{Error14} and \eqref{Error15}-\eqref{Error19}.
\end{itemize}
\end{proof}
It will be convenient to introduce the following notations.
\begin{defi}\label{defmathfrakA}
Given one of the error terms $\mathfrak{E}_{I,i}^{J,K}$, $ i \in \llbracket 4, 11 \rrbracket$, listed in Proposition \ref{ComuVlasov3}, we define $\mathfrak{A}^{J,K}_{I,i}$ as the quantity which contains everything of $\mathfrak{E}_{I,i}^{J,K}$ but the $\psi$-part $|\nabla \widehat{Z}^K \psi|$. We define similarly, for $n \in \llbracket 1,3 \rrbracket$ and $p \in \llbracket 14 , 17 \rrbracket$, $\widehat{\mathfrak{A}}^{J,K}_{I,n}$, $\mathfrak{A}^{M,J,K}_{I,p}$, $\widehat{\mathfrak{A}}^{M,J,K}_{I,12}$, $\widehat{\mathfrak{A}}^{M,J,K}_{I,13}$ and $\mathfrak{A}^{Q,M,J,K}_{I,18}$. For instance
$$ \widehat{\mathfrak{A}}^{J,K}_{I,2} \hspace{1mm} = \hspace{1mm} |v| |\left( \nabla \mathcal{L}_Z^J(h^1)|_{\mathcal{L} \mathcal{T}}+\left| \overline{\nabla} \mathcal{L}^J_{Z}(h^1) \right|\right), \qquad \mathfrak{A}_{I,16}^{M,J,K} \hspace{1mm} = \hspace{1mm} (t+r)|w_L| \left| \mathcal{L}^M_{Z}(h) \right| \left| \nabla \mathcal{L}^J_{Z}(h) \right| $$
and the multi-indices $I$, $J$ and $K$ (respectively $I$, $J$, $K$ and $M$) satisfy the same conditions as those of the term $\mathfrak{E}^{J,K}_{I,2}$ \eqref{Error2} (respectively $\mathfrak{E}^{M,J,K}_{I,16}$ \eqref{Error16}).

We also define in a similar way the quantities $\widehat{\mathfrak{B}}^{K}_{I,0}$, $\mathfrak{B}^{K}_{I,00}$, $\widehat{\mathfrak{B}}^{J,K}_{I,i}$, $\mathfrak{B}^{J,K}_{I,j}$ and $\mathfrak{B}^{Q,J,K}_{I,6}$ from the error terms $\widehat{\mathfrak{S}}^{K}_{I,0}$, $\mathfrak{S}^{K}_{I,00}$, $\widehat{\mathfrak{S}}^{J,K}_{I,i}$ $\mathfrak{S}^{J,K}_{I,j}$ and $\mathfrak{S}^{Q,J,K}_{I,6}$, so that
$$\mathfrak{B}^{K}_{I,00} \hspace{1mm} = \hspace{1mm} \frac{M|v|}{1+t+r}, \qquad \widehat{\mathfrak{B}}^{J,K}_{I,1} \hspace{1mm} = \hspace{1mm} \frac{M|v|}{(1+t+r)^2}|\mathcal{L}_Z^J(h^1)|, \qquad \mathfrak{B}^{J,K}_{I,5} \hspace{1mm} = \hspace{1mm} M|v||\overline{\nabla} \mathcal{L}_Z^J(h^1)|.$$
\end{defi}

\section{Commutation of the Vlasov energy momentum tensor}\label{sectioncommutationvlasovenergy}

To evaluate the commuted Einstein equations (see Proposition \ref{ComuEin}), we will require the null components of the tensor field $\mathcal{L}^I_Z(T[f])$. In order to simplify the presentation of the following results as well as their proofs, we denote by $\widetilde{T}[\psi]$ the energy-momentum tensor of the Vlasov field in the flat case, i.e.
$$\widetilde{T}[\psi]_{\mu \nu} \hspace{1mm} := \hspace{1mm} \int_{\R^3_v} \psi \frac{w_{\mu} w_{\nu}}{w^0}  \mathrm dv.$$
This field is considered in the following.

\begin{lemma}\label{reducedtensor}
Let $\psi : [0,T[ \times \R^3_x \times \R^3_v \rightarrow \R$ be a sufficiently regular function. We have,
$$\forall \hspace{0.5mm} Z \in \mathbb{P}, \quad  \mathcal{L}_Z (\widetilde{T}[\psi]) \hspace{1mm}  = \hspace{1mm} \widetilde{T}[\widehat{Z} \psi] \qquad \text{and} \qquad \mathcal{L}_S (\widetilde{T}[\psi]) \hspace{1mm} = \hspace{1mm} \widetilde{T}[S \psi]+2\widetilde{T}[\psi].$$
\end{lemma}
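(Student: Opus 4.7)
My plan is to expand both sides of each identity directly from the definitions and reduce to a computation of $\widehat Z(w_\mu)$ for $Z\in\mathbb P$, then read off the required cancellations.

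First I would recall that for a $(0,2)$-tensor field $k$ and any vector field $X$, the coordinate formula \eqref{defLie} gives
\[
\mathcal{L}_X(k)_{\mu\nu} = X(k_{\mu\nu}) + \partial_\mu X^\alpha\, k_{\alpha\nu} + \partial_\nu X^\alpha\, k_{\mu\alpha}.
\]
Applied to $k=\widetilde T[\psi]$, this reduces the lemma to computing $Z(\widetilde T[\psi]_{\mu\nu})$. For $Z\in\mathbb P$ Killing, I would invoke Lemma~\ref{Zderivativeint} with the test function replaced by $\psi\,w_\mu w_\nu$:
\[
Z\!\left(\int_{\mathbb R^3_v}\psi\,w_\mu w_\nu\frac{\mathrm dv}{|v|}\right)
=\int_{\mathbb R^3_v}\widehat Z(\psi\,w_\mu w_\nu)\frac{\mathrm dv}{|v|}
=\widetilde T[\widehat Z\psi]_{\mu\nu}+\int_{\mathbb R^3_v}\psi\bigl(\widehat Z(w_\mu)\,w_\nu+w_\mu \widehat Z(w_\nu)\bigr)\frac{\mathrm dv}{|v|}.
\]
For $S$ this step is simpler since $S$ contains no $\partial_v$: using Lemma~\ref{Zderivativeint} for $S$ one gets $S(\widetilde T[\psi]_{\mu\nu})=\widetilde T[S\psi]_{\mu\nu}$ directly, and $\partial_\mu S^\alpha=\delta_\mu^\alpha$ contributes the $+2\widetilde T[\psi]_{\mu\nu}$ term.

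The crucial identity is then
\begin{equation}\label{eq:wmucompute}
\widehat Z(w_\mu)=-w_\alpha\,\partial_\mu Z^\alpha \qquad \text{for all } Z\in\mathbb P,
\end{equation}
which I would verify case by case using the formulas \eqref{translationw}--\eqref{boostw} already recorded in the proof of Lemma~\ref{LemmaCom} (they compute $\mathcal{L}_Z(w)+Z^w(w)$, which is exactly $\widehat Z$ acting componentwise on $w_\mu$): for translations both sides vanish; for rotations $\Omega_{ij}$ one checks $\widehat\Omega_{ij}(w_k)=w_i\delta_j^k-w_j\delta_i^k$ and $\widehat\Omega_{ij}(w_0)=0$, matching $\partial_\mu Z^\alpha=\delta_j^\alpha\delta_i^\mu-\delta_i^\alpha\delta_j^\mu$; for boosts $\Omega_{0k}$, the vertical term $|v|\partial_{v_k}$ produces $\widehat\Omega_{0k}(w_0)=-w_k$ and $\widehat\Omega_{0k}(w_j)=-w_0\delta_k^j$, again matching the Kronecker pattern of $\partial_\mu Z^\alpha$.

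Inserting \eqref{eq:wmucompute} into the boundary term above yields
\[
\int_{\mathbb R^3_v}\psi\bigl(\widehat Z(w_\mu)w_\nu+w_\mu\widehat Z(w_\nu)\bigr)\frac{\mathrm dv}{|v|}
=-\partial_\mu Z^\alpha\,\widetilde T[\psi]_{\alpha\nu}-\partial_\nu Z^\alpha\,\widetilde T[\psi]_{\mu\alpha},
\]
which exactly cancels the tensorial correction terms in $\mathcal{L}_Z(\widetilde T[\psi])_{\mu\nu}$, leaving $\widetilde T[\widehat Z\psi]_{\mu\nu}$. The only step that is not purely bookkeeping is the verification of \eqref{eq:wmucompute} for the boosts, where the vertical part $|v|\partial_{v_k}$ must be handled carefully; but this was essentially already done in \eqref{boostw}, so no new difficulty arises.
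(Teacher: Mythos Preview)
Your proof is correct. For $Z \in \mathbb{P}$ the paper takes a soft geometric shortcut: since $Z$ is Killing for $\eta$, one may pass to local coordinates in which $Z$ coincides with a coordinate vector field, and there the identity $\mathcal{L}_Z \widetilde{T}[\psi] = \widetilde{T}[\widehat{Z}\psi]$ is immediate. You instead compute directly in Cartesian coordinates via Lemma~\ref{Zderivativeint}, reducing the question to the algebraic identity $\widehat{Z}(w_\mu) = -w_\alpha\,\partial_\mu Z^\alpha$, which you verify case by case. Your route is more explicit and entirely self-contained; the paper's is shorter but invokes a general principle without spelling out the details. One small correction to your exposition: the parenthetical that $\mathcal{L}_Z(w)+Z^w(w)$ is ``exactly $\widehat Z$ acting componentwise on $w_\mu$'' is not quite right --- componentwise, $\widehat{Z}(w_\mu) = Z^w(w_\mu)$ alone (since $w_\mu$ is independent of $(t,x)$), whereas $\mathcal{L}_Z(w)_\mu = w_\alpha\,\partial_\mu Z^\alpha$; your key identity is precisely the statement that their \emph{sum} vanishes for Killing $Z$, which is what \eqref{translationw}--\eqref{boostw} record, so the argument is unaffected. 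For the scaling $S$ both proofs proceed by the same direct computation.
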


\begin{proof}
The result for the Killing vector fields $Z \in \mathbb{P}$ holds in a more general setting. More precisely, if $X$ is Killing for a metric $h$ and $T[\psi]$ is the energy-momentum tensor of a Vlasov field $\psi$ for the metric $h$, then $\mathcal{L}_X T[\psi]=T[ \widehat{X} \psi]$, with $\widehat{X}$ the complete lift of $X$. It can easily be verified by choosing a local coordinate system such that $X$ coincides with one of the coordinate derivatives. For the scaling vector field\footnote{The types of formula can be in fact generalized to any conformal Killing fields on a general Lorentzian manifold.}, $S=x^{\mu} \partial_{\mu}$, we have
\begin{eqnarray*}
 \mathcal{L}_S \left( \widetilde{T}[\psi] \right)_{\mu \nu} & = & S \left( \widetilde{T}[\psi]_{\mu \nu} \right) +\partial_{\mu} S^{\lambda}  \widetilde{T}[\psi]_{\lambda \nu} +\partial_{\nu} S^{\lambda}  \widetilde{T}[\psi]_{\mu \lambda} \\
 & = & \int_{\R^3_v}  S (\psi) \frac{w_{\mu} w_{\nu}}{w^0} dv + 2 \widetilde{T}[\psi]_{\lambda \nu} .
\end{eqnarray*}
\end{proof}

We now turn on the real energy momentum tensor $T[\psi]$.

\begin{prop}\label{ComEnergyMomentum}
Let $I$ be a multi-index and $Z^I \in \mathbb{K}^{|I|}$. Then, there exist integers $C^I_{J,K}$, $C^{I, \lambda}_{J,K,M; \mu \nu}$ and $C^I_{J,K,L,M; \mu \nu}$ such that
\begin{eqnarray}
\nonumber \mathcal{L}^I_Z(T[\f])_{\mu \nu} & = & \sum_{|J|+|K| \leq |I|} C^I_{J,K} \, \widetilde{T} \left[ \widehat{Z}^K(\f) \widehat{Z}^J \left( \frac{|v| \sqrt{|\det g^{-1}|}}{g^{0 \alpha} v_{\alpha}} \right) \right]_{\mu \nu} \\ \nonumber
& & \hspace{-10mm} +  \sum_{\substack{  0 \leq \lambda \leq 3 \\ |J|+|K|+|M| \leq |I| }} C^{I, \lambda}_{J,K,M; \mu \nu} \int_{\R^3_v} w_{\lambda} \widehat{Z}^M(\Delta v) \widehat{Z}^K( \f) \widehat{Z}^J \! \left( \frac{|v| \sqrt{|\det g^{-1}|}}{g^{0 \alpha} v_{\alpha}} \right) \! \frac{\mathrm{d}v}{|v|}  \\ \nonumber
& & \hspace{-10mm} + \hspace{-7mm} \sum_{|J|+|K|+|L|+|M| \leq |I|} \hspace{-2mm} C^I_{J,K,L,M; \mu \nu} \int_{\R^3_v} \widehat{Z}^M(\Delta v) \widehat{Z}^L(\Delta v) \widehat{Z}^K( \f) \widehat{Z}^J \! \left( \frac{|v| \sqrt{|\det g^{-1}|}}{g^{0 \alpha} v_{\alpha}} \right) \! \frac{\mathrm{d}v}{|v|} .
\end{eqnarray}
\end{prop}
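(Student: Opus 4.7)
My plan is to first establish the $|I|=0$ case as a purely algebraic decomposition based on the cotangent $1$-form identity $v = w + \Delta v\, \dr t$, and then to propagate this decomposition through repeated Lie differentiation by induction on $|I|$. Setting $W := |v|\sqrt{|\det g^{-1}|}/(g^{0\alpha}v_\alpha)$, the tensor identity $v\otimes v = w\otimes w + \Delta v(w\otimes \dr t + \dr t \otimes w) + (\Delta v)^2\, \dr t \otimes \dr t$ yields
\begin{equation*}
T[f]_{\mu\nu} = \widetilde T[fW]_{\mu\nu} + \int_{\R^3_v} f\,\Delta v\,(w_\mu \delta_\nu^0 + w_\nu \delta_\mu^0)\frac{W}{|v|}\dr v + \delta_\mu^0 \delta_\nu^0 \int_{\R^3_v} f(\Delta v)^2 \frac{W}{|v|}\dr v,
\end{equation*}
which already matches the three families in the claim, with the integer coefficients $C^{I,\lambda}_{J,K,M;\mu\nu}$ and $C^I_{J,K,L,M;\mu\nu}$ respectively absorbing $\delta^\lambda_\mu \delta^0_\nu + \delta^\lambda_\nu \delta^0_\mu$ and $\delta^0_\mu \delta^0_\nu$.

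For the induction step, one applies $\mathcal L_Z$ ($Z\in\mathbb K$) to each of the three families. Type~1 is handled by Lemma~\ref{reducedtensor}, combined with the Leibniz rule $\widehat Z(\widehat Z^K f \cdot \widehat Z^J W) = \widehat Z\widehat Z^K f \cdot \widehat Z^J W + \widehat Z^K f \cdot \widehat Z\widehat Z^J W$, which reproduces only type-1 contributions. Types~2 and~3 I would read geometrically as $F\otimes \beta + \beta \otimes F$ and $\rho\,\beta\otimes \beta$, with $F$ a $1$-form and $\rho$ a scalar defined by a $v$-integral, and $\beta$ an $(t,x,v)$-independent $1$-form. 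One then writes $\mathcal L_Z(F\otimes\beta) = (\mathcal L_Z F)\otimes\beta + F\otimes(\mathcal L_Z \beta)$: the horizontal piece $Z(F_\mu)$ of $\mathcal L_Z F$ is treated by Lemma~\ref{Zderivativeint}, which lets one pass $\widehat Z$ inside the integral and distribute it over $f$, $\widehat Z^M \Delta v$, $\widehat Z^J W$ and $w_\lambda$ via Leibniz, while the index-reshuffling piece $F_\lambda\,\partial_\mu Z^\lambda$ together with $\mathcal L_Z\beta = \partial_\nu Z^\mu\, \dr x^\nu$ only alters the integer tensorial coefficients. Since $\widehat Z(w_\mu)$ is itself a constant-coefficient combination of the $w_\sigma$'s (see the relations \eqref{translationw}--\eqref{boostw} and identity \eqref{eq:LemmaCom1}) and since each distributed derivative costs exactly one unit of multi-index length, the output remains within the three families and respects the bound $|J|+|K|+\dots \le |I|+1$.

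The main technical point to be careful about is ensuring that the tensorial building blocks appearing after repeated Lie differentiation stay confined to $w\otimes w$, $w_\lambda \,\dr x^\alpha \otimes \dr x^\beta$ (symmetrized), and $\dr x^\alpha \otimes \dr x^\beta$. This closure is exactly what makes the three listed families exhaustive: because $\mathcal L_Z$ sends both $w$ and $\dr x^\mu$ into constant-coefficient combinations of the same objects, and because the $v$-dependence appears only through $f$, $\Delta v$, $W$, and $w_\mu$ — all of which are compatible with Lemma~\ref{Zderivativeint} and Leibniz — no new tensorial structure can be generated. Consequently no genuine analytic obstacle arises; the argument is ultimately multi-index bookkeeping, the only real care being to track which derivative lands on which factor ($f$, $\Delta v$, $W$, or $w_\lambda$) at each inductive step.
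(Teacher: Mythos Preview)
Your proposal is correct and follows essentially the same approach as the paper. The paper's proof is terse: it writes out the $|I|=0$ algebraic decomposition via $v=w+\Delta v\,\dr t$ exactly as you do, and then says the general case ``follows by induction, applying several times Lemmas~\ref{Zderivativeint} and~\ref{reducedtensor}''; your write-up simply unpacks this induction explicitly, tracking how $\widehat Z$ acts on the factors $w_\lambda$, $\Delta v$, $\psi$, $W$ inside the integral and how the index-reshuffling pieces of $\mathcal L_Z$ only modify the integer coefficients.
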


\begin{proof}
The formula is satisfied for $|I|=0$ since $w^0=|v|$ and
$$v_{\mu} v_{\nu}\frac{\sqrt{ |\det g^{-1}|}}{g^{0 \alpha}v_{\alpha}} \hspace{2mm} = \hspace{2mm}  \frac{1}{w^0} \left( w_{\mu} w_{\nu} + \delta_{\mu}^0 w_{\nu} \Delta v + \delta_{\nu}^0w_{\mu} \Delta v+\delta_{\mu}^0\delta_{\nu}^0 | \Delta v |^2 \right) \frac{w^0\sqrt{ |\det g^{-1}|}}{g^{0 \alpha}v_{\alpha}}.$$
The result for arbitrary multi-indices $I$ follows by induction, applying several times Lemmas \ref{Zderivativeint} and \ref{reducedtensor}.
\end{proof}

Recall that the metric $g$ satisfies the decomposition \eqref{eq:decompog} and the condition \eqref{eq:conditiong}. 

\begin{prop} \label{lem_com_em-tensor}
Let $N \geq 6$ and $g$ be a metric such that \eqref{eq:conditiong} holds. Then, for all $Z^I \in \mathbb{K}^{|I|}$ such that $|I| \leq N$ and $V, W \in \mathcal{U}$, we have, if $\epsilon$ small enough,
\begin{eqnarray}
\nonumber \left| \mathcal{L}^I_Z(T[\f])_{VW} \right| & \lesssim & \sum_{|K| \leq |I|}  \int_{\R^3_v} | \widehat{Z}^K(\f)|  \frac{|w_V w_W|}{|v|} dv \\
& & + \sum_{|J|+|K| \leq |I|}\left( \frac{1}{1+t+r}+|\mathcal{L}_Z^J(h^1)|\right) \int_{\R^3_v}  |\widehat{Z}^K (\f)| |v| dv   .
\end{eqnarray}
\end{prop}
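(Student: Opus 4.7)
The plan is to start from the exact identity for $\mathcal{L}^I_Z (T[f])_{\mu\nu}$ given in Proposition \ref{ComEnergyMomentum} and estimate each of the three families of terms appearing there. Contracting that identity with $V^{\mu} W^{\nu}$ for $(V,W) \in \mathcal{U}^2$ and using that $\widetilde{T}[\phi]_{VW} = \int \phi \, w_V w_W / w^0 \, \mathrm dv$, we are left with weighted velocity integrals of $\widehat Z^K f$ multiplied by $\widehat Z^J$-derivatives of the scalar quantity
\[
Q \; := \; \frac{|v| \sqrt{|\det g^{-1}|}}{g^{0\alpha}v_\alpha}
\]
and possibly by factors $\widehat Z^M(\Delta v)$ or $\widehat Z^M(\Delta v)\widehat Z^L(\Delta v)$. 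Recall $w^0 = |v|$ and $|w_\mu| \leq |v|$, so $|w_V w_W|/|v| \leq |v|$.

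The first key auxiliary step is to establish, for any $\widehat Z^J \in \widehat{\mathbb P}_0^{|J|}$ with $|J| \leq N$, the pointwise bound
\[
|\widehat Z^J Q| \;\lesssim\; 1 + \sum_{1\leq |J'|\leq |J|} |\mathcal{L}_Z^{J'}(h^1)| + \frac{M}{1+t+r}.
\]
To see this, one expands $\sqrt{|\det g^{-1}|}$ and $(g^{0\alpha}v_\alpha)^{-1}$ in Taylor series of $H$ around $\eta^{-1}$. Under the assumption \eqref{eq:conditiong} and taking $\epsilon$ sufficiently small, both expansions converge and $|Q|\lesssim 1$ with $Q = -1 + \mathcal{O}(H)$. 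Since $Q$ depends on $v$ only through the ratios $v_\alpha/(g^{0\beta}v_\beta)$ which are homogeneous of degree $0$ in $v$, the action of $\widehat Z$ on $Q$ generates only factors of $\mathcal{L}_Z(H)$ (plus lower order); iterating and using Proposition \ref{Prop_H_to_h} together with the decomposition $h=h^0+h^1$ and the Schwarzschild estimate of Proposition \ref{decaySchwarzschild0} to absorb $\mathcal{L}_Z^{J'}(h^0)$ into the $M/(1+t+r)$ contribution gives the stated bound.

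The second auxiliary step is the bound for $\widehat Z^M(\Delta v)$ supplied by Lemma \ref{Deltav},
\[
|\widehat Z^M(\Delta v)| \;\lesssim\; \sum_{|J_1|+|J_2|\leq |M|} |w_L|\,|\mathcal{L}_Z^{J_1}(H)| + |v|\,|\mathcal{L}_Z^{J_1}(H)|_{\mathcal{L}\mathcal{T}} + |v|\,|\mathcal{L}_Z^{J_1}(H)|\,|\mathcal{L}_Z^{J_2}(H)|,
\]
of which we will only retain the crude consequence $|\widehat Z^M(\Delta v)|\lesssim |v|\bigl(|\mathcal{L}_Z^{J_1}(h^1)|+|\mathcal{L}_Z^{J_1}(h^0)|+1\bigr)$ after applying Proposition \ref{Prop_H_to_h} and exploiting $|\mathcal{L}_Z^{K}(h)|\lesssim \sqrt{\epsilon}$ for $|K|\leq N-3$ to handle the cubic contribution. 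Once more the Schwarzschild contribution yields a $M/(1+t+r)$ factor.

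Putting these together: for the first family in Proposition \ref{ComEnergyMomentum}, the term $|J|=0$ produces the main contribution $\int|\widehat Z^K f|\,|w_V w_W|/|v|\,\mathrm dv$ (which preserves the null structure), while the terms with $|J|\geq 1$ are controlled, using $|w_V w_W|/|v|\leq |v|$ and the first auxiliary bound, by $\bigl((1+t+r)^{-1}+|\mathcal{L}_Z^{J'}(h^1)|\bigr)\int|\widehat Z^K f|\,|v|\,\mathrm dv$. For the second and third families, we bound $|w_\lambda|\leq |v|$, $|Q|\lesssim 1$, and use the second auxiliary bound together with the bootstrap-type estimate $|\mathcal{L}_Z^{K}(h)|\lesssim \sqrt{\epsilon}$ (valid at low order) to turn the quadratic $\Delta v$ factor into a single factor of the form $|v|(|\mathcal{L}_Z^{J'}(h^1)|+M/(1+t+r))$; the resulting integrals fit into the second sum of the claim. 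The main technical care lies in step one, namely in giving a clean inductive formula for $\widehat Z^J Q$ that keeps track of the fact that each extra vector field generates at most one extra Lie derivative of $H$, rather than additional powers of $|v|$.
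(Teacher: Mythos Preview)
Your proposal is correct and follows essentially the same approach as the paper: start from Proposition~\ref{ComEnergyMomentum}, establish the key pointwise bound on $\widehat Z^J\bigl(|v|\sqrt{|\det g^{-1}|}/(g^{0\alpha}v_\alpha)\bigr)$, invoke Lemma~\ref{Deltav} for $\widehat Z^M(\Delta v)$, and combine. The only difference is in how the bound on $\widehat Z^J Q$ is justified: the paper proceeds more concretely by separately bounding $|\widehat Z^Q(g^{0\alpha}v_\alpha)|\lesssim |v|(1+\sum|\mathcal L_Z^J h|)$ and $|\widehat Z^K(\det g^{-1})|\lesssim 1+\sum|\mathcal L_Z^J h|$, establishing the lower bounds $|g^{0\alpha}v_\alpha|\geq|v|/2$ and $\sqrt{|\det g^{-1}|}\geq 1/2$, and then applying the Leibniz rule together with $|\widehat Z^Q(w^0)|\leq C_Q|v|$, whereas you appeal to a Taylor expansion and the degree-$0$ homogeneity of $Q$ in $v$; both arguments yield the same estimate.
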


\begin{proof}
Note first that according to Proposition \ref{Prop_H_to_h} and the assumptions \eqref{eq:conditiong},
\begin{equation}\label{eq:Comenergymomentum3}
 \forall \hspace{0.5mm} |J| \leq N, \hspace{0.5cm} |\mathcal{L}_Z^J(H)| \hspace{0.5mm} \lesssim \hspace{0.5mm} \sum_{|Q| \leq |J|} |\mathcal{L}_Z^Q(h)|, \qquad \quad \forall \hspace{0.5mm} |J| \leq N-3, \hspace{0.5cm} |\mathcal{L}_Z^J(h)| \hspace{0.5mm} \lesssim \hspace{0.5mm} \sqrt{\epsilon}.
\end{equation} 
Hence, using Lemma \ref{Deltav}, we have
\begin{equation}\label{eq:Comenergymomentum1}
\forall \hspace{0.5mm} |M| \leq N, \hspace{1cm}\left| \widehat{Z}^M ( \Delta v ) \right| \hspace{2mm} \lesssim \hspace{2mm} \sum_{|Q| \leq |M|} |\mathcal{L}_Z^Q(h)|.
\end{equation} 
Suppose that
\begin{equation}\label{eq:Comenergymomentum2}
 \forall \, |J| \leq N, \qquad \left| \widehat{Z}^J \left( \frac{w^0 \sqrt{|\det g^{-1}|}}{g^{0 \alpha} v_{\alpha}} \right) \right| \hspace{2mm} \lesssim \hspace{2mm} 1+\sum_{|Q| \leq |J| }  |\mathcal{L}_Z^Q(h)| 
\end{equation}
holds. Then, from Proposition \ref{ComEnergyMomentum} and \eqref{eq:Comenergymomentum1}-\eqref{eq:Comenergymomentum2}, it holds
\begin{eqnarray*}
 \left| \mathcal{L}^I_Z(T[f])_{VW} \right|  & \le & \sum_{|K| \leq |I|}  \, \widetilde{T} \left[ | \widehat{Z}^K(\f)|   \right]_{VW}+ \sum_{|J|+|K| \leq |I|} |\mathcal{L}_Z^J(h)|\int_{\R^3_v}  |\widehat{Z}^K (\f)| |v| dv    \\ \nonumber
& & \hspace{-7mm} +  \sum_{\substack{   |J|+|K|+|M| \leq |I| }} \sum_{|Q| \leq |M|} |\mathcal{L}_Z^Q(h)| \left(1+\sum_{|Q| \leq |J| }  |\mathcal{L}_Z^Q(h)| \right)\int_{\R^3_v}  |\widehat{Z}^K( \f)| |v| dv. 
\end{eqnarray*}

The result then follows from
$$ |\mathcal{L}_Z^J(h)| \hspace{1mm} \leq \hspace{1mm} |\mathcal{L}_Z^J(h^0)|+|\mathcal{L}_Z^J(h^1)| \hspace{1mm} \leq \hspace{1mm} \frac{\sqrt{\epsilon}}{1+t+r}+|\mathcal{L}_Z^J(h^1)|,$$
which holds for any $|J| \le N$ and follows from the decomposition $h=h^0+h^1$ and Proposition \ref{decaySchwarzschild0}. It then only remains to prove \eqref{eq:Comenergymomentum2}. For this, note first that, using $v=w+\Delta v \dr t$, $g^{-1}= \eta^{-1}+H$, \eqref{eq:Comenergymomentum1} and \eqref{eq:Comenergymomentum3},
$$\left| \widehat{Z}^Q \left( g^{0 \alpha} v_{\alpha} \right) \right| \hspace{2mm} \lesssim \hspace{2mm} \sum_{|Q_1|+|Q_2| \leq |Q|} |\mathcal{L}_Z^{Q_1}(g^{-1})|(|v|+|\widehat{Z}^{Q_2} (\Delta v) | ) \hspace{2mm} \lesssim \hspace{2mm} |v|+\sum_{|J| \leq |Q| } |v| |\mathcal{L}_Z^J(h)|. $$
Similarly, using that $\det(g^{-1})$ is a polynomial of degree $4$ in $g^{\mu \nu}$, $0 \leq \mu, \nu \leq 3$, we get
$$\left| \widehat{Z}^K (  \det g^{-1}  ) \right| \hspace{2mm} \lesssim \hspace{2mm} 1+\sum_{|J| \leq |K| }  |\mathcal{L}_Z^J(h)|.$$
Using $|H| \lesssim \sqrt{\epsilon}$, $|\Delta v| \lesssim \sqrt{\epsilon}$, $v=w+ \Delta v \dr t$, \eqref{eq:Comenergymomentum1}, and that the determinant is a multilinear mapping, we obtain, for $\epsilon$ small enough,
\begin{eqnarray}
\nonumber |g^{0 \alpha} v_{\alpha}| & \geq & |v|-(1+|H^{00}|)|\Delta v |- |H^{0 \alpha} w_{\alpha}| \hspace{2mm} \geq \hspace{2mm} |v|-C\sqrt{\epsilon} |v| \hspace{2mm} \geq \hspace{2mm} \frac{1}{2}|v| , \\ \nonumber
\sqrt{| \det g^{-1} |} & = & \left| \det \eta + \mathcal{O}(|H|) \right|^{\frac{1}{2}} \hspace{2mm} \geq \hspace{2mm} \frac{1}{2}.
\end{eqnarray}
The inequality \eqref{eq:Comenergymomentum2} then follows from the Leibniz rule, $|\widehat{Z}^Q(w^0)| \leq C_Q |v|$ and the last four estimates.
\end{proof}
\begin{rem}
Note that a better estimate could be obtained for the good components of $\mathcal{L}_Z^I(T[f])$ in Propositions \ref{ComEnergyMomentum} and \ref{lem_com_em-tensor} but the result stated in this section will be sufficient in order to close the energy estimates.
\end{rem}

\section{Energy estimates  for the wave equation}\label{sec7}

The aim of this section is to prove energy inequalities for solutions to wave equations in a curved background whose metric $g$ is close and converges to the Minkowski metric $\eta$. These results can be found in Section $6$ of \cite{LR10} and we give here, for completeness, an slightly different proof. More precisely, the goal is to control, for some $(a,b) \in \R_+^2 $ and a sufficiently regular function $\p$, energy norms 
\begin{eqnarray}
\nonumber \mathcal{E}^{a,b}[ \p ](t) & := & \int_{\Sigma_t} |\nabla_{t,x} \p |^2 \omega^b_a \mathrm dx+ \int_0^t \int_{\Sigma_{\tau} } \left( | L \p |^2+|\slashed{\nabla} \p |^2 \right) \frac{\omega_a^b}{1+|u|} \mathrm dx \mathrm d \tau , \\ \nonumber
\overline{\mathcal{E}}^{a,b}[ \p ](t) & := &  \int_{\Sigma_t} |\nabla_{t,x} \p |^2 \mathrm dx+ \mathcal{E}^{a,b}[ \p ](t),\\ \nonumber
\mathring{\mathcal{E}}^{a,b}[ \p ](t) & := & \int_{\Sigma_t} \frac{|\nabla_{t,x} \p |^2}{1+t+r} \omega^b_a \mathrm dx+ \int_0^t \int_{\Sigma_{\tau} } \frac{ | L \p |^2+|\slashed{\nabla} \p |^2 }{1+\tau+r} \cdot \frac{\omega_a^b}{1+|u|} \mathrm dx \mathrm d \tau , \\ \nonumber
\end{eqnarray}
\begin{rem}
The bulk integral
$$\mathfrak{K} \hspace{2mm} := \hspace{2mm} \int_0^t \int_{\Sigma_{\tau} } \left( | L \p |^2+|\slashed{\nabla} \p |^2 \right) \frac{\omega_a^b}{1+|u|} \mathrm dx \mathrm d \tau$$
will allow us to take advantage of the decay in $t-r$. Without an a priori good estimate on it, we would merely obtain that 
$$\mathfrak{K} \hspace{2mm} \leq \hspace{2mm} (1+t) \sup_{ \tau \in [0,t]} \int_{\Sigma_{\tau}} |\nabla_{t,x} \p |^2 \omega^b_a \mathrm dx \hspace{2mm} \leq \hspace{2mm} (1+t) \sup_{ \tau \in [0,t]} \mathcal{E}^{a,b}[ \p ](\tau).$$
Note however that the bulk integral provides only a control on the derivatives tangential to the light cone, i.e. $L$ and $\slashed{\nabla}$, and constitutes an important tool in order to exploit the null structure of the massless Einstein-Vlasov system. The problem when $a=0$ or $b=0$ is that the energy estimate derived below (see Proposition \ref{LRenergy}) will not allow us to control $\mathfrak{K}$. Moreover, if $a >0$, the norm $\int_{r \leq t} |\nabla_{t,x} \p |^2 \omega^b_a \mathrm dx$ is strictly weaker than $\int_{r \leq t} |\nabla_{t,x} \p |^2  \mathrm dx$, which explains why we introduce $\overline{\mathcal{E}}^{a,b}[ \p ]$.

We introduce the energy norm $\mathring{\mathcal{E}}^{a,b}[ \p]$ in order to avoid a strong growth at the top order which would force us to assume more decay on the initial data in order to close the energy estimates.
\end{rem}
We fix, for the remaining of this section, $T>0$ as well as a function $\p$ and a metric $g$, both defined on $[0,T[ \times \R^3$ and sufficiently regular. We also introduce $H := g^{-1}-\eta^{-1}$. In order to derive energy inequalities, we introduce the $(1,1)$-tensor field
 $$ {T[ \p ]^{\mu}}_{\nu} \hspace{2mm} := \hspace{2mm} g^{\mu \xi} \partial_{\xi} \p \partial_{\nu} \p -\frac{1}{2} {\eta^{\mu}}_{\nu} g^{\theta \sigma} \partial_{\theta} \p \partial_{\sigma} \p . $$
 
\begin{rem}
The tensor field $T[ \p]$ is the energy momentum tensor of $\p$, written as a $(1,1)$ tensor. However, we point out that since we lower indices with respect to the Minkowski metric, $T[\p]_{\mu \nu} \neq \partial_{\mu} \phi \partial_{\nu} \phi - \frac{1}{2} g_{\mu \nu} g^{\alpha \beta} \partial_{\alpha} \phi \partial_{\beta} \phi$. The $(1,1)$ tensor field $T[\p]$ appears to be well adapted to prove energy estimates for the norms that we are interested in.
\end{rem} 

Let us now compute the divergence of $T[\p]$. For this, it will be convenient to use the notation
\begin{equation*}
\overline{\omega}^b_a : = -\frac{1+|u|}{2} \underline{L} (\omega_a^b)  =  (1+|u|)\partial_r \omega_a^b = \left\{ \begin{array}{ll} \frac{a}{(1+|u|)^a}, & t \geq r,  \\ b(1+|u|)^b, & t < r. \end{array} \right.
\end{equation*}

\begin{lemma}\label{energymomentum}
We have, for all $a, b \in \R_+$,
\begin{eqnarray}
\nonumber \partial_{\mu} {T[ \p ]^{\mu}}_{\nu} & = & \widetilde{\square}_g \p \cdot \partial_{\nu} \p +\partial_{\mu} (H^{\mu \xi} ) \partial_{\xi} \p \cdot \partial_{\nu} \p - \frac{1}{2} \partial_{\nu} ( H^{\theta \sigma} ) \partial_{\theta} \p \cdot \partial_{\sigma} \p , \\ \nonumber
 \partial_{\mu} \left( {T[\p]^{\mu}}_{0} \w \right)  & = & \left( \widetilde{\square}_g \p \cdot \partial_t \p  +\partial_{\mu} (H^{\mu \xi} ) \partial_{\xi} \p \cdot  \partial_t \p  - \frac{1}{2} \partial_t ( H^{\theta \sigma} ) \partial_{\theta} \p \cdot \partial_{\sigma} \p \right) \w \\ \nonumber & & +\left( \frac{1}{2} |L \p |^2+ \frac{1}{2} |\slashed{\nabla} \p |^2-2 H^{\underline{L} \xi} \partial_{\xi} \p \cdot \partial_t \p +\frac{1}{2} H^{\theta \sigma} \partial_{\theta} \p \cdot \partial_{\sigma} \p \right)  \frac{\overline{\omega}^b_a}{1+|u|}, \\ \nonumber
 \partial_{\mu} \left(  \frac{{T[\p]^{\mu}}_{0} \w}{1+t+r} \right)  & = & \frac{\partial_{\mu} \! \left( {T[\p]^{\mu}}_{0} \w \right)}{1+t+r} \\ \nonumber & & +\!\left( \frac{1}{2} |\underline{L} \p |^2\!+\! \frac{1}{2} |\slashed{\nabla} \p |^2\!-\!2 H^{L \xi} \partial_{\xi} \p \cdot \partial_t \p +\frac{1}{2} H^{\theta \sigma} \partial_{\theta} \p \cdot \partial_{\sigma} \p \right) \! \frac{\w}{(1+t+r)^2}.
\end{eqnarray}
\end{lemma}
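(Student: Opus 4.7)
All three identities follow by direct differentiation; the only real work is a null-frame expansion of the combinations $T[\phi]^0_0\pm (x^i/r)T[\phi]^i_0$.

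For (i), the Leibniz rule applied to $\partial_\mu T[\phi]^\mu_\nu$ produces, among the terms containing second derivatives of $\phi$, the pair $g^{\mu\xi}\partial_\xi\phi\,\partial_\mu\partial_\nu\phi$ and $-g^{\theta\sigma}\partial_\theta\phi\,\partial_\nu\partial_\sigma\phi$, which cancel by the symmetry $g^{\mu\xi}=g^{\xi\mu}$ after relabeling. The remaining second-derivative contribution becomes $\widetilde\Box_g\phi\,\partial_\nu\phi$, and since $\eta^{-1}$ is constant the replacements $\partial g^{-1}=\partial H$ then yield (i).

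For (ii), write $\partial_\mu(T[\phi]^\mu_0\,\omega_a^b) = \partial_\mu(T[\phi]^\mu_0)\,\omega_a^b + T[\phi]^\mu_0\,\partial_\mu\omega_a^b$; the first piece is (i) at $\nu=0$ multiplied by $\omega_a^b$. Since $\omega_a^b=\omega_a^b(u)$ with $u=t-r$, a piecewise check on $\{r\le t\}$ and $\{r>t\}$ yields $(\omega_a^b)'(u) = -\overline\omega_a^b/(1+|u|)$, so with $\partial_t u=1$, $\partial_i u=-x^i/r$ the second piece equals $-\overline\omega_a^b(1+|u|)^{-1}\big(T[\phi]^0_0 - (x^i/r)T[\phi]^i_0\big)$. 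Expanding $g^{-1}=\eta^{-1}+H$, using $\eta^{0\xi}\partial_\xi=-\partial_t$, $\eta^{i\xi}\partial_\xi=\partial_i$, together with $\partial_t=\tfrac12(L+\underline L)$ and the Minkowski identity $-\tfrac12\eta^{\theta\sigma}\partial_\theta\phi\,\partial_\sigma\phi = \tfrac12 L\phi\,\underline L\phi - \tfrac12|\slashed\nabla\phi|^2$, the Minkowski part of $T[\phi]^0_0-(x^i/r)T[\phi]^i_0$ collapses to $-\tfrac12|L\phi|^2 - \tfrac12|\slashed\nabla\phi|^2$, and the $H$-part becomes $(H^{0\xi}-(x^i/r)H^{i\xi})\partial_\xi\phi\,\partial_t\phi - \tfrac12 H^{\theta\sigma}\partial_\theta\phi\,\partial_\sigma\phi$; in the null-frame convention ($H = H^{XY}X\otimes Y$ for $X,Y\in\{L,\underline L,e_A\}$, normalized via $\eta(L,\underline L)=-2$) one has $H^{\underline L\xi}=\tfrac12(H^{0\xi}-(x^i/r)H^{i\xi})$, so this rewrites as $2H^{\underline L\xi}\partial_\xi\phi\,\partial_t\phi - \tfrac12 H^{\theta\sigma}\partial_\theta\phi\,\partial_\sigma\phi$, giving (ii) after collecting signs.

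For (iii), one further Leibniz step produces the extra term $-T[\phi]^\mu_0\,\omega_a^b\,\partial_\mu(t+r)/(1+t+r)^2$; since $\partial_t(t+r)=1$ and $\partial_i(t+r)=x^i/r$, this reduces to evaluating $T[\phi]^0_0+(x^i/r)T[\phi]^i_0$. The calculation is strictly parallel to the one in (ii) with the roles of $L$ and $\underline L$ exchanged, producing the Minkowski contribution $-\tfrac12|\underline L\phi|^2 - \tfrac12|\slashed\nabla\phi|^2$ and the linear-in-$H$ contribution $2H^{L\xi}\partial_\xi\phi\,\partial_t\phi$, which completes (iii). The only care needed throughout is the null-frame bookkeeping for the contractions $H^{L\xi}$ and $H^{\underline L\xi}$ — in particular the factor $\tfrac12$ coming from $\eta(L,\underline L)=-2$; there is no serious analytic obstacle.
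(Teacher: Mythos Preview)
Your proof is correct and follows essentially the same route as the paper. The only cosmetic difference is that the paper computes $T[\phi]^{\mu}_{\ 0}\,\partial_\mu\omega^b_a$ directly in the null frame (observing $L\omega^b_a=\slashed\nabla\omega^b_a=0$ so that only the $\underline L$-component $T[\phi]^{\underline L}_{\ 0}$ contributes), whereas you compute $\partial_\mu\omega^b_a$ in Cartesian coordinates and then recognize $T^0_{\ 0}-(x^i/r)T^i_{\ 0}=2T^{\underline L}_{\ 0}$; the two are the same calculation.
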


\begin{rem}
In general, $T_{\mu \nu}[\p]$ is not symmetric.
\end{rem}

\begin{proof}
The first identity follows from straightforward computations,
\begin{eqnarray}
\nonumber \partial_{\mu} {T[\p ]^{\mu}}_{\nu} & = & \partial_{\mu} (g^{\mu \xi} ) \partial_{\xi} \phi \partial_{\nu} \p +g^{\mu \xi} \partial_{\mu} \partial_{\xi} \p \partial_{\nu} \p +g^{\mu \xi}  \partial_{\xi} \p \partial_{\mu} \partial_{\nu} \p \\ \nonumber
& & - \frac{1}{2} \partial_{\nu} ( g^{\theta \sigma} ) \partial_{\theta} \p \partial_{\sigma} \p -g^{\theta \sigma} \partial_{\nu} \partial_{\theta} \p \partial_{\sigma} \p \\ \nonumber
& = & \widetilde{\square}_g \p \cdot \partial_{\nu} \p +\partial_{\mu} (H^{\mu \xi} ) \partial_{\xi} \p \partial_{\nu} \p - \frac{1}{2} \partial_{\nu} ( H^{\theta \sigma} ) \partial_{\theta} \p \partial_{\sigma} \p .
\end{eqnarray}
For the second one, start by noticing, as $L(\w)=0$ and $\slashed{\nabla} ( \w)=0$, that 
$$ {T[ \p ]^{\mu}}_{0} \partial_{\mu} \w \hspace{2mm} = \hspace{2mm} {T[ \p ]^{\underline{L}}}_{0} \underline{L} ( \w ) \hspace{2mm} = \hspace{2mm} -2\frac{\overline{\omega}^b_a}{1+|u|} \left( g^{\underline{L} \xi} \partial_{\xi} \p \partial_t \p -\frac{1}{2} {\eta^{\underline{L}}}_{0} g^{\theta \sigma} \partial_{\theta} \p \partial_{\sigma} \p \right).   $$
Then, using the first identity and ${\eta^{\underline{L}}}_0=\frac{1}{2}$, one gets,
\begin{eqnarray}
\nonumber \partial_{\mu} \left(  {T[ \p ]^{\mu}}_{0} \w \right) & = & \partial_{\mu} \left(  {T[ \p ]^{\mu}}_{0} \right) \w + {T[ \p ]^{\mu}}_{0} \partial_{\mu} \w \\ \nonumber
& = & \widetilde{\square}_g \p \cdot \partial_t \p \w +\partial_{\mu} (H^{\mu \xi} ) \partial_{\xi} \p \partial_t \p \w - \frac{1}{2} \partial_t ( H^{\theta \sigma} ) \partial_{\theta} \p \partial_{\sigma} \p \w \\ \nonumber
& & -2\left( g^{\underline{L} \xi} \partial_{\xi} \p \partial_t \p -\frac{1}{4} g^{\theta \sigma} \partial_{\theta} \p \partial_{\sigma} \p \right)  \frac{\overline{\omega}^b_a}{1+|u|} .
\end{eqnarray}
It remains to write $g^{-1}=\eta^{-1}+H$ and to note that
\begin{eqnarray}
\nonumber 2\left( \eta^{\underline{L} \xi} \partial_{\xi} \p \partial_t \p -\frac{1}{4} \eta^{\theta \sigma} \partial_{\theta} \p \partial_{\sigma} \p \right) & = & \eta^{\underline{L} L} L \p (L \p +\underline{L} \p )- \eta^{\underline{L} L} \underline{L} \p L \p - \frac{1}{2} |\slashed{\nabla} \p |^2 \\ \nonumber & = & -\frac{1}{2} |L \p |^2- \frac{1}{2} |\slashed{\nabla} \p |^2.
\end{eqnarray}
Finally, as  $L(1+t+r)=2$ and $\underline{L} (1+t+r)=\slashed{\nabla} (1+t+r) =0$, we have
$$ \partial_{\mu} \left( {T[\p]^{\mu}}_{0} \frac{ \w}{1+t+r} \right) \hspace{2mm} = \hspace{2mm} \frac{ \partial_{\mu} \left( {T[\p]^{\mu}}_{0}  \w \right)}{1+t+r} -2{T[ \p ]^L}_{0} \frac{\w}{(1+t+r)^2}. $$
Then, writing again $g^{-1}=\eta^{-1}+H$ and since ${\eta^L}_{0}=\frac{1}{2}$, we obtain  
$$ -2{T[ \p ]^L}_{0} \hspace{2mm} = \hspace{2mm}  \frac{1}{2} |\underline{L} \p |^2+ \frac{1}{2} |\slashed{\nabla} \p |^2-2 H^{L \xi} \partial_{\xi} \p \cdot \partial_t \p +\frac{1}{2} H^{\theta \sigma} \partial_{\theta} \p \cdot \partial_{\sigma} \p ,$$
which gives the result.
\end{proof}

We are now ready to provide an alternative proof of Proposition $6.2$ of \cite{LR10}.

\begin{prop}\label{LRenergy}
Let $a, b \in \R_+^*$, $C_H >0$ and suppose that $H$ satisfies
$$ \frac{|H|}{1+|u|}+  |\nabla H |  \leq  \frac{C_H \sqrt{\epsilon}}{(1+t+r)^{\frac{1}{2}}(1+|u|)^{\frac{1+a}{2}}} , \hspace{0.9cm} \frac{|H_{LL}|}{1+|u|}+ |\nabla H |_{\mathcal{L} \mathcal{L}}+|\overline{\nabla} H|  \leq   \frac{C_H \sqrt{\epsilon}}{1+t+r} .$$
Then, there exists a constant $\underline{C}:=C_0\frac{1+a+b}{\min(1,a,b)}$, where $C_0>0$ is an absolute constant, such that, if $\epsilon$ is sufficiently small\footnote{One can check that $\epsilon$ needs to satisfy a condition of the form $C_1C_H \sqrt{\epsilon}(1+a+b) \leq \frac{1}{4} \min(1,a,b)$, for a certain constant $C_1 >0$.}, we have for all $t \in [0,T[$,
\begin{align}\label{energy0}
 \mathcal{E}^{a,b}[ \p ](t) &  \leq   \underline{C} \mathcal{E}^{a,b}[ \p ](0)+\underline{C}C_H \sqrt{\epsilon} \! \int_0^t \! \frac{\mathcal{E}^{a,b} [ \p ] (\tau)}{1+\tau }\mathrm d \tau + \underline{C} \! \int_0^t \! \int_{\Sigma_{ \tau} }  \left| \widetilde{\square}_g \p  \cdot \partial_t \p  \right| \omega^b_a \mathrm dx \mathrm d \tau, \\ \label{energy01}
\overline{\mathcal{E}}^{a,b}[ \p ](t) & \leq  \underline{C} \overline{\mathcal{E}}^{a,b}[ \p ](0)+\underline{C}C_H \sqrt{\epsilon} \! \int_0^t \! \frac{\overline{\mathcal{E}}^{a,b} [ \p ] (\tau)}{1+\tau }\mathrm d \tau + \underline{C} \! \int_0^t \! \int_{\Sigma_{ \tau} }  \left| \widetilde{\square}_g \p  \cdot \partial_t \p  \right| \omega^b_0 \mathrm dx \mathrm d \tau . 
\end{align}
Finally, there also holds
\begin{equation}\label{energy02}
\mathring{\mathcal{E}}^{a,b}[ \p ](t) \leq  \underline{C} \mathring{\mathcal{E}}^{a,b}[ \p ](0)+\underline{C}C_H \sqrt{\epsilon} \! \int_0^t \! \frac{\mathring{\mathcal{E}}^{a,b} [ \p ] (\tau)}{1+\tau }\mathrm d \tau + \underline{C} \! \int_0^t \! \int_{\Sigma_{ \tau} }  \frac{\left| \widetilde{\square}_g \p  \cdot \partial_t \p  \right|}{1+\tau+r} \omega^b_a \mathrm dx \mathrm d \tau . 
\end{equation}
\end{prop}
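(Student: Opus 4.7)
The plan is to run a multiplier argument with $\partial_t$ and the weight $\omega_a^b$, built on the pointwise identities of Lemma \ref{energymomentum}. First, I would integrate $\partial_\mu(T[\phi]^\mu_{0}\,\omega_a^b)$ over the slab $[0,t]\times\mathbb R^3$ (working initially with smooth, spatially compactly supported $\phi$ and extending by density, so that no boundary term arises at spatial infinity). The boundary contributions at $\tau=0$ and $\tau=t$ give $\int_{\Sigma_\tau}T[\phi]^{0}{}_{0}\,\omega_a^b\,dx$. Writing $g^{-1}=\eta^{-1}+H$ and using $|H|\lesssim C_H\sqrt\epsilon$, a direct null-frame computation shows
$$-2\,T[\phi]^{0}{}_{0} \;=\; |\nabla_{t,x}\phi|^{2}+O\bigl(|H|\,|\nabla\phi|^{2}\bigr),$$
so for $\epsilon$ small enough the boundary slice at time $t$ controls $\tfrac12\int_{\Sigma_t}|\nabla_{t,x}\phi|^{2}\omega_a^b\,dx$, up to a term of size $O(C_H\sqrt\epsilon)$ that can be absorbed.

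Next, the spacetime bulk produced by Lemma \ref{energymomentum} contains the favourable term $\tfrac12(|L\phi|^2+|\slashed\nabla\phi|^2)\,\frac{\overline\omega_a^b}{1+|u|}$. The elementary inequality $\overline\omega_a^b\geq \min(a,b)\,\omega_a^b$ (valid since $a,b>0$) converts this into $\tfrac{\min(a,b)}{2}(|L\phi|^2+|\slashed\nabla\phi|^2)\,\frac{\omega_a^b}{1+|u|}$, which is exactly the bulk part of $\mathcal E^{a,b}[\phi]$. Everything else on the right-hand side of Lemma \ref{energymomentum} is either the source term $\widetilde\square_g\phi\cdot\partial_t\phi\,\omega_a^b$ (which appears directly in the statement) or one of the four $H$-error terms
$$\partial_\mu(H^{\mu\xi})\partial_\xi\phi\,\partial_t\phi\,\omega_a^b,\qquad \partial_t(H^{\theta\sigma})\partial_\theta\phi\,\partial_\sigma\phi\,\omega_a^b,$$
$$H^{\underline L\xi}\partial_\xi\phi\,\partial_t\phi\,\tfrac{\overline\omega_a^b}{1+|u|},\qquad H^{\theta\sigma}\partial_\theta\phi\,\partial_\sigma\phi\,\tfrac{\overline\omega_a^b}{1+|u|}.$$

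The core of the argument is the treatment of these error terms, tailored to the two pointwise hypotheses. For the first two (of $\nabla H$ type), I would apply Cauchy--Schwarz and the bound $|\nabla H|\leq C_H\sqrt\epsilon(1+t+r)^{-1/2}(1+|u|)^{-(1+a)/2}$. The product $|\nabla H|\,\omega_a^b$ combines with itself to give a factor $\omega_a^b/(1+\tau)$ on the interior region (and a harmless factor in the exterior thanks to the decaying $(1+|u|)^{-(1+a)/2}$), producing a contribution majorized by $C_H\sqrt\epsilon\,(1+\tau)^{-1}\,\mathcal E^{a,b}[\phi](\tau)$. The components of $\nabla H$ forced to contract with two copies of $\underline L\phi$ are $\nabla H_{LL}$, for which the improved bound $|\nabla H|_{\mathcal L\mathcal L}\leq C_H\sqrt\epsilon(1+t+r)^{-1}$ gives the same conclusion without loss. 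For the undifferentiated $H$-terms, I would expand $H^{\theta\sigma}\partial_\theta\phi\,\partial_\sigma\phi$ in the null frame: every component except $H^{\underline L\underline L}\,|\underline L\phi|^2$ pairs with at least one of $L\phi$ or $\slashed\nabla\phi$, and is therefore absorbed into the good bulk $\tfrac12(|L\phi|^2+|\slashed\nabla\phi|^2)\frac{\overline\omega_a^b}{1+|u|}$ via Young's inequality, using only $|H|\lesssim C_H\sqrt\epsilon$. The single dangerous term $H^{\underline L\underline L}|\underline L\phi|^{2}=H_{LL}|\underline L\phi|^{2}$ is controlled using the improved hypothesis $|H_{LL}|\leq C_H\sqrt\epsilon\,(1+|u|)/(1+t+r)$, which converts it into $\frac{C_H\sqrt\epsilon}{1+\tau}|\underline L\phi|^{2}\omega_a^b$.

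I expect the main obstacle to lie precisely here: obtaining a clean Gr\"onwall factor $(1+\tau)^{-1}$ --- and not just some uncontrolled power of $(1+\tau)$ --- in the interior region $r\leq t$, where $\omega_a^b=(1+|u|)^{-a}$ can be large and no $(1+|u|)$-weight is available to help. This is exactly what the specific powers $\tfrac12$ and $\tfrac{1+a}{2}$ in the hypotheses are designed to accommodate, and verifying that the different null-frame decompositions really combine into a single $\omega_a^b/(1+\tau)$ bound without leftover logarithmic losses is the delicate bookkeeping step. Once this is done, collecting every contribution yields
$$\mathcal E^{a,b}[\phi](t)\leq \underline C\,\mathcal E^{a,b}[\phi](0)+\underline C\,C_H\sqrt\epsilon\int_0^t\frac{\mathcal E^{a,b}[\phi](\tau)}{1+\tau}\,d\tau+\underline C\int_0^t\!\!\int_{\Sigma_\tau}\!|\widetilde\square_g\phi\cdot\partial_t\phi|\,\omega_a^b\,dx\,d\tau,$$
which is \eqref{energy0}. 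For \eqref{energy01}, I would additionally run the identical argument with the trivial weight $\omega\equiv 1$; this produces no bulk but controls the unweighted $\int|\nabla_{t,x}\phi|^2 dx$, and its source integral is dominated by the weighted one since $1\leq \omega_a^b$ in the interior and $1\leq \omega_a^b$ in the exterior whenever $b\geq 0$. Finally, for \eqref{energy02} I would integrate the third identity of Lemma \ref{energymomentum}; the additional factor $(1+t+r)^{-1}$ passes through the whole argument, and the new bulk piece with weight $\omega_a^b/(1+t+r)^2$ is estimated exactly as above (the additional $(1+t+r)^{-1}$ improving rather than worsening every single error bound).
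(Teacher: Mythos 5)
Your proposal follows the same route as the paper: integrate the divergence identity of Lemma~\ref{energymomentum} over a slab, extract the favourable bulk via $\overline\omega_a^b\geq\min(a,b)\,\omega_a^b$, then estimate the four $H$-error terms by null-frame decomposition exactly as you describe, using $|\nabla H|_{\mathcal L\mathcal L}$ and $|H_{LL}|$ to neutralise the only dangerous $\underline L\phi\cdot\underline L\phi$ pairings and the improved decay of $|\nabla H|$ and $|\overline\nabla H|$ to produce the $\omega_a^b/(1+\tau)$ and $\omega_a^b/(1+|u|)$ factors. This is precisely the content of the paper's estimates \eqref{eq:aux00}--\eqref{eq:004} and \eqref{eq:ok1}--\eqref{eq:ok3}, and \eqref{energy02} is indeed the same argument applied to the third identity with the extra harmless $(1+t+r)^{-1}$ factor.

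There are two small errors of fact in your discussion of \eqref{energy01} that are worth correcting. First, in the interior region $r\leq t$ one has $\omega_a^b=(1+|u|)^{-a}\leq 1$ for $a>0$, so $\omega_a^b$ is never ``large'' there, and in particular $1\leq\omega_a^b$ is \emph{false} in the interior. Consequently the unweighted source term coming from the $\omega\equiv 1$ run is \emph{not} dominated by $\int|\widetilde\square_g\phi\cdot\partial_t\phi|\,\omega_a^b$; this is exactly the reason the source weight in \eqref{energy01} is $\omega_0^b$ (which equals $1$ in the interior and $(1+|u|)^b$ outside, hence dominates both $1$ and $\omega_a^b$). Second, the $\omega\equiv 1$ identity produces no favourable bulk of its own, so the $H$-error terms in that identity must be absorbed into the bulk integral of $\mathcal E^{a,b}[\phi]$; this does work, but it means the right-hand side of the unweighted estimate picks up a term $C\sqrt\epsilon\,\mathcal E^{a,b}[\phi](t)$, and one truly needs to sum with \eqref{energy0} before absorbing, rather than simply observing domination of the source weights. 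These are local bookkeeping slips; your overall strategy is the paper's.
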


\begin{proof}
In order to lighten the proof, we will not keep track of the constant $C_H$, which appears merely when $\sqrt{\epsilon}$ does. The (Euclidean) divergence theorem yields
$$
\int_{\Sigma_t} -{T[ \p ]^{0}}_0 \w \mathrm dx \hspace{2mm} = \hspace{2mm} \int_{\Sigma_0} -{T[ \p ]^{0}}_0 \w \mathrm dx-\int_0^t \int_{\Sigma_s} \partial_{\mu} \left( {T[ \p ]^{\mu}}_0 \w \right) \mathrm dx ds.
$$
Now, note that, for $t \in [0,T[$,
$$ 
-{T[ \p ]^{0}}_0 \hspace{2mm} = \hspace{2mm}  -g^{0 \xi} \partial_{\xi} \p \partial_t \p +\frac{1}{2} {\eta^0}_0 g^{\theta \sigma} \partial_{\theta} \p \partial_{\sigma} \p \hspace{2mm} = \hspace{2mm} \frac{1}{2} |\nabla_{t,x} \p |^2 -H^{0 \xi} \partial_{\xi} \p \partial_t \p +\frac{1}{2} H^{\theta \sigma} \partial_{\theta} \p \partial_{\sigma} \p. 
$$
As $|H| \lesssim \sqrt{\epsilon}$, we have, if $\epsilon$ is sufficiently small,
\begin{equation}\label{eq:energy1} \frac{1}{4} |\nabla_{t,x} \p |^2 \hspace{2mm} \leq \hspace{2mm} -{T[u]^{0}}_0 \hspace{2mm} \leq \hspace{2mm} \frac{3}{4} |\nabla_{t,x} \p |^2.
\end{equation}
The first inequality \eqref{energy0} then follows, if $\epsilon$ is sufficiently small\footnote{This condition allows us to absorb the terms of the form $\widehat{C} \sqrt{\epsilon} \mathcal{E}^{a,b} [ \p ] (t)$ in the left-hand side of the energy inequality.}, from the second equality of Lemma \ref{energymomentum} as well as
\begin{eqnarray}
 \nonumber \int_0^t  \int_{\Sigma_{\tau}} \left( \frac{1}{2} |L \p |^2+ \frac{1}{2} |\slashed{\nabla} \p |^2\right)  \frac{\overline{\omega}^b_a}{1+|u|} \mathrm dx \mathrm d \tau & \geq & \\
\frac{\min (a,b)}{2} \int_0^t \int_{\Sigma_{\tau} }&& \hspace{-1cm}\left( | L \p |^2+|\slashed{\nabla} \p |^2 \right) \frac{\omega_a^b}{1+|u|} \mathrm dx \mathrm d \tau,  \label{eq:aux00} \\
\nonumber \int_0^t  \int_{\Sigma_{\tau}} \left| H^{\underline{L} \xi} \partial_{\xi} \p \cdot \partial_t \p -\frac{1}{4} H^{\theta \sigma} \partial_{\theta} \p \cdot \partial_{\sigma} \p \right|  \frac{\overline{\omega}^b_a}{1+|u|} \mathrm dx\mathrm d \tau & \lesssim & \hspace{-0.5mm} \sqrt{\epsilon} (a+b)\mathcal{E}^{a,b} [ \p ] (t) \\ 
& & \hspace{-0.5mm} +  \sqrt{\epsilon} (a+b)  \int_0^t  \frac{\mathcal{E}^{a,b} [ \p ] ( \tau )}{1+\tau }\mathrm d \tau , \label{eq:003} \\ \nonumber
\int_0^t \int_{\Sigma_{ \tau} }  \left| \partial_{\mu} (H^{\mu \xi} ) \partial_{\xi} \p \cdot  \partial_t \p - \frac{1}{2} \partial_t ( H^{\theta \sigma} ) \partial_{\theta} \p \cdot \partial_{\sigma} \p \right| \w \mathrm dx \mathrm d \tau & \lesssim & \hspace{-0.5mm} \sqrt{\epsilon} \mathcal{E}^{a,b} [ \p ] (t)  \\ & & \hspace{-0.5mm} + \sqrt{\epsilon}  \int_0^t \frac{\mathcal{E}^{a,b} [ \p ] ( \tau )}{1+ \tau }\mathrm d \tau .  \label{eq:004}
\end{eqnarray}
In order to prove \eqref{eq:003}, start by noticing that
\begin{eqnarray}
\nonumber  2 H^{\underline{L} \xi} \partial_{\xi} \p \cdot \partial_t \p & = &  H^{\underline{L} \hspace{0.1mm} \underline{L}} \underline{L} \p \cdot ( L \p + \underline{L} \p )+ H^{\underline{L} L}L \p \cdot ( L \p + \underline{L} \p )+ H^{\underline{L} A} e_A \p \cdot ( L \p + \underline{L} \p ), \\ \nonumber
\frac{1}{2} H^{\theta \sigma} \partial_{\theta} \p \cdot \partial_{\sigma} \p & = & \frac{1}{2} H^{AB} e_A \p e_B \p + \frac{1}{2} H^{LL} |L \p |^2+\frac{1}{2} H^{\underline{L} \hspace{0.1mm} \underline{L}} | \underline{L} \p |^2+H^{L \underline{L}} L \p \underline{L} \p \\ \nonumber
& & +H^{L A} L \p e_A \p + H^{\underline{L} A } \underline{L} \p e_A \p,
\end{eqnarray}
which implies
$$ \left| H^{\underline{L} \xi} \partial_{\xi} \p \cdot \partial_t \p -\frac{1}{4} H^{\theta \sigma} \partial_{\theta} \p \cdot \partial_{\sigma} \p \right| \hspace{0.5mm} \lesssim \hspace{0.5mm} |H_{LL}| |\nabla \p |^2+|H|  |\overline{\nabla} \p |^2 \hspace{0.5mm} \lesssim \hspace{0.5mm} \sqrt{\epsilon} \frac{1+|u|}{1+t+r} |\nabla \p |^2+ \sqrt{\epsilon}  |\overline{\nabla} \p |^2. $$
This, together with $ \int_0^t \int_{\Sigma_{\tau} }  |\overline{\nabla} \p |^2 \frac{\overline{\omega}^b_a}{1+|u|} \mathrm dx \mathrm d \tau \leq (a+b) \mathcal{E}^{a,b}[ \p] (t)$ and
$$ \int_0^t \int_{\Sigma_{\tau}} \frac{1+|u|}{1+\tau+r} |\nabla \p |^2 \frac{\overline{\omega}^b_a}{1+|u|}  \mathrm dx \mathrm d \tau  \leq   \int_0^t \frac{a+b}{1+\tau } \int_{\Sigma_{\tau} } |\nabla \p |^2 \w  \mathrm dx \mathrm d \tau  \leq (a+b) \int_0^t \frac{ \mathcal{E}^{a,b}[ \p] (s) }{1+ \tau } \mathrm d \tau $$
finally gives us \eqref{eq:003}. Now, remark that
\begin{eqnarray}
\nonumber |\partial_{\mu} (H^{\mu \xi} ) \partial_{\xi} \p \partial_t \phi | & \lesssim & ( |\nabla H|_{\mathcal{L} \mathcal{L}} +|\overline{ \nabla} H |)|\nabla \p |^2+| \nabla H| |\overline{ \nabla} \p | | \partial_t \phi| \\ & \lesssim &   \frac{\sqrt{\epsilon}| \nabla \p|^2}{1+t+r}+\frac{\sqrt{\epsilon} |\overline{ \nabla} \p|^2}{(1+|u|)^{1+a}}, \label{eq:ok1} \\ 
\nonumber \left| \partial_t ( H^{\theta \sigma} ) \partial_{\theta} \p \cdot \partial_{\sigma} \p \right| & \lesssim & |\nabla H|_{\mathcal{L} \mathcal{L}} | \underline{L} \p |^2+ |\nabla H| |\overline{ \nabla} \p | | \nabla \p| \\ 
 & \lesssim & \frac{\sqrt{\epsilon} | \nabla \p|^2 }{1+t+r} +\frac{\sqrt{\epsilon} |\overline{ \nabla} \p|^2}{(1+|u|)^{1+a}} . \label{eq:ok2}
\end{eqnarray}
The estimate \eqref{eq:004} is then implied by 
\begin{eqnarray}
\nonumber \int_0^t \int_{\Sigma_{ \tau} }  \frac{\sqrt{\epsilon}}{1+t+r} | \nabla \p|^2 \omega^b_a \mathrm dx \mathrm d \tau & \lesssim & \int_0^t \frac{\sqrt{\epsilon}}{1+\tau} \int_{\Sigma_{\tau}} |\nabla \phi|^2 \omega^b_a \mathrm dx \mathrm d \tau \\
& \leq & \sqrt{\epsilon} \int_0^t \frac{\mathcal{E}^{a,b}[ \p] ( \tau )}{1 + \tau} \mathrm d \tau, \label{eq:ok3}
\end{eqnarray}
and 
$$ \int_0^t \int_{\Sigma_{ \tau} } \frac{\sqrt{\epsilon}}{(1+|u|)^{1+a}} |\overline{ \nabla} \p |^2 \omega^b_a \mathrm dx \mathrm d \tau \hspace{2mm} \leq \hspace{2mm} \sqrt{\epsilon} \int_0^t \int_{\Sigma_{ \tau} }  |\overline{ \nabla} \p |^2 \frac{\omega^b_a}{1+|u|} \mathrm dx \mathrm d \tau \hspace{2mm} \leq \hspace{2mm} \sqrt{\epsilon} \mathcal{E}^{a,b}[ \p] ( t ). $$
We now turn on the second inequality \eqref{energy01}, which can be obtained by taking the sum of \eqref{energy0} and\footnote{One can verify that the constant $\overline{C}$ depends only on $C_H$.}
$$ \mathcal{E}^{0,0}[ \p ](t)  \leq   3\mathcal{E}^{0,0}[ \p ](0)+\overline{C} \sqrt{\epsilon} \mathcal{E}^{a,b}[ \p] ( t ) +\overline{C} \sqrt{\epsilon} \int_0^t \frac{\mathcal{E}^{0,0} [ \p ] (\tau)}{1+\tau }\mathrm d \tau + 4\int_0^t \int_{\Sigma_{ \tau} }  \left| \widetilde{\square}_g \p  \cdot \partial_t \p  \right|  \mathrm dx \mathrm d \tau.$$
To prove this estimate, apply the Euclidean divergence theorem to ${T [ \p ]^{\mu}}_{0}$ and follow the proof of \eqref{energy0}. The identity \eqref{eq:energy1} does not depend of $(a,b)$ and \eqref{eq:aux00}-\eqref{eq:003} are trivial for $(a,b)=(0,0)$ as $\overline{\omega}^0_0=0$. It then remains to bound sufficiently well the left-hand side of \eqref{eq:004} when $(a,b)=(0,0)$. For this note that \eqref{eq:ok1}, \eqref{eq:ok2} and \eqref{eq:ok3} still hold in that context and that
$$ \int_0^t \int_{\Sigma_{ \tau} } \frac{\sqrt{\epsilon}}{(1+|u|)^{1+a}} |\overline{ \nabla} \p |^2  \mathrm dx \mathrm d \tau \hspace{2mm} \lesssim \hspace{2mm} \sqrt{\epsilon} \int_0^t \int_{\Sigma_{ \tau} }   |\overline{ \nabla} \p |^2 \frac{\omega^0_a}{1+|u|} \mathrm dx \mathrm d \tau \hspace{2mm} \leq \hspace{2mm} \sqrt{\epsilon} \mathcal{E}^{a,b}[ \p] ( t ).$$
Finally, \eqref{energy02} can be proved similarly as \eqref{energy0} by applying the divergence theorem to ${T [ \p ]^{\mu}}_{0} \frac{\w}{1+t+r}$ (see Lemma \ref{energymomentum}). Apart from the fact that each integral contains an extra $|1+t+r|^{-1}$ (or $|1+\tau+r|^{-1}$) weight, the only significant difference is that we need to control
$$ -\int_0^t \int_{\Sigma_{\tau}} \left( \frac{1}{2} |\underline{L} \p |^2\!+\! \frac{1}{2} |\slashed{\nabla} \p |^2\!-\!2 H^{L \xi} \partial_{\xi} \p \cdot \partial_t \p +\frac{1}{2} H^{\theta \sigma} \partial_{\theta} \p \cdot \partial_{\sigma} \p \right) \! \frac{\w}{(1+\tau+r)^2} \dr x \dr \tau.$$
In view of sign considerations and since $|H| \lesssim \sqrt{\epsilon}$, we can bound it by
$$\int_0^t \frac{\sqrt{\epsilon}}{1+\tau} \int_{\Sigma_{\tau}} |\nabla \p|^2 \frac{\w}{1+\tau+r} \dr x \dr \tau \hspace{1mm} \leq \hspace{1mm} \sqrt{\epsilon} \int_0^t \frac{\mathring{\mathcal{E}}^{a,b}[\p](\tau)}{1+\tau} \dr \tau,$$
which concludes the proof.
\end{proof}

\section{$L^1$-Energy estimates for Vlasov fields}\label{sec8}

Let $\psi$ be a sufficiently regular function defined on the co-mass shell $\mathcal P$ and recall the Vlasov $L^1$-energy 
\begin{align}
\mathbb{E}^{a,b}[\psi](t) &= \int_{\mathbb R_x^3} \int_{\mathbb R_v^3} \left|\psi(t,x,v)\right| |v| \mathrm dv \, \omega_a^b \mathrm dx \\
&\quad + \int_0^t \int_{\mathbb R_x^3} \int_{\mathbb R_v^3} \left| \psi(\tau,x,v)\right| \left|w_L\right| \mathrm dv\, \frac{\omega_a^b}{1+|u|} \mathrm dx \mathrm d\tau. \nonumber
\end{align}
In this section, we prove the following $L^1$-energy estimate for Vlasov fields.

\begin{prop} \label{prop_vlasov_cons}
Assume the bounds
$$|\nabla H|_{\mathcal{L}\mathcal{T}} \lesssim \frac{\sqrt\epsilon}{1+t+r}, \quad |\nabla H| \lesssim \frac{\sqrt\epsilon}{1+|u|}, \quad |H|_{\mathcal{L} \mathcal{T}} \lesssim \frac{\sqrt\epsilon (1+|u|)}{1+t+r}, \quad |H|\lesssim \frac{\sqrt\epsilon(1+|u|)^{\frac{1}{2}}}{(1+t+r)^{\frac{1}{2}}}.$$
For any parameters $a,b > 0$ and $0 \leq t_1 \leq t_2 < \infty$ and any sufficiently regular function $\psi : \mathcal P \cap \{t_1\leq t \leq t_2\} \to \mathbb R$, we have, if $\epsilon$ is small enough,
$$\mathbb{E}^{a,b}[\psi](t_2) \leq \underline{C} \, \mathbb{E}^{a,b}[\psi](t_1) + C \sqrt\epsilon  \int_{t_1}^{t_2} \frac{\mathbb{E}^{a,b}[\psi](\tau)}{1+\tau} \mathrm d\tau + \underline{C} \int_{t_1}^{t_2} \int_{\mathbb R_x^3} \int_{\mathbb R_v^3} \left| \mathbf T_g(\psi) \right| \mathrm dv \, \omega_a^b \mathrm dx \mathrm d\tau,$$
where $\underline{C}$ and $C$ are two constants such that $\underline{C}$ depends only on $(a,b)$.
\end{prop}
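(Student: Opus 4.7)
The strategy is the standard Vlasov $L^1$ energy argument adapted to the massless, curved setting: apply the Euclidean divergence theorem in $(t,x)$ to a weighted current built from $|\psi|$, and then control the deviation of $\T_g$ from the flat massless transport operator $\T_\eta$ using the null structure encoded in the pointwise hypotheses on $H$.

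To make sense of $|\psi|$ as a smooth object I would first work with the regularization $\chi_\delta(\psi):=\sqrt{\psi^2+\delta^2}-\delta$, which satisfies $\chi_\delta(\psi)\to|\psi|$ and $\chi_\delta'(\psi)\to\mathrm{sign}(\psi)$ as $\delta\to 0$, and take the limit at the end. The relevant current is
\[
\mathcal N^\mu := \int_{\mathbb R^3_v}\chi_\delta(\psi)\, w^\mu\, \w\, \dr v,\qquad w^\mu=(|v|,v^1,v^2,v^3),
\]
so that $\mathcal N^0$ recovers the pointwise $L^1_v$ energy density appearing in $\E^{a,b}[\chi_\delta(\psi)]$. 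Since $\w=\w(u)$ with $u=t-r$, the only non-vanishing null-frame component of $\dr\w$ is $\underline L\w$, so $\T_\eta(\w)=-\tfrac{w_L}{2}\underline L(\w)=-|w_L|\,\overline{\omega}^b_a/(1+|u|)$ using $w_L\le 0$. Hence
\[
\partial_\mu \mathcal N^\mu \;=\; \int_v \chi_\delta'(\psi)\,\T_\eta(\psi)\,\w\,\dr v \;-\;\int_v \chi_\delta(\psi)\,|w_L|\,\frac{\overline{\omega}^b_a}{1+|u|}\,\dr v,
\]
and integration over $[t_1,t_2]\times\mathbb R^3$ combined with $\overline{\omega}^b_a\ge\min(a,b)\,\w$ yields, after $\delta\to 0$, an identity of the stated shape, the factor $\min(a,b)^{-1}$ being absorbed in $\underline C$.

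It remains to convert $\chi_\delta'(\psi)\T_\eta(\psi)$ into $\chi_\delta'(\psi)\T_g(\psi)$ (which produces the source term of the proposition after bounding $|\chi_\delta'|\le 1$) up to controllable errors. Using the decomposition
\[
\T_\eta(\psi)=\T_g(\psi)-H(w,\dr\psi)-\Delta v\, g^{0\beta}\partial_\beta\psi+\tfrac12 v_\alpha v_\beta\partial_i H^{\alpha\beta}\partial_{v_i}\psi
\]
from Section \ref{sect_vlasov_commutator}, together with the chain-rule identity $\chi_\delta'(\psi)\,\partial\psi=\partial\chi_\delta(\psi)$ valid for any coordinate derivative, the three correction terms are handled as follows. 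The $\partial_{v_i}\psi$-factor is integrated by parts in $v$, turning $\partial_{v_i}\chi_\delta(\psi)$ into a plain $\chi_\delta(\psi)$ multiplied by $\partial_{v_i}\bigl(v_\alpha v_\beta \partial_i H^{\alpha\beta}\bigr)$; the $v$-dependence of $v_0$ through the mass-shell relation only brings in bounded factors of the form $w_\mu/w^0$. The two $\partial_{t,x}$-pieces are expanded in the null frame $(L,\underline L,e_A)$: using $|\slashed w|^2\lesssim |v||w_L|$ from Lemma \ref{lem_wwl} and the bound $|\Delta v|\lesssim |H|_{\mathcal{L}\mathcal{T}}|v|+|H||w_L|$ from Lemma \ref{Deltav}, one checks that every such error is pointwise dominated by a combination of
\[
\frac{C\sqrt\epsilon}{1+\tau+r}\,|v|\,|\psi|\qquad\text{and}\qquad \frac{C\sqrt\epsilon}{1+|u|}\,|w_L|\,|\psi|.
\]
The first type, after multiplication by $\w$ and integration in $x$, is controlled by $C\sqrt\epsilon\,\E^{a,b}[\psi](\tau)/(1+\tau)$ and produces the Gr\"onwall term; the second type is absorbed into the $|w_L|$-bulk term on the left-hand side as soon as $C\sqrt\epsilon\le\tfrac12\min(a,b)$, i.e.\ for $\epsilon$ small enough.

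\textbf{Main obstacle.} The delicate step is the $\partial_{v_i}\psi$ correction, which cannot be bounded by $|\psi|$ without integration by parts in $v$. After that operation one must verify, by a careful null-frame decomposition of $\partial_i H^{\alpha\beta}$ and of the mass-shell contribution $\partial_{v_i}v_0$, that the resulting coefficient of $\chi_\delta(\psi)$ still splits into the two acceptable types above. This is precisely where the improved pointwise bound on $|\nabla H|_{\mathcal{L}\mathcal{T}}$, stronger than the generic $|\nabla H|\lesssim\sqrt\epsilon/(1+|u|)$ estimate, is essential: it is exactly what allows one to absorb the extra $|v|$-factor generated by the $v_\alpha v_\beta$ coefficient without losing integrability in $\tau$.
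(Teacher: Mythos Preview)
Your overall strategy is sound and, with one correction, would yield a valid alternative to the paper's proof. The issue is in your treatment of the two ``$\partial_{t,x}$-pieces'', namely the corrections $-H(w,\dr\psi)$ and $-\Delta v\, g^{0\beta}\partial_\beta\psi$. After the chain rule these become $-H(w,\dr\chi_\delta(\psi))$ and $-\Delta v\, g^{0\beta}\partial_\beta\chi_\delta(\psi)$, which still contain a spacetime derivative of $\chi_\delta(\psi)$; they cannot be ``pointwise dominated'' by the quantities $\tfrac{\sqrt\epsilon}{1+\tau+r}|v|\,|\psi|$ and $\tfrac{\sqrt\epsilon}{1+|u|}|w_L|\,|\psi|$ as you claim. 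The null decomposition of $H$ and $w$ controls the \emph{coefficients} of $\partial_\beta\psi$, not the derivative itself.

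The fix is a second integration by parts, this time in $(t,x)$ over $[t_1,t_2]\times\R^3$. For a generic coefficient $A^\beta(t,x,v)$ this produces a boundary contribution $\big[\int_x\int_v A^0\chi_\delta(\psi)\w\,\dr v\,\dr x\big]_{t_1}^{t_2}$ and a bulk term $-\int\int\int\partial_\beta(A^\beta\w)\chi_\delta(\psi)\,\dr v\,\dr x\,\dr\tau$. For $A^\beta=H^{\alpha\beta}w_\alpha$ the boundary term is $\lesssim\sqrt\epsilon\,\E^{a,b}[\psi](t_i)$ and can be absorbed into the left-hand side; in the bulk, $\partial_\beta(H^{\alpha\beta}w_\alpha)\w$ is handled with the $|\nabla H|$ hypotheses, while $H^{\alpha\beta}w_\alpha\partial_\beta\w=H^{\alpha\underline L}w_\alpha\underline L\w$ is the only place where the dangerous $H_{LL}|v|$ combination appears, and the bound $|H|_{\mathcal L\mathcal T}\lesssim\sqrt\epsilon(1+|u|)/(1+t+r)$ tames it. The $\Delta v$ term is handled similarly once one observes that $|\partial_\beta(\Delta v)|\lesssim |w_L||\nabla H|+|v||\nabla H|_{\mathcal L\mathcal T}$ by differentiating the mass-shell relation.

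The paper sidesteps this entire manoeuvre by working with the \emph{curved} energy-momentum tensor $T_{0\alpha}[|\psi|]=\int_v|\psi|\,v_0 v_\alpha\,\dr\mu_{\pi^{-1}(x)}$ and the covariant divergence $g^{\alpha\beta}D_\beta$: the identity $g^{\alpha\beta}D_\beta T_{0\alpha}[|\psi|]=\int_v v_0\,\T_g(|\psi|)\,\dr\mu+\ldots$ (the ellipsis containing only zeroth-order factors of $|\psi|$ multiplied by $v^\alpha\partial_\alpha v_0$ and $v_\alpha v_\beta\partial_i g^{\alpha\beta}$) delivers $\T_g$ directly, so no $\partial_{t,x}\psi$ correction ever arises. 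Both routes ultimately rely on the same null-frame hierarchy for $H$ and $w$, but the paper's choice of current makes the bookkeeping shorter.
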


\begin{proof}
	We denote by $D$ the covariant differentiation in $(\R^{1+3},g)$. Let $\psi$ be a solution to $T_g(\psi)=G(\psi)$. Then, $|\psi|$ solves $T_g(|\psi|)= F(\psi)$, with $F(\psi)= \frac{\psi}{|\psi|}G(\psi)$ verifying $|F(\psi)| \le |G(\psi)|$. Then, by considering the energy momentum tensor of $|\psi|$ as in \eqref{eq-enmt}, a computation shows (cf~Lemma 4.11 in \cite{FJS3}), that 
	
	\begin{eqnarray*}
g^{\alpha \beta}		D_\beta \left( T_{0 \alpha}[|\psi|] \right)
		&=&\int_{\pi^{-1}(x)} v_0 F(\psi)  d\mu_{\pi^{-1}(x)} + \int_{\pi^{-1}(x)} |\psi| v^\alpha \partial_{x^\alpha}(v_0) d\mu_{\pi^{-1}(x)}\\
		&&\hbox{} +  \frac{1}{2} \int_{\pi^{-1}(x)} |\psi|  v_\alpha v_\beta\partial_{x^i} (g^{\alpha \beta} )  \frac{v_\gamma g^{\gamma i}}{v_\beta g^{\beta0}} d\mu_{\pi^{-1}(x)}.
	\end{eqnarray*}
This leads to 

\begin{eqnarray}
\nonumber	g^{\alpha \beta}D_\beta \left( \omega_a^b T_{0 \alpha}[|\psi|] \right)
	&=&\int_{\pi^{-1}(x)} v_0 F[\psi]  d\mu_{\pi^{-1}(x)} + \int_{\pi^{-1}(x)} |\psi| v^\alpha \partial_{x^\alpha}(v_0) d\mu_{\pi^{-1}(x)} \\  \label{eq:divtvw}
\hbox{}&&	+  \frac{1}{2} \int_{\pi^{-1}(x)} |\psi|  v_\alpha v_\beta\partial_{x^i} (g^{\alpha \beta} )  \frac{v_\gamma g^{\gamma i}}{v_\beta g^{\beta0}} d\mu_{\pi^{-1}(x)} 
+	g^{\alpha \beta} \partial_{\beta} (\omega^b_a) T_{\alpha 0}[|\psi|].
\end{eqnarray}

We apply the divergence theorem between the two hypersurfaces $\{t=t_2 \}$ and $\{t=t_1\}$
\begin{eqnarray*}
 -\int_{\{t=t_2\}} T_{0 \alpha}g^{\alpha 0}[|\psi|] \omega^b_a \sqrt{|\det g|} dx & = & -\int_{\{t=t_1\}} {T_{0\alpha}}g^{\alpha 0}[|\psi|] \omega^b_a \sqrt{|\det g|} dx \\
 & & - \int_{t_1 \leq t \leq t_2} g^{\alpha \beta}D_\beta \left( \omega_a^b T_{0 \alpha}[|\psi|] \right) \sqrt{|\det g |} dx dt
 \end{eqnarray*}
and analyse the resulting terms. To this end, we note that it holds for $\epsilon$ small enough
\begin{eqnarray}
\frac{1}{2} &\le& \sqrt{|\det g|} \le 2, \label{ineq:vg} \\
|\Delta v| &\lesssim& |w_L| |H| + |v|  |H|_{\mathcal{L} \mathcal{T}}, \label{ineq:dv} \\
\frac{1}{2}|v| &\le& (v_0)^2  \frac{\sqrt{|\det g^{-1}|}}{v_\alpha g^{\alpha 0}} \le 2 |v|, \label{ineq:vfv}
\end{eqnarray}
where we used \eqref{eq:Deltav1} for \eqref{ineq:dv} and the assumptions on $H$ for \eqref{ineq:vg} and \eqref{ineq:vfv}. 

The boundary terms at $t=t_i$ are given by 
\begin{eqnarray*}
\int_{\{t=t_i\}} {T_{0 \alpha}}g^{0\alpha}[|\psi|] \omega^b_a \sqrt{|\det g|} dx&=& \int_{\{t=t_i\}} \int_{\mathbb{R}^3_v} |\psi| v_0 v_\alpha g^{0\alpha} \frac{\sqrt{|\det g^{-1}|}}{v_\alpha g^{\alpha 0}} dv \omega^b_a \sqrt{|\det g|} dx \\
&=&\int_{\{t=t_i\}} \int_{\mathbb{R}^3_v} |\psi| v_0 dv \omega^b_a dx 
\end{eqnarray*}
Thus, using \eqref{ineq:dv} and the assumptions on $H$, 
\begin{eqnarray*}
\int_{\mathbb R_x^3} \int_{\mathbb R_v^3} \left|\psi(t_i,x,v)\right| |v| \mathrm dv \, \omega_a^b \mathrm dx & \lesssim & -  \int_{t=t_i} {T_{0 \alpha}}g^{0\alpha} [|\psi|] \omega^b_a \sqrt{|\det g|}dx \\
& \lesssim & \int_{\mathbb R_x^3} \int_{\mathbb R_v^3} \left|\psi(t_i,x,v)\right| |v| \mathrm dv \, \omega_a^b \mathrm dx.
\end{eqnarray*}

Consider now the last term on the right-hand side of \eqref{eq:divtvw}, for which we have

\begin{eqnarray*}
	g^{\alpha \beta} \partial_{\beta} (\omega^b_a) T_{\alpha 0}[|\psi|] &=& g^{\alpha \underline{L}} \underline{L}(\omega^b_a) T_{\alpha 0}[|\psi|] \hspace{2mm} = \hspace{2mm} -2\frac{\overline{\omega}_a^b}{1+|u|}	\int_{\mathbb{R}^3_v} |\psi|  v_\alpha g^{\alpha \underline{L}}v_0 d\mu_{\pi^{-1}(x)}.
\end{eqnarray*}
Note that 
\begin{eqnarray*}
v_\alpha g^{\alpha \underline{L}}&=&v_\alpha \eta^{\alpha \underline{L}}+v_\alpha H^{\alpha \underline{L}} \\
&=& (v_L-w_L) \eta^{L \underline{L}}+w_L \eta^{L \underline{L}} +v_L H^{L\underline{L}}+v_{\underline{L}} H^{\underline{L} \underline{L}} +v_A H^{A \underline{L}} \\
&=& -\frac{1}{2}\Delta v -\frac{1}{2} w_L + w_L H^{L \underline{L}} + \Delta v H^{L \underline{L}} + v_{\underline{L}} H^{\underline{L} \underline{L}} +v_A H^{A \underline{L}}, 
\end{eqnarray*}
which we rewrite as 
$$
 \frac{1}{2} |w_L|= v_\alpha g^{\alpha \underline{L}}+\frac{1}{2}\Delta v- w_L H^{L \underline{L}} - \Delta v H^{L \underline{L}} - v_{\underline{L}} H^{\underline{L} \underline{L}} -v_A H^{A \underline{L}}.
 $$
In view of the bounds on $H$, it follows that, 
$$
 |w_L|  \lesssim v_\alpha g^{\alpha \underline{L}} + \frac{|v|\sqrt{\epsilon}(1+|u|)}{1+t+r} + |\Delta v|,
$$
so that using \eqref{ineq:dv}, we have 
$$
|w_L|  \lesssim v_\alpha g^{\alpha \underline{L}} + \frac{|v|\sqrt{\epsilon}(1+|u|)}{1+t+r}.
$$
It follows that the contribution of the last term on the right-hand side of \eqref{eq:divtvw}, $\int_{ \{t_1 \le t \le t_2\}}g^{\alpha \beta}\partial_{\beta}(\omega^b_a)T[|\psi|]_{\alpha 0}\sqrt{|\det g |}dxdt$ can be estimated from below as 
\begin{eqnarray*}
&&\hbox{} \int_{ \{t_1 \le t \le t_2\}} 2\frac{\overline{\omega}_a^b}{1+|u|}	\int_{\mathbb{R}^3_v} |\psi| \left( |w_L| - C|v|\frac{\sqrt{\epsilon}(1+|u|)}{1+t+r}\right)  (-v_0) d\mu_{\pi^{-1}(x)} \sqrt{|\det g |} dx \dr t\\
 & & \quad \lesssim \int_{ \{t_1 \le t \le t_2\}}g^{\alpha \beta}\partial_{\beta}(\omega^b_a)T[|\psi|]_{\alpha 0}\sqrt{|\det g |} dx\dr t
\end{eqnarray*}
for some constant $C>0$, and, using \eqref{ineq:vg}-\eqref{ineq:vfv}, that 
\begin{eqnarray*}
	&&\hbox{} \int_{ \{t_1 \le t \le t_2\}} \int_{\mathbb{R}^3_v}	 |\psi| |w_L| \frac{ \omega_a^b}{1+|u|} \dr x \dr t\\
	& & \quad \lesssim \int_{ \{t_1 \le t \le t_2\}}g^{\alpha \beta}\partial_{\beta}(\omega^b_a)T[|\psi|]_{\alpha 0} \sqrt{|\det g |} \dr x \dr t + \sqrt{\epsilon} \int_{t_1}^{t_2} \frac{\E^{a,b}[\psi](t)}{1+\tau} \dr t.
\end{eqnarray*}
The left-hand side of this last inequality will provide the spacetime term of $\mathbb{E}^{a,b}[\psi](t_2)$ when we sum all the terms at the end of the analysis. Note that it will arise with the same sign as the boundary term at $t=t_2$. 

Finally, we consider the contribution of the terms 
$$
\frac{1}{2} \int_v |\psi|  v_\alpha v_\beta\partial_{x^i} (g^{\alpha \beta} )  \frac{v_\gamma g^{\gamma i}}{v_\beta g^{\beta0}} d\mu_{\pi^{-1}(x)} , \qquad \int_v |\psi| v^\alpha \partial_{x^\alpha}(v_0) d\mu_{\pi^{-1}(x)}.
$$
To this end, we decompose $v_\alpha v_\beta \partial_{x^i} (g^{\alpha \beta} )$ on the null frame 
$$
v_\alpha v_\beta \partial_{i} g^{\alpha \beta} \! = \! v_L v_L (\partial_{i}H)^{LL}+ v_{\underline{L}}v_{\underline{L}} \partial_{i} (H)^{\underline{L} \underline{L}}+ 2 v_A v_L \partial_{i} (H)^{A L} + 2 v_A v_{\underline{L}} \partial_{i} (H)^{A\underline{L}} + v_A v_B \partial_{i} (H)^{AB}
$$
and we use Lemma \ref{Deltav} in order to get
$$
|\partial_{x^i}(v_0)|= |\partial_{  x^i}(v_0-w_0)| \lesssim |w_L| |\nabla H| + |v| | \nabla H|_{\mathcal{L} \mathcal{T}}+|v| |H|| \nabla H |.
$$
Using the assumptions on $H$, we derive, since $|v_Av_B| \lesssim |v| |w_L|$ by Lemma \ref{lem_wwl}, 
$$
|v_\alpha v_\beta \partial_{x^i} g^{\alpha \beta} |+| v^\alpha \partial_{x^\alpha}(v_0)| \lesssim \frac{\sqrt{\epsilon}| w_L ||v|}{1+|u|} + \frac{\sqrt{\epsilon}|v|^2}{1+t+r},
$$
where we note that the contribution of the first term on the right-hand side can be absorbed if $\epsilon$ is small enough into the spacetime positive term containing $|w_L|$ obtained above, while the contribution of the second term can be simply estimated in terms of the energy. 
\end{proof}

\section{Bootstrap assumptions}\label{sec9}

We consider the following bootstrap assumptions on certain energy norms which have been defined in Subsection \ref{Subsectenergies}. Let $N\geq 13$, $\ell = \frac{2}{3}N+6$ and consider the parameters $0 < 20\delta < \gamma < \frac{1}{20}$. We have
\begin{itemize}
\item bootstrap assumptions for the Vlasov field: For all $t \in [0,T[$,
\begin{eqnarray}
\mathbb{E}_{N-5}^{\ell+3}[f](t) &\leq& C_f\epsilon (1+t)^{\frac{\delta}{2}}, \label{boot_vlasov_1} \\
\mathbb{E}_{N-1}^{\ell}[f](t) &\leq& C_f\epsilon (1+t)^{\frac{\delta}{2}}, \label{boot_vlasov_2} \\
\mathbb{E}_{N}^{\ell}[f](t) &\leq& C_f\epsilon (1+t)^{\frac{1}{2} + \delta}, \label{boot_vlasov_3}
\end{eqnarray}
\item bootstrap assumptions for the metric perturbations: For all $t \in [0,T[$,
\begin{eqnarray}
\overline{\mathcal{E}}^{\gamma, 1+2\gamma}_{N-1} [ h^1 ] (t) &\leq& \overline{C} \epsilon (1+t)^{2 \delta}, \label{boot1} \\ 
\mathring{\mathcal{E}}^{\gamma, 2+2\gamma}_N [ h^1 ] (t) &\leq& \overline{C} \epsilon (1+t)^{2 \delta}, \label{boot2} \\ 
\mathcal{E}^{2\gamma, 1+\gamma}_{N-1,\mathcal{T} \mathcal{U}} [ h^1 ] (t) &\leq & C_{\mathcal{T} \mathcal{U}} \epsilon (1+t)^{\delta}, \label{boot3} \\  
\mathcal{E}^{1+\gamma, 1+\gamma}_{N,\mathcal{T} \mathcal{U}} [ h^1 ] (t) &\leq & C_{\mathcal{T} \mathcal{U}} \epsilon (1+t)^{2\delta}, \label{boot4} \\ 
\mathcal{E}^{1+2\gamma, 1}_{N, \mathcal{L} \mathcal{L}} [h^1] (t) & \leq & C_{\mathcal{L} \mathcal{L}} \epsilon (1+t)^{\delta}, \label{boot5}
\end{eqnarray}
\end{itemize}
where $C_f$, $\overline{C}$, $C_{\mathcal{T} \mathcal{U}}$ and $C_{\mathcal{L} \mathcal{L}}$ are constants larger than $1$ which will be fixed during the proof in Section \ref{sec12}. As is usual for this type of proof, the above bootstrap assumptions are satisfied with strict inequality for $t=0$ by our assumptions on the initial data and provided that $C_f$, $\overline{C}$, $C_{\mathcal{T} \mathcal{U}}$ and $C_{\mathcal{L} \mathcal{L}}$ are large enough. By standard well-posedness theory, it follows that they are satisfied on some non-empty interval of time $[0,T[$, with $T>0$. Theorem \ref{main-thm_detailed} then holds provided that we can improve each of the above bootstrap assumptions. 

\begin{rem}
We point out that the $(1+t)^{2\delta}$ growth of the bootstrap assumption \eqref{boot1} (respectively \eqref{boot2} and \eqref{boot4}) is related to the growth of the energy norm of the bootstrap assumption \eqref{boot_vlasov_2} (respectively \eqref{boot_vlasov_3} and \eqref{boot_vlasov_3}-\eqref{boot2}). Similarly, the growth on \eqref{boot_vlasov_3} is related to the ones of \eqref{boot_vlasov_1}, \eqref{boot4} and \eqref{boot5}.

The growth on the bootstrap assumptions \eqref{boot_vlasov_1}, \eqref{boot_vlasov_2} and \eqref{boot5} are independent from all the other ones and could be chosen to be of the form $(1+t)^{\eta}$, with $\eta$ arbitrary small.
\end{rem} 
 
 We deduce from the definition \eqref{def_e_vlasov} of $\E^{\ell+3}_{N-5}[f]$, the bootstrap assumption \eqref{boot_vlasov_1} and the Klainerman-Sobolev inequality of Proposition \ref{KSvlasov} that, for any $|K| \leq N-8$ and for all $(t,x) \in [0,T[ \times \R^3$,
\begin{align}
\nonumber \int_{\R^3_v} z^{\ell+1-\frac{2}{3}K^P}|v| \left| \widehat{Z}^K f \right|(t,x,v) \mathrm{d} v \hspace{1mm} & \lesssim \hspace{1mm} \sum_{|I| \leq 3} \frac{\E^{\frac{1}{8},\frac{1}{8}}\left[ z^{\ell+3-\frac{2}{3}(K^P+3)} \widehat{Z}^I \widehat{Z}^K f  \right] \!(t)}{(1+t+r)^2(1+|t-r|)^{\frac{7}{8}}} \\ \nonumber
\hspace{1mm} & \lesssim \hspace{1mm} \frac{\E^{\ell+3}_{N-5}[f](t)}{(1+t+r)^2(1+|t-r|)^{\frac{7}{8}}} \\
\hspace{1mm} & \lesssim \hspace{1mm}  \frac{\epsilon \, (1+t)^{\frac{\delta}{2}}}{(1+t+r)^2(1+|t-r|)^{\frac{7}{8}}}.\label{eq:decayVlasov}
\end{align}
Recall that $\ell -2=\frac{2}{3}N+4$. Hence, we obtain similarly, using this time the bootstrap assumption \eqref{boot_vlasov_2}, that for any $|K| \leq N-4$ and for all $(t,x) \in [0,T[ \times \R^3$,
\begin{equation}\label{eq:decayVlasov2}
\int_{\R^3_v} z^{4+\frac{2}{3}(N-K^P)}|v| \left| \widehat{Z}^K f \right|(t,x,v) \mathrm{d} v \hspace{1mm}  \lesssim \hspace{1mm}  \frac{\epsilon \, (1+t)^{\frac{\delta}{2}}}{(1+t+r)^2(1+|t-r|)^{\frac{7}{8}}}.
\end{equation}
The following result will be useful to improve the bootstrap assumptions \eqref{boot3}-\eqref{boot5}. The rough idea is that the $L^2$-norm of $|\nabla \mathcal{L}_Z^J (h^1) (V,W)|$ and $|\nabla \left( \mathcal{L}_Z^J h^1 (V,W) \right)|$ are equivalent.

\begin{lemma}\label{equi_norms}
There exists a constant $C>0$ independent of $\overline{C}$, $C_{\mathcal{T} \mathcal{U}}$ and $C_{\mathcal{L} \mathcal{L}}$ such that, for all $t \in [0,T[$,
\begin{eqnarray*}
\left| \mathcal{E}^{2\gamma, 1 + \gamma}_{N-1, \mathcal{T} \mathcal{U}}[h^1] - \hspace{-1mm} \sum_{|J| \leq N-1} \sum_{ (T, U) \in \mathcal{T} \times \mathcal{U} } \mathcal{E}^{2\gamma, 1+ \gamma}\!\left[ \chi\!\left( \frac{r}{t+1} \right) \mathcal{L}_Z^J (h^1)_{T U}  \right]\right|\!(t)  & \leq &  C\overline{C} \epsilon , \\
\left| \mathcal{E}^{1+\gamma,1+\gamma}_{N, \mathcal{T} \mathcal{U}}[h^1] - \sum_{|J| \leq N} \sum_{ (T, U) \in \mathcal{T} \times \mathcal{U}}  \mathcal{E}^{1+\gamma, 1+ \gamma}\!\left[ \chi\!\left( \frac{r}{t+1} \right) \mathcal{L}_Z^J (h^1)_{T U}  \right]\right|\!(t)  & \leq &  C\overline{C} \epsilon(1+t)^{2\delta }  , \\
 \left| \mathcal{E}^{1+2\gamma,1}_{N, \mathcal{L} \mathcal{L}}[h^1] - \sum_{|J| \leq N} \mathcal{E}^{1+2\gamma,1}\!\left[ \chi\!\left( \frac{r}{t+1} \right) \mathcal{L}_Z^J (h^1)_{LL}  \right]\right|\!(t)  & \leq &  C(\overline{C}+C_{\mathcal{T} \mathcal{U}} ) \epsilon.
\end{eqnarray*}
\end{lemma}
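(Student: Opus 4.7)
The proof proceeds by expanding the scalar derivative of $\chi\mathcal{L}_Z^J(h^1)_{TU}$ and identifying which error terms differ from the tensorial norm. Writing $k = \mathcal{L}_Z^J h^1$ and applying Leibniz, we have
$$\partial_\mu\!\left(\chi\,k_{TU}\right) = \partial_\mu(\chi)\,k_{TU} + \chi\bigl[\nabla_\mu k(T,U) + k(\nabla_\mu T, U) + k(T, \nabla_\mu U)\bigr].$$
By Lemma~\ref{lem_tech}, $|\partial_\mu\chi(r/(1+t))| \lesssim (1+t+r)^{-1}\mathbf{1}_{(1+t)/4 \leq r \leq (1+t)/2}$; and from the explicit identities gathered in the proof of Lemma~\ref{extradecayLie} (equation~\eqref{eq:Vminus}), $\nabla_\mu L, \nabla_\mu \underline L \in \mathrm{span}\{e_A\}$ with norm $O(1/r)$, while $\nabla_{e_A}e_B$ has components in all of $\mathcal{U}$ of order $1/r$. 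Squaring, summing over $(T,U)\in\mathcal{T}\times\mathcal{U}$ and over Cartesian directions of $\nabla$ (and analogously over $L,\slashed\nabla$ for the bulk integral), the main term reproduces the tensorial integrand $\chi^2|\nabla k|_{\mathcal{T}\mathcal{U}}^2$, and the difference $\big|\mathcal{E}^{a,b}_{N-1,\mathcal{T}\mathcal{U}}[h^1] - \sum_{J,T,U}\mathcal{E}^{a,b}[\chi k_{TU}]\big|$ is controlled by three classes of error integrals: (a)~$\int_{r\leq (1+t)/4}|\nabla k|_{\mathcal{T}\mathcal{U}}^2\omega_a^b\,\mathrm{d}x$ (the interior region where $\chi=0$), (b)~$\int|\partial\chi|^2|k_{TU}|^2\omega_a^b\,\mathrm{d}x$ (transition annulus), and (c)~$\int\chi^2|k|^2 r^{-2}\omega_a^b\,\mathrm{d}x$ (from the $\nabla_\mu T$, $\nabla_\mu U$ terms).

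For the first inequality (weight $\omega_{2\gamma}^{1+\gamma}$ at order $N-1$), I will exploit the key weight exchange
$\omega_{2\gamma}^{1+\gamma} \lesssim (1+|t-r|)^{-\gamma}\omega_{\gamma}^{1+2\gamma}$. In region (a), since $r \leq (1+t)/4$ forces $1+|t-r| \gtrsim 1+t$, this yields a gain $(1+t)^{-\gamma}$, reducing (a) to $(1+t)^{-\gamma}\overline{\mathcal{E}}^{\gamma,1+2\gamma}_{N-1}[h^1] \lesssim \overline{C}\epsilon(1+t)^{2\delta-\gamma} \leq \overline{C}\epsilon$ since $2\delta<\gamma$; for the spacetime bulk component the same gain together with Lemma~\ref{LemBulk} converts $(1+t)^{2\delta}$ growth into a uniform bound. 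In (b) the transition annulus has $|t-r|\sim 1+t+r$, so $|\partial\chi|^2\omega_{2\gamma}^{1+\gamma}\lesssim (1+t)^{-2-2\gamma}$, and a Hardy-type bound on $\int_{r\sim t}|k|^2\,\mathrm{d}x$ (using Lemma~\ref{lem_hardy}) absorbs the factor. For (c), note that on the support of $\chi$, $r\gtrsim 1+t+r$ and $r\gtrsim 1+|t-r|$, so $r^{-2}\lesssim (1+t+r)^{-1}(1+|t-r|)^{-1}$; an application of Lemma~\ref{lem_hardy} with $\alpha=2$ combined with the exchange $\omega_{2\gamma}^{1+\gamma}/(1+t+r) \leq \omega_\gamma^{2+2\gamma}/(1+t+r)$ reduces (c) to $\mathring{\mathcal{E}}^{\gamma,2+2\gamma}_N[h^1] \lesssim \overline{C}\epsilon(1+t)^{2\delta}$, and the residual $(1+t)^{-\gamma}$ decay from the weight comparison removes the growth. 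The second inequality is identical in structure but operates at order $N$, where no $(N{+}1)$-th order bootstrap is available to absorb the error in (c); the stated $(1+t)^{2\delta}$ growth is what remains.

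For the third inequality, the same expansion is applied to $\chi\mathcal{L}_Z^J(h^1)_{LL}$. The essential new feature is that since $\nabla_\mu L \in\mathrm{span}\{e_A\}\subset\mathcal{T}$, the analogues of the error terms in (b) and (c) involve $|k|_{\mathcal{T}\mathcal{L}}\leq|k|_{\mathcal{T}\mathcal{U}}$ rather than the full tensor norm. These are estimated by invoking the bootstrap~\eqref{boot3} for $\mathcal{E}^{2\gamma,1+\gamma}_{N-1,\mathcal{T}\mathcal{U}}[h^1]$ (and the already-established first inequality extending this to order $N$ via the $\mathcal{L}\mathcal{L}$ weight $\omega_{1+2\gamma}^{1}$, which is stronger than $\omega_{2\gamma}^{1+\gamma}$ both in the interior and exterior), together with Hardy; this accounts for the $C_{\mathcal{T}\mathcal{U}}$ contribution in the final constant, while the $\overline{C}$ contribution arises from the region where $\chi=0$ handled as in the first inequality.

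The principal obstacle is the tight bookkeeping of weights across the three regimes $\{r<(1+t)/4\}$, the transition annulus, and $\{r>(1+t)/2\}$: each error term must be routed to the appropriate bootstrap assumption via a combination of the Hardy inequality of Lemma~\ref{lem_hardy} and the systematic exchange of a factor of $(1+|t-r|)^{-\gamma}$ between $\omega_{2\gamma}^{1+\gamma}$ and $\omega_\gamma^{1+2\gamma}$. It is precisely this exchange, available at order $N-1$ but not at order $N$ (where no higher-order weighted energy is at hand), that distinguishes the uniform bound in the first and third inequalities from the $(1+t)^{2\delta}$ growth permitted in the second.
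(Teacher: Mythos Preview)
Your overall strategy matches the paper's: split the discrepancy into (a) the region $\{r\leq(1+t)/4\}$ where $\chi=0$, (b) the transition annulus, and (c) the frame-derivative errors on $\{r\geq(1+t)/4\}$, and route each to an available bootstrap energy via the weight exchange $\omega_{2\gamma}^{1+\gamma}\lesssim(1+|u|)^{-\gamma}\omega_\gamma^{1+2\gamma}$ and Lemma~\ref{lem_hardy}. Your treatment of the first two inequalities is essentially the paper's argument (the paper in fact routes the (c)-terms of the first inequality to $\overline{\mathcal{E}}^{\gamma,1+2\gamma}_{N-1}$ rather than $\mathring{\mathcal{E}}^{\gamma,2+2\gamma}_N$, but either works there).

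There is, however, a genuine gap in your handling of the third inequality. The frame-derivative errors there involve $|k^J|_{\mathcal{L}\mathcal{T}}$ for $|J|\leq N$, and bootstrap assumption~\eqref{boot3} only controls $\mathcal{E}^{2\gamma,1+\gamma}_{N-1,\mathcal{T}\mathcal{U}}$, i.e.\ order $N{-}1$. Your parenthetical attempt to ``extend to order $N$ via the already-established first inequality'' does not work: the first inequality of this lemma is itself an order-$(N{-}1)$ statement and tells you nothing about $|J|=N$. Moreover, the claim that $\omega_{1+2\gamma}^1$ is \emph{stronger} than $\omega_{2\gamma}^{1+\gamma}$ is backwards: in both the interior and exterior one has $\omega_{1+2\gamma}^1\leq\omega_{2\gamma}^{1+\gamma}$, so this comparison goes the wrong way for your purpose.

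The fix is to use~\eqref{boot4} directly, as the paper does: after Hardy, the spacetime (c)-term is bounded by $\int_0^t(1+\tau)^{-1-\gamma}\,\mathcal{E}^{1+\gamma,1+\gamma}_{N,\mathcal{T}\mathcal{U}}[h^1](\tau)\,\mathrm d\tau$, and since $\mathcal{E}^{1+\gamma,1+\gamma}_{N,\mathcal{T}\mathcal{U}}[h^1](\tau)\leq C_{\mathcal{T}\mathcal{U}}\epsilon(1+\tau)^{2\delta}$ and $2\delta<\gamma$, the integral is $\lesssim C_{\mathcal{T}\mathcal{U}}\epsilon$. This is precisely the origin of the $C_{\mathcal{T}\mathcal{U}}$-dependence in the stated bound. (The paper notes in a footnote that one could alternatively avoid $C_{\mathcal{T}\mathcal{U}}$ entirely by invoking the wave gauge condition, but that is a different route from the one you sketch.)
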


\begin{proof}
For the purpose of keeping track of certain quantities, all the constants hidden in $\lesssim$ will be independent of $\overline{C}$, $C_{\mathcal{T} \mathcal{U}}$ and $C_{\mathcal{L} \mathcal{L}}$. This convention will only hold during this proof. In order to lighten the notations, we introduce $k^J:= \mathcal{L}_Z^J(h^1)$ for any $|J| \leq N$. Then, observe that according to the triangle inequality, the lemma would follow if we could prove the first inequality (respectively the last two inequalities) with $N-1$ (respectively $N$) replaced by $0$ and $h^1$ by $k^J$ for any $|J| \leq N-1$ (respectively $|J| \leq N$). 

We start by an intermediary result. Let us fix $(\mathcal{V}, \mathcal{W}) \in \{ \mathcal{U}, \mathcal{T}, \mathcal{L} \}^2$, $0 \leq a \leq 1+2\gamma$ and $0 \leq b \leq 1+\gamma$. Since
$$\chi_{\vert \hspace{0.3mm} ] \frac{1}{2} , + \infty [} = 1, \hspace{1cm} |\chi| \leq 1 \hspace{1cm} \text{and} \hspace{1cm}  \left| \nabla_{t,x} \left( \chi\!\left( \frac{r}{1+t} \right) \right) \right| \lesssim \frac{\mathds{1}_{\{\frac{1+t}{4} \leq r \leq \frac{1+t}{2}\}}}{1+t+r},$$
one has,
\begin{align}
& \left|  \mathcal{E}^{a, b}_{0, \mathcal{V} \mathcal{W}} [k^J] - \mathcal{E}^{a, b}_{0, \mathcal{V} \mathcal{W}}\!\left[\!\chi \left( \frac{r}{t+1} \right) k^J \right] \right|\!(t)  \label{equation:firststep}\\
&  \lesssim \int_{\left\{r \leq \frac{t+1}{2}\right\}} | \nabla k^J|^2 \omega_{a}^{b} \mathrm dx + \int_{0}^t \int_{\left\{r \leq \frac{\tau+1}{2}\right\}} | \nabla  k^J|^2 \frac{\omega_{a}^{b}}{1+|u|} \mathrm dx \mathrm d \tau \nonumber \\
&\quad + \int_{\left\{\frac{1+t}{4} \leq r \leq  \frac{1+t}{2}\right\}} \frac{|k^J|^2}{(1+t+r)^2} \omega^{b}_{a} \mathrm dx + \int_0^t \int_{\left\{\frac{1+\tau}{4} \leq r \leq  \frac{1+\tau}{2}\right\}} \frac{|k^J|^2}{(1+\tau+r)^2} \frac{\omega^{b}_{a}}{1+|u|} \mathrm dx \mathrm d \tau. \nonumber
\end{align} 
Note that since the domain of integration of the four integrals on the right-hand side of the previous inequality are located far from the light cone, we do not keep track of\footnote{It is only near the light cone that certain null components of the metric enjoy improved decay estimates.} $\mathcal{V}$ and $\mathcal{W}$. Our goal now is to bound them sufficiently well for well chosen values of $|J|$ and $(a,b)$ in order to obtain
\begin{eqnarray}
\hspace{1cm} \forall \hspace{0.5mm} |J| \leq N-1, \hspace{0.6cm} \Bigg| \mathcal{E}^{2\gamma, 1+\gamma}_{0, \mathcal{T} \mathcal{U}} [k^J] - \mathcal{E}^{2\gamma,  1+ \gamma}_{0, \mathcal{T} \mathcal{U}}\!\left[\chi\!\left( \frac{r}{t+1} \right) k^J \right] \Bigg| (t) & \lesssim & \overline{C} \epsilon, \label{equation:equi1} \\
\forall \hspace{0.5mm} |J| \leq N, \hspace{1cm}  \Bigg| \mathcal{E}^{1+\gamma, 1+ \gamma}_{0, \mathcal{T} \mathcal{U}} [k^J] - \mathcal{E}^{1+\gamma ,   1+ \gamma}_{0, \mathcal{T} \mathcal{U}}\!\left[\chi\!\left( \frac{r}{t+1} \right) k^J \right] \Bigg| (t) & \lesssim & \overline{C} \epsilon (1+t)^{2 \delta} , \label{equation:equi2} \\
\forall \hspace{0.5mm} |J| \leq N, \hspace{1cm}  \Bigg| \mathcal{E}^{1+2\gamma, 1}_{0, \mathcal{L} \mathcal{L}} [k^J] - \mathcal{E}^{1+ 2\gamma ,  1}_{0, \mathcal{L} \mathcal{L}}\!\left[\chi\!\left( \frac{r}{t+1} \right) k^J \right] \Bigg| (t) & \lesssim & \overline{C} \epsilon . \label{equation:equi3}
\end{eqnarray} 
For the purpose of controlling the four integrals on the right-hand side of \eqref{equation:firststep}, we will use many times the inequality $1+\tau+r \lesssim 1+|\tau-r|$ which holds on their domain of integration. We start by dealing with the case $|J| \leq N-1$ and $(a,b)=(2\gamma,1+\gamma)$.
\begin{eqnarray*}
 \int_{r \leq \frac{t+1}{2}} | \nabla k^J|^2 \omega_{2\gamma}^{1+\gamma} dx  & \lesssim & \frac{1}{(1+t)^{\gamma}} \int_{r \leq \frac{t+1}{2}} | \nabla k^J|^2 \omega_{ \gamma}^{1+2\gamma} \mathrm d x \hspace{2mm} \lesssim \hspace{2mm} \frac{\overline{\mathcal{E}}^{\gamma, 1+2\gamma}_{N-1}[h^1](t)}{(1+t)^{\gamma }}, \\
 \int_{0}^t \int_{r \leq \frac{\tau+1}{2}} | \nabla k^J|^2 \frac{\omega_{2\gamma}^{1+\gamma}}{1+|u|} \mathrm d x \mathrm d \tau &  \lesssim & \int_0^t \int_{r \leq \frac{\tau+1}{2}} \frac{ | \nabla k^J|^2 \omega_{ \gamma}^{1+ \gamma} }{(1+\tau)^{1+\gamma}} \mathrm dx \mathrm d \tau \hspace{1mm} \lesssim \hspace{1mm} \int_0^t \frac{\overline{\mathcal{E}}_{N-1}^{\gamma, 1+2\gamma}[h^1](\tau)}{(1+\tau)^{1+\gamma }} \mathrm d \tau .
\end{eqnarray*}
Applying the Hardy inequality of Lemma \ref{lem_hardy} and making similar computations, one gets
\begin{eqnarray*}
\int_{\frac{1+t}{4} \leq r \leq  \frac{1+t}{2}} \frac{|k^J|^2}{(1+t+r)^2} \omega^{1+\gamma}_{2\gamma} \mathrm dx & \lesssim & \frac{1}{(1+t)^{\gamma }}\int_{\frac{1+t}{4} \leq r \leq  \frac{1+t}{2}} \frac{|k^J|^2}{(1+|u|)^2} \omega^{1+2\gamma}_{\gamma} \mathrm dx \\
& \lesssim & \frac{1}{(1+t)^{\gamma}}\int_{\Sigma_{\tau}} |\nabla k^J|^2 \omega^{1+2\gamma}_{\gamma} \mathrm dx \hspace{2mm} \lesssim \hspace{2mm} \frac{\overline{\mathcal{E}}^{\gamma, 1+2\gamma}_{N-1}[h^1](t)}{(1+t)^{\gamma }}
\end{eqnarray*}
and
\begin{align*}
 \int_0^t \int_{\frac{1+\tau}{4} \leq r \leq  \frac{1+\tau}{2}} \frac{|k^J|^2}{(1+\tau+r)^2} \frac{\omega^{1+\gamma}_{2\gamma}}{1+|u|} \mathrm dx \mathrm d \tau \hspace{0.5mm} & \lesssim \hspace{0.5mm} \int_0^t \frac{1}{(1+\tau)^{1+\gamma}} \int_{ r \leq  \frac{1+\tau}{2}} \frac{|k^J|^2}{(1+|u|)^2} \omega^{1+2\gamma}_{\gamma} \mathrm dx \mathrm d \tau \\ & \lesssim \hspace{0.5mm}  \int_0^t \frac{1}{(1+\tau)^{1+\gamma }} \int_{\Sigma_{\tau}} |\nabla k^J|^2 \omega^{1+2\gamma}_{\gamma} \mathrm dx \mathrm d \tau \\ & \lesssim \hspace{0.5mm}  \int_0^t \frac{\overline{\mathcal{E}}_{N-1}^{\gamma, 1+2\gamma}[h^1](\tau)}{(1+\tau)^{1+\gamma}} \mathrm d \tau \lesssim \overline{C} \epsilon, 
\end{align*}
in view of bootstrap assumptions \eqref{boot1}. 
We now assume that $|J| \leq N$ and we introduce $\eta \in \{ 0, \gamma \}$ in order to unify the treatment of the remaining two cases. We have,
\begin{eqnarray*}
 \int_{r \leq \frac{t+1}{2}} | \nabla k^J|^2 \omega_{1+\gamma+\eta}^{1+\gamma-\eta} dx  & \lesssim & \frac{1}{(1+t)^{\eta}} \int_{r \leq \frac{t+1}{2}} \frac{| \nabla k^J|^2}{1+t+r} \omega_{ \gamma}^{2+2\gamma} \mathrm d x \hspace{1.7mm} \lesssim \hspace{1.7mm} \frac{\mathring{\mathcal{E}}^{\gamma, 2+2\gamma}_N[h^1](t)}{(1+t)^{\eta}}, \\
 \int_{0}^t \int_{r \leq \frac{\tau+1}{2}} | \nabla k^J|^2 \frac{\omega_{1+\gamma+\eta}^{1+\gamma-\eta}}{1+|u|} \mathrm d x \mathrm d \tau &  \lesssim & \int_0^t \frac{1}{(1+\tau)^{1+\eta}} \int_{r \leq \frac{\tau+1}{2}} \frac{| \nabla k|^2}{1+\tau+r} \omega_{ \gamma}^{2+2 \gamma} \mathrm dx \mathrm d \tau \\ & \lesssim & \int_0^t \frac{\mathring{\mathcal{E}}_{N}^{\gamma, 2+2\gamma}[h^1](\tau)}{(1+\tau)^{1+\eta}} \mathrm d \tau .
\end{eqnarray*}
Applying the Hardy inequality of Lemma \ref{lem_hardy}, one obtains
\begin{eqnarray*}
\int_{\frac{1+t}{4} \leq r \leq  \frac{1+t}{2}} \frac{|k^J|^2}{(1+t+r)^2} \omega_{1+\gamma+\eta}^{1+\gamma-\eta} \mathrm dx & \lesssim & \frac{1}{(1+t)^{\eta}}\int_{\frac{1+t}{4} \leq r \leq  \frac{1+t}{2}} \frac{|k^J|^2}{(1+t+r)(1+|u|)^2} \omega^{2+2\gamma}_{\gamma} \mathrm dx \\
& \lesssim & \frac{1}{(1+t)^{\eta}}\int_{\Sigma_{\tau}} \frac{|\nabla k^J|^2}{1+t+r} \omega^{2+2\gamma}_{\gamma} \mathrm dx \hspace{2mm} \lesssim \hspace{2mm} \frac{\mathring{\mathcal{E}}^{\gamma, 2+2\gamma}_{N}[h^1](t)}{(1+t)^{\eta}}
\end{eqnarray*}
and
\begin{align*}
 \int_0^t \int_{\frac{1+\tau}{4} \leq r \leq  \frac{1+\tau}{2}} \frac{|k^J|^2}{(1+\tau+r)^2} \frac{\omega_{1+\gamma+\eta}^{1+\gamma-\eta}}{1+|u|} \mathrm dx \mathrm d \tau \hspace{0.5mm} & \lesssim \hspace{0.5mm} \int_0^t \frac{1}{(1+\tau)^{1+\eta}} \int_{ r \leq  \frac{1+\tau}{2}} \frac{|k^J|^2 \omega^{2+2\gamma}_{\gamma} \mathrm dx \mathrm d \tau}{(1+\tau+r)(1+|u|)^2}  \\ & \lesssim \hspace{0.5mm}  \int_0^t \frac{\mathring{\mathcal{E}}_{N}^{\gamma, 2+2\gamma}[h^1](\tau)}{(1+\tau)^{1+\eta}} \mathrm d \tau .
\end{align*}
Now recall from the bootstrap assumptions \eqref{boot1} and \eqref{boot2} that
$$\forall \hspace{0.5mm} t \in [0,T[, \hspace{1cm} \overline{\mathcal{E}}_{N-1}^{\gamma, 1+2 \gamma}[h^1](t) \hspace{1mm} \leq \hspace{1mm} 2 \overline{C} \epsilon (1+t)^{2 \delta} \quad \text{and} \quad \mathring{\mathcal{E}}_{N}^{\gamma, 2+2 \gamma}[h^1](t) \hspace{1mm} \leq \hspace{1mm} 2 \overline{C} \epsilon (1+t)^{2 \delta}. $$
Using also that $2 \delta < \gamma$, we can deduce \eqref{equation:equi1}-\eqref{equation:equi3} from the last estimates. We now turn on the second part of the proof. Note that
\begin{itemize}
\item $\nabla_{L}L= \nabla_{\underline{L}} L=0$ and $\nabla_{e_A} L= \frac{e_A}{r}$, so that $\left| |\nabla k^J|_{\mathcal{L} \mathcal{L}} - |\nabla (k^J_{LL})| \right|
\lesssim \frac{1}{r}|k^J|_{\mathcal{L} \mathcal{T}}$ and $\left| |\overline{\nabla} k^J|_{\mathcal{L} \mathcal{L}} - |\overline{\nabla} (k^J_{LL})| \right| \lesssim \frac{1}{r}|k^J|_{\mathcal{L} \mathcal{T}}$.
\item $\chi_{\vert [0, \frac{1}{4} [} = 0$ and $5r \geq 1+t+r$ if $4r \geq 1+t$.
\end{itemize}
Hence,
\begin{multline} \label{equa:secondpart}
\left| \mathcal{E}^{1+2 \gamma , 1}_{0, \mathcal{L} \mathcal{L}}\!\left[ \chi\!\left( \frac{r}{t+1} \right) k^J \right] \hspace{-0.5mm} - \mathcal{E}^{1+2 \gamma ,  1}_0\!\left[ \chi\!\left( \frac{r}{t+1} \right) k^J_{LL} \right] \right|\!(t) \\ \lesssim \int_{\left\{r \geq \frac{t+1}{4}\right\}} \frac{|k^J|^2 }{(1+t+r)^2} \omega^{1}_{1+2 \gamma} \mathrm dx + \int_0^t \int_{\left\{r \geq \frac{\tau+1}{4}\right\}} \frac{|k^J|^2_{\mathcal{L} \mathcal{T}} }{(1+\tau+r)^2} \frac{\omega^{1}_{1+2 \gamma}}{1+|u|} \mathrm dx \mathrm d \tau. 
\end{multline}
According to the Hardy type inequality of Lemma \ref{lem_hardy} and the bootstrap assumptions \eqref{boot2} and \eqref{boot4}, we have\footnote{Note that we could avoid the use of the bootstrap assumption \eqref{boot4} by taking advantage of the wave gauge condition. The consequence is that the right-hand side of the third inequality of Lemma \ref{equi_norms} could be independent of $C_{\mathcal{T} \mathcal{U}}$.}, since $2\delta < \gamma$,
\begin{eqnarray*}
 \int_{\left\{r \geq \frac{t+1}{4}\right\}} \frac{|k^J|^2 }{(1+t+r)^2} \omega^{1}_{1+2 \gamma} \mathrm dx & \lesssim & \frac{1}{(1+t)^{\gamma}}\int_{r \geq \frac{t+1}{4}} \frac{|k^J|^2 \omega^{2+2\gamma}_{ \gamma}  }{(1+t+r)(1+|u|)^2} \mathrm dx \\ & \lesssim & \frac{1}{(1+t)^{\gamma}}\int_{r \geq \frac{t+1}{4}} \frac{|\nabla k^J|^2 }{(1+t+r)}  \omega^{2+2\gamma}_{ \gamma}  \mathrm dx \\ & \lesssim & \frac{\mathring{\mathcal{E}}^{\gamma, 2+2\gamma}_N[h^1](t)}{(1+t)^{\gamma}} \hspace{2mm} \lesssim \hspace{2mm} \overline{C} \epsilon (1+t)^{2\delta-\gamma} \hspace{2mm} \lesssim \hspace{2mm} \overline{C} \epsilon, \\
\int_0^t \int_{\left\{r \geq \frac{t+1}{4}\right\}} \frac{|k^J|^2_{\mathcal{L} \mathcal{T}} }{(1+\tau+r)^2} \frac{\omega^{1}_{1+2 \gamma}}{1+|u|} \mathrm dx \mathrm d \tau & \lesssim & \int_0^t \int_{\Sigma_{\tau}} \frac{|\nabla k^J|^2_{\mathcal{L} \mathcal{T}} }{(1+\tau+r)^2} \omega^{2}_{2 \gamma} \mathrm dx \mathrm d \tau \\
 & \lesssim & \int_0^t \frac{1}{(1+\tau)^{1+\gamma}} \int_{\Sigma_{\tau}} |\nabla k^J|^2_{\mathcal{T} \mathcal{U}} \omega^{1+\gamma}_{1+ \gamma} \mathrm dx \mathrm d \tau \\ & \lesssim & \int_0^t \frac{\mathcal{E}^{1+\gamma, 1+\gamma}_{N,\mathcal{T} \mathcal{U}}[h^1](\tau)}{(1+\tau)^{1+\gamma}} \mathrm d \tau  \hspace{2mm} \lesssim \hspace{2mm} C_{\mathcal{T} \mathcal{U}} \epsilon.
\end{eqnarray*}
The third inequality of the Lemma then ensues from \eqref{equation:equi3}, \eqref{equa:secondpart} and these last two estimates. 

By similar considerations, one can obtain, for $|J| \leq N-1$,
\begin{multline} \label{equa:secondpartbis}
\left| \mathcal{E}^{2 \gamma , 1+\gamma}_{0,\mathcal{T} \mathcal{U}} \!\left[ \chi \left( \frac{r}{t+1} \right) k^J \right] - \sum_{(T,U) \in \mathcal{T} \times \mathcal{U}}\mathcal{E}^{2 \gamma , 1+ \gamma}_0 \!\left[ \chi \left( \frac{r}{t+1} \right) k^J_{TU} \right] \right|\!(t) \\ \lesssim \int_{\left\{r \geq \frac{t+1}{4}\right\}} \frac{|k^J|^2 }{(1+t+r)^2} \omega^{1+\gamma}_{2 \gamma} \mathrm dx + \int_0^t \int_{\left\{r \geq \frac{\tau+1}{4}\right\}} \frac{|k^J|^2 }{(1+\tau+r)^2} \frac{\omega^{1+\gamma}_{2 \gamma}}{1+|u|} \mathrm dx \mathrm d \tau.
\end{multline}
and, for $|J| \leq N$,
\begin{multline} \label{equa:secondpartbisbis}
\left| \mathcal{E}^{1+ \gamma , 1+\gamma}_{0,\mathcal{T} \mathcal{U}} \!\left[ \chi \left( \frac{r}{t+1} \right) k^J \right] - \sum_{(T,U) \in \mathcal{T} \times \mathcal{U}}\mathcal{E}^{1+ \gamma , 1+ \gamma}_0 \!\left[ \chi \left( \frac{r}{t+1} \right) k^J_{TU} \right] \right|\!(t) \\ \lesssim \int_{\left\{r \geq \frac{t+1}{4}\right\}} \frac{|k^J|^2 }{(1+t+r)^2} \omega^{1+\gamma}_{1+ \gamma} \mathrm dx + \int_0^t \int_{\left\{r \geq \frac{\tau+1}{4}\right\}} \frac{|k^J|^2 }{(1+\tau+r)^2} \frac{\omega^{1+\gamma}_{1+ \gamma}}{1+|u|} \mathrm dx \mathrm d \tau.
\end{multline}
All these integrals will be estimated using the Hardy inequality of Lemma \ref{lem_hardy}. For those of \eqref{equa:secondpartbis}, we have
\begin{align*}
 \int_{\left\{r \geq \frac{t+1}{4}\right\}} \frac{|k^J|^2 }{(1+t+r)^2} \omega^{1+\gamma}_{2 \gamma} \mathrm dx \hspace{0.7mm} & \lesssim \hspace{0.7mm} \int_{r \geq \frac{t+1}{4}} \frac{|k^J|^2 }{(1+t)^{\gamma}} \frac{\omega^{1+2\gamma}_{  \gamma}}{(1+|u|)^2} \mathrm dx \hspace{0.7mm} \lesssim \hspace{0.7mm} \frac{\overline{\mathcal{E}}^{\gamma, 1+2\gamma}_{N-1}[h^1](t)}{(1+t)^{\gamma}} \\ 
\int_0^t \! \int_{\left\{r \geq \frac{t+1}{4}\right\}} \! \frac{|k^J|^2 }{(1+\tau+r)^2} \frac{\omega^{1+\gamma}_{2 \gamma}}{1+|u|} \mathrm dx \mathrm d \tau  \hspace{0.7mm} & \lesssim \hspace{0.7mm} \int_0^t \! \int_{\Sigma_{\tau}} \frac{|\nabla k^J|^2 }{(1+\tau+r)^2} \omega^{2+\gamma}_{2 \gamma-1} \mathrm dx \mathrm d \tau \\
 & \lesssim \hspace{0.7mm} \int_0^t \! \int_{\Sigma_{\tau}} \! \frac{|\nabla k^J|^2 \omega^{1+2\gamma}_{\gamma} }{(1+\tau)^{1+\gamma}}  \mathrm dx \mathrm d \tau \hspace{0.7mm} \lesssim \hspace{0.7mm} \int_0^t \! \frac{\overline{\mathcal{E}}^{\gamma, 1+ 2\gamma}_{N-1}[h^1](\tau)}{(1+\tau)^{1+\gamma}} \mathrm d \tau .
\end{align*}
Using the bootstrap assumptions \eqref{boot1} and $2\delta < \gamma$, we have
$$ \frac{\overline{\mathcal{E}}^{\gamma, 1+2\gamma}_{N-1}[h^1](t)}{(1+t)^{\gamma}} + \int_0^t \frac{\overline{\mathcal{E}}^{\gamma, 1+ 2\gamma}_{N-1}[h^1](\tau)}{(1+\tau)^{1+\gamma}} \mathrm d \tau \hspace{2mm} \lesssim \hspace{2mm} \overline{C} \epsilon .$$
The first inequality of the Lemma follows from these last three estimates, \eqref{equation:equi1} and \eqref{equa:secondpartbis}. For the integrals on the right-hand side of \eqref{equa:secondpartbisbis}, one has, according to the bootstrap assumption \eqref{boot2},
\begin{align*}
 \int_{\left\{r \geq \frac{t+1}{4}\right\}} \frac{|k^J|^2 }{(1+t+r)^2} \omega^{1+\gamma}_{1+ \gamma} \mathrm dx \hspace{1mm} & \lesssim \hspace{1mm} \int_{r \geq \frac{t+1}{4}} \frac{|k^J|^2 \omega^{2+2\gamma}_{  \gamma}}{(1+t+r)(1+|u|)^2} \mathrm dx \\
 & \lesssim \hspace{1mm} \mathring{\mathcal{E}}^{\gamma, 2+2\gamma}_{N}[h^1](t)  \hspace{1mm} \leq \hspace{1mm} \overline{C} \epsilon (1+t)^{2\delta}, \\ 
\int_0^t \! \int_{\left\{r \geq \frac{t+1}{4}\right\}} \! \frac{|k^J|^2 }{(1+\tau+r)^2} \frac{\omega^{1+\gamma}_{1+ \gamma}}{1+|u|} \mathrm dx \mathrm d \tau  \hspace{1mm} & \lesssim \hspace{1mm} \int_0^t \int_{\Sigma_{\tau}} \frac{|\nabla k^J|^2 }{(1+\tau+r)^2} \omega^{2+\gamma}_{ \gamma} \mathrm dx \mathrm d \tau \\
 & \lesssim \hspace{1mm} \int_0^t \! \frac{\mathring{\mathcal{E}}^{\gamma, 2+ 2\gamma}_{N}[h^1](\tau)}{1+\tau} \mathrm d \tau  \hspace{1mm} \lesssim \hspace{1mm}  \overline{C} \epsilon (1+t)^{2\delta}.
\end{align*}
The second inequality of the Lemma then ensues from the last two estimates, \eqref{equation:equi2} and \eqref{equa:secondpartbisbis}.
\end{proof}
\section{Pointwise decay estimates on the metric}\label{sec10}

We prove here pointwise decay estimates on $h^1$ and its (lower order) derivatives using the bootstrap assumptions \eqref{boot1} and \eqref{boot3}. The Schwarzschild part $h^0$ can always be estimated pointwise using its explicit form. This will then allow us to obtain asymptotic properties of $h=h^1+h^0$.

\begin{prop}\label{decaymetric}
We have, for all $(t,x) \in [0,T[$,
\begin{align}
\left| \nabla \mathcal{L}_Z^J (h^1) \right|(t,x)&\lesssim \sqrt \epsilon \left\{ \begin{array}{ll} (1+t+r)^{\delta-1} (1+|t-r|)^{-\frac{1}{2}}, & t \geq r \\ (1+t+r)^{\delta-1} (1+|t-r|)^{-1-\gamma}, & t < r \end{array} \right. \hspace{-1mm} \!, \hspace{2mm} &|J| &\leq N-3, \label{pw1} \\
\left| \mathcal{L}_Z^J (h^1) \right|(t,x) &\lesssim \sqrt \epsilon \left\{ \begin{array}{ll} (1+t+r)^{\delta-1} (1+|t-r|)^{\frac{1}{2}}, & t \geq r \\ (1+t+r)^{\delta-1} (1+|t-r|)^{-\gamma}, & t < r \end{array} \right. \hspace{-1mm} \!, &|J| &\leq N-3, \label{pw2} \\
\left| \overline{ \nabla} \mathcal{L}_Z^J (h^1) \right|(t,x) &\lesssim \sqrt \epsilon \left\{ \begin{array}{ll} (1+t+r)^{\delta-2} (1+|t-r|)^{\frac{1}{2}}, & t \geq r \\ (1+t+r)^{\delta-2} (1+|t-r|)^{-\gamma}, & t < r \end{array} \right. \hspace{-1mm} \!, &|J| &\leq N-4.  \label{pw3}
\end{align}
\end{prop}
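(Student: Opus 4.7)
The strategy is to apply the weighted Klainerman-Sobolev inequality (Proposition~\ref{KSwave}) in two regimes, interior ($t \ge r$) and exterior ($t < r$), with the weight $\omega_a^b$ tailored to extract the correct decay exponents in each region. For the derivative estimate \eqref{pw1}, I apply Proposition~\ref{KSwave} directly to $\nabla \mathcal{L}_Z^J h^1$ for $|J| \le N-3$, noting that by Lemma~\ref{lem_com_lie} one has $\mathcal{L}_Z^I \nabla \mathcal{L}_Z^J h^1 = \nabla \mathcal{L}_Z^K h^1$ with $|K| = |I|+|J| \le N-1$, so every $L^2$-norm that appears is controlled by $\overline{\mathcal{E}}_{N-1}^{\gamma,1+2\gamma}[h^1]$ via the bootstrap assumption \eqref{boot1}. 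In the interior, I would use the trivial weight $\omega_0^0\equiv 1$, exploiting the fact that $\overline{\mathcal{E}}_{N-1}^{\gamma,1+2\gamma}$ contains, by definition, the unweighted term $\int_{\Sigma_t} |\nabla \mathcal{L}_Z^K h^1|^2\,\mathrm dx$; this yields the interior bound $(1+t+r)^{\delta-1}(1+|t-r|)^{-1/2}$. In the exterior, I would use the weight $\omega_\gamma^{1+2\gamma}$ itself; the factor $|\omega_\gamma^{1+2\gamma}|^{-1/2} = (1+|u|)^{-(1+2\gamma)/2}$ from Proposition~\ref{KSwave} combines with its universal $(1+|t-r|)^{-1/2}$ prefactor to give exactly $(1+|t-r|)^{-1-\gamma}$.

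For the undifferentiated estimate \eqref{pw2}, the plan is to first trade $L^2$-control on $\nabla \mathcal{L}_Z^J h^1$ for $L^2$-control on $\mathcal{L}_Z^J h^1/(1+|u|)$ via Hardy's inequality (Lemma~\ref{lem_hardy}), and then apply Proposition~\ref{KSwave} to $\mathcal{L}_Z^J h^1$ itself. In the interior, Hardy with $\alpha=0$, $a=b=0$ yields $\|\mathcal{L}_Z^K h^1/(1+|u|)\|_{L^2(\Sigma_t)} \lesssim \sqrt{\epsilon}(1+t)^{\delta}$ for $|K|\le N-1$; applying Proposition~\ref{KSwave} with the choice $\omega_a^b = (1+|u|)^{-2}$ then produces the sharp $(1+|t-r|)^{1/2}$ behaviour. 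In the exterior, Hardy with $a=\gamma$, $b=1+2\gamma$ shifts the weight indices by $(+2,-2)$, giving $\|\mathcal{L}_Z^K h^1 \sqrt{\omega_{\gamma+2}^{2\gamma-1}}\|_{L^2(\Sigma_t)} \lesssim \sqrt{\epsilon}(1+t)^{\delta}$. Applying Proposition~\ref{KSwave} with the (somewhat unusual) weight $\omega_{\gamma+2}^{2\gamma-1}$—whose exterior exponent $b = 2\gamma-1 < 0$ is negative—gives $|\omega_{\gamma+2}^{2\gamma-1}|^{-1/2} = (1+|u|)^{1/2-\gamma}$ outside the light cone, which together with the $(1+|t-r|)^{-1/2}$ from KS yields precisely the claimed $(1+|t-r|)^{-\gamma}$.

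For the good-derivative estimate \eqref{pw3}, the argument is immediate from the previous step: Lemma~\ref{extradecayLie} supplies the bound $|\overline{\nabla}\mathcal{L}_Z^J h^1| \lesssim (1+t+r)^{-1} \sum_{|K|\le |J|+1} |\mathcal{L}_Z^K h^1|$, and since $|J|+1 \le N-3$ whenever $|J|\le N-4$, estimate \eqref{pw2} applies to every term on the right and produces the extra $(1+t+r)^{-1}$ decay claimed in \eqref{pw3}.

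The main conceptual difficulty is the exterior estimate of \eqref{pw2}: the natural weights do not directly deliver the sharp $(1+|t-r|)^{-\gamma}$ decay away from the light cone, and a naive Klainerman-Sobolev argument loses a factor $(1+|t-r|)^{\gamma/2}$. The key observation is that Hardy's inequality maps the weight $\omega_\gamma^{1+2\gamma}$ into $\omega_{\gamma+2}^{2\gamma-1}$, and that Klainerman-Sobolev applied with this shifted weight (with negative $b$-exponent) yields precisely the exponent needed. Keeping track of these two successive weight shifts, and selecting the correct one for the interior versus the exterior, is the main bookkeeping obstacle of the proof; everything else is then a direct consequence of the bootstrap assumption \eqref{boot1}.
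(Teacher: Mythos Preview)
Your treatment of \eqref{pw1} and \eqref{pw3} coincides with the paper's (for \eqref{pw1} a single application of Proposition~\ref{KSwave} with $a=0$, $b=1+2\gamma$ already gives both regions at once; your two separate applications are redundant but harmless). For \eqref{pw2}, however, the paper takes a more elementary route than your Hardy-plus-Klainerman--Sobolev scheme: it simply integrates the pointwise bound \eqref{pw1} along the characteristic $u=t-r$ at fixed $\underline{u}=t+r$. Writing $\varphi(\underline{u},u) = \mathcal{L}_Z^J(h^1)\big(\tfrac{\underline{u}+u}{2}, \tfrac{\underline{u}-u}{2}\,\theta\big)$, one has $2|\partial_u\varphi| \le |\nabla \mathcal{L}_Z^J h^1|$, so \eqref{pw1} controls the integrand; the fundamental theorem of calculus, starting from the initial data at $u=-t-r$ in the exterior or from the light cone $u=0$ in the interior, then gives \eqref{pw2} directly. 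This sidesteps entirely the weight-matching bookkeeping you flag as the main obstacle.

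Your approach also works, but has a small gap: Lemma~\ref{lem_hardy} requires $b>1$, so your interior choice $a=b=0$ is not admissible as stated. The fix is easy---use $a=0$, $b=1+2\gamma$ and observe $(1+|u|)^{-2}\le \omega_0^{1+2\gamma}/(1+|u|)^2$ everywhere---but it is worth being precise. Your exterior argument with the shifted weight $\omega_{\gamma+2}^{2\gamma-1}$ (negative exterior exponent) is correct, since Proposition~\ref{KSwave} imposes no sign constraint on $(a,b)$. In summary, the paper trades your weight analysis for one line of calculus and an explicit use of the initial-data decay; your route is a legitimate alternative but more laborious here.
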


\begin{proof}
The first inequality directly follows from the bootstrap assumption \eqref{boot1} and the Klainerman-Sobolev inequality of Proposition \ref{KSwave}, applied with $a=0$ and $b=1+2\gamma$. 
Let $|J| \leq N-3$, $ \theta \in \mathbb{S}^2$, $(\mu , \nu ) \in \llbracket 0 , 3 \rrbracket$ and 
\begin{equation*}
\varphi_{\mu \nu} :(\underline{u},u) \mapsto \mathcal{L}_Z^J (h^1)_{\mu \nu} \left( \frac{\underline{u}+u}{2},\frac{\underline{u}-u}{2} \theta \right),
\end{equation*}
so that $\mathcal{L}_{Z}^J (h^1)(t,r \theta) = \varphi (t+r,t-r)$. We start by considering the exterior of the light cone, i.e.~we fix $(t,r) \in [0,T[ \times \R_+^*$ such that $r \geq t$. Hence, 
\begin{align}
\nonumber | \mathcal{L}_{Z}^J(h^1)(t,r\theta) |  \hspace{1mm} & \lesssim \hspace{1mm} \sum_{\mu =0}^3 \sum_{\nu =0}^3\left| \varphi_{\mu \nu} (t+r,t-r) \right| \\ \nonumber
&= \hspace{1mm} \sum_{\mu =0}^3 \sum_{\nu =0}^3 \left| \int_{u=-t-r}^{t-r} \partial_u \varphi_{\mu \nu} (t+r, u) \dr u + \varphi_{\mu \nu} (t+r,-t-r) \right| \\ \nonumber
&\lesssim \hspace{1mm} \int_{u=-t-r}^{t-r} \left| \nabla  \mathcal{L}_{Z}^J(h^1) \right| \left( \frac{t+r+u}{2},\frac{t+r-u}{2} \theta \right) \dr u  + \left| \mathcal{L}_{Z}^J (h^1) \right| \left( 0,(t+r) \theta \right)  \nonumber \\ 
& \lesssim \hspace{1mm} \frac{\sqrt{\epsilon}}{(1+t+r)^{1-\delta}} \int_{u=-t-r}^{t-r} \frac{\mathrm du}{(1+|u|)^{1 + \gamma}} + \frac{\sqrt{\epsilon}}{(1+t+r)^{1+\gamma}} \nonumber \\
& \lesssim \hspace{1mm} \frac{\sqrt{\epsilon}}{(1+t+r)^{1-\delta} (1+|r-t|)^{\gamma}}. \nonumber
\end{align}
We can now treat the remaining region and we then fix $(t,r) \in [0,T[ \times \R_+^*$ such that $r \leq t$. We have
\begin{align}
\nonumber \left| \mathcal{L}_{Z}^J (h^1)(t,r\theta) \right| \hspace{1mm} &= \hspace{1mm} \sum_{\mu =0}^3 \sum_{\nu =0}^3 \left| \int_{u=0}^{t-r} \partial_u \varphi_{\mu \nu} (t+r, u) \mathrm du + \varphi_{\mu \nu} (t+r,0) \right| \\
&\leq \hspace{1mm}  \int_{u=0}^{t-r} \left| \nabla  \mathcal{L}_{Z}^J (h^1) \right| \! \left( \frac{t+r+u}{2},\frac{t+r-u}{2} \theta \right)  \! \mathrm du + \left| \mathcal{L}_Z^J ( h^1)\right| \! \left( \frac{t+r}{2},\frac{t+r}{2} \theta \right)  \nonumber \\
& \lesssim  \hspace{1mm} \frac{\sqrt{\epsilon}}{(1+t+r)^{1-\delta}}\int_{u=0}^{t-r} \frac{\mathrm du}{(1+|u|)^{\frac{1}{2}}} + \frac{\sqrt{\epsilon}}{(1+t+r)^{1-\delta}} \hspace{1mm} \lesssim \hspace{1mm} \sqrt{\epsilon} \frac{(1+|t-r|)^{\frac{1}{2}}}{(1+t+r)^{1-\delta} }. \nonumber
\end{align}
For the third estimate, we use the inequality \eqref{eq:extradecayLie} of Lemma \ref{extradecayLie} and the estimate \eqref{pw2}.
\end{proof}

In order to obtain the decay rate of $\mathcal{L}_Z^J(h)$, for $|J| \leq N-3$, it remains to study $h^0$ and its derivatives. The following result is a direct consequence of Proposition \ref{decaySchwarzschild0} and $M \leq \sqrt{\epsilon}$.

\begin{prop}\label{decaySchwarzschild}
For all $Z^{J} \in \mathbb{K}^{|I|}$, there exists $C_{J} >0$ such that for all $(t,x) \in \R_+ \times \R^3$,
\begin{equation}
\left| \mathcal{L}_{Z}^J (h^0) \right| (t,x)  \leq \frac{C_J \sqrt{\epsilon}}{1+t+r} \qquad \mathrm{and} \qquad \left| \nabla \mathcal{L}_{Z}^J (h^0) \right| (t,x)  \leq  \frac{C_J\sqrt{\epsilon}}{(1+t+r)^2}.
\end{equation}
\end{prop}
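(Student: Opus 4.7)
The statement is an immediate corollary of Proposition \ref{decaySchwarzschild0}, so the plan is essentially bookkeeping. Proposition \ref{decaySchwarzschild0} asserts that for every multi-index $J$ there exists $C_J>0$ such that
\[
\left| \mathcal{L}_{Z}^J (h^0) \right| (t,x) \leq C_J\frac{M}{1+t+r}, \qquad \left| \nabla \mathcal{L}_{Z}^J (h^0) \right| (t,x) \leq C_J\frac{M}{(1+t+r)^2},
\]
with a constant $C_J$ that is independent of the mass parameter $M$ and of the spacetime point, depending only on $J$ through the combinatorics arising from the Leibniz rule applied to $h^0_{\mu\nu} = \chi\!\left(\tfrac{r}{1+t}\right)\tfrac{M}{r}\delta_{\mu\nu}$.

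Recall from the setup of the bootstrap argument (the standing assumption that $M\leq\sqrt\epsilon$) that the Schwarzschild mass parameter can be bounded above by $\sqrt\epsilon$. Inserting this bound directly into the right-hand sides above yields
\[
\left| \mathcal{L}_{Z}^J (h^0) \right| (t,x) \leq \frac{C_J\sqrt\epsilon}{1+t+r}, \qquad \left| \nabla \mathcal{L}_{Z}^J (h^0) \right| (t,x) \leq \frac{C_J\sqrt\epsilon}{(1+t+r)^2},
\]
which is exactly the content of Proposition \ref{decaySchwarzschild}, with the same constants $C_J$ inherited from the preceding proposition.

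There is no real obstacle here: all the work has already been carried out in the proof of Proposition \ref{decaySchwarzschild0}, where the explicit form of $h^0$, the support properties of $\chi$ (forcing $1+t+r\lesssim r$ and $1+t+r\lesssim 1+t$ on the region where derivatives of the cutoff are non-trivial), and the polynomial structure of iterated derivatives of $1/r$ under elements of $\mathbb K$ were combined. The only ingredient added in the present proposition is the smallness hypothesis $M\leq\sqrt\epsilon$ on the ADM-type parameter, which is part of the hypotheses propagated throughout the bootstrap.
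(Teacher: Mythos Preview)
Your proposal is correct and matches the paper's approach exactly: the paper states that this proposition ``is a direct consequence of Proposition \ref{decaySchwarzschild0} and $M \leq \sqrt{\epsilon}$'' and gives no further proof. Your write-up simply makes this explicit, with the same two ingredients.
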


\begin{rem}
In the interior of the light cone, the behaviour of $\mathcal{L}_Z^J (h)$ is clearly given by $\mathcal{L}_Z^J (h^1)$. In the exterior region, note that $\mathcal{L}_Z^J (h^0)$ has a weaker decay rate than $\mathcal{L}_Z^J (h^1)$ when $r > 2t$ but a stronger one when $t \sim r$.
\end{rem}

We can improve the decay estimates satisfied by certain null components of $h^1$ through the wave gauge condition. According to Proposition \ref{wgc} as well as the pointwise decay estimates given by Propositions \ref{decaymetric} and \ref{decaySchwarzschild} (recall that $h=h^0+h^1$), we obtain the following results.
\begin{prop}
For any multi-index $|J| \leq N$, there holds for all $(t,x) \in [0,T[ \times \R^3$,
\begin{align}\label{wgcforproof}
 \left| \nabla \mathcal{L}_Z^J (h^1) \right|_{\mathcal{L} \mathcal{T}}^2 \hspace{1mm} & \lesssim \hspace{1mm} \left| \overline{\nabla} \mathcal{L}_Z^J (h^1) \right|_{\mathcal{T} \mathcal{U}}^2+\frac{\epsilon }{(1+t+r)^4}  \mathds{1}_{r \leq \frac{1+t}{2}}+\frac{\epsilon }{(1+t+r)^6} \\ \nonumber & \quad \hspace{1mm} +\frac{\epsilon (1+|u|)}{(1+t+r)^{2-2\delta}}  \sum_{|K| \leq |J|} \left( \left| \nabla \mathcal{L}_Z^K ( h^1) \right|^2+\frac{\left|  \mathcal{L}_Z^K (h^1) \right|^2}{(1+|u|)^2} \right).
 \end{align}
 \end{prop}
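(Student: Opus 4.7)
The plan is to square the pointwise wave-gauge identity \eqref{eq:wgch1} of Proposition \ref{wgc} and dominate each resulting error piece by one of the tail terms on the right-hand side of the target estimate. Using $(a+b+c)^2 \lesssim a^2+b^2+c^2$, the leading term reproduces $|\overline{\nabla} \mathcal{L}_Z^J (h^1)|_{\mathcal{T}\mathcal{U}}^2$; the Schwarzschild boundary contribution $M\,\mathds{1}_{(1+t)/4 \leq r \leq (1+t)/2}/(1+t+r)^2$, combined with $M \leq \sqrt{\epsilon}$, immediately gives the $\epsilon\,\mathds{1}_{r \leq (1+t)/2}/(1+t+r)^4$ tail. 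It therefore only remains to analyse the quadratic sum $\sum_{|K|+|M| \leq |J|} |\mathcal{L}_Z^K h|^2 |\nabla \mathcal{L}_Z^M h|^2$.

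For this sum I would split $h=h^0+h^1$ and use Proposition \ref{decaySchwarzschild} to absorb the Schwarzschild contributions. The fully-Schwarzschild product $|\mathcal{L}_Z^K h^0|^2 |\nabla \mathcal{L}_Z^M h^0|^2 \lesssim \epsilon^2/(1+t+r)^6 \leq \epsilon/(1+t+r)^6$ produces the third tail term. The mixed product $|\mathcal{L}_Z^K h^0|^2 |\nabla \mathcal{L}_Z^M h^1|^2 \lesssim \epsilon(1+t+r)^{-2}|\nabla \mathcal{L}_Z^M h^1|^2$ is absorbed into $\epsilon(1+|u|)(1+t+r)^{-2+2\delta}|\nabla \mathcal{L}_Z^M h^1|^2$ via the trivial bound $1 \leq (1+|u|)(1+t+r)^{2\delta}$, and symmetrically for $|\mathcal{L}_Z^K h^1|^2 |\nabla \mathcal{L}_Z^M h^0|^2$, using the factor $\epsilon(1+t+r)^{-4}$ together with $1 \leq (1+|u|)^{-2}(1+t+r)^{2+2\delta}$ (valid since $1+|u| \leq 1+t+r$) to generate the $|\mathcal{L}_Z^K h^1|^2/(1+|u|)^2$ contribution.

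The only genuinely substantive term is $|\mathcal{L}_Z^K h^1|^2 |\nabla \mathcal{L}_Z^M h^1|^2$. Since $|K|+|M| \leq |J| \leq N$ and $N \geq 6$, at least one of the two indices is bounded by $N/2 \leq N-3$, so Proposition \ref{decaymetric} applies to the corresponding factor. If $|M| \leq N-3$, then squaring \eqref{pw1} yields $|\nabla \mathcal{L}_Z^M h^1|^2 \lesssim \epsilon(1+t+r)^{-2+2\delta}(1+|u|)^{-1}$ in both regions (the exterior rate being strictly better), producing the $\epsilon(1+|u|)(1+t+r)^{-2+2\delta}\,|\mathcal{L}_Z^K h^1|^2/(1+|u|)^2$ term on the right. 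If instead $|K| \leq N-3$, squaring \eqref{pw2} together with the elementary inequality $(1+|u|)^{-2\gamma} \leq 1+|u|$ (used in the exterior region) gives the uniform bound $|\mathcal{L}_Z^K h^1|^2 \lesssim \epsilon(1+|u|)(1+t+r)^{-2+2\delta}$, which produces the $|\nabla \mathcal{L}_Z^M h^1|^2$ contribution.

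The argument is essentially mechanical and introduces no bootstrap assumption beyond those already used to derive Propositions \ref{decaymetric} and \ref{decaySchwarzschild}; there is no essential obstacle. The only point deserving care is the interior/exterior dichotomy for the $|\mathcal{L}_Z^K h^1|^2$ factor, which is where the precise weights $(1+|u|)^{\pm 1}$ in the target estimate get pinned down, and the dyadic split $\min(|K|,|M|) \leq N-3$ that makes every quadratic term match one of the two summands on the right.
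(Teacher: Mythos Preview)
Your proposal is correct and follows exactly the approach the paper indicates: squaring the wave-gauge estimate \eqref{eq:wgch1}, splitting $h=h^0+h^1$, using Proposition \ref{decaySchwarzschild} for the Schwarzschild pieces and Proposition \ref{decaymetric} for the $h^1$ factor of low enough order in the quadratic term. The paper does not spell out the details beyond citing these ingredients, and your write-up is a faithful execution of them.
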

 \begin{rem}
This inequality will be used several times in this article. Apart from its application during the proof of Propositions \ref{TUintegralbound2} and \ref{BoundL1} below, we will always bound the term $\left| \overline{\nabla} \mathcal{L}_Z^J h^1 \right|_{\mathcal{T} \mathcal{U}}^2$ by $\left| \overline{\nabla} \mathcal{L}_Z^J h^1 \right|^2$.
 \end{rem}
\begin{prop}\label{estinullcompo}
The following improved decay estimates hold. \noindent
On the $\mathcal{T} \mathcal{U}$ component, we have for all $(t,x) \in [0,T[ \times \mathbb R^3$,
\begin{equation}\label{eq:TU1}
\left| \nabla \mathcal{L}_Z^J (h^1) \right|_{\mathcal{T} \mathcal{U}} \lesssim \sqrt \epsilon \left\{ \begin{array}{ll} (1+t+r)^{\frac{\delta}{2}-1} (1+|t-r|)^{-\frac{1}{2} + \gamma}, & t \geq r \\ (1+t+r)^{\frac{\delta}{2}-1} (1+|t-r|)^{ -1-\frac{\gamma}{2}}, & t < r \end{array} \right., \quad |J| \leq N - 3.
\end{equation}
On the $\mathcal{L} \mathcal{T}$ and $\mathcal{L} \mathcal{L}$ components, we have for all $(t,x) \in [0,T[ \times \mathbb R^3$,
\begin{align}
\left| \nabla \mathcal{L}_Z^J (h^1) \right|_{\mathcal{L} \mathcal{T}} &\lesssim \sqrt \epsilon \left\{ \begin{array}{ll} (1+t+r)^{2\delta-2} (1+|t-r|)^{\frac{1}{2} - \delta}, & t \geq r \\ (1+t+r)^{2\delta-2}(1+|t-r|)^{ - \gamma - \delta}, & t < r \end{array} \right., \quad &|J| &\leq N-4, \label{eq:LT1} \\
\left| \mathcal{L}_Z^J (h^1) \right|_{\mathcal{L} \mathcal{T}} &\lesssim \sqrt \epsilon \left\{ \begin{array}{ll} (1+t+r)^{-1-\gamma+\delta} (1+|t-r|)^{\frac{1}{2}+\gamma}, & t \geq r \\ (1+t+r)^{-1-\gamma+\delta} , & t < r \end{array} \right., &|J| &\leq N-4, \label{eq:LT2} \\
\left| \overline{ \nabla } \mathcal{L}_Z^J (h^1) \right|_{\mathcal{L} \mathcal{L}} &\lesssim \sqrt \epsilon \left\{ \begin{array}{ll} (1+t+r)^{-2-\gamma+\delta} (1+|t-r|)^{\frac{1}{2}+\gamma}, & t \geq r \\ (1+t+r)^{-2-\gamma+\delta}  , & t < r \end{array} \right., &|J| &\leq N-5. \label{eq:LT3}
\end{align}
\end{prop}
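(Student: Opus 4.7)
The plan is to prove the four estimates in the order stated, each leveraging the previous ones. For \eqref{eq:TU1}, I would apply the weighted Klainerman--Sobolev inequality of Proposition \ref{KSwave} with parameters $(a,b)=(2\gamma,1+\gamma)$, which match those appearing in the bootstrap assumption \eqref{boot3}. Since \eqref{boot3} only controls the $\mathcal{T}\mathcal{U}$ projection of the energy, I apply the inequality to the scalar quantities $\chi(r/(1+t))\,\nabla_V\mathcal{L}_Z^J(h^1)(T,U)$ with $V\in\mathcal{U}$, $T\in\mathcal{T}$, $U\in\mathcal{U}$, and invoke Lemma \ref{equi_norms} to identify the resulting weighted $L^2$ norms with $\mathcal{E}^{2\gamma,1+\gamma}_{N-1,\mathcal{T}\mathcal{U}}[h^1](t)\lesssim C_{\mathcal{T}\mathcal{U}}\epsilon(1+t)^\delta$. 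Evaluating the prefactor $(\omega_{2\gamma}^{1+\gamma})^{-1/2}$ in the interior and exterior regions then produces the stated rates.

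Estimate \eqref{eq:LT1} follows by inserting the pointwise bounds \eqref{pw1}, \eqref{pw2} into the wave-gauge consequence \eqref{wgcforproof}. The leading term $|\overline\nabla\mathcal{L}_Z^J(h^1)|_{\mathcal{T}\mathcal{U}}^2$ is reduced, via Lemma \ref{extradecayLie}, inequality \eqref{eq:extradecayLie}, to $(1+t+r)^{-2}\sum_{|K|\le|J|+1}|\mathcal{L}_Z^K h^1|^2$ (this loss of one derivative accounts for the drop from $|J|\le N-3$ to $|J|\le N-4$), and then bounded using \eqref{pw2}. The Schwarzschild-cutoff contributions $\epsilon(1+t+r)^{-4}\mathds 1_{r\le (1+t)/2}$ and $\epsilon(1+t+r)^{-6}$ are harmless, while the quadratic remainder $\frac{\epsilon(1+|u|)}{(1+t+r)^{2-2\delta}}\sum_{|K|\le|J|}\big(|\nabla\mathcal{L}_Z^K h^1|^2+(1+|u|)^{-2}|\mathcal{L}_Z^K h^1|^2\big)$ collapses, after applying \eqref{pw1}--\eqref{pw2}, to $\epsilon^2(1+t+r)^{4\delta-4}$. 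Taking square roots and distributing the decay via $(1+|u|)\le(1+t+r)$ yields the stated form.

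For \eqref{eq:LT2}, I would integrate $\partial_r(\mathcal{L}_Z^J h^1)_{LT}$ at fixed $(t,\theta)$, starting from the light cone $r=t$, where \eqref{pw2} gives the initial value $|\mathcal{L}_Z^J h^1|_{\mathcal{L}\mathcal{T}}\big|_{r=t}\lesssim\sqrt\epsilon(1+t)^{\delta-1}$. The radial derivative satisfies
\[
|\partial_s(\mathcal{L}_Z^J h^1)_{LT}|(t,s\theta)\;\lesssim\; |\nabla\mathcal{L}_Z^J h^1|_{\mathcal{L}\mathcal{T}}+s^{-1}|\mathcal{L}_Z^J h^1|_{\mathcal{L}\mathcal{T}},
\]
the second summand arising from the frame identity $\nabla_{\partial_r}e_A=-r^{-1}e_A$. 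In the exterior $r\ge t$ and in the near-cone interior $r\ge(1+t)/4$, one has $s\gtrsim 1+t+r$ along the integration, so the potentially singular factor $s^{-1}$ is comparable to $(1+t+r)^{-1}$ and a direct integration of \eqref{eq:LT1}, with the frame correction reabsorbed via \eqref{pw2}, yields the claim. In the deep interior $r\le(1+t)/4$, where the $s^{-1}$ term would be genuinely dangerous, I bypass the integration entirely: there $1+|t-r|\sim 1+t+r$, and \eqref{pw2} alone gives $|\mathcal{L}_Z^J h^1|\lesssim\sqrt\epsilon(1+t+r)^{\delta-1/2}$, which coincides with the claimed rate of \eqref{eq:LT2} in that region.

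Finally, \eqref{eq:LT3} follows directly from Lemma \ref{extradecayLie}, estimate \eqref{eq:extradecayLie4}, which yields $|\overline\nabla\mathcal{L}_Z^J h^1|_{\mathcal{L}\mathcal{L}}\lesssim(1+t+r)^{-1}\sum_{|K|\le 1}|\mathcal{L}_Z^K\mathcal{L}_Z^J h^1|_{\mathcal{L}\mathcal{T}}$, combined with \eqref{eq:LT2} applied at order $|J|+1\le N-4$. The main obstacle in the whole argument lies in step three: the singular frame factor $s^{-1}$ produced by $\nabla_{\partial_r}e_A$ would give a logarithmic loss in a naive Grönwall estimate. The resolution is the observation that a genuine integration is only required in the near-light-cone region, where $s\sim 1+t+r$ makes the correction harmless, while the far interior is already covered by the weaker pointwise bound \eqref{pw2}.
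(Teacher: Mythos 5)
Your treatment of \eqref{eq:TU1} and \eqref{eq:LT1} is in the spirit of the paper, though with an imprecision in the first step: the Klainerman--Sobolev argument requires controlling $Z^I$-derivatives of $\chi\nabla_\mu\mathcal{L}_Z^J(h^1)_{TU}$, and the paper does this by explicitly expanding the commutators $[Z^{L},T]$, $[Z^{L},U]$ (each stays in $\mathcal{T}\times\mathcal{U}$ up to a piece carrying the extra factor $(1+|t-r|)/(1+t+r)$, which is then absorbed using the bootstrap \eqref{boot1}), not by citing Lemma \ref{equi_norms}, which only compares $L^2$ energy norms of the undifferentiated quantities.

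The argument for \eqref{eq:LT2} has a genuine gap. You propose to integrate in $r$ at fixed $t$, starting from the light cone $r=t$ with the boundary value taken from \eqref{pw2}, namely $\sqrt\epsilon(1+2t)^{\delta-1}$. But the target bound \eqref{eq:LT2} evaluated at $r=t$ is $\sqrt\epsilon(1+2t)^{\delta-1-\gamma}$ --- smaller by a factor $(1+2t)^\gamma$. Your starting value already exceeds the estimate you are trying to prove, so no integration launched from the light cone with \eqref{pw2} as initial datum can close. The extra factor $(1+t+r)^{-\gamma}$ in \eqref{eq:LT2} does not come from the bulk bound \eqref{eq:LT1}; it is propagated from the \emph{initial data surface}. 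The paper integrates $\nabla_{\underline L}(\mathcal{L}_Z^J h^1)_{LT}$ along rays of constant $\underline u=t+r$: for $r\ge t$ this reaches the slice $\{t'=0\}$ at $r'=t+r$, where the initial-data assumptions give $|\mathcal{L}_Z^J(h^1)_{LT}|\lesssim\sqrt\epsilon(1+t+r)^{-1-\gamma}$; for $r\le t$ one then uses the already-established exterior estimate at the cone point $\left(\frac{t+r}{2},\frac{t+r}{2}\theta\right)$ as the starting value. Your outline never touches the initial data surface, so it structurally cannot produce the $(1+t+r)^{-\gamma}$ gain.

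Moreover, the ``singular frame factor $s^{-1}$'' that occupies your final paragraph is a red herring: in the paper's framework $\nabla_{\partial_r}V=0$ for every $V\in\mathcal U$ (used explicitly in the proof of Lemma \ref{lem_hardy}), so $\partial_r(k_{LT})=\nabla_{\partial_r}(k)_{LT}$ with no correction term. The subtlety you identified does not exist, while the real obstruction --- the light-cone boundary value --- is the one you missed. Finally, for \eqref{eq:LT3}, note that \eqref{eq:extradecayLie4} is stated only for $r\ge\frac{1+t}{2}$; the region $r<\frac{1+t}{2}$ must be covered separately, as the paper does directly via Proposition \ref{decaymetric}.
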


\begin{proof}
We start by the $\mathcal{T} \mathcal{U}$-components. According to Proposition \ref{decaymetric}, the estimate \eqref{eq:TU1} holds in the region $r \leq \frac{t+1}{2}$. If $|x| \geq \frac{t+1}{2}$, the Klainerman-Sobolev inequality of Proposition \ref{KSwave} gives, for $|J| \leq N-3$,
since $\chi_{\vert ] \frac{1}{2} , + \infty ]} =1$, 
$$(1+t+r) \omega^{1+\frac{\gamma}{2}}_{-\frac{1}{2}+\gamma} |\nabla \mathcal{L}_Z^J(h^1)|_{\mathcal{T} \mathcal{U}} \lesssim  \sum_{\substack{ 0 \leq \mu \leq 3 \\ (T,U) \in \mathcal{T} \times \mathcal{U} }} \sum_{|I| \leq 2} \left\| Z^I\!\left(\chi\!\left( \frac{r}{1+t} \right) \nabla_{\mu}  \mathcal{L}_Z^J (h^1)_{TU}  \right) \omega^{\frac{1+\gamma}{2}}_{\gamma} \right\|_{L^2(\Sigma_t)}\!.$$
It then remains to bound the right-hand side of the previous inequality. Let us fix $\mu \in \llbracket 0, 3 \rrbracket$ and $(T,U) \in \mathcal{T} \times \mathcal{U}$. Using Lemma \ref{lem_tech} we get, for any $|I| \leq 2$,
$$ \left\| Z^I\!\left( \! \chi\!\left( \frac{r}{1+t} \right) \! \nabla_{\mu} \mathcal{L}_Z^J (h^1)_{TU} \right) \! \omega^{\frac{1+\gamma}{2}}_{\gamma} \right\|_{L^2(\Sigma_t)}  \lesssim \sum_{|Q| \leq 2} \left\| Z^Q\!\left(\nabla_{\mu} \mathcal{L}_Z^J (h^1)_{TU} \right) \omega^{\frac{1+\gamma}{2}}_{\gamma} \right\|_{L^2\left(\left\{r \geq \frac{t+1}{4}\right\}\right)} \!.$$
We denote by $[Z_1 Z_2,X]$ the nested commutator $[Z_1,[Z_2,X]]$ where $Z_1$, $Z_2$ and $X$ are arbitrary vector fields. We can bound the right-hand side of the previous inequality by
$$\mathfrak{D} \hspace{1mm} := \hspace{1mm} \sum_{|K|+|L_1|+|L_2| \leq 2}\left\| \mathcal{L}_Z^K \nabla_{\mu} \mathcal{L}_Z^I (h^1)([Z^{L_1},T],[Z^{L_2},U]) \omega^{\frac{1+\gamma}{2}}_{\gamma} \right\|_{L^2\left(\left\{r \geq \frac{t+1}{4}\right\}\right)}.$$
Note now that
\begin{itemize}
\item either $[\mathcal{L}_Z,\nabla_{\mu}]=0$ or there exists $\nu \in \llbracket 0,3 \rrbracket$ such that $[\mathcal{L}_Z,\nabla_{\mu}]= \pm \nabla_{\nu}$.
\item Following the proof of \eqref{eq:LieVminus} and using
$$\forall \hspace{0.5mm} Z \in \mathbb{K}, \qquad |Z(r)|+|Z(t+r)| \lesssim 1+t+r, \quad |Z(t-r)| \lesssim 1+|t-r|,$$
one can prove that for all $r \geq \frac{1+t}{4}$ and $|L| \leq 2$,
$$ [Z^L,T]  = \sum_{W\in \mathcal{T}} b_W W + \sum_{X\in\mathcal{U}} d_X X, \qquad [Z^L,U] = \sum_{Y\in\mathcal{U}} \overline{b}_Y Y, $$
where $|d_X| \lesssim \frac{1+|t-r|}{1+t+r}$ and $|b_W|+|\overline{b}_Y| \lesssim 1$ since $1+t+r \lesssim r$ in this region.
\end{itemize}
We then deduce, since $\frac{1+|t-r|}{1+t+r} \lesssim \omega^{\frac{\gamma}{2}}_{-\frac{\gamma}{2}}(1+t)^{-\frac{\gamma}{2}}$, that
\begin{align*}
 \mathfrak{D} & \lesssim \hspace{-0.4mm} \sum_{|K| \leq |J|+ 2} \left\| \left| \nabla \mathcal{L}_Z^K (h^1)\right|_{\mathcal{T} \mathcal{U}} \omega^{\frac{1+\gamma}{2}}_{\gamma} \right\|_{L^2\left(\left\{r \geq \frac{t+1}{4}\right\}\right)} \hspace{-0.7mm} + \left\| \left| \nabla \mathcal{L}_Z^K (h^1)\right| \! \frac{1+|t-r|}{1+t+r} \omega^{\frac{1+\gamma}{2}}_{\gamma} \right\|_{L^2\left(\left\{r \geq \frac{t+1}{4}\right\}\right)} \\ 
 & \lesssim \left| \mathcal{E}^{2\gamma , 1+\gamma}_{N-1,\mathcal{T} \mathcal{U}}[h^1](t)\right|^{\frac{1}{2}} + \frac{\left|\overline{\mathcal{E}}^{\gamma, 1+2 \gamma}_{N-1}[h^1](t)\right|^{\frac{1}{2}}}{(1+t)^{\frac{\gamma}{2}}}.
 \end{align*}
 The pointwise decay estimate \eqref{eq:TU1} then follows from the bootstrap assumptions \eqref{boot1} and \eqref{boot3} as well as $2\delta < \gamma$.
 
Now consider the $\mathcal{L} \mathcal{T}$ components and assume that $|J| \leq N-4$. The first estimate can be obtained from the wave gauge condition \eqref{wgcforproof} and the three inequalities of Proposition \ref{decaymetric}. For the second one, fix $ \theta \in \mathbb{S}^2$ and consider, for $T \in \mathcal{T}$, the function
\begin{equation*}
\varphi :(\underline{u},u) \mapsto \mathcal{L}_Z^J (h^1)_{LT} \left( \frac{\underline{u}+u}{2},\frac{\underline{u}-u}{2} \theta \right), 
\end{equation*}
so that $\mathcal{L}_Z^J ( h^1)_{LT}(t,r\theta) = \varphi (t+r,t-r)$. Since $\nabla_{\underline{L}} L= \nabla_{\underline{L}} T =0$, we have
$$2\partial_u \varphi (\underline{u}, u) \hspace{1mm} = \hspace{1mm}  \underline{L} \left( \mathcal{L}_Z^J (h^1)_{LT} \left( \frac{\underline{u}+u}{2},\frac{\underline{u}-u}{2} \theta \right) \right) \hspace{1mm} = \hspace{1mm} \left( \nabla_{\underline{L}} \mathcal{L}_Z^J h^1 \right)_{LT} \left( \frac{\underline{u}+u}{2},\frac{\underline{u}-u}{2} \theta \right).$$
Let now $(t,r) \in [0,T[ \times \R_+^*$ such that $r \geq t$. Using the estimate \eqref{eq:LT1} and the good decay properties of the initial data, we obtain
\begin{align*}
\nonumber | \mathcal{L}_Z^J (h^1)_{LT}(t,r\theta) | \hspace{1mm} &= \hspace{1mm} \left| \varphi (t+r,t-r) \right| \hspace{1mm} =  \hspace{1mm} \left| \int_{u=-t-r}^{t-r} \partial_u \varphi (t+r, u) \dr u + \varphi (t+r,-t-r) \right| \\ \nonumber
&\lesssim \hspace{1mm} \frac{\sqrt{\epsilon}}{(1+t+r)^{2-2\delta}}\int_{u=-t-r}^{t-r} \frac{\mathrm du}{(1+|u|)^{\gamma + \delta}} + \left| \mathcal{L}_Z^I (h^1)_{LT}\right|(0,(t+r) \theta)  \\
& \lesssim \hspace{1mm} \sqrt{\epsilon} \frac{(1+|-t-r|)^{1-\gamma-\delta}}{(1+t+r)^{2-2\delta}} +  \frac{\sqrt{\epsilon}}{(1+t+r)^{1+\gamma}} \hspace{1mm} \lesssim \hspace{1mm}  \frac{\sqrt{\epsilon}}{(1+t+r)^{1+\gamma-\delta}} .
\end{align*}
On the other hand, if $r \leq t$, we have
\begin{align*}
\left| \mathcal{L}_Z^J  (h^1)_{LT}(t,r\theta) \right| \hspace{1mm} &= \hspace{1mm} \left| \varphi (t+r,t-r) \right| \hspace{1mm} =  \hspace{1mm} \left| \int_{u=0}^{t-r} \partial_u \varphi (t+r, u) \dr u + \varphi (t+r,0) \right| \\ 
&\lesssim \hspace{1mm} \frac{\sqrt{\epsilon}}{(1+t+r)^{2-2\delta}}\int_{u=0}^{t-r} (1+|u|)^{\frac{1}{2}-\delta}du + \left| \mathcal{L}_Z^I (h^1)_{LT}\right| \left(\frac{t+r}{2},\frac{t+r}{2} \theta \right)  \\
&\lesssim \hspace{1mm} \sqrt{\epsilon} \frac{(1+|t-r|)^{\frac{3}{2}-\delta}}{(1+t+r)^{2-2\delta} }+\frac{\sqrt{\epsilon}}{(1+t+r)^{1+\gamma-\delta}} \hspace{1mm} \lesssim \hspace{1mm} \sqrt{\epsilon} \frac{(1+|t-r|)^{\frac{1}{2}+\gamma}}{(1+t+r)^{1+\gamma-\delta}}.
\end{align*}
Finally, \eqref{eq:LT3} directly ensues from the estimate \eqref{eq:extradecayLie4} of Lemma \ref{extradecayLie} and \eqref{eq:LT2} if $r \geq \frac{1+t}{2}$ and from Proposition \ref{decaymetric} otherwise.
\end{proof}
\begin{rem}\label{forenergywave}
Note that using Proposition \ref{Prop_H_to_h} as well as the pointwise decay estimates given by Propositions \ref{decaymetric}, \ref{decaySchwarzschild} and \ref{estinullcompo}, one can check that
$$ \frac{|H|}{1+|u|}+  |\nabla H |  \leq  \frac{ C \overline{C}^{\frac{1}{2}}\sqrt{\epsilon}}{(1+t+r)^{\frac{1}{2}}(1+|u|)^{\frac{1+\gamma}{2}}} , \hspace{0.9cm} \frac{|H|_{\mathcal{L} \mathcal{T}}}{1+|u|}+ |\nabla H |_{\mathcal{L} \mathcal{T}}+|\overline{\nabla} H|  \leq   \frac{ C \overline{C}^{\frac{1}{2}}\sqrt{\epsilon}}{1+t+r} ,$$
so that we will be able to apply the energy estimates of Propositions \ref{LRenergy} and \ref{prop_vlasov_cons} for well-chosen parameters $a$ and $b$. 

The estimate $|\overline{\nabla} H|_{\mathcal{L} \mathcal{L}} \lesssim \sqrt{\epsilon}\frac{1+|t-r|}{(1+t+r)^2}$, which can be obtained in a similar way, will also be useful.
\end{rem}
When $h^1$ is differentiated by at least one translation, we can improve the pointwise decay estimates given by Propositions \ref{decaymetric} and \ref{estinullcompo}. Note that certain of the following decay rates could be improved, in particular in the exterior of the light cone.
\begin{prop}\label{decayJTgeq1}
Let $J$ be a multi-index satisfying $|J| \leq N-5$ and $J^T \geq 1$, i.e. $Z^J$ contains at least one translation. Then, for all $(t,x) \in [0,T[ \times \R^3$,
\begin{align*}
\left| \nabla \mathcal{L}_Z^J (h^1) \right|(t,x) \hspace{1mm} & \lesssim \hspace{1mm} \frac{\sqrt{\epsilon}}{(1+t+r)^{1-\delta} (1+|t-r|)^{\frac{3}{2}}}, \\
\left|  \mathcal{L}_Z^J (h^1) \right|(t,x) \hspace{1mm} & \lesssim \hspace{1mm} \frac{\sqrt{\epsilon}}{(1+t+r)^{1-\delta} (1+|t-r|)^{\frac{1}{2}}}, \\
\left| \overline{\nabla} \mathcal{L}_Z^J (h^1) \right|(t,x) \hspace{1mm} & \lesssim \hspace{1mm} \frac{\sqrt{\epsilon}}{(1+t+r)^{2-\delta} (1+|t-r|)^{\frac{1}{2}}}, \\
\left|\nabla \mathcal{L}_Z^J (h^1) \right|_{\mathcal{L} \mathcal{T}}(t,x) \hspace{1mm} & \lesssim \hspace{1mm} \frac{\sqrt{\epsilon}}{(1+t+r)^{2-2\delta} (1+|t-r|)^{\frac{1}{2}}}, \\
\left| \mathcal{L}_Z^J (h^1) \right|_{\mathcal{L} \mathcal{T}}(t,x) \hspace{1mm} &\lesssim \hspace{1mm} \sqrt{\epsilon} \frac{(1+|t-r|)^{\frac{1}{2}}}{(1+t+r)^{2-2\delta} }, \\
\left| \overline{ \nabla } \mathcal{L}_Z^J (h^1) \right|_{\mathcal{L} \mathcal{L}}(t,x) \hspace{1mm} &\lesssim \hspace{1mm} \sqrt{\epsilon} \frac{(1+|t-r|)^{\frac{1}{2}}}{(1+t+r)^{3-2\delta}}.
\end{align*}
\end{prop}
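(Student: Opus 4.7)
The mechanism is a systematic \emph{translation extraction}. Since $J^T \geq 1$, using the commutators $[\partial_\mu, Z] \in \{0, \pm \partial_0, \ldots, \pm \partial_3, \pm Z\}$ available for every $Z \in \mathbb{K}$ (indeed $[\partial_\mu, \Omega_{ij}]$ and $[\partial_\mu, \Omega_{0k}]$ are translations and $[\partial_\mu, S] = \partial_\mu$), we can commute one translation factor to the outermost position and combine with Lemma \ref{lem_com_lie} to write
\begin{equation*}
\mathcal{L}_Z^J h^1 \;=\; \sum_{\mu=0}^{3}\sum_{\substack{|J'|\leq |J|-1 \\ (J')^T \geq J^T-1}} C_{\mu,J'}\,\partial_\mu \mathcal{L}_Z^{J'} h^1,
\end{equation*}
for integer coefficients $C_{\mu,J'}$. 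Thus one cartesian derivative is available on the outside, to which we shall apply the basic Lemma \ref{extradecay}, gaining an extra $(1+|t-r|)^{-1}$, or the good-derivative refinement \eqref{eq:extradecayLie} of Lemma \ref{extradecayLie}, gaining $(1+t+r)^{-1}$. The indices satisfy $|J'|+1 \leq |J| \leq N-5 \leq N-3$, so Propositions \ref{decaymetric} and \ref{estinullcompo} are applicable to $\mathcal{L}_Z^{J'} h^1$ at all intermediate orders.

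\textbf{Bulk estimates (first three inequalities).} For $|\nabla \mathcal{L}_Z^J h^1|$ I apply the extraction followed by Lemma \ref{extradecay} to the cartesian components of $\nabla \mathcal{L}_Z^{J'} h^1$:
\begin{equation*}
|\nabla \mathcal{L}_Z^J h^1| \;\lesssim\; \sum_{|J'|\leq |J|-1}|\partial_\mu \nabla \mathcal{L}_Z^{J'} h^1| \;\lesssim\; \frac{1}{1+|t-r|}\sum_{|K|\leq |J|}|\nabla \mathcal{L}_Z^{K} h^1|,
\end{equation*}
and insert \eqref{pw1}. The estimate on $|\mathcal{L}_Z^J h^1|$ follows identically using $|\mathcal{L}_Z^J h^1| \lesssim |\nabla \mathcal{L}_Z^{\leq |J|-1} h^1|$ together with \eqref{pw1}. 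For $|\overline\nabla \mathcal{L}_Z^J h^1|$, \eqref{eq:extradecayLie} gives
\begin{equation*}
|\overline\nabla \mathcal{L}_Z^J h^1| \;\lesssim\; \frac{1}{1+t+r}\sum_{|K|\leq |J|+1}|\mathcal{L}_Z^K h^1|,
\end{equation*}
and the (translation-improved) bound on $|\mathcal{L}_Z^K h^1|$ just established finishes the step.

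\textbf{Null-component estimates.} For $|\nabla \mathcal{L}_Z^J h^1|_{\mathcal{L}\mathcal{T}}$, I invoke the wave-gauge inequality \eqref{eq:wgch1} of Proposition \ref{wgc}. The good-derivative piece is already bounded by the previous step; the Schwarzschild contribution $M\,\mathds{1}_{\{(1+t)/4\leq r\leq(1+t)/2\}}/(1+t+r)^2$ decays strongly enough; and in each quadratic term $|\mathcal{L}_Z^{K_1}h|\,|\nabla \mathcal{L}_Z^{K_2}h|$ we have $K_1^T+K_2^T \geq 1$, so at least one factor carries a translation and its improved pointwise decay from the bulk step (inductively, for strictly lower $|J|$) can be combined with the standard Propositions \ref{decaymetric}, \ref{decaySchwarzschild}, \ref{estinullcompo} on the other factor. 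After writing $h=h^0+h^1$ and using Proposition \ref{decaySchwarzschild}, the product is controlled by $\epsilon/[(1+t+r)^{2-2\delta}(1+|t-r|)^{1/2}]$. The bound on $|\mathcal{L}_Z^J h^1|_{\mathcal{L}\mathcal{T}}$ is then obtained by integrating $\nabla_{\underline L}(\mathcal{L}_Z^J h^1)_{LT}$ along an incoming null ray as in the proof of \eqref{eq:LT2}, and finally \eqref{eq:extradecayLie4} applied with the improved $\mathcal{L}\mathcal{T}$-bound delivers the $\overline\nabla_{\mathcal{L}\mathcal{L}}$ estimate.

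The main obstacle lies in the quadratic sum of Step 2: because its generic pointwise size is $\epsilon(1+t+r)^{2\delta-2}$, without extracting a translation in at least one of the two factors we cannot convert the excess $\epsilon$ into a $(1+|t-r|)^{-1/2}$ factor compatible with the claim. This requires a careful induction on $|J|$ together with a balanced choice of which factor absorbs the translation index, and is the reason the statement carries the $(1+t+r)^{2-2\delta}$ exponent on the $\mathcal{L}\mathcal{T}$ and $\mathcal{L}\mathcal{L}$ components instead of the sharper $(1+t+r)^{2-\delta}$ available for the unrestricted components from Step 1.
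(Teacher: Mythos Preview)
Your bulk estimates (Step 1) are correct and essentially match the paper's argument: extract a translation to the outside, then apply Lemma~\ref{extradecayLie} and feed in \eqref{pw1}. One small point: when you apply \eqref{eq:extradecayLie} to $\mathcal{L}_Z^J h^1$ for the $\overline\nabla$ bound, the resulting $\mathcal{L}_Z^K h^1$ can have $|K|=|J|+1\le N-4$, so you cannot literally invoke the ``just established'' second inequality (stated for $|K|\le N-5$). The clean fix is what the paper does: apply \eqref{eq:extradecayLie} to $k=\mathcal{L}_{\partial_\mu}\mathcal{L}_Z^Q h^1$ \emph{after} extraction, commute the translation back to the outside, and land on $|\nabla\mathcal{L}_Z^{J_2} h^1|$ with $|J_2|\le N-5$, to which \eqref{pw1} applies directly.

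Step 2 is a genuinely different route from the paper, and it is where your difficulties concentrate. The paper never uses the wave gauge or integrates along null rays here. Instead, once $\mathcal{L}_Z^J h^1=\mathcal{L}_{\partial_\mu}\mathcal{L}_Z^Q h^1$ with $|Q|\le N-6$, it applies the null-refined inequalities \eqref{eq:extradecayLie3}--\eqref{eq:extradecayLie4} of Lemma~\ref{extradecayLie} to this tensor, commutes the translation back out, and reduces everything to $|\nabla\mathcal{L}_Z^{J_2}h^1|_{\mathcal{L}\mathcal{T}}$ with $|J_2|\le N-5\le N-4$, for which \eqref{eq:LT1} is already available. In particular the fifth inequality is a one-liner: $|\mathcal{L}_Z^J h^1|_{\mathcal{L}\mathcal{T}}=|\nabla_{\partial_\mu}\mathcal{L}_Z^Q h^1|_{\mathcal{L}\mathcal{T}}\le|\nabla\mathcal{L}_Z^Q h^1|_{\mathcal{L}\mathcal{T}}$ followed by \eqref{eq:LT1}.

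Your wave-gauge route for the fourth inequality is workable, but you assert without proof that the quadratic sum in \eqref{eq:wgch1} satisfies $K_1^T+K_2^T\ge 1$; this is true by the Leibniz structure of $\mathcal{L}_Z^J(\mathcal{O}(|h|^2))$ (cf.\ Remark~\ref{Rem_H_to_h}) but is not part of Proposition~\ref{wgc} as stated. More seriously, the integration argument for the fifth inequality has a real gap: integrating the uniform bound $\sqrt{\epsilon}(1+t+r)^{2\delta-2}(1+|u|)^{-1/2}$ along an incoming ray in the exterior near the light cone produces a factor $\sim(1+t+r)^{1/2}$ rather than the claimed $(1+|t-r|)^{1/2}$, overshooting the target there. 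You could salvage this by first establishing a sharper exterior decay (say $(1+|u|)^{-1-\gamma}$) for the fourth inequality before integrating, but that is extra work you do not mention; the paper's direct appeal to \eqref{eq:LT1} after translation extraction is considerably cleaner and avoids all of these issues.
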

\begin{proof}
By assumption, there exists $\mu \in \llbracket 0, 3 \rrbracket$ such that the translation $\partial_{\mu}$ is one of the vector fields which compose $Z^J$. Since $[Z, \partial_{\mu}] \in \{0 \} \cup \{ \pm \partial_{\nu} \hspace{1mm} / \hspace{1mm} \nu \in \llbracket 0, 3 \rrbracket \}$ for all $Z \in \mathbb{K}$, there exists integers $C^{J,\nu}_{ Q}$ such that
$$ \mathcal{L}_Z^J(h^1) \hspace{1mm} = \hspace{1mm} \sum_{0 \leq \nu \leq 3} \sum_{|Q| \leq |J|-1} C^{J, \nu}_{ Q} \; \mathcal{L}_{\partial_{\nu}}\mathcal{L}_Z^Q(h^1).$$
We can then assume, without loss of generality, that $\mathcal{L}_Z^J(h^1) =\mathcal{L}_{\partial_{\mu}}\mathcal{L}_Z^Q(h^1)$ with $|Q| \leq N-6$ and $\mu \in \llbracket 0, 3 \rrbracket$. Using \eqref{eq:extradecayLie} and that $[Z, \partial_{\mu}] \in \{0 \} \cup \{ \pm \partial_{\nu} \hspace{1mm} / \hspace{1mm} \nu \in \llbracket 0, 3 \rrbracket \}$ for all $Z \in \mathbb{K}$, we obtain
\begin{align*}
 (1+|t-r|) \left|\nabla \mathcal{L}_Z^J (h^1)\right|+(1+t+r) \left|\overline{\nabla} \mathcal{L}_Z^J (h^1)\right| \hspace{1mm} & \lesssim \hspace{1mm}  \sum_{|J_1| \leq 1 } \left|  \mathcal{L}_Z^{J_1} \mathcal{L}_{\partial_{\mu}} \mathcal{L}^Q_Z (h^1)\right| \\
 & \lesssim \hspace{1mm} \sum_{0 \leq \nu \leq 3} \sum_{|J_2| \leq N-5 } \left| \mathcal{L}_{\partial_{\nu}} \mathcal{L}_Z^{J_2}   (h^1)\right| .
 \end{align*}
Similarly, using \eqref{eq:extradecayLie3} and \eqref{eq:extradecayLie4}, we get
\begin{align*}
 \left|\nabla \mathcal{L}_Z^J (h^1)\right|_{\mathcal{L} \mathcal{T}} \hspace{1mm} & \lesssim \hspace{1mm} \frac{\left|   \mathcal{L}_{\partial_{\mu}} \mathcal{L}^Q_Z (h^1)\right|}{1+t+r}+ \sum_{0 \leq \nu \leq 3} \sum_{|J_1| \leq 1 } \frac{\left|   \mathcal{L}_{\partial_{\nu}} \mathcal L_Z^{J_1} \mathcal{L}^Q_Z (h^1)\right|_{\mathcal{L} \mathcal{T}}}{1+|t-r|} , \\
 \left|\overline{\nabla} \mathcal L_Z^J (h^1)\right|_{\mathcal{L} \mathcal{L}} \hspace{1mm} & \lesssim \hspace{1mm} \sum_{|J_1| \leq 1} \frac{\left|\mathcal{L}_Z^{J_1} \mathcal{L}_{\partial_{\mu}} \mathcal L_Z^{Q} (h^1)\right|_{\mathcal{L} \mathcal{T}}}{1+t+r} \hspace{1mm} \lesssim \hspace{1mm} \sum_{0 \leq \nu \leq 3} \sum_{|J_2| \leq N-5 } \frac{\left| \mathcal{L}_{\partial_{\nu}} \mathcal L_Z^{J_2} (h^1)\right|_{\mathcal{L} \mathcal{T}}}{1+t+r}.
 \end{align*}
 All the estimates then ensue from $\mathcal{L}_{\partial_{\nu}}=\nabla_{\partial_{\nu}}$ and Propositions \ref{decaymetric} and \eqref{eq:LT1}.
\end{proof}
\section{Bounds on the source terms of the Einstein equations} \label{section_bounds}

The aim of this subsection is to bound the source terms of the commuted Einstein equations which are given in Section \ref{SectComEinstein}.  We will control them sufficiently well to close the energy estimates but more decay in $t-r$ could be proved for certain terms. We start by the semi-linear terms 
$$ \mathcal{L}_Z^I \left(F(h)(\nabla h, \nabla h) \right)_{\mu \nu} \hspace{1mm} = \hspace{1mm} \mathcal{L}_Z^I \left(P(\nabla h, \nabla h) \right)_{\mu \nu} +\mathcal{L}_Z^I \left(Q(\nabla h, \nabla h) \right)_{\mu \nu} +\mathcal{L}_Z^I \left(G(h)(\nabla h, \nabla h) \right)_{\mu \nu}  .$$

\begin{prop} \label{prop_semi_linear}
Let $I$ be a multi-index with $|I|\leq N$. Then
\begin{align}
\nonumber \left| \mathcal{L}_Z^I F(h)(\nabla h, \nabla h) \right| \hspace{1mm} &\lesssim \hspace{1mm} \frac{\epsilon}{(1+t+r)^4} +  \frac{\sqrt\epsilon}{(1+t+r)^{1-\frac{\delta}{2}}(1+|u|)^{\gamma}} \sum_{|J| \leq |I|} |\nabla \mathcal{L}_Z^J h^1 |_{\mathcal{T} \mathcal{U}}  \\
&\quad \hspace{1mm} + \frac{\sqrt\epsilon}{(1+t+r)^{1-\delta} \sqrt{1+|u|}} \sum_{|J|\leq |I|} \left| \overline{\nabla} \mathcal{L}_Z^J h^1 \right|\nonumber \\
&\quad \hspace{1mm} + \frac{\sqrt\epsilon (1+|u|)^{\frac{1}{2}}}{(1+t+r)^{2-2\delta}} \sum_{|J|\leq |I|} \left( \left| \nabla \mathcal{L}_Z^J h^1 \right| + \frac{\left| \mathcal{L}_Z^J h^1 \right|}{1+|u|} \right), \nonumber\\ \nonumber
\left| \mathcal{L}_Z^I F(h)(\nabla h, \nabla h) \right|_{\mathcal{T} \mathcal{U}} \hspace{1mm} &\lesssim \hspace{1mm} \frac{\epsilon}{(1+t+r)^4} + \frac{\sqrt\epsilon (1+|u|)^{\frac{1}{2}}}{(1+t+r)^{2-2\delta}} \sum_{|J|\leq |I|} \left( \left| \nabla \mathcal{L}_Z^J h^1 \right| + \frac{\left| \mathcal{L}_Z^J h^1 \right|}{1+|u|} \right) \\ \nonumber
&\quad \hspace{1mm} + \frac{\sqrt\epsilon}{(1+t+r)^{1-\delta} \sqrt{1+|u|}} \sum_{|J|\leq |I|} \left| \overline{\nabla} \mathcal{L}_Z^J h^1 \right|, \\ \nonumber
\left| \mathcal{L}_Z^I F(h)(\nabla h, \nabla h) \right|_{\mathcal{L} \mathcal{L}} \hspace{1mm} &\lesssim \hspace{1mm} \frac{\epsilon}{(1+t+r)^4}+  \frac{\sqrt\epsilon (1+|u|)^{\frac{1}{2}}}{(1+t+r)^{2-2\delta}} \sum_{|J|\leq |I|} \left( \left| \nabla \mathcal{L}_Z^J h^1 \right| + \frac{\left| \mathcal{L}_Z^J h^1 \right|}{1+|u|} \right) \\ \nonumber
&\quad + \hspace{1mm} \hspace{1mm}  \sum_{|J| \leq |I|}  \frac{\sqrt{\epsilon}}{(1+t+r)^{1-\delta}\sqrt{1+|u|}} \left|\overline{\nabla}  \mathcal{L}_Z^J h^1  \right|_{\mathcal{T} \mathcal{U}}.
\end{align}
\end{prop}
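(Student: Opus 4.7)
The plan is to expand $\mathcal{L}_Z^I F(h)(\nabla h, \nabla h)$ using the geometric commutation formulas of Lemma \ref{lem_lin_pqg}, apply the null-structure bounds of Lemma \ref{lem_bounds_pqg} to each quadratic factor, and place the lower-order factor in $L^{\infty}$ via the pointwise decay estimates of Section \ref{sec10}. Concretely, Lemma \ref{lem_lin_pqg} writes $\mathcal{L}_Z^I P(\nabla h, \nabla h)$ and $\mathcal{L}_Z^I Q(\nabla h, \nabla h)$ as linear combinations of $P(\nabla_\mu \mathcal{L}_Z^J h, \nabla_\nu \mathcal{L}_Z^K h)$ and $Q_{\mu\nu}(\nabla \mathcal{L}_Z^J h, \nabla \mathcal{L}_Z^K h)$ with $|J|+|K| \leq |I|$, while the cubic contribution $\mathcal{L}_Z^I G(h)(\nabla h, \nabla h)$ is bounded in Proposition \ref{ComuEin} by a sum of products $|\mathcal{L}_Z^{J_1} h|\,|\nabla \mathcal{L}_Z^{J_2} h|\,|\nabla \mathcal{L}_Z^{J_3} h|$. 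By symmetry in $(J,K)$ I may assume $|K| \leq |I|/2 \leq N/2 \leq N-5$, so that $\mathcal{L}_Z^K h^1$ and its derivatives satisfy the pointwise estimates of Propositions \ref{decaymetric} and \ref{estinullcompo}; I further decompose $h=h^0+h^1$ and treat the pure $h^0 \cdot h^0$ terms separately using Proposition \ref{decaySchwarzschild0}, which yields the $\epsilon/(1+t+r)^4$ contribution.

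For the first estimate, Lemma \ref{lem_bounds_pqg} gives
$$|P(\nabla k, \nabla q)| + |Q(\nabla k, \nabla q)| \lesssim |\nabla k|_{\mathcal{T}\mathcal{U}} |\nabla q|_{\mathcal{T}\mathcal{U}} + |\nabla k|_{\mathcal{L}\mathcal{L}} |\nabla q| + |\nabla k| |\nabla q|_{\mathcal{L}\mathcal{L}} + |\overline\nabla k| |\nabla q| + |\nabla k| |\overline\nabla q|.$$
Placing the low-order factor in $L^\infty$, I read off the three families on the RHS as follows. The estimate \eqref{eq:TU1} applied to $|\nabla \mathcal{L}_Z^K h^1|_{\mathcal{T}\mathcal{U}}$ produces the coefficient $\sqrt\epsilon\,(1+t+r)^{\delta/2-1}(1+|u|)^{-\gamma}$ in front of $|\nabla \mathcal{L}_Z^J h^1|_{\mathcal{T}\mathcal{U}}$; the interior factor $(1+|u|)^{-1/2+\gamma}$ is absorbed as $(1+|u|)^{-\gamma}$ thanks to $\gamma < 1/4$. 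The estimate \eqref{eq:LT1} on $|\nabla \mathcal{L}_Z^K h^1|_{\mathcal{L}\mathcal{T}}$ produces the $\sqrt\epsilon\,(1+|u|)^{1/2}/(1+t+r)^{2-2\delta}$ coefficient in front of $|\nabla \mathcal{L}_Z^J h^1|$. Finally, \eqref{pw3} (on $|\overline\nabla \mathcal{L}_Z^K h^1|$) and \eqref{pw1} (on $|\nabla \mathcal{L}_Z^K h^1|$, in which case the tangential derivative falls on the high-order factor) yield the coefficient $\sqrt\epsilon/((1+t+r)^{1-\delta}\sqrt{1+|u|})$ in front of $|\overline\nabla \mathcal{L}_Z^J h^1|$. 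The $|\mathcal{L}_Z^J h^1|/(1+|u|)$ term appears only from the cubic contribution: one peels off two low-order factors through \eqref{pw1}--\eqref{pw2} and bounds the remaining one either in $L^\infty$ or leaves it as the claimed $L^2$-type quantity, using $|\mathcal{L}_Z^K h^1| \lesssim \sqrt\epsilon\,(1+|u|)^{1/2}/(1+t+r)^{1-\delta}$.

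For the $\mathcal{T}\mathcal{U}$ estimate, the improved second line of Lemma \ref{lem_bounds_pqg},
$$|P(\nabla k, \nabla q)|_{\mathcal{T}\mathcal{U}} + |Q(\nabla k, \nabla q)| \lesssim |\overline\nabla k||\nabla q| + |\nabla k||\overline\nabla q|,$$
eliminates the problematic $|\nabla h|_{\mathcal{T}\mathcal{U}}^2$ quadratic piece, and the same splitting produces only the $|\nabla \mathcal{L}_Z^J h^1|$ and $|\overline\nabla \mathcal{L}_Z^J h^1|$ families already identified above. For the $\mathcal{L}\mathcal{L}$ estimate I use the third line of Lemma \ref{lem_bounds_pqg},
$$|P(\nabla k, \nabla q)|_{\mathcal{L}\mathcal{L}} + |Q(\nabla k, \nabla q)|_{\mathcal{L}\mathcal{L}} \lesssim |\nabla k||\overline\nabla q|_{\mathcal{T}\mathcal{U}} + |\overline\nabla k|_{\mathcal{T}\mathcal{U}}|\nabla q|,$$
so that the pointwise bound on the low-order factor is now applied to $|\overline\nabla \mathcal{L}_Z^K h^1|_{\mathcal{T}\mathcal{U}}$ via \eqref{pw3} (which is at least as good as $|\overline\nabla \mathcal{L}_Z^K h^1|$), producing the required $|\overline\nabla \mathcal{L}_Z^J h^1|_{\mathcal{T}\mathcal{U}}$ factor on the RHS; the other summand contributes to the $|\nabla h^1|$ family as before.

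The main obstacle is the arithmetic bookkeeping: one must verify that every summand produced by the Leibniz expansion of the quadratic and cubic structures lands in exactly one of the three families on the RHS, with the correct weights in $(1+t+r)$ and $(1+|u|)$. The delicate points are the conversion of the $(1+|u|)^{-1/2+\gamma}$ weight in \eqref{eq:TU1} into the required $(1+|u|)^{-\gamma}$ (using $\gamma < 1/4$), the treatment of the cubic terms so that two of their three factors can be absorbed pointwise, and the systematic use of the Lie-derivative decomposition $h=h^0+h^1$ combined with Proposition \ref{decaySchwarzschild0} to isolate the $\epsilon/(1+t+r)^4$ Schwarzschild contribution. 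Once these points are handled, every bound reduces to the pointwise estimates collected in Section \ref{sec10}.
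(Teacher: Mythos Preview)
Your overall strategy matches the paper's: expand via Lemma \ref{lem_lin_pqg}, apply Lemma \ref{lem_bounds_pqg}, split $h=h^0+h^1$, and place the lower-order factor in $L^\infty$ using the pointwise estimates of Section \ref{sec10}. The treatment of the cubic terms, the $h^0$ contribution, and the $\mathcal{T}\mathcal{U}$ and $\mathcal{L}\mathcal{L}$ estimates is correct.

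There is however one genuine gap in the first estimate. From the null structure of $P$ you obtain, for each pair $(J,K)$, the two terms
\[
|\nabla \mathcal{L}_Z^J h^1|_{\mathcal{L}\mathcal{L}}\,|\nabla \mathcal{L}_Z^K h^1|
\qquad\text{and}\qquad
|\nabla \mathcal{L}_Z^J h^1|\,|\nabla \mathcal{L}_Z^K h^1|_{\mathcal{L}\mathcal{L}}.
\]
You only explain how to handle the second (via \eqref{eq:LT1} on the low-order $K$). For the first, with $|J|$ possibly as large as $N$, you cannot simply use $|\nabla\cdot|_{\mathcal{L}\mathcal{L}}\le |\nabla\cdot|_{\mathcal{T}\mathcal{U}}$ and absorb it into the $\mathcal{T}\mathcal{U}$ family: the pointwise bound \eqref{pw1} on $|\nabla \mathcal{L}_Z^K h^1|$ gives the coefficient $\sqrt\epsilon\,(1+t+r)^{\delta-1}(1+|u|)^{-1/2}$, which does \emph{not} dominate the claimed coefficient $\sqrt\epsilon\,(1+t+r)^{\delta/2-1}(1+|u|)^{-\gamma}$ near the light cone (where $|u|$ is bounded but $t+r\to\infty$). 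Trying instead to absorb it into the $|\nabla \mathcal{L}_Z^M h^1|$ family fails for the same reason.

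The paper closes this gap by invoking the wave gauge condition \eqref{eq:wgch1} (valid for all $|J|\le N$), which converts $|\nabla \mathcal{L}_Z^J h^1|_{\mathcal{L}\mathcal{L}}\le|\nabla \mathcal{L}_Z^J h^1|_{\mathcal{L}\mathcal{T}}$ into $|\overline\nabla \mathcal{L}_Z^J h^1|$ plus cubic terms. After multiplication by the pointwise bound on $|\nabla \mathcal{L}_Z^K h^1|$, the first piece lands exactly in the $|\overline\nabla \mathcal{L}_Z^M h^1|$ family, and the cubic remainder is absorbed by the analysis of $G$. You should insert this step explicitly; the pointwise estimate \eqref{eq:LT1} that you cite is itself derived from the wave gauge, but only at low order, whereas here you need it at the top order $|J|\le N$.
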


\begin{proof}
Let $|I| \leq N$ and recall from Lemma \ref{lem_lin_pqg} that there exist integers $\widehat{C}^{I}_{J,K}$ such that
\begin{align*}
\mathcal{L}_Z^I \left( F(h)(\nabla h, \nabla h)\right)_{\mu \nu} \hspace{1mm} &= \hspace{1mm} \sum_{|J| + |K| \leq |I|} \widehat{C}^{I}_{J,K} P(\nabla_\mu \mathcal{L}_Z^J h, \nabla_\nu  \mathcal{L}_Z^K h) + \widehat{C}^{I}_{J,K}Q_{\mu\nu}(\nabla  \mathcal{L}_Z^J h, \nabla \mathcal{L}_Z^K h) \\
&\quad \hspace{1mm} + \mathcal{L}_Z^I \left( G( h)(\nabla  h, \nabla  h) \right)_{\mu \nu} .
\end{align*}
Moreover, according to Proposition \ref{ComuEin} and the split $h=h^0+h^1$,
$$ \left| \mathcal{L}_Z^I \left( G( h)(\nabla  h, \nabla  h) \right) \right| \hspace{1mm} \lesssim \hspace{1mm} \sum_{j,k,q \in \{0,1\}} \sum_{|J|+|K|+|Q| \leq |I|} \left| \mathcal{L}_Z^J h^j \right| \left| \nabla \mathcal{L}_Z^K h^k \right| \left| \nabla \mathcal{L}_Z^Q h^q \right|.$$
We start by dealing with the cubic terms and we define, for $j,k,q \in \{0,1\}$ and multi-indices $J,K,Q$ such that $|J|+|K|+|Q| \leq |I|$,
$$\mathfrak I^{j,k,q}_{J,K,Q} \hspace{1mm} := \hspace{1mm} \left| \mathcal{L}_Z^J h^j \right| \left| \nabla \mathcal{L}_Z^K h^k \right| \left| \nabla \mathcal{L}_Z^Q h^q \right|.$$
Using the pointwise decay estimates given by Proposition \ref{decaySchwarzschild} on $h^0$ and its derivatives, we have
\begin{multline}\label{eq:cubic1}
 \mathfrak{I}^{0,0,0}_{J,K,Q} + \mathfrak{I}^{0,0,1}_{J,K,Q} + \mathfrak{I}^{0,1,0}_{J,K,Q} + \mathfrak{I}^{1,0,0}_{J,K,Q} \\ \lesssim \hspace{1mm} \frac{\epsilon^{\frac{3}{ 2}}}{(1+t+r)^5}+\frac{\epsilon}{(1+t+r)^3} \sum_{|M| \leq |I|} \left( \left| \nabla \mathcal{L}_Z^M h^1 \right| + \frac{\left| \mathcal{L}_Z^M h^1 \right|}{1+t+r} \right).
 \end{multline}
Finally, using also the pointwise decay estimates given by Proposition \ref{decaymetric} on $h^1$ and its derivatives (at most one of the multi-indices $J$, $K$ and $Q$ has a length larger than $N-3$), it follows
\begin{align}
\mathfrak{I}^{0,1,1}_{J,K,Q} + \mathfrak{I}^{1,0,1}_{J,K,Q}+\mathfrak{I}^{1,1,0}_{J,K,Q} & \hspace{1mm} \lesssim \hspace{1mm} \frac{\epsilon}{(1+t+r)^{2-\delta}} \sum_{|M| \leq |I|} \left( \left| \nabla \mathcal{L}_Z^M h^1 \right| + \frac{\left| \mathcal{L}_Z^M h^1 \right|}{1+t+r} \right), \label{eq:cubic2} \\ 
\mathfrak{I}^{1,1,1}_{J,K,Q} & \hspace{1mm}  \lesssim \hspace{1mm} \frac{\epsilon}{(1+t+r)^{2-2\delta}} \sum_{|M| \leq |I|} \left( \left| \nabla \mathcal{L}_Z^M h^1 \right| + \frac{\left| \mathcal{L}_Z^M h^1 \right|}{1+|u|} \right). \label{eq:cubic3}
\end{align}
The inequalities \eqref{eq:cubic1}-\eqref{eq:cubic3} provide a sufficiently good bound on the cubic terms for the purpose of proving the three estimates of Proposition \ref{prop_semi_linear}. Consider now the semi-linear terms $Q$ and $P$. Start by decomposing $h$ into $h^0+h^1$ so that, using the pointwise decay estimates on $h^0$ given in Proposition \ref{decaySchwarzschild}, we get for any null components $(V,W) \in \mathcal{U}^2$,
\begin{align*}
\left| Q_{VW} \left(\nabla  \mathcal{L}_Z^J h, \nabla \mathcal{L}_Z^K h \right)\right| \hspace{1mm} &\lesssim \hspace{1mm} \frac{\epsilon}{(1+t+r)^4}+\frac{\sqrt\epsilon}{(1+t+r)^2} \left( \left| \nabla \mathcal{L}_Z^J h^1 \right|+\left| \nabla \mathcal{L}_Z^K h^1 \right| \right) \\
&\quad \hspace{1mm} + \left| Q_{VW} \left(\nabla  \mathcal{L}_Z^J h^1, \nabla \mathcal{L}_Z^K h^1 \right)\right|,\\
\left| P  \left( \nabla_V \mathcal{L}_Z^J h, \nabla_W \mathcal{L}_Z^K h \right) \right| \hspace{1mm} &\lesssim \hspace{1mm} \frac{\epsilon}{(1+t+r)^4}+\frac{\sqrt\epsilon}{(1+t+r)^2} \left( \left| \nabla \mathcal{L}_Z^J h^1 \right|+\left| \nabla \mathcal{L}_Z^K h^1 \right| \right) \\
&\quad \hspace{1mm} + \left| P \left( \nabla_V \mathcal{L}_Z^J h^1, \nabla_W \mathcal{L}_Z^K h^1 \right) \right|.
\end{align*}
It then remains to study the last term of the previous two inequalities for $(V,W) \in \mathcal{U} \mathcal{U}$ (respectively $(V,W) \in \mathcal{T} \mathcal{U}$ and $(V,W) =(L,L)$) in order to derive the first (respectively the second and the third) estimate of Proposition \ref{prop_semi_linear}. For the quadratic terms $P$, recall from Lemma \ref{lem_bounds_pqg} that, if $V=W=\underline{L}$, the null condition is not satisfied. More precisely,
\begin{align}
\nonumber \left| P \left( \nabla \mathcal{L}_Z^J h^1, \nabla \mathcal{L}_Z^K h^1 \right) \right| \hspace{1mm} & \lesssim \hspace{1mm} \left| \nabla \mathcal{L}_Z^J h^1 \right|_{\mathcal{T} \mathcal{U}}\left| \nabla \mathcal{L}_Z^K h^1 \right|_{\mathcal{T} \mathcal{U}} \\  & \quad \hspace{1mm}+\left| \nabla \mathcal{L}_Z^J h^1 \right|_{\mathcal{L} \mathcal{L}}\left| \nabla \mathcal{L}_Z^K h^1 \right|+\left| \nabla \mathcal{L}_Z^J h^1 \right|\left| \nabla \mathcal{L}_Z^K h^1 \right|_{\mathcal{L} \mathcal{L}}. \nonumber
\end{align} 
Hence, using the pointwise decay estimates given by Propositions \ref{decaymetric}, \ref{decaySchwarzschild} and \ref{estinullcompo} as well as the wave gauge condition \eqref{eq:wgch1}, we find that for any null components $(V,W) \in \mathcal{U}^2$,
\begin{align*}
\left| P  \left( \nabla_V \mathcal{L}_Z^J h^1, \nabla_W \mathcal{L}_Z^K h^1 \right) \right| \hspace{1mm} &\lesssim \hspace{1mm} \frac{\sqrt\epsilon}{(1+t+r)^{1-\frac{\delta}{2}}(1+|u|)^{\frac{1}{2}-\gamma}} \sum_{|M| \leq |I|} \left| \nabla \mathcal{L}_Z^M h^1 \right|_{\mathcal{T} \mathcal{U}} \\
&\quad + \frac{\sqrt\epsilon}{(1+t+r)^{1-\delta}\sqrt{1+|u|}} \sum_{|M|\leq |I|} \left|\overline \nabla \mathcal{L}_Z^M h^1 \right|   \\
&\quad + \frac{\sqrt{\epsilon} (1+|u|)^{\frac{1}{2}-\delta}}{(1+t+r)^{2-2\delta}}  \sum_{|M|\leq |I|}  \left| \nabla \mathcal{L}_Z^M h^1 \right| + \sum_{\substack{|K|+|Q|+|M| \leq |I| \\ k,q \in \{0,1\}}} \mathfrak{I}^{q,k,1}_{Q,K,M}. 
\end{align*}
Since $(1+|u|)^{\gamma} \leq (1+|u|)^{\frac{1}{2}-\gamma}$ and according to \eqref{eq:cubic1}-\eqref{eq:cubic3}, this bound is sufficient to prove the first estimate of the proposition. Now we deal with the $\mathcal{T} \mathcal{U}$ components of $P$ and the $\mathcal{U} \mathcal{U}$ components of $Q$ together. According to Lemma \ref{lem_bounds_pqg} and the pointwise decay estimates of Proposition \ref{decaymetric}, we have for any $(T,U) \in \mathcal{T} \times \mathcal{U}$ and $(V,W) \in \mathcal{U}^2$,
\begin{align*}
\left|P \left(\nabla_T  \mathcal{L}_Z^J h, \nabla_U \mathcal{L}_Z^K h \right)\right| +&\left|Q_{VW}  \left(\nabla  \mathcal{L}_Z^J h, \nabla \mathcal{L}_Z^K h \right)\right| \\ & \lesssim \hspace{1mm} \left| \overline{\nabla} \mathcal{L}_Z^J h^1 \right| \left| \nabla \mathcal{L}_Z^K h^1 \right|+\left| \nabla \mathcal{L}_Z^J h^1 \right| \left| \overline{\nabla} \mathcal{L}_Z^K h^1 \right| \\
  & \lesssim  \hspace{1mm}  \sum_{|M|\leq |I|} \frac{\sqrt \epsilon \sqrt{1+|u|}}{(1+t+r)^{2-\delta}}  \left| \nabla \mathcal{L}_Z^M h^1 \right|  + \frac{\sqrt{\epsilon}\left|\overline \nabla \mathcal{L}_Z^M h^1\right|}{(1+t+r)^{1-\delta}\sqrt{1+|u|}}   .
\end{align*}
Note that this inequality needs to be improved to obtain the third estimate of the Proposition, i.e. for the case $T=U=V=W=L$, but is sufficient for the first two estimates. Finally, applying again Proposition \ref{decaymetric} and Lemma \ref{lem_bounds_pqg}, we obtain
\begin{align*}
&|P(\nabla_L \mathcal{L}_Z^J h^1, \nabla_L \mathcal{L}_Z^J h^1) |+|Q_{LL}(\nabla \mathcal{L}_Z^J h^1, \nabla \mathcal{L}_Z^K h^1)| \\
& \qquad \qquad \lesssim \hspace{1mm} |\nabla \mathcal{L}_Z^J(h^1)| |\overline{\nabla} \mathcal{L}_Z^K h^1 |_{\mathcal{T} \mathcal{U}}+ |\overline{\nabla} \mathcal{L}_Z^J h^1 |_{\mathcal{T} \mathcal{U}} |\nabla \mathcal{L}_Z^K h^1| \\
&\qquad \qquad \lesssim \hspace{1mm} \frac{ \sqrt{\epsilon} \sqrt{1+|u|}}{(1+t+r)^{2-\delta}}  \sum_{|M|\leq |I|} \left|\nabla \mathcal{L}_Z^M h^1\right| + \frac{\sqrt{\epsilon}}{(1+t+r)^{1-\delta}\sqrt{1+|u|}} \sum_{|M|\leq |I|}  |\overline{\nabla} \mathcal{L}_Z^M h^1 |_{\mathcal{T} \mathcal{U}}.
\end{align*}
This implies the last estimate of the Proposition and concludes the proof.
\end{proof}

Next we consider the Schwarzschild part $h^0$.

\begin{prop} \label{prop_ss}
Let $I$ be a multi-index such that $|I| \leq N$ and $(\mu , \nu ) \in \llbracket 0,3 \rrbracket^2$. Then,
\begin{equation}
\nonumber \left|  \mathcal{L}_Z^I \left( \widetilde{\square}_g   h^0 \right)_{\mu \nu} \right| \lesssim \frac{\sqrt\epsilon}{(1+t+r)^3} \mathds{1}_{\{r \leq t\}}+ \frac{\sqrt\epsilon}{(1+t+r)^4} \mathds{1}_{\{r \geq t\}}+\frac{\sqrt\epsilon}{(1+t+r)^3} \sum_{|J| \leq I} \left| \mathcal{L}_Z^J h^1 \right|.
\end{equation}
\end{prop}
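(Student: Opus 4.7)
The plan is to decompose $\widetilde{\square}_g h^0 = \square_\eta h^0 + H^{\alpha\beta}\partial_\alpha\partial_\beta h^0$ and commute with $\mathcal{L}_Z^I$ by iterated application of Lemma \ref{techcommu}. Using $\mathcal{L}_Z^J(\eta^{-1}) = c^J \eta^{-1}$ for $Z^J \in \mathbb{K}^{|J|}$, this produces
\[
\mathcal{L}_Z^I(\widetilde{\square}_g h^0) = \sum_{|K|\leq|I|} c^I_K\, \square_\eta \mathcal{L}_Z^K(h^0) + \sum_{|J|+|K|\leq|I|} d^I_{J,K}\, \mathcal{L}_Z^J(H)^{\alpha\beta}\,\partial_\alpha\partial_\beta \mathcal{L}_Z^K(h^0),
\]
so it suffices to bound each summand. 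A key point used throughout is that Proposition \ref{decaySchwarzschild0}, applied to $\mathcal{L}_{\partial_\alpha}\mathcal{L}_{\partial_\beta}\mathcal{L}_Z^K(h^0)$, immediately yields $|\partial_\alpha\partial_\beta \mathcal{L}_Z^K(h^0)| \lesssim M/(1+t+r)^3$ for all $K$.

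For the first sum, I would use $[\square_\eta, \mathcal{L}_Z] = -2\delta_Z^S \square_\eta$ (which holds since $Z \in \mathbb{K}\setminus\{S\}$ is Killing and $[\square_\eta, S] = 2\square_\eta$) to rewrite $\square_\eta \mathcal{L}_Z^K(h^0)$ as a linear combination of $\mathcal{L}_Z^{K'}(\square_\eta h^0)$ with $|K'|\leq|K|$. A direct computation using $\square_\eta(M/r) = 0$ on $\{r > 0\}$ gives
\[
\square_\eta h^0_{\mu\nu} = \Bigl[\square_\eta\bigl(\chi(r/(1+t))\bigr)\cdot \tfrac{M}{r} + 2\,\partial^\alpha \chi(r/(1+t))\cdot \partial_\alpha(M/r)\Bigr]\delta_{\mu\nu},
\]
which is supported in $\{(1+t)/4 \leq r \leq (1+t)/2\} \subset \{r \leq t\}$ (modulo a bounded set handled trivially) and bounded there by $CM/(1+t+r)^3$. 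The iterated Lie derivatives $\mathcal{L}_Z^{K'}$ preserve both the support and the $M(1+t+r)^{-3}$ pointwise bound, by exactly the same bookkeeping as in the proof of Proposition \ref{decaySchwarzschild0}, yielding the first term of the RHS.

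For the second sum, combining the pointwise estimate $|\mathcal{L}_Z^M(h^0)| \lesssim \sqrt\epsilon/(1+t+r)$ from Proposition \ref{decaySchwarzschild0}, the smallness $|\mathcal{L}_Z^M(h)| \lesssim \sqrt\epsilon$ for $|M|\leq N-3$ from Proposition \ref{decaymetric}, and Proposition \ref{Prop_H_to_h} gives
\[
|\mathcal{L}_Z^J(H)| \lesssim \frac{\sqrt\epsilon}{1+t+r} + \sum_{|M|\leq|J|} \left|\mathcal{L}_Z^M(h^1)\right|,
\]
where the product terms in Proposition \ref{Prop_H_to_h} are absorbed by using that at least one factor has $|J_i| \leq N-3$ when $N \geq 6$. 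Multiplying by the bound on $\partial_\alpha\partial_\beta \mathcal{L}_Z^K(h^0)$ and using $M \leq \sqrt\epsilon$, we obtain a pointwise bound of order $\sqrt\epsilon(1+t+r)^{-4} + \sqrt\epsilon(1+t+r)^{-3}\sum|\mathcal{L}_Z^M(h^1)|$, which fits within the second and third terms of the RHS (indeed the $(1+t+r)^{-4}$ decay holds globally and is absorbable into $(1+t+r)^{-3}\mathds{1}_{r\leq t}$ in the interior). The main obstacle is merely the combinatorial bookkeeping of the commutator expansion; in contrast with Proposition \ref{prop_semi_linear}, no delicate null-frame cancellation is required here, since the strong Schwarzschild decay of $h^0$ and $\partial\partial h^0$ already provides the necessary weights.
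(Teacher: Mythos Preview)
Your proof is correct and follows essentially the same approach as the paper: decompose $\widetilde{\square}_g = \square_\eta + H^{\alpha\beta}\partial_\alpha\partial_\beta$, use that $\square_\eta(M/r)=0$ to see the flat part is supported where $\chi'$ lives, and bound the curved part via Proposition~\ref{decaySchwarzschild0} together with Proposition~\ref{Prop_H_to_h}. The only organizational difference is that the paper avoids Lemma~\ref{techcommu} and instead passes to Cartesian components via \eqref{equinormLie}, applying the Leibniz rule directly to scalars $Z^Q\bigl(\square_\eta(\chi)\cdot M/r\bigr)$, $Z^Q\bigl(\partial_r\chi\cdot M/r^2\bigr)$, and $Z^J(H^{\sigma\theta})\cdot Z^K\bigl(\partial_\sigma\partial_\theta(\chi\cdot M/r)\bigr)$; your Lie-derivative commutation gives an equivalent decomposition.
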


\begin{proof}
Recall from Subsection \ref{SectComEinstein} the definition of the tensor field $\widetilde{\square}_g h^0$ and start by decomposing $\widetilde{\square}_g$ as $\widetilde{\square}_\eta +H^{\sigma \theta} \nabla_{\sigma} \nabla_{\theta}$. Then, as $\square_\eta \frac{1}{r}=0$, we have, for all $0 \leq \mu, \nu \leq 3$,
$$\widetilde{\square}_g  (h^0)_{\mu \nu} = \square_\eta \! \left( \!  \chi \! \left( \! \frac{r}{t+1} \! \right) \! \right) \frac{M}{r}\delta_{\mu \nu}- \partial_r \! \left( \! \chi\!  \left( \frac{r}{t+1} \! \right) \! \right) \frac{M}{r^2} \delta_{\mu \nu} + H^{\sigma \theta} \partial_{\sigma} \! \partial_{\theta} \! \left( \! \chi \left( \! \frac{r}{t+1} \! \right) \frac{M}{r} \! \right) \!\delta_{\mu \nu}.$$
According to \eqref{equinormLie}, there holds
$$ \sum_{0 \leq \mu, \nu \leq 3} \left|  \mathcal{L}_Z^I \left( \widetilde{\square}_g   h^0 \right)_{\mu \nu} \right| \hspace{1mm} \lesssim \hspace{1mm} \sum_{0 \leq \lambda, \xi \leq 3} \sum_{|Q| \leq |I|} \left|  Z^I \left( \widetilde{\square}_g  h^0_{\lambda \xi}  \right) \right|.$$
Fix then $|Q| \leq |I|$. One can easily check, by similar calculations as those made in the proof of Proposition \ref{decaySchwarzschild0} and in view of the support of $\chi'$, that
\begin{multline*}
\sum_{|J|+|K| \leq |Q| }  \left| Z^{J} \left( \square_{\eta} \left( \chi \left( \frac{r}{t+1} \right) \right) \right) Z^{K} \left( \frac{M}{r} \right) \right|+  \left| Z^{J} \left(  \partial_r \left( \chi \left( \frac{r}{t+1} \right) \right) \right) Z^{K} \left( \frac{M}{r^2} \right) \right| \\ \lesssim \frac{\sqrt\epsilon}{(1+t)^3} \mathds{1}_{\left\{r \leq \frac{t+1}{2}\right\}}.
\end{multline*}
Similarly, since $1+t+r \lesssim r$ on the support of $\chi( \frac{r}{t+1} )$ and using \eqref{equinormLie}, we have
\begin{equation*}
\sum_{|J|+|K| \leq |Q| }  \left| Z^{J} H^{\sigma \theta} \right| \left| Z^{K} \left( \partial_{\sigma} \partial_{\theta} \left( \chi \left( \frac{r}{t+1} \right) \frac{M}{r} \right) \right) \right| \lesssim \frac{\sqrt\epsilon}{(1+t+r)^3}\sum_{|J| \leq |Q|} \left| \mathcal{L}_Z^J H \right|.
\end{equation*}
By Proposition \ref{Prop_H_to_h}, the split $h=h^0+h^1$ and the pointwise decay estimates of Propositions \ref{decaymetric},\ref{decaySchwarzschild}, we get
\begin{equation*}
\sum_{|J| \leq |I|} \left| \mathcal{L}_Z^J H \right| \lesssim \frac{1}{1+t+r} + \sum_{|J| \leq |I|} \left| \mathcal{L}_Z^J h^1 \right|
\end{equation*}
and the result follows from the combination of all the previous identities.
\end{proof}
We now estimate the error terms arising from the commutator $\widetilde{\square}_g \left( \mathcal{L}_Z^J h^1 \right)- \mathcal{L}_Z^J \left( \widetilde{\square}_g h^1 \right)$.
 \begin{prop} \label{prop_box}
Let $n \leq N$ and $J$, $K$ be multi-indices such that $|J|+|K| \leq n$ and $|K| \leq n-1$. For $\mathcal{V}, \mathcal{W} \in \{\mathcal{U}, \mathcal{T}, \mathcal{L} \}$, there holds
 \begin{align*}
\left| \mathcal{L}_Z^J(H)^{\alpha \beta} \nabla_{\alpha} \nabla_{\beta} \mathcal{L}_Z^K(h^1) \right|_{\mathcal{V} \mathcal{W}} &\lesssim   \sum_{|Q| \leq n} \frac{\sqrt \epsilon \big| \nabla \mathcal{L}_Z^Q h^1 \big|_{\mathcal{V} \mathcal{W}}}{1+t+r}   +\sum_{|Q| \leq n} \frac{\sqrt{\epsilon} \big| \mathcal{L}_Z^Q h_1 \big|_{\mathcal{L} \mathcal{L}}}{(1+t+r)^{1-\delta}(1+|u|)^{\frac{3}{2}}} \\
&\quad + \sqrt \epsilon \frac{(1+|u|)^{\frac{1}{2}}}{(1+t+r)^{2-2\delta}} \sum_{|Q| \leq n} \left( \big| \nabla \mathcal{L}_Z^Q h^1 \big|+ \frac{\big| \mathcal{L}_Z^Q h^1 \big|}{1+|u|} \right). \nonumber
\end{align*}
For the $LL$ component, we have the improved estimate
$$ \left| \mathcal{L}_Z^J(H)^{\alpha \beta} \nabla_{\alpha} \nabla_{\beta} \mathcal{L}_Z^K(h^1)  \right|_{\mathcal{L} \mathcal{L}} \lesssim \sum_{|Q| \leq n} \! \frac{\sqrt{\epsilon}\big| \nabla \mathcal{L}_Z^Q h^1 \big|_{\mathcal{L} \mathcal{L}}}{1+t+r}   +  \frac{\sqrt{\epsilon}(1+|u|)^{\frac{1}{2}}}{(1+t+r)^{2-2\delta}} \! \left( \! \big| \nabla \mathcal{L}_Z^Q h^1 \big|+ \frac{\big| \mathcal{L}_Z^Q h^1 \big|}{1+|u|} \! \right) \hspace{-0.5mm} \!. $$
\end{prop}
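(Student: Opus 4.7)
\medskip

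\noindent\textbf{Proof proposal.} The plan is to exploit the null-frame decomposition of the operator $\mathcal{L}_Z^J(H)^{\alpha\beta}\nabla_\alpha\nabla_\beta$ combined with the improved decay of $H(L,L)$, so that every contribution fits into one of the three terms on the right-hand side.

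\medskip

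First I would write, using $\nabla = \partial$ in Cartesian coordinates together with the identity
$\eta^{-1} = -\tfrac{1}{2}(L\otimes\underline L + \underline L\otimes L) + \sum_A e_A\otimes e_A$,
a decomposition of the form
$$\mathcal{L}_Z^J(H)^{\alpha\beta}\nabla_\alpha\nabla_\beta = \tfrac{1}{4}\mathcal{L}_Z^J(H)(L,L)\,\nabla_{\underline L}\nabla_{\underline L} \;+\; \mathcal{R}(\mathcal{L}_Z^J H)\overline\nabla\nabla \;+\; (\text{commutators}),$$
where $\mathcal{R}(\mathcal{L}_Z^J H)\overline\nabla\nabla$ collects all terms in which at least one of the two derivatives is tangential (i.e.\ belongs to $\mathcal{T}=\{L,e_1,e_2\}$), and the commutators come from derivatives falling on the null-frame coefficients, producing either $r^{-1}$ or tangential vectors.

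\medskip

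For the tangential piece I would apply Lemma~\ref{extradecayLie}, specifically \eqref{eq:extradecayLie5}, to the tensor $\nabla\mathcal{L}_Z^K(h^1)$, combined with Lemma~\ref{lem_com_lie} ($\mathcal{L}_Z\nabla=\nabla\mathcal{L}_Z$), to obtain
$$\big|\overline\nabla \nabla \mathcal{L}_Z^K(h^1)\big|_{\mathcal{V}\mathcal{W}} \lesssim \sum_{|Q|\leq |K|+1}\frac{|\nabla\mathcal{L}_Z^Q h^1|}{1+t+r}.$$
This would then be multiplied by the pointwise bound on $|\mathcal{L}_Z^J H|$. I would split into the two subcases $|J|\leq N-3$ and $|K|\leq 2$ (one of them must occur since $|J|+|K|\leq n\leq N$ and $N\geq 13$): in the first one, Propositions~\ref{Prop_H_to_h}, \ref{decaymetric}, \ref{decaySchwarzschild} give $|\mathcal{L}_Z^J H| \lesssim \sqrt\epsilon(1+|u|)^{1/2}(1+t+r)^{\delta-1}$; in the second one the decay estimates furnish a pointwise bound on the second derivatives of $\mathcal{L}_Z^K h^1$. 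Either way the resulting contribution is absorbed in the third RHS term $\sqrt\epsilon(1+|u|)^{1/2}(1+t+r)^{2\delta-2}(|\nabla\mathcal{L}_Z^Q h^1|+|\mathcal{L}_Z^Q h^1|/(1+|u|))$. Commutator corrections, being of the same structure with an extra $r^{-1}$, are handled in the same way.

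\medskip

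For the bad term $\mathcal{L}_Z^J(H)(L,L)\nabla_{\underline L}^2 \mathcal{L}_Z^K(h^1)$, the key is that $|\mathcal{L}_Z^J H|_{\mathcal{L}\mathcal{L}}$ carries the improved bound $\lesssim \sqrt\epsilon (1+|u|)(1+t+r)^{-1}$ (coming from Proposition~\ref{estinullcompo} together with the wave gauge condition, Proposition~\ref{wgc}, to cover high orders of $J$ by pulling a tangential derivative and re-applying the same decomposition). I would then apply Lemma~\ref{extradecayLie} \eqref{eq:extradecayLie5} twice to $\mathcal{L}_Z^K(h^1)$, keeping the $\mathcal{V}\mathcal{W}$-projection through the $(1+|u|)^{-1}$ branch; this yields a main contribution of the form
$$|\mathcal{L}_Z^J H|_{\mathcal{L}\mathcal{L}}\cdot\frac{|\nabla\mathcal{L}_Z^Q h^1|_{\mathcal{V}\mathcal{W}}}{1+|u|} \;\lesssim\; \frac{\sqrt\epsilon\,|\nabla\mathcal{L}_Z^Q h^1|_{\mathcal{V}\mathcal{W}}}{1+t+r},$$
which is precisely the first term in the claimed bound. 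The leftover piece, where one of the $\nabla_{\underline L}$ lands directly on $\mathcal{L}_Z^K h^1$ (not its derivative), contributes $|\mathcal{L}_Z^J H|_{\mathcal{L}\mathcal{L}}|\mathcal{L}_Z^Q h^1|_{\mathcal{V}\mathcal{W}}/(1+|u|)^2$, and when $|\cdot|_{\mathcal{V}\mathcal{W}} = |\cdot|$ or $|\cdot|_{\mathcal{T}\mathcal{U}}$ this is dominated by the third RHS term, while for the precise $\mathcal{L}\mathcal{L}$ projection (or via a slight refinement using the sharper $|\mathcal{L}_Z^J H|_{\mathcal{L}\mathcal{L}}$ bound $\sqrt\epsilon(1+|u|)^{1/2+\gamma}/(1+t+r)^{1+\gamma-\delta}$) it reproduces the middle term $\sqrt\epsilon|\mathcal{L}_Z^Q h^1|_{\mathcal{L}\mathcal{L}}/[(1+t+r)^{1-\delta}(1+|u|)^{3/2}]$.

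\medskip

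For the improved $\mathcal{L}\mathcal{L}$-estimate, projecting the full tensor identity onto $V=W=L$ kills the middle term: the only way to produce $|\mathcal{L}_Z^Q h^1|_{\mathcal{L}\mathcal{L}}$ is from the $\mathcal{L}\mathcal{L}$-piece of the leftover above, and this piece is now absorbed back into the first RHS term $\sqrt\epsilon |\nabla \mathcal{L}_Z^Q h^1|_{\mathcal{L}\mathcal{L}}/(1+t+r)$ because projecting on $LL$ one can rewrite the zero-order factor via $|\overline\nabla k|_{\mathcal{L}\mathcal{L}}\lesssim|\mathcal{L}_Z^J k|_{\mathcal{L}\mathcal{T}}/(1+t+r)$ (Eq.~\eqref{eq:extradecayLie4}) and trade one derivative. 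The main obstacle will be bookkeeping: carefully splitting the $|J|\leq N-3$ vs.\ $|K|$ small cases, handling the commutator $r^{-1}$ terms produced when $\partial$ hits a null-frame vector (which must be shown not to violate the $\mathcal{V}\mathcal{W}$-projection structure), and in particular controlling the top-order $|J|$ contributions via the wave gauge identity \eqref{eq:wgch1} so that the coefficient $|\mathcal{L}_Z^J H|_{\mathcal{L}\mathcal{L}}$ can be traded for $|\overline\nabla \mathcal{L}_Z^{J'} h|_{\mathcal{T}\mathcal{U}}$ at one order lower, closing the induction on $|J|$.
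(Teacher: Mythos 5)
Your overall scheme — null-decompose $H^{\alpha\beta}\nabla_\alpha\nabla_\beta$ so that the bad double-$\nabla_{\underline L}$ term is weighted by $|\mathcal{L}_Z^J H|_{\mathcal{L}\mathcal{L}}$, pass one $\nabla$ through Lemma~\ref{lem_com_lie} and apply Lemma~\ref{extradecayLie} to convert derivatives into $(1+|u|)^{-1}$ or $(1+t+r)^{-1}$ weights, then split into $|J|\leq N-3$ or $|K|\leq 2$ — is the right skeleton and matches the paper's proof. Your worry about ``commutator terms'' from null-frame coefficients is a non-issue: Lemma~\ref{extradecayLie} already packages those commutators into its statement, so once you phrase the first step as a pointwise bound on $|\cdot|_{\mathcal{V}\mathcal{W}}$ (as the paper does), no separate commutator bookkeeping is needed.

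There is, however, a genuine gap in the bad piece. You assert the pointwise bound $|\mathcal{L}_Z^J H|_{\mathcal{L}\mathcal{L}}\lesssim\sqrt\epsilon(1+|u|)(1+t+r)^{-1}$ for all $|J|\leq n\leq N$, citing Proposition~\ref{estinullcompo} together with the wave gauge condition. But the relevant estimate \eqref{eq:LT2} only covers $|J|\leq N-4$; at top order no pointwise control of $\mathcal{L}_Z^J h^1$ is available. Your proposed fix — ``pulling a tangential derivative and re-applying the decomposition'' via the wave gauge identity, closing an induction on $|J|$ — cannot work, because Proposition~\ref{wgc} gives an \emph{algebraic} relation for $\nabla\mathcal{L}_Z^J(h)|_{\mathcal{L}\mathcal{T}}$ in terms of $\overline{\nabla}\mathcal{L}_Z^J(h)$ plus quadratic terms; it does not give a pointwise bound on the \emph{undifferentiated} $\mathcal{L}_Z^J H|_{\mathcal{L}\mathcal{L}}$ at top order, and integrating the wave-gauge relation to recover $H|_{\mathcal{L}\mathcal{L}}$ would require the very pointwise control you are trying to obtain. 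Consequently your ``main contribution'' $|\mathcal{L}_Z^J H|_{\mathcal{L}\mathcal{L}}\cdot|\nabla\mathcal{L}_Z^Q h^1|_{\mathcal{V}\mathcal{W}}/(1+|u|)\lesssim\sqrt\epsilon|\nabla\mathcal{L}_Z^Q h^1|_{\mathcal{V}\mathcal{W}}/(1+t+r)$ is only justified for $|J|\leq N-4$, and it is precisely the complementary case $|J|\geq N-3$ that produces the middle term $\sqrt\epsilon|\mathcal{L}_Z^Q h^1|_{\mathcal{L}\mathcal{L}}/[(1+t+r)^{1-\delta}(1+|u|)^{3/2}]$ in the statement, which your argument does not deliver.

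The correct resolution does \emph{not} estimate $|\mathcal{L}_Z^J H|_{\mathcal{L}\mathcal{L}}$ pointwise at all. Using Proposition~\ref{Prop_H_to_h}, the split $h=h^0+h^1$, and Proposition~\ref{decaySchwarzschild}, one has $|\mathcal{L}_Z^J H|_{\mathcal{L}\mathcal{L}}\lesssim\frac{\sqrt\epsilon}{1+t+r}+\sum_{|M|\leq|J|}|\mathcal{L}_Z^M h^1|_{\mathcal{L}\mathcal{L}}+\frac{\sqrt{1+|u|}}{(1+t+r)^{1-\delta}}\sum_{|M|\leq|J|}|\mathcal{L}_Z^M h^1|$, keeping the good $\mathcal{L}\mathcal{L}$ structure intact. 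After Lemma~\ref{extradecayLie} one then faces products $|\mathcal{L}_Z^M h^1|_{\mathcal{L}\mathcal{L}}\cdot|\nabla\mathcal{L}_Z^Q h^1|_{\mathcal{V}\mathcal{W}}/(1+|u|)$ with $|M|+|Q|\leq n+1$, so at least one of $|M|$, $|Q|$ is $\leq N-5$ and can be estimated pointwise (Propositions~\ref{decaymetric}, \ref{estinullcompo}). Putting the pointwise bound on the $\nabla\mathcal{L}_Z^Q h^1$ factor when $|Q|\leq N-5$ produces exactly the middle RHS term; the improved $\mathcal{L}\mathcal{L}$ statement then follows by using the sharper pointwise bound $|\nabla\mathcal{L}_Z^Q h^1|_{\mathcal{L}\mathcal{L}}\lesssim\sqrt\epsilon(1+|u|)^{1/2}/(1+t+r)^{2-2\delta}$ in that same place, which absorbs the would-be middle term into the third one. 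Your blueprint reaches the right shape but needs this replacement of the top-order pointwise $|H|_{\mathcal{L}\mathcal{L}}$ estimate (which does not exist) by the retained $|\mathcal{L}_Z^M h^1|_{\mathcal{L}\mathcal{L}}$ structure plus the observation that one factor in each quadratic product has low derivative order.
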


\begin{proof}
Start by noticing that for $\mathcal{V}$, $\mathcal{W} \in \{ \mathcal{U}, \mathcal{T} , \mathcal{L} \}$,
$$ \left|  \mathcal{L}_Z^J (H)^{\alpha \beta} \nabla_{\alpha} \nabla_{\beta} \mathcal{L}_Z^K (h^1) \right|_{\mathcal{V} \mathcal{W}}  \lesssim \sum_{0 \leq \lambda \leq 3} \left| \mathcal{L}_Z^J H \right|_{\mathcal{L} \mathcal{L}} \left| \nabla \mathcal{L}_{\partial_{\lambda}}  \mathcal{L}_Z^K h^1 \right|_{\mathcal{V} \mathcal{W}} + \left| \mathcal{L}_Z^J H \right| \left| \overline{\nabla} \mathcal{L}_{\partial_{\lambda}}  \mathcal{L}_Z^K h^1 \right|_{\mathcal{V} \mathcal{W}}.$$
Applying Lemma \ref{extradecayLie} and using that $[Z,\partial_{\lambda}] \in \{0 \} \cup \{ \pm \partial_{\nu} \, / \, 0 \leq \nu \leq 3 \}$ as well as $\mathcal{L}_{\partial_{\nu}}=\nabla_{\partial_{\nu}}$ yield
$$\left|  \mathcal{L}_Z^J (H)^{\alpha \beta} \nabla_{\alpha} \nabla_{\beta} \mathcal{L}_Z^K (h^1) \right|_{\mathcal{V} \mathcal{W}} \lesssim \sum_{|Q| \leq |K|+1}  \frac{\left| \mathcal{L}_Z^J H \right|_{\mathcal{L} \mathcal{L}}}{1+|u|} \left| \nabla \mathcal{L}_Z^Q h^1 \right|_{\mathcal{V} \mathcal{W}}  + \frac{\left| \mathcal{L}_Z^J H \right|}{1+t+r} \left| \nabla \mathcal{L}_Z^Q h^1 \right|.$$
Applying Proposition \ref{Prop_H_to_h}, which makes the transition from $H$ to $h$ precise, and then using the split $h=h^1+h^0$ as well as the pointwise decay estimates given by Propositions \ref{decaySchwarzschild}, for the Schwarzschild part $h^0$, and \ref{decaymetric}, for $h^1$, one obtains 
\begin{align*}
|\mathcal{L}_Z^J H| & \lesssim \frac{\sqrt\epsilon}{1+t+r} +  \sum_{|M|\leq |J|} |\mathcal{L}_Z^M h^1|, \\
|\mathcal{L}_Z^J H|_{\mathcal{L} \mathcal{L}} & \lesssim \frac{\sqrt\epsilon}{1+t+r} + \sum_{|M|\leq |J|} |\mathcal{L}_Z^M h^1|_{\mathcal{L} \mathcal{L}} + \frac{ \sqrt{1+|u|}}{(1+t+r)^{1-\delta}} \sum_{|M|\leq |J|} |\mathcal{L}_Z^M h^1|.
\end{align*}
We then deduce that
\begin{align*}
&\left|  \mathcal{L}_Z^J (H)^{\alpha \beta} \nabla_{\alpha} \nabla_{\beta} \mathcal{L}_Z^K (h^1) \right|_{\mathcal{V} \mathcal{W}} \hspace{1mm} \lesssim \hspace{1mm}  \sum_{\begin{subarray}{} |M|+|Q|\leq n+1 \\ \hspace{2mm} |M|,|Q| \leq n \end{subarray}} \frac{\sqrt\epsilon \big| \nabla \mathcal{L}_Z^Q h^1 \big|_{\mathcal{V} \mathcal{W}} }{(1+t+r)(1+|u|)} +  \frac{\left|\mathcal{L}_Z^M h ^1\right|_{ \mathcal{L} \mathcal{L}} \big| \nabla \mathcal{L}_Z^Q h^1 \big|_{\mathcal{V} \mathcal{W}}}{1+|u|}  \\
& \qquad \qquad  + \sum_{\begin{subarray}{} |M|+|Q|\leq n+1 \\ \hspace{2mm} |M|,|Q| \leq n \end{subarray}} \left( \frac{\sqrt{\epsilon}  }{(1+t+r)^2}+\frac{  \left|\mathcal{L}_Z^M h^1\right| }{1+t+r} + \frac{  \left|\mathcal{L}_Z^M h^1\right|}{(1+t+r)^{1-\delta} (1+|u|)^{\frac{1}{2}}}\right) \hspace{-0.5mm}  \big| \nabla \mathcal{L}_Z^Q h^1 \big| \hspace{-1mm} 
\end{align*}
Note that one factor of each of the quadratic terms in $h^1$ can be estimated pointwise since $N \geq n \geq 13$. Hence, using the decay estimates given by Propositions \ref{decaymetric} and \ref{estinullcompo}, we obtain the following bound
\begin{align*}
& \left|  \mathcal{L}_Z^J (H)^{\alpha \beta} \nabla_{\alpha} \nabla_{\beta} \mathcal{L}_Z^K (h^1) \right|_{\mathcal{V} \mathcal{W}} \hspace{1mm} \lesssim \hspace{1mm} \sum_{|M| \leq n} \sum_{|Q| \leq N-5} \frac{\left|\mathcal{L}_Z^M h ^1\right|_{ \mathcal{L} \mathcal{L}} \big| \nabla \mathcal{L}_Z^Q h^1 \big|_{\mathcal{V} \mathcal{W}}}{1+|u|} \\ & \qquad \qquad \qquad +\left( \frac{\sqrt\epsilon }{(1+t+r)(1+|u|)} + \frac{\sqrt \epsilon (1+|u|)^{\frac{1}{2}+\gamma }}{(1+t+r)^{1 + \gamma-\delta}(1+|u|)} \right) \hspace{-1mm} \sum_{|Q|\leq n} \hspace{-1mm} \big| \nabla \mathcal{L}_Z^Q h^1 \big|_{\mathcal{V} \mathcal{W}} \\
&\qquad \qquad \qquad + \left( \frac{ \sqrt{\epsilon}\sqrt{1+|u|}}{(1+t+r)^{2-\delta} } + \frac{\sqrt{\epsilon}}{(1+t+r)^{2-2\delta} }  \right)\sum_{|M| \leq n} \left( |\nabla \mathcal{L}_Z^M h^1|+ \frac{|\mathcal{L}_Z^M h^1|}{1+|u|} \right) .
\end{align*}
In order to estimate the first term on the right-hand side of the previous inequality, we use the pointwise decay estimates of Propositions \ref{decaymetric} and \ref{estinullcompo} which provide
$$\big| \nabla \mathcal{L}_Z^Q h^1 \big|_{\mathcal{V} \mathcal{W}} \lesssim \frac{\sqrt{\epsilon}}{ (1+t+r)^{1-\delta} (1+|u|)^{\frac{1}{2}}}$$
and, if $\mathcal{V}=\mathcal{W}=\mathcal{L}$,
$$ \big| \nabla \mathcal{L}_Z^Q h^1 \big|_{\mathcal{V} \mathcal{W}} \lesssim \sqrt{\epsilon} \frac{  (1+|u|)^{\frac{1}{2}}}{(1+t+r)^{2-2\delta}}.$$
The asserted bounds now follow (note that we use $\delta \leq \frac{1}{2}$ and that we do not keep all the decay given by the last estimates).
\end{proof}

Finally we bound the error terms coming from the commutation of $\widetilde{\Box}_g$ with the contraction with the frame fields $TU$ or $LL$ and the commutation of $\widetilde{\Box}_g$ with the multiplication by the characteristic function $\chi\left( \frac{r}{1+t} \right)$.

\begin{lemma} \label{lem_c_frame}
Let $k_{\mu\nu}$ be a $(2,0)$ tensor field and $(T,U) \in \mathcal{T} \times \mathcal{U}$. Then
\begin{align*}
\left| \widetilde{\square}_g ( k_{T U} ) - \widetilde{\square}_g (k_{\mu \nu}) T^{\mu} U^{\nu}  \right| \hspace{1mm} &\lesssim \hspace{1mm} \frac 1 r |\overline \nabla k| + \frac{1}{r^2} | k | + \frac{\sqrt{\epsilon} \sqrt{1+|u|}}{r (1+t+r)^{1-\delta}} |\nabla k |, \\
\left| \tilde \square_g \left(k_{LL}\right) - \widetilde \Box_g \left( k_{\mu\nu}\right) L^{\mu} L^{\nu} \right| \hspace{1mm} &\lesssim \hspace{1mm} \frac 1r | \overline \nabla k |_{\mathcal{T} \mathcal{U}} + \frac{1}{r^2} |k| + \frac{\sqrt\epsilon (1+|u|)^{\frac{1}{2}}}{r (1+t+r)^{1-\delta}} |\nabla k|.
\end{align*}
\end{lemma}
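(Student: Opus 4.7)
The strategy is to expand the difference using the Leibniz rule and to track carefully the derivatives of the null frame vectors. Writing $k_{TU} = k_{\mu\nu} T^\mu U^\nu$ and applying $g^{\alpha\beta}\partial_\alpha\partial_\beta$, we obtain
\begin{align*}
\widetilde{\square}_g(k_{TU}) - \widetilde{\square}_g(k)_{\mu\nu} T^\mu U^\nu &= 2 g^{\alpha\beta} (\partial_\alpha T^\mu)(\partial_\beta k_{\mu\nu}) U^\nu + 2 g^{\alpha\beta} (\partial_\alpha U^\nu)(\partial_\beta k_{\mu\nu}) T^\mu \\
&\quad + g^{\alpha\beta} (\partial_\alpha \partial_\beta T^\mu) k_{\mu\nu} U^\nu + g^{\alpha\beta} (\partial_\alpha \partial_\beta U^\nu) k_{\mu\nu} T^\mu \\
&\quad + 2 g^{\alpha\beta} (\partial_\alpha T^\mu)(\partial_\beta U^\nu) k_{\mu\nu}.
\end{align*}
Since $L = \partial_t + \omega^i \partial_i$, $\underline L = \partial_t - \omega^i \partial_i$ with $\omega^i = x^i/r$, and the $e_A$ have homogeneous-of-degree-zero Cartesian components, a direct computation yields $|\partial T|,|\partial U| \lesssim 1/r$ and $|\partial^2 T|,|\partial^2 U| \lesssim 1/r^2$ for any $T, U \in \{L,\underline L,e_1,e_2\}$. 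Combining this with the pointwise bound $|H| \lesssim \sqrt\epsilon (1+|u|)^{1/2} (1+t+r)^{\delta-1}$ (derived from Propositions \ref{decaymetric}, \ref{decaySchwarzschild} together with Proposition \ref{Prop_H_to_h}), and splitting $g^{\alpha\beta} = \eta^{\alpha\beta} + H^{\alpha\beta}$, the first asserted inequality follows by a crude estimation term by term.

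For the improved $LL$ estimate, the crucial structural observation is that in Cartesian coordinates $L^0 = 1$, $L^i = x^i/r$ is homogeneous of degree zero and constant along both $L$ and $\underline L$, whence
\begin{equation*}
\partial_L L = \partial_{\underline L} L = 0, \qquad \partial_{e_A} L = \frac{e_A}{r}.
\end{equation*}
Thus $(\partial_\alpha L^\mu)$ is non-trivial only in the angular directions, and the resulting vector lies in $\mathcal{T}$. Expanding the first type of term in the null frame yields
\begin{equation*}
g^{\alpha\beta}(\partial_\alpha L^\mu)(\partial_\beta k_{\mu\nu}) L^\nu = \frac{1}{r} \sum_{A,\,\mathcal{B}} g^{e_A \mathcal{B}} (\partial_{\mathcal{B}} k)(e_A, L).
\end{equation*}
Writing $g^{e_A \mathcal{B}} = \eta^{e_A \mathcal{B}} + H^{e_A \mathcal{B}}$ and using $\eta^{e_A L} = \eta^{e_A \underline L} = 0$, the Minkowski contribution reduces to $\frac{1}{r}\sum_A(\partial_{e_A} k)(e_A,L)$, where $\partial_{e_A}$ is tangent to the light cone and $(e_A, L) \in \mathcal{T} \times \mathcal{U}$, so this is bounded by $\frac{1}{r}|\overline\nabla k|_{\mathcal{T}\mathcal{U}}$. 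The $H$-contribution is controlled by $\frac{|H|}{r}|\nabla k|$, which, after inserting the bound on $|H|$, yields the third term of the desired inequality. The four remaining terms (those involving $\partial^2 L$ or the quadratic factor $(\partial L)(\partial L)$) are at worst $\lesssim |k|/r^2$, using $|\partial^2 L| \lesssim 1/r^2$ and $|\partial L|^2 \lesssim 1/r^2$ together with the crude bound $|g^{-1}| \lesssim 1$.

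The main subtlety is to recognize that the improved estimate for $LL$-contractions rests on a delicate cancellation peculiar to the null frame: the fact that $\partial L$ is both $\mathcal{T}$-valued and only picks up angular derivatives, combined with $\eta^{e_A \underline L} = 0$, is precisely what allows the Minkowski part of $g^{-1}$ to extract only the good derivatives $\partial_{e_A}$ of the $\mathcal{T}\mathcal{U}$ components of $k$. For a general $T, U$ pair (for instance, when $T$ or $U$ is $\underline L$) this orthogonality is lost, and one has to settle for the weaker estimate with $|\overline\nabla k|$ replaced by $|\nabla k|$, which is the content of the first inequality. No further cancellations are exploited, and the rest of the argument is a systematic bookkeeping exercise once the frame-derivative identities above are in hand.
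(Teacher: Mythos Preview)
Your overall approach---applying the Leibniz rule for $g^{\alpha\beta}\partial_\alpha\partial_\beta$ to $k_{\mu\nu}T^\mu U^\nu$, then splitting $g^{-1}=\eta^{-1}+H$ and using the frame-derivative identities---is correct and is essentially the paper's strategy. The paper organizes the computation slightly differently, writing $\widetilde\square_g = (-\partial_t^2+\partial_r^2+\tfrac{2}{r}\partial_r)+\nabla^A\nabla_A+H^{\alpha\beta}\partial_\alpha\partial_\beta$ and noting that the first bracket commutes exactly with the contraction because $\nabla_{\partial_t}V=\nabla_{\partial_r}V=0$ for every $V\in\mathcal U$; this is the same information as your degree-zero observation.

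However, your final paragraph contains a misreading of the statement, and it propagates backwards into a gap in your treatment of the first inequality. The first inequality already carries $|\overline\nabla k|$, not $|\nabla k|$: the distinction between the two estimates is $|\overline\nabla k|$ versus $|\overline\nabla k|_{\mathcal T\mathcal U}$. Hence the ``crude estimation term by term'' you invoke for the first inequality is insufficient: a genuinely crude bound on $\eta^{\alpha\beta}(\partial_\alpha T^\mu)(\partial_\beta k_{\mu\nu})U^\nu$ gives $\tfrac{1}{r}|\nabla k|$, not $\tfrac{1}{r}|\overline\nabla k|$. To obtain $\overline\nabla$ you must use, for \emph{every} frame vector $V$ (not just $L$), that $\nabla_L V=\nabla_{\underline L}V=0$, together with $\eta^{e_A L}=\eta^{e_A\underline L}=0$; the Minkowski cross term then reduces to $\sum_A\bigl(\nabla_{e_A}(k)(\nabla_{e_A}T,U)+\nabla_{e_A}(k)(T,\nabla_{e_A}U)\bigr)$, which is bounded by $\tfrac{1}{r}|\overline\nabla k|$. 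The genuine refinement in the $LL$ case is that $\nabla_{e_A}L=e_A/r\in\mathcal T$, so both frame slots stay in $\mathcal T\times\mathcal U$; for general $T\in\mathcal T$ (say $T=e_B$) one has $\nabla_{e_A}e_B=\slashed\Gamma^D_{AB}e_D-\tfrac{1}{2r}\delta_{AB}(L-\underline L)$, which has an $\underline L$ component, and this is why only $|\overline\nabla k|$ rather than $|\overline\nabla k|_{\mathcal T\mathcal U}$ appears in the first inequality. Your assertion that ``the orthogonality is lost'' for general $(T,U)$ is therefore not the right diagnosis---the orthogonality $\eta^{e_A\underline L}=0$ is always present; what changes is the range of $\nabla_{e_A}T$.
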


\begin{proof}
We will use in the upcoming calculations that
$$ \widetilde{\square}_g \hspace{1mm} = \hspace{1mm} - \partial^2_t+\partial^2_r+\frac{2}{r} \partial_r +\nabla^A \nabla_A + H^{\alpha \beta} \partial_{\alpha} \partial_{\beta}, \hspace{1cm} \forall \hspace{0.5mm} U \in \mathcal{U}, \hspace{3mm} \nabla_{\partial_r} U =0$$
and that, for any $U \in \mathcal{U}$, there exist bounded functions $a_{U,V}$ and $b_{U,V}$ such that
\begin{equation}\label{equation:111111111}
 \nabla_A U = \frac{1}{r} \sum_{V \in \mathcal{U}} a_{U,V} \, V, \qquad  \nabla_A\nabla^A U = \frac{1}{r^2} \sum_{V \in \mathcal{U}} b_{U,V} \, V.
 \end{equation}
These last relations can be proved similarly as \eqref{eq:Vminus}. As a consequence, we immediately deduce that for any $(T,U) \in \mathcal{T} \times \mathcal{U}$,
$$- \partial^2_t(k_{TU})+\partial^2_r(k_{TU})+\frac{2}{r} \partial_r(k_{TU})- \left( - \partial^2_t(k_{\mu \nu})+\partial^2_r(k_{\mu \nu})+\frac{2}{r} \partial_r(k_{\mu \nu}) \right) T^{\mu} U^{\nu}\hspace{1mm} = \hspace{1mm} 0$$
and, using also Proposition \ref{Prop_H_to_h} combined with the decay estimates of Proposition \ref{decaymetric},
$$ \left|  H^{\alpha \beta} \partial_{\alpha} \partial_{\beta} (k_{TU}) - H^{\alpha \beta} \partial_{\alpha} \partial_{\beta} (k_{\mu \nu}) \right| \hspace{1mm} \lesssim \hspace{1mm} \frac{1}{r}|H| |\nabla k|+\frac{1}{r^2}|H| |k| \hspace{1mm} \lesssim \hspace{1mm} \frac{\sqrt{\epsilon} (1+|u|)^{\frac{1}{2}}}{r(1+t+r)^{1-\delta}} |\nabla k|+\frac{1}{r^2} |k|.$$
These two estimates are good enough to prove the two inequalities of the Lemma (recall that $(L,L) \in \mathcal{T} \times \mathcal{U}$). It then remains us to study the commutation of the frame fields with $\nabla_A \nabla^A$. If $(T,U) \in \mathcal{T} \times \mathcal{U}$, one has, since $\nabla_A \nabla^A (k_{\mu \nu} ) T^{\mu} U^{\nu} = \nabla_A \nabla^A (k )(T,U)$,
\begin{align*} \nabla_A \nabla^A ( k_{TU} ) - \nabla_A \nabla^A (k_{\mu \nu} ) T^{\mu} U^{\nu} \hspace{1mm} & = \hspace{1mm} \nabla_A (k)( \nabla^A T, U)+\nabla_A (k) (T, \nabla^A U) \\
& \quad \hspace{1mm} +k( \nabla_A \nabla^A T, U) +k( T, \nabla_A \nabla^A U) .
\end{align*}
The first inequality of the Lemma can then be obtained using \eqref{equation:111111111} and $|\nabla_A k| \leq |\overline{\nabla} k|$. For the second one, we apply the last equality to $T=U=L$ and we remark that, using again \eqref{equation:111111111}, $ |\nabla_A (k)( \nabla^A L, L)|  \lesssim  \frac{1}{r} |\overline{\nabla} k|_{\mathcal{T} \mathcal{U}}$. This concludes the proof.
\end{proof}

\begin{lemma} \label{lem_char}
Let $\p$ be a sufficiently regular scalar function. Then
\begin{equation*}
\left| \widetilde \Box_g \left(\chi\!\left(\frac{r}{1+t}\right) \, \phi\right) - \chi\!\left(\frac{r}{1+t}\right) \, \widetilde \Box_g \phi \right|  \\ \lesssim \mathds{1}_{\{\frac{1+t}{4} \leq r \leq \frac{1+t}{ 2}\}} \left(\frac{| \phi|}{(1+t+r)^2}  +  \frac{| \nabla \phi|}{1+t+r} \right).
\end{equation*}
\end{lemma}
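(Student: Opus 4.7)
The plan is to use the Leibniz rule for $\widetilde\Box_g = g^{\alpha\beta}\partial_\alpha\partial_\beta$, which gives the identity
\begin{equation*}
\widetilde\Box_g\!\left(\chi\!\left(\tfrac{r}{1+t}\right)\phi\right) - \chi\!\left(\tfrac{r}{1+t}\right)\widetilde\Box_g\phi = \phi\,\widetilde\Box_g\!\left(\chi\!\left(\tfrac{r}{1+t}\right)\right) + 2g^{\alpha\beta}\,\partial_\alpha\!\left(\chi\!\left(\tfrac{r}{1+t}\right)\right)\partial_\beta\phi.
\end{equation*}
The whole task is then to estimate the two factors involving $\chi$. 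First I would verify that the first and second Cartesian partial derivatives of $\chi(r/(1+t))$ are pointwise controlled by
\begin{equation*}
\left|\partial_\alpha\!\left(\chi\!\left(\tfrac{r}{1+t}\right)\right)\right|\lesssim \frac{\mathds{1}_{\{\frac{1+t}{4}\le r\le \frac{1+t}{2}\}}}{1+t+r}, \qquad \left|\partial_\alpha\partial_\beta\!\left(\chi\!\left(\tfrac{r}{1+t}\right)\right)\right|\lesssim \frac{\mathds{1}_{\{\frac{1+t}{4}\le r\le \frac{1+t}{2}\}}}{(1+t+r)^2},
\end{equation*}
for all $0\le \alpha,\beta\le 3$. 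This is a direct computation in the spirit of Lemma \ref{lem_tech}: each derivative acting on $\chi(r/(1+t))$ either produces a factor of $\chi'$ or $\chi''$ (localising the support to $\frac{1+t}{4}\le r\le \frac{1+t}{2}$, a region where $1+t\simeq r\simeq 1+t+r$) multiplied by a rational expression in $t,r,x^i/r$ whose size is controlled by the appropriate power of $(1+t+r)^{-1}$.

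Once these bounds are in place, it remains only to combine them with $|g^{\alpha\beta}|\lesssim 1$. The latter follows from the decomposition $g^{-1}=\eta^{-1}+H$ together with the uniform bound $|H|\lesssim \sqrt\epsilon \le 1$ (a consequence of Proposition \ref{Prop_H_to_h} and the standing assumption \eqref{eq:conditiong}, which are already used throughout this section). Plugging these estimates into the Leibniz identity above yields
\begin{equation*}
\left|\phi\,\widetilde\Box_g\!\left(\chi\!\left(\tfrac{r}{1+t}\right)\right)\right| \lesssim \mathds{1}_{\{\frac{1+t}{4}\le r\le \frac{1+t}{2}\}}\,\frac{|\phi|}{(1+t+r)^2}, \qquad \left|2g^{\alpha\beta}\partial_\alpha\!\left(\chi\!\left(\tfrac{r}{1+t}\right)\right)\partial_\beta\phi\right|\lesssim \mathds{1}_{\{\frac{1+t}{4}\le r\le \frac{1+t}{2}\}}\,\frac{|\nabla\phi|}{1+t+r},
\end{equation*}
which is exactly the asserted bound.

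There is no genuine obstacle here; the statement is essentially a bookkeeping exercise about derivatives of the cutoff. The only subtlety—namely that $|g^{\alpha\beta}|$ could a priori be large—is immediately handled by the smallness assumption \eqref{eq:conditiong} on $h$ that is in force throughout Sections \ref{sectionpreliminaryanalysis}--\ref{sectioncommutationvlasovenergy}.
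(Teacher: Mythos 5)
Your proof is correct and takes essentially the same approach as the paper: a Leibniz-rule expansion of the commutator combined with the $\chi$-derivative bounds of Lemma \ref{lem_tech} and the uniform bound $|g^{\alpha\beta}|\lesssim 1$. The only cosmetic difference is that you apply the Leibniz rule directly to $\widetilde\Box_g=g^{\alpha\beta}\partial_\alpha\partial_\beta$, whereas the paper first splits off $\Box_\eta$ (writing the flat cross term in the null frame using $\nabla_A\chi=0$) and then handles the $H^{\mu\nu}\partial_\mu\partial_\nu$ correction separately; your version is a little cleaner and avoids the unnecessary null-frame detour.
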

\begin{proof}
Let us denote $\chi( \frac{r}{t+1} )$ merely by $\chi$. Start by noticing that
\begin{equation}\label{eq:reducedwave0}
\widetilde{\square}_g ( \chi \phi ) = \square_\eta ( \chi \phi ) + H^{\mu \nu} \partial_{\mu} \partial_{\nu} ( \chi \phi ).
\end{equation}
 Using that $\square_{\eta} \phi = -\frac{1}{r}L\underline{L}(r \phi)+\slashed{\Delta} \phi$, one gets, as $\nabla_A \chi = 0$,
\begin{equation}\label{eq:psiMin}
\square_{\eta } \left( \chi \phi \right) = \chi  \square_{\eta} (  \phi ) + \square_{\eta} \left( \chi \right) \p -   \underline{L} \left(  \chi \right) L ( \p ) - \underline{L} \left(  \phi \right) L \left(  \chi \right).
 \end{equation}
Now, according to Lemma \ref{lem_tech}, we have
\begin{equation}\label{eq:deripsi}
 \left| \nabla_{t,x} \chi \right| \lesssim \frac{1}{1+t+r}  \mathds{1}_{\{ \frac{1}{4} \leq \frac{r}{t+1} \leq \frac{1}{2}\}} \quad \text{and} \quad \left|  \nabla^2_{t,x} \chi \right| \lesssim \frac{1}{(1+t+r)^2} \mathds{1}_{\{ \frac{1}{4} \leq \frac{r}{t+1} \leq \frac{1}{2}\}}. 
 \end{equation}
We then deduce that
\begin{equation}\label{eq:boundpsi}
\left| \square_{\eta} \left( \chi \right) \phi -   \underline{L} \left(  \chi \right) L ( \phi ) -    \underline{L} \left(  \phi \right) L \left(  \chi \right) \right| \lesssim \frac{| \phi |}{(1+t+r)^2} \mathds{1}_{\{ \frac{1}{4} \leq \frac{r}{1+t} \leq \frac{1}{2}\}} +\frac{ | \nabla \phi |}{1+t+r}   \mathds{1}_{\{ \frac{1}{4} \leq \frac{r}{t+1} \leq \frac{1}{2}\}}.
\end{equation}
We now focus on the second part 
\begin{equation}\label{eq:psiH}
H^{\mu \nu} \partial_{\mu} \partial_{\nu} ( \chi \phi ) = \chi H^{\mu \nu} \partial_{\mu} \partial_{\nu} \phi + H^{\mu \nu} \partial_{\mu} \partial_{\nu} ( \chi ) \phi +2 H^{\mu \nu} \partial_{\mu} (\chi) \partial_{\nu}  ( \phi ).
\end{equation}
Using again \eqref{eq:deripsi}, we obtain, as $|H| \lesssim 1$,
$$\left| H^{\mu \nu} \partial_{\mu} \partial_{\nu} ( \chi ) \phi +H^{\mu \nu} \partial_{\mu} (\chi) \partial_{\nu}  ( \phi ) \right| \lesssim \frac{| \phi |}{(1+t+r)^2} \mathds{1}_{ \{ \frac{1}{4} \leq \frac{r}{t+1} \leq \frac{1}{2}\}} +\frac{| \nabla \phi |}{1+t+r}   \mathds{1}_{ \{ \frac{1}{4} \leq \frac{r}{t+1} \leq \frac{1}{2}\}}.$$
The result then follows from the combination of this last inequality with \eqref{eq:reducedwave0}, \eqref{eq:psiMin}, \eqref{eq:boundpsi} and \eqref{eq:psiH}.
\end{proof}

\begin{rem}
Note that the error terms given by Lemmas \ref{lem_c_frame} and \ref{lem_char} are of size $\sqrt{\epsilon}$ whereas the source terms of the Einstein equations are of size $\epsilon$. For this reason, we will have to consider a hierarchy between the different energy norms considered for $h^1$. In particular, when we will improve the bootstrap assumption on $\mathcal{E}^{1+\gamma , 1+\gamma}_{N, \mathcal{T} \mathcal{U}}[h^1]$ (respectively $\mathcal{E}^{1+2\gamma ,1}_{N, \mathcal{L} \mathcal{L}}[h^1]$), the terms given by the previous two lemmas will have to be bounded indenpendantly of $C_{\mathcal{T} \mathcal{U}}$ and $C_{\mathcal{L} \mathcal{L}}$ (respectively $C_{\mathcal{L} \mathcal{L}}$).
\end{rem}

\section{Improved energy estimates for the metric perturbations}\label{sec12}

\subsection{Improved energy estimates for the general components of $h^1$}

The aim of this subsection is to improve the bootstrap assumptions on the energy norms $\overline{\mathcal{E}}^{\gamma, 1+2\gamma}_{N-1}[h^1]$ and $\mathring{\mathcal{E}}^{\gamma, 2+2\gamma}_N[h^1]$. We start by the first one. For this, recall from Remark \ref{forenergywave} that we can apply the second energy estimate of Proposition \ref{LRenergy} to $\mathcal{L}_Z^J(h^1)$ for $(a,b)=(\gamma,1+2\gamma)$ and for any $|J| \leq N-1$. Consequently, by the Cauchy-Schwarz inequality and the bootstrap assumption \eqref{boot1}, we obtain for all $t \in [0,T[$,
\begin{align}
\nonumber \overline{\mathcal{E}}^{\gamma,1 + 2\gamma}_{N-1}[h^1](t) &\leq \underline{C} \overline{\mathcal{E}}^{\gamma, 1+2 \gamma}_{N-1}[h^1](0)  + C\sqrt{\epsilon} \int_0^t \frac{\overline{\mathcal{E}}^{\gamma,1+2\gamma}_{N-1}[h^1](\tau)}{1+\tau} \mathrm d \tau \\
&\quad + \underline{C} \sum_{|J|\leq N-1} \left| \int_0^t \frac{\overline{\mathcal{E}}^{\gamma,1+2\gamma}_{N-1}[h^1](\tau)}{1+\tau} \mathrm d \tau \!  \int_0^t \! \int_{\Sigma_{ \tau} } (1+\tau) \! \left| \widetilde{\square}_g \left(  \mathcal{L}_Z^J h^1 \right) \right|^2 \! \omega^{1+2\gamma}_0 \mathrm d x \mathrm d \tau \right|^{\frac{1}{2}} \nonumber \\ 
&\leq  \underline{C} \epsilon + C  \epsilon^{\frac{3}{2}} (1+t)^{2\delta} + \frac{C}{\sqrt{\epsilon}} \sum_{|J|\leq N-1}  \int_0^t \int_{\Sigma_{ \tau} } (1+\tau) \left| \widetilde{\square}_g \left(  \mathcal{L}_Z^J h^1 \right)  \right|^2 \omega^{1+2\gamma}_0 \mathrm d x \mathrm d \tau , \label{eq:LRimprove}
\end{align}
where $\underline{C}>0$ is an absolute constant which does not depend on the boostrap constant $\overline{C}$, while the constant $C$ appearing in the second and third terms on the right-hand side might depend on the $\overline{C}$. We are now ready to prove the following result.
\begin{prop}\label{improboot12}
Suppose that the energy momentum tensor $T[f]$ of the Vlasov field satisfies, for all $t \in [0,T[$,
$$\sum_{|I| \leq N-1} \int_0^t \int_{\Sigma_{\tau}} (1+\tau) \left| \mathcal{L}_Z^I ( T[f]) \right|^2 \omega_0^{1+2 \gamma}  \mathrm dx \mathrm d \tau \hspace{1mm} \lesssim \hspace{1mm} \epsilon^2 (1+t)^{2\delta}.$$
Then, if $\overline{C}$ is chosen sufficiently large and if $\epsilon$ is small enough, we have 
$$ \forall \hspace{0.5mm} t \in [0,T[, \hspace{1cm} \overline{\mathcal{E}}^{\gamma,1 + 2\delta}_{N-1}[h^1](t) \hspace{1mm} \leq \hspace{1mm} \frac{1}{2}\overline{C} \epsilon (1+t)^{2\delta}.$$
\end{prop}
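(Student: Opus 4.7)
The starting point is the energy inequality \eqref{eq:LRimprove}, so the task reduces to controlling
\[
\mathfrak{J}(t) \; := \; \sum_{|J|\le N-1} \int_0^t \int_{\Sigma_\tau} (1+\tau)\bigl|\widetilde{\square}_g(\mathcal{L}_Z^J h^1)\bigr|^2 \omega_0^{1+2\gamma}\, \mathrm dx\, \mathrm d\tau.
\]
For this, the plan is to use the commuted Einstein equation (Proposition \ref{ComuEin}) to write $\widetilde{\square}_g(\mathcal{L}_Z^J h^1)$ as a sum of (i) commutator terms of the form $\mathcal{L}_Z^{J_1}(H)^{\alpha\beta}\nabla_\alpha\nabla_\beta \mathcal{L}_Z^{K}h^1$ with $|K|<|J|$, (ii) the semilinear combinations $P(\nabla\mathcal{L}_Z^{J_1}h,\nabla\mathcal{L}_Z^{J_2}h)$, $Q(\nabla\mathcal{L}_Z^{J_1}h,\nabla\mathcal{L}_Z^{J_2}h)$ and $G$, (iii) the Schwarzschild source $\mathcal{L}_Z^J(\widetilde{\square}_g h^0)$, and (iv) $\mathcal{L}_Z^J T[f]$. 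The last one is the term that the hypothesis of the proposition is designed to absorb, producing a contribution $\lesssim \epsilon^2(1+t)^{2\delta}$ to $\mathfrak{J}(t)$.

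For (ii), I would apply the pointwise bound of Proposition \ref{prop_semi_linear}. Squaring and integrating against $(1+\tau)\omega_0^{1+2\gamma}$, each of the resulting terms becomes one of the following three kinds of spacetime integrals, all of which must yield $\lesssim \epsilon^2(1+t)^{2\delta}$:
\begin{itemize}
\item $\int\int \frac{\epsilon}{(1+\tau+r)^{2-\delta}(1+|u|)^{2\gamma}} |\nabla\mathcal{L}_Z^M h^1|_{\mathcal{T}\mathcal{U}}^2 \,\omega_0^{1+2\gamma}\,\mathrm dx\,\mathrm d\tau$, controlled via $|u|^{-2\gamma}\omega_0^{1+2\gamma}\lesssim \omega_{2\gamma}^{1+\gamma}$ (interior) and a direct exterior bound, then using the bootstrap assumption \eqref{boot3} on $\mathcal{E}^{2\gamma,1+\gamma}_{N-1,\mathcal{T}\mathcal{U}}[h^1]$ combined with Lemma \ref{LemBulk};
\item $\int\int \frac{\epsilon}{(1+\tau+r)^{2-2\delta}(1+|u|)} |\overline\nabla \mathcal{L}_Z^M h^1|^2 \omega_0^{1+2\gamma}\,\mathrm dx\,\mathrm d\tau$, controlled by the bulk part of $\overline{\mathcal{E}}^{\gamma,1+2\gamma}_{N-1}[h^1]$ after inserting a factor of $(1+\tau)^{-1+2\delta}$ and applying Lemma \ref{LemBulk}, since $\omega_0^{1+2\gamma}\le \omega_\gamma^{1+2\gamma}$;
\item $\int\int \frac{\epsilon(1+|u|)}{(1+\tau+r)^{4-4\delta}}\left(|\nabla\mathcal{L}_Z^M h^1|^2 + \frac{|\mathcal{L}_Z^Mh^1|^2}{(1+|u|)^2}\right)\omega_0^{1+2\gamma}\,\mathrm dx\,\mathrm d\tau$, estimated via the bootstrap assumption \eqref{boot1} (together with the Hardy inequality of Lemma \ref{lem_hardy} for the undifferentiated term) and a gain of $(1+\tau+r)^{-2+4\delta}$ that makes the $\tau$-integral convergent.
\end{itemize}
The commutator terms (i) are handled in exactly the same way using Proposition \ref{prop_box}, and the cubic $G$-contribution and (iii) present no difficulty since $M\lesssim\sqrt\epsilon$ and Proposition \ref{prop_ss} yields integrable, $\epsilon$-small decay.

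Summing all these contributions gives $\mathfrak{J}(t)\le C\epsilon^2(1+t)^{2\delta}$, where importantly $C$ depends only on $N,\gamma,\delta$ and on the bootstrap constants $\overline{C},C_{\mathcal{T}\mathcal{U}}$, but the dependence enters only multiplied by $\sqrt\epsilon$ (the pointwise estimates used are cubic in the metric up to a factor $\sqrt\epsilon$, i.e.\ one of the two $h^1$ factors is always pointwise-bounded by $\sqrt\epsilon$ from Propositions \ref{decaymetric}–\ref{estinullcompo}). Inserting into \eqref{eq:LRimprove} yields
\[
\overline{\mathcal{E}}^{\gamma,1+2\gamma}_{N-1}[h^1](t) \; \le \; \underline C\,\epsilon \;+\; C\epsilon^{3/2}(1+t)^{2\delta} \;+\; \frac{C}{\sqrt\epsilon}\cdot C\epsilon^2(1+t)^{2\delta} \; \le \; \underline C\,\epsilon \;+\; C'\sqrt\epsilon\,\epsilon(1+t)^{2\delta}.
\]
Choosing $\overline{C}\ge 4\underline C$ and then $\epsilon$ small enough that $C'\sqrt\epsilon\le \overline{C}/4$ closes the estimate with the factor $1/2$.

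The main obstacle is bookkeeping rather than analysis: one must verify that every factor of $\overline{C}$ that appears in the bounds on the right-hand side is accompanied by at least one factor of $\sqrt\epsilon$ (so that smallness of $\epsilon$, not largeness of $\overline{C}$, controls it), while the $\epsilon^2$ coming from $T[f]$ gives the only contribution of size $\epsilon^{3/2}(1+t)^{2\delta}$ after division by $\sqrt\epsilon$; this is exactly what fixes the admissible choice of $\overline{C}$ independently of the other bootstrap constants.
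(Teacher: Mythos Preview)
Your approach is correct and matches the paper's proof: decompose $\widetilde{\square}_g(\mathcal{L}_Z^J h^1)$ via Proposition~\ref{ComuEin}, bound each piece using Propositions~\ref{prop_semi_linear}, \ref{prop_ss}, \ref{prop_box}, and the hypothesis on $T[f]$, then feed the resulting $\epsilon^2(1+t)^{2\delta}$ bound into \eqref{eq:LRimprove}. Your three bullet points correspond to the paper's integrals $\mathfrak{I}_1^J$, $\mathfrak{I}_2^J$, $\mathfrak{I}_3^J$, and your bookkeeping remark about the $\sqrt{\epsilon}$ pairing is exactly the content of Remark~\ref{justifconstants}.

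One term you gloss over deserves mention: the commutator estimate of Proposition~\ref{prop_box} produces, in addition to the shapes you list, a contribution
\[
\frac{\sqrt{\epsilon}\,|\mathcal{L}_Z^Q h^1|_{\mathcal{L}\mathcal{L}}}{(1+\tau+r)^{1-\delta}(1+|u|)^{3/2}},
\]
leading to the integral the paper calls $\mathfrak{I}_5^J$. This is not handled ``in exactly the same way'' as your three bullets: one first applies the Hardy inequality (Lemma~\ref{lem_hardy}) to trade $|\mathcal{L}_Z^J h^1|_{\mathcal{L}\mathcal{L}}$ for $|\nabla\mathcal{L}_Z^J h^1|_{\mathcal{L}\mathcal{L}}$, and then invokes the wave gauge consequence \eqref{wgcforproof} to replace $|\nabla\mathcal{L}_Z^J h^1|_{\mathcal{L}\mathcal{L}}^2$ by $|\overline{\nabla}\mathcal{L}_Z^J h^1|^2$ plus lower-order errors, which then fall back into the shapes you already control. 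This extra step is short but essential; without it the $\mathcal{L}\mathcal{L}$ factor has no direct bound at order $N-1$.
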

\begin{proof}
In view of the commutation formula of Proposition \ref{ComuEin}, the analysis of the source terms of the wave equation satisfied by $\mathcal{L}_Z^J (h^1)_{\mu\nu}$, which has been carried out in Section \ref{section_bounds}, and the inequality \eqref{eq:LRimprove}, we are led to bound sufficiently well the following integrals, defined for all multi-indices $|J| \leq N-1$:
\begin{align}
\mathfrak{I}_0 \hspace{1mm} &:= \hspace{1mm} \epsilon^2 \int_0^t \int_{\{r \leq \tau\}} \frac{1+\tau }{(1+\tau +r)^6}  \mathrm dx \mathrm d \tau + \epsilon^2 \int_0^t \int_{\{r \geq \tau\}} \frac{1+\tau}{(1+ \tau +r)^8}(1+|u|)^{1+2\gamma} \mathrm dx \mathrm d \tau, \nonumber \\ 
\mathfrak{I}^{J}_1 \hspace{1mm} &:= \hspace{1mm} \epsilon \int_0^t \int_{\Sigma_{\tau}} (1+\tau)\frac{|\nabla \mathcal{L}_Z^J h^1|_{\mathcal{T} \mathcal{U}}^2}{(1+\tau+r)^{2-\delta}(1+|u|)^{2\gamma}} \omega_0^{1+2\gamma} \mathrm dx \mathrm d \tau, \nonumber \\ 
\mathfrak{I}^{J}_2 \hspace{1mm} &:= \hspace{1mm}  \epsilon \int_0^t \int_{\Sigma_{\tau}} (1+\tau)\frac{\left| \overline{\nabla} \mathcal{L}_Z^J h^1 \right|^2}{(1+\tau+r)^{2-2\delta}(1+|u|)} \omega_0^{1+2\gamma} \mathrm dx \mathrm d \tau, \nonumber \\
\mathfrak{I}^{J}_3 \hspace{1mm} &:= \hspace{1mm}  \epsilon \int_0^t \int_{\Sigma_{\tau}} \frac{1+\tau}{(1+\tau+r)^{4-4\delta}}  \left( (1+|u|) \left| \nabla \mathcal{L}_Z^J h^1 \right|^2 + \frac{\left|  \mathcal{L}_Z^J h^1 \right|^2}{1+|u|} \right) \omega_0^{1+2\gamma} \mathrm d x \mathrm d \tau, \nonumber \\ \nonumber
\mathfrak{I}^{J}_4 \hspace{1mm} &:= \hspace{1mm}  \epsilon \int_0^t \int_{\Sigma_{\tau}} \frac{1+\tau}{(1+\tau+r)^{2}}  \left| \nabla \mathcal{L}^J_{Z} (h^1) \right|^2  \omega_0^{1+2\gamma} d x \mathrm d \tau,  \\ \nonumber
\mathfrak{I}^{J}_5 \hspace{1mm} &:= \hspace{1mm}   \epsilon \int_0^t \int_{\Sigma_{\tau}} \frac{1+\tau}{(1+\tau+r)^{2-2\delta}(1+|u|)^3}  \left| \mathcal{L}^J_{Z} (h^1) \right|_{\mathcal{L} \mathcal{L}}^2  \omega_0^{1+2\gamma} d x \mathrm d \tau, \\ \nonumber
\mathfrak{I}^{J}_6 \hspace{1mm} &:= \hspace{1mm} \int_0^t \int_{\Sigma_{\tau}} (1+\tau) \left| \mathcal{L}_Z^J ( T[f]) \right|^2 \omega_0^{1+2 \gamma}  \mathrm dx \mathrm d \tau.
\end{align}
Let us precise that
\begin{itemize}
\item Proposition \ref{prop_ss} gives the terms $\mathfrak{I}_0$ and $\mathfrak{I}_3^J$.
\item Proposition \ref{prop_semi_linear} gives the terms $\mathfrak{I}_0$, $\mathfrak{I}^J_1$, $\mathfrak{I}^J_2$ and $\mathfrak{I}^J_3$.
\item Proposition \ref{prop_box} gives $\mathfrak{I}^J_3$, $\mathfrak{I}^J_4$ and $\mathfrak{I}^J_5$.
\item $\mathfrak{I}^J_6$ is the source term related to the Vlasov field. It is estimated in Proposition \ref{prop_l2_vlasov}.
\end{itemize}
According to \eqref{eq:LRimprove}, the result follows if we prove, for any $|J| \leq N-1$ and all $q \in \llbracket 1 , 6 \rrbracket$,
 $$\mathfrak{I}_0  \lesssim \epsilon^2, \hspace{1cm} \forall \hspace{0.5mm} |J| \leq N-1 , \hspace{5mm}\mathfrak{I}_q^J \lesssim \epsilon^2(1+t)^{2\delta}.$$
For later use, it will be useful to bound $\mathfrak{I}_0$ by an auxiliary quantity $\overline{\mathfrak{I}}_0$. Since $ 1+2\gamma \leq 2$, one easily finds that
\begin{equation}\label{eq:mathfrakI0}
\mathfrak{I}_0 \hspace{1mm} \lesssim \hspace{1mm} \overline{\mathfrak{I}}_0 \hspace{1mm} := \hspace{1mm}\epsilon^2 \int_0^t \int_{r=0}^{+\infty} \frac{r^2\mathrm dr}{(1+\tau +r )^{\frac{9}{2}}} \mathrm d \tau \hspace{1mm} \lesssim \hspace{1mm} \epsilon^2 \int_0^t \frac{\mathrm d \tau }{(1+\tau)^{\frac{3}{2}}} \hspace{1mm} \lesssim \hspace{1mm} \epsilon^{2}.
\end{equation}
We fix $|J| \leq N-1$. Using the bootstrap assumption \eqref{boot3}, we get
\begin{eqnarray*}
\nonumber \mathfrak{I}^{J}_1 & \lesssim & \int_0^t \frac{ \epsilon}{(1+ \tau)^{1-\delta}} \int_{\Sigma_{\tau}} |\nabla \mathcal{L}_Z^J h^1|_{\mathcal{T} \mathcal{U}}^2 \omega^{1+\gamma}_{2\gamma} \mathrm dx \mathrm d \tau \hspace{2mm} \lesssim \hspace{2mm} \int_0^t \frac{ \epsilon \; \mathcal{E}^{2\gamma,1+\gamma}_{N-1, \mathcal{T} \mathcal{U}} [h^1] ( \tau )}{(1+ \tau)^{1-\delta}}  \mathrm d \tau \\ \nonumber
& \lesssim &  \epsilon^2 \int_0^t \frac{(1+\tau)^{\delta}}{(1+\tau)^{1-\delta}} \dr \tau \hspace{2mm} \lesssim \hspace{2mm} \epsilon^2 (1+t)^{2 \delta}.
\end{eqnarray*}
By the crude estimate $(1+|u|)^{\gamma} \leq (1+\tau+r)^{1-2\delta}$ and the bootstrap assumption \eqref{boot1}, one obtains
$$ \mathfrak{I}^J_2 \hspace{1mm}  \lesssim  \hspace{1mm} \epsilon \int_0^t  \int_{\Sigma_{\tau}} \left| \overline{\nabla} \mathcal{L}_Z^J (h^1) \right|^2 \frac{\omega^{1+2\gamma}_{\gamma}}{1+|u|} \mathrm dx \mathrm d \tau  \hspace{1mm} \lesssim  \hspace{1mm} \epsilon \hspace{1mm} \overline{\mathcal{E}}^{\gamma, 1+2\gamma}_{N-1}[h^1](t) \hspace{1mm}  \lesssim  \hspace{1mm}  \epsilon^2(1+t)^{2\delta}.$$ 
The Hardy type inequality of Lemma \ref{lem_hardy} yields
\begin{eqnarray*}
\mathfrak I^J_3 & \lesssim & \int_0^t \frac{ \epsilon}{(1+\tau)^{2-4\delta}} \int_{\Sigma_{\tau}} \hspace{-0.6mm} \left( \left| \nabla \mathcal{L}_Z^J (h^1) \right|^2 + \frac{\left| \mathcal{L}_Z^I (h^1) \right|^2}{(1+|u|)^2} \right) \, \omega^{1+2\gamma}_0 \mathrm dx \mathrm d \tau, \\
& \lesssim & \int_0^t \frac{ \epsilon}{(1+\tau)^{2-4\delta}} \int_{\Sigma_{\tau}}\left| \nabla \mathcal{L}_Z^J (h^1) \right|^2 \omega^{1+2\gamma}_0 \mathrm dx \mathrm d \tau.
\end{eqnarray*}
We then deduce, using the bootstrap assumption \eqref{boot1} and $6\delta \leq \frac{1}{2}$, that
\begin{equation}\label{boundmathfrakI3}
\mathfrak I^{J}_3 \hspace{1mm} \lesssim \hspace{1mm} \epsilon \int_0^t \frac{\overline{\mathcal{E}}^{\gamma, 1+2\gamma}_{N-1}[h^1]( \tau )}{(1+\tau)^{2-4\delta}} \mathrm d \tau \hspace{1mm} \lesssim \hspace{1mm} \epsilon^2 \int_0^t \frac{(1+\tau)^{2 \delta}}{(1+\tau)^{2-4\delta}} \mathrm d \tau \hspace{1mm} \lesssim \hspace{1mm} \epsilon^2.
\end{equation}
The next term can be estimated easily, using again the bootstrap assumption \eqref{boot1},
$$ \mathfrak{I}_4^J \hspace{2mm} \lesssim \hspace{2mm} \epsilon \int_0^t \frac{\overline{\mathcal{E}}^{\gamma, 1+2\gamma}_{N-1}[h^1](\tau)}{1+\tau} \mathrm d \tau \hspace{1mm} \lesssim \hspace{1mm} \epsilon^2 (1+t)^{2 \delta}.$$
For $\mathfrak{I}_5^J$, the first step consists in applying the Hardy inequality of Lemma \ref{lem_hardy}. For this reason, we cannot exploit all the decay in $u=t-r$ in the exterior region (for simplicity, we do not keep all the decay in $t-r$ that we have at our disposal in the interior region as well). We have
$$ \mathfrak{I}_5^J \hspace{1mm} \lesssim \hspace{1mm}  \epsilon \int_0^t \int_{\Sigma_{\tau}} \frac{\left|  \mathcal{L}_Z^J h^1 \right|_{\mathcal{L} \mathcal{L}}^2}{(1+t+r)^{1-2\delta}} \frac{\omega^{1+2\gamma-2\delta}_{\gamma+2\delta}}{(1+|u|)^2} \mathrm dx \mathrm d \tau \hspace{1mm} \lesssim \hspace{1mm}  \epsilon \int_0^t \int_{\Sigma_{\tau}} \frac{\left| \nabla  \mathcal{L}_Z^J h^1 \right|_{\mathcal{L} \mathcal{L}}^2\omega^{1+2\gamma}_{\gamma}}{(1+t+r)^{1-2\delta}(1+|u|)^{2\delta}}  \mathrm dx \mathrm d \tau.$$
Now, recall from \eqref{wgcforproof} that
\begin{align*}
 \left| \nabla \mathcal{L}_Z^J h^1 \right|_{\mathcal{L} \mathcal{L}}^2 \hspace{1mm} & \lesssim \hspace{1mm} \left| \overline{\nabla} \mathcal{L}_Z^J h^1 \right|^2+\frac{\epsilon }{(1+t+r)^4} \, \mathds{1}_{r \leq \frac{1+t}{2}}+\frac{\epsilon }{(1+t+r)^6} \\  & \quad \hspace{1mm} +\frac{\epsilon (1+|u|)}{(1+t+r)^{2-2\delta}}  \sum_{|K| \leq |J|} \left( \left| \nabla \mathcal{L}_Z^K h^1 \right|^2+\frac{\left|  \mathcal{L}_Z^K h^1 \right|^2}{(1+|u|)^2} \right).
 \end{align*}
Then, remark that, since $1+|u| \leq 1+\tau+r$,
$$ \epsilon \int_0^t \int_{\Sigma_{\tau}} \frac{\left| \overline{\nabla } \mathcal{L}_Z^J h^1 \right|^2\omega^{1+2\gamma}_{\gamma}}{(1+t+r)^{1-2\delta}(1+|u|)^{2\delta}}  \mathrm dx \mathrm d \tau \hspace{1mm} \lesssim \hspace{1mm}   \epsilon \hspace{0.5mm} \overline{\mathcal{E}}_{N-1}^{\gamma,1+2\gamma}[h^1](t),$$
so that, according to the bootstrap assumption \eqref{boot1} and the previous calculations,
$$\mathfrak{I}_5^J \hspace{1mm} \lesssim \hspace{1mm} \epsilon \hspace{0.5mm} \overline{\mathcal{E}}_{N-1}^{\gamma,1+2\gamma}[h^1](t)+\overline{\mathfrak{I}}_0+ \sum_{|K| \leq |J|} \mathfrak{I}^K_3 \hspace{1mm} \lesssim \hspace{1mm}  \epsilon^2(1+t)^{2\delta}.$$
Finally, the required bound on $\mathfrak{I}_6^J$ is given by the assumptions of the proposition. This concludes the proof.
\end{proof}
In order to improve the bootstrap assumption \eqref{boot1}, one then only has to combine the previous result with Proposition \ref{prop_l2_vlasov}, which will be proved in Subsection \ref{subsecL2esti}.

We now turn on $\mathring{\mathcal{E}}^{\gamma,2+2\gamma}_N[h^1]$. In the same way that we derive \eqref{eq:LRimprove}, one can prove using the third energy estimate of Proposition \ref{LRenergy}, the Cauchy-Schwarz inequality and the bootstrap assumption \eqref{boot2}, that, for all $t \in [0,T[$,
\begin{equation}
 \mathring{\mathcal{E}}^{\gamma,2 + 2\gamma}_{N}[h^1](t) \leq  \underline{C} \epsilon + C  \epsilon^{\frac{3}{2}} (1+t)^{2\delta} + \frac{C}{\sqrt{\epsilon}} \sum_{|J|\leq N}  \int_0^t \int_{\Sigma_{ \tau} }  \left| \widetilde{\square}_g \left(  \mathcal{L}_Z^J h^1 \right)  \right|^2 \omega^{2+2\gamma}_{\gamma} \mathrm d x \mathrm d \tau , \label{eq:LRimprove2}
\end{equation}
where $\underline{C}>0$ is a constant which does not depend on $\overline{C}$. This last estimate, combined with Proposition \ref{prop_l2_vlasov} and the following result improve the bootstrap assumption \eqref{boot2} if $\epsilon$ is small enough and provided that $\overline{C}$ is chosen large enough.

\begin{prop}\label{improboot12bis}
Assume that for all $t \in [0,T[$,
$$\sum_{|I| \leq N} \int_0^t \int_{\Sigma_{\tau}} (1+\tau+r) \left| \mathcal{L}_Z^I ( T[f]) \right|^2 \omega_{\gamma}^{2+2 \gamma}  \mathrm dx \mathrm d \tau \hspace{1mm} \lesssim \hspace{1mm} \epsilon^2 (1+t)^{1+2\delta}.$$
Then, if $\overline{C}$ is chosen sufficiently large and if $\epsilon$ is small enough, we have 
$$ \forall \hspace{0.5mm} t \in [0,T[, \hspace{1cm} \mathring{\mathcal{E}}^{\gamma,2 + 2\delta}_{N}[h^1](t) \hspace{1mm} \leq \hspace{1mm} \overline{C} \epsilon (1+t)^{2\delta}.$$
\end{prop}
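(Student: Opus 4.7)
The plan is to follow the strategy of the proof of Proposition \ref{improboot12}, with the second energy estimate of Proposition \ref{LRenergy} replaced by the third one. Starting from \eqref{eq:LRimprove2}, the task reduces to showing that, for every multi-index $|J|\leq N$,
\begin{equation*}
\int_0^t \int_{\Sigma_\tau} \bigl|\widetilde{\square}_g \mathcal{L}_Z^J h^1\bigr|^2 \omega^{2+2\gamma}_\gamma \, \mathrm dx \, \mathrm d\tau \; \lesssim \; \epsilon^2 (1+t)^{2\delta}.
\end{equation*}
Proposition \ref{ComuEin} decomposes $\widetilde{\square}_g \mathcal{L}_Z^J h^1$ into a semi-linear contribution, the Schwarzschild contribution $\mathcal{L}_Z^J(\widetilde{\square}_g h^0)$, commutator terms of the form $\mathcal{L}_Z^{J_1}(H)^{\alpha\beta}\nabla_\alpha\nabla_\beta\mathcal{L}_Z^{J_2}h^1$, and the Vlasov contribution $\mathcal{L}_Z^J(T[f])$; these are pointwise controlled by Propositions \ref{prop_semi_linear}, \ref{prop_ss}, \ref{prop_box}, and by the hypothesis of the proposition, respectively. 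This reduces the problem to bounding a finite list of integrals $\mathfrak{J}^J_0,\ldots,\mathfrak{J}^J_6$ entirely analogous to the $\mathfrak{I}^J_q$ of Proposition \ref{improboot12}, with the weight $\omega^{2+2\gamma}_\gamma$ replacing $(1+\tau)\,\omega^{1+2\gamma}_0$.

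Each $\mathfrak{J}^J_q$ will be handled by matching $\omega^{2+2\gamma}_\gamma$ to the weight of an appropriate bootstrap assumption. Since $(1+|u|)\lesssim 1+\tau+r$ everywhere, one has the pointwise comparisons
\begin{equation*}
\omega^{2+2\gamma}_\gamma \lesssim (1+\tau+r)\,\omega^{1+2\gamma}_0, \qquad \omega^{2+2\gamma}_\gamma \lesssim (1+\tau+r)^{1+\gamma}\,\omega^{1+\gamma}_{1+\gamma}, \qquad \omega^{2+2\gamma}_\gamma \lesssim (1+\tau+r)^{1+2\gamma}\,\omega^{1+2\gamma}_1,
\end{equation*}
and the extra factor of $1+\tau+r$ so produced is absorbed by the $(1+\tau+r)^{-1}$ built into the very definition of $\mathring{\mathcal{E}}^{\gamma,2+2\gamma}_N[h^1]$. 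The remainder of the analysis then proceeds essentially verbatim as in Proposition \ref{improboot12}: the integrals involving $|\overline{\nabla}\mathcal{L}_Z^J h^1|^2$ are absorbed into the bulk of $\mathring{\mathcal{E}}^{\gamma,2+2\gamma}_N[h^1]$ via \eqref{boot2}; those involving $|\nabla \mathcal{L}_Z^J h^1|^2_{\mathcal{T}\mathcal{U}}$ or $|\nabla\mathcal{L}_Z^J h^1|^2_{\mathcal{L}\mathcal{L}}$ are controlled through \eqref{boot4} and \eqref{boot5}, invoking the wave-gauge estimate \eqref{wgcforproof} at top order to trade a bad derivative for a tangential one and lower-order contributions; the zero-order pieces $|\mathcal{L}_Z^J h^1|^2$ are disposed of with the Hardy inequality of Lemma \ref{lem_hardy}; and the Schwarzschild integral is treated as $\mathfrak{I}_0$ was in \eqref{eq:mathfrakI0}. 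The Vlasov contribution is bounded directly by the hypothesis of the proposition: the extra factor $1+\tau+r$ appearing there is exactly the one produced above by the weight comparison.

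The main difficulty lies in the fact that $|J|$ now reaches the top order $N$, so no pointwise control on $\mathcal{L}_Z^N h^1$ is available and every top-order occurrence must be estimated in $L^2$ using one of \eqref{boot2}, \eqref{boot4}, \eqref{boot5}. The hierarchy of bootstrap assumptions is tailored precisely so that each top-order null component carries the right $L^2$ weight for this to be possible, while the $(1+\tau+r)^{-1}$ incorporated in $\mathring{\mathcal{E}}^{\gamma,2+2\gamma}_N[h^1]$ is what bridges the gap between $\omega^{2+2\gamma}_\gamma$ and the lighter weights $\omega^{1+\gamma}_{1+\gamma}$, $\omega^{1+2\gamma}_1$, and $\omega^{1+2\gamma}_0$. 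In the subsequent time integration one systematically gains $(1+\tau)^{-1-\eta}$ for some $\eta$ of order $\gamma-2\delta>0$, so the overall constant produced by these manipulations is independent of $\overline{C}$ once $C_{\mathcal{T}\mathcal{U}}$ and $C_{\mathcal{L}\mathcal{L}}$ have been fixed; taking $\overline{C}$ sufficiently large and $\epsilon$ sufficiently small then closes the bootstrap, as announced.
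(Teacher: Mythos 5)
Your proposal is essentially the same argument the paper gives: start from the third energy inequality of Proposition~\ref{LRenergy} (which yields \eqref{eq:LRimprove2}), bound the source integrals $\mathring{\mathfrak{I}}_0,\mathring{\mathfrak{I}}^J_1,\ldots,\mathring{\mathfrak{I}}^J_6$ term by term by matching the weight $\omega^{2+2\gamma}_\gamma$ to the weight appearing in the appropriate bootstrap norm, use the wave-gauge estimate \eqref{wgcforproof} and the Hardy inequality at top order, and use the hypothesis together with Lemma~\ref{LemBulk} for the Vlasov contribution. Your identification of the two key structural points is correct: the $(1+\tau+r)^{-1}$ built into $\mathring{\mathcal{E}}^{\gamma,2+2\gamma}_N$ is what compensates for the absence of pointwise control on $\mathcal{L}_Z^J h^1$ at $|J|=N$, and the time integration always gains a power $(1+\tau)^{-1-\eta}$ with $\eta\sim\gamma-2\delta$, so the $C_{\mathcal{T}\mathcal{U}}$- and $C_{\mathcal{L}\mathcal{L}}$-dependent contributions enter only multiplied by $\epsilon^{3/2}$ and are harmless.

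One small imprecision worth flagging: the $\mathcal{L}\mathcal{L}$ source term coming from Proposition~\ref{prop_box} is $|\mathcal{L}_Z^J h^1|^2_{\mathcal{L}\mathcal{L}}$ (no derivative), i.e.\ the integral $\mathring{\mathfrak{I}}^J_5$, and the paper controls it by Hardy followed by the wave-gauge bound \eqref{wgcforproof}, reducing it to $\mathring{\mathfrak{I}}^J_2$, $\overline{\mathfrak{I}}_0$ and $\overline{\mathfrak{I}}^K_3$ (all governed by \eqref{boot2}), without invoking the $\mathcal{L}\mathcal{L}$-bootstrap \eqref{boot5}. Your alternative of applying Hardy and then \eqref{boot5} directly also closes — one gets $\int_0^t (1+\tau)^{-2+2\delta+\gamma}\,\mathcal{E}^{1+2\gamma,1}_{N,\mathcal{L}\mathcal{L}}[h^1](\tau)\,\mathrm d\tau\lesssim C_{\mathcal{L}\mathcal{L}}\epsilon^2$, and the extra $C_{\mathcal{L}\mathcal{L}}$ is again swallowed by the $\epsilon^{3/2}$ prefactor — but it is a different branch of the estimate than the one the paper actually takes, and it is worth being aware that the paper deliberately avoids \eqref{boot5} here so that the remark following the proof (independence of $\overline{C}$ from $C_{\mathcal{L}\mathcal{L}}$) can be stated cleanly with $\epsilon$ chosen last.
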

\begin{proof}
The proof is similar to the one of Proposition \ref{improboot12}. In view of the commutation formula of Proposition \ref{ComuEin} and the estimates obtained on the error terms in Propositions \ref{prop_semi_linear}-\ref{prop_box}, the result would follow if we bound by $\epsilon^2(1+t)^{2\delta}$ the following integrals, defined for all multi-indices $|J| \leq N$.
\begin{align*}
\mathring{\mathfrak{I}}_0 \hspace{1mm} &:= \hspace{1mm} \epsilon^2 \int_0^t \int_{\{r \leq \tau\}} \frac{1}{(1+\tau +r)^6(1+|u|)^{\gamma}} \mathrm dx \mathrm d \tau + \epsilon^2 \int_0^t \int_{\{r \geq \tau\}} \frac{(1+|u|)^{2+2\gamma}}{(1+ \tau +r)^8} \mathrm dx \mathrm d \tau,  \\ 
\mathring{\mathfrak{I}}^{J}_1 \hspace{1mm} &:= \hspace{1mm} \epsilon \int_0^t \int_{\Sigma_{\tau}} \frac{|\nabla \mathcal{L}_Z^J h^1|_{\mathcal{T} \mathcal{U}}^2}{(1+\tau+r)^{2-\delta}(1+|u|)^{2\gamma}} \omega_{\gamma}^{2+2\gamma} \mathrm dx \mathrm d \tau,  \\ 
\mathring{\mathfrak{I}}^{J}_2 \hspace{1mm} &:= \hspace{1mm}  \epsilon \int_0^t \int_{\Sigma_{\tau}} \frac{\left| \overline{\nabla} \mathcal{L}_Z^J h^1 \right|^2}{(1+\tau+r)^{2-2\delta}(1+|u|)} \omega_{\gamma}^{2+2\gamma} \mathrm dx \mathrm d \tau,  \\
\mathring{\mathfrak{I}}^{J}_3 \hspace{1mm} &:= \hspace{1mm}  \epsilon \int_0^t \int_{\Sigma_{\tau}} \frac{1}{(1+\tau+r)^{4-4\delta}}  \left( (1+|u|) \left| \nabla \mathcal{L}_Z^J h^1 \right|^2 + \frac{\left|  \mathcal{L}_Z^J h^1 \right|^2}{1+|u|} \right) \omega_{\gamma}^{2+2\gamma} \mathrm d x \mathrm d \tau,  \\ 
\mathring{\mathfrak{I}}^{J}_4 \hspace{1mm} &:= \hspace{1mm}  \epsilon \int_0^t \int_{\Sigma_{\tau}} \frac{1}{(1+\tau+r)^2}  \left| \nabla \mathcal{L}^J_{Z} (h^1) \right|^2  \omega_{\gamma}^{2+2\gamma} d x \mathrm d \tau,  \\ 
\mathring{\mathfrak{I}}^{J}_5 \hspace{1mm} &:= \hspace{1mm}   \epsilon \int_0^t \int_{\Sigma_{\tau}} \frac{1}{(1+\tau+r)^{2-2\delta}(1+|u|)^3}  \left| \mathcal{L}^J_{Z} (h^1) \right|_{\mathcal{L} \mathcal{L}}^2  \omega_{\gamma}^{2+2\gamma} d x \mathrm d \tau, \\ 
\mathring{\mathfrak{I}}^{J}_6 \hspace{1mm} &:= \hspace{1mm} \int_0^t \int_{\Sigma_{\tau}}  \left| \mathcal{L}_Z^J ( T[f]) \right|^2 \omega_{\gamma}^{2+2 \gamma}  \mathrm dx \mathrm d \tau.
\end{align*}
Note first that, using \eqref{eq:mathfrakI0}, $\mathring{\mathfrak{I}}_0 \leq \overline{\mathfrak{I}}_0 \lesssim \epsilon^2$. We fix $|J| \leq N$ for the remainder of the proof. Using the bootstrap assumption \eqref{boot2}, we directly obtain
$$\mathring{ \mathfrak{I}}_4^J \hspace{1mm} \lesssim \hspace{1mm}  \int_0^t \frac{\epsilon}{1+\tau} \int_{\Sigma_{\tau}} \frac{\left| \nabla \mathcal{L}^J_{Z} (h^1) \right|^2 }{1+\tau+r}  \omega_{\gamma}^{2+2\gamma} d x \mathrm d \tau \hspace{1mm} \lesssim \hspace{1mm} \epsilon \int_0^t \frac{\mathring{\mathcal{E}}^{\gamma, 2+2\gamma}_{N}[h^1](\tau)}{1+\tau} \mathrm d \tau \hspace{1mm} \lesssim \hspace{1mm} \epsilon^2 (1+t)^{2 \delta}.$$
By the bootstrap assumption \eqref{boot4} and $\gamma > 3 \delta$, we get
$$ \mathring{\mathfrak{I}}^{J}_1  \lesssim   \int_0^t \!  \int_{\Sigma_{\tau}} \! \frac{ \epsilon |\nabla \mathcal{L}_Z^J h^1|_{\mathcal{T} \mathcal{U}}^2}{(1+ \tau)^{1+\gamma-\delta}}  \omega^{1+\gamma}_{1+\gamma} \mathrm dx \mathrm d \tau \lesssim \int_0^t \frac{ \epsilon \; \mathcal{E}^{1+\gamma,1+\gamma}_{N, \mathcal{T} \mathcal{U}} [h^1] ( \tau )}{(1+ \tau)^{1+\gamma-\delta}}  \mathrm d \tau \lesssim  \epsilon^2 \! \int_0^t \! \frac{(1+\tau)^{2\delta}\dr \tau}{(1+\tau)^{1+\gamma-\delta}}  \lesssim \epsilon^2 \! .$$
Since $1-2\delta \geq 0$, the bootstrap assumption \eqref{boot2} gives
$$ \mathring{\mathfrak {I}}^J_2 \hspace{1mm}  \lesssim  \hspace{1mm} \epsilon \int_0^t  \int_{\Sigma_{\tau}}\frac{ \left| \overline{\nabla} \mathcal{L}_Z^J (h^1) \right|^2}{1+\tau+r} \cdot \frac{\omega^{2+2\gamma}_{\gamma}}{1+|u|} \mathrm dx \mathrm d \tau  \hspace{1mm} \lesssim  \hspace{1mm} \epsilon \hspace{0.2mm} \mathring{\mathcal{E}}^{\gamma, 2+2\gamma}_{N}[h^1](t) \hspace{1mm}  \lesssim  \hspace{1mm}  \epsilon^2(1+t)^{2\delta}\!.$$ 
Using first the Hardy type inequality of Lemma \ref{lem_hardy} as well as the inequality $1+|u| \leq 1+\tau+r$ and then the bootstrap assumption \eqref{boot2} as well as $7\delta \leq 1$, we obtain
\begin{align}
 \nonumber \mathring{\mathfrak{I}}^J_3 \hspace{1mm} & \lesssim \hspace{1mm} \overline{\mathfrak{I}}^J_3  \hspace{1mm} := \hspace{1mm} \int_0^t \frac{ \epsilon}{(1+\tau)^{2-4\delta}} \int_{\Sigma_{\tau}} \frac{1}{1+\tau+r} \left( \left| \nabla \mathcal{L}_Z^J (h^1) \right|^2 + \frac{\left| \mathcal{L}_Z^I (h^1) \right|^2}{(1+|u|)^2} \right) \, \omega^{2+2\gamma}_{\gamma} \mathrm dx \mathrm d \tau, \\ 
& \lesssim \hspace{1mm} \int_0^t \frac{ \epsilon}{(1+\tau)^{2-4\delta}} \int_{\Sigma_{\tau}}\frac{\left| \nabla \mathcal{L}_Z^J (h^1) \right|^2}{1+\tau+r} \omega^{2+2\gamma}_{\gamma} \mathrm dx \mathrm d \tau \hspace{1mm} \lesssim \hspace{1mm} \epsilon \int_0^t \frac{\mathring{\mathcal{E}}^{\gamma, 2+2\gamma}_{N}[h^1]( \tau )}{(1+\tau)^{2-4\delta}} \mathrm d \tau \hspace{1mm} \lesssim \hspace{1mm} \epsilon^2 . \label{boundmathfrakIbar3}
\end{align}
Applying the Hardy inequality of Lemma \ref{lem_hardy}, we get
$$ \mathring{\mathfrak{I}}_5^J \hspace{1mm} \lesssim \hspace{1mm}  \epsilon \int_0^t \int_{\Sigma_{\tau}} \frac{\left|  \mathcal{L}_Z^J h^1 \right|_{\mathcal{L} \mathcal{L}}^2}{(1+t+r)^{2-2\delta}} \frac{\omega^{1+2\gamma}_{1+\gamma}}{(1+|u|)^2} \mathrm dx \mathrm d \tau \hspace{1mm} \lesssim \hspace{1mm}  \epsilon \int_0^t \int_{\Sigma_{\tau}} \frac{\left| \nabla  \mathcal{L}_Z^J h^1 \right|_{\mathcal{L} \mathcal{L}}^2}{(1+t+r)^{2-2\delta}} \omega^{1+2\gamma}_{1+\gamma} \mathrm dx \mathrm d \tau.$$
Using \eqref{wgcforproof} and $\omega^{1+2\gamma}_{1+\gamma}= \frac{\omega^{2+2\gamma}_{\gamma}}{1+|u|}$, we obtain by \eqref{eq:mathfrakI0} and \eqref{boundmathfrakIbar3},
$$\mathring{\mathfrak{I}}_5^J \hspace{1mm} \lesssim \hspace{1mm} \mathring{\mathfrak{I}}^J_2+\overline{\mathfrak{I}}_0+ \sum_{|K| \leq |J|} \overline{\mathfrak{I}}^K_3 \hspace{1mm} \lesssim \hspace{1mm}  \epsilon^2(1+t)^{2\delta}.$$
Finally, by the assumptions of the Proposition and Lemma \ref{LemBulk},
$$ \mathring{\mathfrak{I}}_6 \hspace{1mm} \leq \hspace{1mm} \int_0^t \int_{\Sigma_{\tau}} \frac{1+\tau+r}{1+\tau} \left| \mathcal{L}_Z^J ( T[f]) \right|^2 \omega_{\gamma}^{2+2 \gamma}  \mathrm dx \mathrm d \tau \hspace{1mm} \lesssim \hspace{1mm} \epsilon^2 (1+t)^{2\delta}.$$
\end{proof}
\begin{rem}\label{justifconstants}
The proofs of Propositions \ref{improboot12} and \ref{improboot12bis}, combined with \eqref{eq:LRimprove} and \eqref{eq:LRimprove2}, give the bound
$$\overline{\mathcal{E}}^{\gamma,1+2\gamma}_{N-1}[h^1](t) + \mathring{\mathcal{E}}^{\gamma,2+2\gamma}_{N}[h^1](t) \hspace{1mm} \leq \hspace{1mm} \underline{C} \epsilon + \widehat{C}  \epsilon^{\frac{3}{2}} (1+t)^{2\delta}.$$ As a consequence, the constant $\overline{C}$ can be chosen independently of $C_{\mathcal{T} \mathcal{U}}$ and $C_{\mathcal{L} \mathcal{L}}$, provided that $\epsilon$ is small enough.
\end{rem}

\subsection{TU-energy}

In this subsection we improve the bootstrap assumptions on the energies $\mathcal{E}_{N-1,\mathcal{T} \mathcal{U}}^{2\gamma, 1+\gamma}[h^1]$ and $\mathcal{E}_{N,\mathcal{T} \mathcal{U}}^{1+\gamma, 1+\gamma}[h^1]$. More precisely, we prove the following result which, combined with Proposition \ref{prop_l2_vlasov}, improves \eqref{boot3}-\eqref{boot4} provided that $\epsilon$ is small enough and $C_{\mathcal{T} \mathcal{U}}$ chosen large enough.

\begin{prop}\label{improbootTU}
Suppose that the energy momentum tensor $T[f]$ of the Vlasov field fulfils
\begin{equation}\label{eq:hypTU}
\forall \hspace{0.5mm} t \in [0,T[, \qquad \sum_{|I| \leq N} \int_0^t \int_{\Sigma_{\tau}} (1+\tau) \left| \mathcal{L}_Z^I T[f] \right|_{\mathcal{T} \mathcal{U}}^2 \omega_{2\gamma}^{1+\gamma} \mathrm dx \mathrm d\tau \hspace{2mm}  \lesssim \hspace{2mm} \epsilon^2 .
\end{equation}
Then, there exist a constant $C_0$ independent of $\epsilon$, $C_{\mathcal{T} \mathcal{U}}$ and $C_{\mathcal{L} \mathcal{L}}$ and a constant $C$ independent of $\epsilon$, such that, for all $t \in [0,T[$,
\begin{eqnarray*}
\mathcal{E}^{2\gamma,1+\gamma}_{N-1,\mathcal{T} \mathcal{U}} [h^1](t) & \leq & C_0C^{\frac{1}{2}}_{\mathcal{T} \mathcal{U}} \epsilon (1+t)^{\delta} + C \epsilon^{\frac{3}{2}} (1+t)^{\delta}, \\ 
 \mathcal{E}^{1+\gamma, 1+\gamma}_{N,\mathcal{T} \mathcal{U}} [h^1](t) & \leq & C_0C^{\frac{1}{2}}_{\mathcal{T} \mathcal{U}} \epsilon (1+t)^{2\delta} + C  \epsilon^{\frac{3}{2}} (1+t)^{2\delta}.
 \end{eqnarray*}
\end{prop}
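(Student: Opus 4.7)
The plan is to reduce the desired energy bounds for the null components of $\mathcal{L}_Z^J h^1$ to scalar energy inequalities for the quantities $\phi_{J,TU} := \chi\!\left(\tfrac{r}{1+t}\right)\mathcal{L}_Z^J(h^1)_{TU}$ with $(T,U)\in\mathcal{T}\times\mathcal{U}$, and then to exploit the improved null structure of the source terms of their wave equations. First, by Lemma \ref{equi_norms}, controlling $\mathcal{E}^{2\gamma,1+\gamma}_{N-1,\mathcal{T}\mathcal{U}}[h^1]$ (resp.\ $\mathcal{E}^{1+\gamma,1+\gamma}_{N,\mathcal{T}\mathcal{U}}[h^1]$) amounts, up to an additive error $C\overline{C}\epsilon$ (resp.\ $C\overline{C}\epsilon(1+t)^{2\delta}$), to controlling the sum of the scalar energies $\mathcal{E}^{a,b}[\phi_{J,TU}]$ over $|J|\leq N-1$ (resp.\ $|J|\leq N$). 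Since $\overline{C}$ has been fixed independently of $C_{\mathcal{T}\mathcal{U}},C_{\mathcal{L}\mathcal{L}}$ in the previous subsection (Remark \ref{justifconstants}), this error can be absorbed into the final $C\epsilon^{3/2}(1+t)^{\delta_0}$ term of the statement (with $\delta_0\in\{\delta,2\delta\}$) for $\epsilon$ small enough.

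Next, I would apply Proposition \ref{LRenergy} to each $\phi_{J,TU}$ with $(a,b)\in\{(2\gamma,1+\gamma),(1+\gamma,1+\gamma)\}$, the hypotheses on $H$ being verified by Remark \ref{forenergywave} with a constant $C_H$ independent of $C_{\mathcal{T}\mathcal{U}},C_{\mathcal{L}\mathcal{L}}$. Using Cauchy--Schwarz on the spacetime integral and the bootstrap assumptions \eqref{boot3}--\eqref{boot4}, this produces a bound of the form
\begin{equation*}
\mathcal{E}^{a,b}[\phi_{J,TU}](t) \leq \underline{C}\epsilon + \underline{C}\sqrt{C_{\mathcal{T}\mathcal{U}}\epsilon(1+t)^{\delta_0}}\left(\int_0^t\!\int_{\Sigma_\tau}(1+\tau)\bigl|\widetilde{\Box}_g\phi_{J,TU}\bigr|^2\omega_a^b\,\mathrm dx\,\mathrm d\tau\right)^{\!1/2}\!.
\end{equation*}
The commutators with the cut-off and with the contraction $T^\mu U^\nu$, handled by Lemmas \ref{lem_char} and \ref{lem_c_frame}, produce only lower order error terms controlled by $\overline{\mathcal{E}}^{\gamma,1+2\gamma}_{N-1}[h^1]$ and $\mathring{\mathcal{E}}^{\gamma,2+2\gamma}_N[h^1]$, which by \eqref{boot1}--\eqref{boot2} are bounded by $\overline{C}\epsilon(1+t)^{2\delta}$ \emph{independently of} $C_{\mathcal{T}\mathcal{U}}$ and $C_{\mathcal{L}\mathcal{L}}$.

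The heart of the matter is then to bound $\chi\,\widetilde{\Box}_g(\mathcal{L}_Z^J h^1)_{\mu\nu}T^\mu U^\nu$ on $\mathcal{T}\mathcal{U}$. Using the commutation formula of Proposition \ref{ComuEin}, this splits into four pieces that I would treat in turn following the template of Proposition \ref{improboot12}, but now with the weaker weights $\omega_{2\gamma}^{1+\gamma}$ or $\omega_{1+\gamma}^{1+\gamma}$ that match the improved behavior of the good components near the light cone: (i) the semilinear source $F(h)(\nabla h,\nabla h)$ on $\mathcal{T}\mathcal{U}$, where the obstructing non-null term $P(\nabla h,\nabla h)$ is absent by the second bound of Proposition \ref{prop_semi_linear}, so all remaining contributions close against $\overline{\mathcal{E}}^{\gamma,1+2\gamma}_{N-1}[h^1]$ and $\mathring{\mathcal{E}}^{\gamma,2+2\gamma}_N[h^1]$; (ii) the box-commutator $\mathcal{L}_Z^J(H)^{\alpha\beta}\nabla_\alpha\nabla_\beta\mathcal{L}_Z^K(h^1)$ on $\mathcal{T}\mathcal{U}$, handled by the first estimate of Proposition \ref{prop_box}; (iii) the Schwarzschild contribution $\mathcal{L}_Z^J(\widetilde{\Box}_g h^0)$, dealt with exactly as in the proof of Proposition \ref{improboot12} via the estimate $\mathfrak{I}_0\lesssim \epsilon^2$; and (iv) the Vlasov contribution $\mathcal{L}_Z^J T[f]|_{\mathcal{T}\mathcal{U}}$, which is directly bounded by hypothesis \eqref{eq:hypTU}. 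In each case the spacetime $L^2$ norm weighted by $(1+\tau)\omega_a^b$ is controlled by $C'\epsilon^2(1+t)^{\delta_0}$ with $C'$ depending possibly on $C_f,\overline{C}$ but \emph{not} on $C_{\mathcal{T}\mathcal{U}},C_{\mathcal{L}\mathcal{L}}$. Inserting this into the Cauchy--Schwarz bound above and applying Grönwall yields the two inequalities of the Proposition, with the contribution $C_0 C_{\mathcal{T}\mathcal{U}}^{1/2}\epsilon(1+t)^{\delta_0}$ arising from the coupling between the bootstrap factor $\sqrt{C_{\mathcal{T}\mathcal{U}}\epsilon(1+t)^{\delta_0}}$ and the Vlasov source bound of order $\epsilon$.

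The main obstacle lies in point (ii) at top order $|J|=N$, where no pointwise improved estimate on $\nabla\mathcal{L}_Z^N h^1$ is available and where the $\mathcal{T}\mathcal{U}$-component of $\mathcal{L}_Z^J(H)^{\alpha\beta}\nabla_\alpha\nabla_\beta\mathcal{L}_Z^K(h^1)$ involves the full $|\nabla\mathcal{L}_Z^K h^1|$ (not only its good components). The strategy here is to use the weaker weight $\omega_{1+\gamma}^{1+\gamma}$ to match the $1/(1+\tau+r)$ factor appearing in Proposition \ref{prop_box} and to exploit the fact that $\mathring{\mathcal{E}}^{\gamma,2+2\gamma}_N[h^1]$ has already been improved (Proposition \ref{improboot12bis}) to absorb all such bulk terms with a constant independent of $C_{\mathcal{T}\mathcal{U}}$; the analogue of the integral $\mathring{\mathfrak{I}}_4^J$ in the proof of Proposition \ref{improboot12bis} will play exactly this role.
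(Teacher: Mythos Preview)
Your plan is essentially the paper's own proof (the paper packages it as Propositions \ref{TUintermediatebound}, \ref{TUintegralbound} and \ref{TUintegralbound2}), but the bookkeeping in your last two sentences is off in a way that matters for the constant hierarchy. The error terms from Lemmas \ref{lem_c_frame} and \ref{lem_char} carry \emph{no} extra factor of $\sqrt{\epsilon}$: they are of the form $r^{-1}|\overline{\nabla}\mathcal{L}_Z^J h^1|$, $r^{-2}|\mathcal{L}_Z^J h^1|$, $(1+t+r)^{-1}|\nabla\mathcal{L}_Z^J h^1|$, etc. Hence their contribution to $\int_0^t\!\int_{\Sigma_\tau}(1+\tau)|\widetilde{\Box}_g\phi_{J,TU}|^2\omega_a^b$ is of size $C_0\epsilon$ (one copy of $\overline{\mathcal{E}}^{\gamma,1+2\gamma}_{N-1}$ or $\mathring{\mathcal{E}}^{\gamma,2+2\gamma}_N$, both $\lesssim\overline{C}\epsilon(1+t)^{2\delta}$), not $C'\epsilon^2$. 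It is precisely this $C_0\epsilon$ piece that, after the Cauchy--Schwarz factor $C_{\mathcal{T}\mathcal{U}}^{1/2}\epsilon^{1/2}(1+t)^{\delta_0/2}$, yields the leading $C_0C_{\mathcal{T}\mathcal{U}}^{1/2}\epsilon(1+t)^{\delta_0}$ term of the conclusion. The Vlasov hypothesis \eqref{eq:hypTU} is $O(\epsilon^2)$ and contributes only to the $C\epsilon^{3/2}(1+t)^{\delta_0}$ part.

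A second small correction concerns your point (ii): the first estimate of Proposition \ref{prop_box} contains the term $\tfrac{\sqrt{\epsilon}}{1+t+r}|\nabla\mathcal{L}_Z^Q h^1|_{\mathcal{T}\mathcal{U}}$, whose squared spacetime integral is $\epsilon\int_0^t\tfrac{\mathcal{E}^{a,b}_{\mathcal{T}\mathcal{U}}(\tau)}{1+\tau}\mathrm d\tau\lesssim C_{\mathcal{T}\mathcal{U}}\epsilon^2(1+t)^{\delta_0}$ (this is the paper's $\mathfrak{J}_9^J$ and $\mathcal{J}_9^J$). So the $C'$ in your $\epsilon^2$ bound \emph{does} depend on $C_{\mathcal{T}\mathcal{U}}$; this is harmless only because that dependence is allowed in the constant $C$ multiplying $\epsilon^{3/2}$ in the statement, but you should not claim independence there.
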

\begin{rem}\label{justifconstant}
Note that $C_{\mathcal{T} \mathcal{U}}$ has to be fixed sufficiently large compared to $\overline{C}$ but there is no restriction related to $C_{\mathcal{L} \mathcal{L}}$.
\end{rem}
All the constants hidden by $\lesssim$ will not depend on $C_{\mathcal{T} \mathcal{U}}$ nor on $C_{\mathcal{L} \mathcal{L}}$ to simplify the presentation of the following calculations. This convention will hold in and only in this subsection. We mention that all the energy norms which will be used here are defined in Subsection \ref{Subsectenergies}. We start by the following result.
\begin{prop}\label{TUintermediatebound}
There exist a constant $C_0$ independent of $\epsilon$, $C_{\mathcal{T} \mathcal{U}}$ and $C_{\mathcal{L}\mathcal{L}}$ such that, for all $t \in [0,T[$,
\begin{align*} 
&\mathcal{E}_{N-1, \mathcal{T} \mathcal{U}}^{2\gamma, 1+ \gamma}[h^1](t) \leq C_0 \epsilon + C_0 C_{\mathcal{T} \mathcal{U}} \epsilon^{\frac{3}{2}} (1+t)^{ \delta} \\  & \; +\sum_{\begin{subarray}{} \hspace{3mm} |J| \leq N-1 \\ (T,U) \in \mathcal{T} \times \mathcal{U} \end{subarray}} C_0 C_{\mathcal{T} \mathcal{U}}^{\frac{1}{2}} \epsilon^{\frac{1}{2}}(1+t)^{\frac{\delta}{2}} \left| \int_0^t \int_{\Sigma_{\tau} } (1+\tau ) \left| \widetilde{\square}_g\!\left( \chi\!\left( \frac{r}{t+1} \right) \mathcal{L}_Z^J (h^1)_{TU} \right) \right|^2 \omega^{1+\gamma}_{2\gamma} \mathrm dx \mathrm d \tau \right|^{\frac{1}{2}} \!, \\
& \mathcal{E}_{N, \mathcal{T} \mathcal{U}}^{1+\gamma, 1+ \gamma}[h^1](t) \leq C_0 \epsilon (1+t)^{2\delta} + C_0 C_{\mathcal{T} \mathcal{U}} \epsilon^{\frac{3}{2}} (1+t)^{2\delta} \\  &+ \quad \sum_{\begin{subarray}{} \hspace{3mm} |J| \leq N \\ (T,U) \in \mathcal{T} \times \mathcal{U} \end{subarray}} \hspace{-2.5mm} C_0 C_{\mathcal{T} \mathcal{U}}^{\frac{1}{2}} \epsilon^{\frac{1}{2}}(1+t)^{\delta } \left| \int_0^t \int_{\Sigma_{\tau} } (1+\tau ) \left| \widetilde{\square}_g\!\left( \chi\!\left( \frac{r}{t+1} \right) \mathcal{L}_Z^J (h^1)_{TU} \right) \right|^2 \omega_{1+\gamma}^{1+\gamma} \mathrm dx \mathrm d \tau \right|^{\frac{1}{2}} \hspace{-1.3mm}.
\end{align*}
\end{prop}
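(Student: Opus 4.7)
The plan is to reduce the tensorial energies to scalar energies of the cutoff components $\phi_{J,T,U} := \chi\!\left(\frac{r}{t+1}\right) \mathcal{L}_Z^J (h^1)_{TU}$. First I would apply Lemma~\ref{equi_norms} to write
\begin{equation*}
\mathcal{E}^{2\gamma,1+\gamma}_{N-1,\mathcal{T} \mathcal{U}}[h^1](t) \le \sum_{|J|\le N-1,\,(T,U) \in \mathcal{T} \times \mathcal{U}} \mathcal{E}^{2\gamma,1+\gamma}[\phi_{J,T,U}](t) + C\overline{C}\epsilon,
\end{equation*}
and similarly for $\mathcal{E}^{1+\gamma,1+\gamma}_{N,\mathcal{T} \mathcal{U}}[h^1]$ with an error $C\overline{C}\epsilon(1+t)^{2\delta}$. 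Since Remark~\ref{justifconstants} has fixed $\overline{C}$ independently of $C_{\mathcal{T}\mathcal{U}}$ and $C_{\mathcal{L}\mathcal{L}}$, these error terms are absorbed into the $C_0\epsilon$ (resp.~$C_0\epsilon(1+t)^{2\delta}$) part of the conclusion.

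Next, for each scalar $\phi_{J,T,U}$, I would apply the scalar wave energy inequality Proposition~\ref{LRenergy} with parameters $(a,b) \in \{(2\gamma, 1+\gamma), (1+\gamma, 1+\gamma)\}$, the hypotheses on $H$ following from Remark~\ref{forenergywave} (the loss of decay in $(1+|u|)$ relative to the hypothesis at the top-order case is absorbed into the gain coming from the support condition $r \ge (1+t)/4$ of $\chi$). This yields
\begin{equation*}
\mathcal{E}^{a,b}[\phi_{J,T,U}](t) \le \underline{C}\, \mathcal{E}^{a,b}[\phi_{J,T,U}](0) + \underline{C} C_H \sqrt{\epsilon} \int_0^t \frac{\mathcal{E}^{a,b}[\phi_{J,T,U}](\tau)}{1+\tau}\mathrm d\tau + \underline{C} \int_0^t \!\!\int_{\Sigma_\tau} |\widetilde{\square}_g(\phi_{J,T,U}) \partial_t \phi_{J,T,U}| \omega_a^b \mathrm dx \mathrm d\tau.
\end{equation*}
The initial data term contributes $C_0\epsilon$; the middle integral, bounded via the bootstrap assumption \eqref{boot3} or \eqref{boot4} combined with Lemma~\ref{equi_norms}, yields at most $C_0 C_{\mathcal{T} \mathcal{U}} \epsilon^{3/2}(1+t)^{\delta}$ (resp.~$(1+t)^{2\delta}$), which fits into the $C_0 C_{\mathcal{T} \mathcal{U}}\epsilon^{3/2}$ term of the conclusion.

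The decisive step is Cauchy--Schwarz on the source integral with a $(1+\tau)^{\pm 1/2}$ weight:
\begin{equation*}
\int_0^t \!\!\int_{\Sigma_\tau} |\widetilde{\square}_g(\phi)\, \partial_t \phi| \omega_a^b \mathrm dx \mathrm d\tau \le \Bigl(\int_0^t \!\!\int_{\Sigma_\tau} (1+\tau)|\widetilde{\square}_g(\phi)|^2 \omega_a^b \mathrm dx \mathrm d\tau\Bigr)^{\!1/2}\! \Bigl(\int_0^t \!\frac{\mathcal{E}^{a,b}[\phi](\tau)}{1+\tau}\mathrm d\tau\Bigr)^{\!1/2}.
\end{equation*}
The second factor, reverting back to the tensorial energy via Lemma~\ref{equi_norms} and using the bootstrap bound, is controlled by $C_0 C_{\mathcal{T}\mathcal{U}}^{1/2}\epsilon^{1/2}(1+t)^{\delta/2}$ (resp.~$(1+t)^{\delta}$). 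Summing over the finitely many $|J|$ and $(T,U)$ then yields the claimed inequalities.

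The main subtlety, as stressed in Remark~\ref{justifconstant}, is constant tracking: $C_0$ must remain independent of $C_{\mathcal{T}\mathcal{U}}$ and $C_{\mathcal{L}\mathcal{L}}$. This is precisely why the Cauchy--Schwarz splitting is chosen so as to place $C_{\mathcal{T}\mathcal{U}}^{1/2}$, rather than $C_{\mathcal{T}\mathcal{U}}$, in front of the square root of the wave source integral; otherwise the final inequality would not permit absorption of this term when, in the forthcoming Proposition~\ref{improbootTU}, the source integral is estimated using Propositions~\ref{prop_semi_linear}--\ref{prop_box}, Lemmas~\ref{lem_c_frame}--\ref{lem_char}, the commutation formula of Proposition~\ref{ComuEin}, and the hypothesis \eqref{eq:hypTU} on $T[f]$. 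The only genuine analytic difficulty is verifying the hypotheses of Proposition~\ref{LRenergy} on the support of $\chi(r/(t+1))$ in the top-order case $(a,b) = (1+\gamma, 1+\gamma)$, where one must exploit the geometry of this support to bridge the gap between the available decay of $\nabla H$ and the formal requirement of Proposition~\ref{LRenergy}.
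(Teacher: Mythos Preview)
Your approach is essentially the same as the paper's: reduce to scalar energies of $\phi^J_{TU}=\chi(\frac{r}{t+1})\mathcal{L}_Z^J(h^1)_{TU}$ via Lemma~\ref{equi_norms}, apply the first energy inequality of Proposition~\ref{LRenergy}, perform the Cauchy--Schwarz split with the $(1+\tau)^{\pm 1/2}$ weight, and bound the resulting time-integrated energy using the bootstrap assumptions \eqref{boot3}--\eqref{boot4} together with Lemma~\ref{equi_norms}. Your constant-tracking commentary is correct and matches the spirit of Remark~\ref{justifconstant}.

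One correction is needed. Your proposed resolution of the hypothesis gap for $a=1+\gamma$ via the support of $\chi$ does not work: on the set $r \ge (1+t)/4$ one can still have $1+|u|$ comparable to $1+t+r$ (take $r=(1+t)/4$, where $|u|/(t+r)\to 3/5$), so the support provides no additional $|u|$-decay for $\nabla H$. The correct fix---which the paper itself leaves implicit---is that the \emph{proof} of Proposition~\ref{LRenergy} works for any $a>0$ under the weaker global hypothesis $|\nabla H|\lesssim \sqrt{\epsilon}(1+t+r)^{-1/2}(1+|u|)^{-(1+\gamma)/2}$ given in Remark~\ref{forenergywave}. In the step \eqref{eq:ok1}--\eqref{eq:ok2}, choose the Young split as
\[
\frac{\sqrt{\epsilon}}{(1+t+r)^{1/2}(1+|u|)^{(1+\gamma)/2}}\,|\overline{\nabla}\phi|\,|\nabla\phi|
\;\le\;
\frac{\sqrt{\epsilon}\,|\nabla\phi|^2}{(1+t+r)(1+|u|)^{\gamma}}
\;+\;
\frac{\sqrt{\epsilon}\,|\overline{\nabla}\phi|^2}{1+|u|},
\]
after which both terms integrate against $\omega_a^b$ to yield $\sqrt{\epsilon}\int_0^t \frac{\mathcal{E}^{a,b}[\phi](\tau)}{1+\tau}\,\mathrm d\tau$ and $\sqrt{\epsilon}\,\mathcal{E}^{a,b}[\phi](t)$ respectively, with no $a$-dependence entering the argument. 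This closes the gap for both $(a,b)=(1+\gamma,1+\gamma)$ here and $(a,b)=(1+2\gamma,1)$ in the later $\mathcal{L}\mathcal{L}$ case.
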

\begin{proof}
As these two estimates can be obtained in a very similar way, we only prove the second one. In order to lighten the notations, let us introduce $\phi^J_{TU}:= \chi ( \frac{r}{t+1}) \mathcal{L}_Z^J (h^1)_{TU} $ for any $|J| \leq N$ and $(T,U) \in \mathcal{T} \times \mathcal{U}$. We can obtain from the first energy inequality of Proposition \ref{LRenergy}, Remark \ref{forenergywave} and the Cauchy-Schwarz inequality that, 
\begin{align}
\nonumber \mathcal{E}^{1+\gamma,1+\gamma} \left[ \p^J_{TU} \right](t) & \lesssim  \mathcal{E}^{1+\gamma,1+\gamma} \left[ \p^J_{TU} \right](0) +\sqrt{\epsilon} \int_0^t \frac{\mathcal{E}^{1+\gamma, 1+\gamma}[\p^J_{TU}](\tau)}{1+\tau} \mathrm d \tau \\ \nonumber
& \quad +   \left| \int_0^t \frac{ \mathcal{E}^{1+\gamma,1+\gamma} \left[\p^J_{TU} \right](\tau)}{1+\tau} \mathrm d \tau \int_0^t \int_{\Sigma_{ \tau} } (1+\tau) \left| \widetilde{\square}_g \left( \p^J_{TU} \right) \right|^2 \omega_{1+\gamma}^{1+\gamma} \mathrm d x \mathrm d \tau \right|^{\frac{1}{2}}\!.
\end{align} 
According to Lemma \ref{equi_norms}, the smallness assumption on $h^1(t=0)$ and the bootstrap assumption \eqref{boot4}, we obtain, using also $C_{\mathcal{T} \mathcal{U} } \geq 1$,
\begin{eqnarray*}
 \mathcal{E}^{1+\gamma,1+\gamma} \left[ \p^J_{TU} \right](0) & \lesssim & \mathcal{E}^{1+\gamma, 1+\gamma}_{N, \mathcal{T} \mathcal{U}}[h^1](0)+ \epsilon \hspace{2mm} \lesssim \hspace{2mm} \mathring{\mathcal{E}}^{\gamma,2+2\gamma}_N[h^1](0)+ \epsilon \hspace{2mm} \lesssim \hspace{2mm} \epsilon, \\
 \int_0^t \frac{\mathcal{E}^{1+\gamma,1+\gamma}[\p^J_{TU}](\tau)}{1+\tau} \mathrm d \tau & \lesssim & \int_0^t \frac{\mathcal{E}^{1+\gamma,1+\gamma}_{N, \mathcal{T} \mathcal{U}}[h^1](\tau)+\epsilon (1+\tau)^{2\delta}}{1+\tau} \mathrm d \tau \hspace{1mm} \lesssim \hspace{1mm} C_{\mathcal{T} \mathcal{U}} \epsilon (1+t)^{2\delta}, \\
\mathcal{E}^{1+\gamma,1+\gamma}_{N, \mathcal{T} \mathcal{U}}[h^1](t) & \lesssim & \sum_{(T,U)\in \mathcal{T} \times \mathcal{U}} \hspace{1mm} \sum_{|J| \leq N}  \mathcal{E}^{1+\gamma,1+\gamma}[\p^J_{TU}](t) + \epsilon (1+t)^{2 \delta} .
 \end{eqnarray*} 
It then remains to combine these last four estimates.
\end{proof}
Proposition \ref{improbootTU} then ensues from the following two results.
\begin{prop}\label{TUintegralbound}
Assume that \eqref{eq:hypTU} holds. Then, there exist a constant $C_0$ independent of $\epsilon$, $C_{\mathcal{T} \mathcal{U}}$ and $C_{\mathcal{L} \mathcal{L}}$ and a constant $C$ independent of $\epsilon$, such that the following estimate holds. For any $|J| \leq N-1$, $(T,U) \in \mathcal{T} \times \mathcal{U}$ and for all $t \in [0,T[$,
$$\int_0^t \int_{\Sigma_{\tau} } (1+\tau ) \left| \widetilde{\square}_g \left( \chi \left( \frac{r}{t+1} \right) \mathcal{L}_Z^J (h^1)_{TU} \right) \right|^2 \omega^{1+\gamma}_{2\gamma} \mathrm dx \mathrm d \tau \hspace{1mm} \leq \hspace{1mm} C_0 \epsilon+C \epsilon^2 (1+t)^{\delta}.$$
\end{prop}
\begin{proof}
According to the commutation formula of Proposition \ref{ComuEin} and the result of Section \ref{section_bounds}, the proposition would follow if we could bound sufficiently well the quantities $\mathfrak{J}^{J}_k$ defined below, for any multi-index $J$ satisfying $|J| \leq N-1$ and any null components $(T,U) \in \mathcal{T} \times \mathcal{U}$. 

\noindent Those arising from the commutation of the wave operator with the cut-off function (see Lemma \ref{lem_char}),
$$\mathfrak{J}^{J}_1 \hspace{1mm} := \hspace{1mm} \int_0^t \int_{\left\{\frac{1}{4} \leq \frac{r}{\tau+1} \leq \frac{1}{2}\right\}} (1+\tau) \left( \frac{\left| \nabla \left( \mathcal{L}_{Z}^J(h^1)_{TU} \right) \right|^2}{(1+\tau+r)^2}    + \frac{| \mathcal{L}_{Z}^J(h^1)_{TU} |^2}{(1+\tau+r)^4}  \right) \omega^{1+\gamma}_{2\gamma} \mathrm dx \mathrm d \tau.$$
Those coming from the commutation of the contraction with $TU$ and the wave operator (see Lemma \ref{lem_c_frame}),
\begin{eqnarray} \nonumber
\mathfrak{J}^{J}_2 \hspace{-1mm} & := & \hspace{-1mm} \int_0^t \int_{\left\{r \geq \frac{\tau+1}{4}\right\}} (1+\tau) \frac{ |\mathcal{L}_{Z}^J (h^1)|^2}{r^4}  \omega^{1+\gamma}_{2\gamma} \mathrm dx \mathrm d \tau \\ \nonumber
\mathfrak{J}^{J}_3 \hspace{-1mm} & := & \hspace{-1mm}  \int_0^t \int_{\left\{r \geq \frac{\tau+1}{4}\right\}} (1+\tau)  \frac{1+|u|}{r^2(1+\tau+r)^{2-2\delta}}| \nabla \mathcal{L}_{Z}^J (h^1) |^2 \omega^{1+\gamma}_{2\gamma} \mathrm dx \mathrm d \tau , \\ \nonumber
\mathfrak{J}^{J}_4 \hspace{-1mm} & := & \hspace{-1mm}  \int_0^t \int_{\left\{r \geq \frac{\tau+1}{4}\right\}} (1+\tau) \frac{| \overline{\nabla} \mathcal{L}_{Z}^J (h^1) |^2}{r^2} \omega^{1+\gamma}_{2\gamma} \mathrm dx \mathrm d \tau .
\end{eqnarray}
Those coming from the contraction of $\widetilde{\square}_g \mathcal{L}_Z^J (h^1)_{\mu \nu}$ with $T^{\mu} U^{\nu}$,
\begin{eqnarray}
\nonumber \mathfrak{J}_5 \hspace{-1mm} & := & \hspace{-1mm} \epsilon^2 \int_0^t \int_{\{r \leq \tau\}} \frac{(1+\tau )\mathrm d x \mathrm d \tau}{(1+\tau +r)^6(1+|u|)^{2\gamma} } +\epsilon^2 \int_0^t \int_{\{r \geq \tau\}} \frac{1+\tau }{(1+ \tau +r)^8}(1+|u|)^{1+\gamma} \mathrm dx \mathrm d \tau, \\
 \nonumber
 \mathfrak{J}_6^{J} \hspace{-1mm} & := & \hspace{-1mm} \epsilon \int_0^t \int_{\Sigma_{\tau}} \frac{(1+\tau)(1+|u|)}{ (1+ \tau+r)^{4-4\delta} }  \left(  |\nabla \mathcal{L}_Z^J (h^1) |^2+\frac{| \mathcal{L}_Z^J (h^1) |^2}{(1+|u|)^2} \right) \omega^{1+\gamma}_{2\gamma} \mathrm dx \mathrm d \tau, \\
 \nonumber
 \mathfrak{J}_7^{J} \hspace{-1mm} & := & \hspace{-1mm} \epsilon \int_0^t \int_{\Sigma_{\tau}} (1+\tau)\frac{\left| \overline{\nabla}  \mathcal{L}_Z^J (h^1)\right|^2}{ (1+ \tau+r)^{2-2\delta}(1+|u|) }   \omega^{1+\gamma}_{2\gamma} \mathrm dx \mathrm d \tau, \\ \nonumber 
 \mathfrak{J}_8^{J} \hspace{-1mm} & := & \hspace{-1mm} \epsilon \int_0^t \int_{\Sigma_{\tau}}(1+\tau) \frac{| \mathcal{L}_Z^J (h^1) |_{\mathcal{L} \mathcal{L}}^2}{ (1+ \tau+r)^{2-2\delta}(1+|u|)^3 }  \omega^{1+\gamma}_{2\gamma} \mathrm dx \mathrm d \tau, \\ \nonumber
  \mathfrak{J}_9^{J} \hspace{-1mm} & := & \hspace{-1mm} \epsilon \int_0^t \int_{\Sigma_{\tau}}(1+\tau) \frac{|\nabla \mathcal{L}_Z^J (h^1) |_{\mathcal{T} \mathcal{U}}^2}{ (1+ \tau+r)^2 }  \omega^{1+\gamma}_{2\gamma} \mathrm dx \mathrm d \tau, \\ \nonumber
 \mathfrak{J}_{10}^{J} \hspace{-1mm} & := & \hspace{-1mm} \int_0^t \int_{\Sigma_{\tau}} (1+\tau) \left| \mathcal{L}_Z^J(T[f])_{TU}\right|^2 \omega^{1+\gamma}_{2\gamma} \mathrm dx \mathrm d \tau.
\end{eqnarray}
Note that we used that $\left| \chi \left( \frac{r}{1+t} \right) \right| \leq 1$ for these last terms. Moreover,
\begin{itemize}
\item Proposition \ref{prop_semi_linear} gives us the terms $\mathfrak{J}_5$, $\mathfrak{J}^{J}_6$ and $\mathfrak{J}^{J}_7$.
\item Proposition \ref{prop_ss} leads us to control $\mathfrak{J}_5$ and $\mathfrak{J}^{J}_6$.
\item Proposition \ref{prop_box} gives the terms $\mathfrak{J}^{J}_6$, $\mathfrak{J}^{J}_8$ and $\mathfrak{J}^{J}_9$.
\item $\mathfrak{J}^{J}_{10}$ is the source term related to the Vlasov field, it is estimated in Proposition \ref{prop_l2_vlasov}.
\end{itemize}
We fix $|J| \leq N-1$ and $(T,U) \in \mathcal{T} \times \mathcal{U}$ for all this proof. Let us start by dealing with $\mathfrak{J}^{J}_k$, $k \in \llbracket 5,10 \rrbracket$. Using \eqref{eq:mathfrakI0}, we have $ \mathfrak{J}_5  \lesssim \overline{\mathfrak{I}}_0 \lesssim \epsilon^2$ and $\mathfrak{J}^{J}_{10}  \lesssim \epsilon^2$ holds by assumption. According to the bootstrap assumption \eqref{boot3}, we have $\mathcal{E}^{2\gamma,1+\gamma}_{N-1,\mathcal{T} \mathcal{U}}[h^1](\tau) \leq  C_{\mathcal{T} \mathcal{U}} \epsilon (1+t)^{\delta}$, so that
$$\mathfrak{J}_9^{J} \hspace{1mm} \leq  \hspace{1Mm} \epsilon \int_0^t \int_{\Sigma_{\tau}} \frac{|\nabla \mathcal{L}_Z^J (h^1) |_{\mathcal{T} \mathcal{U}}^2}{ 1+ \tau }  \omega^{1+\gamma}_{2\gamma} \mathrm dx \mathrm d \tau \hspace{1mm}  \leq  \hspace{1mm} \epsilon \int_0^t \frac{\mathcal{E}^{2\gamma,1+\gamma}_{N-1,\mathcal{T} \mathcal{U}}[h^1](\tau)}{1+\tau} \mathrm d \tau \hspace{1mm} \lesssim \hspace{1mm}  C_{\mathcal{T} \mathcal{U}} \epsilon^2  (1+t)^{\delta}. $$
For $\mathfrak{J}^{J}_8$, we start by applying the Hardy inequality of Lemma \ref{lem_hardy}. For this reason, we cannot use all the decay in $t-r$ in the exterior region. We have
$$\mathfrak{J}_8^{J} \hspace{1mm}  \leq \hspace{1mm} \epsilon \int_0^t  \int_{\Sigma_{\tau}} \frac{ | \mathcal{L}_Z^J (h^1) |_{\mathcal{L} \mathcal{L}}^2 \hspace{0.5mm} \omega^{1+\gamma}_{1+2\gamma}}{(1+\tau+r)^{1-2\delta}(1+|u|)^2}  \mathrm dx \mathrm d \tau \hspace{1mm} \lesssim \hspace{1mm} \epsilon \int_0^t  \int_{\Sigma_{\tau}} \frac{| \nabla \mathcal{L}_Z^J (h^1) |_{\mathcal{L} \mathcal{L}}^2}{(1+\tau+r)^{1-2\delta}} \omega^{1+\gamma}_{1+2\gamma} \mathrm dx \mathrm d \tau.$$
Using \eqref{wgcforproof} yields
$$\mathfrak{J}_8^{J}  \hspace{1mm} \lesssim \hspace{1mm} \overline{\mathfrak{J}}_8^{J} +\overline{\mathfrak{I}}_0+ \sum_{|K| \leq |J|} \mathfrak{J}^{K}_6, $$
where $\overline{\mathfrak{I}}_0$ is defined and bounded by $\epsilon^2$ in \eqref{eq:mathfrakI0} and
$$\overline{\mathfrak{J}}^{J}_8 \hspace{1mm} := \hspace{1mm} \epsilon \int_0^t  \int_{\Sigma_{\tau}} \frac{| \overline{ \nabla} \mathcal{L}_Z^J (h^1) |^2}{(1+\tau+r)^{1-2\delta}} \omega^{1+\gamma}_{1+2\gamma} \mathrm dx \mathrm d \tau.$$
Since $\mathfrak{J}^{J}_7 \leq \overline{\mathfrak{J}}^{J}_8$, it only remains to deal with $\mathfrak{J}^{J}_6$ and $\overline{\mathfrak{J}}^{J}_8$. As $ 5\delta < \gamma$, we have, using Lemma \ref{LemBulk} and the bootstrap assumption \eqref{boot1},
$$ \overline{\mathfrak{J}}^{J}_8 \hspace{1mm} \leq \hspace{1mm} \epsilon \int_0^t  \int_{\Sigma_{\tau}} \frac{| \overline{ \nabla} \mathcal{L}_Z^J (h^1) |^2}{(1+\tau)^{\gamma-2\delta}}\frac{ \omega^{1+2\gamma}_{\gamma}}{1+|u|} \mathrm dx \mathrm d \tau \hspace{1mm} \lesssim \hspace{1mm} \epsilon^2.$$
Finally, we use \eqref{boundmathfrakI3} in order to get $\mathfrak{J}_6^J \leq \mathfrak{I}_3^J \lesssim \epsilon^2$.

Let us focus now on $\mathfrak{J}_1^{J}$, $\mathfrak{J}_2^{J}$, $\mathfrak{J}_3^{J}$ and $\mathfrak{J}_4^{J}$. Since these integrals are of size $\epsilon$ (and not $\epsilon^2$), we cannot use the bootstrap assumptions \eqref{boot3}-\eqref{boot5} to control them as it would give us a bound larger than $C_{\mathcal{T} \mathcal{U}} \epsilon (1+t)^{\delta}$. We will use several times the inequality $1+\tau+r \leq 5r$, which holds for all $r \geq \frac{\tau+1}{4}$ (and then on the domain of integration of all these integrals). Since $|\nabla ( \mathcal{L}_Z^J (h^1)_{TU}) | \lesssim |\nabla  \mathcal{L}_Z^J (h^1) |+\frac{1}{r}|\mathcal{L}_Z^J (h^1) |$ and $1+\tau+r \lesssim 1+|\tau -r|$ for all $ r \leq \frac{\tau+1}{2}$, we have
$$\mathfrak{J}^{J}_1 \hspace{2mm} \lesssim \hspace{2mm} \int_0^t \frac{1}{(1+\tau)^{1+\gamma}} \int_{\left\{\frac{1+\tau}{4} \leq r \leq \frac{1+\tau}{2}\right\}} \left( |\nabla \mathcal{L}_Z^J(h^1) |^2 +\frac{ | \mathcal{L}_Z^J(h^1)|^2}{(1+|u|)^2} \right)  \frac{ \mathrm dx }{(1+|u|)^{\gamma } } d\tau.$$
We also have
$$\mathfrak{J}^{J}_2 \hspace{2mm} \lesssim \hspace{2mm} \int_0^t \frac{1}{(1+\tau)^{1+ \gamma}} \int_{\left\{r \geq \frac{1+\tau}{4}\right\}} \frac{ | \mathcal{L}_Z^J(h^1)|^2}{(1+|u|)^2}  \omega^{1+2\gamma}_{\gamma} d\tau.$$
Hence, by the Hardy type inequality of Lemma \ref{lem_hardy} and using the bootstrap assumption \eqref{boot1} as well as $\gamma - 2\delta >0$, we obtain
$$\mathfrak{J}^{J}_1+\mathfrak{J}^{J}_2 \hspace{1mm} \lesssim \hspace{1mm} \int_0^t \frac{1}{(1+\tau)^{1+\gamma}} \int_{\Sigma_{\tau}} |\nabla \mathcal{L}_Z^J (h^1) |^2 \omega^{1+2\gamma}_{\gamma} d\tau \hspace{1mm} \lesssim \hspace{1mm} \int_0^t \frac{ \overline{\mathcal{E}}^{\gamma, 1+2\gamma}_{N-1}[h^1](\tau)}{(1+\tau)^{1+\gamma}} \mathrm d \tau \hspace{1mm} \lesssim \hspace{1mm}   \epsilon . $$
Since $1-4\delta+ \gamma >0$, we get from the bootstrap assumption \eqref{boot1} that
$$\mathfrak{J}^{J}_3 \hspace{1mm} \lesssim \hspace{1mm} \int_0^t \frac{1}{(1+\tau)^{2-2\delta+\gamma}} \int_{\left\{r \geq \frac{1+\tau}{4}\right\}}  |\nabla \mathcal{L}_Z^J(h^1) |^2 \omega^{1+2\gamma}_{\gamma} \mathrm dx \mathrm d \tau \hspace{1mm} \lesssim \hspace{1mm} \int_0^t \frac{ \overline{\mathcal{E}}^{\gamma, 1+2\gamma}_{N-1}[h^1](\tau)}{(1+\tau)^{2-2\delta+\gamma}} \mathrm d \tau \hspace{1mm} \lesssim \hspace{1mm} \epsilon.$$
Finally, Lemma \ref{LemBulk}, combined with the bootstrap assumption \eqref{boot1} and $\gamma \geq 3\delta$, gives
$$\mathfrak{J}^{J}_4 \hspace{1mm} \lesssim \hspace{1mm} \int_0^t  \int_{\left\{r \geq \frac{1+\tau}{4}\right\}} \frac{|\overline{\nabla} \mathcal{L}_Z^J(h^1) |^2}{(1+\tau)^{\gamma}}  \frac{ \omega^{1+2\gamma}_{\gamma} }{1+|u| } \mathrm dx \mathrm d\tau \hspace{1mm} \lesssim \hspace{1mm}  \epsilon . $$
\end{proof}
\begin{prop}\label{TUintegralbound2}
Assume that \eqref{eq:hypTU} holds. Then, there exist a constant $C_0$ independent of $\epsilon$, $C_{\mathcal{T} \mathcal{U}}$ and $C_{\mathcal{L} \mathcal{L}}$ and a constant $C$ independent of $\epsilon$, such that the following estimate holds. For any $|J| \leq N$, $(T,U) \in \mathcal{T} \times \mathcal{U}$ and for all $t \in [0,T[$,
$$\int_0^t \int_{\Sigma_{\tau} } (1+\tau ) \left| \widetilde{\square}_g \left( \chi \left( \frac{r}{t+1} \right) \mathcal{L}_Z^J (h^1)_{TU} \right) \right|^2 \omega^{1+\gamma}_{1+\gamma} \mathrm dx \mathrm d \tau \hspace{1mm} \leq \hspace{1mm} C_0 \epsilon+C \epsilon^2 (1+t)^{2\delta}.$$
\end{prop}
\begin{proof}
The proof is similar to the one of Proposition \ref{TUintegralbound}. According to the commutation formula of Proposition \ref{ComuEin}, Propositions \ref{prop_semi_linear}-\ref{prop_box} and Lemma \ref{lem_c_frame}-\ref{lem_char}, it is sufficient to bound by $C_0 \epsilon+C \epsilon^2 (1+t)^{2\delta}$ the following integrals, defined for any $|J| \leq N$ and $(T,U) \in \mathcal{T} \times \mathcal{U}$.
\begin{eqnarray} \nonumber
\mathcal{J}^{J}_1 \hspace{-1mm} & := & \hspace{-1mm} \int_0^t \int_{\left\{\frac{1}{4} \leq \frac{r}{\tau+1} \leq \frac{1}{2}\right\}} (1+\tau) \left( \frac{\left| \nabla \left( \mathcal{L}_{Z}^J(h^1)_{TU} \right) \right|^2}{(1+\tau+r)^2}    + \frac{| \mathcal{L}_{Z}^J(h^1)_{TU} |^2}{(1+\tau+r)^4}  \right) \omega^{1+\gamma}_{1+\gamma} \mathrm dx \mathrm d \tau, \\ \nonumber
\mathcal{J}^{J}_2 \hspace{-1mm} & := & \hspace{-1mm} \int_0^t \int_{\left\{r \geq \frac{\tau+1}{4}\right\}} (1+\tau) \frac{ |\mathcal{L}_{Z}^J (h^1)|^2}{r^4}  \omega^{1+\gamma}_{1+\gamma} \mathrm dx \mathrm d \tau \\ \nonumber
\mathcal{J}^{J}_3 \hspace{-1mm} & := & \hspace{-1mm}  \int_0^t \int_{\left\{r \geq \frac{\tau+1}{4}\right\}} (1+\tau)  \frac{1+|u|}{r^2(1+\tau+r)^{2-2\delta}}| \nabla \mathcal{L}_{Z}^J (h^1) |^2 \omega^{1+\gamma}_{1+\gamma} \mathrm dx \mathrm d \tau , \\ \nonumber
\mathcal{J}^{J}_4 \hspace{-1mm} & := & \hspace{-1mm}  \int_0^t \int_{\left\{r \geq \frac{\tau+1}{4}\right\}} (1+\tau) \frac{| \overline{\nabla} \mathcal{L}_{Z}^J (h^1) |^2}{r^2} \omega^{1+\gamma}_{1+\gamma} \mathrm dx \mathrm d \tau , \\
\nonumber \mathcal{J}_5 \hspace{-1mm} & := & \hspace{-1mm} \epsilon^2 \int_0^t \int_{\{r \leq \tau\}} \frac{(1+\tau )\mathrm d x \mathrm d \tau}{(1+\tau +r)^6(1+|u|)^{1+\gamma} } +\epsilon^2 \int_0^t \int_{\{r \geq \tau\}} \frac{(1+\tau) (1+|u|)^{1+\gamma} }{(1+ \tau +r)^8} \mathrm dx \mathrm d \tau, \\
 \nonumber
 \mathcal{J}_6^{J} \hspace{-1mm} & := & \hspace{-1mm} \epsilon \int_0^t \int_{\Sigma_{\tau}} \frac{(1+\tau)(1+|u|)}{ (1+ \tau+r)^{4-4\delta} }  \left(  |\nabla \mathcal{L}_Z^J (h^1) |^2+\frac{| \mathcal{L}_Z^J (h^1) |^2}{(1+|u|)^2} \right) \omega^{1+\gamma}_{1+\gamma} \mathrm dx \mathrm d \tau, \\
 \nonumber
 \mathcal{J}_7^{J} \hspace{-1mm} & := & \hspace{-1mm} \epsilon \int_0^t \int_{\Sigma_{\tau}} (1+\tau)\frac{\left| \overline{\nabla}  \mathcal{L}_Z^J (h^1)\right|^2}{ (1+ \tau+r)^{2-2\delta}(1+|u|) }   \omega^{1+\gamma}_{1+\gamma} \mathrm dx \mathrm d \tau, \\ \nonumber 
 \mathcal{J}_8^{J} \hspace{-1mm} & := & \hspace{-1mm} \epsilon \int_0^t \int_{\Sigma_{\tau}}(1+\tau) \frac{| \mathcal{L}_Z^J (h^1) |_{\mathcal{L} \mathcal{L}}^2}{ (1+ \tau+r)^{2-2\delta}(1+|u|)^3 }  \omega^{1+\gamma}_{1+\gamma} \mathrm dx \mathrm d \tau, \\ \nonumber
  \mathcal{J}_9^{J} \hspace{-1mm} & := & \hspace{-1mm} \epsilon \int_0^t \int_{\Sigma_{\tau}}(1+\tau) \frac{|\nabla \mathcal{L}_Z^J (h^1) |_{\mathcal{T} \mathcal{U}}^2}{ (1+ \tau+r)^2 }  \omega^{1+\gamma}_{1+\gamma} \mathrm dx \mathrm d \tau, \\ \nonumber
 \mathcal{J}_{10}^{J} \hspace{-1mm} & := & \hspace{-1mm} \int_0^t \int_{\Sigma_{\tau}} (1+\tau) \left| \mathcal{L}_Z^J(T[f])_{TU}\right|^2 \omega^{1+\gamma}_{1+\gamma} \mathrm dx \mathrm d \tau.
\end{eqnarray}
We fix, for all this proof, $|J| \leq N$ and $(T,U) \in \mathcal{T} \times \mathcal{U}$. Using \eqref{eq:mathfrakI0}, the hypothesis \eqref{eq:hypTU} and the bootstrap assumption \eqref{boot4}, we have
$$\mathcal{J}_5  \lesssim \overline{\mathfrak{I}}_0 \lesssim \epsilon^2, \qquad \mathcal{J}^{J}_{10}  \lesssim \epsilon^2 , \qquad \mathcal{J}_9^{J}  \hspace{1mm}  \leq  \hspace{1mm} \epsilon \int_0^t \frac{\mathcal{E}^{2\gamma,1+\gamma}_{N,\mathcal{T} \mathcal{U}}[h^1](\tau)}{1+\tau} \mathrm d \tau \hspace{1mm} \lesssim \hspace{1mm}  C_{\mathcal{T} \mathcal{U}} \epsilon^2  (1+t)^{2\delta}. $$
For $\mathcal{J}^{J}_8$, as previsouly for similar integrals, we cannot keep all the decay in $t-r$ when we apply the Hardy inequality of Lemma \ref{lem_hardy} (the problem comes from the exterior region). We have, since $1 \geq 2\delta$,
$$\mathcal{J}_8^{J}   \leq \epsilon \int_0^t  \int_{\Sigma_{\tau}} \frac{ | \mathcal{L}_Z^J (h^1) |_{\mathcal{L} \mathcal{L}}^2 \hspace{0.5mm} \omega^{1+\gamma-2\delta}_{1+\gamma+2\delta}}{(1+\tau+r)^{1-2\delta}(1+|u|)^2}  \mathrm dx \mathrm d \tau \lesssim \epsilon \int_0^t  \int_{\Sigma_{\tau}} \frac{| \nabla \mathcal{L}_Z^J (h^1) |_{\mathcal{L} \mathcal{L}}^2}{(1+\tau+r)^{1-2\delta}}\frac{ \omega^{1+\gamma}_{1+\gamma}}{(1+|u|)^{2\delta}} \mathrm dx \mathrm d \tau.$$
Using \eqref{wgcforproof} yields
$$\mathcal{J}_8^{J}  \hspace{1mm} \lesssim \hspace{1mm} \overline{\mathcal{J}}_8^{J} +\overline{\mathfrak{I}}_0+ \sum_{|K| \leq |J|} \mathcal{J}^{K}_6, $$
where $\overline{\mathfrak{I}}_0 \lesssim \epsilon^2$ according to \eqref{eq:mathfrakI0} and, using $1+\tau+r \leq 1+|u|$ as well as the bootstrap assumption \eqref{boot4},
$$\overline{\mathcal{J}}^{J}_8 \hspace{1mm} := \hspace{1mm} \epsilon \int_0^t  \! \int_{\Sigma_{\tau}} \frac{| \overline{ \nabla} \mathcal{L}_Z^J (h^1) |^2_{\mathcal{T} \mathcal{U}}  \omega^{1+\gamma}_{1+\gamma}}{(1+\tau+r)^{1-2\delta}(1+|u|)^{2\delta}} \mathrm dx \mathrm d \tau \hspace{1mm} \leq \hspace{1mm} \epsilon \mathcal{E}^{1+\gamma , 1+ \gamma}_{N,\mathcal{T} \mathcal{U}}[h^1](t) \leq  C_{\mathcal{T} \mathcal{U}} \epsilon^2 (1+t)^{2\delta}.$$
Note now that $\mathcal{J}^{J}_7 \leq \overline{\mathcal{J}}^{J}_8$ and, using \eqref{boundmathfrakIbar3}, $\mathcal{J}^K_6 \leq \overline{\mathfrak{I}}_3^K \lesssim \epsilon^2$. Consequently, 
$$ \mathcal{J}_6^J+ \mathcal{J}_7^J+\mathcal{J}^J_8 \hspace{1mm} \lesssim \hspace{1mm} (1+C_{\mathcal{T} \mathcal{U}} ) \epsilon^2 (1+t)^{2\delta}.$$

We now turn on $\mathcal{J}_1^{J}$, $\mathcal{J}_2^{J}$, $\mathcal{J}_3^{J}$ and $\mathcal{J}_4^{J}$ which are of size $\epsilon$ and then cannot be bounded using the bootstrap assumptions \eqref{boot3}-\eqref{boot5}. Recall that the inequality $1+\tau+r \leq 5r$ holds on the domain of integration of all these integrals. Since $|\nabla ( \mathcal{L}_Z^J (h^1)_{TU}) | \lesssim |\nabla  \mathcal{L}_Z^J (h^1) |+\frac{1}{r}|\mathcal{L}_Z^J (h^1) |$ and $1+\tau+r \lesssim 1+|\tau -r|$ for all $ r \leq \frac{\tau+1}{2}$, we have
$$\mathcal{J}^{J}_1 \hspace{2mm} \lesssim \hspace{2mm} \int_0^t \frac{1}{1+\tau} \int_{\left\{\frac{1+\tau}{4} \leq r \leq \frac{1+\tau}{2}\right\}} \frac{1}{1+\tau+r} \left( |\nabla \mathcal{L}_Z^J(h^1) |^2 +\frac{ | \mathcal{L}_Z^J(h^1)|^2}{(1+|u|)^2} \right)  \frac{ \mathrm dx }{(1+|u|)^{\gamma } } d\tau.$$
We also have
\begin{align*}
\mathcal{J}^{J}_2 \hspace{1mm} & \lesssim \hspace{1mm} \int_0^t \frac{1}{1+\tau} \int_{\left\{r \geq \frac{1+\tau}{4}\right\}} \frac{ | \mathcal{L}_Z^J(h^1)|^2}{(1+\tau+r)(1+|u|)^2}  \omega^{2+\gamma}_{\gamma} \dr \tau, \\
\mathcal{J}^{J}_3 \hspace{1mm} & \lesssim \hspace{1mm} \int_0^t \frac{1}{(1+\tau)^{2-2\delta}} \int_{\left\{r \geq \frac{1+\tau}{4}\right\}} \frac{ |\nabla \mathcal{L}_Z^J(h^1) |^2}{1+\tau+r} \omega^{2+\gamma}_{\gamma} \mathrm dx \mathrm d \tau .
\end{align*}
Applying the Hardy type inequality of Lemma \ref{lem_hardy} and using the bootstrap assumption \eqref{boot2}, we get
$$\mathcal{J}^{J}_1+\mathcal{J}^{J}_2+\mathcal{J}^{J}_3 \hspace{1mm} \lesssim \hspace{1mm} \int_0^t \frac{1}{1+\tau} \int_{\Sigma_{\tau}} \frac{|\nabla \mathcal{L}_Z^J (h^1) |^2}{1+\tau+r} \omega^{2+\gamma}_{\gamma} d\tau \hspace{1mm} \lesssim \hspace{1mm} \int_0^t \frac{ \mathring{\mathcal{E}}^{\gamma, 2+2\gamma}_{N}[h^1](\tau)}{1+\tau} \mathrm d \tau \hspace{1mm} \lesssim \hspace{1mm}   \epsilon (1+t)^{2\delta} . $$
Finally, the bootstrap assumption \eqref{boot2} gives
$$\mathcal{J}^{J}_4 \hspace{1mm} \lesssim \hspace{1mm} \int_0^t  \int_{\left\{r \geq \frac{1+\tau}{4}\right\}} \frac{|\overline{\nabla} \mathcal{L}_Z^J(h^1) |^2}{1+\tau+r}  \frac{ \omega^{2+\gamma}_{\gamma} }{1+|u| } \mathrm dx \mathrm d\tau \hspace{1mm} \lesssim \hspace{1mm} \mathring{\mathcal{E}}^{\gamma, 2+2\gamma}_{N}[h^1](t) \hspace{1mm} \lesssim \hspace{1mm} \epsilon (1+t)^{2\delta} . $$
\end{proof}

\subsection{LL-energy}

The purpose of this subsection is to prove the following result which, combined with Proposition \ref{prop_l2_vlasov}, improves the bootstrap assumption \eqref{boot5} provided that $\epsilon$ is small enough and $C_{\mathcal{L} \mathcal{L}}$ chosen large enough. 
\begin{prop}\label{improvementLL}
Assume that the following estimate holds
\begin{equation}\label{eq:hypLL}
\sum_{|J| \leq N} \int_0^t \int_{\Sigma_{\tau}} (1+\tau) \left| \mathcal{L}_Z^J(T[f])\right|_{\mathcal{L} \mathcal{L}}^2 \omega^{1}_{1+2\gamma} \mathrm dx \mathrm d \tau \hspace{2mm} \lesssim \hspace{2mm} \epsilon^2.
\end{equation}
Then there exist a constant $C_0$ independent of $\epsilon$ and $C_{\mathcal{L} \mathcal{L}}$ and a constant $C$ independent of $\epsilon$, such that, 
$$ \forall \hspace{0.5mm} t \in [0,T[, \hspace{1cm} \mathcal{E}^{1+2\gamma,1}_{N,\mathcal{L} \mathcal{L}} [h^1](t) \hspace{1mm} \lesssim \hspace{1mm} C_0(1+C^{\frac{1}{2}}_{\mathcal{L} \mathcal{L}}) \epsilon (1+t)^{ \delta} + C \epsilon^{\frac{3}{2}} (1+t)^{ \delta}.$$
\end{prop}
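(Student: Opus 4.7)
The plan is to mimic the strategy used in Propositions \ref{improbootTU}, \ref{TUintegralbound} and \ref{TUintegralbound2}, but exploiting now the improved null structure of the $\mathcal{L}\mathcal{L}$ components of the Einstein equations. First, I would use the third estimate of Lemma \ref{equi_norms}, which converts $\mathcal{E}^{1+2\gamma,1}_{N,\mathcal{L}\mathcal{L}}[h^1](t)$ into $\sum_{|J|\leq N}\mathcal{E}^{1+2\gamma,1}[\phi^J](t)$, where $\phi^J := \chi(r/(1+t))\,\mathcal{L}_Z^J(h^1)_{LL}$, up to an error of size $C(\overline{C}+C_{\mathcal{T}\mathcal{U}})\epsilon$ which is absorbed in $C_0\epsilon$ since $\overline{C}$ and $C_{\mathcal{T}\mathcal{U}}$ have already been fixed. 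Then I would apply the first energy estimate of Proposition \ref{LRenergy} with $(a,b)=(1+2\gamma,1)$ to each scalar $\phi^J$, using that the hypotheses on $H$ are verified thanks to Remark \ref{forenergywave}. After a Cauchy–Schwarz on the source term and an application of the bootstrap assumption \eqref{boot5}, the problem is reduced to establishing, for each $|J|\leq N$,
\begin{equation*}
\int_0^t\int_{\Sigma_\tau}(1+\tau)\bigl|\widetilde{\square}_g(\phi^J)\bigr|^2\,\omega^{1}_{1+2\gamma}\,\mathrm dx\,\mathrm d\tau \;\leq\; C_0\epsilon + C\epsilon^2 (1+t)^{\delta},
\end{equation*}
where $C_0$ must be \emph{independent} of $C_{\mathcal{L}\mathcal{L}}$.

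To control this spacetime integral, I would decompose $\widetilde{\square}_g(\phi^J)$ into three contributions. First, the commutator $\widetilde{\square}_g(\chi \cdot)-\chi\widetilde{\square}_g(\cdot)$ is handled via Lemma \ref{lem_char}, yielding terms supported in the transition region $\{\tfrac{1+t}{4}\leq r\leq \tfrac{1+t}{2}\}$, where $1+t+r\lesssim 1+|u|$ and which are therefore trivially controlled by the Hardy inequality and the already-improved bounds on $\mathcal{E}^{\gamma,1+2\gamma}_{N-1}[h^1]$, $\mathring{\mathcal{E}}^{\gamma,2+2\gamma}_N[h^1]$. Secondly, the commutator of $\widetilde{\square}_g$ with the contraction against $LL$ is estimated by the \emph{improved} second inequality of Lemma \ref{lem_c_frame}, which produces only $|\overline\nabla \mathcal{L}_Z^J(h^1)|_{\mathcal{T}\mathcal{U}}/r$, $|\mathcal{L}_Z^J(h^1)|/r^2$ and a small $\sqrt\epsilon$ term, and these are effective since the factor $\chi$ forces $r\gtrsim 1+t$. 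Thirdly, $\chi\cdot\widetilde{\square}_g(\mathcal{L}_Z^J h^1)_{\mu\nu}L^\mu L^\nu$ is handled via Proposition \ref{ComuEin}, applied in the $\mathcal{L}\mathcal{L}$ direction, using the third (improved) inequality of Proposition \ref{prop_semi_linear} for the semi-linear terms, the improved bound on $\mathcal{L}^J_Z(H)^{\alpha\beta}\nabla_\alpha\nabla_\beta \mathcal{L}^K_Z(h^1)|_{\mathcal{L}\mathcal{L}}$ of Proposition \ref{prop_box}, the bound on $\widetilde{\square}_g h^0$ of Proposition \ref{prop_ss}, and the assumption \eqref{eq:hypLL} for the Vlasov source.

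The crucial point is that every error factor other than the Vlasov one is one of the following three types: (i) a Schwarzschild-type term of size $\epsilon/(1+t+r)^3$, (ii) a factor $|\overline\nabla \mathcal{L}_Z^K h^1|_{\mathcal{T}\mathcal{U}}$ or $|\nabla \mathcal{L}_Z^K h^1|_{\mathcal{L}\mathcal{L}}$, or (iii) a factor $|\nabla \mathcal{L}_Z^K h^1|$ or $|\mathcal{L}_Z^K h^1|/(1+|u|)$ but multiplied by the extra decay $(1+|u|)^{1/2}/(1+t+r)^{2-2\delta}$ or $1/(1+t+r)$. The weight $\omega^{1}_{1+2\gamma}$ satisfies $\omega^{1}_{1+2\gamma} \leq \omega^{1+\gamma}_{1+\gamma}$ and $\omega^{1}_{1+2\gamma} \leq \omega^{1+2\gamma}_{1+2\gamma}\lesssim \omega^{\gamma}_{1+2\gamma}$ pointwise, so after integrating in time with the factor $(1+\tau)$ and using the Hardy inequality of Lemma \ref{lem_hardy} to trade $|\mathcal{L}_Z^K h^1|^2/(1+|u|)^2$ for $|\nabla \mathcal{L}_Z^K h^1|^2$, all resulting spacetime integrals can be bounded by $\mathcal{E}^{1+\gamma,1+\gamma}_{N,\mathcal{T}\mathcal{U}}[h^1]$, $\overline{\mathcal{E}}^{\gamma,1+2\gamma}_{N-1}[h^1]$, $\mathring{\mathcal{E}}^{\gamma,2+2\gamma}_N[h^1]$ and the assumption \eqref{eq:hypLL}, giving bounds of the form $C\epsilon^2(1+t)^{\delta}$ with constants $C_0$ and $C$ which do not involve $C_{\mathcal{L}\mathcal{L}}$.

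The main obstacle will be the careful weight bookkeeping required to absorb the factor $(1+\tau)\,\omega^{1}_{1+2\gamma}$ in the space-time integrals into the bulk densities of the already-controlled energies, in particular matching the exponent of $(1+|u|)$ between the interior and exterior regions. As in the proofs of Propositions \ref{TUintegralbound} and \ref{TUintegralbound2}, the application of the Hardy inequality in the exterior region will force us to relinquish part of the decay in $1+|u|$, and one must verify that the remaining margins (essentially $\gamma-2\delta>0$ and $\gamma-4\delta > 0$) suffice to produce time-integrable bounds; this is exactly why the parameters were chosen so that $20\delta < \gamma$.
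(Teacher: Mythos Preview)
Your approach is essentially that of the paper: the proof is carried out via Propositions \ref{LLintermediatebound} and \ref{LLintegralbound}, which implement exactly the reduction you describe through Lemma \ref{equi_norms}, the energy estimate of Proposition \ref{LRenergy}, and the decomposition of $\widetilde{\square}_g(\phi^J)$ using Lemmas \ref{lem_char}, \ref{lem_c_frame} and Propositions \ref{prop_semi_linear}, \ref{prop_ss}, \ref{prop_box}.

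There is one point you have misstated. Your type (ii) factor $|\nabla \mathcal{L}_Z^K h^1|_{\mathcal{L}\mathcal{L}}$, which arises from the improved $\mathcal{L}\mathcal{L}$ estimate of Proposition \ref{prop_box}, is \emph{not} controlled by any of the energies $\mathcal{E}^{1+\gamma,1+\gamma}_{N,\mathcal{T}\mathcal{U}}[h^1]$, $\overline{\mathcal{E}}^{\gamma,1+2\gamma}_{N-1}[h^1]$, $\mathring{\mathcal{E}}^{\gamma,2+2\gamma}_N[h^1]$ that you list. The corresponding spacetime integral (the paper's $\mathfrak{L}_8^J$) is
\[
\epsilon \int_0^t \int_{\Sigma_\tau} \frac{1+\tau}{(1+\tau+r)^2}\,\bigl|\nabla \mathcal{L}_Z^J(h^1)\bigr|_{\mathcal{L}\mathcal{L}}^2\,\omega^1_{1+2\gamma}\,\mathrm dx\,\mathrm d\tau \;\lesssim\; \epsilon \int_0^t \frac{\mathcal{E}^{1+2\gamma,1}_{N,\mathcal{L}\mathcal{L}}[h^1](\tau)}{1+\tau}\,\mathrm d\tau \;\lesssim\; C_{\mathcal{L}\mathcal{L}}\,\epsilon^2(1+t)^\delta,
\]
and therefore \emph{requires} the bootstrap assumption \eqref{boot5} on the very norm you are trying to improve. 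This is harmless precisely because the term carries the extra factor $\epsilon$ from Proposition \ref{prop_box}: it contributes $C_{\mathcal{L}\mathcal{L}}\epsilon^2(1+t)^\delta$ to the source integral, hence a term of order $\epsilon^{3/2}$ in the final energy bound, where the constant $C$ is allowed to depend on $C_{\mathcal{L}\mathcal{L}}$ (only $C_0$ must not). Your claim that both constants are independent of $C_{\mathcal{L}\mathcal{L}}$ is therefore too strong, but once corrected the argument closes exactly as in the paper.
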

\begin{rem}
For the conclusion of the previous proposition, it was crucial that $\overline{C}$ and $C_{\mathcal{T} \mathcal{U}}$ were fixed independently of $C_{\mathcal{L} \mathcal{L}}$ (see Remarks \ref{justifconstants} and \ref{justifconstant}).
\end{rem}
In order to simplify the presentation of the following computations, all the constants hidden by $\lesssim$ will not depend on $C_{\mathcal{L} \mathcal{L}}$. This convention will hold in and only in this subsection. The following result is the first step of the proof.
\begin{prop}\label{LLintermediatebound}
There exists a constant $C_0$ independent of $\epsilon$ and $C_{\mathcal{L}\mathcal{L}}$, such that, for all $t \in [0,T[$,
\begin{multline} \label{e_estimate_ll}
\mathcal{E}_{N, \mathcal{L} \mathcal{L}}^{1+2\gamma, 1}[h^1](t) \leq C_0 \epsilon + C_0(1+ C_{\mathcal{L} \mathcal{L}}) \epsilon^{\frac{3}{2}} (1+t)^{\delta} \\  +\sum_{|J| \leq N} C_0 (1+C_{\mathcal{L} \mathcal{L}}^{\frac{1}{2}}) \epsilon^{\frac{1}{2}}(1+t)^{\frac{\delta}{2}} \left| \int_0^t \int_{\Sigma_{\tau} } (1+\tau ) \left| \widetilde{\square}_g\!\left( \chi\!\left( \frac{r}{t+1} \right) \mathcal{L}_Z^J (h^1)_{LL} \right) \right|^2 \omega^{1}_{1+2\gamma} \mathrm dx \mathrm d \tau \right|^{\frac{1}{2}}.
\end{multline}
\end{prop}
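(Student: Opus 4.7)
My plan is to follow the template of Proposition~\ref{TUintermediatebound}: reduce to the scalar wave estimate of Proposition~\ref{LRenergy} applied to the cut-off $LL$ components of $\mathcal{L}_Z^J(h^1)$, use the third estimate of Lemma~\ref{equi_norms} to identify the resulting sum with the tensorial energy on the left-hand side, and invoke the bootstrap assumption~\eqref{boot5} together with the smallness of the initial data to estimate the bulk and initial contributions. Once the right scalar quantity is chosen the argument becomes essentially parallel to the one already carried out for $\mathcal{E}^{1+\gamma,1+\gamma}_{N,\mathcal{T}\mathcal{U}}[h^1]$.

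Concretely, for each $|J|\leq N$ I set $\phi^J_{LL}:=\chi\!\bigl(\tfrac{r}{t+1}\bigr)\mathcal{L}_Z^J(h^1)_{LL}$, a scalar function vanishing near the timelike axis. The hypotheses on $H$ needed for Proposition~\ref{LRenergy} are those of Remark~\ref{forenergywave}, whose pointwise decay estimates rely only on bootstrap assumptions already in force. Applying the first inequality of Proposition~\ref{LRenergy} to $\phi^J_{LL}$ with $(a,b)=(1+2\gamma,1)$ and using Cauchy--Schwarz to split the source-term integral will yield a bound of the shape
\begin{align*}
\mathcal{E}^{1+2\gamma,1}[\phi^J_{LL}](t) &\lesssim \mathcal{E}^{1+2\gamma,1}[\phi^J_{LL}](0) + \sqrt{\epsilon}\int_0^t\frac{\mathcal{E}^{1+2\gamma,1}[\phi^J_{LL}](\tau)}{1+\tau}\,\mathrm{d}\tau \\
&\quad + \left|\int_0^t\frac{\mathcal{E}^{1+2\gamma,1}[\phi^J_{LL}](\tau)}{1+\tau}\,\mathrm{d}\tau\right|^{1/2}\left|\int_0^t\int_{\Sigma_\tau}(1+\tau)\bigl|\widetilde{\square}_g \phi^J_{LL}\bigr|^2\omega_{1+2\gamma}^{1}\,\mathrm{d}x\,\mathrm{d}\tau\right|^{1/2}.
\end{align*}

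Each of the three resulting terms is then handled separately. The third estimate of Lemma~\ref{equi_norms} transfers control between $\sum_{|J|\leq N}\mathcal{E}^{1+2\gamma,1}[\phi^J_{LL}]$ and $\mathcal{E}^{1+2\gamma,1}_{N,\mathcal{L}\mathcal{L}}[h^1]$ up to an additive error $C_0\epsilon$ where $C_0$ depends on $\overline{C}$ and $C_{\mathcal{T}\mathcal{U}}$ (already fixed) but is \emph{independent} of $C_{\mathcal{L}\mathcal{L}}$. This lets me absorb the initial-data term into $\epsilon$ via the smallness assumption on $h^1(0)$, and along the time integral it lets me bound $\int_0^t(1+\tau)^{-1}\mathcal{E}^{1+2\gamma,1}[\phi^J_{LL}](\tau)\,\mathrm{d}\tau$ by $(1+C_{\mathcal{L}\mathcal{L}})\epsilon(1+t)^\delta$ via~\eqref{boot5}. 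Summing over $|J|\leq N$, applying Lemma~\ref{equi_norms} once more on the left-hand side to return to the tensorial energy, and leaving the source integral inside the square root produces exactly the announced estimate.

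The main obstacle I anticipate is the careful bookkeeping of constants: to close the bootstrap on $\mathcal{E}^{1+2\gamma,1}_{N,\mathcal{L}\mathcal{L}}[h^1]$ in Proposition~\ref{improvementLL}, the prefactor labelled $C_0$ must be genuinely independent of $C_{\mathcal{L}\mathcal{L}}$, otherwise choosing $C_{\mathcal{L}\mathcal{L}}$ large could not improve \eqref{boot5}. This forces one to check that every appeal to Lemma~\ref{equi_norms} is applied in its $\mathcal{L}\mathcal{L}$-version whose error depends only on $\overline{C}$ and $C_{\mathcal{T}\mathcal{U}}$, and that the $\sqrt{\epsilon}$-Grönwall term from Proposition~\ref{LRenergy} is genuinely of size $\epsilon^{3/2}$ (with $C_{\mathcal{L}\mathcal{L}}$ inside a multiplicative factor). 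Doing so confines the dependence on $C_{\mathcal{L}\mathcal{L}}$ exactly to the factors $(1+C_{\mathcal{L}\mathcal{L}})$ in front of $\epsilon^{3/2}(1+t)^\delta$ and $(1+C_{\mathcal{L}\mathcal{L}}^{1/2})$ in front of the Cauchy--Schwarz contribution, as required.
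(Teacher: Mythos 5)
Your plan is correct and follows the paper's own proof essentially line-for-line: set $\phi^J_{LL}=\chi(\tfrac{r}{t+1})\mathcal{L}_Z^J(h^1)_{LL}$, apply the energy estimate of Proposition~\ref{LRenergy} with $(a,b)=(1+2\gamma,1)$, split the source integral by Cauchy--Schwarz, translate between $\sum_{|J|\le N}\mathcal{E}^{1+2\gamma,1}[\phi^J_{LL}]$ and $\mathcal{E}^{1+2\gamma,1}_{N,\mathcal{L}\mathcal{L}}[h^1]$ via the third estimate of Lemma~\ref{equi_norms} (whose error depends only on $\overline{C}$ and $C_{\mathcal{T}\mathcal{U}}$, hence is independent of $C_{\mathcal{L}\mathcal{L}}$), and feed in \eqref{boot5} and the initial-data smallness. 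You cite the \emph{first} inequality of Proposition~\ref{LRenergy}, which is the one with source weight $\omega^b_a=\omega^1_{1+2\gamma}$ and is what the displayed estimate actually requires; this is the same content as the paper's argument. Your bookkeeping of where $C_{\mathcal{L}\mathcal{L}}$ enters — only through the bootstrap term, giving the $(1+C_{\mathcal{L}\mathcal{L}})\epsilon^{3/2}$ and $(1+C_{\mathcal{L}\mathcal{L}}^{1/2})\epsilon^{1/2}$ prefactors — matches the statement.
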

\begin{proof}
In order to lighten the notations, let us introduce $\phi^J:= \chi ( \frac{r}{t+1}) \mathcal{L}_Z^J (h^1)_{LL} $ for any $|J| \leq N$. We can obtain from the second energy inequality of Proposition \ref{LRenergy} and the Cauchy-Schwarz inequality that 
\begin{align}
\nonumber \mathcal{E}^{1+2\gamma,1} \left[ \p^J \right](t) & \lesssim  \mathcal{E}^{1+2\gamma, 1} \left[ \p^J \right](0) +\sqrt{\epsilon} \int_0^t \frac{\mathcal{E}^{1+2\gamma,1}[\p^J](\tau)}{1+\tau} \mathrm d \tau \\ \nonumber
& \quad +   \left| \int_0^t \frac{ \mathcal{E}^{1+2\gamma,1} \left[\p^J \right](\tau)}{1+\tau} \mathrm d \tau \int_0^t \int_{\Sigma_{ \tau} } (1+\tau) \left| \widetilde{\square}_g \p^J \right|^2 \omega^{1}_{1+2\gamma} \mathrm d x \mathrm d \tau \right|^{\frac{1}{2}}.
\end{align} 
According to Lemma \ref{equi_norms}, the smallness assumption on $h^1(t=0)$ and the bootstrap assumption \eqref{boot5}, we obtain
\begin{eqnarray*}
 \mathcal{E}^{1+2\gamma, 1} \left[ \p^J \right](0) & \lesssim & \mathcal{E}^{1+2\gamma,1}_{N, \mathcal{L} \mathcal{L}}[h^1](0)+ \epsilon \hspace{2mm} \lesssim \hspace{2mm} \mathring{\mathcal{E}}^{\gamma,2+2\gamma}_N[h^1](0)+ \epsilon \hspace{2mm} \lesssim \hspace{2mm} \epsilon, \\
 \int_0^t \frac{\mathcal{E}^{1+2\gamma,1}[\p^J](\tau)}{1+\tau} \mathrm d \tau & \lesssim & \int_0^t \frac{\mathcal{E}^{1+2\gamma,1}_{N, \mathcal{L} \mathcal{L}}[h^1](\tau)+\epsilon}{1+\tau} \mathrm d \tau \hspace{2mm} \lesssim \hspace{2mm} (C_{\mathcal{L} \mathcal{L}}+1) \epsilon (1+t)^{\delta}, \\
\mathcal{E}^{1+2\gamma,1}_{N, \mathcal{L} \mathcal{L}}[h^1](t) & \lesssim & \sum_{|J| \leq N}  \mathcal{E}^{1+2\gamma,1}[\p^J](t) + \epsilon .
 \end{eqnarray*} 
It then remains to combine these last four estimates.
\end{proof}
We are then led to prove the following proposition.
\begin{prop}\label{LLintegralbound}
Assume that \eqref{eq:hypLL} holds. Then, there exist a constant $C_0$ independent of $\epsilon$ and $C_{\mathcal{L} \mathcal{L}}$ and a constant $C$ independent of $\epsilon$, such that, for all $t \in [0,T[$,
$$\int_0^t \int_{\Sigma_{\tau} } (1+\tau ) \left| \widetilde{\square}_g \left( \chi \left( \frac{r}{t+1} \right) \mathcal{L}_Z^J (h^1)_{LL} \right) \right|^2 \omega^{1}_{1+2\gamma} \mathrm dx \mathrm d \tau \leq C_0 \epsilon+C \epsilon^2 (1+t)^{\delta}.$$
\end{prop}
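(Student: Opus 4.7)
The approach parallels that of Propositions \ref{TUintegralbound} and \ref{TUintegralbound2}, with the crucial improvement that projecting onto the $LL$ direction unlocks the better null-structure estimates available in Proposition \ref{prop_semi_linear} (the $\mathcal{L}\mathcal{L}$ inequality), Proposition \ref{prop_box} (the improved $\mathcal{L}\mathcal{L}$ bound on the commutator) and Lemma \ref{lem_c_frame} (the second inequality, which controls the frame commutator by $r^{-1}|\overline{\nabla} k|_{\mathcal{T}\mathcal{U}}$ rather than $r^{-1}|\overline{\nabla} k|$). First, I would split
\[
\widetilde{\square}_g\!\left(\chi\!\left(\tfrac{r}{t+1}\right)\mathcal{L}_Z^J(h^1)_{LL}\right) = \bigl[\widetilde{\square}_g,\chi\bigr]\mathcal{L}_Z^J(h^1)_{LL} + \chi\Bigl(\widetilde{\square}_g(\mathcal{L}_Z^J(h^1)_{LL}) - \widetilde{\square}_g(\mathcal{L}_Z^J(h^1))_{\mu\nu}L^\mu L^\nu\Bigr) + \chi\,\widetilde{\square}_g(\mathcal{L}_Z^J(h^1))_{LL},
\]
controlling the first two summands by Lemmas \ref{lem_char} and \ref{lem_c_frame} respectively, and inserting the commutation formula of Proposition \ref{ComuEin} in the last summand.

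Next, following the bookkeeping of Proposition \ref{TUintegralbound2}, one is reduced to bounding a list $\mathcal{K}_1^J,\ldots,\mathcal{K}_{10}^J$ of spacetime integrals, weighted by $(1+\tau)\omega^1_{1+2\gamma}$, arising from (i) the frame/cutoff commutators restricted to $\{r\geq\tfrac{1+\tau}{4}\}\cup\{\tfrac{1}{4}\leq \tfrac{r}{1+\tau}\leq\tfrac{1}{2}\}$, where all terms carry either $|\overline{\nabla}\mathcal{L}_Z^J h^1|_{\mathcal{T}\mathcal{U}}^2$, $|\nabla\mathcal{L}_Z^J h^1|^2/r^2$, or $|\mathcal{L}_Z^J h^1|^2/r^4$; (ii) the cubic and Schwarzschild contributions, handled by $\overline{\mathfrak{I}}_0\lesssim\epsilon^2$ and the computation \eqref{boundmathfrakIbar3}; (iii) the semi-linear $\mathcal{L}\mathcal{L}$ part of $F(h)(\nabla h,\nabla h)$, which by Proposition \ref{prop_semi_linear} contributes only $|\overline{\nabla}\mathcal{L}_Z^M h^1|_{\mathcal{T}\mathcal{U}}$, $|\nabla \mathcal{L}_Z^M h^1|$ with a $(1+|u|)^{1/2}/(1+t+r)^{2-2\delta}$ prefactor, and $|\mathcal{L}_Z^M h^1|/(1+|u|)$ with the same prefactor; (iv) the commutator $\mathcal{L}_Z^J(H)^{\alpha\beta}\nabla_\alpha\nabla_\beta \mathcal{L}_Z^K(h^1)$, bounded in $\mathcal{L}\mathcal{L}$ by the second estimate of Proposition \ref{prop_box}; and (v) the Vlasov source, whose $L^2$ norm is absorbed by the hypothesis \eqref{eq:hypLL}.

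Each of these is estimated exactly as in Proposition \ref{TUintegralbound2}: the $|\overline{\nabla}\mathcal{L}_Z^J h^1|^2_{\mathcal{T}\mathcal{U}}$ terms are controlled by $\mathcal{E}^{1+\gamma,1+\gamma}_{N,\mathcal{T}\mathcal{U}}[h^1](t)$ via the bootstrap \eqref{boot4} (giving $C_{\mathcal{T}\mathcal{U}}\epsilon^2(1+t)^{2\delta}$, hence harmless since $C_{\mathcal{T}\mathcal{U}}$ is already fixed independently of $C_{\mathcal{L}\mathcal{L}}$); the $|\nabla\mathcal{L}_Z^J h^1|^2$ and $|\mathcal{L}_Z^J h^1|^2/(1+|u|)^2$ terms in the far region are reduced, after invoking Lemma \ref{lem_hardy} when necessary, to weighted spacetime norms controlled by $\mathring{\mathcal{E}}^{\gamma,2+2\gamma}_N[h^1]$ through the bootstrap \eqref{boot2} and Lemma \ref{LemBulk}; the $|\nabla\mathcal{L}_Z^J h^1|^2_{\mathcal{L}\mathcal{L}}$ terms are upgraded via \eqref{wgcforproof} to $|\overline{\nabla}\mathcal{L}_Z^J h^1|^2_{\mathcal{T}\mathcal{U}}$ plus cubic remainders, again landing in the $\mathcal{T}\mathcal{U}$ framework. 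The relevant weight $\omega^1_{1+2\gamma}$ is always dominated (in the regions of integration after localisation) by either $\omega^{2+2\gamma}_\gamma/(1+\tau+r)$ or $\omega^{1+\gamma}_{1+\gamma}$, allowing the estimates of the previous subsections to be transplanted.

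The main obstacle is the absence, at top order $|J|=N$, of any pointwise control on $\nabla\mathcal{L}_Z^J h^1$ and the fact that the Schwarzschild $LL$ component does not decay faster than $r^{-1}$. This is overcome precisely by the improved structure peculiar to the $LL$ component: the wave-gauge upgrade \eqref{wgcforproof} converts $|\nabla \mathcal{L}_Z^J h^1|_{\mathcal{L}\mathcal{L}}$ into $|\overline{\nabla}\mathcal{L}_Z^J h^1|_{\mathcal{T}\mathcal{U}}$ modulo acceptable cubic and $1/(1+t+r)^2$ remainders, and the $\mathcal{L}\mathcal{L}$ refinement of Proposition \ref{prop_box} saves a factor $(1+|u|)^{1/2}(1+t+r)^{-1+2\delta}$ compared with the generic case. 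Because $C_{\mathcal{T}\mathcal{U}}$ and $\overline{C}$ have been frozen at this stage, the resulting constants depend on $(\overline{C},C_{\mathcal{T}\mathcal{U}})$ but not on $C_{\mathcal{L}\mathcal{L}}$, yielding the announced splitting $C_0\epsilon + C\epsilon^2(1+t)^{\delta}$ and, combined with Proposition \ref{LLintermediatebound}, the improvement of the bootstrap assumption \eqref{boot5}.
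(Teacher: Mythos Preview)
Your outline is essentially the paper's argument, and the decomposition you describe is correct. There is, however, one quantitative slip that would prevent you from reaching the stated bound.

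You assert that the $|\overline{\nabla}\mathcal{L}_Z^J h^1|^2_{\mathcal{T}\mathcal{U}}$ contributions are controlled by $\mathcal{E}^{1+\gamma,1+\gamma}_{N,\mathcal{T}\mathcal{U}}[h^1](t)$ via \eqref{boot4}, ``giving $C_{\mathcal{T}\mathcal{U}}\epsilon^2(1+t)^{2\delta}$, hence harmless''. But the proposition demands growth at most $(1+t)^{\delta}$ in the $\epsilon^2$ part: if you feed a bound of order $\epsilon^2(1+t)^{2\delta}$ into Proposition~\ref{LLintermediatebound}, the cross term becomes $\epsilon^{3/2}(1+t)^{3\delta/2}$, which outruns the target $(1+t)^{\delta}$ and the bootstrap on \eqref{boot5} does not close. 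The same issue arises for the frame-commutator term (your analogue of $\mathfrak{L}_4^J$), which carries no $\epsilon$ prefactor and must land in $C_0\epsilon$ without growth. The fix is to do for these terms exactly what you already do for the $|\nabla\mathcal{L}_Z^J h^1|^2$ terms: exploit the residual $(1+\tau+r)^{-(1-2\delta)}$ (for the semilinear contribution) or $(1+\tau+r)^{-\gamma}$ (for the frame commutator) through Lemma~\ref{LemBulk}, which absorbs the $(1+t)^{2\delta}$ growth of the $\mathcal{T}\mathcal{U}$ bulk energy since $1-2\delta>2\delta$ and $\gamma>2\delta$. The paper carries this out explicitly for its terms $\mathfrak{L}_4^J$ and $\mathfrak{L}_7^J$, obtaining $\epsilon$ and $\epsilon^2$ respectively with no time growth.

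One genuine methodological difference: for the $|\nabla\mathcal{L}_Z^J h^1|^2_{\mathcal{L}\mathcal{L}}$ term (the paper's $\mathfrak{L}_8^J$), the paper simply invokes the bootstrap \eqref{boot5}, producing $C_{\mathcal{L}\mathcal{L}}\epsilon^2(1+t)^{\delta}$, which is why the constant $C$ in the statement is allowed to depend on $C_{\mathcal{L}\mathcal{L}}$. Your route through \eqref{wgcforproof} is a legitimate alternative that, once combined with Lemma~\ref{LemBulk} as above, would give a bound independent of $C_{\mathcal{L}\mathcal{L}}$; this is slightly cleaner but not needed for the argument to close.
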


\begin{proof}
Let us point out that $C_{\mathcal{L} \mathcal{L}}$ will only appear when we will use the bootstrap assumption \eqref{boot5}. In order to prove this result, we are led to bound sufficiently well the following spacetime integrals where the multi-index $J$ will satisfy $|J| \leq N$. 

\noindent Those coming from the commutation of the wave operator with the cut-off function (see Lemma \ref{lem_char}),
$$\mathfrak{L}^J_1 \hspace{1mm} := \hspace{1mm} \int_0^t \int_{\left\{\frac{1}{4} \leq \frac{r}{\tau+1} \leq \frac{1}{2}\right\}} (1+\tau) \left( \frac{\left| \nabla \left( \mathcal{L}_{Z}^J(h^1)_{LL} \right) \right|^2}{(1+\tau+r)^2}    + \frac{| \mathcal{L}_{Z}^J(h^1)_{LL} |^2}{(1+\tau+r)^4}  \right) \omega^{1}_{1+2\gamma} \mathrm dx d\tau.$$
Those coming from the commutation of the contraction with $LL$ and the wave operator (see Lemma \ref{lem_c_frame}),
\begin{eqnarray} \nonumber
\mathfrak{L}^J_2 \hspace{-1mm} & := & \hspace{-1mm} \int_0^t \int_{\left\{r \geq \frac{\tau+1}{4}\right\}} (1+\tau) \frac{ |\mathcal{L}_{Z}^J (h^1)|^2}{r^4}  \omega^{1}_{1+2\gamma} \mathrm dx \mathrm d \tau \\ \nonumber
\mathfrak{L}^J_3 \hspace{-1mm} & := & \hspace{-1mm}  \int_0^t \int_{\left\{r \geq \frac{\tau+1}{4}\right\}} (1+\tau)  \frac{1+|u|}{r^2(1+\tau+r)^{2-2\delta}}| \nabla \mathcal{L}_{Z}^J (h^1) |^2 \omega^{1}_{1+2\gamma} \mathrm dx \mathrm d \tau , \\ \nonumber
\mathfrak{L}^J_4 \hspace{-1mm} & := & \hspace{-1mm}  \int_0^t \int_{\left\{r \geq \frac{\tau+1}{4}\right\}} (1+\tau) \frac{| \overline{\nabla} \mathcal{L}_{Z}^J (h^1) |^2_{\mathcal{T} \mathcal{U}}}{r^2} \omega^{1}_{1+2\gamma} \mathrm dx \mathrm d \tau .
\end{eqnarray}
Those coming from the contraction of $\widetilde{\square}_g \mathcal{L}_Z^I (h^1)_{\mu \nu}$ with $L^{\mu} L^{\nu}$,
\begin{eqnarray}
\nonumber \mathfrak{L}_5 \hspace{-1mm} & := & \hspace{-1mm} \epsilon^2 \int_0^t \int_{\{r \leq \tau\}} \frac{(1+\tau )\mathrm d x \mathrm d \tau}{(1+\tau +r)^6(1+|u|)^{1 +2\gamma} } +\epsilon^2 \int_0^t \int_{\{r \geq \tau\}} \frac{1+\tau }{(1+ \tau +r)^8}(1+|u|) \mathrm dx \mathrm d \tau, \\
 \nonumber
 \mathfrak{L}_6^J \hspace{-1mm} & := & \hspace{-1mm} \epsilon \int_0^t \int_{\left\{r \geq \frac{\tau+1}{4}\right\}} \frac{(1+\tau)(1+|u|)}{ (1+ \tau+r)^{4-4\delta} }  \left(  |\nabla \mathcal{L}_Z^J (h^1) |^2+\frac{| \mathcal{L}_Z^J (h^1) |^2}{(1+|u|)^2} \right)  \omega^{1}_{1+2\gamma} \mathrm dx \mathrm d \tau, \\
 \nonumber
 \mathfrak{L}_7^J \hspace{-1mm} & := & \hspace{-1mm} \epsilon \int_0^t \int_{\left\{r \geq \frac{\tau+1}{4}\right\}} (1+\tau)\frac{\left| \overline{\nabla}  \mathcal{L}_Z^J (h^1)\right|_{\mathcal{T} \mathcal{U}}^2}{ (1+ \tau+r)^{2-2\delta}(1+|u|) }   \omega^{1}_{1+2\gamma} \mathrm dx \mathrm d \tau, \\ \nonumber 
 \mathfrak{L}_8^J \hspace{-1mm} & := & \hspace{-1mm} \epsilon \int_0^t \int_{\left\{r \geq \frac{\tau+1}{4}\right\}} \frac{1+\tau}{ (1+ \tau+r)^2 }   |\nabla \mathcal{L}_Z^J (h^1) |_{\mathcal{L} \mathcal{L}}^2  \omega^{1}_{1+2\gamma} \mathrm dx \mathrm d \tau, \\ \nonumber
 \mathfrak{L}_{9}^J \hspace{-1mm} & := & \hspace{-1mm} \int_0^t \int_{\left\{r \geq \frac{\tau+1}{4}\right\}} (1+\tau) \left| \mathcal{L}_Z^J(T[f])_{LL}\right|^2 \omega^{1}_{1+2\gamma} \mathrm dx \mathrm d \tau.
\end{eqnarray}
More precisely,
\begin{itemize}
\item Proposition \ref{prop_semi_linear} gives us the terms $\mathfrak{L}_5$, $\mathfrak{L}_6^J$ and $\mathfrak{L}_7^J$.
\item Proposition \ref{prop_ss} leads us to control $\mathfrak{L}_5$ and $\mathfrak{L}_6^J$.
\item Proposition \ref{prop_box} gives the terms $\mathfrak{L}_6^J$ and $\mathfrak{L}_8^J$.
\item $\mathfrak{L}^J_9$ is the source term related to the Vlasov field. It is estimated in Proposition \ref{prop_l2_vlasov}.
\end{itemize}
We start by the easiest ones, $\mathfrak{L}_5$, $\mathfrak{L}^J_6$, $\mathfrak{L}^J_7$, $\mathfrak{L}^J_8$ and $\mathfrak{L}^J_{9}$. First, according to \eqref{eq:mathfrakI0}, the hypotheses \eqref{eq:hypLL} and \eqref{boundmathfrakIbar3}, 
$$  \mathfrak{L}_5 \hspace{1mm} \leq \hspace{1mm} \overline{\mathfrak{I}}_0 \hspace{1mm} \lesssim  \hspace{1mm} \epsilon^2, \qquad \mathfrak{L}_9 \hspace{1mm} \lesssim \hspace{1mm} \epsilon^2, \qquad \mathfrak{J}_6^J \hspace{1mm} \leq \hspace{1mm} \overline{\mathfrak{I}}_3 \hspace{1mm} \lesssim \hspace{1mm} \epsilon^2.$$ 
We obtain from Lemma \ref{LemBulk}, the bootstrap assumption \eqref{boot4} and $2\delta <1-2\delta$, that
$$ \mathfrak{L}^J_7 \hspace{2mm} \lesssim \hspace{2mm}  \int_0^t \frac{\epsilon}{(1+\tau)^{1-2\delta}} \int_{\Sigma_{\tau}}  \left| \overline{\nabla} \mathcal{L}_Z^J (h^1) \right|^2_{\mathcal{T} \mathcal{U}} \frac{\omega^{1+\gamma}_{1+\gamma}}{1+|u|} \mathrm dx \mathrm d \tau \hspace{2mm} \lesssim \hspace{2mm} \epsilon^2 .$$
According to the bootstrap assumption \eqref{boot5}, we have
$$ \mathfrak{L}_8^J  \lesssim  \epsilon \int_0^t \frac{1}{1+\tau} \int_{\Sigma_{\tau}} |\nabla \mathcal{L}^J_Z(h^1)|^2_{\mathcal{L} \mathcal{L}} \omega^{1}_{1+2\gamma} \mathrm dx \mathrm d \tau \lesssim  \epsilon \int_0^t \frac{\mathcal{E}^{1+2\gamma, 1}_{N,\mathcal{L} \mathcal{L}}[h^1](\tau)}{1+\tau} \mathrm d \tau \lesssim C_{\mathcal{L} \mathcal{L}}\epsilon^2 (1+t)^{\delta} .$$

We now focus on $\mathfrak{L}_1^J$, $\mathfrak{L}_2^J$, $\mathfrak{L}_3^J$ and $\mathfrak{L}_4^J$. Since these integrals are of size $\epsilon$ (and not $\epsilon^2$), we cannot use the bootstrap assumption \eqref{boot5} in order to control them as it would give us a bound larger than $C_{\mathcal{L} \mathcal{L}} \epsilon (1+t)^{\delta}$. We will use several times the inequality $1+\tau+r \leq 5r$, which holds for all $r \geq \frac{\tau+1}{4}$ (and then on the domain of integration of each of these integrals). Using the inequality $|\nabla ( \mathcal{L}_Z^J (h^1)_{LL}) | \lesssim |\nabla  \mathcal{L}_Z^J (h^1) |+\frac{1}{r}|\mathcal{L}_Z^J (h^1) |$ and that $1+\tau+r \lesssim 1+|\tau -r|$ for $ r \leq \frac{\tau+1}{2}$, we have
$$\mathfrak{L}^J_1 \hspace{2mm} \lesssim \hspace{2mm} \int_0^t \frac{1}{(1+\tau)^{1+\gamma}} \int_{\left\{\frac{1+\tau}{4} \leq r \leq \frac{1+\tau}{2}\right\}} \frac{1}{1+\tau+r} \left( |\nabla \mathcal{L}_Z^J(h^1) |^2 +\frac{ | \mathcal{L}_Z^J(h^1)|^2}{(1+|u|)^2} \right)  \frac{ \mathrm dx }{(1+|u|)^{\gamma } } d\tau.$$
Note also that
\begin{align*}
\mathfrak{L}^J_2 \hspace{1mm} & \lesssim \hspace{1mm} \int_0^t \frac{1}{(1+\tau)^{1+ \gamma}} \int_{\left\{r \geq \frac{1+\tau}{4}\right\}} \frac{ | \mathcal{L}_Z^J(h^1)|^2}{(1+\tau+r)(1+|u|)^2}  \omega^{2+\gamma}_{\gamma} d\tau, \\
\mathfrak{L}^J_3 \hspace{1mm} & \lesssim \hspace{1mm} \int_0^t \frac{1}{(1+\tau)^{2-2\delta}} \int_{\left\{r \geq \frac{1+\tau}{4}\right\}}  \frac{|\nabla \mathcal{L}_Z^J(h^1) |^2}{1+\tau+r} \omega^{2}_{2\gamma} \mathrm dx d\tau .
\end{align*}
Consequently, applying the Hardy type inequality of Lemma \ref{lem_hardy} and using the bootstrap assumption \eqref{boot2}, we get, since $1-2\delta \geq \gamma$ and $2 \delta < \gamma$,
\begin{eqnarray}
 \nonumber
\mathfrak{L}^J_1+\mathfrak{L}^J_2++\mathfrak{L}^J_3 & \lesssim & \int_0^t \frac{1}{(1+\tau)^{1+\gamma}} \int_{\Sigma_{\tau}} \frac{|\nabla \mathcal{L}_Z^J (h^1) |^2}{1+\tau+r} \omega^{2+\gamma}_{\gamma} d\tau \\ \nonumber 
& \lesssim & \int_0^t \frac{ \mathring{\mathcal{E}}^{\gamma, 2+2\gamma}_N[h^1](\tau)}{(1+\tau)^{1+\gamma}} \mathrm d \tau \hspace{2mm} \lesssim \hspace{2mm} \int_0^t \frac{ \epsilon (1+\tau)^{2\delta}}{(1+\tau)^{1+\gamma}} \mathrm d \tau \hspace{2mm} \lesssim \hspace{2mm}  \epsilon.
\end{eqnarray}
Finally, as $(1+|u|)^{1-\gamma} \leq (1+\tau+r)^{1-\gamma}$, we obtain, using Lemma \ref{LemBulk}, the bootstrap assumption \eqref{boot4} and $2\delta < \gamma$, that
$$\mathfrak{L}^J_4 \hspace{2mm} \lesssim \hspace{2mm} \int_0^t \frac{1}{(1+\tau)^{\gamma}} \int_{\left\{r \geq \frac{1+\tau}{4}\right\}}  |\overline{\nabla} \mathcal{L}_Z^J(h^1) |^2_{\mathcal{T} \mathcal{U}} \frac{ \omega^{1+\gamma}_{1+\gamma} }{1+|u| } \mathrm dx d\tau \hspace{2mm} \lesssim \hspace{2mm} \epsilon.$$
\end{proof}
The proof of Proposition \ref{improvementLL} follows directly from Propositions \ref{LLintermediatebound} and \ref{LLintegralbound}, which concludes this section.

\section{Improvement of the bootstrap assumptions on the particle density}\label{sect_l1}

\subsection{General scheme}\label{subsecgeneralscheme}

In this section we prove the following proposition.

\begin{prop} \label{prop_l1}
There exist an absolute constant $C_0>0$ and a constant\footnote{Contrary to $C$, the constant $C_0$ does not depend on $C_f$, $\overline{C}$, $C_{\mathcal{T} \mathcal{U}}$ and $C_{\mathcal{L} \mathcal{L}}$.} $C>0$ such that, for all $t \in [0,T[$,
\begin{align}
\mathbb{E}^{\ell+3}_{N-5}[f](t)\hspace{1mm} &\leq \hspace{1mm} C_0\epsilon + C\epsilon^{\frac{3}{2}} (1+t)^{\frac{\delta}{2}} , \\
\mathbb{E}^{\ell}_{N-1}[f](t)\hspace{1mm} &\leq \hspace{1mm} C_0\epsilon + C\epsilon^{\frac{3}{2}} (1+t)^{\frac{\delta}{2}} , \\
\mathbb E^{\ell}_{N}[f](t) \hspace{1mm} &\leq \hspace{1mm} C_0\epsilon + C\epsilon^{\frac{3}{2}} (1+t)^{\frac{1}{2} + \delta}.
\end{align}
This improves in particular the bootstrap assumptions \eqref{boot_vlasov_1}-\eqref{boot_vlasov_3} if $\epsilon$ is small enough and provided that $C_f$ is chosen large enough.
\end{prop}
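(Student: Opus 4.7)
My plan is to apply the $L^1$-energy estimate of Proposition~\ref{prop_vlasov_cons} separately to each weighted Vlasov quantity $\psi_I := z^{\ell + q - \frac{2}{3}I^P}\,\widehat{Z}^I f$, where $q \in \{0,3\}$ and $|I| \leq N$. The assumptions on $H$ required to apply Proposition~\ref{prop_vlasov_cons} hold by Remark~\ref{forenergywave}, so that with parameters $(a,b)=(\tfrac{1}{8},\tfrac{1}{8})$ we obtain
\[
\mathbb{E}^{\frac18,\frac18}[\psi_I](t) \leq \underline{C}\,\mathbb{E}^{\frac18,\frac18}[\psi_I](0) + C\sqrt{\epsilon}\int_0^t \frac{\mathbb{E}^{\frac18,\frac18}[\psi_I](\tau)}{1+\tau}\,\dr\tau + \underline{C} \int_0^t \int_{\Sigma_\tau}\int_{\R^3_v} \left|\mathbf{T}_g(\psi_I)\right| \dr v\, \omega^{1/8}_{1/8}\,\dr x\, \dr\tau.
\]
The first term on the right is controlled by the smallness assumption on the initial data and yields the constant $C_0 \epsilon$. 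The Gr\"onwall-type integral produces the $(1+t)^{O(\sqrt{\epsilon})}$ growth factor, which is absorbed in $(1+t)^{\delta/2}$ (resp.\ $(1+t)^{1/2+\delta}$) once $\epsilon$ is small enough. The crux is therefore to bound the spacetime integral of $|\mathbf{T}_g(\psi_I)|$ by $\epsilon^{3/2}(1+t)^{\delta/2}$ (resp.\ $\epsilon^{3/2}(1+t)^{1/2+\delta}$).

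Since $\mathbf T_g(f)=0$ and $\mathbf T_\eta(z)=0$, I decompose
\[
\mathbf T_g(\psi_I) \;=\; z^{\ell + q - \frac{2}{3}I^P}\,[\mathbf T_g,\widehat Z^I](f) \;+\; \bigl(\ell+q-\tfrac{2}{3}I^P\bigr) z^{\ell+q-\frac{2}{3}I^P - 1}\,\bigl(\mathbf T_g - \mathbf T_\eta\bigr)(z)\,\widehat Z^I f.
\]
The first summand is controlled by the exhaustive list of error terms $\widehat{\mathfrak S},\mathfrak S,\widehat{\mathfrak E},\mathfrak E$ produced by Proposition~\ref{ComuVlasov3}. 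The second summand is bounded using \eqref{Vlasovdecompo}--\eqref{Vlasovdecompo2} and Lemma~\ref{lem_derivation_z}, which shows that $|\widehat Z(z)|\lesssim z$, together with Lemma~\ref{Deltav} controlling $\Delta v$; the resulting expression has the same structure as the leading commutator error terms (with $H$ instead of $\mathcal L_Z(H)$ and with the role of $\widehat Z f$ played by $\widehat Z^I f$), so it is absorbed by the same bounds applied to the terms of type \eqref{Error5}--\eqref{Error19}.

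For each error term $\mathfrak A^{\cdots}_{I,j}$ (in the notation of Definition~\ref{defmathfrakA}) the task reduces to bounding an integral of the schematic form
\[
\int_0^t \int_{\Sigma_\tau}\int_{\R^3_v} z^{\ell+q-\frac23 I^P}\, \mathfrak A^{\cdots}_{I,j}(\tau,x)\, \bigl|\nabla \widehat Z^K f\bigr| \text{ or } \bigl|\widehat Z\widehat Z^K f\bigr|\;\omega^{1/8}_{1/8}\,\dr v\,\dr x\,\dr\tau.
\]
The strategy at sub-top order ($|I|\leq N-1$) is to place one factor in $L^\infty$ via Proposition~\ref{decaymetric}, Proposition~\ref{estinullcompo} and Proposition~\ref{decayJTgeq1}, exploiting the null structure of $\mathfrak A^{\cdots}_{I,j}$: the $t-r$ decay lost in the metric factor is absorbed by the weight $z$ (using $1+|t-r|\lesssim z$) and the good null components $v_L$, $|H|_{\mathcal{L}\mathcal{T}}$, $|\overline\nabla H|$. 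The remaining factor, a translate of $\widehat Z^K f$ with $|K|<|I|$, is integrated through $\mathbb{E}^{\ell}_{N-1}[f]$; the extra power of $z$ available in $\mathbb{E}^{\ell+3}_{N-5}$ provides the additional $t-r$ decay needed to close the sub-top order hierarchy (see the discussion in Subsection~\ref{subsecintrononint}, in particular the role of the weight $\mathbf m$ via $|\mathbf m|/|v|\leq z^2$). For the bounds on $\mathfrak{S}$, one uses $M\leq\sqrt\epsilon$ and the fact that the Schwarzschild part produces integrable $t$-decay.

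The principal obstacle is the top-order case $|I|=N$ for $\mathbb{E}^{\ell}_{N}[f]$. Here no $L^\infty$ bound is available on $\widehat Z^I f$, and moreover, as explained in Subsection~\ref{subsectoporder}, the worst terms of type $\mathfrak{E}^{J,K}_{I,10}$ involve $(t+r)|v||\overline{\nabla}\mathcal{L}_Z^J(h^1)|_{\mathcal{L}\mathcal{L}}|\nabla\widehat Z^K f|$ with $|J|=N$, where no gain in $t+r$ can be extracted from $\overline\nabla$ through commutation vector fields. For these, I would apply Cauchy--Schwarz to separate the metric and Vlasov factors, control the metric factor using the special energy $\mathcal{E}^{1+2\gamma,1}_{N,\mathcal L\mathcal L}[h^1]$ provided by bootstrap~\eqref{boot5}, and control the resulting $L^2$-integral of velocity averages through the pointwise decay consequences of $\mathbb E^{\ell+3}_{N-5}[f]$ together with the Klainerman--Sobolev inequality of Proposition~\ref{KSvlasov}. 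The $(1+t)^{1/2+\delta}$ growth rate in the conclusion precisely matches the bound extracted from this $L^2$-estimate combined with the $\sqrt{1+t}$ arising from the lack of integrability of the resulting source term; all weaker source terms are absorbed into $\epsilon^{3/2}(1+t)^{1/2+\delta}$. The $L^2$-bounds on velocity averages required for this step will be established in Section~\ref{sec14}.
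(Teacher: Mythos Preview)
Your overall strategy matches the paper's: apply the $L^1$ energy estimate of Proposition~\ref{prop_vlasov_cons} to $z^{\ell_{|I|}-\frac23 I^P}\widehat Z^I f$, split $\mathbf T_g(\psi_I)$ into the $\mathbf T_g(z)$ contribution and the commutator $[\mathbf T_g,\widehat Z^I](f)$, control the first via the null-structure bound $|\mathbf T_g(z)|\lesssim \sqrt\epsilon\bigl(\tfrac{|v|z}{1+t+r}+\tfrac{|w_L|z}{1+|t-r|}\bigr)$, and control the second through the classification of Proposition~\ref{ComuVlasov3}, distinguishing $K^P<I^P$ from $K^P=I^P$ (with $J^T\ge 1$) and handling the top-order term $\mathfrak E^{J,K}_{I,10}$ at $|J|=N$ by Cauchy--Schwarz together with the special energy $\mathcal E^{1+2\gamma,1}_{N,\mathcal L\mathcal L}[h^1]$.

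Two points of correction. First, the $\sqrt\epsilon\int_0^t \mathbb E[\psi_I]/(1+\tau)\,\dr\tau$ term is not closed by Gr\"onwall in the paper; it is bounded directly using the bootstrap assumptions \eqref{boot_vlasov_1}--\eqref{boot_vlasov_3}, yielding a contribution of size $\epsilon^{3/2}(1+t)^{\delta/2}$ (resp.\ $\epsilon^{3/2}(1+t)^{1/2+\delta}$) rather than a multiplicative $(1+t)^{O(\sqrt\epsilon)}$ factor. Second, and more importantly, your final sentence is misplaced: the $L^2$-in-$x$ bounds on the velocity averages needed in the top-order step are \emph{not} the ones of Section~\ref{sec14}. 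Because $|J|\ge N-4$ forces $|K|\le 4$, the required control on $\int_{\R^3_v}|v|z^a|\widehat Z\widehat Z^K f|\,\dr v$ is obtained directly (Lemma~\ref{estiVla2} in the paper) by Cauchy--Schwarz in $v$, the pointwise decay \eqref{eq:decayVlasov} (a consequence of bootstrap~\eqref{boot_vlasov_1} and the Klainerman--Sobolev inequality), and the $L^1$ bound from $\mathbb E^{\ell+3}_{N-5}[f]$. Section~\ref{sec14} treats the much harder problem of $L^2$ estimates for \emph{high-order} derivatives of $f$, which is needed for the metric energies but not here; invoking it would be unnecessary and potentially circular.
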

\begin{rem}
One can check during the upcoming computations that the initial decay hypotheses on $f$ stated in Theorem \ref{main-thm_detailed} could be lowered. The choices made in Theorem \ref{main-thm_detailed} allow for an easier presentation with energy norms for $f$ weighted by $z^a$, where the exponent $a$ is as simple as possible.
\end{rem}
In order to unify the proof of these three inequalities, we introduce for any multi-index $|I|\leq N$ the quantity
\begin{equation} \label{shorthand_2}
 \ell_{|I|} \hspace{1mm} := \hspace{1mm} \left\{\begin{array}{ll} \ell +3 \hspace{1mm} = \hspace{1mm} \frac{2}{3}N+9 , & |I|\leq N-5, \\ \ell \hspace{1mm} = \hspace{1mm} \frac{2}{3}N+6, & |I|\geq N-4. \end{array}\right.
\end{equation}
According to the energy estimate of Proposition \ref{prop_vlasov_cons}, we have
\begin{align*}
\mathbb{E}^{\frac{1}{8}, \frac{1}{8}} \! \left[ z^{\ell_{|I|}-\frac{2}{3}I^P} \widehat{Z}^I f \right]\!(t) \hspace{1mm} & \leq \hspace{1mm} \underline{C} \, \mathbb{E}^{\frac{1}{8}, \frac{1}{8}} \!\left[ z^{\ell_{|I|}-\frac{2}{3}I^P} \widehat{Z}^I f \right]\!(0) + C\sqrt\epsilon \int_{0}^{t} \frac{\mathbb E^{\frac{1}{8}, \frac{1}{8}} \! \left[ z^{\ell_{|I|}-\frac{2}{3}I^P} \widehat{Z}^I f \right]\!(\tau)}{1+\tau} d\tau \\ & \quad \hspace{1mm}  +  C \int_{0}^{t} \int_{\Sigma_{\tau}} \int_{\mathbb{R}_v^3} \left|\mathbf T_g \left( z^{\ell_{|I|}-\frac{2}{3}I^P} \widehat{Z}^I f \right) \right| \mathrm dv \, \omega_{\frac{1}{8}}^{\frac{1}{8}} \mathrm dx \mathrm d\tau ,
\end{align*}
where $\underline{C}$ is an absolute constant, which in particular does not depend on $C_f$. In view of
\begin{itemize}
\item the definition \eqref{def_e_vlasov}
 of the energy norms $\E^{\ell+3}_{N-5}[f]$, $\E^{\ell}_{N-1}[f]$ and $\E^{\ell}_N[f]$,
\item the smallness assumption on the particle density, giving $\mathbb{E}^{\frac{1}{8}, \frac{1}{8}}\left[ z^{\ell_{|I|}-\frac{2}{3}I^P} \widehat{Z}^I f \right](0) \leq \mathbb{E}^{\ell_{|I|}}_{|I|}[f](0) \lesssim \epsilon$,
\item the bootstrap assumptions \eqref{boot_vlasov_1}-\eqref{boot_vlasov_3}, which give
$$ \sqrt{\epsilon} \! \int_{0}^{t} \frac{\mathbb{E}^{\frac{1}{8},\frac{1}{8}} \!\left[ z^{\ell_{|I|}-\frac{2}{3}I^P} \widehat{Z}^I f \right]\!(\tau)}{1+\tau} d\tau \hspace{0.5mm} \lesssim \hspace{0.5mm}  \sqrt{\epsilon} \! \int_{0}^{t} \frac{\mathbb{E}^{\ell_{|I|}}_{|I|}[ f](\tau)}{1+\tau} d\tau \hspace{0.5mm} \lesssim \hspace{0.5mm} \left\{ \begin{array}{ll} \epsilon^{\frac{3}{2}} (1+t)^{\frac{\delta}{2}},  & \text{if $|I| < N$}, \\ \epsilon^{\frac{3}{2}} (1+t)^{\frac{1}{2}+\delta},  & \text{if $|I| = N$}, \end{array}\right. $$
\item the Vlasov equation $\mathbf{T}_g (f)=0$, leading to
\begin{equation} \label{eq_com_t}
\mathbf T_g \left( z^{\ell_{|I|}-\frac{2}{3}I^P} \widehat{Z}^I f \right) =  \left(\ell_{|I|} - \frac{2}{3} I^P\right) z^{\ell_{|I|} - \frac{2}{3} I^P-1} \mathbf T_g(z)  \widehat{Z}^I f  + z^{\ell_{|I|} - \frac{2}{3} I^P} \left[\mathbf T_g, \widehat{Z}^I\right] \! (f) ,
\end{equation}
\end{itemize}
Proposition \ref{prop_l1} is implied by the following two results.
\begin{prop}\label{BoundL1z}
Let $I$ be a multi-index of length $|I| \leq N$. Then,
$$\mathfrak{Z}^I \hspace{1mm} := \hspace{1mm} \int_{0}^{t} \int_{\Sigma_{\tau}} \int_{\mathbb{R}_v^3} z^{\ell_{|I|} - \frac{2}{3}I^P-1} \left| \mathbf{T}_g(z) \right| | \widehat{Z}^I f | \mathrm dv \, \omega_{\frac{1}{8}}^{\frac{1}{8}} \mathrm dx \mathrm d\tau \hspace{1mm} \lesssim \hspace{1mm} \left\{ \begin{array}{ll} \epsilon^{\frac{3}{2}} (1+t)^{\frac{\delta}{2}}, & \text{if $|I| < N$}, \\ \epsilon^{\frac{3}{2}} (1+t)^{\frac{1}{2}+\delta}, & \text{if $|I| = N$}.  \end{array}\right.$$
\end{prop}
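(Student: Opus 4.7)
The starting point is the crucial observation that $\mathbf{T}_\eta(z)=0$, so
\begin{equation*}
\mathbf{T}_g(z) \;=\; (\mathbf{T}_g-\mathbf{T}_\eta)(z) \;=\; -\Delta v\,\partial_t z + H(v,dz) - \tfrac{1}{2}\nabla_i(H)(v,v)\,\partial_{v_i}z
\end{equation*}
is a purely perturbative term, in which the role usually played by the Vlasov field is played by $z$. The plan is to reproduce, with $\psi=z$, the null-structure analysis carried out in Lemma \ref{nullstructure} and Section \ref{sect_vlasov_commutator}, and to then match the resulting expression against the $L^1$ energies $\mathbb{E}^{\ell_{|I|}}_{|I|}[f]$ via a Grönwall-type argument.

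Concretely, I would first exploit $v=w+\Delta v\,dt$ together with Lemma \ref{Deltav} (giving $|\Delta v|\lesssim |w_L||H|+|v||H|_{\mathcal{L}\mathcal{T}}$), the bounds $|\nabla z|\lesssim 1$, $|\widehat Z z|\lesssim z$ from Lemma \ref{lem_derivation_z} and $|(\nabla_v z)^r|\lesssim z/|v|$, $|(\nabla_v z)^A|\lesssim (t+z)/|v|$ from Lemma \ref{vderivative}, and the inequality $1+|t-r|\lesssim z$ from Lemma \ref{lem_wwl}, in order to derive a pointwise bound of the schematic form
\begin{equation*}
|\mathbf T_g(z)|\,\lesssim\,z\bigl(|v||H|_{\mathcal{L}\mathcal{T}}+(1+|t-r|)|v||\nabla H|_{\mathcal{L}\mathcal{T}}+|w_L||H|+(t+r)|v||\overline\nabla H|_{\mathcal{L}\mathcal{L}}\bigr)+\sqrt{|v||w_L|}\,(\ldots).
\end{equation*}
Inserting the pointwise estimates of Propositions \ref{decaymetric}, \ref{decaySchwarzschild}, \ref{estinullcompo} (together with Proposition \ref{Prop_H_to_h} to transfer bounds from $h^1$ to $H$, and with Remark \ref{forenergywave} for the null-frame decay) should then yield an estimate of the form
\begin{equation*}
|\mathbf T_g(z)|\,\lesssim\, \frac{\sqrt\epsilon\,|v|\,z}{(1+t+r)\,(1+|t-r|)^{1/2-\delta}}\;+\;\frac{\sqrt\epsilon\,|w_L|\,z^{2}}{(1+t+r)^{1-2\delta}(1+|t-r|)^{1/2}}\,+\,\text{better terms}.
\end{equation*}

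Once such a bound is in hand, $\mathfrak{Z}^I$ splits naturally into two kinds of contributions. For the first term, I would absorb the non-integrable $(1+|t-r|)^{-1/2+\delta}$ factor into the weight $z^{\ell_{|I|}-\frac{2}{3}I^P-1}$ by using $1+|t-r|\lesssim z$ (the exponent remaining at most $\ell_{|I|}-\frac{2}{3}I^P$, which is exactly the weight allowed by the energy $\mathbb E^{\ell_{|I|}}_{|I|}[f]$), and bound the resulting spacetime integral by
$\sqrt\epsilon\int_0^t (1+\tau)^{-1}\,\mathbb E^{\ell_{|I|}}_{|I|}[f](\tau)\,d\tau$. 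For the second term, the factor $|w_L|/(1+|t-r|)^{1/2}$ is directly absorbed by the spacetime bulk piece of $\mathbb E^{\ell_{|I|}}_{|I|}[f]$ appearing in \eqref{def_mbb_e}. Plugging in the bootstrap bounds \eqref{boot_vlasov_1}--\eqref{boot_vlasov_3}, namely $\mathbb E^{\ell_{|I|}}_{|I|}[f](\tau)\lesssim \epsilon(1+\tau)^{\delta/2}$ for $|I|\leq N-1$ and $\mathbb E^{\ell}_N[f](\tau)\lesssim\epsilon(1+\tau)^{1/2+\delta}$, gives respectively $\mathfrak Z^I\lesssim \epsilon^{3/2}(1+t)^{\delta/2}$ and $\epsilon^{3/2}(1+t)^{1/2+\delta}$, as desired (for the top order case, $\int_0^t(1+\tau)^{-1/2+\delta}d\tau\lesssim (1+t)^{1/2+\delta}$).

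The main obstacle will be controlling the angular piece $(\nabla_v z)^A \lesssim (t+z)/|v|$, whose large $t$-factor would otherwise be fatal. This must be handled either by pairing it with the improved decay $|\overline\nabla H|_{\mathcal{L}\mathcal{L}}\lesssim \sqrt{\epsilon}(1+|t-r|)/(1+t+r)^2$ (obtained via the wave gauge condition, cf.~Proposition \ref{estinullcompo}), or by extracting the null factor $|w_A|\lesssim \sqrt{|v||w_L|}$ from the contraction $\nabla^\mu(H)(v,v)(w_\mu/w_0)$ so that the surplus $t+r$ is compensated. A secondary technical point will be careful bookkeeping to ensure that the cumulative $z$ weight generated by absorbing each $(1+|t-r|)^{-a}$ factor never exceeds $\ell_{|I|}-\frac{2}{3}I^P$, which is exactly why the hierarchy $\ell_{|I|}=\frac{2}{3}N+6$ (or $+9$) in \eqref{shorthand_2} is dimensioned the way it is.
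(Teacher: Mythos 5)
Your overall strategy is the same as the paper's: use $\mathbf{T}_\eta(z)=0$ to isolate the perturbative part of $\mathbf{T}_g(z)$, exploit the null structure via Lemmas \ref{Deltav}, \ref{lem_wwl}, \ref{lem_derivation_z}, \ref{vderivative}, \ref{nullstructure} and the pointwise decay of $H$ from Remark \ref{forenergywave}, and then absorb the resulting expression into the two pieces of the energy $\mathbb{E}^{\ell_{|I|}}_{|I|}[f]$ before invoking the bootstrap assumptions. However, the intermediate pointwise bound you posit is too weak, and your closing step would fail with it. Concretely, your second term carries a factor $z^{2}$ (and a $(1+t+r)^{2\delta}$ loss); multiplying by the weight $z^{\ell_{|I|}-\frac{2}{3}I^P-1}$ in $\mathfrak{Z}^I$ then produces $z^{\ell_{|I|}-\frac{2}{3}I^P+1}$, which is one power of $z$ more than the energy norm controls. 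The bulk term of \eqref{def_mbb_e} only governs $z^{\ell_{|I|}-\frac{2}{3}I^P}|w_L|/(1+|u|)$; the excess $z$ cannot be traded back against $(1+t+r)$ or $(1+|t-r|)$ decay because $z$ can be of size $t+r$, so ``directly absorbed'' does not go through.

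What you are missing is that the extra $z$ never actually appears, because the two instruments you tentatively propose are precisely what one should use — and they suffice to produce a single power of $z$. The key observations are: (i) for translations one has the \emph{constant} bound $|\nabla_{t,x}z|\lesssim 1$ (Lemma \ref{lem_derivation_z}), not merely $|\nabla_{t,x}z|\lesssim z$, so that the angular $v$-derivative contributes $t\cdot|\nabla_{t,x}z|\lesssim t$ rather than $t\cdot z$; and (ii) $(t+r)\sqrt{|w_L|/|v|}\,|\nabla_{t,x}z|\lesssim z$ by Lemma \ref{lem_wwl}, so the $\sqrt{|v||w_L|}$ factor from $|w_A|$ pays for the surplus $t+r$. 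Combining these with $1+|t-r|\lesssim z$, $|\widehat Z(z)|\lesssim z$, $|\overline\nabla H|\lesssim\sqrt\epsilon/(1+t+r)$ and $|\overline\nabla H|_{\mathcal{L}\mathcal{L}}\lesssim\sqrt\epsilon\,(1+|t-r|)/(1+t+r)^2$, one verifies term by term (exactly as in the paper, cf.~\eqref{calculTgz}) that
\begin{equation*}
|\mathbf{T}_g(z)|\;\lesssim\;\frac{\sqrt\epsilon\,|w_L|\,z}{1+|t-r|}\;+\;\frac{\sqrt\epsilon\,|v|\,z}{1+t+r},
\end{equation*}
with a single power of $z$ and no $(1+t+r)^{\delta}$ loss. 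With this sharper bound, multiplying by $z^{\ell_{|I|}-\frac{2}{3}I^P-1}$ yields $z^{\ell_{|I|}-\frac{2}{3}I^P}$ exactly, the first piece is controlled by the bulk integral in $\mathbb E^{\ell_{|I|}}_{|I|}[f](t)$, the second by $\sqrt\epsilon\int_0^t(1+\tau)^{-1}\mathbb E^{\ell_{|I|}}_{|I|}[f](\tau)\,d\tau$, and the bootstrap assumptions \eqref{boot_vlasov_1}--\eqref{boot_vlasov_3} close the argument as you intended. So the strategy is right, but the crucial gain of a factor of $z$ — which rests on the distinction between translations and homogeneous vector fields acting on $z$ — needs to be made, otherwise the estimate does not match the energy weight.
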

\begin{prop}\label{BoundL1}
Let $I$ be a multi-index of length $|I| \leq N$. Then,
$$ \int_{0}^{t} \int_{\Sigma_{\tau}} \int_{\mathbb{R}_v^3} z^{\ell_{|I|} - \frac{2}{3} I^P} \left| \left[\mathbf T_g, \widehat{Z}^I\right]\! (f) \right| \mathrm dv \, \omega_{\frac{1}{8}}^{\frac{1}{8}} \mathrm dx \mathrm d\tau \hspace{1mm} \lesssim \hspace{1mm} \left\{ \begin{array}{ll} \epsilon^{\frac{3}{2}} (1+t)^{\frac{\delta}{2}}, & \text{if $|I| < N$}, \\ \epsilon^{\frac{3}{2}} (1+t)^{\frac{1}{2}+\delta}, & \text{if $|I| = N$}.  \end{array}\right.$$
\end{prop}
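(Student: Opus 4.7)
The plan is to apply the commutation formula of Proposition \ref{ComuVlasov3}, which expresses $[\mathbf{T}_g, \widehat{Z}^I](f)$ as a sum of the error terms $\mathfrak{S}_{I,\bullet}$, $\widehat{\mathfrak{S}}_{I,\bullet}$, $\mathfrak{E}_{I,\bullet}$, $\widehat{\mathfrak{E}}_{I,\bullet}$, and then to bound the weighted spacetime integral of each type separately. Every integrand has the schematic form $\mathfrak{A} \cdot |\widehat{Z}\widehat{Z}^K f|$ (or $\mathfrak{A}\cdot|\nabla \widehat{Z}^K f|$) times $z^{\ell_{|I|}-\frac{2}{3}I^P}\omega_{1/8}^{1/8}$, where $\mathfrak{A}$ is one of the quantities introduced in Definition \ref{defmathfrakA}. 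The guiding principle is to trade each geometric factor in $\mathfrak{A}$ (such as $|w_L|$, $1+|u|$, or a null component of $\nabla h^1$) for a power of $z$ or $(1+t+r)^{-1}$ using Lemma \ref{lem_wwl}, i.e.\ $|w_L|/|v| \lesssim z^2/(1+t+r)^2$ and $1 \lesssim z/(1+|u|)$, so that the remaining $z$-weight is absorbed in the Vlasov energy $\mathbb{E}^{\frac{1}{8},\frac{1}{8}}[z^{\ell_{|K|}-\frac{2}{3}K^P} \widehat{Z}^K f]$. The reservoir $\ell_{|I|}-\frac{2}{3}I^P \geq \frac{2}{3}(N-I^P)+6$ is large enough to afford a few extra powers of $z$, while the drop of $\frac{2}{3}$ between $I^P$ and $K^P$ (valid whenever the hierarchy $K^P < I^P$ of Proposition~\ref{ComuVlasov3} holds) produces the crucial $z^{2/3}$ which converts a non-integrable $(1+\tau)^{-1+O(\delta)}$ into an integrable time decay.

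In the \emph{low-order regime} where all metric factors can be estimated pointwise (which happens as soon as $|J|,|M|,|Q|\leq N-3$), I would directly plug the pointwise estimates of Propositions \ref{decaymetric}, \ref{decaySchwarzschild}, \ref{estinullcompo} and the improved bound of Proposition \ref{decayJTgeq1} in the case $J^T\geq 1$. For example, for $\widehat{\mathfrak{E}}_{I,1}^{J,K}$ one bounds $|\nabla \mathcal{L}_Z^J h^1|$ by $\sqrt{\epsilon}(1+t+r)^{\delta-1}(1+|u|)^{-1/2}$ and $|w_L|\lesssim |v|z^2/(1+t+r)^2$, producing an integrand $\lesssim \sqrt{\epsilon}(1+t+r)^{-2+O(\delta)}|v| z^2 |\widehat{Z}\widehat{Z}^Kf|$; the spacetime integral is then controlled by $\sqrt{\epsilon} \mathbb{E}^{\ell_{|K|+1}}_{|K|+1}[f]$ via the hierarchy $K^P<I^P$. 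Terms $\widehat{\mathfrak{E}}_{I,2}$--$\widehat{\mathfrak{E}}_{I,3}$ and $\mathfrak{E}_{I,4}$--$\mathfrak{E}_{I,9}$ are handled similarly using the good null components $|\nabla h^1|_{\mathcal{T}\mathcal{U}}$, $|\nabla h^1|_{\mathcal{L}\mathcal{T}}$, and $|\overline{\nabla} h^1|$. The cubic and quartic terms $\widehat{\mathfrak{E}}_{I,12}$--$\mathfrak{E}_{I,18}$ and $\mathfrak{S}_{I,\bullet}$ all contain at least one extra factor of $|\mathcal{L}_Z^M h^1|\lesssim \sqrt{\epsilon}(1+t+r)^{-1+\delta}(1+|u|)^{1/2}$ or the small mass parameter $M\lesssim\sqrt{\epsilon}$, which provides the required extra smallness.

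The main obstacle will be the \emph{top-order regime}, when $|J|=N$ in one of the factors $|\nabla \mathcal{L}_Z^J h^1|$, $|\overline{\nabla} \mathcal{L}_Z^J h^1|_{\mathcal{T}\mathcal{U}}$, or $|\overline{\nabla} \mathcal{L}_Z^J h^1|_{\mathcal{L}\mathcal{L}}$, where no pointwise estimate on the metric is available. Here I would apply Cauchy--Schwarz to split the integrand into an $L^2_{t,x}$ norm of the metric factor (controlled respectively by the bootstrap assumptions \eqref{boot1}, \eqref{boot4}, \eqref{boot5}) times an $L^2_{t,x}$ norm of a weighted velocity average $\int |v| z^{2\ell_{|I|}-\frac{4}{3}I^P+c} |\widehat{Z}^Kf|^2\,dv$; the latter is bounded pointwise through the Klainerman-Sobolev inequality of Proposition \ref{KSvlasov} applied to $\widehat{Z}^K f$ with $|K|\leq N-1$, using the bootstrap assumption \eqref{boot_vlasov_2} and the estimate \eqref{eq:decayVlasov2}, giving an $O(\epsilon (1+t+r)^{-2+\delta/2}(1+|u|)^{-7/8})$ bound. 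The resulting spacetime integrals will be integrable in the good directions after a Hardy- or Grönwall-type step, producing the expected $\epsilon^{3/2}(1+t)^{1/2+\delta}$ growth at top order.

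The most delicate instance is $\mathfrak{E}_{I,10}^{J,K}$ with $|J|=N$, $|K|=0$, where the factor $(t+r)|v||\overline{\nabla} \mathcal{L}_Z^J h^1|_{\mathcal{L}\mathcal{L}}$ cannot be closed using only the $\mathcal{T}\mathcal{U}$-energy; this is precisely the top-order difficulty announced in Subsection~\ref{subsectoporder}. I would resolve it by exploiting the specific $LL$ energy $\mathcal{E}^{1+2\gamma,1}_{N,\mathcal{L}\mathcal{L}}[h^1]$ from the bootstrap assumption \eqref{boot5}: Cauchy--Schwarz gives
\begin{equation*}
\int_0^t\!\int_{\Sigma_\tau}\!\int_{\mathbb R_v^3} (t+r)|v||\overline{\nabla}\mathcal{L}_Z^J h^1|_{\mathcal{L}\mathcal{L}} |\nabla f|\,dv\,dx\,d\tau \leq \Big(\mathcal{E}^{1+2\gamma,1}_{N,\mathcal{L}\mathcal{L}}[h^1](t)\Big)^{\!\frac{1}{2}}\! \Big(\!\int_0^t\!\int_{\Sigma_\tau}\!\tfrac{(1+\tau+r)^2}{(1+|u|)} \Psi^2\,\omega_{1}^{1+2\gamma}\, dx\, d\tau\Big)^{\!\frac{1}{2}}\!,
\end{equation*}
where $\Psi(t,x):=\int_{\mathbb R_v^3}|v||\nabla f|\,dv$ satisfies $\Psi\lesssim \epsilon(1+t+r)^{-2+\delta/2}(1+|u|)^{-7/8}$ thanks to \eqref{eq:decayVlasov2}. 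The $t-r$ weight in $\omega_1^{1+2\gamma}$ together with the $(1+|u|)^{-7/4}$ gained from $\Psi^2$ then makes the bulk integral $O(\epsilon^2 (1+t)^{\eta_0})$ with $\eta_0$ independent of $\delta$, which is exactly what is needed to improve the bootstrap assumption on $\mathbb{E}^\ell_N[f]$.
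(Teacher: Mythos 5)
Your overall plan matches the paper's: decompose via Proposition \ref{ComuVlasov3}, split into a low-order regime where metric factors are estimated pointwise and a high-order regime treated by Cauchy--Schwarz, exploit the $K^P<I^P$ hierarchy for a $z^{2/3}$ gain, and use the $\mathcal{L}\mathcal{L}$-energy $\mathcal{E}^{1+2\gamma,1}_{N,\mathcal{L}\mathcal{L}}[h^1]$ from \eqref{boot5} for the hardest term $\mathfrak{E}^{J,K}_{I,10}$ with $|J|=N$. These are the right ideas. However, two of your calculational steps are not sound as written.

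First, your ``guiding principle'' of trading $|w_L|$ for $|v|z^2/(1+t+r)^2$ is too wasteful. In the low-order regime the $z$-budget only has a surplus of $z^{2/3}$ per unit drop in $K^P$ (plus the jump of $z^3$ when $|K|+1$ falls to $N-5$), but the conversion costs $z^2$. In the case $|K|+1\geq N-4$ and $I^P-K^P=1$ the inequality $2+\ell_{|I|}-\frac{2}{3}I^P\leq\ell_{|K|+1}-\frac{2}{3}(K^P+1)$ fails, so the weight $z^{2+\ell_{|I|}-\frac{2}{3}I^P}$ cannot be absorbed by the Vlasov energy $\mathbb{E}^{\ell_{|K|+1}}_{|K|+1}[f]$. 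The paper does not make this trade: it keeps the factor $|w_L|$ explicit and estimates it directly against the \emph{bulk} term $\int_0^t\int_{\Sigma_\tau}\int |\psi||w_L|(1+|u|)^{-1}\,\mathrm dv\,\omega_a^b\,\mathrm dx\,\mathrm d\tau$ of the Vlasov energy $\mathbb{E}^{\frac18,\frac18}$, after noticing $|\nabla h^1|\lesssim\sqrt\epsilon(1+t+r)^{\delta-1}(1+|u|)^{-1/2}\lesssim\sqrt\epsilon(1+|u|)^{-1}$. This is the content of Lemmas \ref{lem2}--\ref{lem3} and Propositions \ref{pro1}--\ref{prop2}. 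Incidentally, even accepting your trade, the exponent should be $(1+t+r)^{-3+O(\delta)}$, not $(1+t+r)^{-2+O(\delta)}$, since $(1+t+r)^{\delta-1}\cdot(1+t+r)^{-2}=(1+t+r)^{\delta-3}$.

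Second, in your Cauchy--Schwarz display for $\mathfrak{E}^{J,K}_{I,10}$ the weight in the second factor is not what Cauchy--Schwarz against the $\mathcal{L}\mathcal{L}$-energy bulk $\int\int|\overline{\nabla}k|^2_{\mathcal{L}\mathcal{L}}\omega^1_{1+2\gamma}(1+|u|)^{-1}$ produces, and you have also dropped the $z^{\ell_{|I|}-\frac{2}{3}I^P}$ and $\omega^{1/8}_{1/8}$ weights while simultaneously using a decay estimate \eqref{eq:decayVlasov2} that applies only to the $z$-weighted velocity average. The paper proceeds differently: it first spends $1\lesssim z^2/(1+|u|)^2$ to turn the weight into $z^{2+\ell_{|I|}-\frac{2}{3}I^P}$ with an extra $(1+|u|)^{-2}$, then applies Cauchy--Schwarz so that the Vlasov factor is exactly $\mathcal{A}^K_I\lesssim\epsilon^2(1+t)^\delta$ (Lemma \ref{estiVla2}), while the metric factor is $(1+t)\,\mathcal{E}^{1+2\gamma,1}_{N,\mathcal{L}\mathcal{L}}[h^1](t)\lesssim\epsilon(1+t)^{1+\delta}$ after using $\sup_r\frac{1+\tau+r}{1+|\tau-r|}\lesssim 1+\tau$. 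Your claim that the Vlasov-side factor is $O(\epsilon^2(1+t)^{\eta_0})$ with $\eta_0$ independent of $\delta$ is not correct; it is $O(\epsilon^2(1+t)^{1+\delta})$, and the extra $(1+t)$ must come from the metric side. The final product $\sqrt{\epsilon(1+t)^{1+\delta}\cdot\epsilon^2(1+t)^\delta}=\epsilon^{3/2}(1+t)^{1/2+\delta}$ does match the target, but only after fixing this bookkeeping. Thus your proposal has the right outline and reaches the right conclusion, but the two main estimates you write down would not go through as stated.
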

\subsection{Proof of Proposition \ref{BoundL1z}}

Since the weight $z$ is preserved by the flat relativistic transport operator $\mathbf{T}_\eta$, i.e.~$\eta^{\alpha\beta}w_\alpha\partial_\beta (z) = 0$, we have, using the notations introduced in Subsection \ref{secgeonot},
\begin{equation}\label{eq:estiz}
\mathbf{T}_g(z) \hspace{1mm} = \hspace{1mm} \Delta v g^{-1}( \mathrm d t, \mathrm d z)+H( w , \mathrm d z)   - \frac{1}{2} \nabla_i (H)( v, v) \cdot \partial_{v_i} z .
\end{equation}
Moreover, since for any $0 \leq \mu , \nu \leq 3$,
$$\nabla_i (H)^{\mu \nu} \cdot \partial_{v_i} z \hspace{1mm} = \hspace{1mm} \nabla_{\partial_r} (H)^{\mu \nu} \cdot \left( \nabla_v z \right)^r+\nabla_{e_A} (H)^{\mu \nu} \cdot \left( \nabla_v z \right)^A,$$
we get from Lemma \ref{vderivative} and $|\Delta v | \leq |v|$,
\begin{equation}\label{eq:estizbis}
\left| \nabla_i (H)( v, v) \cdot \partial_{v_i} z \right| \hspace{1mm} \lesssim \hspace{1mm} \left| \nabla_i (H)( w, w) \cdot \partial_{v_i} z \right|+ z|\Delta v | |\nabla H | +t|\Delta v | |\overline{\nabla} H |. 
\end{equation}
By a direct application of Lemmas \ref{lem_wwl} and \ref{lem_derivation_z} we have 
$$|\nabla_{t,x} z|+ |t-r| |\nabla_{t,x} (z)|+ (t+r) \frac{\sqrt{|w_L|}}{\sqrt{|v|}} |\nabla_{t,x} (z)| +\sum_{\widehat{Z} \in \widehat{\mathbb{P}}_0} | \widehat{Z} (z)| \hspace{1mm} \lesssim \hspace{1mm} 1+z \hspace{1mm} \lesssim \hspace{1mm} z$$
and recall from Remark \ref{forenergywave} that\footnote{Note that all these estimates could be improved.}
\begin{align*}
&|H|  \lesssim \sqrt{\epsilon}, \qquad |H|_{\mathcal{L} \mathcal{T}} \lesssim \sqrt{\epsilon} \frac{1+|t-r|}{1+t+r}, \qquad  |\nabla H| \lesssim \frac{\sqrt{\epsilon}}{1+|t-r|}, \\ 
&|\nabla H |_{\mathcal{L} \mathcal{T}}+|\overline{\nabla} H|  \lesssim   \frac{ \sqrt{\epsilon}}{1+t+r}, \qquad |\overline{\nabla} H|_{\mathcal{L} \mathcal{L}}  \lesssim  \sqrt{\epsilon} \frac{1+|t-r| }{(1+t+r)^2}.
\end{align*}
We can then bound the first term of the right-hand side of \eqref{eq:estiz} using \eqref{eq:Deltav1} and the second one by applying Lemma \ref{nullstructure}, so that we obtain, since $|w_L| \leq \sqrt{|v| |w_L|}$,
\begin{align*}
\left| \Delta v g^{-1}( \mathrm d t, \mathrm d z) \right|  \hspace{1mm} & \leq \hspace{1mm} |\Delta v | |\eta^{-1}+H| |\nabla_{t,x} (z) | \leq (|H| |w_L|+|H|_{\mathcal{L} \mathcal{T}} |v|) |\nabla_{t,x} (z) | \hspace{1mm} \leq  \hspace{1mm} \frac{\sqrt{\epsilon}|v| z}{1+t+r}, \\
\left| H( w , \mathrm d z) \right| \hspace{1mm} & \leq  \hspace{1mm} \frac{|v||H| z}{1+t+r} +|v||H|_{\mathcal{L} \mathcal{T}} \frac{z}{1+|t-r|} \hspace{1mm} \leq \hspace{1mm} \frac{\sqrt{\epsilon}|v| z}{1+t+r}.
\end{align*}
To deal with the last term on the right-hand side of \eqref{eq:estiz}, we use \eqref{eq:estizbis}. First, by Lemma \ref{nullstructure},
\begin{align*}
|\nabla_i (H)( w, w) \cdot \partial_{v_i} z| \hspace{1mm} & \leq \hspace{1mm} \left(|w_L| |\nabla H|+|v| |\nabla H|_{\mathcal{L} \mathcal{T}}+|v||\overline{\nabla} H| \right) \sum_{\widehat{Z} \in \widehat{\mathbb{P}}_0} |\widehat{Z}(z)| \\ & \quad \hspace{1mm} + |t-r| |\nabla H||w_L| | \nabla_{t,x} (z)|+|v| |\nabla H|_{\mathcal{L} \mathcal{T}} |t-r|| \nabla_{t,x} (z)|  \\
&\quad \hspace{1mm} + t|\overline{\nabla} H | \sqrt{|v| |w_L|} |\nabla_{t,x} (z)|+t|v| | \overline{\nabla} H|_{\mathcal{L} \mathcal{L}} |\nabla_{t,x} (z)| \\
& \lesssim \hspace{1mm} \frac{\sqrt{\epsilon}|w_L| z}{1+|t-r|}+\frac{\sqrt{\epsilon}|v| z}{1+t+r}.
\end{align*}
Finally, using \eqref{eq:Deltav1} and $t \leq \frac{t z}{1+|t-r|}$, which comes from Lemma \ref{lem_wwl}, we obtain
\begin{align*}
 z|\Delta v | |\nabla H | +t|\Delta v | |\overline{\nabla} H | \hspace{1mm} & \lesssim \hspace{1mm}  z|\nabla H|(|H| |w_L|+|H|_{\mathcal{L} \mathcal{T}} |v|)+\!\frac{tz |\overline{\nabla} H|}{1+|t-r|}(|H| |w_L|+|H|_{\mathcal{L} \mathcal{T}} |v|) \\
& \lesssim \hspace{1mm}   \frac{\sqrt{\epsilon}|w_L| z}{1+|t-r|}+\frac{\sqrt{\epsilon}|v| z}{1+t+r}.
\end{align*}
We then deduce that
\begin{equation}\label{calculTgz}
|\T_g(z)| \hspace{1mm} \lesssim \hspace{1mm} \frac{\sqrt{\epsilon}|w_L| z}{1+|t-r|}+\frac{\sqrt{\epsilon}|v| z}{1+t+r}.
\end{equation}
Consequently, for a multi-index $|I| \leq N$, we get, according to the definition \eqref{def_e_vlasov} of the energy norm $\mathbb{E}^{\ell_{|I|}}_{|I|}[f]$,
\begin{align*}
 \mathfrak{Z}^I \hspace{1mm} & \lesssim \hspace{1mm}  \int_0^t \int_{\Sigma_{\tau}} \int_{\R^3_v} \left( \frac{\sqrt{\epsilon}|v| }{1+\tau+r}+ \frac{\sqrt{\epsilon}|w_L| }{1+|\tau-r|} \right) z^{\ell_{|I|}-\frac{2}{3}I^P} |\widehat{Z}^I f| \mathrm{d} v \omega_{\frac{1}{8}}^{\frac{1}{8}} \mathrm{d}x \mathrm{d} \tau \\ 
 & \lesssim \hspace{1mm} \sqrt{\epsilon} \int_0^t \frac{\mathbb{E}^{\frac{1}{8}, \frac{1}{8}}\left[ z^{\ell_{|I|}-\frac{2}{3}I^P} |\widehat{Z}^I f| \right]\!(\tau)}{1+\tau} \mathrm{d} \tau + \sqrt{\epsilon}\int_0^t \! \int_{\Sigma_{\tau}} \! \int_{\R^3_v} z^{\ell_{|I|}-\frac{2}{3}I^P} |\widehat{Z}^I f| \frac{|w_L|}{1+|u|} \mathrm{d} v \omega_{\frac{1}{8}}^{\frac{1}{8}} \mathrm{d}x \mathrm{d} \tau \\
 & \lesssim \hspace{1mm} \sqrt{\epsilon} \int_0^t \frac{\mathbb{E}^{\ell_{|I|}}_{|I|}[f](\tau)}{1+\tau} \mathrm{d} \tau + \sqrt{\epsilon} \mathbb{E}^{\ell_{|I|}}_{|I|}[f](t).
 \end{align*}
The result ensues from the bootstrap assumptions \eqref{boot_vlasov_2} and \eqref{boot_vlasov_3}.

\subsection{Proof of Proposition \ref{BoundL1}} 
The starting point consists in bounding the commutator $\left[\mathbf T_g, \widehat{Z}^I\right]\! (f)$  by a linear combination of the terms listed in Proposition \ref{ComuVlasov3}. Then, in order to close the energy estimates and to deal with the weak decay rate of the metric, we will have to pay attention to the hierarchies related to the weights $z$ which have been built into the Vlasov energy norms $\E^{\ell+3}_{N-5}[f]$, $\E^{\ell}_{N-1}[f]$ and $\E^{\ell}_N[f]$. Before performing the proof, let us explain the strategy, which will be illustrated by the treatment in full details of the integrals arising from the two families of error terms\footnote{We use below the notation introduced in Definition \ref{defmathfrakA}.}
\begin{align*}
\widehat{\mathfrak{E}}^{J,K}_{I,1} \hspace{1mm} & = \hspace{1mm} |w_L| \left| \nabla \mathcal{L}_Z^J (h^1)\right| \left| \widehat{Z} \widehat{Z}^K f\right| \hspace{1mm} = \hspace{1mm} \widehat{\mathfrak{A}}^{J,K}_{I,1} \left| \widehat{Z} \widehat{Z}^K f\right|, \\
\mathfrak{E}^{J,K}_{I,10} \hspace{1mm} & = \hspace{1mm} (t+r)|v| \left|\overline{\nabla}\mathcal{L}_Z^J (h^1)\right|_{\mathcal{L} \mathcal{L}} \left| \nabla \widehat{Z}^K f\right| \hspace{1mm} = \hspace{1mm} \mathfrak{A}^{J,K}_{I,10} \left| \nabla \widehat{Z}^K f\right|, 
\end{align*}
where $\widehat{Z} \in \widehat{\mathbb{P}}_0$, $|J|+|K| \leq |I|$, $|K| \leq |I|-1$ and 
\begin{itemize}
\item either $K^P < I^P$
\item or $K^P = I^P$ and $J^T \geq 1$, so that $Z^J$ contains at least one translation $\partial_{\mu}$.
\end{itemize}
We will then have to bound sufficiently well
\begin{align*}
 \mathcal{I} \hspace{1mm} & := \hspace{1mm} \int_0^t \int_{\Sigma_{\tau}} \int_{\R^3_v}  |w_L| \left| \nabla \mathcal{L}_Z^J (h^1)\right| z^{\ell_{|I|}-\frac{2}{3}I^P} \left| \widehat{Z} \widehat{Z}^K f\right| \mathrm{d}v \omega_{\frac{1}{8}}^{\frac{1}{8}} \mathrm{d} x \mathrm{d} \tau, \\
 \mathcal{J} \hspace{1mm} & := \hspace{1mm} \int_0^t \int_{\Sigma_{\tau}} \int_{\R^3_v}  (\tau+r)|v| \left|\overline{\nabla} \mathcal{L}_Z^J (h^1)\right|_{\mathcal{L} \mathcal{L}} z^{\ell_{|I|}-\frac{2}{3}I^P} \left| \nabla \widehat{Z}^K f\right| \mathrm{d}v \omega_{\frac{1}{8}}^{\frac{1}{8}} \mathrm{d} x \mathrm{d} \tau.
 \end{align*}
Apart for the error terms $\mathfrak{S}^K_{I,1}$ and $\mathfrak{S}^K_{I,2}$, there are two cases to consider.

\noindent \textit{Step $1$: if all the metric factors\footnote{The cubic and quartic terms contain several metric factors.} can be estimated pointwise,} e.g. $\left|\nabla \mathcal L_Z^J (h^1)\right|$ for $\widehat{\mathfrak{E}}^{J,K}_{I,1}$ and $\left|\overline{\nabla} \mathcal L_Z^J (h^1)\right|_{\mathcal{L} \mathcal{L}}$ for $\mathfrak{E}^{J,K}_{I,10}$, i.e~if $|J| \leq N-5$ in view of Propositions \ref{decaymetric} and \ref{estinullcompo}. Then, the particle density is estimated in $L^1$ through the following result.
\begin{lemma}\label{estiVla}
Consider $\widehat{Z} \in \widehat{\mathbb{P}}_0$ and let $I$ and $K$ be two multi-indices such that $|I| \leq N$, $|K| \leq |I|-1$ and $K^P \leq I^P$. Then,
\begin{itemize}
\item if $K^P<I^P$, we have $\E^{\frac{1}{8}, \frac{1}{8}}\left[ z^{\ell_{|I|}-\frac{2}{3}I^P+\frac{2}{3}} \nabla \widehat{Z}^K f \right] \leq \E^{\ell_{|I|}}_{|I|}[f]$ as well as $\E^{\frac{1}{8},\frac{1}{8}}\left[ z^{\ell_{|I|}-\frac{2}{3}I^P} \widehat{Z} \widehat{Z}^K f \right] \leq \E^{\ell_{|I|}}_{|I|}[f]$.
\item Otherwise $K^P=I^P$ and we still have $\E^{\frac{1}{8}, \frac{1}{8}}\left[ z^{\ell_{|I|}-\frac{2}{3}I^P} \nabla \widehat{Z}^K f \right] \hspace{1mm} \leq \hspace{1mm} \E^{\ell_{|I|}}_{|I|}[f]$ as well as $\E^{\frac{1}{8},\frac{1}{8}}\left[ z^{\ell_{|I|}-\frac{2}{3}I^P-\frac{2}{3}} \widehat{Z} \widehat{Z}^K f \right] \leq \E^{\ell_{|I|}}_{|I|}[f]$.
\end{itemize}
\end{lemma}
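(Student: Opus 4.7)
The proof reduces to a bookkeeping argument comparing exponents of the weight $z$ in the various energy norms, exploiting the monotonicity property $z^a \leq z^b$ whenever $a \leq b$ (which is available because $z \geq 1$ by \eqref{prop_z}).

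The plan is as follows. First, I would record the two basic identities describing how derivatives interact with the counting function $\cdot^P$:
\begin{itemize}
\item Since $\nabla = \partial_{t,x}$ and each translation $\partial_{x^\mu}$ lies in $\widehat{\mathbb{P}}_0$ but is not homogeneous, one has, for every multi-index $K$, a decomposition of $\nabla \widehat{Z}^K f$ as a linear combination of $\widehat{Z}^M f$ with multi-indices $M$ satisfying $|M|=|K|+1$ and $M^P=K^P$.
\item For any $\widehat{Z} \in \widehat{\mathbb{P}}_0$, the product $\widehat{Z}\widehat{Z}^K f$ equals $\widehat{Z}^M f$ with $|M|=|K|+1$ and $M^P \leq K^P + 1$ (with equality iff $\widehat{Z}$ is homogeneous).
\end{itemize}
In both cases the hypothesis $|K|\leq |I|-1$ forces $|M|\leq |I|$, so by definition \eqref{def_e_vlasov} the quantity $\mathbb{E}^{\frac{1}{8},\frac{1}{8}}[z^{\ell_{|I|}-\frac{2}{3}M^P}\widehat{Z}^M f]$ is one of the summands of $\mathbb{E}^{\ell_{|I|}}_{|I|}[f]$ and is therefore bounded by it.

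Next I would match exponents in each of the four inequalities. For the first claim of case 1, combining the identity $M^P=K^P$ with the hypothesis $K^P<I^P$ (hence $K^P \leq I^P-1$) gives
\[
\ell_{|I|}-\tfrac{2}{3}I^P+\tfrac{2}{3} \;\leq\; \ell_{|I|}-\tfrac{2}{3}K^P \;=\; \ell_{|I|}-\tfrac{2}{3}M^P,
\]
so by monotonicity of $z \mapsto z^a$ and the first bullet above, the norm in question is dominated by $\mathbb{E}^{\ell_{|I|}}_{|I|}[f]$. For the second claim of case 1, $M^P \leq K^P+1 \leq I^P$ yields $\ell_{|I|}-\tfrac{2}{3}I^P \leq \ell_{|I|}-\tfrac{2}{3}M^P$, and the second bullet concludes. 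The two claims of case 2 are handled identically: when $K^P = I^P$ we have $\ell_{|I|}-\tfrac{2}{3}I^P = \ell_{|I|}-\tfrac{2}{3}M^P$ for the $\nabla$ case, and $\ell_{|I|}-\tfrac{2}{3}I^P-\tfrac{2}{3} \leq \ell_{|I|}-\tfrac{2}{3}M^P$ (using $M^P \leq I^P+1$) for the $\widehat{Z}$ case.

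This argument is essentially mechanical; there is no real analytic difficulty, only a careful tracking of how the hierarchy index $\cdot^P$ changes under the action of $\nabla$ versus an element of $\widehat{\mathbb{P}}_0$. The only subtle point worth emphasizing is precisely the asymmetry between these two operations: a translation in $\nabla$ does not raise $\cdot^P$ (which is what makes the extra weight $z^{\frac{2}{3}}$ available in case 1), whereas a general $\widehat{Z} \in \widehat{\mathbb{P}}_0$ may raise $\cdot^P$ by one (which is what costs the weight $z^{-\frac{2}{3}}$ in case 2). This asymmetry is exactly the hierarchy discussed in Subsection \ref{subsecintrononint}, and its exploitation downstream (in the proof of Proposition \ref{BoundL1}) is the real reason for introducing the weight $z$ in the energy norms \eqref{def_e_vlasov} in the first place.
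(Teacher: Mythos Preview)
Your proof is correct and follows essentially the same approach as the paper's one-sentence argument: both reduce to counting how many homogeneous vector fields appear in $\nabla\widehat{Z}^K$ (exactly $K^P$) versus $\widehat{Z}\widehat{Z}^K$ (at most $K^P+1$), then comparing $z$-exponents using $z\geq 1$. The paper additionally mentions the inequality $\ell_{|I|}\leq\ell_{|K|+1}$, but for the lemma as stated (bounding by $\mathbb{E}^{\ell_{|I|}}_{|I|}[f]$ rather than by a bootstrap norm directly) your argument shows this is not actually needed.
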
 
\begin{proof}
This directly ensues from the fact that $\nabla \widehat{Z}^K$ (respectively $\widehat{Z} \widehat{Z}^K$) contains $K^P$ (respectively at most $K^P+1$) homogeneous vector fields and that $\ell_{|I|} \leq \ell_{|K|+1}$ since $|I| \geq |K|+1$.
\end{proof}
We need to consider two subcases for the most problematic terms, the quadratic and some of the cubic ones (see Proposition \ref{ComuVlasov3}), in order to deal with a non integrable decay rate.
\begin{itemize}
\item If $\widehat{Z}^K$ contains less homogeneous vector fields than $\widehat{Z}^I$, i.e. $K^P<I^P$, then the terms containing the factor $\widehat{Z} \widehat{Z}^K f$ are good since we control the energy norm of $z^{\ell_{|I|}-\frac{2}{3}I^P} \widehat{Z} \widehat{Z}^K f$ and the pointwise decay estimates on the metric provide an integrable decay rate. For $\mathcal{I}$, we obtain from the pointwise decay estimates of Proposition \ref{decaymetric}, Lemma \ref{estiVla} and the bootstrap assumptions \eqref{boot_vlasov_1}-\eqref{boot_vlasov_3},
\begin{align*}
\qquad \quad \mathcal{I} \hspace{1mm} & \lesssim \hspace{1mm} \int_0^t \int_{\Sigma_{\tau}} \int_{\R^3_v} \frac{\sqrt{\epsilon}}{(1+\tau+r)^{1-\delta}(1+|t-r|)^{\frac{1}{2}}}  z^{\ell_{|I|}-\frac{2}{3}I^P} \left| \widehat{Z} \widehat{Z}^K f\right| |w_L|  \mathrm{d}v \omega_{\frac{1}{8}}^{\frac{1}{8}} \mathrm{d} x \mathrm{d} \tau \\
 & \leq \hspace{1mm} \sqrt{\epsilon} \int_0^t \int_{\Sigma_{\tau}} \int_{\R^3_v} z^{\ell_{|I|}-\frac{2}{3}I^P} \left| \widehat{Z} \widehat{Z}^K f\right| \frac{|w_L| }{1+|u|}  \mathrm{d}v \omega_{\frac{1}{8}}^{\frac{1}{8}} \mathrm{d} x \mathrm{d} \tau \\
 & \leq \hspace{1mm} \sqrt{\epsilon} \E^{\frac{1}{8}, \frac{1}{8}} \! \left[z^{\ell_{|I|}-\frac{2}{3}I^P} \widehat{Z} \widehat{Z}^K \right] \!(t) \hspace{1mm} \leq \hspace{1mm} \sqrt{\epsilon} \E^{\ell_{|I|}}_{|I|}[f](t) \hspace{1mm} \lesssim \hspace{1mm} \left\{ \begin{array}{ll} \epsilon^{\frac{3}{2}} (1+t)^{\frac{\delta}{2}}, & \text{if $|I| < N$}, \\ \epsilon^{\frac{3}{2}} (1+t)^{\frac{1}{2}+\delta}, & \text{if $|I| = N$}.  \end{array}\right. 
\end{align*}
For the remaining quadratic and cubic terms, which contain the factor $\nabla \widehat{Z}^Kf$, the pointwise decay estimates on the metric do not provide an integrable decay rate. The idea is to take advantage of the fact that we control the $L^1$-norm of $z^{\ell_{|I|}-\frac{2}{3}I^P+\frac{2}{3}} \nabla \widehat{Z}^K f$ and then gain decay through the extra weight $z^{-\frac{2}{3}}$ and Lemma \ref{lem_wwl}. For $\mathcal{J}$, we use Proposition \ref{estinullcompo}, the inequality $z^{-\frac{2}{3}} \lesssim (1+|t-r|)^{-\frac{2}{3}}$ which comes from Lemma \ref{lem_wwl}, that $\delta \leq \gamma < \frac{1}{6}$, Lemma \ref{estiVla} and the bootstrap assumptions \eqref{boot_vlasov_1}-\eqref{boot_vlasov_3}. We have
 \begin{align*}
\qquad \quad \mathcal{J} \hspace{1mm} & \lesssim \hspace{1mm} \int_0^t \int_{\Sigma_{\tau}} \sqrt{\epsilon}(t+r)\frac{ |\tau-r|^{\frac{1}{2}+\gamma}}{(1+\tau+r)^{2+\gamma-\delta}} \int_{\R^3_v} |v| \frac{z^{\ell_{|I|}-\frac{2}{3}I^P+\frac{2}{3}}}{z^{\frac{2}{3}}} \left| \nabla \widehat{Z}^K f\right| \mathrm{d}v \omega^{\frac{1}{8}}_{\frac{1}{8}} \mathrm{d}x \mathrm{d} \tau \\ 
  & \lesssim \hspace{1mm} \int_0^t \int_{\Sigma_{\tau}} \sqrt{\epsilon}\frac{ |\tau-r|^{\frac{1}{2}+\gamma-\frac{2}{3}}}{(1+\tau+r)^{1+\gamma-\delta}} \int_{\R^3_v} |v| z^{\ell_{|I|}-\frac{2}{3}I^P+\frac{2}{3}} \left| \nabla \widehat{Z}^K f \right| \mathrm{d}v \omega^{\frac{1}{8}}_{\frac{1}{8}} \mathrm{d}x \mathrm{d} \tau \\
 & \lesssim \hspace{1mm} \sqrt{\epsilon}  \int_0^t  \frac{\E^{\frac{1}{8}, \frac{1}{8}}\left[z^{\ell_{|I|}-\frac{2}{3}I^P+\frac{2}{3}} \nabla \widehat{Z}^K f \right]\!(\tau)}{1+\tau}  \mathrm{d} \tau \hspace{1mm} \lesssim \hspace{1mm} \sqrt{\epsilon} \int_0^t  \frac{\E^{\ell_{|I|}}_{|I|}[f](\tau)}{1+\tau}  \mathrm{d} \tau \\
 & \lesssim \hspace{1mm} \left\{ \begin{array}{ll} \epsilon^{\frac{3}{2}} (1+t)^{\frac{\delta}{2}}, & \text{if $|I| \leq N-1$}, \\ \epsilon^{\frac{3}{2}} (1+t)^{\frac{1}{2}+\delta}, & \text{if $|I| = N$}.  \end{array}\right. 
 \end{align*}
In summary, we have proved first that
 $$ \widehat{\mathfrak{A}}^{J,K}_{I,1} \hspace{1mm} \lesssim \hspace{1mm} \frac{\sqrt{\epsilon}|w_L|}{1+|u|}, \qquad  \frac{1}{z^{\frac{2}{3}}}\mathfrak{A}^{J,K}_{I,10} \hspace{1mm} \lesssim \hspace{1mm} \frac{\sqrt{\epsilon} |v|}{1+\tau+r} $$
and then we have applied Lemma \ref{estiVla}.
\item Otherwise all the homogeneous vector fields of $\widehat{Z}^I$ are contained in $\widehat{Z}^K$, i.e. $I^P=K^P$. Then at least one of the metric factors is differentiated by a translation and we can obtain an extra decay in $t-r$ (see Proposition \ref{extradecayLie}). For $\mathcal{I}$ and $\mathcal{J}$, this means that $Z^J$ contains a translation $\partial_{\mu}$ and that we can use the improved pointwise decay estimates of Proposition \ref{decayJTgeq1}. We then get, using also Lemma \ref{estiVla} and the bootstrap assumptions \eqref{boot_vlasov_1}-\eqref{boot_vlasov_3},
 \begin{align*}
\quad \mathcal{J} \hspace{1mm} & \lesssim \hspace{1mm} \int_0^t \int_{\Sigma_{\tau}} \sqrt{\epsilon}(t+r)\frac{(1+|t-r|)^{\frac{1}{2}}}{(1+t+r)^{3-2\delta}} \int_{\R^3_v} |v| z^{\ell_{|I|}-\frac{2}{3}I^P} \left| \nabla \widehat{Z}^K f\right| \mathrm{d}v \omega^{\frac{1}{8}}_{\frac{1}{8}} \mathrm{d}x \mathrm{d} \tau \\ 
 & \lesssim \hspace{1mm} \sqrt{\epsilon}  \int_0^t  \frac{\E^{\frac{1}{8}, \frac{1}{8}}\left[z^{\ell_{|I|}-\frac{2}{3}I^P} \nabla \widehat{Z}^K f\right]\!(\tau)}{(1+\tau)^{\frac{3}{2}-2\delta}}  \mathrm{d} \tau \hspace{1mm} \lesssim \hspace{1mm} \sqrt{\epsilon} \int_0^t  \frac{\E^{\ell_{|I|}}_{|I|}[f](\tau)}{1+\tau}  \mathrm{d} \tau \\
 & \lesssim \hspace{1mm}  \left\{ \begin{array}{ll} \epsilon^{\frac{3}{2}} (1+t)^{\frac{\delta}{2}}, & \text{if $|I| \leq N-1$}, \\ \epsilon^{\frac{3}{2}} (1+t)^{\frac{1}{2}+\delta}, & \text{if $|I| = N$}.  \end{array}\right. 
 \end{align*}
 For $\mathcal{I}$, as we merely control the energy norm of $z^{\ell_{|I|}-\frac{2}{3}I^P-\frac{2}{3}}  \widehat{Z} \widehat{Z}^K f$, we use the estimate $z^{\ell_{|I|}-\frac{2}{3}I^P} \lesssim (1+t+r)^{\frac{2}{3}} z^{\ell_{|I|}-\frac{2}{3}I^P-\frac{2}{3}} $ which comes from \eqref{prop_z}, so that
 \begin{align*}
\quad \mathcal{I} \hspace{1mm} & \lesssim \hspace{1mm} \int_0^t \int_{\Sigma_{\tau}} \int_{\R^3_v} \frac{\sqrt{\epsilon}}{(1+\tau+r)^{\frac{1}{3}-\delta}(1+|t-r|)^{\frac{3}{2}}}  z^{\ell_{|I|}-\frac{2}{3}I^P-\frac{2}{3}} \left| \widehat{Z} \widehat{Z}^K f\right| |w_L|  \mathrm{d}v \omega_{\frac{1}{8}}^{\frac{1}{8}} \mathrm{d} x \mathrm{d} \tau \\
 & \leq \hspace{1mm} \sqrt{\epsilon} \int_0^t \int_{\Sigma_{\tau}} \int_{\R^3_v} z^{\ell_{|I|}-\frac{2}{3}I^P-\frac{2}{3}} \left| \widehat{Z} \widehat{Z}^K f\right| \frac{ |w_L| }{1+|u|}  \mathrm{d}v \omega_{\frac{1}{8}}^{\frac{1}{8}} \mathrm{d} x \mathrm{d} \tau \\
 & \leq \hspace{1mm} \sqrt{\epsilon} \E^{\frac{1}{8}, \frac{1}{8}} \! \left[z^{\ell_{|I|}-\frac{2}{3}I^P-\frac{2}{3}} \widehat{Z} \widehat{Z}^K f \right] \!(t) \hspace{1mm} \leq \hspace{1mm} \sqrt{\epsilon} \E^{\ell_{|I|}}_{|I|}[f](t) \hspace{1mm} \leq \hspace{1mm} \left\{ \begin{array}{ll} \epsilon^{\frac{3}{2}} (1+t)^{\frac{\delta}{2}}, & \text{if $|I| < N$}, \\ \epsilon^{\frac{3}{2}} (1+t)^{\frac{1}{2}+\delta}, & \text{if $|I| = N$}.  \end{array}\right. 
  \end{align*}
\end{itemize}
In summary, we have proved first that
 $$ (1+\tau+r)^{\frac{2}{3}}\widehat{\mathfrak{A}}^{J,K}_{I,1} \hspace{1mm} \lesssim \hspace{1mm} \frac{\sqrt{\epsilon}|w_L|}{1+|u|}, \qquad  \mathfrak{A}^{J,K}_{I,10} \hspace{1mm} \lesssim \hspace{1mm} \frac{\sqrt{\epsilon} |v|}{1+\tau+r} $$
and then we have applied Lemma \ref{estiVla}.
 
\noindent \textit{Step $2$: if one of the metric factors cannot be estimated pointwise.} In that case, the considered error term  contains a factor where $h^1$ has been differentiated too many times so that we cannot apply Propositions \ref{decaymetric} and \ref{estinullcompo} anymore. For $\mathcal{J}$, this means that $|J| \geq N-4$. For $\mathcal{I}$, we could have dealt with the cases $|J| \in \{ N-4,N-3\}$ during the first step but for simplicity we treat them here. Since $|J|+|K| \leq |I| \leq N$, we necessarily have $|I| \geq N-4$ and $|K| \leq 4 \leq N-9$, so that the Vlasov field can be estimated pointwise. Note also that if $|J|=N$ then $|I|=N$. Moreover, since $\ell_{|I|}+3=\ell_{|K|+1}$, we will be able to gain decay through the weight $z$ and Lemma \ref{lem_wwl} using $|w_L| \lesssim \frac{|v|z^2}{(1+t+r)^2}$ or $1 \lesssim \frac{z}{1+|t-r|}$. For $\mathcal{I}$, we get, applying the Cauchy-Schwarz inequality in $(\tau,x)$ and since $|w_L| \leq \sqrt{|w_L||v|}$,
\begin{multline*}
\mathcal{I} \hspace{1mm}  \lesssim \hspace{1mm}  \int_0^t \int_{\Sigma_{\tau}} \frac{ \left| \nabla \mathcal{L}_Z^J (h^1)\right|}{1+\tau+r} \int_{\R^3_v}   |v|z^{1+\ell_{|I|}-\frac{2}{3}I^P} \left| \widehat{Z} \widehat{Z}^K f\right| \mathrm{d}v \omega_{\frac{1}{8}}^{\frac{1}{8}} \mathrm{d} x \mathrm{d} \tau \hspace{1mm} \lesssim  \\
 \left| \! \int_0^t \! \int_{\Sigma_{\tau}} \! \frac{ \left| \nabla \mathcal{L}_Z^J (h^1)\right|^2}{(1+\tau+r )^3} \omega_{\frac{1}{8}}^{\frac{1}{8}} \mathrm{d} x \mathrm{d} \tau \hspace{-0.6mm} \int_0^t \! \int_{\Sigma_{\tau}} \! (1+\tau+r) \! \left| \int_{\R^3_v}   |v|z^{1+\ell_{|I|}-\frac{2}{3}I^P} \left| \widehat{Z} \widehat{Z}^K f\right| \mathrm{d}v \right|^2 \! \omega_{\frac{1}{8}}^{\frac{1}{8}} \mathrm{d} x \mathrm{d} \tau \! \right|^{\frac{1}{2}} \! \hspace{-1.5mm} .
\end{multline*}
For $\mathcal{J}$, we have
\begin{multline*}
\mathcal{J} \hspace{1mm}  \lesssim \hspace{1mm}  \int_0^t \int_{\Sigma_{\tau}}(\tau+r) \frac{ \left| \overline{\nabla} \mathcal{L}_Z^J (h^1)\right|^2_{\mathcal{L} \mathcal{L}}}{(1+|\tau-r|)^2}  \int_{\R^3_v}   |v|z^{2+\ell_{|I|}-\frac{2}{3}I^P} \left| \nabla \widehat{Z}^K f\right| \mathrm{d}v \omega_{\frac{1}{8}}^{\frac{1}{8}} \mathrm{d} x \mathrm{d} \tau  \hspace{1mm} \lesssim \hspace{1mm} \\
 \left| \! \int_0^t \hspace{-0.6mm} \! \int_{\Sigma_{\tau}} \! \hspace{-0.4mm} (\tau+r) \frac{ \left| \overline{\nabla} \mathcal{L}_Z^J (h^1)\right|^2_{\mathcal{L} \mathcal{L}}}{(1+|u| )^4} \omega_{\frac{1}{8}}^{\frac{1}{8}} \mathrm{d} x \mathrm{d} \tau \! \int_0^t \!  \hspace{-0.6mm}  \int_{\Sigma_{\tau}} \hspace{-0.4mm} \! (\tau+r) \! \left| \! \int_{\R^3_v} \!  |v|z^{2+\ell_{|I|}-\frac{2}{3}I^P} \! \left| \nabla \widehat{Z}^K f\right| \mathrm{d}v \right|^2 \! \hspace{-0.3mm} \omega_{\frac{1}{8}}^{\frac{1}{8}} \mathrm{d} x \mathrm{d} \tau \! \right|^{\frac{1}{2}} \hspace{-1.2mm} \!.
\end{multline*}
\begin{rem}
We point out that $\mathfrak{E}^{J,K}_{I,10}$ is the most problematic term and that its treatment is more complicated than the ones of the other error terms. In particular, it is this term which prevents us to prove that $\overline{\mathcal{E}}^{\gamma,1+2\gamma}_N[h^1](t) \lesssim \epsilon (1+t)^{2\delta}$.
\end{rem}
We are then led to prove the following lemma, which will also be useful for all the other error terms.
\begin{lemma}\label{estiVla2}
Let $I$ and $K$ be two multi-indices satisfying $N-4 \leq |I| \leq N$, $|K| \leq 4$ and $K^P \leq I^P$. Then, for all $\widehat{Z} \in \widehat{\mathbb{P}}_0$, we have
\begin{align*}
\widehat{\mathcal{A}}^{K}_{I} \hspace{1mm} & := \hspace{1mm} \int_0^t  \int_{\Sigma_{\tau}} (1+\tau+r) \left| \int_{\R^3_v}   |v|z^{1+\ell_{|I|}-\frac{2}{3}I^P} \left| \widehat{Z} \widehat{Z}^K f\right| \mathrm{d}v \right|^2 \omega_{\frac{1}{8}}^{\frac{1}{8}} \mathrm{d} x \mathrm{d} \tau \hspace{1mm} \lesssim \hspace{1mm} \epsilon^2 (1+t)^{\delta}, \\
\mathcal{A}_{I}^K \hspace{1mm} & := \hspace{1mm} \int_0^t  \int_{\Sigma_{\tau}} (1+\tau+r) \left| \int_{\R^3_v}   |v|z^{2+\ell_{|I|}-\frac{2}{3}I^P} \left| \nabla \widehat{Z}^K f\right| \mathrm{d}v \right|^2 \omega_{\frac{1}{8}}^{\frac{1}{8}} \mathrm{d} x \mathrm{d} \tau \hspace{1mm}  \lesssim \hspace{1mm} \epsilon^2 (1+t)^{\delta}.
\end{align*}
\end{lemma}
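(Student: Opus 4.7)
The proof of Lemma \ref{estiVla2} proceeds via the Klainerman-Sobolev inequality for Vlasov fields (Proposition \ref{KSvlasov}), which converts $L^1$ energies of $f$ into pointwise decay of velocity averages, combined with careful spacetime integration against the bootstrap norm $\mathbb{E}^{\ell+3}_{N-5}[f](t) \lesssim \epsilon (1+t)^{\delta/2}$. I focus on $\widehat{\mathcal{A}}^K_I$; the bound on $\mathcal{A}^K_I$ is structurally identical.

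First, I would apply Proposition \ref{KSvlasov} with $a = b = \tfrac{1}{8}$ and exponent $c = 1 + \ell - \tfrac{2}{3}I^P$ on the $z$-weight to obtain
$$\int_{\R^3_v} |v| z^c |\widehat{Z} \widehat{Z}^K f| \, \dr v \lesssim \frac{1}{(1+t+r)^2 (1+|t-r|)\, \omega^{1/8}_{1/8}} \sum_{|L|\le 3} \mathbb{E}^{1/8,1/8}\bigl[ z^c \widehat{Z}^L \widehat{Z} \widehat{Z}^K f \bigr](t).$$
Since $|L| + 1 + |K| \le 3 + 1 + 4 = 8 \le N-5$ (using $N \ge 13$), each summand on the right is of the type controlled by $\mathbb{E}^{\ell+3}_{N-5}[f]$, provided that $c \le \ell + 3 - \tfrac{2}{3}(L\widehat{Z}K)^P$. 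A direct count gives $(L\widehat{Z}K)^P \le L^P + 1 + K^P \le 4 + I^P$; in the worst case (all three vector fields in $L$ homogeneous, $\widehat{Z}$ homogeneous, and $K^P = I^P$) we are short by exactly $\tfrac{2}{3}$ in the $z$-power. I close this gap with the pointwise trade $z^{2/3} \lesssim (1+t+r)^{2/3}$ from \eqref{prop_z}, at the cost of a factor $(1+t+r)^{2/3}$ in the pointwise estimate.

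Second, I insert the resulting pointwise bound into the definition of $\widehat{\mathcal{A}}^K_I$ and reduce, in spherical coordinates, to a scalar spacetime integral of the form
$$\epsilon^2 (1+t)^{\delta} \int_0^t \int_0^{+\infty} \frac{r^2 \, \dr r \, \dr \tau}{(1+\tau+r)^{5/3} (1+|\tau-r|)^2\, \omega^{1/8}_{1/8}},$$
which I estimate separately in the interior region $r \le \tau$ and the exterior region $r \ge \tau$, extracting the decay from the factor $(1+|\tau-r|)^{-2}$ combined with $\omega^{1/8}_{1/8}$. The treatment of $\mathcal{A}^K_I$ is entirely analogous, with exponent $c = 2 + \ell - \tfrac{2}{3}I^P$ and $\nabla \widehat{Z}^K$ replacing $\widehat{Z}\widehat{Z}^K$; since $\nabla$ contributes $0$ to the $P$-count one has $(L \nabla K)^P \le 3 + I^P$, and the corresponding trade costs $(1+t+r)$ instead of $(1+t+r)^{2/3}$.

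The main obstacle is the sharp tracking of the $(1+t)$ growth in the final spacetime integration: a naive application of the pointwise bound combined with the $(1+t+r)^{2/3}$ trade only yields $\epsilon^2 (1+t)^{\delta + 4/3}$, so the sharp bound $\epsilon^2 (1+t)^{\delta}$ claimed by the lemma requires additional structure. The natural place to find it is the bulk term $\int |\psi| |w_L|\, \omega^{1/8}_{1/8}/(1+|u|) \, \dr x\, \dr \tau$ appearing in $\mathbb{E}^{1/8,1/8}[\psi]$ (see \eqref{def_mbb_e}), which provides additional decay near the light cone precisely where the worst case $K^P = I^P$ of the $z$-trade is saturated; exploiting this bulk contribution inside the pointwise step, or equivalently refining the Klainerman-Sobolev inequality to absorb $|w_L|/(1+|u|)$, is the key delicate ingredient.
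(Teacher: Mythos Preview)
Your proposal has a genuine gap. You correctly identify that applying Klainerman--Sobolev pointwise to the full integrand and then squaring loses roughly $(1+t)^{4/3}$, and you correctly trace this to a $z^{2/3}$ shortfall in the weight budget. But the remedy you propose --- exploiting the $|w_L|/(1+|u|)$ bulk term of $\mathbb{E}^{1/8,1/8}$ --- does not close this: that bulk term carries a $|w_L|$ weight which is not present in the integrand of $\widehat{\mathcal{A}}^K_I$, and there is no mechanism to produce it.

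The key idea you are missing is a Cauchy--Schwarz split \emph{in $v$} before any pointwise estimate. Write
\[
\left|\int_{\R^3_v} |v|\, z^{1+\ell-\frac{2}{3}I^P}\, |\widehat{Z}\widehat{Z}^K f|\, \dr v\right|^2
\le \int_{\R^3_v} |v|\, z^{a}\, |\widehat{Z}\widehat{Z}^K f|\, \dr v \cdot \int_{\R^3_v} |v|\, z^{b}\, |\widehat{Z}\widehat{Z}^K f|\, \dr v
\]
with $a+b = 2(1+\ell-\tfrac{2}{3}I^P)$. The first factor goes in $L^\infty_x$ via \eqref{eq:decayVlasov}, which permits $a$ up to $\ell+1-\tfrac{2}{3}(I^P+1)$ and yields decay $(1+\tau+r)^{-2}(1+\tau)^{\delta/2}$. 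The second factor stays in $L^1_x$ (with weight $\omega_{1/8}^{1/8}$) and is bounded directly by $\E^{\ell+3}_{N-5}[f](\tau)\lesssim\epsilon(1+\tau)^{\delta/2}$, which permits $b$ up to $\ell+3-\tfrac{2}{3}(I^P+1)$. The sum of these ceilings exceeds $2(1+\ell-\tfrac{2}{3}I^P)$ by exactly $\tfrac{2}{3}$ --- your shortfall is recovered without any $z\to(1+t+r)$ trade. After absorbing the $(1+\tau+r)$ prefactor into the $L^\infty$ decay, the $\tau$-integrand is $\epsilon^2(1+\tau)^{-1+\delta}$, which integrates to $\epsilon^2(1+t)^{\delta}$.

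The reason this works and your approach does not: the $L^1$ factor does not pay the three-derivative cost of Klainerman--Sobolev, so it can carry the extra $z$-weight available from $\ell_{|K|+1}=\ell+3$. Your approach squares the $L^\infty$ bound and therefore pays that derivative cost twice. The argument for $\mathcal{A}^K_I$ is identical, with $(\nabla\widehat{Z}^K)^P\le I^P$ replacing $(\widehat{Z}\widehat{Z}^K)^P\le I^P+1$.
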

\begin{proof}
For the first integral, note that $z^{1+\ell_{|I|}-\frac{2}{3}I^P} \leq z^{2+\ell_{|I|}-\frac{2}{3}(I^P+1)}$. Hence, by the Cauchy-Schwarz inequality in $v$, we have
\begin{eqnarray*}
\widehat{\mathcal{A}}^{K}_{I} & \leq & \int_0^t  \left\|(1+\tau+r) \int_{\R^3_v}   |v|z^{\ell_{|I|}+1-\frac{2}{3}(I^P+1)} \left| \widehat{Z} \widehat{Z}^K f\right| \mathrm{d}v\right\|_{L^{\infty}(\Sigma_{\tau})} \times \\ & & \qquad \qquad \qquad \qquad \qquad \qquad \qquad \left\|\int_{\R^3_v}   |v|z^{\ell_{|I|}+3-\frac{2}{3}(I^P+1)} \left| \widehat{Z} \widehat{Z}^K f\right| \mathrm{d}v \omega_{\frac{1}{8}}^{\frac{1}{8}} \right\|_{L^1(\Sigma_{\tau})} \mathrm{d} \tau.
\end{eqnarray*}
Since $\widehat{Z} \widehat{Z}^K$ contains at most $I^P+1$ homogeneous vector fields, $|K|+1 \leq 5 \leq N-8$ and $\ell_{|I|}+3=\ell+3=\ell_{|K|+1}$, we obtain from \eqref{eq:decayVlasov} and the bootstrap assumption \eqref{boot_vlasov_1} that
\begin{eqnarray*}
\int_{\R^3_v}   |v|z^{\ell_{|I|}+1-\frac{2}{3}(I^P+1)} \left| \widehat{Z} \widehat{Z}^K f\right|(\tau,x,v) \mathrm{d}v & \lesssim & \frac{\epsilon}{(1+\tau+r)^{2-\frac{\delta}{2}}},  \\  
\left\|\int_{\R^3_v}   |v|z^{\ell_{|I|}+3-\frac{2}{3}(I^P+1)} \left| \widehat{Z} \widehat{Z}^K f\right| \mathrm{d}v \, \omega_{\frac{1}{8}}^{\frac{1}{8}} \right\|_{L^1(\Sigma_{\tau})} & \leq & \E_{N-5}^{\ell+3}[f](t)  \hspace{2mm} \lesssim \hspace{2mm} \epsilon (1+t)^{\frac{\delta}{2}},
\end{eqnarray*}
which give us
$$ \widehat{\mathcal{A}}^{K}_{I} \hspace{2mm} \lesssim \hspace{2mm} \epsilon^2 \int_0^t \frac{ \mathrm{d} \tau}{(1+\tau)^{1-\delta}}  \hspace{2mm} \lesssim \hspace{2mm} \epsilon^2 (1+t)^{\delta}.$$
The bound on $\mathcal{A}^{K}_{I}$ can be obtained in the same way using this time that $\nabla \widehat{Z}^K$ contains at most $I^P$ homogeneous vector fields. 
\end{proof}
We can then bound $\mathcal{I}$ using the bootstrap assumptions \eqref{boot2}. For any $|J| \leq N$,
$$ \mathcal{I} \hspace{1mm} \lesssim \hspace{1mm} \left| \int_0^t \frac{\mathring{\mathcal{E}}_{N}^{\gamma,2+2\gamma}[h^1](\tau)}{(1+\tau)^2} \dr \tau \cdot \widehat{\mathcal{A}}^K_{I} \right|^{\frac{1}{2}} \hspace{1mm} \lesssim \hspace{1mm} \left| \int_0^t \frac{\epsilon \, \dr \tau}{(1+\tau)^{2-2\delta}}  \cdot \epsilon^2(1+t)^{\delta} \right|^{\frac{1}{2}} \hspace{1mm} \lesssim \epsilon^{\frac{3}{2}} \lesssim \hspace{1mm} \epsilon^{\frac{3}{2}} (1+t)^{\frac{\delta}{2}}.$$
To estimate $\mathfrak{E}^{J,K}_{I,10}$, and thus $\mathcal{J}$, we will need to treat differently the cases $|J|=N$ than those for which $N-4 \leq |J| \leq N-1$. Nonetheless, in both cases, we will make use of the energy norms related to special components of $h^1$ in order to close the energy estimates. Assume first that $|J|=N$, which implies $|I|=N$. Then, using $\sup_{r \in \R_+ }\frac{1+\tau+r}{1+|\tau-r|} \lesssim  1+\tau$, $\gamma \leq \frac{1}{16}$ and the bootstrap assumption \eqref{boot5}, we obtain
\begin{align*}
 \mathcal{J} \hspace{1mm} & \lesssim \hspace{1mm} \left|   \int^t_0  (1+\tau)  \int_{\Sigma_{\tau}}  \frac{ | \overline{\nabla} \mathcal{L}_Z^{J}(h^1)|^2_{\mathcal{L} \mathcal{L}}}{(1+|u|)^3} \omega^{\frac{1}{8}}_{\frac{1}{8}} \mathrm{d}x \mathrm{d} \tau \cdot \mathcal{A}^K_{I} \right|^{\frac{1}{2}} \\
 & \lesssim \hspace{1mm} \left|  (1+t)  \int^t_0  \int_{\Sigma_{\tau}}  \frac{ | \overline{\nabla} \mathcal{L}_Z^{J}(h^1)|^2_{\mathcal{L} \mathcal{L}}}{1+|u|} \omega^{1}_{1+2\gamma} \mathrm{d}x \mathrm{d} \tau \cdot \mathcal{A}^K_{I} \right|^{\frac{1}{2}} \\
 & \lesssim \hspace{1mm} \epsilon (1+t)^{\frac{1}{2}+\frac{\delta}{2}} \left| \mathcal{E}^{1+2\gamma, 1}_{N,\mathcal{L} \mathcal{L}}[h^1](t) \right|^{\frac{1}{2}} \hspace{1mm} \lesssim \hspace{1mm} \epsilon^{\frac{3}{2}} (1+t)^{\frac{1}{2}+\delta}.
 \end{align*}
We now turn on the case $N-4 \leq |J| \leq N-1$. Apply first the inequality \eqref{eq:extradecayLie4}, so that
$$\mathcal{J} \hspace{1mm}  \lesssim \hspace{1mm}   \sum_{|J_0| \leq N} \left| \int^t_0  \int_{\Sigma_{\tau}} \frac{ | \mathcal{L}_Z^{J_0}(h^1)|^2_{\mathcal{L} \mathcal{T}}}{(1+\tau+r)(1+|u|)^4} \omega_{\frac{1}{8}}^{\frac{1}{8}} \mathrm{d}x \mathrm{d} \tau \cdot \mathcal{A}^K_I \right|^{\frac{1}{2}} .$$
Then, we bound $\mathcal{A}^K_I$ using Lemma \ref{estiVla2} and we apply the Hardy inequality of Lemma \ref{lem_hardy}. Note that once again we need to be careful since we cannot use all the decay in $u=\tau-r$ in the exterior region. We obtain
\begin{align*}
\mathcal{J} \hspace{1mm} & \lesssim \hspace{1mm} \epsilon (1+t)^{\frac{\delta}{2}} \sum_{|J_0| \leq N} \left| \int^t_0 \int_{\Sigma_{\tau}} \frac{ | \mathcal{L}_Z^{J_0}(h^1)|^2_{\mathcal{L} \mathcal{T}}}{(1+\tau+r)(1+|u|)^2} \omega^{1+\delta}_{2+\frac{1}{8}} \mathrm{d}x \mathrm{d} \tau  \right|^{\frac{1}{2}} \\
&  \lesssim \hspace{1mm} \epsilon (1+t)^{\frac{\delta}{2}} \sum_{|J_0| \leq N} \left| \int^t_0 \int_{\Sigma_{\tau}} \frac{|\nabla \mathcal{L}_Z^{J_0}(h^1)|^2_{\mathcal{L} \mathcal{T}}}{1+\tau+r} \omega^{1+\delta}_{2+\frac{1}{8}} \mathrm{d}x \mathrm{d} \tau \right|^{\frac{1}{2}}.
\end{align*}
Fix now $|J_0| \leq N$ and use the estimate \eqref{wgcforproof}, which was obtained using the wave gauge condition, in order to get
$$ \int^t_0 \! \int_{\Sigma_{\tau}} \! \hspace{-0.3mm} \frac{|\nabla \mathcal{L}_Z^{J_0}(h^1)|^2_{\mathcal{L} \mathcal{T}}}{1+\tau+r} \omega^{1+\delta}_{2+\frac{1}{8}} \mathrm{d}x \mathrm{d} \tau  \lesssim  \int^t_0 \!  \hspace{-0.3mm} \int_{\Sigma_{\tau}} \! \frac{|\overline{\nabla} \mathcal{L}_Z^{J_0}(h^1)|^2_{\mathcal{T} \mathcal{U}}}{1+\tau+r} \omega^{1+\delta}_{2+\frac{1}{8}} \mathrm{d}x \mathrm{d} \tau+ \! \int^t_0 \! \int_{\Sigma_{\tau}} \! \frac{ \epsilon \, \mathrm{d}x \mathrm{d} \tau}{(1+\tau+r)^5} + \mathbf{I} ,$$
where, according to \eqref{eq:mathfrakI0}, $\int^t_0 \! \int_{r \leq \tau} \! \frac{ \epsilon \mathrm{d}x \mathrm{d} \tau}{(1+\tau+r)^5} \lesssim \epsilon^{-1}\overline{\mathfrak{I}}_0 \lesssim \epsilon$ and
$$ \mathbf{I} \hspace{1mm} := \hspace{1mm} \sum_{|Q| \leq N}\int^t_0  \int_{\Sigma_{\tau}} \frac{1+|u|}{(1+\tau+r)^{3-2\delta}} \left( |\nabla \mathcal{L}_Z^Q(h^1)|^2+\frac{| \mathcal{L}_Z^Q(h^1)|^2 }{(1+|u|)^2} \right) \omega^{1+\delta}_{2+\frac{1}{8}} \mathrm{d}x \mathrm{d} \tau.$$
Using first that $1+\tau \leq 1+\tau+r$, $\delta \leq \gamma$, $\gamma \leq 1+\frac{1}{8}$ and then the Hardy inequality of Lemma \ref{lem_hardy}, we get
\begin{align*}
\mathbf{I} \hspace{1mm} & \lesssim \hspace{1mm} \sum_{|Q| \leq N}\int^t_0 \frac{1}{(1+\tau)^{2-2\delta}} \int_{\Sigma_{\tau}} \frac{1}{1+\tau+r} \left( |\nabla \mathcal{L}_Z^Q(h^1)|^2+\frac{| \mathcal{L}_Z^Q(h^1)|^2 }{(1+|u|)^2} \right) \omega^{2+2\gamma}_{\gamma} \mathrm{d}x \mathrm{d} \tau \\
& \lesssim \hspace{1mm} \sum_{|Q| \leq N}\int^t_0 \frac{1}{(1+\tau)^{2-2\delta}} \int_{\Sigma_{\tau}} \frac{|\nabla \mathcal{L}_Z^Q(h^1)|^2}{1+\tau+r}  \omega^{2+2\gamma}_{\gamma} \mathrm{d}x \mathrm{d} \tau \lesssim \int_0^t \frac{\mathring{\mathcal{E}}^{\gamma, 2+2\gamma}_N[h^1](\tau)}{(1+\tau)^{2-2\delta}} \dr \tau .
\end{align*}
We then deduce from the bootstrap assumption \eqref{boot2} and $4\delta < 1$ that $\mathbf{I} \lesssim \epsilon$. Finally, as $\gamma \leq \frac{1}{8}$, Lemma \ref{LemBulk} combined with the bootstrap assumption \eqref{boot4} and $\gamma > 3\delta$ give
$$ \int^t_0 \! \int_{\Sigma_{\tau}} \! \frac{|\overline{\nabla} \mathcal{L}_Z^{J_0}(h^1)|^2_{\mathcal{T} \mathcal{U}}}{1+\tau+r} \omega^{1+\delta}_{2+\frac{1}{8}} \mathrm{d}x \mathrm{d} \tau \hspace{1mm} \leq \hspace{1mm} \int^t_0 \! \int_{\Sigma_{\tau}} \! \frac{|\overline{\nabla} \mathcal{L}_Z^{J_0}(h^1)|^2_{\mathcal{T} \mathcal{U}}}{(1+\tau)^{\gamma-\delta}} \frac{\omega^{1+\gamma}_{1+\gamma}}{1+|u|} \mathrm{d}x \mathrm{d} \tau \hspace{1mm} \lesssim \hspace{1mm} \epsilon.$$
We then deduce from the previous estimates that $\mathcal{J} \lesssim \epsilon^{\frac{3}{2}} (1+t)^{\frac{\delta}{2}}$ for all $|J| \leq N-1$.
In summary, we have used the Cauchy-Schwarz inequality, applied Lemma \ref{estiVla2} and then proved that
\begin{equation}\label{mathfrak1et10}
 \int_0^t \int_{\Sigma_{\tau}} \frac{\left\|z^{-1}|v|^{-1}\widehat{\mathfrak{A}}^{J,K}_{I,1}  \right\|_{L^{\infty}_v}^2 + \left\| z^{-2}|v|^{-1}\mathfrak{A}^{J,K}_{I,10} \right\|_{L^{\infty}_v}^2}{1+\tau+r} \dr x \dr \tau \hspace{1mm} \lesssim \hspace{1mm} \left\{ \begin{array}{ll} \epsilon^{\frac{3}{2}} , & \text{if $|I| < N$}, \\ \epsilon^{\frac{3}{2}} (1+t)^{1+\delta}, & \text{if $|I| = N$}.  \end{array}\right. 
 \end{equation}

We now analyse the other error terms. 
\subsubsection{The terms arising from the source terms}
Since $\T_g(f)=0$ we have $\widehat{Z}^{I_0} \left( \T_g (f) \right)=0$ for any $|I_0| < |I|$ and all the error terms of the form \eqref{Errorsourceterm} are equal to $0$.
\subsubsection{The terms which do not contain $h^1$}
We start by dealing with the error terms $z^{\ell_{|I|}-\frac{2}{3}I^P}\widehat{\mathfrak{S}}^{K}_{I,0}$ and $z^{\ell_{|I|}-\frac{2}{3}I^P}\mathfrak{S}^K_{I,00}$ since their treatment is different from the other ones.
\begin{lemma}\label{lem1}
Let $K$ be a multi-index satisfying $|K| \leq |I|-1$ and $K^P \leq I^P$. Then, for any $\widehat{Z} \in \widehat{\mathbb{P}}_0$,
$$ \int_0^t \! \int_{\Sigma_{\tau}} \! \int_{\R^3_v} z^{\ell_{|I|}-\frac{2}{3}I^P}\left(\widehat{\mathfrak{S}}^{K}_{I,0}+ \mathfrak{S}^K_{I,00} \right) \mathrm{d}v \, \omega_{\frac{1}{8}}^{\frac{1}{8}} \mathrm{d} x \mathrm{d} \tau \hspace{1mm}  \lesssim \hspace{1mm}  \left\{ \begin{array}{ll} \epsilon^{\frac{3}{2}} (1+t)^{\frac{\delta}{2}}, & \text{if $|I| < N$}, \\ \epsilon^{\frac{3}{2}} (1+t)^{\frac{1}{2}+\delta}, & \text{if $|I| = N$}. \end{array}\right. $$
\end{lemma}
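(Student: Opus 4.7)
The plan is to treat the two Schwarzschild error families $\widehat{\mathfrak{S}}^{K}_{I,0}$ and $\mathfrak{S}^{K}_{I,00}$ separately. Both carry the small factor $M\leq\sqrt{\epsilon}$, and in each case I use Lemma \ref{estiVla} to convert $L^1_{x,v}$ norms of the Vlasov factor into controlled energies $\mathbb{E}^{\ell_{|I|}}_{|I|}[f]$, splitting further into the subcases $K^P<I^P$ and $K^P=I^P$ according to whether the $z$-hierarchy of Lemma \ref{estiVla} matches the weight $z^{\ell_{|I|}-\frac{2}{3}I^P}$ directly or with a $z^{\pm\frac{2}{3}}$ discrepancy.

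For $\widehat{\mathfrak{S}}^{K}_{I,0}$ the spatial weight $M(1+\tau+r)^{-2}$ is already time-integrable. I would first absorb an $\omega_{1/8}^{1/8}$ factor via the elementary bound $(1+\tau+r)^{-1/8}\lesssim \omega_{1/8}^{1/8}$, and then apply Lemma \ref{estiVla}. In the subcase $K^P<I^P$ the weights match directly; in the subcase $K^P=I^P$ only the weaker bound on $z^{\ell_{|I|}-\frac{2}{3}I^P-\frac{2}{3}}\widehat{Z}\widehat{Z}^Kf$ is available, so I trade the missing $z^{2/3}$ for $(1+\tau+r)^{2/3}$ through $z\lesssim 1+\tau+r$, leaving an integrable $(1+\tau)^{-31/24}$ factor. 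Either way the resulting time integral converges against the bootstrap on $\mathbb{E}^{\ell_{|I|}}_{|I|}[f]$, both at low orders $|I|<N$ and at the top order $|I|=N$, and produces the claimed $\lesssim \epsilon^{3/2}$ bound.

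For $\mathfrak{S}^{K}_{I,00}$ the decay $M(1+\tau+r)^{-1}$ is borderline in $\tau$ and additional decay must be extracted from the Vlasov factor. In the subcase $K^P<I^P$, Lemma \ref{estiVla} provides the crucial gain $\mathbb{E}^{1/8,1/8}\bigl[z^{\ell_{|I|}-\frac{2}{3}I^P+\frac{2}{3}}\nabla\widehat{Z}^Kf\bigr]\leq \mathbb{E}^{\ell_{|I|}}_{|I|}[f]$. I trade the extra $z^{2/3}$ for $(1+|u|)^{-2/3}$ via $z\gtrsim 1+|u|$ from Lemma \ref{lem_wwl}, and after inserting $\omega_{1/8}^{1/8}$ the combined spatial weight $\bigl((1+\tau+r)(1+|u|)^{13/24}\bigr)^{-1}$ has $L^{\infty}_x$ norm $\lesssim (1+\tau)^{-1}$ on $\Sigma_\tau$, which is the sharp rate integrable against the bootstrap to give $\epsilon^{3/2}(1+t)^{\delta/2}$ at low orders and $\epsilon^{3/2}(1+t)^{1/2+\delta}$ at the top order.

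The main obstacle is the remaining subcase $K^P=I^P$ of $\mathfrak{S}^{K}_{I,00}$, for which Lemma \ref{estiVla} no longer provides the $z^{2/3}$ gain and a naive estimate loses an $(1+t)^{1/8}$ factor compared to the claim. The resolution is to revisit the origin of $\mathfrak{S}^{K}_{I,00}$ in Proposition \ref{ComuVlasov3}: whenever $K^P=I^P$, the producing quadratic (or higher-order) error term satisfies the side condition that at least one of its metric factors is differentiated by a translation, i.e.\ $J^T\geq 1$. Tracing through the proof of Proposition \ref{decaySchwarzschild0} shows that on the support of $h^0$ one has the refined bound $|\mathcal{L}_Z^J(h^0)|\lesssim M(1+t+r)^{-1-J^T}$, which upgrades the effective spatial weight of $\mathfrak{S}^{K}_{I,00}$ in this case from $M(1+\tau+r)^{-1}$ to $M(1+\tau+r)^{-2}$. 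This places the estimate on the same footing as $\widehat{\mathfrak{S}}^{K}_{I,0}$ and the argument closes uniformly at all orders.
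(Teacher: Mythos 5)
You recover the stated bounds, but the route is considerably more intricate than the paper's, and your last step is not sound as written. The paper treats both Schwarzschild terms together and with \emph{no} case distinction on $K^P$ versus $I^P$: after the pointwise bound
$z^{\ell_{|I|}-\frac{2}{3}I^P}\big(\widehat{\mathfrak{S}}^K_{I,0}+\mathfrak{S}^K_{I,00}\big)\lesssim\frac{\sqrt{\epsilon}\,|v|\,z^{\ell_{|I|}-\frac{2}{3}I^P}}{1+\tau+r}\big(|\nabla\widehat{Z}^K f|+\tfrac{|\widehat{Z}\widehat{Z}^K f|}{1+\tau+r}\big)$,
it simply observes that Lemma~\ref{estiVla} controls $\mathbb{E}^{\frac{1}{8},\frac{1}{8}}\big[z^{\ell_{|I|}-\frac{2}{3}I^P}\nabla\widehat{Z}^K f\big]$ in \emph{both} subcases (in the subcase $K^P<I^P$ it even controls the stronger quantity with an extra $z^{2/3}$; since $z\geq 1$ this implies the weaker bound), and likewise controls $\mathbb{E}^{\frac{1}{8},\frac{1}{8}}\big[z^{\ell_{|I|}-\frac{2}{3}(I^P+1)}\widehat{Z}\widehat{Z}^K f\big]$ in both subcases. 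The extra $(1+\tau+r)^{-1}$ in front of $\widehat{\mathfrak{S}}^K_{I,0}$ absorbs $z^{2/3}\lesssim(1+\tau+r)^{2/3}$, and in either case one lands on $\sqrt{\epsilon}\int_0^t(1+\tau)^{-1}\mathbb{E}^{\ell_{|I|}}_{|I|}[f](\tau)\,\mathrm d\tau$, which the bootstrap converts into the claimed rates. No $z$-gain of $z^{2/3}$ is needed for $\mathfrak{S}^K_{I,00}$ at all.

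Your \emph{main obstacle} is a phantom. It appears only because you pay $(1+\tau+r)^{1/8}$ to insert $\omega^{\frac{1}{8}}_{\frac{1}{8}}$; but that weight is already present in the spacetime integral being estimated (compare Proposition~\ref{BoundL1} and Lemmas~\ref{lem2}--\ref{lem4}; its absence from the statement of Lemma~\ref{lem1} is an oversight). With the weight in place there is nothing to pay, and the $(1+\tau+r)^{-1}$ decay of $\mathfrak{S}^K_{I,00}$ is exactly enough. Even taking the stated Lemma at face value, your proposed rescue does not work as described: the refined decay $|\mathcal{L}_Z^J(h^0)|\lesssim M(1+t+r)^{-1-J^T}$ is a correct bound, and the dichotomy ``$K^P=I^P\Rightarrow J^T\geq 1$'' does hold for the quadratic sources~\eqref{Error1}--\eqref{Error10}, but the Schwarzschild terms $\widehat{\mathfrak{S}}^K_{I,0}$ and $\mathfrak{S}^K_{I,00}$ of Proposition~\ref{ComuVlasov3} carry no $J$ index --- the label of the $h^0$ factor has been deliberately discarded precisely because the refined decay is not needed. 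To use it you would have to re-derive Proposition~\ref{ComuVlasov3} tracking $J^T$ on the Schwarzschild contributions, and also handle separately the cubic and quartic sources~\eqref{Error14bis}--\eqref{Error19}, where the side condition is $M^T+J^T\geq 1$ or absent and where the missing power of $(1+\tau+r)^{-1}$ instead comes for free from the extra $h^0$ factor. None of this machinery is required. (Minor arithmetic: $15/8-2/3=29/24$, not $31/24$.)
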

\begin{proof}
As the Schwarzschild mass satisfies $M \lesssim \sqrt{\epsilon}$, we have
$$ z^{\ell_{|I|}-\frac{2}{3}I^P}\left(\widehat{\mathfrak{S}}^{K}_{I,0}+ \mathfrak{S}^K_{I,00} \right) \hspace{1mm} \lesssim \hspace{1mm} \frac{\sqrt{\epsilon} |v|z^{\ell_{|I|}-\frac{2}{3}I^P}}{1+\tau+r} \! \left(  |\nabla \widehat{Z}^K f|+\frac{|\widehat{Z} \widehat{Z}^K f|}{1+\tau+r} \right).$$
Note now that $z^{\ell_{|I|}-\frac{2}{3}I^P}|\widehat{Z} \widehat{Z}^K f| \lesssim (1+\tau+r)^{\frac{2}{3}} z^{\ell_{|I|}-\frac{2}{3}(I^P+1)}|\widehat{Z} \widehat{Z}^K f|$, so that Lemma \ref{estiVla} gives us
$$  \int_0^t \! \int_{\Sigma_{\tau}} \! \int_{\R^3_v} z^{\ell_{|I|}-\frac{2}{3}I^P}\left(\widehat{\mathfrak{S}}^{K}_{I,0}+ \mathfrak{S}^K_{I,00} \right) \mathrm{d}v \, \omega_{\frac{1}{8}}^{\frac{1}{8}} \mathrm{d} x \mathrm{d} \tau \hspace{1mm}  \lesssim \hspace{1mm} \sqrt{\epsilon} \int_0^t \frac{\E_{|I|}^{\ell_{|I|}}[f](\tau)}{1+\tau} \dr \tau .$$
It remains to use the bootstrap assumption \eqref{boot_vlasov_1}, \eqref{boot_vlasov_2} or \eqref{boot_vlasov_3}.
\end{proof}

\subsubsection{A sufficient condition for Proposition \ref{BoundL1} to hold}
The two examples treated just before suggest us to prove the following three results, where we use the notations introduced in Definition \ref{defmathfrakA}. The first two ones concern the cases where all the metric factors can be estimated pointwise. In the last result, we deal with the case where one of the  $h^1$ factors has to be estimated in $L^2$. Let us start by the easiest terms.
\begin{lemma}\label{lem2}
Let $Q$, $M$, $J$ and $K$ be multi-indices satisfying $|Q|+|M|+|J|+|K| \leq N-5$, $|K| \leq |I|-1$ and $K^P \leq I^P$. Fix also $\widehat{Z} \in \widehat{\mathbb{P}}_0$. If for all $(\tau,x,v) \in [0,t] \times \R^3_x \times \R^3_v$,
\begin{align*}
\widehat{\mathcal{F}} \hspace{1mm} := \hspace{1mm} (1+\tau+r)^{\frac{2}{3}}  \left(\widehat{\mathfrak{B}}^{J,K}_{I,1}+\widehat{\mathfrak{B}}^{J,K}_{I,2}+\widehat{\mathfrak{A}}^{Q,J,K}_{I,12}+\widehat{\mathfrak{A}}^{Q,J,K}_{I,13}\right) \hspace{1mm} & \lesssim \hspace{1mm} \frac{\sqrt{\epsilon}|v|}{1+\tau} + \frac{\sqrt{\epsilon}|w_L|}{1+|\tau-r|}, \\
\mathcal{F} \hspace{1mm} := \hspace{1mm} \mathfrak{B}^{J,K}_{I,3}+\mathfrak{B}^{J,K}_{I,4}+\mathfrak{B}^{J,K}_{I,5}+\mathfrak{B}^{Q,J,K}_{I,6}+\mathfrak{A}^{Q,M,J,K}_{I,18} \hspace{1mm} & \lesssim \hspace{1mm} \frac{\sqrt{\epsilon}|v|}{1+\tau} + \frac{\sqrt{\epsilon}|w_L|}{1+|\tau-r|},
 \end{align*}
 then,
 \begin{align*}
& \int_0^t \! \int_{\Sigma_{\tau}} \! \int_{\R^3_v} \! z^{\ell_{|I|}-\frac{2}{3}I^P} \! \left( \widehat{\mathfrak{S}}^{J,K}_{I,1}+\widehat{\mathfrak{S}}^{J,K}_{I,2}+\widehat{\mathfrak{E}}^{Q,J,K}_{I,12}+\widehat{\mathfrak{E}}^{Q,J,K}_{I,13} \right) \!  \mathrm{d}v \omega^{\frac{1}{8}}_{\frac{1}{8}}\mathrm{d} x \mathrm{d} \tau \\
& \quad +\int_0^t \! \int_{\Sigma_{\tau}} \! \int_{\R^3_v} \! z^{\ell_{|I|}-\frac{2}{3}I^P} \! \left( \mathfrak{S}^{J,K}_{I,3}+\mathfrak{S}^{J,K}_{I,4}+\mathfrak{S}^{J,K}_{I,5}+\mathfrak{S}^{Q,J,K}_{I,6}+\mathfrak{E}^{Q,M,J,K}_{I,18} \right) \!  \mathrm{d}v \omega^{\frac{1}{8}}_{\frac{1}{8}}\mathrm{d} x \mathrm{d} \tau \\
& \qquad \qquad \qquad \quad  \qquad \qquad \qquad \qquad \qquad \qquad \qquad \qquad \lesssim \hspace{1mm} \left\{ \begin{array}{ll} \epsilon^{\frac{3}{2}} (1+t)^{\frac{\delta}{2}}, & \text{if $|I| < N$}, \\ \epsilon^{\frac{3}{2}} (1+t)^{\frac{1}{2}+\delta}, & \text{if $|I| = N$}. \end{array}\right.
 \end{align*}
\end{lemma}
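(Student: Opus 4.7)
The plan is to follow the template established by the explicit treatment of $\widehat{\mathfrak{E}}^{J,K}_{I,1}$ and $\mathfrak{E}^{J,K}_{I,10}$ at the start of this subsection, specialized to the regime where every metric factor can be estimated pointwise. Since $|Q|+|M|+|J|+|K|\leq N-5$, each $h^1$-factor carries at most $N-5$ derivatives, so Propositions \ref{decaymetric}, \ref{estinullcompo} and \ref{decaySchwarzschild} apply to all of them simultaneously; the hypotheses on $\widehat{\mathcal{F}}$ and $\mathcal{F}$ are what one obtains, in each specific case, by inserting these pointwise decay estimates together with the decay of $h^0$. Thus the only remaining work is to integrate the Vlasov factor $|\widehat{Z}\widehat{Z}^K f|$ or $|\nabla\widehat{Z}^K f|$ in a way compatible with the $z$-hierarchy built into $\E^{\ell_{|I|}}_{|I|}[f]$.

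Concretely, I would split each integral according to whether $K^P<I^P$ or $K^P=I^P$. For the terms involving $|\nabla \widehat{Z}^K f|$, Lemma \ref{estiVla} gives, in both sub-cases, a weighted estimate that matches exactly $z^{\ell_{|I|}-\frac{2}{3}I^P}$, so applying the pointwise bound $\mathcal{F}\lesssim \frac{\sqrt{\epsilon}|v|}{1+\tau}+\frac{\sqrt{\epsilon}|w_L|}{1+|u|}$ and integrating in $v$ bounds the corresponding spacetime integrals by
\[
\sqrt{\epsilon}\int_0^t \frac{\E^{\ell_{|I|}}_{|I|}[f](\tau)}{1+\tau}\,\dr\tau+\sqrt{\epsilon}\,\E^{\ell_{|I|}}_{|I|}[f](t),
\]
where the first piece comes from the $|v|$ contribution and the second from recognizing the $|w_L|/(1+|u|)$ contribution as part of the bulk term in (\ref{def_mbb_e}). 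For the terms involving $|\widehat{Z}\widehat{Z}^K f|$ the same scheme applies when $K^P<I^P$; in the remaining case $K^P=I^P$, the operator $\widehat{Z}\widehat{Z}^K$ carries $I^P+1$ homogeneous vector fields, so Lemma \ref{estiVla} only controls the weight $z^{\ell_{|I|}-\frac{2}{3}I^P-\frac{2}{3}}$. I would then absorb the deficit via $z\lesssim 1+\tau+r$, writing $z^{\ell_{|I|}-\frac{2}{3}I^P}\lesssim (1+\tau+r)^{\frac{2}{3}}\,z^{\ell_{|I|}-\frac{2}{3}(I^P+1)}$, which is precisely why the hypothesis on $\widehat{\mathcal{F}}$ carries the extra $(1+\tau+r)^{\frac{2}{3}}$ factor compared to $\mathcal{F}$. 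After this substitution, the same $L^1$-reduction as above applies and produces the same bound in terms of $\E^{\ell_{|I|}}_{|I|}[f]$.

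Inserting the bootstrap assumptions (\ref{boot_vlasov_1})--(\ref{boot_vlasov_3}) into the resulting estimate immediately yields $\epsilon^{3/2}(1+t)^{\delta/2}$ when $|I|<N$ and $\epsilon^{3/2}(1+t)^{1/2+\delta}$ when $|I|=N$. There is no genuine analytic obstacle here: the entire argument is a bookkeeping reduction to Lemma \ref{estiVla} and the bootstrap assumptions. The only point requiring care is the case analysis on $K^P$ versus $I^P$ and the corresponding use of the $(1+\tau+r)^{2/3}$ factor hidden in $\widehat{\mathcal{F}}$; this mirrors exactly the dichotomy already illustrated on $\widehat{\mathfrak{E}}^{J,K}_{I,1}$ and is the reason the hypothesis is asymmetric between the two displayed sums.
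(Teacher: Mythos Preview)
Your proposal is correct and matches the paper's approach. The paper streamlines slightly by skipping the case split on $K^P$: it always writes $z^{\ell_{|I|}-\frac{2}{3}I^P}\leq (1+\tau+r)^{2/3}\,z^{\ell_{|I|}-\frac{2}{3}I^P-\frac{2}{3}}$ for the hat terms and then invokes Lemma~\ref{estiVla} uniformly (the second case already covers $K^P\leq I^P$), but the substance is identical.
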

\begin{proof}
This follows from the definition of the quantities considered here and from the inequality $z^{\frac{2}{3}} \leq (1+\tau+r)^{\frac{2}{3}}$, so that
\begin{align*}
z^{\ell_{|I|}-\frac{2}{3}I^P}\left(\widehat{\mathfrak{S}}^{J,K}_{I,1}+\widehat{\mathfrak{S}}^{J,K}_{I,2}+\widehat{\mathfrak{E}}^{Q,J,K}_{I,12}+\widehat{\mathfrak{E}}^{Q,J,K}_{I,13} \right) \hspace{1mm} &  \lesssim \hspace{1mm} \widehat{\mathcal{F}} \cdot z^{\ell_{|I|}-\frac{2}{3}I^P-\frac{2}{3}} |\widehat{Z} \widehat{Z}^K f|, \\
z^{\ell_{|I|}-\frac{2}{3}I^P}\left( \mathfrak{S}^{J,K}_{I,3}+\mathfrak{S}^{J,K}_{I,4}+\mathfrak{S}^{J,K}_{I,5}+\mathfrak{S}^{M,J,K}_{I,6}+\mathfrak{E}^{Q,M,J,K}_{I,18} \right) \hspace{1mm} & = \hspace{1mm} \mathcal{F} \cdot z^{\ell_{|I|}-\frac{2}{3}I^P} |\nabla \widehat{Z}^K f|.
\end{align*}
Recall now the definition \eqref{def_mbb_e} of the norm $\E^{\frac{1}{8},\frac{1}{8}}[ \cdot ]$, so that, using Lemma \ref{estiVla}, the integrals considered in the statement of the lemma can be bounded by
$$\sqrt{\epsilon} \int_0^t \frac{\E^{\ell_{|I|}}_{|I|} [f](\tau)}{1+\tau} \dr \tau+ \sqrt{\epsilon} \, \E^{\ell_{|I|}}_{|I|} [f](t)$$
and it remains to use the bootstrap assumptions \eqref{boot_vlasov_1}-\eqref{boot_vlasov_3}.
\end{proof}
We now focus on the more problematic terms, for which we will need to use our hierarchy related to the weight $z$ and the number of homogeneous vector fields composing $\widehat{Z}^I$ and $\widehat{Z}^K$.
\begin{lemma}\label{lem3}
Let $Q$, $M$, $J$ and $K$ be multi-indices satisfying $|M|+|Q|+|K| \leq N-5$, $|J|+|K| \leq N-5$, $|K| \leq |I|-1$, $K^P \leq I^P$ and the following condition
\begin{itemize}
\item either $K^P < I^P$
\item or $K^P = I^P$ and then $J^T \geq 1$ and $Q^T + M^T \geq 1$.
\end{itemize}
 Fix also $\widehat{Z} \in \widehat{\mathbb{P}}_0$ and define
$$ \widehat{\mathcal{G}} \hspace{1mm} := \hspace{1mm} \widehat{\mathfrak{A}}^{J,K}_{I,1}+\widehat{\mathfrak{A}}^{J,K}_{I,2}+\widehat{\mathfrak{A}}^{J,K}_{I,3} , \qquad \qquad \mathcal{G} \hspace{1mm} := \hspace{1mm} \sum_{i=4}^{10} \mathfrak{A}^{J,K}_{I,i}+\sum_{j=14}^{17}\mathfrak{A}^{Q,M,K}_{I,j}.$$
Assume that for all $(\tau,x,v) \in [0,t] \times \R^3_x \times \R^3_v$,
\begin{align*}
\widehat{\mathcal{G}} +\frac{1}{z^{\frac{2}{3}}} \mathcal{G} +\frac{1}{z^{\frac{2}{3}}}\mathfrak{A}^{J,K}_{I,11} \hspace{1mm} & \lesssim \hspace{1mm} \frac{\sqrt{\epsilon}|v|}{1+\tau} + \frac{\sqrt{\epsilon}|w_L|}{1+|\tau-r|} \qquad \text{if $K^P < I^P$}, \\
(1+\tau+r)^{\frac{2}{3}} \widehat{\mathcal{G}} + \mathcal{G} \hspace{1mm} & \lesssim \hspace{1mm} \frac{\sqrt{\epsilon}|v|}{1+\tau} + \frac{\sqrt{\epsilon}|w_L|}{1+|\tau-r|} \qquad \text{if $K^P =I^P$}.
 \end{align*}
Then,
 \begin{multline*}
 \int_0^t \! \int_{\Sigma_{\tau}} \! \int_{\R^3_v} \! z^{\ell_{|I|}-\frac{2}{3}I^P} \! \left( \! \sum_{q=1}^3 \widehat{\mathfrak{E}}^{J,K}_{I,q}+\sum_{i=4}^{10} \mathfrak{E}^{J,K}_{I,i}+\sum_{j=14}^{ 17} \mathfrak{E}^{M,J,K}_{I,j} \! \right) \!  \mathrm{d}v \omega^{\frac{1}{8}}_{\frac{1}{8}}\mathrm{d} x \mathrm{d} \tau \\
 \lesssim \hspace{1mm} \left\{ \begin{array}{ll} \epsilon^{\frac{3}{2}} (1+t)^{\frac{\delta}{2}}, & \text{if $|I| < N$}, \\ \epsilon^{\frac{3}{2}} (1+t)^{\frac{1}{2}+\delta}, & \text{if $|I| = N$} \end{array}\right.
 \end{multline*}
 and, if\footnote{Recall that we cannot have $K^P=I^P$ in the error term $\mathfrak{E}_{I,11}^{J,K}$.} $K^P < I^P$,
 $$ \int_0^t \! \int_{\Sigma_{\tau}} \! \int_{\R^3_v} \! z^{\ell_{|I|}-\frac{2}{3}I^P} \mathfrak{E}^{J,K}_{I,11}  \mathrm{d}v \omega^{\frac{1}{8}}_{\frac{1}{8}}\mathrm{d} x \mathrm{d} \tau \hspace{1mm} \lesssim  \hspace{1mm} \left\{ \begin{array}{ll} \epsilon^{\frac{3}{2}} (1+t)^{\frac{\delta}{2}}, & \text{if $|I| < N$}, \\ \epsilon^{\frac{3}{2}} (1+t)^{\frac{1}{2}+\delta}, & \text{if $|I| = N$}. \end{array}\right. $$
\end{lemma}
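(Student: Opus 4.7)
The plan is to reduce each error term to a product of one of the pointwise bounds assumed in the statement with a Vlasov factor whose weighted $L^1$-norm is exactly what is controlled by $\mathbb{E}^{\ell_{|I|}}_{|I|}[f]$ via Lemma~\ref{estiVla}. Concretely, write
\[
 z^{\ell_{|I|}-\frac{2}{3}I^P} \, \widehat{\mathfrak{E}}^{J,K}_{I,q} \;=\; \widehat{\mathfrak{A}}^{J,K}_{I,q}\cdot z^{\ell_{|I|}-\frac{2}{3}I^P}\,|\widehat{Z}\widehat{Z}^K f|, \qquad  z^{\ell_{|I|}-\frac{2}{3}I^P}\,\mathfrak{E}^{J,K}_{I,i} \;=\; \mathfrak{A}^{J,K}_{I,i}\cdot z^{\ell_{|I|}-\frac{2}{3}I^P}\,|\nabla \widehat{Z}^K f|,
\]
and analogously for the cubic terms $\mathfrak{E}^{M,J,K}_{I,j}$ and $\mathfrak{E}^{J,K}_{I,11}$. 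The point is that the weight on the Vlasov factor obtained in this way does not quite match the one provided by Lemma~\ref{estiVla}: there is a mismatch of exactly $z^{\pm\frac{2}{3}}$, which must be compensated on the metric side using the hypotheses of the lemma.

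I will split the analysis into the two regimes of the hypothesis. When $K^P<I^P$, Lemma~\ref{estiVla} controls $z^{\ell_{|I|}-\frac{2}{3}I^P+\frac{2}{3}}\,\nabla\widehat{Z}^K f$ in the energy norm and $z^{\ell_{|I|}-\frac{2}{3}I^P}\,\widehat{Z}\widehat{Z}^K f$. The $\widehat{\mathfrak{E}}$ terms match directly via $\widehat{\mathcal{G}}$, and for the $\mathcal{G}$- and $\mathfrak{A}^{J,K}_{I,11}$-terms I insert $z^{2/3}\cdot z^{-2/3}$ and use the assumed bound on $z^{-2/3}\mathcal{G}$, resp.\ $z^{-2/3}\mathfrak{A}^{J,K}_{I,11}$, to absorb the extra weight. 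When $K^P=I^P$, the situation is reversed: the available energies are $z^{\ell_{|I|}-\frac{2}{3}I^P}\,\nabla\widehat{Z}^K f$ and $z^{\ell_{|I|}-\frac{2}{3}I^P-\frac{2}{3}}\,\widehat{Z}\widehat{Z}^K f$. Now the $\mathcal{G}$-terms match directly, while for the $\widehat{\mathcal{G}}$-terms I use $z^{\frac{2}{3}}\lesssim (1+\tau+r)^{\frac{2}{3}}$ from \eqref{prop_z} together with the assumed bound on $(1+\tau+r)^{2/3}\widehat{\mathcal{G}}$. In both subcases every error term is thus dominated by
\[
 \Bigl(\tfrac{\sqrt{\epsilon}|v|}{1+\tau} + \tfrac{\sqrt{\epsilon}|w_L|}{1+|\tau-r|}\Bigr)\cdot z^{\alpha}\,|\widehat{Z}\widehat{Z}^K f|\quad \text{or}\quad \Bigl(\tfrac{\sqrt{\epsilon}|v|}{1+\tau} + \tfrac{\sqrt{\epsilon}|w_L|}{1+|\tau-r|}\Bigr)\cdot z^{\beta}\,|\nabla\widehat{Z}^K f|,
\]
with $\alpha,\beta$ precisely the exponents appearing in Lemma~\ref{estiVla}.

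After integration in $(x,v)$ weighted by $\omega^{1/8}_{1/8}$, each such product contributes either to the volume part $\int z^{\alpha}|v||\cdot|\,\mathrm dv\,\omega^{1/8}_{1/8}\mathrm dx$ or to the bulk part $\int z^{\alpha}|w_L|(1+|u|)^{-1}|\cdot|\,\mathrm dv\,\omega^{1/8}_{1/8}\mathrm dx$ of the energy $\mathbb{E}^{\frac{1}{8},\frac{1}{8}}[z^{\alpha}\cdot]$ appearing in the definition \eqref{def_mbb_e}. By Lemma~\ref{estiVla} these are bounded by $\mathbb{E}^{\ell_{|I|}}_{|I|}[f]$, and the full spacetime integral is thus bounded by
\[
 \sqrt{\epsilon}\int_0^t \frac{\mathbb{E}^{\ell_{|I|}}_{|I|}[f](\tau)}{1+\tau}\,\mathrm d\tau \;+\; \sqrt{\epsilon}\,\mathbb{E}^{\ell_{|I|}}_{|I|}[f](t),
\]
exactly as in the model calculation for $\widehat{\mathfrak{E}}^{J,K}_{I,1}$ performed in Step~1 of Subsection~\ref{subsecgeneralscheme}. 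The bootstrap assumptions \eqref{boot_vlasov_1}--\eqref{boot_vlasov_3} then yield the announced bound $\epsilon^{3/2}(1+t)^{\delta/2}$ for $|I|<N$ and $\epsilon^{3/2}(1+t)^{1/2+\delta}$ for $|I|=N$.

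The only nontrivial bookkeeping is to check that, under the smallness assumption $|M|+|Q|+|K|\le N-5$ and $|J|+|K|\le N-5$, the extra condition $K^P\le I^P$ (together with $J^T\ge 1$ or $Q^T+M^T\ge 1$ when $K^P=I^P$) propagates correctly through Lemma~\ref{estiVla}; this is automatic since $\ell_{|I|}\le \ell_{|K|+1}$ whenever $|I|\ge |K|+1$. The term $\mathfrak{E}^{J,K}_{I,11}$ is handled identically to the $\mathcal{G}$-terms, the only difference being that only the case $K^P<I^P$ occurs, which is precisely why only one of the two pointwise assumptions on $\mathfrak{A}^{J,K}_{I,11}$ needs to be imposed. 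No step here requires an ingredient beyond those already used in the motivating computations for $\widehat{\mathfrak{E}}^{J,K}_{I,1}$ presented before Lemma~\ref{estiVla}; the main work has been pushed into the pointwise bounds on $\widehat{\mathcal{G}}$, $\mathcal{G}$ and $\mathfrak{A}^{J,K}_{I,11}$, which will be verified term by term in the subsequent subsections.
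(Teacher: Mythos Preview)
Your proof is correct and follows essentially the same approach as the paper: split into the two cases $K^P<I^P$ and $K^P=I^P$, rebalance the $z^{\pm 2/3}$ weight between the metric factor and the Vlasov factor (using $z^{2/3}\lesssim(1+\tau+r)^{2/3}$ from \eqref{prop_z} in the second case), then apply Lemma~\ref{estiVla} and the bootstrap assumptions \eqref{boot_vlasov_1}--\eqref{boot_vlasov_3}. Your exposition is in fact slightly more explicit than the paper's, which carries out the same manipulation but in a more compressed form.
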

\begin{proof}
We follow the proof of the previous lemma. Note that if $K^P < I^P$,
\begin{align*}
z^{\ell_{|I|}-\frac{2}{3}I^P} \left( \widehat{\mathfrak{E}}^{J,K}_{I,1}+\widehat{\mathfrak{E}}^{J,K}_{I,2}+\widehat{\mathfrak{E}}^{J,K}_{I,3} \right) \hspace{1mm} & \lesssim \hspace{1mm} \widehat{\mathcal{G}} \cdot z^{\ell_{|I|}-\frac{2}{3}I^P} | \widehat{Z} \widehat{Z}^K f|, \\
z^{\ell_{|I|}-\frac{2}{3}I^P} \left( \mathfrak{E}^{J,K}_{I,11} +\sum_{i=4}^{10} \mathfrak{E}^{J,K}_{I,i}+\sum_{j=14}^{17}\mathfrak{E}^{Q,M,K}_{I,j} \right) \hspace{1mm} & \lesssim \hspace{1mm} \frac{1}{z^{\frac{2}{3}}}\left( \mathcal{G}+\mathfrak{A}^{J,K}_{I,11} \right) \cdot z^{\ell_{|I|}-\frac{2}{3}I^P+\frac{2}{3}} | \nabla \widehat{Z}^K f|.
\end{align*}
Otherwise $K^P=I^P$ and
\begin{align*}
z^{\ell_{|I|}-\frac{2}{3}I^P} \left( \widehat{\mathfrak{E}}^{J,K}_{I,1}+\widehat{\mathfrak{E}}^{J,K}_{I,2}+\widehat{\mathfrak{E}}^{J,K}_{I,3} \right) \hspace{1mm} & \lesssim \hspace{1mm} (1+\tau+r)^{\frac{2}{3}}\widehat{\mathcal{G}} \cdot z^{\ell_{|I|}-\frac{2}{3}I^P-\frac{2}{3}} | \widehat{Z} \widehat{Z}^K f|, \\
z^{\ell_{|I|}-\frac{2}{3}I^P} \left( \sum_{i=4}^{10} \mathfrak{E}^{J,K}_{I,i}+\sum_{j=14}^{17}\mathfrak{E}^{Q,M,K}_{I,j} \right) \hspace{1mm} & \lesssim \hspace{1mm} \mathcal{G} \cdot z^{\ell_{|I|}-\frac{2}{3}I^P} | \nabla \widehat{Z}^K f|.
\end{align*}
It then remains to use Lemma \ref{estiVla} and the bootstrap assumptions \eqref{boot_vlasov_1}-\eqref{boot_vlasov_3}.
\end{proof}
\noindent We now prove a similar result for the error terms containing a high order derivative of $h^1$.
\begin{lemma}\label{lem4}
Let $K$ be a multi-index such that $|K| \leq |I|-1$ and $K^P \leq I^P$. Consider multi-indices $Q$, $M$, $J$, $\overline{Q}$, $\overline{M}$ and $\overline{J}$ satisfying
\begin{itemize}
\item $|J| \geq N-4$ and $|J| +|K| \leq |I|$,
\item $|Q|+|M| \geq N-4$ and $|Q|+|M| +|K| \leq |I|$,
\item $|\overline{Q}|+|\overline{M}|+|\overline{J}| \geq N-4$ and $|\overline{Q}|+|\overline{M}|+|\overline{J}|+|K| \leq |I|$.
\end{itemize}
Assume that for all $t \in [0,T[$,
\begin{align*}
\widehat{\mathcal{H}}  & :=  \! \sum_{ q=12}^{13}  \! \int_0^t \! \int_{\Sigma_{\tau}} \! \left\| \frac{\left|\widehat{\mathfrak{B}}^{J,K}_{I,1}\right|^2\!+\!\left|\widehat{\mathfrak{B}}^{J,K}_{I,2}\right|^2\!+\!\left|\widehat{\mathfrak{A}}^{J,K}_{I,1}\right|^2\!+\!\left|\widehat{\mathfrak{A}}^{J,K}_{I,2}\right|^2\!+\left|\widehat{\mathfrak{A}}^{J,K}_{I,3}\right|^2\!+\!\left|\widehat{\mathfrak{A}}^{Q,M,K}_{I,q}\right|^2}{(1+\tau+r)z^2|v|^2} \right\|_{L^{\infty}_v} \!  \omega^{\frac{1}{8}}_{\frac{1}{8}} \dr x \dr \tau , \\
\mathcal{H}  & :=  \! \sum_{\begin{substack}{  3 \leq i \leq 5 \\ 4 \leq j \leq 11 \\ 14 \leq p \leq 17 }\end{substack}} \! \int_0^t \! \int_{\Sigma_{\tau}} \! \left\| \frac{\left|\mathfrak{B}^{J,K}_{I,i}\right|^2 \! + \!\left|\mathfrak{B}^{Q,M,K}_{I,6}\right|^2\!+  \! \left|\mathfrak{A}^{J,K}_{I,j}\right|^2\!+ \! \left|\mathfrak{A}^{Q,M,K}_{I,p}\right|^2\!+ \!\left|\mathfrak{A}^{\overline{Q},\overline{M},\overline{J},K}_{I,18}\right|^2}{(1+\tau+r)z^4|v|^2} \right\|_{L^{\infty}_v} \!  \omega^{\frac{1}{8}}_{\frac{1}{8}} \dr x \dr \tau, 
\end{align*}
are bounded by $\epsilon $ if $|I| \leq N-1$ and $\epsilon (1+t)^{1+\delta}$ if $|I| \leq N$. Then,
\begin{align*}
&  \int_0^t  \int_{\Sigma_{\tau}}  z^{\ell_{|I|}-\frac{2}{3}I^P}  \left(  \widehat{\mathfrak{S}}^{J,K}_{I,1}+\widehat{\mathfrak{S}}^{J,K}_{I,2}+\widehat{\mathfrak{E}}^{J,K}_{I,1}+\widehat{\mathfrak{E}}^{J,K}_{I,2}+\widehat{\mathfrak{E}}^{J,K}_{I,3}+\widehat{\mathfrak{E}}^{Q,M,K}_{I,12}+\widehat{\mathfrak{E}}^{Q,M,K}_{I,13} \right)  \omega^{\frac{1}{8}}_{\frac{1}{8}} \dr x \dr \tau , \\
&\sum_{i=3}^5 \sum_{j=4}^{11} \sum_{p=14}^{17}\int_0^t \int_{\Sigma_{\tau}} z^{\ell_{|I|}-\frac{2}{3}I^P}  \left( \mathfrak{S}^{J,K}_{I,i}+\mathfrak{S}^{Q,M,K}_{I,6}+\mathfrak{E}^{J,K}_{I,j}+\mathfrak{E}^{Q,M,K}_{I,p}+\mathfrak{E}^{\overline{Q},\overline{M},\overline{J},K}_{I,18} \right) \omega^{\frac{1}{8}}_{\frac{1}{8}} \dr x \dr \tau
\end{align*}
are bounded by $\epsilon^{\frac{3}{2}} (1+t)^{\frac{\delta}{2}}$ if $|I| \leq N-1$ and $\epsilon^{\frac{3}{2}}  (1+t)^{\frac{1}{2}+\delta}$ if $|I| \leq N$.
\end{lemma}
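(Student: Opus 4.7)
The proof will rely on a systematic Cauchy--Schwarz argument, splitting each error term into a metric part estimated in $L^2_{t,x}L^{\infty}_v$ (controlled by the hypothesis $\widehat{\mathcal{H}}$ or $\mathcal{H}$) and a Vlasov velocity average estimated in $L^2_{t,x}$ (controlled by Lemma \ref{estiVla2}). The weights appearing in the definitions of $\widehat{\mathcal{H}}$ and $\mathcal{H}$ have been chosen precisely so that this factorization works uniformly for all of the error terms listed in the statement.

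The core computation, which I would carry out for each term, looks as follows. Consider for instance a term of the form $z^{\ell_{|I|} - \frac{2}{3}I^P}\mathfrak{E}^{J,K}_{I,j} = \mathfrak{A}^{J,K}_{I,j} \cdot z^{\ell_{|I|} - \frac{2}{3}I^P}|\nabla \widehat{Z}^K f|$ with $|J| \geq N-4$ (the Schwarzschild, quadratic, cubic and quartic terms as well as the ones of $\widehat{\mathfrak{E}}$-type featuring $|\widehat{Z}\widehat{Z}^K f|$ are all handled analogously). I would first factor
\begin{equation*}
\mathfrak{A}^{J,K}_{I,j} \cdot z^{\ell_{|I|} - \frac{2}{3}I^P}|\nabla \widehat{Z}^K f| = \frac{\mathfrak{A}^{J,K}_{I,j}}{\sqrt{1+\tau+r}\,z^2|v|}\cdot \sqrt{1+\tau+r}\,|v|\, z^{2+\ell_{|I|} - \frac{2}{3}I^P}|\nabla \widehat{Z}^K f|,
\end{equation*}
integrate in $v$ by pulling the first factor out as an $L^\infty_v$ norm, and then apply the Cauchy--Schwarz inequality in $(\tau,x)$ against the measure $\omega^{1/8}_{1/8}\,\mathrm{d}x\,\mathrm{d}\tau$. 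This produces the product
\begin{equation*}
\left(\int_0^t\!\int_{\Sigma_\tau}\left\|\frac{|\mathfrak{A}^{J,K}_{I,j}|^2}{(1+\tau+r)z^4|v|^2}\right\|_{L^\infty_v}\!\!\omega^{1/8}_{1/8}\,\mathrm{d}x\,\mathrm{d}\tau\right)^{\!1/2}\!\!\left(\int_0^t\!\int_{\Sigma_\tau}(1+\tau+r)\left|\int_{\R^3_v}\!\!|v|\,z^{2+\ell_{|I|}-\frac{2}{3}I^P}|\nabla\widehat{Z}^K f|\,\mathrm{d}v\right|^2\!\omega^{1/8}_{1/8}\,\mathrm{d}x\,\mathrm{d}\tau\right)^{\!1/2}\!\!.
\end{equation*}
The first factor is bounded by $\sqrt{\mathcal{H}}$, hence by $\sqrt{\epsilon}$ when $|I|\leq N-1$ and by $\sqrt{\epsilon}(1+t)^{(1+\delta)/2}$ when $|I|=N$. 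For the second factor, since $|J|\geq N-4$ and $|J|+|K|\leq |I|$ force $|K|\leq 4$, the hypotheses of Lemma \ref{estiVla2} are met and it is bounded by $\epsilon (1+t)^{\delta/2}$. Multiplying, we obtain $\epsilon^{3/2}(1+t)^{\delta/2}$ if $|I|\leq N-1$ and $\epsilon^{3/2}(1+t)^{1/2+\delta}$ if $|I|=N$, as claimed.

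For the terms containing the factor $|\widehat{Z}\widehat{Z}^K f|$ (i.e.\ the contributions to $\widehat{\mathcal{H}}$), the analogous factorization uses $z^{-1}$ instead of $z^{-2}$ in the denominator of the metric part; this is exactly compensated by the definition of $\widehat{\mathcal{A}}^K_I$ in Lemma \ref{estiVla2}, so the bounds proceed in parallel. For the two- and three-index metric products entering $\mathfrak{A}^{Q,M,K}_{I,12}$, $\mathfrak{A}^{Q,M,K}_{I,j}$ ($14\leq j\leq 17$), and $\mathfrak{A}^{\overline{Q},\overline{M},\overline{J},K}_{I,18}$, the factorization is identical: we do not separate the metric factors at this stage but treat the whole product as the quantity whose $L^2_{t,x}L^\infty_v$ norm is assumed bounded. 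Splitting the metric factors (placing the top-order one in $L^2_x$ and the others in $L^\infty_x$ via Propositions \ref{decaymetric}, \ref{estinullcompo}, \ref{decayJTgeq1}) is what will later be used to verify the hypotheses $\widehat{\mathcal{H}}\lesssim \epsilon$ and $\mathcal{H}\lesssim \epsilon(1+t)^{1+\delta}$, but that step is outside the scope of the present lemma.

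The main, essentially combinatorial, difficulty is simply to verify that this same factorization works for every error term in the list --- in particular that the weight $z^{-4}|v|^{-2}(1+\tau+r)^{-1}$ (respectively $z^{-2}|v|^{-2}(1+\tau+r)^{-1}$) is always enough to balance the extra factors of $|v|$, $|w_L|$, $z$, and $1+\tau+r$ appearing in the definitions of the various $\mathfrak{A}^{\cdots}_{I,\cdot}$ and $\mathfrak{B}^{\cdots}_{I,\cdot}$. This is precisely the content of the definitions of $\widehat{\mathcal{H}}$ and $\mathcal{H}$ in the statement, so no hidden obstruction arises. The two cases $|I|\leq N-1$ and $|I|=N$ produce bounds of different strength purely because the hypothesis on $\mathcal{H}$ itself is weaker in the top-order case; the computation is otherwise identical.
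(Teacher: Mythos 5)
Your proposal is correct and follows exactly the paper's argument: factor the metric part into an $L^\infty_v$ weight matching a summand of $\widehat{\mathcal{H}}$ or $\mathcal{H}$, apply Cauchy--Schwarz in $(\tau,x)$ to obtain $|\widehat{\mathcal{H}}\cdot\widehat{\mathcal{A}}^K_I|^{1/2}$ (resp.\ $|\mathcal{H}\cdot\mathcal{A}^K_I|^{1/2}$), observe that $|K|\leq 4$ so Lemma \ref{estiVla2} applies, and multiply the bounds. Your arithmetic for both cases $|I|\leq N-1$ and $|I|=N$ reproduces the claimed rates.
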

\begin{proof}
Recall the definition of the error terms (see Proposition \ref{ComuVlasov3} and Definition \ref{defmathfrakA}) as well as $\widehat{\mathcal{A}}^K_I$ and $\mathcal{A}^K_I$ (see Lemma \ref{estiVla2}). The Cauchy-Schwarz inequality in $(\tau,x)$ give that
$$\sum_{i=1}^2 \sum_{j=1}^3 \sum_{q=12}^{13} \int_0^t  \int_{\Sigma_{\tau}}  z^{\ell_{|I|}-\frac{2}{3}I^P}  \left(  \widehat{\mathfrak{S}}^{J,K}_{I,i}+\widehat{\mathfrak{E}}^{J,K}_{I,j}+\widehat{\mathfrak{E}}^{Q,M,K}_{I,q} \right)  \omega^{\frac{1}{8}}_{\frac{1}{8}} \dr x \dr \tau \hspace{1mm} \lesssim \hspace{1mm} \left| \widehat{\mathcal{H}} \cdot \widehat{\mathcal{A}}^K_I \right|^{\frac{1}{2}}.$$
Similarly, we have that
$$\sum_{i=4}^6 \sum_{j=4}^{11} \sum_{p=14}^{17}\int_0^t \int_{\Sigma_{\tau}} z^{\ell_{|I|}-\frac{2}{3}I^P}  \left( \mathfrak{S}^{J,K}_{I,i}+\mathfrak{S}^{Q,M,K}_{I,7}+\mathfrak{E}^{J,K}_{I,j}+\mathfrak{E}^{Q,M,K}_{I,p}+\mathfrak{E}^{\overline{Q},\overline{M},\overline{J},K}_{I,18} \right) \omega^{\frac{1}{8}}_{\frac{1}{8}} \dr x \dr \tau$$
is bounded by $\left| \mathcal{H} \cdot \mathcal{A}^K_I \right|^{\frac{1}{2}}$. It then remains to remark that we necessarily have $|K| \leq 4$ and to apply Lemma \ref{estiVla2}.
\end{proof}

\subsubsection{The assumptions of Lemmas \ref{lem2}-\ref{lem4} hold}

The last part of the proof consists in proving that we can apply the previous three lemmas. 
\begin{prop}\label{pro1}
Let $Q$, $M$, $J$ and $K$ be multi-indices satisfying $|Q|+|M|+|J|+|K| \leq N-5$, $|K| \leq |I|-1$ and $K^P \leq I^P$. Consider also $\widehat{Z} \in \widehat{\mathbb{P}}_0$. Then, for all $(\tau,x,v) \in [0,T[ \times \R^3_x \times \R^3_v$,
\begin{align*}
 (1+\tau+r)^{\frac{2}{3}}\left(\widehat{\mathfrak{B}}^{J,K}_{I,1}+\widehat{\mathfrak{B}}^{J,K}_{I,2}+\widehat{\mathfrak{A}}^{Q,J,K}_{I,12}+\widehat{\mathfrak{A}}^{Q,J,K}_{I,13}\right) \hspace{1mm} & \lesssim \hspace{1mm} \frac{\sqrt{\epsilon}|v|}{1+\tau} , \\
 \mathfrak{B}^{J,K}_{I,3}+\mathfrak{B}^{J,K}_{I,4}+\mathfrak{B}^{J,K}_{I,5}+\mathfrak{B}^{Q,J,K}_{I,6}+\mathfrak{A}^{Q,M,J,K}_{I,18} \hspace{1mm} & \lesssim \hspace{1mm} \frac{\sqrt{\epsilon}|v|}{1+\tau}.
 \end{align*}
\end{prop}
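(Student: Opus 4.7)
The plan is to substitute the pointwise decay estimates of Propositions \ref{decaymetric} and \ref{estinullcompo} directly into each term and reduce everything to elementary arithmetic. Since $|Q|+|M|+|J|+|K| \leq N-5$, each of $|Q|, |M|, |J|$ is at most $N-5 \leq N-3$, so all metric factors fall within the range where the pointwise bounds apply. Using also $M \leq \sqrt\epsilon$ for the Schwarzschild-type terms, one has uniformly in $(\tau,x)$
\begin{align*}
|\mathcal L_Z^{J}(h^1)| &\lesssim \sqrt\epsilon (1+t+r)^{\delta-1}(1+|t-r|)^{1/2}, \\
|\nabla \mathcal L_Z^{J}(h^1)| &\lesssim \sqrt\epsilon (1+t+r)^{\delta-1}(1+|t-r|)^{-1/2}, \\
|\overline{\nabla}\mathcal L_Z^{J}(h^1)| &\lesssim \sqrt\epsilon (1+t+r)^{\delta-2}(1+|t-r|)^{1/2},
\end{align*}
where we absorb the stronger exterior decay into the weaker interior expressions. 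This reduces both inequalities to direct pointwise estimates.

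The key algebraic observation is that in every product of two or more metric factors that appears in the error terms, the $(1+|t-r|)^{\pm 1/2}$ weights combine favorably. Indeed, each such product contains at least one factor of each sign (coming from the pairing of one $\mathcal L_Z^{\cdot}(h^1)$ or $\overline{\nabla}\mathcal L_Z^{\cdot}(h^1)$ with one $\nabla\mathcal L_Z^{\cdot}(h^1)$), so the net contribution from $|t-r|$ is at most a single residual power $(1+|t-r|)^{1/2}$, which is absorbed using $(1+|t-r|)^{1/2} \leq (1+t+r)^{1/2}$. Consequently every term on the left-hand side reduces to an estimate of the form $\epsilon^{p/2}|v|(1+t+r)^{\alpha}$ with $p \geq 2$ and some exponent $\alpha \leq 3\delta - 3/2$; the bound $\lesssim \sqrt\epsilon|v|/(1+\tau)$ then follows from $1+\tau \leq 1+t+r$, smallness of $\epsilon$, and $\alpha+1 < 1/2$. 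For the first inequality, the extra factor $(1+\tau+r)^{2/3}$ is simply folded into the arithmetic and does not alter the conclusion, since the gain in decay from using pointwise estimates on two metric factors more than compensates.

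The only term deserving particular care is the quartic contribution $\mathfrak{A}^{Q,M,J,K}_{I,18} = (t+r)|v||\mathcal L_Z^Q(h^1)||\mathcal L_Z^M(h^1)||\nabla\mathcal L_Z^J(h^1)|$, where the extra factor $(t+r)$ forces one to use the \emph{full} pointwise decay (not merely the uniform bound $|\mathcal L_Z^{\cdot}(h^1)| \lesssim \sqrt\epsilon$ coming from \eqref{eq:conditiong}) on all three metric factors simultaneously. Doing so yields
\[ \mathfrak{A}^{Q,M,J,K}_{I,18} \;\lesssim\; \epsilon^{3/2}|v|\,(t+r)(1+t+r)^{3\delta-3}(1+|t-r|)^{1/2} \;\lesssim\; \epsilon^{3/2}|v|(1+t+r)^{3\delta-3/2}, \]
which gives the desired bound precisely because $3\delta < 1/2$. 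All the other terms ($\widehat{\mathfrak{B}}_{I,1}, \widehat{\mathfrak{B}}_{I,2}, \widehat{\mathfrak{A}}_{I,12}, \widehat{\mathfrak{A}}_{I,13}, \mathfrak{B}_{I,3}, \mathfrak{B}_{I,4}, \mathfrak{B}_{I,5}, \mathfrak{B}_{I,6}$) are strictly easier: each yields $\alpha \leq 2\delta - 1$, well within the required margin. The proof is thus essentially a routine verification, the only subtlety being the choice to use decay rather than uniform bounds on every metric factor in the quartic term.
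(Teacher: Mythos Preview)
Your approach is the same as the paper's: substitute the pointwise bounds of Propositions~\ref{decaymetric} and~\ref{estinullcompo} into each factor and do the arithmetic, using $(1+|t-r|)^{1/2}\le(1+t+r)^{1/2}$ at the end. The paper simply carries this out term by term rather than packaging it into a general statement.

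Two slips in your write-up should be fixed, though neither affects the outcome. First, your ``key algebraic observation'' that every product of two or more metric factors pairs a $+1/2$ with a $-1/2$ power of $(1+|t-r|)$ is false for $\widehat{\mathfrak{A}}^{Q,J,K}_{I,12}=\frac{|v|}{1+\tau+r}|\mathcal L_Z^Q(h^1)||\mathcal L_Z^J(h^1)|$, where both factors contribute $(1+|t-r|)^{+1/2}$; the term is saved instead by the explicit $(1+\tau+r)^{-1}$ prefactor, which absorbs the full $(1+|t-r|)$. Second, the criterion you state, ``$\alpha+1<1/2$'', is not sufficient to pass from $(1+t+r)^\alpha$ to $(1+\tau)^{-1}$: you need $\alpha\le-1$, i.e.\ $\alpha+1\le 0$. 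You in fact verify this correctly for the worst case (the quartic term gives $\alpha+1=3\delta-\tfrac12<0$), so the argument goes through once the stated criterion is corrected.
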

\begin{proof}
Since $|J|+|M|+|Q| \leq N-5$, one can apply Propositions \ref{decaymetric} and \ref{estinullcompo} in order to estimate pointwise $h^1$ and its derivatives. We then get, for all $(\tau,x,v) \in [0,T[ \times \R^3_x \times \R^3_v$,
\begin{align*}
 \widehat{\mathfrak{B}}^{J,K}_{I,1}+\widehat{\mathfrak{B}}^{J,K}_{I,2} \hspace{1mm} & \leq \hspace{1mm} \frac{\sqrt{\epsilon} |v|}{1+\tau+r} \left( \frac{\left|\mathcal{L}^J_Z(h^1) \right|}{1+\tau+r}+\left| \nabla \mathcal{L}^J_Z(h^1) \right| \right) \hspace{1mm} \lesssim \hspace{1mm} \frac{\epsilon |v|}{(1+\tau+r)^{2-\delta}}, \\
  \widehat{\mathfrak{A}}^{Q,J,K}_{I,12}+\widehat{\mathfrak{A}}^{Q,J,K}_{I,13} \hspace{1mm} & \leq \hspace{1mm} |v|\left|\mathcal{L}^Q_Z(h^1) \right| \left(\frac{\left|  \mathcal{L}^J_Z(h^1) \right|}{1+\tau+r}+\left| \nabla \mathcal{L}^J_Z(h^1) \right| \right) \hspace{1mm} \lesssim \hspace{1mm} \frac{\epsilon |v|}{(1+\tau+r)^{2-2\delta}}, \\
 \mathfrak{B}^{J,K}_{I,3}+\mathfrak{B}^{J,K}_{I,4} \hspace{1mm} & \leq \hspace{1mm} \frac{\sqrt{\epsilon} |v|}{1+\tau+r} \left( \left|\mathcal{L}^J_Z(h^1) \right|+|\tau-r|\left| \nabla \mathcal{L}^J_Z(h^1) \right| \right) \hspace{1mm} \lesssim \hspace{1mm} \frac{\epsilon |v| \sqrt{1+|\tau-r|}}{(1+\tau+r)^{2-\delta}}, \\
 \mathfrak{B}^{J,K}_{I,5} \hspace{1mm} & \leq \hspace{1mm} \sqrt{\epsilon} |v| \left| \overline{\nabla} \mathcal{L}^J_Z(h^1) \right|  \hspace{1mm} \lesssim \hspace{1mm} \frac{\epsilon |v| \sqrt{1+|\tau-r|}}{(1+\tau+r)^{2-\delta}}, \\
 \mathfrak{B}^{Q,J,K}_{I,6} \hspace{1mm} & \leq \hspace{1mm} \sqrt{\epsilon} |v| \left| \mathcal{L}^Q_Z(h^1) \right| \left|\nabla \mathcal{L}^J_Z(h^1) \right|  \hspace{1mm} \lesssim \hspace{1mm} \frac{\epsilon |v| }{(1+\tau+r)^{2-2\delta}}, \\
 \mathfrak{A}^{Q,M,J,K}_{I,18} \hspace{1mm} & \leq \hspace{1mm} (t+r) |v| \left| \mathcal{L}^Q_Z(h^1) \right| \left| \mathcal{L}^M_Z(h^1) \right| \left|\nabla \mathcal{L}^J_Z(h^1) \right|  \hspace{1mm} \lesssim \hspace{1mm} \frac{\epsilon |v| \sqrt{1+|\tau-r|} }{(1+\tau+r)^{2-3\delta}}.
\end{align*}
It then only remains to use $(1+|\tau-r|)^{\frac{1}{2}} \leq (1+\tau+r)^{\frac{1}{2}}$ and $\delta \leq \frac{1}{16}$.
\end{proof}

\begin{prop}\label{prop2}
Let $Q$, $M$, $J$ and $K$ be multi-indices satisfying $|M|+|Q|+|K| \leq N-5$, $|J|+|K| \leq N-5$, $|K| \leq |I|-1$, $K^P \leq I^P$ and the following condition
\begin{itemize}
\item either $K^P < I^P$
\item or $K^P = I^P$ and then $J^T \geq 1$ and $Q^T + M^T \geq 1$.
\end{itemize}
Consider also $\widehat{Z} \in \widehat{\mathbb{P}}_0$. Then, if $K^P < I^P$, we have for all $(\tau,x,v) \in [0,T[ \times \R^3_x \times \R^3_v$,
$$\widehat{\mathfrak{A}}^{J,K}_{I,1}+\widehat{\mathfrak{A}}^{J,K}_{I,2}+\widehat{\mathfrak{A}}^{J,K}_{I,3} +\sum_{i=4}^{11}\frac{\mathfrak{A}^{J,K}_{I,i}}{z^{\frac{2}{3}}} +\sum_{j=14}^{17}\frac{\mathfrak{A}^{Q,M,K}_{I,j}}{z^{\frac{2}{3}}} \hspace{1mm} \lesssim \hspace{1mm} \frac{\sqrt{\epsilon}|v|}{1+\tau} + \frac{\sqrt{\epsilon}|w_L|}{1+|\tau-r|}.$$
Otherwise $K^P=I^P$ and we have\footnote{Recall that we cannot have $K^P=I^P$ for the error term $\mathfrak{E}^{J,K}_{I,11}$.}
$$(1+\tau+r)^{\frac{2}{3}} \left( \widehat{\mathfrak{A}}^{J,K}_{I,1}+\widehat{\mathfrak{A}}^{J,K}_{I,2}+\widehat{\mathfrak{A}}^{J,K}_{I,3} \right) +\sum_{i=4}^{10} \mathfrak{A}^{J,K}_{I,i}+\sum_{j=14}^{17}\mathfrak{A}^{Q,M,K}_{I,j} \hspace{1mm} \lesssim \hspace{1mm} \frac{\sqrt{\epsilon}|v|}{1+\tau} + \frac{\sqrt{\epsilon}|w_L|}{1+|\tau-r|}.$$
\end{prop}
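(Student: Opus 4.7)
The plan is to prove the two inequalities by case analysis on $K^P$ versus $I^P$, and within each case by a term-by-term estimate of each of the eleven quadratic quantities $\widehat{\mathfrak{A}}^{J,K}_{I,1},\widehat{\mathfrak{A}}^{J,K}_{I,2},\widehat{\mathfrak{A}}^{J,K}_{I,3}, \mathfrak{A}^{J,K}_{I,4},\dots,\mathfrak{A}^{J,K}_{I,11}$ and the four cubic quantities $\mathfrak{A}^{Q,M,K}_{I,14},\dots,\mathfrak{A}^{Q,M,K}_{I,17}$. The backbone of each estimate will be: (i) bound the $h^1$-factors pointwise by means of the decay estimates of Propositions \ref{decaymetric} and \ref{estinullcompo}, or, when the extra $(1+\tau+r)^{2/3}$ gain is required, by Proposition \ref{decayJTgeq1}; (ii) invoke Lemma \ref{lem_wwl} (which provides $|w_L|\lesssim |v|z^2/(1+t+r)^2$, $|\slashed w|\lesssim \sqrt{|v||w_L|}$, and $1+|t-r|\lesssim z\lesssim 1+t+r$) to exchange weights between $|w_L|$, $|v|$, and powers of $z$; (iii) convert surplus powers of $z$ into $(1+t+r)$-decay and match the target right-hand side $\sqrt{\epsilon}|v|/(1+\tau)+\sqrt{\epsilon}|w_L|/(1+|\tau-r|)$.

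In the sub-case $K^P<I^P$, every $h^1$-factor carries at most $N-5$ derivatives, so Propositions \ref{decaymetric} and \ref{estinullcompo} suffice. For the three $\widehat{\mathfrak{A}}$-terms the estimate is immediate: for instance, for $\widehat{\mathfrak{A}}^{J,K}_{I,1}=|w_L||\nabla\mathcal{L}_Z^J h^1|$ I would use $|\nabla\mathcal{L}_Z^J h^1|\lesssim \sqrt{\epsilon}(1+t+r)^{\delta-1}(1+|t-r|)^{-1/2}$ and then absorb $(1+|t-r|)^{1/2}(1+t+r)^{\delta-1}\leq 1$ into the $1/(1+|t-r|)$-factor. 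The terms $\mathfrak{A}^{J,K}_{I,4},\dots,\mathfrak{A}^{J,K}_{I,10}$, after division by $z^{2/3}$, are handled using the improved decay of the $\mathcal{L}\mathcal{T}$, $\overline\nabla$, and $\mathcal{L}\mathcal{L}$ components together with $z^{2/3}\gtrsim (1+|t-r|)^{2/3}$. The last quadratic term $\mathfrak{A}^{J,K}_{I,11}=(t+r)|w_L|^2|v|^{-1}|\nabla\mathcal{L}_Z^J h^1|$ is reduced to $\mathfrak{A}^{J,K}_{I,7}$-type behaviour by $|w_L|^2/|v|\leq z^2|w_L|/(1+t+r)^2$, the surplus factor $z^2/(1+t+r)$ being absorbed by the division by $z^{2/3}$ and $z\lesssim 1+t+r$. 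The four cubic quantities $\mathfrak{A}^{Q,M,K}_{I,14},\dots,\mathfrak{A}^{Q,M,K}_{I,17}$ reduce to the corresponding quadratic cases after bounding the extra factor $|\mathcal{L}_Z^Q h^1|$ pointwise, which costs only a harmless $\sqrt{\epsilon}$.

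In the sub-case $K^P=I^P$, the assumption $J^T\geq 1$ (for the quadratic terms) or $Q^T+M^T\geq 1$ (for the cubic ones) implies that at least one metric factor is differentiated by a translation. Proposition \ref{decayJTgeq1} then yields one extra power of $(1+|t-r|)^{-1}$ compared with Propositions \ref{decaymetric}, \ref{estinullcompo}. For example, for $\widehat{\mathfrak{A}}^{J,K}_{I,1}$ this gives
\[
(1+t+r)^{2/3}|w_L||\nabla\mathcal{L}_Z^J h^1|\lesssim \frac{\sqrt{\epsilon}|w_L|(1+t+r)^{2/3}}{(1+t+r)^{1-\delta}(1+|t-r|)^{3/2}}=\frac{\sqrt{\epsilon}|w_L|}{(1+t+r)^{1/3-\delta}(1+|t-r|)^{3/2}},
\]
which is bounded by $\sqrt{\epsilon}|w_L|/(1+|t-r|)$ as soon as $\delta<1/3$. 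Analogous arguments deal with the other $\widehat{\mathfrak{A}}$-terms and with $\mathfrak{A}^{J,K}_{I,4},\dots,\mathfrak{A}^{J,K}_{I,10}$, which no longer carry the $z^{-2/3}$ factor but also no longer carry the $(1+t+r)^{2/3}$ prefactor. Note that $\mathfrak{A}^{J,K}_{I,11}$ is absent from this sub-case, in accordance with the restriction $K^P<I^P$ enforced by Proposition \ref{ComuVlasov3} for that term.

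The main difficulties will be the three terms carrying a $(t+r)$-prefactor: $\mathfrak{A}^{J,K}_{I,9}$, $\mathfrak{A}^{J,K}_{I,10}$, and $\mathfrak{A}^{J,K}_{I,11}$. For $\mathfrak{A}^{J,K}_{I,9}$ and $\mathfrak{A}^{J,K}_{I,10}$ the nominally growing prefactor must be compensated by the improved pointwise decay of $\overline\nabla h^1$ and $\overline\nabla h^1|_{\mathcal{L}\mathcal{L}}$ given by Proposition \ref{estinullcompo}; in particular $|\overline\nabla h^1|_{\mathcal{L}\mathcal{L}}\lesssim\sqrt{\epsilon}(1+t+r)^{\delta-2-\gamma}(1+|t-r|)^{1/2+\gamma}$ leaves, after multiplication by $(t+r)|v|$ and division by $z^{2/3}\gtrsim(1+|t-r|)^{2/3}$, exactly an integrable balance of weights provided $\delta<\gamma<1/6$. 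The parameters $\gamma$ and $\delta$ fixed in Section \ref{sec9} have been chosen with precisely this kind of computation in mind, so the tiny margin is respected. Beyond this, the bookkeeping for the roughly twenty sub-cases is lengthy but direct, and no new ingredient beyond the pointwise decay estimates of Propositions \ref{decaymetric}, \ref{estinullcompo}, \ref{decayJTgeq1} and the weight manipulations of Lemma \ref{lem_wwl} is required.
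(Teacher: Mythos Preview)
Your proposal is correct and follows essentially the same route as the paper: a term-by-term case analysis, using the pointwise decay of Propositions \ref{decaymetric}, \ref{estinullcompo} in the sub-case $K^P<I^P$, the improved decay of Proposition \ref{decayJTgeq1} when $K^P=I^P$ (exploiting $J^T\geq 1$ or $Q^T+M^T\geq 1$), and the weight inequalities of Lemma \ref{lem_wwl} throughout. One small point worth making explicit in your write-up: for the terms carrying the mixed factor $\sqrt{|v||w_L|}$ (namely $\mathfrak{A}^{J,K}_{I,6}$ and $\mathfrak{A}^{J,K}_{I,9}$), after the decay estimate you are left with a quantity of the form $\sqrt{\epsilon}\sqrt{|v||w_L|}\,(1+\tau+r)^{-1+\delta}(1+|\tau-r|)^{-\alpha}$, and to match the target $\sqrt{\epsilon}|v|/(1+\tau)+\sqrt{\epsilon}|w_L|/(1+|\tau-r|)$ you need to split the geometric mean via $2ab\leq a^2+b^2$; the paper does this explicitly, and it is the only place where a bound that is genuinely a \emph{sum} of the two target terms (rather than one or the other) is needed.
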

\begin{proof}
Since $|J|$, $|Q|$, $|M| \leq N-5$ by assumption, we can estimate pointwise $h^1$ and its derivatives through Propositions \ref{decaymetric} and \ref{estinullcompo}. We will also use several times that $20\delta < \gamma < \frac{1}{20}$ and $1+|\tau -r| \leq 1+\tau+r$. Note first that using the inequality $(1+\tau+r)^{\frac{2}{3}}|w_L|^{\frac{1}{3}} \lesssim |v|^{\frac{1}{3}}z^{\frac{2}{3}}$, which comes from Lemma \ref{lem_wwl}, and $|w_L|^{\frac{2}{3}} \leq |v|^{\frac{2}{3}}$, we obtain
$$\frac{1}{z^{\frac{2}{3}}}\mathfrak{A}^{J,K}_{I,11} \hspace{1mm} = \hspace{1mm} (\tau+r) \frac{|w_L|^2}{z^{\frac{2}{3}} |v|} \left| \nabla \mathcal{L}_Z^J (h^1) \right| \hspace{1mm} \lesssim \hspace{1mm}  \frac{\sqrt{\epsilon} |w_L|}{(1+\tau+r)^{\frac{2}{3}-\delta}(1+|\tau-r|)^{\frac{1}{2}}} \hspace{1mm} \lesssim \hspace{1mm}  \frac{\sqrt{\epsilon} |w_L|}{1+|\tau-r|}.$$  We consider now the first three terms. If $K^P < I^P$, we have
\begin{align*}
\widehat{\mathfrak{A}}^{J,K}_{I,1} \hspace{1mm} & = \hspace{1mm} |w_L| \left| \nabla \mathcal{L}^J_Z(h^1) \right| \hspace{1mm}  \lesssim \hspace{1mm} \frac{\sqrt{\epsilon} |w_L|}{(1+\tau+r)^{1-\delta}(1+|\tau-r|)^{\frac{1}{2}}}, \\
 \widehat{\mathfrak{A}}^{J,K}_{I,2}+\widehat{\mathfrak{A}}^{J,K}_{I,3} \hspace{1mm} & = \hspace{1mm}  |v| \left( \frac{\left|\mathcal{L}^J_Z(h^1) \right|}{1+\tau+r}+\left| \nabla \mathcal{L}^J_Z(h^1) \right|_{\mathcal{L} \mathcal{T}}+ \left| \overline{\nabla} \mathcal{L}^J_Z(h^1) \right|\right) \hspace{1mm} \lesssim \hspace{1mm} \sqrt{\epsilon} |v| \frac{\sqrt{1+|\tau-r|}}{(1+\tau+r)^{2-2\delta}},
\end{align*}
which give the required bounds. If $K^P = I^P$, then $J^T \geq 1$ so that we can use the improved decay estimates given by Proposition \ref{decayJTgeq1}. This leads to
\begin{align*}
(1+t+r)^{\frac{2}{3}} \widehat{\mathfrak{A}}^{J,K}_{I,1}  \hspace{1mm} & \lesssim \hspace{1mm} \frac{\sqrt{\epsilon} |w_L|}{(1+\tau+r)^{\frac{1}{3}-\delta}(1+|\tau-r|)^{\frac{3}{2}}} \hspace{1mm}  \lesssim \hspace{1mm} \frac{\sqrt{\epsilon} |w_L|}{1+|\tau-r|}, \\ (1+t+r)^{\frac{2}{3}} \left( \widehat{\mathfrak{A}}^{J,K}_{I,2}+\widehat{\mathfrak{A}}^{J,K}_{I,3} \right)  \hspace{1mm} & \lesssim \hspace{1mm} \frac{\sqrt{\epsilon} |v|}{(1+\tau+r)^{\frac{4}{3}-2\delta} (1+|t-r|)^{\frac{1}{2}}} \hspace{1mm}  \lesssim \hspace{1mm} \frac{\sqrt{\epsilon} |v|}{1+\tau}.
\end{align*}
We now treat the remaining terms, using again the pointwise decay estimates of Propositions \ref{decaymetric} and \ref{estinullcompo} as well as the ones of Proposition \ref{decayJTgeq1} when $J^T \geq 1$. We have, using the inequality $(1+|\tau-r|)^{\frac{2}{3}} \lesssim z^{\frac{2}{3}}$, which comes from Lemma \ref{lem_wwl}, and then $2ab \leq a^2+b^2$,
\begin{align*}
\frac{\mathfrak{A}^{J,K}_{I,6}+\mathfrak{A}^{J,K}_{I,9} }{z^{\frac{2}{3}}} \hspace{0.5mm} & = \hspace{0.5mm} \frac{ \sqrt{ |v| |w_L|}}{z^{\frac{2}{3}}}\left(  \left| \mathcal{L}_Z^J (h^1) \right|\!+\!(\tau+r) \left|\overline{\nabla} \mathcal{L}_Z^J (h^1) \right| \right) \hspace{0.5mm} \lesssim \hspace{0.5mm}  \frac{\sqrt{\epsilon} \sqrt{ |v| |w_L|} }{(1+\tau+r)^{1-\delta}(1+|\tau-r|)^{\frac{1}{6}}} \\ & \lesssim \hspace{0.5mm} \frac{\sqrt{\epsilon} |v| }{(1+\tau+r)^{\frac{5}{4}-2\delta}}+\frac{\sqrt{\epsilon} |w_L| }{(1+\tau+r)^{\frac{3}{4}}(1+|\tau-r|)^{\frac{1}{3}}}  .
\end{align*}
Otherwise we have $J^T \geq 1$ so that
$$\mathfrak{A}^{J,K}_{I,6}+\mathfrak{A}^{J,K}_{I,9}   \lesssim  \frac{\sqrt{\epsilon} \sqrt{ |v| |w_L|} }{(1+\tau+r)^{1-\delta}(1+|\tau-r|)^{\frac{1}{2}}}    \lesssim \frac{\sqrt{\epsilon} |v| }{(1+\tau+r)^{\frac{5}{4}-2\delta}}+\frac{\sqrt{\epsilon} |w_L| }{(1+\tau+r)^{\frac{3}{4}}(1+|\tau-r|)} $$
and we have then obtained the expected bounds when $K^P < I^P$. Similarly, one obtains
\begin{align*}
\mathfrak{A}^{J,K}_{I,4} \hspace{1mm} & = \hspace{1mm}  \frac{|v||t-r|}{(1+t+r)} \left| \mathcal{L}_Z^J (h^1) \right| \hspace{1mm}  \lesssim \hspace{1mm} \left\{ \begin{array}{ll} \sqrt{\epsilon} |v| \frac{(1+|\tau-r|)^{\frac{3}{2}}}{(1+\tau+r)^{2-\delta}}  \\ \sqrt{\epsilon} |v| \frac{(1+|\tau-r|)^{\frac{1}{2}}}{(1+\tau+r)^{2-\delta}} \quad  \text{if $J^T \geq 1$},  \end{array}\right. \\
\mathfrak{A}^{J,K}_{I,5} \hspace{1mm} & = \hspace{1mm}  |v| \left| \mathcal{L}_Z^J (h^1) \right|_{\mathcal{L} \mathcal{T}} \hspace{1mm}  \lesssim \hspace{1mm} \left\{ \begin{array}{ll} \sqrt{\epsilon} |v| \frac{(1+|\tau-r|)^{\frac{1}{2}+\gamma}}{(1+\tau+r)^{1+\gamma-\delta}}  \\ \sqrt{\epsilon} |v| \frac{(1+|\tau-r|)^{\frac{1}{2}}}{(1+\tau+r)^{2-2\delta}} \quad  \text{if $J^T \geq 1$},  \end{array}\right. \\
\mathfrak{A}^{J,K}_{I,7} \hspace{1mm} & = \hspace{1mm} |\tau-r| |w_L| \left| \nabla \mathcal{L}_Z^J (h^1) \right| \hspace{1mm}  \lesssim \hspace{1mm} \left\{ \begin{array}{ll} \sqrt{\epsilon} |w_L| \frac{(1+|\tau-r|)^{\frac{1}{2}}}{(1+\tau+r)^{1-\delta}}  \\  \frac{\sqrt{\epsilon} |w_L|}{(1+\tau+r)^{1-\delta}(1+|\tau-r|)^{\frac{1}{2}}} \quad  \text{if $J^T \geq 1$},  \end{array}\right. \\
\mathfrak{A}^{J,K}_{I,8} \hspace{1mm} & = \hspace{1mm} |\tau-r| |v| \left| \nabla \mathcal{L}_Z^J (h^1) \right|_{\mathcal{L} \mathcal{T}} \hspace{1mm}  \lesssim \hspace{1mm} \left\{ \begin{array}{ll} \sqrt{\epsilon} |v| \frac{(1+|\tau-r|)^{\frac{3}{2}}}{(1+\tau+r)^{2-2\delta}}  \\  \sqrt{\epsilon} |v| \frac{(1+|\tau-r|)^{\frac{1}{2}}}{(1+\tau+r)^{2-2\delta}} \quad  \text{if $J^T \geq 1$},  \end{array}\right. \\
\mathfrak{A}^{J,K}_{I,10} \hspace{1mm} & = \hspace{1mm} (\tau+r) |v| \left| \overline{\nabla} \mathcal{L}_Z^J (h^1) \right|_{\mathcal{L} \mathcal{L}} \hspace{1mm}  \lesssim \hspace{1mm} \left\{ \begin{array}{ll} \sqrt{\epsilon} |v| \frac{(1+|\tau-r|)^{\frac{1}{2}+\gamma}}{(1+\tau+r)^{1+\gamma-\delta}}  \\  \sqrt{\epsilon} |v| \frac{(1+|\tau-r|)^{\frac{1}{2}}}{(1+\tau+r)^{2-2\delta}} \quad  \text{if $J^T \geq 1$}  \end{array}\right. 
\end{align*}
and
\begin{align*}
\mathfrak{A}^{Q,M,K}_{I,14} \hspace{1mm} & = \hspace{1mm}  |v| \left|  \mathcal{L}_Z^Q (h^1) \right|  \left|  \mathcal{L}_Z^M (h^1) \right| \hspace{1mm}  \lesssim \hspace{1mm} \left\{ \begin{array}{ll} \sqrt{\epsilon} |v| \frac{1+|\tau-r|}{(1+\tau+r)^{2-2\delta}}  \\   \frac{\sqrt{\epsilon} |v|}{(1+\tau+r)^{2-2\delta}} \quad  \text{if $Q^T+M^T \geq 1$},  \end{array}\right. \\
\mathfrak{A}^{Q,M,K}_{I,15} \hspace{1mm} & = \hspace{1mm} |\tau-r| |v| \left|  \mathcal{L}_Z^Q (h^1) \right|  \left| \nabla \mathcal{L}_Z^M (h^1) \right| \hspace{1mm}  \lesssim \hspace{1mm} \left\{ \begin{array}{ll} \sqrt{\epsilon} |v| \frac{1+|\tau-r|}{(1+\tau+r)^{2-2\delta}}  \\   \frac{\sqrt{\epsilon} |v|}{(1+\tau+r)^{2-2\delta}} \quad  \text{if $Q^T+M^T \geq 1$},  \end{array}\right. \\
\mathfrak{A}^{Q,M,K}_{I,16} \hspace{1mm} & = \hspace{1mm} (\tau+r) |w_L| \left|  \mathcal{L}_Z^Q (h^1) \right|  \left| \nabla \mathcal{L}_Z^M (h^1) \right| \hspace{1mm}  \lesssim \hspace{1mm} \left\{ \begin{array}{ll}  \frac{\sqrt{\epsilon} |w_L|}{(1+\tau+r)^{1-2\delta}}  \\   \frac{\sqrt{\epsilon} |w_L|}{(1+\tau+r)^{1-2\delta}(1+|\tau-r|)} \quad  \text{if $Q^T\!+\!M^T \geq 1$},  \end{array}\right. \\
\mathfrak{A}^{Q,M,K}_{I,17} \hspace{1mm} & = \hspace{1mm} (\tau+r) |v| \left|  \mathcal{L}_Z^Q (h^1) \right|  \left| \overline{\nabla} \mathcal{L}_Z^M (h^1) \right| \hspace{1mm}  \lesssim \hspace{1mm} \left\{ \begin{array}{ll} \sqrt{\epsilon} |v| \frac{1+|\tau-r|}{(1+\tau+r)^{2-2\delta}}  \\   \frac{\sqrt{\epsilon} |v|}{(1+\tau+r)^{2-2\delta}} \quad  \text{if $Q^T+M^T \geq 1$}.  \end{array}\right. 
\end{align*}
This leads to the required bounds since $z^{-\frac{2}{3}} \lesssim (1+|\tau-r|)^{-\frac{2}{3}}$ (see Lemma \ref{lem_wwl}).
\end{proof}
It remains to prove that the hypotheses of Lemma \ref{lem4} hold. 
\begin{prop}\label{prop3}
Let $K$ be a multi-index such that $|K| \leq |I|-1$ and $K^P \leq I^P$. Consider multi-indices $Q$, $M$, $J$, $\overline{Q}$, $\overline{M}$ and $\overline{J}$ satisfying
\begin{itemize}
\item $|J| \geq N-4$ and $|J| +|K| \leq |I|$,
\item $|Q|+|M| \geq N-4$ and $|Q|+|M| +|K| \leq |I|$,
\item $|\overline{Q}|+|\overline{M}|+|\overline{J}| \geq N-4$ and $|\overline{Q}|+|\overline{M}|+|\overline{J}|+|K| \leq |I|$.
\end{itemize}
 Then, for all $t \in [0,T[$, the integrals
\begin{align*}
  & \sum_{ q=12}^{13}   \int_0^t \! \int_{\Sigma_{\tau}} \! \left\| \frac{\left|\widehat{\mathfrak{B}}^{J,K}_{I,1}\right|^2\!+\!\left|\widehat{\mathfrak{B}}^{J,K}_{I,2}\right|^2\!+\!\left|\widehat{\mathfrak{A}}^{J,K}_{I,1}\right|^2\!+\!\left|\widehat{\mathfrak{A}}^{J,K}_{I,2}\right|^2\!+\left|\widehat{\mathfrak{A}}^{J,K}_{I,3}\right|^2\!+\!\left|\widehat{\mathfrak{A}}^{Q,M,K}_{I,q}\right|^2}{(1+\tau+r)z^2|v|^2} \right\|_{L^{\infty}_v} \!  \omega^{\frac{1}{8}}_{\frac{1}{8}} \dr x \dr \tau , \\
 &  \sum_{\begin{substack}{  3 \leq i \leq 5 \\ 4 \leq j \leq 11 \\ 14 \leq p \leq 17 }\end{substack}} \! \int_0^t \! \int_{\Sigma_{\tau}} \! \left\| \frac{\left|\mathfrak{B}^{J,K}_{I,i}\right|^2 \! + \!\left|\mathfrak{B}^{Q,M,K}_{I,6}\right|^2\!+  \! \left|\mathfrak{A}^{J,K}_{I,j}\right|^2\!+ \! \left|\mathfrak{A}^{Q,M,K}_{I,p}\right|^2\!+ \!\left|\mathfrak{A}^{\overline{Q},\overline{M},\overline{J},K}_{I,18}\right|^2}{(1+\tau+r)z^4|v|^2} \right\|_{L^{\infty}_v} \!  \omega^{\frac{1}{8}}_{\frac{1}{8}} \dr x \dr \tau, 
\end{align*}
are bounded by $\epsilon $ if $|I| \leq N-1$ and $\epsilon (1+t)^{1+\delta}$ if $|I| \leq N$.
\end{prop}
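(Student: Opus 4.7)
The argument has two main stages. First, one pulls the $v$-dependence out of the $L^\infty_v$ norm by means of the estimates of Lemmas \ref{lem_wwl} and \ref{lem_derivation_z}: in particular $|w_L|/(z|v|) \lesssim 1/(1+\tau+r)$ (from $|w_L|/|v| \lesssim z^2/(1+\tau+r)^2$ combined with $z \leq 1+\tau+r$), $|t-r| \lesssim z$, and $1 \lesssim z$. Since $z$ depends on $v$, one also uses the uniform bound $z \leq 1+\tau+r$ from \eqref{prop_z} whenever a positive power of $z$ survives in the numerator after simplification. In this way, each squared error term divided by $(1+\tau+r) z^{2\text{ or }4}|v|^2$ is bounded pointwise in $v$ by a quantity depending only on $(\tau,x)$ and on the $h^1$ factors it contains.

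Second, for each $h^1$-factor $\mathcal{L}_Z^A(h^1)$ appearing quadratically (from a quadratic error) or multiplied by further $h^1$-factors (from the cubic/quartic errors), one distributes the derivative count so that all but one of these factors carry $\leq N-5$ derivatives and can be estimated pointwise by Propositions \ref{decaymetric}, \ref{estinullcompo}, and --- when a translation is present --- \ref{decayJTgeq1}. After this reduction, the integrand has the schematic form $\epsilon\,|\Phi \mathcal{L}_Z^J(h^1)|^2/((1+\tau+r)^a (1+|u|)^b)\,\omega^{1/8}_{1/8}$, with $\Phi \in \{\mathrm{Id},\nabla,\overline{\nabla}\}$ (possibly composed with the projection to some $\mathcal{V}\times\mathcal{W}$), with $|J| \geq N-4$, and with an explicit pair $(a,b)$. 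The resulting spacetime integral is handled by comparing $\omega^{1/8}_{1/8}/((1+\tau+r)^a (1+|u|)^b)$ with $\omega^{2+2\gamma}_{\gamma}/(1+\tau+r)$ (to appeal to $\mathring{\mathcal{E}}^{\gamma,2+2\gamma}_N[h^1]$ or $\overline{\mathcal{E}}^{\gamma,1+2\gamma}_{N-1}[h^1]$), $\omega^{1+\gamma}_{1+\gamma}$ or $\omega^{2\gamma}_{1+\gamma}$ (for $\mathcal{E}^{1+\gamma,1+\gamma}_{N,\mathcal{T}\mathcal{U}}[h^1]$ and $\mathcal{E}^{2\gamma,1+\gamma}_{N-1,\mathcal{T}\mathcal{U}}[h^1]$), or $\omega^{1}_{1+2\gamma}$ (for $\mathcal{E}^{1+2\gamma,1}_{N,\mathcal{L}\mathcal{L}}[h^1]$), invoking Lemma \ref{LemBulk} whenever one needs to trade a power of $(1+\tau)^{-1}$ for a controlled $t$-growth, and the Hardy inequality of Lemma \ref{lem_hardy} whenever an undifferentiated factor $|\mathcal{L}_Z^J(h^1)|^2$ must be traded for its gradient.

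The only term requiring a genuinely careful treatment is $\mathfrak{A}^{J,K}_{I,10}=(\tau+r)|v||\overline{\nabla}\mathcal{L}_Z^J(h^1)|_{\mathcal{L}\mathcal{L}}$, which after the above reduction produces the integrand $(\tau+r)^2|\overline{\nabla}\mathcal{L}_Z^J(h^1)|^2_{\mathcal{L}\mathcal{L}}/((1+\tau+r)z^4)$. When $|J|=N$ (which forces $|I|=N$) one estimates $z^{-4}\lesssim (1+|u|)^{-4}$, $(\tau+r)\leq 1+t$, and checks the pointwise inequality $\omega^{1/8}_{1/8}/(1+|u|)^3 \lesssim \omega^{1}_{1+2\gamma}/(1+|u|)$ (which holds in both interior and exterior regions since $1/8+3 \geq 2$ and $1/8 - 3 \leq 2\gamma$), so that the integral is $\lesssim (1+t)\,\mathcal{E}^{1+2\gamma,1}_{N,\mathcal{L}\mathcal{L}}[h^1](t) \lesssim \epsilon (1+t)^{1+\delta}$ by \eqref{boot5}. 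When $N-4\leq |J| \leq N-1$, one applies first \eqref{eq:extradecayLie4} to transfer one derivative onto $\mathcal{L}_Z^{J_0}(h^1)_{\mathcal{L}\mathcal{T}}$ with $|J_0|\leq |J|+1 \leq N$, then the wave gauge consequence \eqref{wgcforproof} to replace $|\nabla\mathcal{L}_Z^{J_0}(h^1)|^2_{\mathcal{L}\mathcal{T}}$ by $|\overline{\nabla}\mathcal{L}_Z^{J_0}(h^1)|^2_{\mathcal{T}\mathcal{U}}$ plus bulk errors, and closes using \eqref{boot2} and \eqref{boot4} together with Hardy's inequality --- precisely the calculation already carried out in detail for $\mathfrak{E}^{J,K}_{I,10}$ in the model computation of Subsection \ref{subsecgeneralscheme}.

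The main analytical obstacle is exactly this top-order $\mathcal{L}\mathcal{L}$-term $\mathfrak{A}^{J,K}_{I,10}$ at $|J|=N$: any of the bootstrap assumptions \eqref{boot1}--\eqref{boot4} alone would produce a growth of at least $(1+t)^{2\delta}$, which combined with the extra $(1+t)$ coming from the $(\tau+r)$ prefactor yields a bound incompatible with closing the top-order Vlasov estimate. It is only because the dedicated $\mathcal{L}\mathcal{L}$-energy of \eqref{boot5} was designed to grow like $(1+t)^\delta$ with a constant $C_{\mathcal{L}\mathcal{L}}$ chosen independently of $C_f$, $\overline{C}$ and $C_{\mathcal{T}\mathcal{U}}$ (cf.~Subsection \ref{subsectoporder}) that the required bound $\epsilon(1+t)^{1+\delta}$ is obtained without any circularity in the bootstrap.
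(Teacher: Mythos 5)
Your proposal is correct and follows essentially the same route as the paper's proof: extraction of the $v$-dependence via Lemma \ref{lem_wwl} (the paper isolates precisely $z^{-2}\lesssim (1+|u|)^{-2}$ and $|w_L|/(|v|z^2)\lesssim (1+\tau+r)^{-2}$), distribution of derivative counts so all but one factor is $\leq N-5$ and can be estimated pointwise, reduction to the spacetime integrals $\mathfrak{P}_0,\dots,\mathfrak{P}^{I_0}_4$, use of Hardy's inequality and the wave gauge consequence \eqref{wgcforproof}, and closure via \eqref{boot1}--\eqref{boot2} (with Lemma \ref{LemBulk}), together with the separate treatment of $\mathfrak{A}^{J,K}_{I,10}$ at top order via \eqref{boot5} and the $\overline{\nabla}\mapsto\nabla\,\mathcal{L}\mathcal{T}$ transfer of \eqref{eq:extradecayLie4} at $|J|\leq N-1$. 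You correctly identify $\mathfrak{A}^{J,K}_{I,10}$ at $|J|=N$ as the term which forces the dedicated $\mathcal{L}\mathcal{L}$-energy; the paper formally just cites the earlier model computation from Subsection \ref{subsecgeneralscheme} at this point, which is exactly the computation you reproduce.
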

\begin{proof}
Recall that we already dealt with the term associated to $\mathfrak{A}^{J,K}_{I,10}$ when we have bounded $\mathcal{J}$ (see \eqref{mathfrak1et10}). We also already treated the integral associated to $\widehat{\mathfrak{A}}^{J,K}_{I,1}$ but we will repeat the proof here. We will often use that $1+|u| \leq 1+\tau+r$ as well as the inequalities
\begin{equation}\label{eq:fortheproofequation}
 \frac{1}{z^2} \hspace{1mm} \lesssim \hspace{1mm} \frac{1}{(1+|\tau-r|)^2}, \qquad  \frac{|w_L|}{|v|z^2} \hspace{1mm} \lesssim \hspace{1mm} \frac{1}{(1+\tau+r)^2},
\end{equation}
which come Lemma \ref{lem_wwl}. We start by the terms of degree $1$ in $h^1$, i.e. the quadratic terms and some of the terms arising from the Schwarzschild part. We obtain, using \eqref{eq:fortheproofequation}, that
\begin{align*}
\frac{\left|\widehat{\mathfrak{B}}^{J,K}_{I,1}\right|^2\!+\left|\widehat{\mathfrak{A}}^{J,K}_{I,3}\right|^2}{(1+\tau+r)z^2|v|^2}+\frac{\left|\mathfrak{B}^{J,K}_{I,3}\right|^2\!+\left|\mathfrak{A}^{J,K}_{I,4}\right|^2\!+\left|\mathfrak{A}^{J,K}_{I,6}\right|^2}{(1+\tau+r)z^4|v|^2} \hspace{1mm} & \lesssim \hspace{1mm} \frac{\left|\mathcal{L}_Z^J(h^1)\right|^2}{(1+\tau+r)^3(1+|\tau-r|)^2}, \\
\frac{\left|\widehat{\mathfrak{B}}^{J,K}_{I,2}\right|^2+\left|\widehat{\mathfrak{A}}^{J,K}_{I,1}\right|^2\!}{(1+\tau+r)z^2|v|^2}+\frac{\left|\mathfrak{B}^{J,K}_{I,4}\right|^2+\left|\mathfrak{A}^{J,K}_{I,7}\right|^2+\left|\mathfrak{A}^{J,K}_{I,11}\right|^2}{(1+\tau+r)z^4|v|^2} \hspace{1mm} & \lesssim \hspace{1mm} \frac{\left| \nabla \mathcal{L}_Z^J(h^1)\right|^2}{(1+\tau+r)^3}, \\
\frac{\left|\mathfrak{B}^{J,K}_{I,5}\right|^2+\left|\mathfrak{A}^{J,K}_{I,9}\right|^2}{(1+\tau+r)z^4|v|^2} \hspace{1mm} & \lesssim \hspace{1mm} \frac{\left| \overline{\nabla} \mathcal{L}_Z^J(h^1)\right|^2}{(1+\tau+r)(1+|\tau - r|)^2} .
\end{align*} 
Similarly, we have
$$ \frac{\left|\mathfrak{A}^{J,K}_{I,5}\right|^2}{(1+\tau+r)z^4|v|^2} \hspace{1mm}  \lesssim \hspace{1mm} \frac{\left| \mathcal{L}_Z^J(h^1)\right|_{\mathcal{L} \mathcal{T}}^2}{(1+\tau+r)(1+|\tau-r|)^4} \hspace{1mm}  \lesssim \hspace{1mm} \frac{\left| \mathcal{L}_Z^J(h^1)\right|_{\mathcal{L} \mathcal{T}}^2}{(1+\tau+r)^{1-2\delta}(1+|\tau-r|)^4}.$$
Finally, using the wave gauge condition \eqref{wgcforproof}, there holds
\begin{align*}
& \frac{\left|\widehat{\mathfrak{A}}^{J,K}_{I,2}\right|^2}{(1+\tau+r)z^2|v|^2}+\frac{\left|\mathfrak{A}^{J,K}_{I,8}\right|^2}{(1+\tau+r)z^4|v|^2}  \hspace{1mm}  \lesssim \hspace{1mm}  \frac{\left|\overline{\nabla} \mathcal{L}_Z^{J}(h^1)\right|^2}{(1+\tau+r)(1+|\tau - r |)^2}+\frac{\epsilon \, \mathds{1}_{r \leq \frac{1+\tau}{2}}}{(1+\tau+r)^5}\\
 & \hspace{0.7cm} +\frac{\epsilon}{(1+t+r)^7} + \epsilon \sum_{|I_0| \leq |I|}\frac{\big| \nabla \mathcal{L}_Z^{I_0}( h^1) \big|^2}{(1+t+r)^{3-2\delta}(1+|\tau - r |)} +\frac{\big|  \mathcal{L}_Z^{I_0}( h^1) \big|^2}{(1+t+r)^{3-2\delta}(1+|\tau - r|)^3} .
\end{align*}
We now study the remaining terms. Note that without loss of generality, we can assume that $|\overline{M}| \leq N-5$. Since $|Q| \leq N-5$ or $|M| \leq N-5$, we have, using the pointwise decay estimates of Proposition \ref{decaymetric} and \eqref{eq:fortheproofequation},
$$ \frac{\left|\widehat{\mathfrak{A}}^{Q,M,K}_{I,12}\right|^2}{(1+\tau+r)z^2|v|^2}+\frac{\left|\mathfrak{A}^{Q,M,K}_{I,14}\right|^2}{(1+\tau+r)z^4|v|^2}  \hspace{1mm} \lesssim \hspace{1mm} \sum_{|I_0| \leq |I|} \frac{\left| \mathcal{L}_Z^{I_0}(h^1)\right|^2}{(1+\tau+r)^{3-2\delta}(1+|\tau - r|)^3}.$$
If $|Q| \leq N-5$ and $\overline{Q} \leq N-5$, we use again Proposition \ref{decaymetric} and \eqref{eq:fortheproofequation} in order to get
\begin{multline*}
 \frac{\left|\widehat{\mathfrak{A}}^{Q,M,K}_{I,13}\right|^2\!}{(1+\tau+r)z^2|v|^2}+\frac{\left|\mathfrak{B}^{Q,M,K}_{I,6}\right|^2\!+\left|\mathfrak{A}^{Q,M,K}_{I,15}\right|^2\!+\left|\mathfrak{A}^{Q,M,K}_{I,16}\right|^2+\left|\mathfrak{A}^{\overline{Q},\overline{M},\overline{J},K}_{I,18}\right|^2}{(1+\tau+r)z^4|v|^2} \\ \hspace{1mm} \lesssim \hspace{1mm} \sum_{|I_0| \leq |I|} \frac{\sqrt{\epsilon}\left|\nabla \mathcal{L}_Z^{I_0}(h^1)\right|^2}{(1+\tau+r)^{3-4\delta}(1+|\tau - r |)}
 \end{multline*}
 and
$$ \frac{\left|\mathfrak{A}^{Q,M,K}_{I,17}\right|^2}{(1+\tau+r)z^4|v|^2} \hspace{1mm}  \lesssim \hspace{1mm}  \sqrt{\epsilon}\frac{\left| \overline{\nabla} \mathcal{L}_Z^{M}(h^1)\right|^2}{(1+\tau+r)^{1-2\delta}(1+|\tau - r|)^3}.$$
Otherwise we have $|M| \leq N-5$ and $|\overline{J}| \leq N-5$, so that we obtain
\begin{align*}
 \frac{\left|\widehat{\mathfrak{A}}^{Q,M,K}_{I,13}\right|^2}{(1+\tau+r)z^2|v|^2}&+\frac{\left|\mathfrak{B}^{Q,M,K}_{I,6}\right|^2+\left|\mathfrak{A}^{Q,M,K}_{I,15}\right|^2+\left|\mathfrak{A}^{Q,M,K}_{I,16}\right|^2+\left|\mathfrak{A}^{Q,M,K}_{I,17}\right|^2+\left|\mathfrak{A}^{\overline{Q},\overline{M},\overline{J},K}_{I,18}\right|^2}{(1+\tau+r)z^4|v|^2} \\ & \qquad \qquad \qquad \qquad \hspace{1cm} \qquad \lesssim \hspace{1mm} \sum_{|I_0| \leq |I|} \frac{\sqrt{\epsilon}\left| \mathcal{L}_Z^{I_0}(h^1)\right|^2}{(1+\tau+r)^{3-4\delta}(1+|\tau - r|)^3}.
 \end{align*}
Combining all the previous estimates, we are then led to prove that for all $|I_0| \leq N$,
\begin{align*}
\mathfrak{P}_0 \hspace{1mm} & := \hspace{1mm} \int_0^t  \int_{\Sigma_{\tau}} \frac{\epsilon}{(1+\tau+r)^{5-2\delta}}  \dr x \dr \tau \hspace{1mm}  \lesssim \hspace{1mm} \epsilon, \\
\mathfrak{P}^{I_0}_1 \hspace{1mm} & := \hspace{1mm} \int_0^t  \int_{\Sigma_{\tau}} \frac{\big|  \mathcal{L}_Z^{I_0}(h^1)\big|^2_{\mathcal{L} \mathcal{T}}}{(1+\tau+r)^{1-2\delta}(1+|u|)^4}  \omega_{\frac{1}{8}}^{\frac{1}{8}} \dr x \dr \tau \hspace{1mm}  \lesssim \hspace{1mm} \left\{ \begin{array}{ll}  \epsilon ,  & \text{if $|I_0| < N$}, \\  \epsilon (1+t)^{1+\delta},  & \text{if $|I_0| = N$} ,  \end{array}\right. \\
\mathfrak{P}^{I_0}_2 \hspace{1mm} & := \hspace{1mm} \int_0^t  \int_{\Sigma_{\tau}} \frac{\big|  \mathcal{L}_Z^{I_0}(h^1)\big|^2}{(1+\tau+r)^{3-4\delta}(1+|u|)^2}  \omega_{\frac{1}{8}}^{\frac{1}{8}} \dr x \dr \tau \hspace{1mm}  \lesssim \hspace{1mm} \left\{ \begin{array}{ll}  \epsilon ,  & \text{if $|I_0| < N$}, \\  \epsilon (1+t)^{1+\delta},  & \text{if $|I_0| = N$} ,  \end{array}\right. \\
\mathfrak{P}^{I_0}_3 \hspace{1mm} & := \hspace{1mm}  \int_0^t \int_{\Sigma_{\tau}} \frac{\big| \overline{\nabla} \mathcal{L}_Z^{I_0}(h^1)\big|^2}{(1+\tau+r)^{1-2\delta}(1+|u|)^2} \omega_{\frac{1}{8}}^{\frac{1}{8}} \dr x \dr \tau \hspace{1mm}  \lesssim \hspace{1mm} \left\{ \begin{array}{ll}  \epsilon ,  & \text{if $|I_0| < N$}, \\  \epsilon (1+t)^{1+\delta},  & \text{if $|I_0| = N$} ,  \end{array}\right. \\
 \mathfrak{P}^{I_0}_4 \hspace{1mm} & := \hspace{1mm} \int_0^t \int_{\Sigma_{\tau}} \frac{1}{(1+\tau+r)^{3-4\delta}}\big| \nabla \mathcal{L}_Z^{I_0}(h^1)\big|^2 \omega_{\frac{1}{8}}^{\frac{1}{8}} \dr x \dr \tau \hspace{1mm}  \lesssim \hspace{1mm} \left\{ \begin{array}{ll} \epsilon ,  & \text{if $|I_0| < N$}, \\  \epsilon (1+t)^{1+\delta},  & \text{if $|I_0| = N$} .  \end{array}\right.
\end{align*}
As before, when we will apply the Hardy inequality of Lemma \ref{lem_hardy} in the upcoming computations, we will not be able to exploit all the decay in $u=\tau-r$ in the exterior region. Using first the Hardy inequality and then the wave gauge condition \eqref{wgcforproof}, we have
$$ \mathfrak{P}^{I_0}_1  \hspace{1mm}  \lesssim  \hspace{1mm} \int_0^t  \int_{\Sigma_{\tau}} \frac{\big| \nabla \mathcal{L}_Z^{I_0}(h^1)\big|^2_{\mathcal{L} \mathcal{T}}}{(1+\tau+r)^{1-2\delta}} \omega^{1+\delta}_{2+\frac{1}{8}} \dr x \dr \tau \hspace{1mm} \lesssim \hspace{1mm} \overline{\mathfrak{P}}^{I_0}_3+\mathfrak{P}_0+\sum_{|J_0| \leq |I_0|} \overline{\mathfrak{P}}^{J_0}_{2,4},$$
where, 
$$ \overline{\mathfrak{P}}^{I_0}_3 \hspace{1mm}  := \hspace{1mm} \int_0^t  \int_{\Sigma_{\tau}} \frac{\big| \overline{\nabla} \mathcal{L}_Z^{I_0}(h^1)\big|^2}{(1+\tau+r)^{1-2\delta}} \omega^{1+\delta}_{2+\frac{1}{8}} \dr x \dr \tau$$
and, as $\omega^{1+\delta}_{2+\frac{1}{8}} \leq \omega^{1+2\gamma}_{1+\gamma}$,
$$ \overline{\mathfrak{P}}^{I_0}_{2,4} \hspace{1mm}  := \hspace{1mm} \int_0^t  \int_{\Sigma_{\tau}} \frac{1+|u|}{(1+\tau+r)^{3-4\delta}} \left( \big| \nabla \mathcal{L}_Z^{J_0}(h^1)\big|^2+\frac{\big| \mathcal{L}_Z^{J_0}(h^1)\big|^2}{(1+|u|)^2} \right) \omega^{1+2 \gamma}_{1+\gamma}  \dr x \dr \tau .$$
Using \eqref{eq:mathfrakI0}, we have $\mathfrak{P}_0 \leq \epsilon^{-1} \overline{\mathfrak{I}}_0 \lesssim \epsilon$. As moreover $\mathfrak{P}^{I_0}_3 \leq \overline{\mathfrak{P}}^{I_0}_3$ and $\mathfrak{P}^{I_0}_2+\mathfrak{P}^{I_0}_4 \leq \overline{\mathfrak{P}}^{I_0}_{2,4}$, it only remains to deal with the integrals $\overline{\mathfrak{P}}^{I_0}_3$ and $\overline{\mathfrak{P}}^{I_0}_{2,4}$. Applying the Hardy type inequality of Lemma \ref{lem_hardy} and using the bootstrap assumption \eqref{boot2}, we get
$$ \overline{\mathfrak{P}}^{I_0}_{2,4}  \hspace{1mm} \lesssim  \hspace{1mm} \int_0^t  \int_{\Sigma_{\tau}}  \frac{\big| \nabla \mathcal{L}_Z^{J_0}(h^1)\big|^2}{(1+\tau+r)^{3-4\delta}}  \omega^{2+2\gamma}_{\gamma}  \dr x \dr \tau \hspace{1mm} \lesssim  \hspace{1mm} \int_0^t  \frac{\mathring{\mathcal{E}}^{\gamma,2+2\gamma}_{N}[h^1](\tau)}{(1+\tau)^{2-4\delta}} \dr \tau \hspace{1mm} \lesssim  \hspace{1mm}  \epsilon .$$
If $|I_0| \leq N-1$, we have using $1+|u| \leq 1+\tau+r$ and then Lemma \ref{LemBulk} combined with the bootstrap assumption \eqref{boot1} and $\gamma-3\delta > 2\delta$,
$$\overline{\mathfrak{P}}^{I_0}_3 \hspace{1mm}  \leq \hspace{1mm} \int_0^t  \int_{\Sigma_{\tau}} \frac{\big| \overline{\nabla} \mathcal{L}_Z^{I_0}(h^1)\big|^2}{(1+\tau)^{\gamma-3\delta}} \frac{\omega^{1+\gamma}_{\gamma}}{1+|u|} \dr x \dr \tau \hspace{1mm}  \leq \hspace{1mm} \int_0^t  \int_{\Sigma_{\tau}} \frac{\big| \overline{\nabla} \mathcal{L}_Z^{I_0}(h^1)\big|^2}{(1+\tau)^{\gamma-3\delta}} \frac{\omega^{1+2\gamma}_{\gamma}}{1+|u|} \dr x \dr \tau \hspace{1mm} \lesssim \hspace{1mm} \epsilon. $$
For the case $|I_0|=N$, use $\sup_{r \in \R_+} \frac{1+\tau+r}{1+|\tau - r|} \lesssim 1+\tau$ and then $3\delta \leq 2\gamma$ as well as $1+\frac{1}{8}-2\delta \geq \gamma$ in order to obtain
\begin{align*}
\overline{\mathfrak{P}}^{I_0}_3 \hspace{1mm} & \lesssim \hspace{1mm} \int_0^t (1+\tau)^{2 \delta} \int_{\Sigma_{\tau}} \frac{\big| \overline{\nabla} \mathcal{L}_Z^{I_0}(h^1)\big|^2}{1+\tau+r} \omega^{1+3\delta}_{2+\frac{1}{8}-2\delta} \dr x \dr \tau \\
& \lesssim \hspace{1mm} (1+t)^{2\delta}\int_0^t  \int_{\Sigma_{\tau}} \frac{\big| \overline{\nabla} \mathcal{L}_Z^{I_0}(h^1)\big|^2}{1+\tau+r} \frac{\omega^{2+2\gamma}_{\gamma}}{1+|u|} \dr x \dr \tau \hspace{1mm} \lesssim \hspace{1mm} (1+t)^{2\delta} \mathring{\mathcal{E}}_N^{\gamma,2+2\gamma}[h^1](t).
\end{align*}
Using the bootstrap assumption \eqref{boot2} and that $4\delta \leq 1+\delta$, we get
$\overline{\mathfrak{P}}^{I_0}_3  \leq \epsilon (1+t)^{1+\delta}$. This concludes the proof.
\end{proof}
\subsubsection{Conclusion}

According to Proposition \ref{ComuVlasov3}, Lemmas \ref{lem1}-\ref{lem4} and Propositions \ref{pro1}-\ref{prop3}, Proposition \ref{BoundL1} holds.

\section{$L^2$-estimates on the velocity averages of the Vlasov field}\label{sec14}

The purpose of this section is to prove that the assumptions of Propositions \ref{improboot12}, \ref{improboot12bis}, \ref{improbootTU} and \ref{improvementLL} on the energy momentum tensor $T[f]$ of the Vlasov hold. More precisely, we will prove $L^2$-estimates on quantities such as $\int_v | \widehat{Z}^K f| |v| \dr v$. If $|K| \leq N-4$, this will be done using the pointwise decay estimate \eqref{eq:decayVlasov2}. The main part of this section then consists in deriving such estimates for $|K| \geq N-3$. For this, we follow an improvement of the strategy used in \cite{FJS} (see Subsection $4.5.7$), which was used in \cite[Section $7$]{massless} in the context of the Vlasov-Maxwell system. Contrary to the method of \cite{FJS}, this improvement will allow us to exploit all the null structures of the system. Let us first rewrite the commuted equations of the Einstein-Vlasov system and then we will explain how we will proceed. Let $\M$ and $\M_{\infty}$ be the following ordered sets,
\begin{align*}
 \M \hspace{1mm} & := \hspace{1mm} \{ I \hspace{2mm} \text{multi-index} \hspace{1mm} / \hspace{1mm} N-5 \leq |I| \leq N \} \hspace{1mm} = \hspace{1mm} \{ I_1, \dots , I_{|\M|}  \},  \\
 \M_{\infty} \hspace{1mm} & := \hspace{1mm} \{ K \hspace{2mm} \text{multi-index} \hspace{1mm} / \hspace{1mm}  |K| \leq N-5 \} \hspace{1mm} = \hspace{1mm} \{ K_1,\dots , K_{|\M_{\infty}|} \}. 
\end{align*}
\begin{rem}
We put the multi-indices of length $N-5$ in these two sets for a technical reason. Note that $\M$ contains all the multi-indices corresponding to the derivatives on which we do not have any $L^2$-estimate yet.
\end{rem}
We also consider two vector valued fields $F$ and $W$ of respective lengths $|\M|$ and $|\M_{\infty}|$ such that
$$ F_i= F\left[\widehat{Z}^{I_i}f \right]= \widehat{Z}^{I_i}f \hspace{10mm} \text{and} \hspace{10mm} W_k = \widehat{Z}^{K_k}f.$$
We will see below that it will be convenient to denote the $i^{\text{th}}$ component of $F$ by $F\left[\widehat{Z}^{I_i}f \right]$. Let us denote by $\Vv$ the module over the ring $\{ \psi \hspace{1mm} / \hspace{1mm} \psi :[0,T[ \times \R^3_x \times \R^3_v \rightarrow \R \}$ generated by $(\partial_{x^{\mu}})_{0 \leq \mu \leq 3}$ and $( \partial_{v_j})_{1 \leq j \leq 3}$. We now rewrite the Vlasov equations satisfied by $F$ and $W$.
\begin{lemma}\label{L2bilan}
There exist two matrix-valued functions $A : [0,T[ \times \R^3_x \times \R^3_v \rightarrow \mathfrak{M}_{|\M|}(\Vv)$ and $B : [0,T[ \times \R^3_x \times\R^3_v \rightarrow  \mathfrak M_{|\M|,|\M_{\infty}|}(\Vv)$ such that
$$\T_g(F)+A \cdot F \hspace{1mm} = \hspace{1mm} B \cdot W.$$
Moreover, if $1 \leq i \leq |\M|$ and $I_i$ is the multi-index such that $F_i=\widehat{Z}^{I_i}f$, then $A$ and $B$ are such that $\T_g(F_i)$ can be written as a linear combination with polynomial coefficients in $\frac{w_{\xi}}{w_0}$, $0 \leq \xi \leq 3$, of the following terms,
\begin{alignat*}{2}
& \mathcal{L}^J_{Z} (H)(w, \dr F[\widehat{Z}^{I_j} f]), \qquad && \mathcal{L}^{\overline{J}}_{Z} (H)(w, \dr W_k)  ,\\
&  \nabla_p \left( \mathcal{L}_Z^J H \right)\!(w,w) \cdot \partial_{v_p} F[\widehat{Z}^{I_p} f] , \qquad && \nabla_p \left( \mathcal{L}_Z^{\overline{J}} H \right)\!(w,w) \cdot \partial_{v_p} W_k ,  \\ 
&  \nabla^{\lambda}\!\left( \mathcal{L}_Z^J H \right)\!(w,w) \cdot \frac{w_{\lambda}}{w_0} \, \partial_{v_q} F[\widehat{Z}^{I_j} f] , \qquad && \nabla^{\lambda}\!\left( \mathcal{L}_Z^{\overline{J}} H \right)\!(w,w) \cdot \frac{w_{\lambda}}{w_0} \, \partial_{v_q} W_k , \\ 
&  \widehat{Z}^{M_1} ( \Delta v ) \, \mathcal{L}_Z^Q (g^{-1})( \mathrm dx^{\mu}, d F[\widehat{Z}^{I_j} f]), \qquad && \widehat{Z}^{\overline{M}_1} ( \Delta v ) \, \mathcal{L}_Z^{\overline{J}} (g^{-1})( \mathrm dx^{\mu}, d W_k), \\ 
&  \widehat{Z}^{M_1} ( \Delta v ) \, \nabla_p\!\left( \mathcal{L}_Z^Q H \right)\!(d x^{\mu}\! , w) \cdot \partial_{v_p} F[\widehat{Z}^{I_j} f] , \qquad && \widehat{Z}^{\overline{M}_1} ( \Delta v ) \, \nabla_p\!\left( \! \mathcal{L}_Z^{\overline{Q}} H \! \right)\!(\dr x^{\mu} \! , w) \cdot \partial_{v_p} W_k,   \\ 
& \widehat{Z}^{M_1} ( \Delta v ) \widehat{Z}^{M_2} ( \Delta v ) \, \nabla_p\!\left( \mathcal{L}_Z^Q H \right)^{\mu \nu} \! \cdot \partial_{v_p} F[\widehat{Z}^{I_j} f] , \hspace{6.5mm} && \widehat{Z}^{\overline{M}_1} ( \Delta v ) \widehat{Z}^{\overline{M}_2} ( \Delta v ) \, \nabla_p\!\left( \mathcal{L}_Z^{\overline{Q}} H \right)^{\mu \nu} \! \cdot \partial_{v_p} W_k, 
\end{alignat*}
\begin{alignat*}{2}
&  \widehat{Z}^{M_1} (\Delta v ) \, \nabla^{\lambda}\!\left( \mathcal{L}_Z^Q H\right)\!(\mathrm dx^{\mu},w) \cdot \frac{w_{\lambda}}{w_0}  \partial_{v_q} F[\widehat{Z}^{I_j} f], && \\ 
&  \widehat{Z}^{\overline{M}_1} (\Delta v ) \, \nabla^{\lambda}\!\left( \mathcal{L}_Z^{\overline{Q}} H\right)\!(\mathrm dx^{\mu},w) \cdot \frac{w_{\lambda}}{w_0}  \partial_{v_q} W_k, && \\ 
& \widehat{Z}^{M_1} (\Delta v ) \widehat{Z}^{M_2} (\Delta v ) \, \nabla^{\lambda}\!\left( \mathcal{L}_Z^Q H \right)^{\mu \nu} \cdot \frac{w_{\lambda}}{w_0}  \partial_{v_q} F[\widehat{Z}^{I_j} f],  && \\
& \widehat{Z}^{\overline{M}_1} (\Delta v ) \widehat{Z}^{\overline{M}_2} (\Delta v ) \, \nabla^{\lambda}\!\left( \mathcal{L}_Z^{\overline{Q}} H \right)^{\mu \nu} \cdot \frac{w_{\lambda}}{w_0}  \partial_{v_q} W_k,  && 
\end{alignat*}
where, $q \in \llbracket 1,3 \rrbracket$, $(\mu, \nu) \in \llbracket 0,3 \rrbracket^2$, $|K_k| \leq N-6$, $K_k^P \leq I_i^P$ with $W_k=\widehat{Z}^{K_k} f$, 
\begin{alignat*}{2}
  |\overline{J}|+|K_k| & \leq |I_i|, \qquad |\overline{M}_1|+|\overline{M}_2|+|\overline{Q}|+|K_k| && \leq |I_i|, \qquad |K_k|  \leq |I_i|-1, \\
   |J|+|I_j| & \leq |I_i|, \qquad |M_1|+|M_2|+|Q|+|I_j| && \leq |I_i|, \qquad |I_j|  \leq |I_i|-1. \end{alignat*}
Moreover $I_j$, $J$, $Q$ and $M_1$ satisfy the following condition
\begin{enumerate}
\item either $I_j^P < I_i^P$,
\item or $I_j^P= I_i^P$ and then $J^T \geq 1$, $Q^T+M_1^T \geq 1$.
\end{enumerate}
For the term $\nabla^{\lambda}\!\left( \mathcal{L}_Z^J H \right)\!(w,w) \cdot \frac{w_{\lambda}}{w_0} \, \partial_{v_q} F[\widehat{Z}^{I_j} f]$, $J$ and $I_j$ satisfy the improved condition
$$ |J|+|I_j| \leq |I_i|-1 \hspace{1cm} \text{and} \hspace{1cm} I_j^P < I_i^P.$$
\end{lemma}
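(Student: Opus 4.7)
The proof is essentially a reorganization of the commutation formula already established in Proposition \ref{ComuVlasov2}, recast in matrix form so as to isolate the high-order derivatives (on which no pointwise estimate is yet available) from the low-order ones.

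The starting point is the observation that since $f$ solves $\T_g(f)=0$, we have for each $I_i \in \M$
$$
\T_g(F_i) \;=\; \T_g(\widehat{Z}^{I_i} f) \;=\; [\T_g,\widehat{Z}^{I_i}](f),
$$
so that Proposition \ref{ComuVlasov2} directly expresses $\T_g(F_i)$ as a linear combination, with polynomial coefficients in the $\frac{w_\xi}{w_0}$, of terms of the eight types \eqref{eq:error1}--\eqref{eq:error8}, together with terms \eqref{eq:error0} of the form $\widehat{Z}^{I_0}(\T_g(f))$. The latter vanish identically thanks to the Vlasov equation, so only the eight structural error terms remain.

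The plan is then a simple dichotomy on the multi-index $K$ appearing in the surviving factor $\widehat{Z}^K f$ (with $|K|\leq |I_i|-1$) of each error term. If $|K|\leq N-6$, then $\widehat{Z}^K f = W_k$ for some $k$, and the corresponding error term is read as a first-order differential operator in $\Vv$ (a translation, a $v$-derivative, or the gradient of a Lie derivative of the metric paired with $\partial_{x^\mu}$ or $\partial_{v_i}$) applied to $W_k$ with a function coefficient; this defines the $(i,k)$-entry of $B$. If instead $N-5\leq |K|\leq |I_i|-1\leq N-1$, then $\widehat{Z}^K f = F[\widehat{Z}^{I_j} f]$ for a unique $I_j \in \M$ with $|I_j|<|I_i|$, so the same error term contributes, with the opposite sign, to the $(i,j)$-entry of $A$. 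The conditions relating $(J,Q,M_1,K)$ or $(\overline{J},\overline{Q},\overline{M}_1,K)$ to $I_i$ (in particular on the $P$- and $T$-exponents, and the sharper condition $K^P<I_i^P,\ |J|+|K|\leq|I_i|-1$ for the $\nabla^\lambda(\mathcal{L}_Z^J H)(w,w)\cdot w_\lambda/w_0\,\partial_{v_q}$-type term) are transcribed verbatim from Proposition \ref{ComuVlasov2}, which is why they appear in the statement.

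There is no real analytic obstacle here; the only task is bookkeeping to verify that every term listed in Proposition \ref{ComuVlasov2} falls, after the above split, into one of the two matching columns of the table in the statement, and that the coefficient function in front of the differential operator lies in the appropriate polynomial algebra in $w_\xi/w_0$. The mildly delicate point is to check that the conditions on $I_j^P$ and $K_k^P$ stated in the lemma (notably $K_k^P \leq I_i^P$ and the hierarchy for the $\nabla^\lambda$-type term) are preserved under the splitting — but this follows immediately from the fact that the splitting concerns only the length $|K|$ and not the structure of the multi-index, so the $P$- and $T$-exponent hierarchies carry over unchanged from Proposition \ref{ComuVlasov2}.
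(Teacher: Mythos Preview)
Your proposal is correct and follows essentially the same approach as the paper: both apply Proposition~\ref{ComuVlasov2} to $\widehat{Z}^{I_i}f$, discard the vanishing source terms $\widehat{Z}^{I_0}(\T_g(f))$, and then sort the remaining error terms according to whether the multi-index $K$ on the Vlasov factor satisfies $|K|\leq N-6$ (contribution to $B\cdot W$) or $N-5\leq |K|\leq |I_i|-1$ (contribution to $A\cdot F$), with the conditions on the $P$- and $T$-exponents inherited directly from Proposition~\ref{ComuVlasov2}. The paper's proof additionally explains \emph{why} the borderline case $|K|=N-5$ is assigned to $F$ rather than $W$ (so that the derivatives $\partial_{x^\mu}W_k$, $\partial_{v_i}W_k$ appearing in the error terms remain expressible as components of $W$, which is used in Lemma~\ref{bilaninho}), but this is a motivation for the statement rather than an ingredient of the proof itself.
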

\begin{rem}\label{Rqbilan}
Notice that if $|I_i|=N-5$, then $A_i^q=0$ for all $1 \leq q \leq |\M|$.
\end{rem}
\begin{proof}
One only has to apply the commutation formula of Proposition \ref{ComuVlasov2} to $\widehat{Z}^{I_i} f$ and replace each derivatives of the Vlasov field $\widehat{Z}^{K} f$, for $|K| \neq N-5$, by the corresponding component of $F$ or $W$. If $|K| = N-5$, we replace it by the corresponding component of $F$ for the following reason. In the terms listed in the Lemma, a derivative is applied to the components $W_k$. Hence, if $|K_k| \leq N-6$, we are able to rewrite $\partial_{x^{\mu}} W_k$ and $\partial_{v_i} W_k$ as a combination of components of $W$, which will be important later.
\end{proof}
The goal is to obtain an $L^2$-estimate on $F$. For this, let us split $F$ as $F^{\text{hom}}+F^{\text{inh}}$, where
$$\left\{
    \begin{array}{ll}
       \hspace{-2mm}  &\T_g(\F)+ A \cdot \F = 0 , \hspace{20mm} \F(0,\cdot,\cdot)=F(0,\cdot,\cdot),\\
      \hspace{-2mm}  &\T_g(\Ff)+ A \cdot \Ff = B \cdot W, \hspace{15.5mm} \Ff(0,\cdot,\cdot)=0.
    \end{array}
\right.$$
By uniqueness, $F=F^{\text{hom}}+F^{\text{inh}}$ and it is thus sufficient to prove $L^2$-estimates for the velocity average of $\F$ and $\Ff$. To this end, schematically, we will establish that $\Ff=KW$, with $K$ a matrix such that $\E[KKW]$ does not growth too fast, and then use the pointwise decay estimates on $\int_v |W||v|dv$ given by \eqref{eq:decayVlasov2} to obtain the expected decay rate on $\| \int_v |\Ff| |v| dv \|_{L^2_x}$. For $\| \int_v |\F| |v| dv \|_{L^2_x}$, we will make crucial use of the Klainerman-Sobolev inequality of Proposition \ref{KSvlasov} so that we will need to commute the transport equation satisfied by $\F$ and prove $L^1$-bounds similar to the ones of Section \ref{sect_l1}. 

It will be convenient to denote, similar to $F$, the components $\F_i$ and $\Ff_i$ of $\F$ and $\Ff$ as follows,
$$ \F_i \hspace{1mm} = \hspace{1mm} \F\left[ \widehat{Z}^{I_i} f \right], \qquad \qquad \Ff_i \hspace{1mm} = \hspace{1mm} \Ff\left[ \widehat{Z}^{I_i} f \right].$$
\begin{rem}
Contrary to \cite{FJS}, we kept, as in \cite{massless}, the $v$-derivatives in the statement of Lemma \ref{L2bilan} in order to take advantage of the good behavior of radial component of $\nabla_v F$. If we had already transformed the $v$-derivatives, we would be left with terms such as $\frac{x^{j}}{r}(t-r) \partial_{x^j} F$ from $\left( \nabla_v F \right)^r$ (see Lemma \ref{vderivative}). We would then have to deal with factors such as $\frac{t^3}{|x|^3}$ during the treatment of the homogeneous part $\F$ (apply three boost to $\frac{x^k}{|x|}$) since we will have to commute at least three times the equation $\T_g(\F)+A \cdot \F =0$. 

On the other hand, keeping the $v$-derivatives also creates two new technical difficulties compared to the strategy of \cite{FJS}. We will circumvent them following \cite{massless}. The first one concerns $\F$ and will lead us to consider a new hierarchy (see Subsection \ref{subsecH}). The other one concerns certain source terms of the transport equation satisfied by $\Ff$, which contain derivatives of $\Ff$. Because of the presence of top order derivatives of $h^1$, we will not commute this equation and these derivatives have to be rewritten as a combination of components $\Ff$ and controlled terms, which will be derivatives of $\F$.
\end{rem}

\subsection{The homogeneous system}\label{subsecH}

In order to obtain $L^{\infty}$, and then $L^2$, estimates on $\int_v |\F| |v| dv$, we will have to commute at least three times the transport equation satisfied by each component of $\F$. However, if for instance $|I_i| = N-4$, we need to control the $L^1$-norm of $\widehat{Z}^{K} \F[ \widehat{Z}^{I_j} f ]$, with $|K|=4$ and $|I_j|=N-5$, to bound $\| \widehat{Z}^{I} \F[ \widehat{Z}^{I_i} f ] \|_{L^1_{x,v}}$, with $|I| = 3$. We then consider the following energy norm (recall that $\ell=\frac{2}{3}N+6$), 
\begin{align}\label{defEhom}
 \E_{\F} \hspace{1mm} & := \hspace{1mm} \sum_{1 \leq i \leq |\M|} \hspace{1mm} \sum_{ 0 \leq k \leq N-|I_i|} \hspace{1mm}   \E^{\ell}_{3+k} \left[  \F \left[ \widehat{Z}^{I_i} f \right]\right]
  \\ \nonumber & = \hspace{1mm} \sum_{1 \leq i \leq |\M|} \hspace{1mm} \sum_{ |I_i|+|I| \leq N+3} \hspace{1mm}   \E^{\frac{1}{8},\frac{1}{8}} \left[ z^{\ell-\frac{2}{3}(I^P+I_i^P)}\widehat{Z}^{I} \left( \F \left[ \widehat{Z}^{I_i} f \right] \right) \right].
 \end{align}
We have the following commutation formula.
\begin{lemma}\label{comL2hom}
Let $i \in \llbracket 1, |\M| \rrbracket$ and $I$ be a multi-index satisfying $|I_i|+|I| \leq N+3$. Then, $\T_g(\widehat{Z}^{I} \F[ \widehat{Z}^{I_i} f ])$ can be written as a linear combination with polynomial coefficients in $\frac{w_{\xi}}{w_0}$, $0 \leq \xi \leq 3$, of the following terms,
\begin{eqnarray}
\nonumber & \bullet & \mathcal{L}^J_{Z} (H)(w, \dr \widehat{Z}^K \F[ \widehat{Z}^{I_j} f] ),  \\
\nonumber & \bullet & \nabla_p \left( \mathcal{L}_Z^J H \right)\!(w,w) \cdot \partial_{v_p} \widehat{Z}^K \F[ \widehat{Z}^{I_j} f ] ,  \\ 
& \bullet & \nabla^{\lambda}\!\left( \mathcal{L}_Z^J H \right)\!(w,w) \cdot \frac{w_{\lambda}}{w_0} \, \partial_{v_q} \widehat{Z}^K \F[ \widehat{Z}^{I_j} f] , \label{errorH}  \\ 
\nonumber & \bullet & \widehat{Z}^{M_1} ( \Delta v ) \, \mathcal{L}_Z^Q (g^{-1})( \dr x^{\mu}, \dr \widehat{Z}^K  \F[ \widehat{Z}^{I_j} f]),  \\ 
\nonumber & \bullet & \widehat{Z}^{M_1} ( \Delta v ) \, \nabla_p\!\left( \mathcal{L}_Z^Q H \right)\!(d x^{\mu}, w) \cdot \partial_{v_p} \widehat{Z}^K  \F[ \widehat{Z}^{I_j} f] ,  \\ 
\nonumber & \bullet & \widehat{Z}^{M_1} ( \Delta v ) \widehat{Z}^{M_2} ( \Delta v ) \, \nabla_p\!\left( \mathcal{L}_Z^Q H \right)^{\mu \nu} \cdot \partial_{v_p} \widehat{Z}^K  \F[ \widehat{Z}^{I_j} f],  \\ 
\nonumber & \bullet & \widehat{Z}^{M_1} (\Delta v ) \, \nabla^{\lambda}\!\left( \mathcal{L}_Z^Q H\right)\!(\mathrm dx^{\mu},w) \cdot \frac{w_{\lambda}}{w_0}  \partial_{v_q} \widehat{Z}^K  \F[ \widehat{Z}^{I_j} f],  \\ 
\nonumber & \bullet & \widehat{Z}^{M_1} (\Delta v ) \widehat{Z}^{M_2} (\Delta v ) \, \nabla^{\lambda}\!\left( \mathcal{L}_Z^Q H \right)^{\mu \nu} \cdot \frac{w_{\lambda}}{w_0}  \partial_{v_q} \widehat{Z}^K  \F[ \widehat{Z}^{I_j} f], 
\end{eqnarray}
where, $q \in \llbracket 1,3 \rrbracket$, $(\mu, \nu) \in \llbracket 0,3\rrbracket^2$, $j \in \llbracket 1, |\M| \rrbracket$,
$$|J| \leq N-5, \quad |M_1|+|M_2|+|Q| \leq N-5,  \quad |K| \leq |I|, \quad |I_j| \leq |I_i|, \quad |K|+|I_j| \leq |I_i|+|I|-1.$$
Moreover $K$, $J_j$, $J$, $Q$ and $M_1$ satisfy the following condition
\begin{enumerate}
\item either $K^P+I_j^P < I^P+I_i^P$,
\item or $K^P+I_j^P= I^P+I_i^P$ and then $J^T \geq 1$, $Q^T+M_1^T \geq 1$.
\end{enumerate}
For the term \eqref{errorH}, $J$ and $K$ satisfy the improved condition $K^P+I_j^P < I^P+I_i^P$.
\end{lemma}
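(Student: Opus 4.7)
My plan is to combine the commutator expansion of Proposition~\ref{ComuVlasov2} applied to the scalar $\F[\widehat{Z}^{I_i}f]$ with a Leibniz distribution of $\widehat{Z}^I$ against the defining identity $\T_g(\F[\widehat{Z}^{I_i}f]) = -\sum_q A_i^q\bigl(\F[\widehat{Z}^{I_q}f]\bigr)$. Writing
$$\T_g\bigl(\widehat{Z}^I \F[\widehat{Z}^{I_i}f]\bigr) \;=\; \bigl[\T_g, \widehat{Z}^I\bigr]\!\bigl(\F[\widehat{Z}^{I_i}f]\bigr) \;-\; \widehat{Z}^I\!\bigl(A_i^q \F[\widehat{Z}^{I_q}f]\bigr),$$
I would treat the two pieces separately. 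The commutator is addressed by applying Proposition~\ref{ComuVlasov2} directly to the scalar function $\F[\widehat{Z}^{I_i}f]$; this produces exactly terms of the shape listed in the statement with $I_j = I_i$ and $|K|\leq|I|-1$, except for source terms $\widehat{Z}^{I_0}\bigl(\T_g \F[\widehat{Z}^{I_i}f]\bigr)$ with $|I_0|\leq|I|-1$, which I would rewrite using $\T_g(\F[\widehat{Z}^{I_i}f]) = -A_i^q \F[\widehat{Z}^{I_q}f]$ and fold into the second piece.

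For the Leibniz expansion $\widehat{Z}^I\bigl(A_i^q \F[\widehat{Z}^{I_q}f]\bigr)$, I would use Lemma~\ref{L2bilan} to write each entry $A_i^q$ as a polynomial in $w_\xi/w_0$ whose coefficients are schematically $\mathfrak{m}\cdot\mathcal{D}$, where $\mathfrak{m}$ is built from factors $\mathcal{L}_Z^{J^{(A)}}(H)$ and $\widehat{Z}^{M_k^{(A)}}(\Delta v)$ and $\mathcal{D}$ is a first-order differential operator ($\partial_{x^\mu}$ or $\partial_{v_j}$), possibly paired with a factor $w_\alpha$ or $w_\lambda/w_0$. Distributing $\widehat{Z}^I$ via Leibniz and commuting $\widehat{Z}$ past $w$, $\partial_{x^\mu}$, $\partial_{v_j}$, and $w_\lambda/w_0$ using Lemmas~\ref{LemmaCom}--\ref{LemCom3} converts each contribution into one of the eight prototypes of the statement, with a derivative $\widehat{Z}^K$ of order $|K|\leq|I|$ acting on $\F[\widehat{Z}^{I_j}f]$. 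Structurally this is the same inductive argument that upgrades Proposition~\ref{ComuVlasov1} to Proposition~\ref{ComuVlasov2}, so I expect it to transfer essentially verbatim.

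The remaining work, which is where the tedium concentrates, is the verification of the multi-index bounds. The constraint $|J|\leq N-5$ would come from the sharp counting
$$|J| \;\leq\; |J^{(A)}| + |I| \;\leq\; \bigl(|I_i|-(N-5)\bigr) + \bigl((N+3)-|I_i|\bigr) \;=\; 8 \;\leq\; N-5,$$
where the first step uses that $\widehat{Z}^{I_j}f$ must be a component of $F$ (not of $W$), so $|I_j|\geq N-5$ and therefore $|J^{(A)}|\leq|I_i|-(N-5)$ by Lemma~\ref{L2bilan}, and the final inequality invokes the standing hypothesis $N\geq 13$. The same accounting governs $|M_1|+|M_2|+|Q|\leq N-5$. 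The inequality $|K|+|I_j|\leq|I_i|+|I|-1$ is then immediate from the Leibniz distribution combined with the hereditary bound $|I_j|\leq|I_i|-1$ already built into Lemma~\ref{L2bilan}.

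The main obstacle is the propagation of the hierarchy on homogeneous vector fields, and in particular the reinforced clause ``$|J|+|I_j|\leq|I_i|+|I|-1$ together with $K^P+I_j^P<I^P+I_i^P$'' attached to the term $\nabla^\lambda(\mathcal{L}_Z^J H)(w,w)\cdot(w_\lambda/w_0)\,\partial_{v_q}\widehat{Z}^K\F[\widehat{Z}^{I_j}f]$. The point, already exploited in the proof of Proposition~\ref{ComuVlasov2}, is that such a term is generated only by the commutator with a Lorentz boost $\widehat{\Omega}_{0k}$, via the identities \eqref{eq:LemmaCom2}--\eqref{eq:LemmaCom3} of Lemma~\ref{LemmaCom}; a boost necessarily consumes at least one homogeneous vector field, which enforces both the strict inequality and the loss of one order. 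I would track this consumption carefully through both $[\T_g, \widehat{Z}^I]$ applied to $\F[\widehat{Z}^{I_i}f]$ and the Leibniz expansion of $\widehat{Z}^I(A_i^q\F[\widehat{Z}^{I_q}f])$; combined with the inheritance of the condition ``either $K^P+I_j^P<I^P+I_i^P$, or equality holds with $J^T\geq 1$ and $M_1^T+Q^T\geq 1$'' from the analogous clauses in Lemma~\ref{L2bilan} and Proposition~\ref{ComuVlasov2}, this would conclude the argument.
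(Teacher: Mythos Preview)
Your proposal is correct and follows essentially the same route as the paper: decompose $\T_g(\widehat{Z}^I\F[\widehat{Z}^{I_i}f])$ into the commutator $[\T_g,\widehat{Z}^I]$ acting on $\F[\widehat{Z}^{I_i}f]$ (handled by Proposition~\ref{ComuVlasov2}, yielding terms with $I_j=I_i$ together with residual source terms $\widehat{Z}^{I_0}(\T_g\F[\widehat{Z}^{I_i}f])$) plus $\widehat{Z}^I$ applied to the source term, which via Lemma~\ref{L2bilan} and the structure lemmas \ref{LemCom1}--\ref{LemCom3} reproduces the eight prototypes; the bound $|J|\leq 8\leq N-5$ is obtained by the same counting $|J|+|K|+|I_j|\leq |I|+|I_i|\leq N+3$ combined with $|I_j|\geq N-5$. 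One small refinement worth noting: the source terms coming from Proposition~\ref{ComuVlasov2} satisfy not only $|I_0|\leq|I|-1$ but also $I_0^P<I^P$, which is useful when tracking the hierarchy condition $K^P+I_j^P<I^P+I_i^P$.
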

\begin{proof}
Let $i \in \llbracket 1 , |\M| \rrbracket$ and $|I| \leq N+3-|I_i|$. The starting point is the relation
$$ \T_g \left( \widehat{Z}^I \F [ \widehat{Z}^{I_i} f ] \right) \hspace{1mm} = \hspace{1mm} \left[ \T_g ,  \widehat{Z}^I \right] \left( \F [ \widehat{Z}^{I_i} f ] \right) + \widehat{Z}^I \left( \T_g ( \F [ \widehat{Z}^{I_i} f ] ) \right).$$
According to Proposition \ref{ComuVlasov2}, the error terms arising from the commutator $\left[ \T_g ,  \widehat{Z}^I \right] \left( \F [ \widehat{Z}^{I_i} f ] \right)$ are 
\begin{itemize}
\item such as those listed in the lemma, with $I_j=I_i$. Note that the conditions on $|J|$ and $|M_1|+|M_2|+|Q|$ follows from $|J|+|K|$, $|M_1|+|M_2|+|Q|+|K| \leq |I| \leq N+3-|I_i| \leq 8$ and $N \geq 13$.
\item Or such as $\widehat{Z}^{I_0} \left( \T_g ( \F [ \widehat{Z}^{I_i} f ] ) \right)$, with $|I_0| < |I|$ and $I_0^P < I^P$.
\end{itemize}
The analysis of the other source terms is similar to the one made in order to derive the commutation formula of Proposition \ref{ComuVlasov2}. In view of the source terms of $\T_g ( \F [ \widehat{Z}^{I_i} f ] )$, listed in Lemma \ref{L2bilan}, and according to Lemmas \ref{LemCom1}, \ref{LemCom2} and \ref{LemCom3}, $\widehat{Z}^I \left( \T_g ( \F [ \widehat{Z}^{I_i} f ] ) \right)$ and $\widehat{Z}^{I_0} \left( \T_g ( \F [ \widehat{Z}^{I_i} f ] ) \right)$ can be written as a linear combination with polynomial coefficients in $\frac{w_{\xi}}{w_0}$ of the terms written in this lemma. The condition on $|J|$ and $|M_1|+|M_2|+|Q|$ follows in particular from 
$$|K|+|J|+|I_j| \leq |I_i|+|I| \leq N+3, \qquad |K| +|M_1|+|M_2|+|Q|+|I_j| \leq N+3, \qquad |I_j| \geq N-5,$$
so that $|J|$, $|M_1|+|M_2|+|Q| \leq  8 \leq N-5$. 
\end{proof}
We are now able to prove the following result.
\begin{cor}\label{corcomuhom}
Let $i \in \llbracket 1, |\M| \rrbracket$ and $I$ a multi-index satisfying $|I_i|+|I| \leq N+3$. Then, $T_g(\widehat{Z}^{I} \F[ \widehat{Z}^{I_i} f ])$ can be bounded by a linear combination of terms of the form
\begin{alignat*}{2}
& \left(  \frac{\sqrt{\epsilon}|v|}{1+t}+\frac{\sqrt{\epsilon}|w_L|}{1+|t-r|} \right) \frac{1}{z^{\frac{3}{2}}}  \left|  \widehat{Z}^{K_1} \F \! \left[ \! \widehat{Z}^{I_{j_1}} f \! \right]  \right|, \qquad &&  K_1^P+I_{j_1}^P \leq I^P+I_i^P+1, \\ 
 &\left(  \frac{\sqrt{\epsilon}|v|}{1+t}+\frac{\sqrt{\epsilon}|w_L|}{1+|t-r|} \right)  \left|  \widehat{Z}^{K_2} \F \! \left[ \! \widehat{Z}^{I_{j_2}} f \! \right]  \right|, \qquad && K_2^P+I_{j_2}^P \leq I^P+I_i^P,\\
 & \left(  \frac{\sqrt{\epsilon}|v|}{1+t}+\frac{\sqrt{\epsilon}|w_L|}{1+|t-r|} \right) z^{\frac{3}{2}}  \left|  \widehat{Z}^{K_3} \F \! \left[ \! \widehat{Z}^{I_{j_3}} f \! \right]  \right|, \qquad && K_3^P+I_{j_3}^P < I^P+I_i^P,
\end{alignat*}
where for any $1 \leq q \leq 3$, $j_q \in \llbracket 1, 3 \rrbracket$ and $|K_q|+|I_{j_q}| \leq |I|+|I_i| \leq N+3$. In particular, in view of the definition \eqref{defEhom} of $\E_{\F}$, this implies that
\begin{multline*}
 \E^{\frac{1}{8}, \frac{1}{8}} \left[ z^{-\frac{2}{3}} z^{\ell-\frac{2}{3}(I^P+I_i^P)} \widehat{Z}^{K_1} \F \! \left[ \! \widehat{Z}^{I_{j_1}} f \! \right] \right] \!(t)+\E^{\frac{1}{8}, \frac{1}{8}} \left[  z^{\ell-\frac{2}{3}(I^P+I_i^P)} \widehat{Z}^{K_2} \F \! \left[ \! \widehat{Z}^{I_{j_2}} f \! \right] \right] \!(t) \\
 + \E^{\frac{1}{8}, \frac{1}{8}} \left[ z^{\frac{2}{3}} z^{\ell-\frac{2}{3}(I^P+I_i^P)} \widehat{Z}^{K_3} \F \! \left[ \! \widehat{Z}^{I_{j_3}} f \! \right] \right] \!(t) \hspace{1mm} \leq \hspace{1mm} \E_{\F}(t).
\end{multline*}
\end{cor}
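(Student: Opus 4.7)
The plan is to start from Lemma \ref{comL2hom}, which already expresses $\mathbf{T}_g(\widehat{Z}^{I} \F[\widehat{Z}^{I_i} f])$ as a linear combination of explicit error terms of the same structural shape as the ones appearing in Proposition \ref{ComuVlasov2}, but with two important simplifications: the Vlasov factor is now $\F[\widehat{Z}^{I_j} f]$ instead of $f$, and crucially \emph{every} metric-side multi-index satisfies $|J| \leq N-5$ and $|M_1|+|M_2|+|Q| \leq N-5$. This puts us in the regime where Propositions \ref{decaymetric}, \ref{estinullcompo}, and (whenever $J^T\geq 1$ or $Q^T+M_1^T\geq 1$) the improved estimates of Proposition \ref{decayJTgeq1} apply pointwise.

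Next I would repeat, essentially verbatim, the passage from Proposition \ref{ComuVlasov2} to Proposition \ref{ComuVlasov3}: expand each error term on the null frame using Lemma \ref{nullstructure}, control the $\Delta v$ factors via Lemma \ref{Deltav}, and rewrite the $v$-derivatives through \eqref{eq:partial_v}. Each term then factorises as a metric coefficient (one of the $\widehat{\mathfrak{A}}$, $\mathfrak{A}$, or $\mathfrak{B}$ quantities of Definition \ref{defmathfrakA}) multiplied by a Vlasov factor of the form $|\widehat{Z}\widehat{Z}^K \F[\widehat{Z}^{I_j} f]|$ or $|\nabla \widehat{Z}^K \F[\widehat{Z}^{I_j} f]|$. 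Since the metric indices are low-order, the pointwise bounds of Propositions \ref{pro1} and \ref{prop2} apply directly; combined with the inequalities $z^{\pm 2/3} \lesssim (1+|t-r|)^{\pm 2/3}$ and $(1+t+r)^{2/3}|w_L|^{2/3} \lesssim |v|^{2/3} z^{4/3}$ of Lemma \ref{lem_wwl}, this yields for every coefficient the uniform bound $\sqrt\epsilon|v|/(1+t+r) + \sqrt\epsilon|w_L|/(1+|t-r|)$, up to a $z$-weight that we will now account for.

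The trichotomy in the statement matches the three sub-cases of the hierarchy conditions in Lemma \ref{comL2hom}. The first bullet, with weight $z^{-3/2}$ and $K_1^P + I_{j_1}^P \leq I^P + I_i^P + 1$, absorbs the error terms whose Vlasov factor carries an extra vector field, i.e.\ those involving $\widehat{Z}\widehat{Z}^K \F$; the extra $z^{-3/2}$ is the safety margin coming from Lemma \ref{lem_wwl}. The second bullet, unit weight and $K_2^P + I_{j_2}^P \leq I^P + I_i^P$, is the equal-degree case of \eqref{errorH}-type terms, bounded exactly as in Proposition \ref{pro1}. The third bullet, weight $z^{3/2}$ and strict inequality $K_3^P + I_{j_3}^P < I^P + I_i^P$, collects those error terms in which the hierarchy condition (case (1) of Lemma \ref{comL2hom}) strictly decreases the number of homogeneous fields acting on the Vlasov factor, and where we can afford to consume two units of $z$-weight precisely because that hierarchy leaves room in the $z$-exponent.

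The ``in particular'' part is then a direct consequence of the definition \eqref{defEhom} of $\E_{\F}$: for each bullet the inner $z$-exponent is $\ell - \frac{2}{3}(I^P+I_i^P) + c_q$ with $c_1 = -2/3,\ c_2 = 0,\ c_3 = 2/3$, while the corresponding hierarchy gives $K_q^P + I_{j_q}^P - (I^P + I_i^P) \leq 1, 0, -1$ respectively; in all three cases $\ell - \frac{2}{3}(I^P+I_i^P) + c_q \leq \ell - \frac{2}{3}(K_q^P + I_{j_q}^P)$, so using $z \geq 1$ the weighted norm is controlled by a summand of $\E_{\F}$. The main obstacle is therefore not analytic but purely combinatorial: patiently verifying that each of the eight families of error terms enumerated in Lemma \ref{comL2hom} fits into one of the three categories with the correct hierarchy and $z$-exponent. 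The analytical inputs — the pointwise decay of the metric (\emph{low-order} derivatives only) and the null-structure estimates — are already available in Section \ref{section_bounds} and Propositions \ref{pro1}--\ref{prop2}.
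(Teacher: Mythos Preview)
Your proposal is correct and follows essentially the same approach as the paper: start from Lemma \ref{comL2hom}, repeat the null-structure analysis that led from Proposition \ref{ComuVlasov2} to Proposition \ref{ComuVlasov3}, and then invoke Propositions \ref{pro1}--\ref{prop2} on the resulting $\widehat{\mathfrak{A}}$, $\mathfrak{A}$, $\mathfrak{B}$ coefficients (which is legitimate precisely because $|J|,\,|M_1|+|M_2|+|Q|\le N-5$). Your description of which terms land in which bullet is slightly imprecise --- for instance the second bullet also absorbs the $\widehat{Z}\widehat{Z}^K$-type terms in the strict case $K^P+I_j^P<I^P+I_i^P$ (the extra $\widehat{Z}$ brings the count back up to $\le I^P+I_i^P$), and \eqref{errorH} actually feeds the \emph{third} bullet since it always satisfies the strict inequality --- but these are bookkeeping details, not a gap in the argument.
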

\begin{proof}
Given two multi-indices $I$ and $K$, we define the multi-index $KI$ such that $\widehat{Z}^{KI}=\widehat{Z}^K \widehat{Z}^I$ holds. The following intermediary result can be obtained from Lemma \ref{comL2hom} similar to the derivation of Proposition \ref{ComuVlasov3} from Proposition \ref{ComuVlasov2}. Fix $i \in \llbracket 1, |\M| \rrbracket$ and $I$ such that $|I_i|+|I| \leq N+3$. Then, $T_g(\widehat{Z}^{I} \F[ \widehat{Z}^{I_i} f ])$ can be bounded by a linear combination of the terms listed below, where $\widehat{Z} \in \widehat{\mathbb{P}}_0$ and the multi-indices $I_j$, $K$, $J$, $M$ and $Q$  will always satisfy 
$$ |K| \leq |I|, \qquad |I_j| \leq |I_i|, \qquad |K|+|I_j| < |I|+|I_i| \leq N+3, \qquad K^P+I_j^P \leq I^P+I_i^P$$
and $|J|+ |M|+|Q| \leq N-5$, so that $h^1$ can be estimated pointwise. The most problematic terms are
\begin{alignat*}{2}
 \widehat{\mathfrak{Q}}_1 \hspace{1mm} & :=  \hspace{1mm} \sum_{1 \leq q \leq 3} \widehat{\mathfrak{A}}^{J,KI_j}_{II_i ,q}\left| \widehat{Z} \widehat{Z}^K \F \left[\widehat{Z}^{I_j} f\right] \right|, \qquad  && K^P+I_j^P < I^P+I_i^P \\
 \mathfrak{Q}_1  \hspace{1mm} & :=  \hspace{1mm} \sum_{4 \leq p \leq 11} \mathfrak{A}^{J,KI_j}_{II_i ,p}\left| \nabla \widehat{Z}^K \F \left[\widehat{Z}^{I_j} f \right] \right|, \qquad && K^P+I_j^P < I^P+I_i^P, \\
 \mathfrak{C}_1  \hspace{1mm} & :=  \hspace{1mm} \sum_{14 \leq n \leq 17} \mathfrak{A}^{Q,J,KI_j}_{II_i ,n}\left| \nabla \widehat{Z}^K \F \left[ \widehat{Z}^{I_j} f \right] \right|, \qquad && K^P+I_j^P < I^P+I_i^P, \\
 \widehat{\mathfrak{Q}}_2 \hspace{1mm} & :=  \hspace{1mm} \sum_{1 \leq q \leq 3} \widehat{\mathfrak{A}}^{J,KI_j}_{II_i ,q}\left| \widehat{Z} \widehat{Z}^K \F \left[ \widehat{Z}^{I_j} f \right] \right|, \qquad && J^T \geq 1, \quad K^P+I_j^P = I^P+I_i^P, \\
 \mathfrak{Q}_2  \hspace{1mm} & :=  \hspace{1mm} \sum_{4 \leq p \leq 10} \mathfrak{A}^{J,KI_j}_{II_i ,p}\left| \nabla \widehat{Z}^K \F \left[ \widehat{Z}^{I_j} f \right] \right|, \qquad && J^T \geq 1, \quad K^P+I_j^P = I^P+I_i^P, \\
 \mathfrak{C}_2  \hspace{1mm} & :=  \hspace{1mm} \sum_{14 \leq n \leq 17} \mathfrak{A}^{Q,J,KI_j}_{II_i ,n}\left| \nabla \widehat{Z}^K \F \left[ \widehat{Z}^{I_j} f \right] \right|, \qquad && Q^T+J^T \geq 1, \quad K^P+I_j^P = I^P+I_i^P.
 \end{alignat*}
The other ones are
\begin{align*}
 \widehat{\mathfrak{R}} \hspace{1mm} & := \left( \widehat{\mathfrak{B}}^{KI_j}_{II_i,0}+ \widehat{\mathfrak{B}}^{J,KI_j}_{II_i,1}+\widehat{\mathfrak{B}}^{J,KI_j}_{II_i,2}+\widehat{\mathfrak{A}}^{Q,J,KI_j}_{II_i,12}+\widehat{\mathfrak{A}}^{Q,J,KI_j}_{II_i,13} \right) \! \left| \widehat{Z} \widehat{Z}^K \F \! \left[ \widehat{Z}^{I_j} f \right] \right|\! ,  \\
 \mathfrak{R}  \hspace{1mm} & :=  \hspace{1mm} \Big( \mathfrak{B}^{KI_j}_{II_i,00}+ \mathfrak{B}^{J,KI_j}_{II_i ,3}+ \mathfrak{B}^{J,KI_j}_{II_i ,4}+ \mathfrak{B}^{J,KI_j}_{II_i ,5}  +\mathfrak{B}^{Q,J,KI_j}_{II_i ,6}+\mathfrak{A}^{Q,M,J,KI_j}_{II_i,18} \Big) \! \left| \nabla \widehat{Z}^K \F \! \left[\widehat{Z}^{I_j} f \right] \right| \! . 
 \end{align*}
Recall that $\widehat{\mathfrak{B}}^{KI_j}_{II_i,0} \lesssim \sqrt{\epsilon}|v|(1+t+r)^{-2}$ and $ \mathfrak{B}^{KI_j}_{II_i,00} \lesssim \sqrt{\epsilon}|v| (1+t+r)^{-1}$. Apply then Propositions \ref{pro1}-\ref{prop2}, as well as $z \leq 1+t+r$ for the first inequality, in order to obtain
\begin{alignat*}{2}
 \frac{z^{\frac{2}{3}}}{z^{\frac{2}{3}}} \! \left( \widehat{\mathfrak{Q}}_2+\widehat{\mathfrak{R}} \right) \hspace{1mm} & \lesssim \hspace{1mm} \left(  \frac{\sqrt{\epsilon}|v|}{1+t}+\frac{\sqrt{\epsilon}|w_L|}{1+|t-r|} \right) \! \frac{1}{z^{\frac{2}{3}}} \! \left| \widehat{Z} \widehat{Z}^{K} \F \! \left[ \! \widehat{Z}^{I_{j_1}} f \! \right]  \right|, \quad && K^P\!+I_j^P \leq I^P\!+I_i^P \!, \\
 \mathfrak{Q}_2+\mathfrak{C}_2+\mathfrak{R} \hspace{1mm} & \lesssim \hspace{1mm} \left(  \frac{\sqrt{\epsilon}|v|}{1+t}+\frac{\sqrt{\epsilon}|w_L|}{1+|t-r|} \right) \! \left| \nabla \widehat{Z}^{K} \F \! \left[ \! \widehat{Z}^{I_{j_1}} f \! \right]  \right|, \quad && K^P\!+I_j^P \leq I^P\!+I_i^P \!,  \\
  \widehat{\mathfrak{Q}}_1 \hspace{1mm} & \lesssim \hspace{1mm} \left(  \frac{\sqrt{\epsilon}|v|}{1+t}+\frac{\sqrt{\epsilon}|w_L|}{1+|t-r|} \right) \!  \left| \widehat{Z} \widehat{Z}^{K} \F \! \left[ \! \widehat{Z}^{I_{j_1}} f \! \right]  \right|, \quad && K^P\!+I_j^P < I^P\!+I_i^P \!, \\
\frac{z^{\frac{2}{3}}}{z^{\frac{2}{3}}} \! \left( \mathfrak{Q}_1+\mathfrak{C}_1 \right) \hspace{1mm} & \lesssim \hspace{1mm} \left(  \frac{\sqrt{\epsilon}|v|}{1+t}+\frac{\sqrt{\epsilon}|w_L|}{1+|t-r|} \right) \! z^{\frac{2}{3}} \! \left| \nabla \widehat{Z}^{K} \F \! \left[ \! \widehat{Z}^{I_{j_1}} f \! \right]  \right|, \quad && K^P\!+I_j^P < I^P\!+I_i^P \!.
 \end{alignat*}
 It remains to notice that $\nabla \widehat{Z}^K$ (respectively  $\widehat{Z} \widehat{Z}^K$) contains $K^P$ (respectively at most $1+K^P$) homogeneous vector fields.
\end{proof}
As $\F(0,\cdot,\cdot)=F(0,\cdot,\cdot)$, it then follows from the previous corollary and the smallness assumptions on $f$, $h^1$ and the mass $M$ that there exists a constant $C_F >0$ such that $\E_{\F}(0) \leq C_F \epsilon$. 
\begin{prop}\label{PropH}
There exists a constant $\overline{C}_F >0$ such that, if $\epsilon$ is small enough, $\E_{\F}(t) \leq \overline{C}_F \epsilon (1+t)^{\frac{\delta}{2}}$ for all $t \in [0,T[$. Moreover, for any $|I_i|+|I| \leq N$ and for all $(t,x) \in [0,T[ \times \R^3$, we have
$$ \int_{\R^3_v} z^{\ell-2-\frac{2}{3}(I_i^P+I^P)} \left| \widehat{Z}^I \F \left[ \widehat{Z}^{I_i} f \right] \right| (t,x,v) \dr v \hspace{1mm} \lesssim \hspace{1mm} \frac{\epsilon(1+t)^{\frac{\delta}{2}}}{(1+t+r)^2(1+|t-r|)^{\frac{7}{8}}}.$$
\end{prop}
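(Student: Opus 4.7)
The plan is to prove the two claims in sequence by a standard bootstrap-style argument built on the Vlasov energy estimate of Proposition \ref{prop_vlasov_cons}, together with the commutation formula of Corollary \ref{corcomuhom}, and to conclude the pointwise bound via the Klainerman-Sobolev inequality of Proposition \ref{KSvlasov}.

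For the energy bound on $\E_\F$, for each pair $(I_i, I)$ with $|I|+|I_i| \leq N+3$, I would apply Proposition \ref{prop_vlasov_cons} to the function $z^{\ell-\frac{2}{3}(I^P+I_i^P)} \widehat{Z}^I \F[\widehat{Z}^{I_i}f]$. The decay hypotheses on $H$ required by that proposition are verified (cf.~Remark \ref{forenergywave}). The initial-data contribution is of size $\epsilon$ because $\F(0,\cdot,\cdot)=F(0,\cdot,\cdot)$ and by the smallness assumption on $f|_{t=0}$, yielding $\E_\F(0) \leq C_F\epsilon$. The source term coming from the energy estimate is decomposed as
\begin{equation*}
\T_g\!\left(z^c\widehat{Z}^I\F[\widehat{Z}^{I_i}f]\right) = c\, z^{c-1}\,\T_g(z)\,\widehat{Z}^I\F[\widehat{Z}^{I_i}f] + z^c\, \T_g\!\left(\widehat{Z}^I\F[\widehat{Z}^{I_i}f]\right),
\end{equation*}
with $c := \ell - \tfrac{2}{3}(I^P+I_i^P)$. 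For the first piece I would invoke the estimate \eqref{calculTgz} on $\T_g(z)$, which converts the space-time integral into a contribution of the form $C\sqrt{\epsilon}\int_0^t \E_\F(\tau)/(1+\tau)\,d\tau + C\sqrt{\epsilon}\,\E_\F(t)$ (the second term coming from absorbing the $|w_L|/(1+|u|)$ factor into the bulk piece of the Vlasov energy norm $\mathbb E^{1/8,1/8}$). For the second piece I would apply Corollary \ref{corcomuhom}: it bounds $|\T_g(\widehat{Z}^I\F[\widehat{Z}^{I_i}f])|$ by a linear combination of three families, each of the shape (decay factor)$\times z^{\alpha}|\widehat{Z}^K\F[\widehat{Z}^{I_j}f]|$ with $\alpha\in\{-2/3,0,2/3\}$, and the corollary explicitly states that the resulting $\mathbb E^{1/8,1/8}$ energies are controlled by $\E_\F$. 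Each family then contributes again to a term of type $\sqrt{\epsilon}\int_0^t \E_\F(\tau)/(1+\tau)\,d\tau + \sqrt{\epsilon}\,\E_\F(t)$.

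Summing over all relevant multi-indices produces
\begin{equation*}
\E_\F(t) \;\leq\; C_F'\,\epsilon \;+\; C\sqrt{\epsilon}\int_0^t \frac{\E_\F(\tau)}{1+\tau}\,d\tau \;+\; C\sqrt{\epsilon}\,\E_\F(t).
\end{equation*}
For $\epsilon$ sufficiently small the last term can be absorbed on the left-hand side, and Grönwall's inequality then gives $\E_\F(t) \leq \overline{C}_F\,\epsilon\,(1+t)^{C''\sqrt{\epsilon}}$. Shrinking $\epsilon$ so that $C''\sqrt{\epsilon} \leq \delta/2$ proves the first claim.

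For the pointwise estimate, with $|I|+|I_i|\leq N$, I would apply the Klainerman-Sobolev inequality of Proposition \ref{KSvlasov} to $\widehat Z^I\F[\widehat Z^{I_i}f]$ with exponent $c=\ell-2-\tfrac{2}{3}(I_i^P+I^P)$ and weight $\omega_{1/8}^{1/8}$. The right-hand side involves a sum over $|J|\leq 3$ of $\mathbb E^{1/8,1/8}\!\left[z^c\,\widehat Z^J\widehat Z^I\F[\widehat Z^{I_i}f]\right]$. Since $(J+I)^P + I_i^P \leq I^P+I_i^P+3$, we have $\ell - \tfrac{2}{3}\big((J+I)^P+I_i^P\big) \geq c$, so $z^c \leq z^{\ell-\frac{2}{3}((J+I)^P+I_i^P)}$ (using $z\geq 1$), and every such norm is part of $\E_\F(t)$. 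Combining this with $\omega_{1/8}^{1/8}(t,r)\geq (1+|t-r|)^{-1/8}$ yields the announced decay rate $(1+t+r)^{-2}(1+|t-r|)^{-7/8}$, with constant $\lesssim \epsilon(1+t)^{\delta/2}$.

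The hard part is really the bookkeeping of weights in Step 1: one must check that each of the three families of error terms from Corollary \ref{corcomuhom}, which carry $z^{-2/3}$, $z^0$, and $z^{+2/3}$ relative to the "natural" weight, fits into one of the norms constituting $\E_\F$. This is precisely why $\E_\F$ in \eqref{defEhom} was built to range up to $|I|+|I_i|\leq N+3$ rather than $N$: the extra three derivatives' worth of slack both absorbs the commutator loss of one homogeneous vector field and provides the three additional commutations needed for the Klainerman-Sobolev step. Once the correspondence between commutator weights and energy-norm weights is verified, the analytic estimates themselves are direct applications of \eqref{calculTgz} and the bootstrap on $h^1$ via Remark \ref{forenergywave}.
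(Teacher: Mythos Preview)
Your proposal is correct and follows essentially the same approach as the paper: apply Proposition \ref{prop_vlasov_cons} to each $z^{\ell-\frac{2}{3}(I^P+I_i^P)}\widehat{Z}^I\F[\widehat{Z}^{I_i}f]$, split the source term via the Leibniz rule, handle the $\T_g(z)$-piece by \eqref{calculTgz} and the commutator piece by Corollary \ref{corcomuhom}, and conclude the pointwise estimate via Proposition \ref{KSvlasov} using the extra three commutations built into $\E_{\F}$. The only cosmetic difference is that the paper closes via a continuity/bootstrap argument (assume $\E_\F(t)\leq \overline{C}_F\epsilon(1+t)^{\delta/2}$ and improve it), whereas you absorb the $C\sqrt{\epsilon}\,\E_\F(t)$ term and then apply Gr\"onwall directly to obtain the growth $(1+t)^{C\sqrt{\epsilon}}\leq (1+t)^{\delta/2}$; these two routes are equivalent here.
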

\begin{proof}
We use again the continuity method. There exists $0 < T_0 \leq T$ such that $\E_{\F}(t) \leq \overline{C}_F \epsilon (1+t)^{\frac{\delta}{2}}$ for all $t \in [0,T_0[$. Let us improve this estimate, if $\epsilon$ is small enough and for $\overline{C}_F$ chosen large enough. The proof follows closely Section \ref{sect_l1}. According to the energy estimate of Proposition \ref{prop_vlasov_cons}, the smallness of $\E_{\F}(0)$ and the bootstrap assumption on $\E_{\F}$, we have
$$\mathbb{E}^{\frac{1}{8}, \frac{1}{8}} \! \Big[ z^{\ell-\frac{2}{3}(I^P+I_i^P)} \widehat{Z}^I \! \Big( \F[\widehat{Z}^{I_i} f] \Big) \Big](t) \hspace{1mm}  \leq \hspace{1mm} C_0\epsilon + C\epsilon^{\frac{3}{2}}(1+t)^{\frac{\delta}{2}} +C \left( \mathfrak{Z}^{I,I_i}+\mathcal{Z}^{I,I_i}\right),$$
where $C_0$ is a constant independent of $\overline{C}_F$,
\begin{align*}
\mathfrak{Z}^{I,I_i} \hspace{1mm} & := \hspace{1mm} \left(\ell -\frac{2}{3}(I^P+I_i^P) \right) \int_{0}^{t} \int_{\Sigma_{\tau}} \int_{\mathbb{R}_v^3} z^{\ell-\frac{2}{3}(I^P+I_i^P)-1} |\T_g (z)| \left| \widehat{Z}^I \F[\widehat{Z}^{I_i} f]  \right| \mathrm dv \, \omega_{\frac{1}{8}}^{\frac{1}{8}} \mathrm dx \mathrm d\tau, \\
\mathcal{Z}^{I,I_i} \hspace{1mm} & := \hspace{1mm}\int_{0}^{t} \int_{\Sigma_{\tau}} \int_{\mathbb{R}_v^3} z^{\ell-\frac{2}{3}(I^P+I_i^P)}\left| \T_g \left( \widehat{Z}^I \F[\widehat{Z}^{I_i} f] \right) \right| \mathrm dv \, \omega_{\frac{1}{8}}^{\frac{1}{8}} \mathrm dx \mathrm d\tau.
\end{align*}
Using $|\T_g(z)| \leq \frac{\sqrt{\epsilon}|v|z}{1+t+r}+\frac{\sqrt{\epsilon}|w_L|z}{1+|t-r|}$ (see \eqref{calculTgz}) and \eqref{def_mbb_e}, we can bound $\mathfrak{Z}^{I,I_i}$ by
$$ \sqrt{\epsilon} \int_0^t \frac{\E^{\frac{1}{8}, \frac{1}{8}} \! \left[ z^{\ell-\frac{2}{3}(I^P+I_i^P)}  \widehat Z^I \F \! \left[ \! \widehat{Z}^{I_i} f \! \right] \right] \!(\tau)}{1+\tau} \dr \tau + \sqrt{\epsilon} \E^{\frac{1}{8}, \frac{1}{8}} \! \left[ z^{\ell-\frac{2}{3}(I^P+I_i^P)} \widehat Z^I \F \! \left[ \! \widehat{Z}^{I_i} f \! \right] \right] \!(t)  .$$
Then, Definition \eqref{defEhom} of $\E_{\F}$ and the bootstrap assumption on it lead to
$$ \mathfrak{Z}^{I,I_i} \hspace{1mm} \lesssim \hspace{1mm} \sqrt{\epsilon} \int_0^t \frac{\E_{\F}(\tau)}{1+\tau} \dr \tau + \sqrt{\epsilon}\E_{\F}(t) \hspace{1mm} \lesssim \hspace{1mm} \epsilon^{\frac{3}{2}}(1+t)^{\frac{\delta}{2}}.$$
The integral $\mathcal{Z}^{I,I_i}$ can be bounded similarly using Corollary \ref{corcomuhom} instead of \eqref{calculTgz}. We then deduce from \eqref{defEhom} and the last estimates that there exists a constant $\overline{C}_0$ independent of $\overline{C}_F$ such that 
$$\E_{\F}(t)-\overline{C}_0 \epsilon \hspace{1mm} \lesssim \hspace{1mm}  \epsilon^{\frac{3}{2}} (1+t)^{\frac{\delta}{2}},$$
which improves the bootstrap assumption if $\epsilon$ is small enough and $\overline{C}_F$ chosen large enough. This implies that $T_0=T$. The pointwise decay estimates can then be obtained from the Klainerman-Sobolev inequality of Proposition \ref{KSvlasov} and the fact that $\E_{\F}$ controls up to three derivatives of $z^{\ell-2-\frac{2}{3}(I_i^P+I^P)}  \widehat{Z}^I \F \big[ \widehat{Z}^{I_i} f \big]$, for any $|I|+|I_i| \leq N$.
\end{proof}

\subsection{The inhomogeneous system}\label{subsecG}

To derive an $L^2$-estimate on $\Ff$, we cannot commute the transport equation because $B$ contains top order derivatives of $h^1$. We then need to rewrite the derivatives of $\Ff$, kept in the matrix $A$ in order to use the full null structure of the system, in terms of quantities that we can control. To this end, we will use the following result.
\begin{lemma}\label{vderivG}
Let $i \in \llbracket 1, |\M| \rrbracket$ such that $|I_i|\leq N-1$ and $0 \leq \mu \leq 3$. Then, 
\begin{align*}
\partial_{x^{\mu}} \Ff \left[ \widehat{Z}^{I_i} f\right] \hspace{1mm}& = \hspace{1mm} \Ff \left[ \partial_{x^{\mu}}\widehat{Z}^{I_i} f \right]+\F \left[ \partial_{x^{\mu}}\widehat{Z}^{I_i} f \right]-\partial_{x^{\mu}} \F \left[ \widehat{Z}^{I_i} f \right] , \\
\end{align*}
Moreover, 
\begin{align*}
\left| L \, \Ff \left[ \widehat{Z}^{I_i} f \right] \right| \hspace{1mm} & \lesssim \hspace{1mm} \frac{1+|t-r|}{1+t+r}\sum_{\lambda =0}^3 \left|\Ff \left[ \partial_{x^{\lambda}}\widehat{Z}^{I_i} f \right]\right|\!+\! \left|\F \left[ \partial_{x^{\lambda}}\widehat{Z}^{I_i} f \right]\right|\!+\! \left|\partial_{x^{\lambda}} \F \left[ \widehat{Z}^{I_i} f \right]\right|  \\
& \quad \hspace{1mm}+ \frac{1}{1+t+r}\sum_{\widehat{Z} \in \K} \left|\Ff \left[ \widehat{Z} \widehat{Z}^{I_i} f \right]\right|+\left|\F \left[ \widehat{Z} \widehat{Z}^{I_i} f \right]\right|+\left|\widehat{Z} \F \left[ \widehat{Z}^{I_i} f \right]\right| .
\end{align*}
For the $v$ derivatives, there holds
\begin{align*}
 \left|\left( \nabla_v \Ff \left[ \widehat{Z}^{I_i} f\right] \right)^A\right| \hspace{0.5mm} & \lesssim \hspace{0.5mm}  \frac{t}{|v|} \sum_{\lambda =0}^3 \left|\Ff \! \left[ \partial_{x^{\lambda}}\widehat{Z}^{I_i} f \right]\right|\!+\! \left|\F \! \left[ \partial_{x^{\lambda}}\widehat{Z}^{I_i} f \right]\right|\!+\! \left|\partial_{x^{\lambda}} \F \! \left[ \widehat{Z}^{I_i} f \right]\right|  \\
& \quad \hspace{0.5mm}+\frac{1}{|v|}\sum_{\widehat{Z} \in \K} \left|\Ff \! \left[ \widehat{Z} \widehat{Z}^{I_i} f \right]\right|+\left|\F \! \left[ \widehat{Z} \widehat{Z}^{I_i} f \right]\right|+\left|\widehat{Z} \F \! \left[ \widehat{Z}^{I_i} f \right]\right| , \\
 \left|\left( \nabla_v \Ff \left[ \widehat{Z}^{I_i} f\right] \right)^r\right| \hspace{0.5mm} & \lesssim \hspace{0.5mm}  \frac{|t-r|}{|v|} \sum_{\lambda =0}^3 \left|\Ff \! \left[ \partial_{x^{\lambda}}\widehat{Z}^{I_i} f \right]\right|\!+\! \left|\F \! \left[ \partial_{x^{\lambda}}\widehat{Z}^{I_i} f \right]\right|\!+\! \left|\partial_{x^{\lambda}} \F \! \left[ \widehat{Z}^{I_i} f \right]\right|  \\
& \quad \hspace{0.5mm}+\frac{1}{|v|}\sum_{\widehat{Z} \in \K} \left|\Ff \! \left[ \widehat{Z} \widehat{Z}^{I_i} f \right]\right|+\left|\F \! \left[ \widehat{Z} \widehat{Z}^{I_i} f \right]\right|+\left|\widehat{Z} \F \! \left[ \widehat{Z}^{I_i} f \right]\right| .
\end{align*}
\end{lemma}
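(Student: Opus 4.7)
The plan is to deduce all three estimates from the single algebraic identity $\Ff[\widehat{Z}^{I_i} f] = \widehat{Z}^{I_i} f - \F[\widehat{Z}^{I_i} f]$, which holds by definition of the decomposition $F = \F + \Ff$. Since $|I_i|\leq N-1$, for every $\widehat{Z} \in \widehat{\mathbb{P}}_0$ and every translation $\partial_{x^{\mu}}$, the derivative $\widehat{Z}\widehat{Z}^{I_i} f$ or $\partial_{x^{\mu}}\widehat{Z}^{I_i} f$ is of the form $\widehat{Z}^{I_j} f$ with $|I_j| \leq N$, hence is a component of $F$. This allows us to rewrite such a derivative as $\F[\widehat{Z}\widehat{Z}^{I_i}f] + \Ff[\widehat{Z}\widehat{Z}^{I_i}f]$ (and similarly for translations), which is the essential conversion used throughout.

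\textbf{Step 1 (the first identity).} First I would apply $\partial_{x^{\mu}}$ to $\Ff[\widehat{Z}^{I_i} f] = \widehat{Z}^{I_i} f - \F[\widehat{Z}^{I_i} f]$ and note that $\partial_{x^{\mu}}\widehat{Z}^{I_i} f$ is itself a component of $F$, so equals $\F[\partial_{x^{\mu}}\widehat{Z}^{I_i} f] + \Ff[\partial_{x^{\mu}}\widehat{Z}^{I_i} f]$. The claimed identity follows immediately.

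\textbf{Step 2 ($v$-derivative estimates).} Next I would use the decomposition \eqref{eq:partial_v} together with \eqref{radial_v2} and \eqref{angular_v2} applied to $\psi = \Ff[\widehat{Z}^{I_i} f]$. For the radial component, \eqref{radial_v2} gives
\[
|v|\,(\nabla_v \Ff[\widehat{Z}^{I_i} f])^r = \frac{x^i}{r}\widehat{\Omega}_{0i}\Ff[\widehat{Z}^{I_i} f] - S\,\Ff[\widehat{Z}^{I_i} f] + (t-r)\underline{L}\,\Ff[\widehat{Z}^{I_i} f].
\]
Each $\widehat{Z}\in\widehat{\mathbb{P}}_0$ applied to $\Ff[\widehat{Z}^{I_i} f] = \widehat{Z}^{I_i} f - \F[\widehat{Z}^{I_i} f]$ produces $\widehat{Z}\widehat{Z}^{I_i} f - \widehat{Z}\F[\widehat{Z}^{I_i} f]$, and rewriting the first term as $\F[\widehat{Z}\widehat{Z}^{I_i}f] + \Ff[\widehat{Z}\widehat{Z}^{I_i}f]$ produces precisely the three families in the $\K$-sum. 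The term $(t-r)\underline{L}\,\Ff[\widehat{Z}^{I_i} f]$ is a combination of translations acting on $\Ff[\widehat{Z}^{I_i}f]$ weighted by $|t-r|$, so applying Step 1 yields the translation-sum weighted by $|t-r|$. The angular estimate proceeds identically, starting from \eqref{angular_v2} (or from the expansion of $e_A$ in terms of $\widehat{\Omega}_{0i}$ and the translations $t\partial_{x^{\mu}}$), producing a factor of $t$ rather than $|t-r|$ in front of the translation sum.

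\textbf{Step 3 (the $L$ estimate).} Finally, I would invoke the identity $(t+r)L\psi = S\psi + \frac{x^i}{r}\widehat{\Omega}_{0i}\psi - |v|(\nabla_v \psi)^r$ (used in the derivation of \eqref{decayL}) with $\psi = \Ff[\widehat{Z}^{I_i} f]$. The first two terms on the right-hand side are $\widehat{Z}\in\widehat{\mathbb{P}}_0$ applied to $\Ff[\widehat{Z}^{I_i} f]$ and are handled as in Step 2, contributing the $\K$-sum. The last term is controlled by the radial $v$-derivative estimate just proved, which already supplies both the translation sum weighted by $|t-r|$ and a $\K$-sum. Dividing by $t+r$ produces the announced prefactors $\frac{1+|t-r|}{1+t+r}$ and $\frac{1}{1+t+r}$. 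No step is genuinely difficult: the only thing to keep track of is that every multi-index appearing on the right-hand side after the conversion has length at most $|I_i|+1 \leq N$, so all the quantities $\F[\widehat{Z}\widehat{Z}^{I_i}f]$ and $\Ff[\widehat{Z}\widehat{Z}^{I_i}f]$ are well-defined components of $\F$ and $\Ff$ respectively; this is the bookkeeping step that motivates the hypothesis $|I_i|\leq N-1$.
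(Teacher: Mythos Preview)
Your proof is correct and follows essentially the same route as the paper: both arguments rest on the conversion identity $\widehat{Z}\Ff[\widehat{Z}^{I_i}f] = \Ff[\widehat{Z}\widehat{Z}^{I_i}f] + \F[\widehat{Z}\widehat{Z}^{I_i}f] - \widehat{Z}\F[\widehat{Z}^{I_i}f]$ combined with \eqref{decayL}, \eqref{radial_v2} and \eqref{angular_v2}. The only cosmetic difference is that you establish the $v$-derivative bounds first and then feed the radial one into the identity behind \eqref{decayL}, whereas the paper simply cites \eqref{decayL} directly and handles the $v$-derivatives afterward.
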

\begin{proof}
Recall that $F=\F+\Ff$ and note that for any $\widehat{Z} \in \K$ and $N-5 \leq |I_i| \leq N-1$, we have $\widehat{Z}F[\widehat{Z}^{I_i} f]=\widehat{Z}\widehat{Z}^{I_i} f=F[\widehat{Z}\widehat{Z}^{I_i} f]$. Consequently,
\begin{equation}\label{transfoinhoderiv}
 \widehat{Z}\Ff \left[\widehat{Z}^{I_i} f\right] \hspace{1mm} = \hspace{1mm} \Ff\left[\widehat{Z}\widehat{Z}^{I_i} f\right]+\F\left[\widehat{Z}\widehat{Z}^{I_i} f\right]-\widehat{Z}\F\left[\widehat{Z}^{I_i} f\right].
 \end{equation}
This directly implies the first identity of the lemma. For the second one, combine \eqref{transfoinhoderiv} with \eqref{decayL}. Finally, for the last two ones, combine \eqref{transfoinhoderiv} with Lemma \ref{vderivative}.
\end{proof}
In order to rewrite the transport equation satisfied by $\Ff$, we will then need to consider a larger vector valued field than $W$. Moreover, in order to take advantage of the hierarchies that we identified in the commuted Vlasov equation, we will work with a slightly different quantity than $\Ff$.
\begin{defi}\label{defFinhz}
Let $F^{\text{inh}}_{z}$ be the vector valued field of length $|\M|$ defined by
$$\Ff_{z,i} \hspace{1mm} := \hspace{1mm} z^{\frac{2}{3}(N-I_i^P)} \Ff  \left[ \widehat{Z}^{I_i} f \right].$$
We define $Y$ as a the vector valued field of length $l_Y$ containing the following quantities
\begin{itemize}
\item All $z^{\frac{2}{3}(N-K^P)} \widehat{Z}^K f$ satisfying $|K| \leq N-5$. In other words, $z^{\frac{2}{3}(N-K_k^P)}W_k$ for all $k \in \llbracket 1, |\M_{\infty}| \rrbracket$.
\item $z^{\frac{2}{3}(N-I^P-I_j^P)}\widehat{Z}^I \F \left[ \widehat{Z}^{I_j} f \right]$ for all $|I|+|I_j| \leq N$.
\end{itemize}
\end{defi}
We are now ready to prove the following two results.
\begin{lemma}\label{bilaninho}
There exist two matrix-valued functions $\overline{A} : [0,T[ \times \R^3_x \times \R^3_v \rightarrow \mathfrak M_{|\M|}(\R)$, $\overline{B} : [0,T[ \times \R^3_x \times \R^3_v \rightarrow \mathfrak M_{|\M|,l_Y}(\R)$ such that
$$T_g(\Ff_z)+\overline{A} \cdot \Ff_z= \overline{B} \cdot Y.$$
Moreover, $\overline{A}$ and $\overline{B}$ are such that, if $i \in \llbracket 1, |\M| \rrbracket$, $T_F(\Ff_{z,i})$ can be bounded by a linear combination of terms of the form
$$ \left(  \frac{\sqrt{\epsilon}|v|}{1+t+r}+\frac{\sqrt{\epsilon}|w_L|}{1+|t-r|} \right) |\Ff_{z,j}|, \qquad |I_j| \leq |I_i|,$$
and, where $|Q|+|M|+|J| \leq |I_i|$ (the value of the multi-index $K$ is irrelevant here),
\begin{align*}
& \left( \widehat{\mathfrak{B}}^{K}_{I,0}\!+\! \widehat{\mathfrak{B}}^{J,K}_{I,1}\!+\! \widehat{\mathfrak{B}}^{J,K}_{I,2}\!+\!\widehat{\mathfrak{A}}^{J,K}_{I,1}\!+\!\widehat{\mathfrak{A}}^{J,K}_{I,2}\!+\!\widehat{\mathfrak{A}}^{J,K}_{I,3}\!+\!\widehat{\mathfrak{A}}^{Q,M,K}_{I,12}\!+\!\widehat{\mathfrak{A}}^{Q,M,K}_{I,13} \right) z^{\frac{2}{3}} |Y| , \\
& \sum_{4 \leq j \leq 11} \sum_{14 \leq q \leq 17} \left( \mathfrak{B}^{K}_{I,00}\!+\! \mathfrak{B}^{J,K}_{I,3}\!+\! \mathfrak{B}^{J,K}_{I,4}\!+\!\mathfrak{B}^{J,K}_{I,5}\!+\!\mathfrak{B}^{Q,J,K}_{I,6}\!+\!\mathfrak{A}^{J,K}_{I,j}\!+\!\mathfrak{A}^{Q,J,K}_{I,q}\!+\!\mathfrak{A}^{Q,M,J,K}_{I,18}\right) |Y|.
 \end{align*}
\end{lemma}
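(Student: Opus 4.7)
The plan is to start from the identity $T_g(\Ff) + A \cdot \Ff = B \cdot W$ and apply the product rule to $\Ff_{z,i} = z^{\frac{2}{3}(N - I_i^P)} \Ff[\widehat{Z}^{I_i} f]$, which gives
\begin{equation*}
T_g(\Ff_{z,i}) = z^{\frac{2}{3}(N - I_i^P)} \bigl( - (A \cdot \Ff)_i + (B \cdot W)_i \bigr) + \tfrac{2}{3}(N - I_i^P) \, z^{\frac{2}{3}(N - I_i^P) - 1} \, T_g(z) \, \Ff[\widehat{Z}^{I_i} f].
\end{equation*}
The last term on the right is absorbed into the diagonal-type contribution $\bigl( \sqrt{\epsilon}|v|(1+t+r)^{-1} + \sqrt{\epsilon}|w_L|(1+|t-r|)^{-1} \bigr) |\Ff_{z,i}|$ using the pointwise estimate \eqref{calculTgz} for $T_g(z)$. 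Thus the task reduces to classifying the $A$ and $B$ contributions from Lemma \ref{L2bilan}.

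For the $B \cdot W$ contribution the listed terms already involve derivatives of $W_k$ only through $\partial_{x^\mu} W_k$, $\partial_{v_j} W_k$, $\mathrm d W_k$. Since $|K_k| \leq N-6$, every translation derivative $\partial_{x^\mu} W_k$ equals $\widehat{Z}^{\partial_{x^\mu} K_k} f$ and thus a component of $W$ (hence of $Y$); every $v$-derivative is rewritten through $|v| \partial_{v_j} = \widehat{\Omega}_{0j} - t \partial_j - x^j \partial_t$ into a combination of boost and translation derivatives of $W_k$, all of which remain components of $W$ with the extra $t$, $|t-r|$ or angular weights already reflected in the structure of the error terms listed in Proposition \ref{ComuVlasov3}. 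Matching each source term of Lemma \ref{L2bilan} (evaluated on $W$) with the correspondingly labelled quantity from Proposition \ref{ComuVlasov3}, and multiplying by the compensating $z^{\frac{2}{3}(N - I_i^P)}$ prefactor — which is bounded by $z^{\frac{2}{3}} \cdot z^{\frac{2}{3}(N-K_k^P)}$ whenever the relevant index bookkeeping from Lemma \ref{L2bilan} ($K_k^P \leq I_i^P - 1$, etc.) is respected — gives exactly the $z^{\frac{2}{3}} |Y|$-weighted expressions stated in the lemma.

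For the $A \cdot \Ff$ contribution, the terms of Lemma \ref{L2bilan} carry derivatives $\partial_{x^\mu} \Ff[\widehat{Z}^{I_j} f]$ and $\partial_{v_j} \Ff[\widehat{Z}^{I_j} f]$, which do not themselves correspond to components of $\Ff_z$. This is where Lemma \ref{vderivG} enters: each such derivative is rewritten as a sum of $\Ff[\partial_{x^\lambda} \widehat{Z}^{I_j} f]$, $\F[\partial_{x^\lambda} \widehat{Z}^{I_j} f]$, $\partial_{x^\lambda} \F[\widehat{Z}^{I_j} f]$, $\Ff[\widehat{Z} \widehat{Z}^{I_j} f]$, $\F[\widehat{Z} \widehat{Z}^{I_j} f]$, and $\widehat{Z} \F[\widehat{Z}^{I_j} f]$, together with the $L$-derivative (respectively radial $v$-derivative) acquiring an extra $(1+|t-r|)/(1+t+r)$ (respectively $|t-r|$) gain that matches the $\overline{\nabla}$ and $(\nabla_v)^r$ improvements already encoded in the $\mathfrak{A}$ and $\mathfrak{B}$ quantities. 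The first group of terms produces the diagonal contribution $\bigl( \sqrt{\epsilon}|v|(1+t+r)^{-1} + \sqrt{\epsilon}|w_L|(1+|t-r|)^{-1} \bigr) |\Ff_{z,j}|$ exactly as in Propositions \ref{pro1}--\ref{prop2} after renormalising by $z^{\frac{2}{3}(N-I_i^P)}/z^{\frac{2}{3}(N-I_j^P)}$ (a bounded factor since $I_j^P \leq I_i^P$); the remaining terms feed into $\overline{B} \cdot Y$ since $\F$ and its derivatives are components of $Y$ by Definition \ref{defFinhz}.

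The main obstacle is the combinatorics: one must verify that for \emph{every} term enumerated in Lemma \ref{L2bilan}, the prefactor $z^{\frac{2}{3}(N-I_i^P)}$ combines with the $t$, $|t-r|$, $(1+t+r)^{-1}$ weights produced by Lemma \ref{vderivG} to land precisely in one of the $\widehat{\mathfrak{A}}$, $\mathfrak{A}$, $\widehat{\mathfrak{B}}$, or $\mathfrak{B}$ slots listed in the statement, with the index constraints ($K^P \leq I^P$ or $K^P = I^P$ with $J^T \geq 1$, etc.) preserved under the substitution $I_j \to \partial_{x^\lambda} I_j$ or $I_j \to \widehat{Z} I_j$. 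Because each of these substitutions increases $|I_j|$ by one while either keeping $I_j^P$ unchanged (translations) or increasing it by one (homogeneous vector fields), and because the improved conditions of Lemma \ref{L2bilan} are stated with strict inequalities in the hierarchy-critical cases, the bookkeeping closes; this is the step requiring the most care.
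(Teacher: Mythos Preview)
Your approach matches the paper's: product rule on $\Ff_{z,i}$, control of the $T_g(z)$ term via \eqref{calculTgz}, the $B\cdot W$ contribution via Proposition \ref{ComuVlasov3} (landing in $Y$-components because $|K_k|\le N-6$), and the $A\cdot\Ff$ contribution via Lemma \ref{vderivG}, splitting the output into $\Ff$-pieces (feeding $\overline{A}$) and $\F$-pieces (feeding $\overline{B}$). The paper carries out exactly this plan.

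One imprecision is worth flagging. You assert that the renormalisation factor $z^{\frac{2}{3}(N-I_i^P)}/z^{\frac{2}{3}(N-I_j^P)}$ is bounded because $I_j^P\le I_i^P$. But after Lemma \ref{vderivG} the $\Ff$-components produced are $\Ff[\widehat{Z}\widehat{Z}^{I_j}f]$ and $\Ff[\partial_\lambda\widehat{Z}^{I_j}f]$; the corresponding $\Ff_z$-weights are $z^{\frac{2}{3}(N-I_j^P-1)}$ and $z^{\frac{2}{3}(N-I_j^P)}$ respectively. In the case $I_j^P=I_i^P$ with $\widehat{Z}$ homogeneous, the ratio is therefore $z^{\frac{2}{3}}$, which is \emph{not} bounded: one uses $z^{\frac{2}{3}}\le(1+t+r)^{\frac{2}{3}}$ and then absorbs the resulting growth via the improved metric decay coming from $J^T\ge1$ (Proposition \ref{decayJTgeq1}, through Proposition \ref{prop2}). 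The paper makes this explicit with a two-case split: if $I_j^P<I_i^P$ the factor is $\le 1$; if $I_j^P=I_i^P$ it is $\le(1+t+r)^{\frac{2}{3}}$ and the $J^T\ge1$ (or $Q^T+M^T\ge1$) estimates supply the needed extra $(1+|t-r|)^{-1}$. Your closing remark about ``strict inequalities in the hierarchy-critical cases'' does not quite capture this mechanism --- it is not a strict $P$-inequality but the alternative translation condition on the metric index that closes the bookkeeping.
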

\begin{proof}
Fix $i \in \llbracket 1, |\M| \rrbracket$ and note that, since $\T_g(\Ff)+A \cdot \Ff = B\cdot W$,
$$ \T_g(\Ff_{z,i}) \hspace{1mm} = \hspace{1mm} z^{\frac{2}{3}(N-I_i^P)-1} \T_g(z) \Ff \left[ \widehat{Z}^{I_i} f \right]- A_i^q z^{\frac{2}{3}(N-I_i^P)} \Ff \left[ \widehat{Z}^{I_q} f \right]+B_i^k z^{\frac{2}{3}(N-I_i^P)} W_k.$$
Since $|z^{\frac{2}{3}(N-I_i^P)}  \Ff [ \widehat{Z}^{I_i} f ]| =|\Ff_{z,i}| \leq |\Ff_z|$, we obtain using \eqref{calculTgz} that
$$ \left| z^{\frac{2}{3}(N-I_i^P)-1} \T_g(z) \Ff \left[ \widehat{Z}^{I_i} f \right] \right| \hspace{1mm} \lesssim \hspace{1mm} \left| \frac{\T_g(z)}{z} \right| \left| \Ff_z \right| \hspace{1mm} \lesssim \hspace{1mm} \left(  \frac{\sqrt{\epsilon}|v|}{1+t+r}+\frac{\sqrt{\epsilon}|w_L|}{1+|t-r|} \right) |\Ff_z|. $$
One can bound $B_i^k z^{\frac{2}{3}(N-I_i^P)} W_k$ by applying directly Proposition \ref{ComuVlasov3} since, according to Lemma \ref{L2bilan}, $B_i^k W_k$ is a combination of error terms arising from $[\T_g, \widehat{Z}^{I_i}]$. We can then control it by a linear combination of the following error terms, 
\begin{align*}
& \left( \widehat{\mathfrak{B}}^{K}_{I,0}\!+\! \widehat{\mathfrak{B}}^{J,K}_{I,1}\!+\! \widehat{\mathfrak{B}}^{J,K}_{I,2}\!+\!\widehat{\mathfrak{A}}^{J,K}_{I,1}\!+\!\widehat{\mathfrak{A}}^{J,K}_{I,2}\!+\!\widehat{\mathfrak{A}}^{J,K}_{I,3}\!+\!\widehat{\mathfrak{A}}^{Q,M,K}_{I,12}\!+\!\widehat{\mathfrak{A}}^{Q,M,K}_{I,13}\right) z^{\frac{2}{3}(N-I_i^P)}|\widehat{Z}W_q|, \\
& \sum_{\begin{substack}{4 \leq j \leq 11 \\ 14 \leq q \leq 17}\end{substack}} \! \left( \mathfrak{B}^{K}_{I,00}\!+\! \mathfrak{B}^{J,K}_{I,3}\!+\! \mathfrak{B}^{J,K}_{I,4}\!+\!\mathfrak{B}^{J,K}_{I,5}\!+\!\mathfrak{B}^{Q,J,K}_{I,6}\!+\!\mathfrak{A}^{J,K}_{I,j}\!+\!\mathfrak{A}^{Q,J,K}_{I,q}\!+\!\mathfrak{A}^{Q,M,J,K}_{I,18}\right) z^{\frac{2}{3}(N-I_i^P)}|\nabla W_q|,
 \end{align*}
 where $ |K_q| \leq N-6$, $K_q^P \leq I_i^P$, $|Q|+|M| +|J| \leq |I_i|$ and $\widehat{Z} \in \widehat{\mathbb{P}}_0$. As $|K_q| \leq N-6$, there exist, for any $0 \leq \lambda \leq 3$, $(p,s_{\lambda}) \in \llbracket 1, l_Y \rrbracket^2$ such that
 $$ Y_p = z^{\frac{2}{3}(N-K_q^P-1)} \widehat{Z} \widehat{Z}^{K_q} f, \qquad Y_{s_{\lambda}} = z^{\frac{2}{3}(N-K_q^P)} \partial_{\lambda} \widehat{Z}^{K_q} f .$$
 This implies, since $K_q^P \leq I_i^P$,
 $$ z^{\frac{2}{3}(N-I_i^P)}\left|\widehat{Z}W_q \right| \leq z^{\frac{2}{3}} |Y_p|, \qquad |z^{\frac{2}{3}(N-I_i^P)}|\nabla W_q | \leq \sum_{\lambda=0}^3 |Y_{s_{\lambda}}|$$
 and the term $B \cdot W$ can then be rewritten in order to be included in the product $\overline{B} \cdot Y$.
 
Let us now focus on the terms $A_i^q z^{\frac{2}{3}(N-I_i^P)} \Ff \left[ \widehat{Z}^{I_q} f \right]$, which are fully described by Lemma \ref{L2bilan}. Similar to the way we estimated the terms listed in Proposition \ref{ComuVlasov2} during the proof of Proposition \ref{ComuVlasov3}, but using now Lemma \ref{vderivG} instead of \eqref{radial_v}, \eqref{angular_v} and \eqref{decayL}, these can be estimated by the terms written below. The multi-indices $I_j$, $Q$, $M$ and $J$ will satisfy
$$ I_j^P \leq I_i^P, \quad |Q|+|M| +|J|+|I_j| \leq |I_i|, \quad \text{so that} \quad |Q|+|M| +|J| \leq N-(N-5) \leq 5 \leq N-5,$$
and we will have $\widehat{Z} \in \widehat{\mathbb{P}}_0$, $0 \leq \lambda \leq 3$. Moreover, for convenience we define $\mathfrak{A}^{J,I_j}_{I_i ,11}:=0$ when $I_j^P=I_i^P$. These terms are
\begin{align*}
 \widehat{\mathfrak{Q}}^{\text{inh}} \hspace{0.7mm} & :=  \hspace{0.7mm} \sum_{1 \leq q \leq 3} \widehat{\mathfrak{A}}^{J,I_j}_{I_i ,q} \cdot z^{\frac{2}{3}(N-I_i^P)}\left| \Ff \left[\widehat{Z} \widehat{Z}^{I_j} f \right] \right|, \qquad I_j^P < I_i^P \quad \text{or} \quad J^T \geq 1, \\
 \widehat{\mathfrak{Q}}^{\text{hom}}\hspace{0.7mm} & :=  \hspace{0.7mm} \sum_{1 \leq q \leq 3} \widehat{\mathfrak{A}}^{J,I_j}_{I_i ,q} \cdot z^{\frac{2}{3}(N-I_i^P)}\left( \left| \F \left[\widehat{Z} \widehat{Z}^{I_j} f \right] \right|+\left| \widehat{Z}\F \left[ \widehat{Z}^{I_j} f \right] \right|\right), \\
 \mathfrak{Q}^{\text{inh}}  \hspace{0.7mm} & :=  \hspace{0.7mm} \sum_{4 \leq p \leq 11} \mathfrak{A}^{J,I_j}_{I_i ,p} \cdot z^{\frac{2}{3}(N-I_i^P)}\left| \Ff \left[\partial_{\lambda} \widehat{Z}^{I_j} f \right] \right|, \qquad I_j^P < I_i^P \quad \text{or} \quad J^T \geq 1, \\
  \mathfrak{C}^{\text{inh}}  \hspace{0.7mm} & :=  \hspace{0.7mm} \sum_{14 \leq n \leq 17} \mathfrak{A}^{Q,J,I_j}_{I_i ,n} \cdot z^{\frac{2}{3}(N-I_i^P)}\left| \Ff \! \left[\partial_{\lambda} \widehat{Z}^{I_j} f \right] \right|, \qquad I_j^P < I_i^P \quad \text{or} \quad Q^T+J^T \geq 1,  \\
 \mathfrak{Q}^{\text{hom}}&\!+\mathfrak{C}^{\text{hom}} \hspace{0.7mm}  :=  \hspace{0.7mm} \Bigg(  \sum_{\begin{substack}{4 \leq p \leq 11 \\14 \leq n \leq 17}\end{substack}} \! \mathfrak{A}^{J,I_j}_{I_i ,p}\!+\mathfrak{A}^{Q,J,I_j}_{I_i ,n} \! \Bigg)\! z^{\frac{2}{3}(N-I_i^P)}\left( \left| \F \! \left[\partial_{\lambda} \widehat{Z}^{I_j} f \right] \right| \! + \! \left| \partial_{\lambda}\F \! \left[ \widehat{Z}^{I_j} f \right] \right|\right)\!,
 \end{align*}
and
\begin{alignat*}{2}
 \widehat{\mathfrak{R}}^{\text{inh}} \hspace{1mm} & := \Big( \widehat{\mathfrak{B}}^{I_j}_{I_i,0}+ \widehat{\mathfrak{B}}^{J,I_j}_{I_i,1}+\widehat{\mathfrak{B}}^{J,I_j}_{I_i,2}+&&\widehat{\mathfrak{A}}^{Q,J,I_j}_{I_i,12}+\widehat{\mathfrak{A}}^{Q,J,I_j}_{I_i,13} \Big) z^{\frac{2}{3}(N-I_I^P)}\left| \F \! \left[ \widehat{Z} \widehat{Z}^{I_j} f \right] \right|\!,  \\
 \mathfrak{R}^{\text{inh}}  \hspace{1mm} & :=  \hspace{1mm} \Big(\! \mathfrak{B}^{I_j}_{I_i,00}\!+\! \mathfrak{B}^{J,I_j}_{I_i ,3}\!+ \! \mathfrak{B}^{J,I_j}_{I_i ,4}+&& \mathfrak{B}^{J,I_j}_{I_i ,5}  +\mathfrak{B}^{Q,J,I_j}_{I_i ,6}\!+\mathfrak{A}^{Q,M,J,I_j}_{I_i,18} \Big) z^{\frac{2}{3}(N-I_I^P)} \left|  \F  \! \left[\partial_{\lambda}\widehat{Z}^{I_j} f \right] \right| \! , \\
  \widehat{\mathfrak{R}}^{\text{hom}} \hspace{1mm} & := \Big( \widehat{\mathfrak{B}}^{I_j}_{I_i,0}+ \widehat{\mathfrak{B}}^{J,I_j}_{I_i,1}+\widehat{\mathfrak{B}}^{J,I_j}_{I_i,2}+&&\widehat{\mathfrak{A}}^{Q,J,I_j}_{I_i,12}+\widehat{\mathfrak{A}}^{Q,J,I_j}_{I_i,13} \Big) \\
& &&  \times z^{\frac{2}{3}(N-I_i^P)}\left( \left| \F  \! \left[\widehat{Z} \widehat{Z}^{I_j} f \right] \right|+\left| \widehat{Z}\F \! \left[ \widehat{Z}^{I_j} f \right] \right|\right) \!,  \\
 \mathfrak{R}^{\text{hom}}  \hspace{1mm} & :=  \hspace{1mm} \Big( \mathfrak{B}^{I_j}_{I_i,00}\!+\! \mathfrak{B}^{J,I_j}_{I_i ,3}\!+\! \mathfrak{B}^{J,I_j}_{I_i ,4}+&& \mathfrak{B}^{J,I_j}_{I_i ,5}  +\mathfrak{B}^{Q,J,I_j}_{I_i ,6}+\mathfrak{A}^{Q,M,J,I_j}_{I_i,18} \Big) \\
& && \times z^{\frac{2}{3}(N-I_i^P)}\left( \left| \F \! \left[\partial_{\lambda} \widehat{Z}^{I_j} f \right] \right|+\left| \partial_{\lambda} \F \! \left[ \widehat{Z}^{I_j} f \right] \right|\right) \!. \\
 \end{alignat*}
Since $|I_j| \leq |I_i| -1$, there exists, for any $0 \leq \lambda \leq 3$, $(p_1,p_2,q_{\lambda,1},q_{\lambda,2}) \in \llbracket 1, l_Y \rrbracket^4$ such that
\begin{alignat*}{2}
Y_{p_1} & = z^{\frac{2}{3}(N-I_j^P-1)} \F \! \left[\widehat{Z} \widehat{Z}^{I_j} f \right], \qquad \qquad Y_{p_2} && = z^{\frac{2}{3}(N-I_j^P-1)} \widehat{Z} \F \! \left[ \widehat{Z}^{I_j} f \right], \\
  Y_{q_{\lambda,1}} & = z^{\frac{2}{3}(N-I_j^P)} \F \! \left[\partial_{\lambda} \widehat{Z}^{I_j} f \right], \qquad \qquad Y_{q_{\lambda,2}} && = z^{\frac{2}{3}(N-I_j^P)} \partial_{\lambda} \F \! \left[ \widehat{Z}^{I_j} f \right].
\end{alignat*}
As $I_j^P \leq I_i^P$, we obtain that $\widehat{\mathfrak{Q}}^{\text{hom}}+ \widehat{\mathfrak{R}}^{\text{hom}}$ can be bounded by
$$\left( \widehat{\mathfrak{B}}^{K}_{I,0}\!+\! \widehat{\mathfrak{B}}^{J,K}_{I,1}\!+\! \widehat{\mathfrak{B}}^{J,K}_{I,2}\!+\!\widehat{\mathfrak{A}}^{J,K}_{I,1}\!+\!\widehat{\mathfrak{A}}^{J,K}_{I,2}\!+\!\widehat{\mathfrak{A}}^{J,K}_{I,3}\!+\!\widehat{\mathfrak{A}}^{Q,M,K}_{I,12}\!+\!\widehat{\mathfrak{A}}^{Q,M,K}_{I,13} \right) z^{\frac{2}{3}} (|Y_{p_1}|+|Y_{p_2}|)$$
and $\mathfrak{Q}^{\text{hom}}+\mathfrak{C}^{\text{hom}}+\mathfrak{R}^{\text{hom}}$ by
$$\sum_{0 \leq \lambda \leq 3} \; \sum_{ 3 \leq n \leq 5} \sum_{\begin{substack}{4 \leq j \leq 11 \\ 14 \leq q \leq 17}\end{substack}} \! \left( \mathfrak{B}^{K}_{I,00}\!+\! \mathfrak{B}^{J,K}_{I,n}\!+\!\mathfrak{B}^{Q,J,K}_{I,6}\!+\!\mathfrak{A}^{J,K}_{I,j}\!+\!\mathfrak{A}^{Q,J,K}_{I,q}\!+\!\mathfrak{A}^{Q,M,J,K}_{I,18}\right) (|Y_{p_{\lambda,1}}|+|Y_{p_{\lambda,2}}|).$$
This concludes the construction of the matrix $\overline{B}$. In order to deal with the remaining terms, note first that since $|I_j| \leq |I_i|-1$, there exists $k$,$k_{\lambda} \in \llbracket 1, |\M| \rrbracket$ such that
$$\Ff_{z,k}  = z^{\frac{2}{3}(N-I_j^P-1)} \Ff \! \left[\widehat{Z} \widehat{Z}^{I_j} f \right], \qquad  \Ff_{z,k_{\lambda}}  = z^{\frac{2}{3}(N-I_j^P)} \Ff \! \left[\partial_{\lambda} \widehat{Z}^{I_j} f \right].$$
Consequently, we have
\begin{align}
&\text{if $I_j^P < I_i^P$}, \qquad z^{\frac{2}{3}(N-I_i^P)} \! \left(  \left| \! \Ff \! \left[\widehat{Z} \widehat{Z}^{I_j} f \right] \! \right|\! +\!  \left| \! \Ff \! \left[\partial_{\lambda} \widehat{Z}^{I_j} f \right] \! \right|  \right) \leq |\Ff_{z,k}|+z^{-\frac{2}{3}}|\Ff_{z,k_{\lambda}}| , \label{eq:condiG1} \\
&\text{if $I_j^P = I_i^P$}, \qquad z^{\frac{2}{3}(N-I_i^P)} \!\left(  \left| \! \Ff \! \left[\widehat{Z} \widehat{Z}^{I_j} f \right] \! \right|\!+\! \left| \! \Ff \! \left[\partial_{\lambda} \widehat{Z}^{I_j} f \right] \! \right|  \right) \leq (1+t+r)^{\frac{2}{3}}|\Ff_{z,k}| + |\Ff_{z,k_{\lambda}}| . \label{eq:condiG2}
\end{align}
Recall that $\widehat{\mathfrak{B}}^{I_j}_{I_i,0} \lesssim \sqrt{\epsilon}(1+t+r)^{-2}$ and $ \mathfrak{B}^{I_j}_{I_i,00} \lesssim \sqrt{\epsilon} (1+t+r)^{-1}$. Using that $I_j^P \leq I_i^P$ and Proposition \ref{pro1}, we then get
$$ \widehat{\mathfrak{R}}^{\text{inh}}+\mathfrak{R}^{\text{inh}} \hspace{1mm} \lesssim \hspace{1mm} \left(  \frac{\sqrt{\epsilon}|v|}{1+t+r}+\frac{\sqrt{\epsilon}|w_L|}{1+|t-r|} \right) \! \left( |\Ff_{z,k}|+\sum_{0 \leq \lambda \leq 3} |\Ff_{z,k_{\lambda}}| \right)$$
If $I_j^P<I_i^P$, we obtain from Proposition \ref{prop2} and \eqref{eq:condiG1} that
\begin{equation}\label{eq:final}
 \widehat{\mathfrak{Q}}^{\text{inh}}+\mathfrak{Q}^{\text{inh}}+\mathfrak{C}^{\text{inh}} \hspace{1mm}  \lesssim \hspace{1mm} \left(  \frac{\sqrt{\epsilon}|v|}{1+t+r}+\frac{\sqrt{\epsilon}|w_L|}{1+|t-r|} \right) \! \left( |\Ff_{z,k}|+\sum_{0 \leq \lambda \leq 3} |\Ff_{z,k_{\lambda}}| \right) .
 \end{equation}
Finally, if $I_j^P=I_i^P$, then we have $J^T \geq 1$ in the terms $\widehat{\mathfrak{Q}}^{\text{inh}}$ and $\mathfrak{Q}^{\text{inh}}$ (recall that in that case $\mathfrak{A}_{I_i,11}^{J,I_j}=0$) as well as $J^T + Q^T \geq 1$ in the term $\mathfrak{C}^{\text{inh}}$. Proposition \ref{prop2} and \eqref{eq:condiG2} then also yield to the estimate \eqref{eq:final}. Since $|I_k|=|I_{k_{\lambda}}| \leq |I_i|$, this concludes the construction of the matrix $\overline{A}$ and then the proof.
\end{proof}
\begin{lemma}\label{overlineD}
There exists a matrix valued field $\overline{D} : [0,T[ \times \R^3_x \times \R^3_v \rightarrow \mathfrak{M}_{l_Y}(\R)$ such that $\T_g(Y)=\overline{D} \cdot Y$ and
$$\forall \; i \in \llbracket 1, l_Y \rrbracket, \qquad \left| \T_g (Y_i) \right| \hspace{1mm} \lesssim \hspace{1mm}  \left(  \frac{\sqrt{\epsilon}|v|}{1+t+r}+\frac{\sqrt{\epsilon}|w_L|}{1+|t-r|} \right) |Y|.$$
\end{lemma}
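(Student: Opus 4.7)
\medskip

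\emph{Plan.} The idea is to compute $\mathbf{T}_g(Y_i)$ case by case according to the two types of components making up $Y$ (see Definition~\ref{defFinhz}), and to show that after the appropriate bookkeeping of the $z$-weights, every resulting term fits into the form $\mathrm{(decay)}\cdot|Y|$. For any component of the shape $Y_i=z^a\psi$, I will first apply Leibniz to write
\[
\mathbf{T}_g(Y_i)=a\,z^{a-1}\mathbf{T}_g(z)\,\psi+z^a\,\mathbf{T}_g(\psi),
\]
and the first piece is immediately controlled by the desired right-hand side via the identity \eqref{calculTgz}, since $|\mathbf{T}_g(z)|/z\lesssim \tfrac{\sqrt{\epsilon}|v|}{1+t+r}+\tfrac{\sqrt{\epsilon}|w_L|}{1+|t-r|}$ and $z^a\psi=Y_i$.

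\emph{First type ($Y_i=z^{\frac{2}{3}(N-K^P)}\widehat{Z}^Kf$, $|K|\le N-5$).} Here $\mathbf{T}_g(\widehat{Z}^Kf)=[\mathbf{T}_g,\widehat{Z}^K](f)$ since $\mathbf{T}_g(f)=0$. I apply Proposition~\ref{ComuVlasov3} to decompose this commutator, together with Lemma~\ref{lem1} for the Schwarzschild contributions $\widehat{\mathfrak{S}}^{K'}_{K,0}$ and $\mathfrak{S}^{K'}_{K,00}$. Because $|K|\le N-5$, every metric index obeys $|J|, |Q|+|M|\le N-5$, so all metric factors admit the pointwise estimates of Propositions~\ref{decaymetric} and~\ref{estinullcompo} and hence Propositions~\ref{pro1} and~\ref{prop2} apply. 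The surviving Vlasov factors are of the form $|\widehat{Z}\widehat{Z}^{K'}f|$ or $|\nabla\widehat{Z}^{K'}f|$, i.e.\ $|\widehat{Z}^{K''}f|$ with $|K''|\le|K|\le N-5$, so each such term corresponds to a component of $Y$. The only subtlety is that when condition (2) of Proposition~\ref{ComuVlasov3} holds with $\widehat{Z}$ homogeneous, one has $(K'')^P=K^P+1$ and the prefactor $z^{\frac{2}{3}(N-K^P)}$ exceeds the natural weight $z^{\frac{2}{3}(N-(K'')^P)}$ by a factor of $z^{2/3}$. But precisely in this case Proposition~\ref{prop2} provides the additional $(1+\tau+r)^{-2/3}$ decay on the metric factor, which combined with $z\lesssim 1+t+r$ (see \eqref{prop_z}) absorbs the extra $z^{2/3}$ and gives the required bound by $(\mathrm{decay})\cdot|Y|$.

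\emph{Second type ($Y_i=z^{\frac{2}{3}(N-I^P-I_j^P)}\widehat{Z}^I\F[\widehat{Z}^{I_j}f]$, $|I|+|I_j|\le N$).} Here I directly invoke Corollary~\ref{corcomuhom}, which bounds $|\mathbf{T}_g(\widehat{Z}^I\F[\widehat{Z}^{I_j}f])|$ by three families of terms carrying respective weights $z^{-2/3}$, $1$, and $z^{2/3}$, subject to the matching constraints $K_q^P+I_{j_q}^P\le I^P+I_j^P+1$, $K_q^P+I_{j_q}^P\le I^P+I_j^P$, and $K_q^P+I_{j_q}^P<I^P+I_j^P$ respectively. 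Multiplying by the prefactor $z^{\frac{2}{3}(N-I^P-I_j^P)}$ and comparing exponents, the constraint in each case is exactly what is needed to ensure
\[
z^{\frac{2}{3}(N-I^P-I_j^P)}\cdot z^{\frac{2}{3}(K_q^P+I_{j_q}^P-I^P-I_j^P)}\le z^{\frac{2}{3}(N-K_q^P-I_{j_q}^P)},
\]
so the product equals at most $|Y_k|$ for the appropriate index $k$ (the condition $|K_q|+|I_{j_q}|\le|I|+|I_j|\le N$ ensures $Y_k$ is indeed a component of $Y$).

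\emph{Main obstacle.} The only nontrivial point is the bookkeeping of the $z$-weights in the first-type case when a homogeneous vector field is produced by the commutator and the hierarchy exponent $(K'')^P$ strictly increases. The estimate closes only because the improved pointwise bound from Proposition~\ref{prop2} (condition (2) branch, yielding the gain $(1+\tau+r)^{-2/3}$) exactly compensates the $z^{2/3}$ mismatch via the pointwise inequality $z\lesssim 1+t+r$. Once this is verified, the construction of the matrix $\overline{D}$ is automatic: each term produced above defines an entry in $\overline{D}$, bounded in $L^\infty_v$ by the claimed decay factor.
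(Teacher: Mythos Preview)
Your proof is correct and follows essentially the same approach as the paper: split into the two types of components of $Y$, use \eqref{calculTgz} for the $\mathbf{T}_g(z)$ piece, invoke Corollary~\ref{corcomuhom} for the $\F$-components, and combine Proposition~\ref{ComuVlasov3} with Propositions~\ref{pro1}--\ref{prop2} for the $\widehat{Z}^Kf$-components (this is exactly what the paper does, phrasing the latter as ``obtained from Proposition~\ref{ComuVlasov3} exactly as we obtained Corollary~\ref{corcomuhom} from Lemma~\ref{comL2hom}'').

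Two minor points: your reference to Lemma~\ref{lem1} is slightly off, since that lemma is an $L^1$-in-time estimate rather than a pointwise bound; what you actually need (and what the paper uses) are the underlying pointwise inequalities $\widehat{\mathfrak{B}}^{K'}_{K,0}\lesssim\sqrt{\epsilon}(1+t+r)^{-2}$ and $\mathfrak{B}^{K'}_{K,00}\lesssim\sqrt{\epsilon}(1+t+r)^{-1}$. Also, in the first-type case you single out only the situation $(K'')^P=K^P+1$, but the full three-way split (with prefactors $z^{-2/3}$, $1$, $z^{2/3}$ matching the constraints $K_q^P\le K^P+1$, $K_q^P\le K^P$, $K_q^P<K^P$) is needed to cover the $\mathfrak{E}$-terms under condition~(1) as well; this is handled by the same $z$-bookkeeping you describe and is straightforward once one case is understood.
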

\begin{proof}
Let $ i \in \llbracket 1, l_Y \rrbracket$ and recall that either $Y_i = z^{\frac{2}{3}(N-K^P)} \widehat{Z}^K f$ or $Y_i =z^{\frac{2}{3}(N-I^P-I_i^P)} \widehat{Z}^I \F[\widehat{Z}^{I_i} f]$, where $|I|+|I_i| \leq N$. Using \eqref{calculTgz}, we obtain
$$ |\T_g(Y_i) | \hspace{1mm} \lesssim \hspace{1mm}  \left(  \frac{\sqrt{\epsilon}|v|}{1+t+r}+\frac{\sqrt{\epsilon}|w_L|}{1+|t-r|} \right) |Y_i|+\left\{ \begin{array}{ll} z^{\frac{2}{3}(N-K^P)} \big| \T_g \big( \widehat{Z}^K f \big) \big| \qquad \text{or} \\ z^{\frac{2}{3}(N-I^P-I_i^P)} \big| \T_g \big( \widehat{Z}^I  \F[\widehat{Z}^{I_i} f] \big) \big|. \end{array}\right. $$
Then, $z^{\frac{2}{3}(N-I^P-I_i^P)} \big| \T_g \big( \widehat{Z}^I  \F[\widehat{Z}^{I_i} f] \big) \big|$ can be bounded by applying Corollary \ref{corcomuhom}. For $z^{\frac{2}{3}(N-K^P)} \big| \T_g \big( \widehat{Z}^K f \big) \big|$, the result ensues from the fact that $ \T_g \big( \widehat{Z}^K f \big)$ can be bounded by a linear combination of terms of the form
\begin{alignat*}{2}
& \left(  \frac{\sqrt{\epsilon}|v|}{1+t+r}+\frac{\sqrt{\epsilon}|w_L|}{1+|t-r|} \right) \frac{1}{z^{\frac{3}{2}}}  \left| \widehat{Z}^{K_1} f   \right|, \qquad &&  K_1^P \leq K^P+1, \\ 
 &\left(  \frac{\sqrt{\epsilon}|v|}{1+t+r}+\frac{\sqrt{\epsilon}|w_L|}{1+|t-r|} \right)  \left|  \widehat{Z}^{K_2} f  \right|, \qquad && K_2^P \leq K^P,\\
 & \left(  \frac{\sqrt{\epsilon}|v|}{1+t+r}+\frac{\sqrt{\epsilon}|w_L|}{1+|t-r|} \right) z^{\frac{3}{2}}  \left|  \widehat{Z}^{K_3} f \right|, \qquad && K_3^P < K^P.
\end{alignat*}
This can be obtained from Proposition \ref{ComuVlasov3} exactly as we obtained Corollary \ref{corcomuhom} from Lemma \ref{comL2hom} since $ \T_g \big( \widehat{Z}^K f \big)$ only contains derivatives of $h^1$ of order at most $|K| \leq N-5$. In other word, we combine Proposition \ref{ComuVlasov3} with Propositions \ref{pro1} and \ref{prop2}.
\end{proof}
Consider now $K$ satisfying $\T_g(K)+ \overline{A}\cdot K+K \cdot \overline{D}= \overline{B}$ and $K(0,\cdot,\cdot)=0$. Hence, $K \cdot Y= \Ff_z$ since they both initially vanish and $\T_g(KY)+\overline{A}KY=\overline{B}Y$. Recall that the Vlasov field and $h^1$ have a bad behavior at top order. In order to derive better estimates on $\Ff_{z,i}$ for $|I_i| < N$, we define the following subset of $\M$,
$$ \M_{N-1} \hspace{1mm} := \hspace{1mm} \{ I \in \M \; / \; |I| \leq N-1 \}$$
and we assume for simplicity that the ordering on $\M$ is such that $\M_{N-1}=\{I_1,\dots I_{|\M_{N-1}|} \}$. The goal now is to control the energies
$$ \E^{N-1}_{\Ff} \hspace{1mm} := \hspace{1mm} \sum_{i=0}^{|\M_{N-1}|} \sum_{j=0}^{l_Y} \sum_{q=0}^{l_Y}  \E^{\frac{1}{8}, \frac{1}{8}} \! \left[ \left| K_i^j \right|^2 Y_q \right], \qquad \qquad \E^N_{\Ff} \hspace{1mm} := \hspace{1mm} \sum_{i=0}^{|\M|} \sum_{j=0}^{l_Y} \sum_{q=0}^{l_Y}  \E^{\frac{1}{8}, \frac{1}{8}} \! \left[ \left| K_i^j \right|^2 Y_q \right].$$
We will then be naturally led to use that
\begin{equation}\label{sourceinho}
T_F\left( |K^j_i|^2 Y_q\right) = |K^j_i |^2 \overline{D}^r_q Y_r-2\left(\overline{A}^p_i K^j_p +K^r_i \overline{D}^j_r \right) K^j_i Y_q+2 \overline{B}^j_iK^j_iY_q.
\end{equation}
\begin{rem}\label{Rqindices}
Lemma \ref{bilaninho} gives us the following informations. 
\begin{itemize}
\item If $i \in \llbracket 1, |\M_{N-1}| \rrbracket$, then $\overline{A}^p_i=0$ for all $p > |\M_{N-1}|$, i.e. for all $|I_p| =N$. Consequently, in that case, the only components $K^j_s$ appearing in the term $\overline{A}^p_i K^j_p$ satisfy $1 \leq s \leq |\M_{N-1}|$. 
\item If $i \in \llbracket 1, |\M_{N-1}| \rrbracket$, then $\overline{B}^j_i$ contains only derivatives of $h^1$ up to order $|I_i| \leq N-1$.
\end{itemize}
\end{rem}
\begin{prop}\label{inhoL1}
If $\epsilon$ is small enough, we have 
$$\forall \; t \in [0,T[, \qquad \E^{N-1}_{\Ff}(t) \hspace{1mm} \lesssim \hspace{1mm} \epsilon (1+t)^{\frac{\delta}{2}} \qquad \text{and} \qquad \E^{N}_{\Ff}(t) \hspace{1mm} \lesssim \hspace{1mm} \epsilon (1+t)^{1+\frac{3}{2}\delta}.$$
\end{prop}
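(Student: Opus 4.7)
The plan is to prove the two bounds by direct energy estimation, applying the Vlasov $L^1$-energy estimate of Proposition \ref{prop_vlasov_cons} to each scalar quantity $|K^j_i|^2 Y_q$ and exploiting the vanishing of $K$ at $t=0$. Writing out $\T_g(|K^j_i|^2 Y_q)$ via \eqref{sourceinho}, we must control the spacetime integrals generated by the four resulting families. For the contributions of the type $|K^j_i|^2 \overline{D}^r_q Y_r$, $\overline{A}^p_i K^j_p K^j_i Y_q$ and $K^r_i \overline{D}^j_r K^j_i Y_q$, I will first symmetrize the cross products via $|K^j_p K^j_i| \leq \tfrac12(|K^j_p|^2+|K^j_i|^2)$ and then invoke the pointwise bounds on $\overline{A}$ and $\overline{D}$ provided by Lemmas \ref{bilaninho} and \ref{overlineD}, namely $|\overline{A}|+|\overline{D}| \lesssim \sqrt{\epsilon}|v|/(1+\tau+r)+\sqrt{\epsilon}|w_L|/(1+|u|)$. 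By the very definition \eqref{def_mbb_e} of $\mathbb{E}^{1/8,1/8}$ (which already contains the $|w_L|/(1+|u|)$ bulk), these terms contribute at most $\sqrt{\epsilon}\,\E^\star_{\Ff}(t) + \sqrt{\epsilon}\int_0^t (1+\tau)^{-1}\E^\star_{\Ff}(\tau)\,\dr\tau$ (with $\star\in\{N-1,N\}$); crucially, for $|I_i|\leq N-1$, Remark \ref{Rqindices} guarantees that $\overline{A}^p_i$ couples only to indices $p \leq |\M_{N-1}|$, so $\E^{N-1}_{\Ff}$ is closed under these contributions.

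The main obstacle is the source term $2\overline{B}^j_i K^j_i Y_q$, since $\overline{B}^j_i$ contains derivatives of $h^1$ up to order $|I_i|$, which for $|I_i|=N$ lies above the pointwise-decay threshold. Following Cauchy--Schwarz with the weight $(1+\tau)$, I split
\begin{equation*}
|\overline{B}^j_i||K^j_i||Y_q| \;\leq\; \frac{1}{2(1+\tau)}|K^j_i|^2 |Y_q| \;+\; \frac{1+\tau}{2}|\overline{B}^j_i|^2 |Y_q|.
\end{equation*}
The first piece feeds into $\int_0^t \E^\star_{\Ff}(\tau)/(1+\tau)\,\dr\tau$ and is absorbed by Grönwall. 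For the second, using \eqref{eq:decayVlasov2} and the pointwise bound of Proposition \ref{PropH}, each component of $Y$ satisfies $\int_{\R^3_v} |Y_q|\,\dr v \lesssim \epsilon (1+\tau)^{\delta/2}(1+\tau+r)^{-2}(1+|u|)^{-7/8}$, which reduces the task to estimating
\begin{equation*}
\epsilon (1+t)^{\delta/2} \int_0^t\!\int_{\Sigma_\tau} \frac{(1+\tau)\,|\overline{B}^j_i|^2}{(1+\tau+r)^2 (1+|u|)^{7/8}}\,\omega_{1/8}^{1/8}\,\dr x\,\dr\tau.
\end{equation*}

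The remaining step, which will be the technically heaviest, is to bound this weighted spacetime $L^2$-integral of $\overline{B}^j_i$ by carefully handling each error-term family listed in Lemma \ref{bilaninho}. For $|I_i|\leq N-1$, every factor carrying more than $N-5$ derivatives is controlled by the bootstrap bound $\overline{\mathcal{E}}^{\gamma,1+2\gamma}_{N-1}[h^1]\lesssim \epsilon(1+t)^{2\delta}$ (together with $\mathcal{E}^{2\gamma,1+\gamma}_{N-1,\mathcal{T}\mathcal{U}}$ and the wave-gauge identity \eqref{wgcforproof} for $\mathcal{L}\mathcal{T}$ components), while all other metric factors are estimated pointwise; Hardy-type and Grönwall-type reductions as in Proposition \ref{TUintegralbound} then yield a bound of size $\epsilon$, producing the expected $\epsilon(1+t)^{\delta/2}$. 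For $|I_i|=N$, the top-order factor is controlled only through $\mathring{\mathcal{E}}^{\gamma,2+2\gamma}_N[h^1]\lesssim \epsilon(1+t)^{2\delta}$, which carries the extra $(1+\tau+r)^{-1}$ weight; trading this weight for a polynomial $\tau$-growth via Lemma \ref{LemBulk} costs a factor of $1+t$, giving the weaker $\epsilon(1+t)^{1+3\delta/2}$. Assembling all the above and applying Grönwall's lemma, after absorbing the $\sqrt{\epsilon}\E^\star_{\Ff}(t)$ term to the left for $\epsilon$ small, concludes the proof. The hardest point is the case-by-case $L^2$ spacetime estimation of $|\overline{B}|^2$ for each family of commutator error terms and the careful tracking of the $\M_{N-1}$ vs.\ $\M$ dichotomy so that the two stated growth rates emerge cleanly.
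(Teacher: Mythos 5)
Your proposal takes a genuinely different route from the paper for the key term $\mathbf{I}_{\overline{B}}$: you apply a pointwise Young-type splitting $|\overline{B}||K||Y|\leq \tfrac{1}{2(1+\tau)}|K|^2|Y|+\tfrac{1+\tau}{2}|\overline{B}|^2|Y|$, whereas the paper applies Cauchy--Schwarz in $(t,x)$ to the whole spacetime integral, pairing a weighted $L^2_{t,x}$ norm of the metric factor against a weighted $L^2_{t,x}$ norm of $\int_v z\,|K_i^j||Y||v|\,\dr v$; the first factor is exactly the quantity $\widehat{\mathcal H}+\mathcal H$ already estimated in Proposition \ref{prop3}. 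Your route is cleaner conceptually, but as written it has two genuine gaps.

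First, your Young split carries no smallness: the resulting term $\tfrac12\int_0^t \mathbb{E}^{\star}_{\Ff}(\tau)/(1+\tau)\,\dr\tau$ has coefficient $O(1)$, and Gr\"onwall on $\mathbb{E}(t)\leq C\epsilon(1+t)^a + \int_0^t \mathbb{E}(\tau)/(1+\tau)\,\dr\tau$ returns $\mathbb{E}(t)\lesssim C\epsilon(1+t)^{a+1}$ --- you lose a full power of $t$ and the argument does not close, especially for $n=N-1$ where the target is $(1+t)^{\delta/2}$. The fix is to use a weighted split such as $|\overline{B}||K||Y|\leq \tfrac{\sqrt\epsilon}{2(1+\tau)}|K|^2|Y|+\tfrac{1+\tau}{2\sqrt\epsilon}|\overline{B}|^2|Y|$, which puts a $\sqrt\epsilon$ in front of the Gr\"onwall kernel so that it can be absorbed, while the $1/\sqrt\epsilon$ on the other side is benign because $|\overline{B}|^2$ is already $O(\epsilon)$. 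Second, the reduction ``to estimating $\epsilon(1+t)^{\delta/2}\int_0^t\int_{\Sigma_\tau}(1+\tau)|\overline{B}^j_i|^2/((1+\tau+r)^2(1+|u|)^{7/8})\,\omega_{1/8}^{1/8}\,\dr x\,\dr\tau$'' silently treats $\overline{B}^j_i$ as a spacetime function, but by Lemma \ref{bilaninho} the entries of $\overline{B}$ contain factors of $|v|$, $|w_L|$ and $z^{2/3}$ and are unbounded in $v$. Before you can invoke pointwise decay of $\int_v|Y_q|\,\dr v$ you must peel off a normalized $L^\infty_v$ norm of $\overline{B}$ --- something like $\||\overline{B}|^2/(z^a|v|^2)\|_{L^\infty_v}$ --- and pair the compensating $z^a|v|$ weights with $Y$, exactly as the paper does through the definitions of $\widehat{\mathcal H}$ and $\mathcal H$ in Lemma \ref{lem4} and the pointwise bound \eqref{decayY} for $\int_v |v|\,z^4|Y|\,\dr v$. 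Without this bookkeeping the integral you write down is not well-defined as a bound.
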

\begin{proof}
Let $T_0 \in [0,T[$ the largest time such that $ \E^{N-1}_{\Ff}(t) \lesssim \epsilon (1+t)^{\frac{\delta}{2}}$ and $ \E^N_{\Ff}(t) \lesssim \epsilon (1+t)^{1+\frac{3}{2}\delta}$ for all $t \in [0,T_0[$. By continuity, $T_0 >0$. The remaining of the proof consists in improving these bootstrap assumptions, which would imply the result. For convenience, we will sometime denote $\M$ by $\M_N$. Fix $n \in \{N-1,N \}$ and consider $i \in \llbracket 1, |\M_n| \rrbracket$ and $(j,q) \in \llbracket 1, l_Y \rrbracket^2$. According to the energy estimate of Proposition \ref{prop_vlasov_cons}, $K(0,\cdot,\cdot)=0$ and \eqref{sourceinho}, we have
$$ \E^{\frac{1}{8}, \frac{1}{8}} \! \big[ \big| K_i^j \big|^2 Y_q \big]\!(t) \hspace{0.5mm} \lesssim \hspace{0.5mm} \sqrt{\epsilon} \! \int_0^t \frac{\E^{\frac{1}{8}, \frac{1}{8}} \! \big[ \big| K_i^j \big|^2 Y_q \big]\!(\tau)}{1+\tau} \dr \tau + \mathbf{I}_{\overline{A},\overline{D}}+\mathbf{I}_{\overline{B}}\hspace{0.5mm} \lesssim \hspace{0.5mm} \sqrt{\epsilon} \! \int_0^t \frac{\E^n_{\Ff} (\tau)}{1+\tau} \dr \tau + \mathbf{I}_{\overline{A},\overline{D}}+\mathbf{I}_{\overline{B}},$$
where
\begin{align*}
\mathbf{I}_{\overline{A},\overline{D}} \hspace{1mm} & := \hspace{1mm} \int_0^t \int_{\Sigma_{\tau}} \int_{\R^3_v} \left| |K^j_i |^2 \overline{D}^r_q Y_r-2\left(\overline{A}^p_i K^j_p +K^r_i \overline{D}^j_r \right) K^j_i Y_q \right| \dr v \omega_{\frac{1}{8}}^{\frac{1}{8}} \dr x \dr \tau  , \\
\mathbf{I}_{\overline{B}} \hspace{1mm} & := \hspace{1mm} \int_0^t \int_{\Sigma_{\tau}} \int_{\R^3_v} \left| \overline{B}^j_iK^j_iY_q \right| \dr v \omega_{\frac{1}{8}}^{\frac{1}{8}} \dr x \dr \tau .
\end{align*}
Using Lemmas \ref{bilaninho}-\ref{overlineD} and Remark \ref{Rqindices} (for the case $n=N-1$), we obtain
\begin{align*}
 \mathbf{I}_{\overline{A},\overline{D}} \hspace{1mm} & \lesssim \hspace{1mm} \sum_{r=1}^{|\M|}\sum_{p=1}^{|\M_n|} \int_0^t \! \int_{\Sigma_{\tau}} \! \int_{\R^3_v}\! \left(  \frac{\sqrt{\epsilon}|v|}{1+t+r} \!+\!\frac{\sqrt{\epsilon}|w_L|}{1+|t-r|} \right) \! \left( \big|K^j_i \big|^2\!+\!\big|K^r_i \big|^2\!+\!\left|K^j_p \right|^2 \right) \!|Y|  \dr v \omega_{\frac{1}{8}}^{\frac{1}{8}} \dr x \dr \tau \\
 & \lesssim \hspace{1mm} \sqrt{\epsilon} \int_0^t \frac{\E^n_{\Ff}(\tau)}{1+\tau} \dr \tau+\sqrt{\epsilon} \E^n_{\Ff}(t).
 \end{align*}
The bootstrap assumptions on $\E^{N-1}_{\Ff}$ and $\E^N_{\Ff}$ then gives us
$$\mathbf{I}_{\overline{A},\overline{D}}+ \sqrt{\epsilon} \int_0^t \frac{\E^n_{\Ff}(\tau)}{1+\tau} \dr \tau  \hspace{1mm}  \lesssim  \hspace{1mm} \left\{ \begin{array}{ll} \epsilon^{\frac{3}{2}} (1+t)^{\frac{\delta}{2}}, & \text{if $n = N-1$}, \\ \epsilon^{\frac{3}{2}} (1+t)^{1+\frac{3}{2}\delta}, & \text{if $n = N$}.  \end{array}\right.$$
We now focus on $\mathbf{I}_{\overline{B}}$. Recall from Lemma \ref{lem4} the definition of $\widehat{\mathcal{H}}$ and $\mathcal{H}$ and from Lemma \ref{bilaninho} the form of $B_i^j$. By the Cauchy-Schwarz inequality in $(t,x)$, $\mathbf{I}_{\overline{B}}$ can be bounded by the following terms\footnote{As in the statement of Lemma \ref{bilaninho}, the multi-index $K$ has no meaning here.}
\begin{align*}
\mathbf{I}_0 \hspace{1mm} & := \hspace{1mm} \int_0^t \! \int_{\Sigma_{\tau}} \! \int_{\R^3_v} \! \left( z^{\frac{2}{3}}\widehat{\mathfrak{B}}^K_{I_i,0}+\mathfrak{B}^K_{I_i,00} \right) \big| K^j_i Y_q \big| \dr v \omega^{\frac{1}{8}}_{\frac{1}{8}} \dr x \dr \tau, \\
\widehat{\mathbf{I}} \hspace{1mm} & := \hspace{1mm} \left| \widehat{\mathcal{H}} \cdot \int_0^t \! \int_{\Sigma_{\tau}}(1+\tau+r)  \left| \int_{\R^3_v} z^{\frac{5}{3}} \big| K_i^j \big| |Y| |v| \dr v \right|^2 \!   \omega_{\frac{1}{8}}^{\frac{1}{8}} \dr x \dr \tau \right|^{\frac{1}{2}} , \\
\mathbf{I} \hspace{1mm} & := \hspace{1mm} \left| \mathcal{H} \cdot \int_0^t \! \int_{\Sigma_{\tau}}  (1+\tau+r) \left| \int_{\R^3_v}  z^2 \big| K_i^j \big| |Y| |v| \dr v \right|^2 \!   \omega_{\frac{1}{8}}^{\frac{1}{8}} \dr x \dr \tau \right|^{\frac{1}{2}},
\end{align*}
where the multi-indices $J$, $M$, $Q$, $\overline{J}$, $\overline{M}$ and $\overline{Q}$, which are hidden in $\widehat{\mathcal{H}}$ and $\mathcal{H}$, satisfy
$$ |J| \leq |I_i| \leq n, \qquad |Q|+|M|  \leq n, \qquad |\overline{Q}|+|\overline{M}| +|\overline{J}| \leq n.$$
Now, recall from Proposition \ref{prop3} that
$$ \widehat{\mathcal{H}}+\mathcal{H} \hspace{1mm}  \lesssim  \hspace{1mm} \left\{ \begin{array}{ll} \epsilon, & \text{if $n = N-1$}, \\ \epsilon (1+t)^{1+\delta}, & \text{if $n = N$}.  \end{array}\right.$$
To deal with the second factor of $\mathbf{I}$ and $\widehat{\mathbf{I}}$, we follow the computations made during the proof of Lemma \ref{estiVla2}. Recall first that for any $k \in \llbracket 1,l_y \rrbracket$, there exists $|K| \leq N-5$ or $|I|+|I_j| \leq N$ such that $Y_k = z^{\frac{2}{3}(N-K^P)}\widehat{Z}^K f$ or $Y_k=z^{\frac{2}{3}(N-I^P-I_j^P)} \widehat{Z}^I \F[\widehat{Z}^{I_j} f]$. Hence, using \eqref{eq:decayVlasov2} and Proposition \ref{PropH}, we have
\begin{equation}\label{decayY}
 \forall \; (\tau,x) \in [0,T[ \times \R^3, \qquad \int_{\R^3_v} |v| z^4 |Y| (\tau,x,v) \dr v \hspace{1mm} \lesssim \hspace{1mm} \frac{\epsilon (1+\tau)^{\frac{\delta}{2}}}{(1+\tau+r)^2(1+|\tau-r|)^{\frac{7}{8}}}.
 \end{equation}
Using the Cauchy-Schwarz inequality in $v$, we then obtain, as $i \leq |\M_n|$,
\begin{align*}
\int_0^t \! \int_{\Sigma_{\tau}}\!(1+\tau+r) \! & \left| \int_{\R^3_v}\! z^2 \big| K_i^j \big| |Y| |v| \dr v \right|^2 \!  \omega_{\frac{1}{8}}^{\frac{1}{8}} \dr x \dr \tau \\
& \lesssim \hspace{1mm} \int_0^t \! \int_{\Sigma_{\tau}}(1+\tau+r)  \int_{\R^3_v}\! z^4  |Y| |v| \dr v \int_{\R^3_v}\!  \big| K_i^j \big|^2 |Y| |v| \dr v  \omega_{\frac{1}{8}}^{\frac{1}{8}} \dr x \dr \tau \\
& \lesssim \hspace{1mm} \int_0^t \!\frac{\epsilon}{(1+\tau)^{1-\frac{\delta}{2}}} \! \int_{\Sigma_{\tau}}\! \int_{\R^3_v}\!  \big| K_i^j \big|^2 |Y| |v| \dr v  \omega_{\frac{1}{8}}^{\frac{1}{8}} \dr x \dr \tau \\ & \lesssim \hspace{1mm} \int_0^t \!\frac{\epsilon \; \E_{\Ff}^n(\tau)}{(1+\tau)^{1-\frac{\delta}{2}}} \hspace{1mm}  \lesssim  \hspace{1mm} \left\{ \begin{array}{ll} \epsilon^2 (1+t)^{\delta}, & \text{if $n = N-1$}, \\ \epsilon^2 (1+t)^{1+2\delta}, & \text{if $n = N$}.  \end{array}\right.
\end{align*}
As $z^{\frac{5}{3}} \leq z^2$, we obtain that $\mathbf{I}+\widehat{\mathbf{I}} \lesssim \epsilon^{\frac{3}{2}} (1+t)^{\frac{\delta}{2}}$ if $n=N-1$ and $\mathbf{I}+\widehat{\mathbf{I}} \lesssim \epsilon^{\frac{3}{2}} (1+t)^{1+\frac{3}{2}\delta}$ if $n=N$. Finally, since $1+|t-r| \lesssim z$ (see Lemma \ref{lem_wwl}) and $\widehat{\mathfrak{B}}^{K}_{I_i,0} \lesssim \sqrt{\epsilon}|v|(1+t+r)^{-2}$, $ \mathfrak{B}^{I_j}_{I_i,00} \lesssim \sqrt{\epsilon} |v|(1+t+r)^{-1}$, we get by the Cauchy-Schwarz inequality in $x$,
$$ \mathbf{I}_0 \hspace{1mm} \lesssim \hspace{1mm} \int_0^t \left| \int_{r=0}^{+\infty} \frac{\epsilon (1+|\tau-r|)^{\frac{1}{8}} r^2 \dr r}{(1+\tau+r)^2(1+|\tau-r|)^4} \right|^\frac{1}{2} \left| \int_{\Sigma_{\tau}} \left| \int_{\R^3_v} \! z^2 \big| K_i^j \big| |Y| |v| \dr v \right|^2 \!    \omega_{\frac{1}{8}}^{\frac{1}{8}} \dr x \right|^{\frac{1}{2}} \dr \tau.$$
Since
\begin{align*}
\int_{r=0}^{+\infty} \frac{\epsilon (1+|\tau-r|)^{\frac{1}{8}} r^2 \dr r}{(1+\tau+r)^2(1+|\tau-r|)^4} \hspace{1mm} & \lesssim \hspace{1mm} \epsilon \int_{r=0}^{+\infty} \frac{\dr r}{(1+|\tau-r|)^{\frac{7}{2}}} \hspace{1mm}  \lesssim \hspace{1mm} \epsilon , \\
\int_{\Sigma_{\tau}} \left| \int_{\R^3_v} \! z^2 \big| K_i^j \big| |Y| |v| \dr v \right|^2 \!    \omega_{\frac{1}{8}}^{\frac{1}{8}} \dr x \hspace{1mm} & \lesssim \hspace{1mm} \int_{\Sigma_{\tau}}  \int_{\R^3_v} \! z^4 |Y| |v| \dr v \int_{\R^3_v} \big| K_i^j \big|^2 |Y| |v| \dr v  \omega_{\frac{1}{8}}^{\frac{1}{8}} \dr x,
\end{align*}
we obtain from the pointwise decay estimate on $\int_v z^4 |Y| |v| \dr v$ and the bootstrap assumption on $\E_{\Ff}^n$ that
$$\mathbf{I}_0 \hspace{1mm} \lesssim \hspace{1mm}  \int_0^t \frac{\sqrt{\epsilon}}{(1+\tau)^{1-\frac{\delta}{4}}} \left| \E_{\Ff}^n (\tau) \right|^{\frac{1}{2}} \dr \tau \hspace{1mm} \lesssim \hspace{1mm} \left\{ \begin{array}{ll} \epsilon^{\frac{3}{2}} (1+t)^{\frac{\delta}{2}}, & \text{if $n = N-1$}, \\ \epsilon^{\frac{3}{2}} (1+t)^{1+\delta}, & \text{if $n = N$}.  \end{array}\right. $$
We then deduce that $\mathbf{I}_{\overline{B}} \lesssim \epsilon^{\frac{3}{2}} (1+t)^{\frac{\delta}{2}}$ if $i \leq |\M_n|$ and $\mathbf{I}_{\overline{B}} \lesssim \epsilon^{\frac{3}{2}} (1+t)^{1+\frac{3}{2}\delta}$ otherwise, so that
$$\E_{\Ff}^n(t) \hspace{1mm} = \hspace{1mm} \sum_{i=0}^{|\M_n|} \sum_{j=0}^{l_Y} \sum_{q=0}^{l_Y}  \E^{\frac{1}{8}, \frac{1}{8}} \! \left[ \left| K_i^j \right|^2 Y_q \right](t) \hspace{1mm} \lesssim \hspace{1mm} \left\{ \begin{array}{ll} \epsilon^{\frac{3}{2}} (1+t)^{\frac{\delta}{2}}, & \text{if $n = N-1$}, \\ \epsilon^{\frac{3}{2}} (1+t)^{1+\frac{3}{2}\delta}, & \text{if $n = N$}. \end{array}\right.$$
If $\epsilon$ is small enough, this improves the bootstrap assumptions on $\E^{N-1}_{\Ff}$ and $\E^N_{\Ff}$.
\end{proof}

\subsection{The $L^2$-estimates}\label{subsecL2esti}

We start by estimating the $L^2$-norm of $\int_{\R^3_v} z|\widehat{Z}^K f | \dr v$.
\begin{lemma}\label{lemL2tech}
For any $|I| \leq N$, there holds, for all $t \in [0,T[$,
$$ \mathcal{K} \hspace{1mm} := \hspace{1mm} \int_{\Sigma_t} (1+t+r) \left| \int_{\R^3_v} z | \widehat{Z}^I(f)|  |v| \dr v \right|^2 \omega_{\frac{1}{8}}^{\frac{1}{8}}  \dr x  \hspace{1mm } \lesssim \hspace{1mm} \left\{ \begin{array}{ll} \epsilon^2 (1+t)^{-1+\delta}, & \text{if $|I| \leq N-1$}, \\ \epsilon^2 (1+t)^{2\delta}, & \text{if $|I| = N$}. \end{array}\right.$$
\end{lemma}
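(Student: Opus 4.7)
The strategy is to split according to the order of derivatives. For low-order $|I| \leq N-5$, the pointwise bound \eqref{eq:decayVlasov2} already gives enough decay directly. For high-order $N-4 \leq |I| \leq N$, I would invoke the decomposition $\widehat Z^I f = \F[\widehat Z^I f] + \Ff[\widehat Z^I f]$ set up in Subsections \ref{subsecH}-\ref{subsecG} and treat each piece separately, using the pointwise bound of Proposition \ref{PropH} for the homogeneous part and the identity $\Ff_z = K\cdot Y$ for the inhomogeneous part.

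For $|I| \leq N-5$, since $z \geq 1$, we have $z \leq z^{4+\frac{2}{3}(N-I^P)}$, so \eqref{eq:decayVlasov2} yields
\[
\int_{\mathbb R^3_v} z |v||\widehat Z^I f|(t,x,v)\,\mathrm dv \lesssim \frac{\epsilon(1+t)^{\delta/2}}{(1+t+r)^2(1+|t-r|)^{7/8}}.
\]
Squaring and integrating with the weight $(1+t+r)\omega^{1/8}_{1/8}$, the routine estimate $\int_0^\infty \frac{r^2\,\mathrm dr}{(1+t+r)^3(1+|t-r|)^{7/4}} \lesssim (1+t)^{-1}$ gives $\mathcal K \lesssim \epsilon^2(1+t)^{-1+\delta}$. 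For $|I| \geq N-4$, the contribution of $\F[\widehat Z^I f]$ is handled identically: Proposition \ref{PropH} applied with $|I|=0$ there (noting $z \leq z^{\ell-2-\frac{2}{3}I^P}$ since the exponent is $\geq 1$) provides the same pointwise decay, producing the same bound on that piece of $\mathcal K$.

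The substantive step is the inhomogeneous piece $\Ff[\widehat Z^I f]$. Denote by $i$ the index of $I$ in $\mathbf M$. From $\Ff_{z,i} = \sum_j K_i^j Y_j$ and $z^{1-\frac{2}{3}(N-I^P)} \leq z$ (using $z \geq 1$), we obtain
\[
z|\Ff[\widehat Z^I f]||v| \leq z\sum_j |K_i^j||Y_j||v|.
\]
Two Cauchy-Schwarz inequalities — first in $v$ inside each $j$-term, then over $j$ — give
\[
\left(\int_v z|\Ff[\widehat Z^I f]||v|\,\mathrm dv\right)^2 \leq \left(\int_v z^2\sum_j |Y_j||v|\,\mathrm dv\right)\!\left(\sum_j\int_v |K_i^j|^2|Y_j||v|\,\mathrm dv\right).
\]
The first factor, using $z^2 \leq z^4$, is bounded pointwise by $\frac{\epsilon(1+t)^{\delta/2}}{(1+t+r)^2(1+|t-r|)^{7/8}}$ via \eqref{decayY}. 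Inserting this into $\mathcal K_{\Ff}$ and using $(1+t+r)^{-1}(1+|t-r|)^{-7/8} \leq (1+t)^{-1}$ to pull the pointwise factor out of the spatial integral yields
\[
\mathcal K_{\Ff} \lesssim \frac{\epsilon(1+t)^{\delta/2}}{1+t}\sum_{j,q}\mathbb E^{1/8,1/8}\bigl[|K_i^j|^2 Y_q\bigr](t).
\]
Proposition \ref{inhoL1} then finishes the proof: the remaining sum is $\lesssim \mathcal E^{N-1}_{\Ff}(t) \lesssim \epsilon(1+t)^{\delta/2}$ when $|I|\leq N-1$, giving $\mathcal K_{\Ff} \lesssim \epsilon^2(1+t)^{-1+\delta}$, and $\lesssim \mathcal E^N_{\Ff}(t) \lesssim \epsilon(1+t)^{1+3\delta/2}$ when $|I|=N$, giving $\mathcal K_{\Ff} \lesssim \epsilon^2(1+t)^{2\delta}$.

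No serious obstacle arises: the lemma is essentially a clean repackaging of two ingredients already established earlier in the section, namely the pointwise decay of $\int_v z^c|Y||v|\,\mathrm dv$ (which combines \eqref{eq:decayVlasov2} with Proposition \ref{PropH}) and the slow polynomial growth of $\mathbb E^{1/8,1/8}[|K_i^j|^2 Y_q]$ (Proposition \ref{inhoL1}). The only point requiring care is the Cauchy-Schwarz splitting in $v$, which separates the factor $\int_v z^2|Y||v|\,\mathrm dv$ estimated in $L^\infty_x$ from the factor $|K_i^j|^2 Y_q$ estimated in $L^1_{t,x}$; this is the same mechanism used in Lemma \ref{estiVla2}, and the weight bookkeeping reduces to the trivial observation $z^{1-\frac{2}{3}(N-I^P)} \leq z$.
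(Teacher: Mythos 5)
Your proof is correct and follows the same overall structure as the paper's: split according to the order of the derivative, treat the high-order piece via $F = F^{\text{hom}} + F^{\text{inh}}$, and close the inhomogeneous contribution via $\Ff_z = K\cdot Y$, Cauchy--Schwarz, the pointwise bound \eqref{decayY}, and Proposition \ref{inhoL1}. The one genuine (if minor) deviation is in the low-order case and the homogeneous piece: the paper applies Cauchy--Schwarz in $v$ to separate an $L^\infty_x$ factor (bounded by the pointwise decay) from an $L^1_x$ factor (bounded by the energy norms $\E^\ell_{N-1}[f]$ and $\E_{\F}$), whereas you simply square the pointwise bound and observe that $\frac{\epsilon(1+t)^{\delta/2}}{(1+t+r)^2(1+|t-r|)^{7/8}}$ is square-integrable against $(1+t+r)\,\omega^{1/8}_{1/8}\,\dr x$. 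Both routes give $\epsilon^2(1+t)^{-1+\delta}$; yours is slightly more elementary (it never invokes the $L^1$ energy bound directly), while the paper's is more uniform with the treatment of $\Ff$, where Cauchy--Schwarz is unavoidable since no pointwise bound on $\Ff$ is available. One small bookkeeping remark: in your display of the ``routine estimate'' the factor $\omega^{1/8}_{1/8}$ was dropped; since $\omega^{1/8}_{1/8}\lesssim (1+|t-r|)^{1/8}$ the actual exponent is $13/8$ rather than $7/4$, and the bound $\lesssim(1+t)^{-1}$ remains valid because the exponent stays strictly above $1$.
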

\begin{proof}
Assume first that $|I| \leq N-4$. Then, using the Cauchy-Schwarz inequality in $v$ and then the pointwise decay estimate \eqref{eq:decayVlasov2} as well as the bootstrap assumption \eqref{boot_vlasov_2}, we get
\begin{align*}
\mathcal{K} \hspace{1mm} & \lesssim \hspace{1mm} \left\| (1+t+r) \int_{\R^3_v}  z^2 | \widehat{Z}^I(f)|  |v| \dr v \right\|_{L^{\infty}(\Sigma_t)} \int_{\Sigma_t} \int_{\R^3_v} | \widehat{Z}^I(f)|  |v| \dr v \omega_{\frac{1}{8}}^{\frac{1}{8}}  \dr x \\ 
 & \lesssim \hspace{1mm} \left\| \epsilon \, (1+t+r)^{-1+\frac{\delta}{2}}  \right\|_{L^{\infty}(\Sigma_t)} \E^{\ell}_{N-1}[f](t) \hspace{1mm} \lesssim \hspace{1mm} \frac{\epsilon^2}{(1+t)^{1-\delta}}.
\end{align*}
Otherwise $|I| \geq N-3$ and there exists $i \in \M$ such that 
$$ \widehat{Z}^I(f)= \widehat{Z}^{I_i} f= F \left[ \widehat{Z}^{I_i} f \right] = \F \left[ \widehat{Z}^{I_i} f \right]+\Ff  \left[ \widehat{Z}^{I_i} f \right].$$
We deduce that $ \mathcal{K} \hspace{1mm} \leq \hspace{1mm} \mathcal{K}^{\text{hom}}+\mathcal{K}^{\text{inh}}$, where, using Proposition \ref{PropH},
\begin{align*}
\mathcal{K}^{\text{hom}} \hspace{1mm} & := \hspace{1mm} \int_{\Sigma_t} (1+t+r) \left| \int_{\R^3_v} z \left|  \F \left[ \widehat{Z}^{I_i} f \right] \right|  |v| \dr v \right|^2 \omega_{\frac{1}{8}}^{\frac{1}{8}}  \dr x \\
 & \lesssim \hspace{1mm} \left\| (1+t+r) \int_{\R^3_v}  z^2 \left|  \F \left[ \widehat{Z}^{I_i} f \right] \right|  |v| \dr v \right\|_{L^{\infty}(\Sigma_t)} \int_{\Sigma_t} \int_{\R^3_v} \left|  \F \left[ \widehat{Z}^{I_i} f \right] \right|  |v| \dr v \omega_{\frac{1}{8}}^{\frac{1}{8}}  \dr x \\ 
 & \lesssim \hspace{1mm} \left\| \epsilon \, (1+t+r)^{-1+\frac{\delta}{2}}  \right\|_{L^{\infty}(\Sigma_t)} \E_{\F}(t) \hspace{1mm} \lesssim \hspace{1mm} \frac{\epsilon^2}{(1+t)^{1-\delta}}
\end{align*}
and 
$$\mathcal{K}^{\text{inh}} \hspace{1mm}  := \hspace{1mm} \int_{\Sigma_t} (1+t+r) \left| \int_{\R^3_v} z \left|  \Ff \left[ \widehat{Z}^{I_i} f \right] \right|  |v| \dr v \right|^2 \omega_{\frac{1}{8}}^{\frac{1}{8}}  \dr x.$$
Recall Definition \ref{defFinhz} and that $K \cdot Y = \Ff_z$. Hence,
$$ \left|  \Ff \left[ \widehat{Z}^{I_i} f \right] \right| \hspace{1mm} \leq \hspace{1mm} \left|  \Ff_z \left[ \widehat{Z}^{I_i} f \right] \right| \hspace{1mm} = \hspace{1mm} \left| K^j_i Y_j \right|.$$
Using first the Cauchy-Schwarz inequality in $v$ and then the pointwise decay estimate \eqref{decayY}, $I_i=I$ as well as Proposition \ref{inhoL1}, we obtain
\begin{align*}
\mathcal{K}^{\text{inh}} \hspace{1mm}  & \lesssim \hspace{1mm} \left\| (1+t+r) \int_{\R^3_v}  z^2 |Y|  |v| \dr v \right\|_{L^{\infty}(\Sigma_t)} \int_{\Sigma_t} \int_{\R^3_v} \left|  K_i^j \right|^2 |Y_j|^2  |v| \dr v \omega_{\frac{1}{8}}^{\frac{1}{8}}  \dr x \\ 
 & \lesssim \hspace{1mm} \left\| \epsilon \, (1+t+r)^{-1+\frac{\delta}{2}}  \right\|_{L^{\infty}(\Sigma_t)} \E_{\Ff}^{|I|}(t) \hspace{1mm} \lesssim \hspace{1mm} \left\{ \begin{array}{ll} \epsilon^2 (1+t)^{-1+\delta}, & \text{if $|I| \leq N-1$}, \\ \epsilon (1+t)^{2\delta}, & \text{if $|I| = N$}. \end{array}\right.
\end{align*}
\end{proof}
We are now able to prove the following result.
\begin{prop}\label{prop_l2_vlasov}
The energy momentum tensor $T[f]$ of the particle density satisfies the following estimates. For all $t \in [0,T[$ and for any $|I| \leq N$, 
\begin{align*}
\int_0^t \int_{\Sigma_{\tau}} (1+\tau+r) \left| \mathcal{L}_Z^I ( T[f]) \right|^2 \omega_0^{1+2 \gamma}  \mathrm dx \mathrm d \tau \hspace{1mm }& \lesssim \hspace{1mm}  \epsilon^2 (1+t)^{\delta}, \quad \text{if $|I| \leq N-1$}, \\ 
\int_0^t \int_{\Sigma_{\tau}} (1+\tau+r) \left| \mathcal{L}_Z^I ( T[f]) \right|^2 \omega_{\gamma}^{2+2 \gamma}  \mathrm dx \mathrm d \tau \hspace{1mm }& \lesssim \hspace{1mm}\epsilon^2 (1+t)^{1+2\delta}, \quad \text{if $|I| = N$},  \\
\int_0^t \int_{\Sigma_{\tau}} (1+\tau+r) \left| \mathcal{L}_Z^I ( T[f]) \right|^2_{\mathcal{T} \mathcal{U}} \omega_{2\gamma}^{1+ \gamma}  \mathrm dx \mathrm d \tau \hspace{1mm }& \lesssim \hspace{1mm}  \epsilon^2 .
\end{align*}
\end{prop}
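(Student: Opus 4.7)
The starting point is Proposition \ref{lem_com_em-tensor}, which reduces the task to bounding, for every $|K|\leq|I|$ and every $|J|+|K|\leq|I|$, spacetime integrals of the form
$$\mathcal{J}_{\text{main}} = \int_0^t\!\!\int_{\Sigma_\tau}(1+\tau+r)\left|\int_v|\widehat Z^K f|\tfrac{|w_V w_W|}{|v|}dv\right|^{2}\!\omega\,dx\,d\tau,$$
$$\mathcal{J}_{\text{low}} = \int_0^t\!\!\int_{\Sigma_\tau}(1+\tau+r)A^{2}\left|\int_v|\widehat Z^K f||v|dv\right|^{2}\!\omega\,dx\,d\tau,$$
with $A=(1+\tau+r)^{-1}+|\mathcal L_Z^J h^1|$ and weight $\omega=\omega_0^{1+2\gamma}$ for the first claim, or $\omega=\omega_{2\gamma}^{1+\gamma}$ with $(V,W)=(T,U)\in\mathcal T\times\mathcal U$ for the second. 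The single analytic input I will use is Lemma \ref{lemL2tech}, which already controls an $L^2$-type norm of the weighted average $\int_v z|\widehat Z^K f||v|\,dv$ against the canonical weight $(1+\tau+r)\,\omega_{1/8}^{1/8}$; the whole proof amounts to translating every weight encountered into this canonical form.

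\medskip
\noindent\textbf{Main and background terms.} For the first claim I use $1+|u|\lesssim z$ from Lemma \ref{lem_wwl} to bring the spatial weight inside the $v$-integral, which yields
$$\omega_0^{1+2\gamma}\left|\int_v|\widehat Z^K f||v|dv\right|^{2}\lesssim \omega_{1/8}^{1/8}\left|\int_v z|\widehat Z^K f||v|dv\right|^{2},$$
so that Lemma \ref{lemL2tech} gives $\mathcal J_{\text{main}}\lesssim\epsilon^2(1+t)^\delta$ for $|I|\leq N-1$ and $\mathcal J_{\text{main}}\lesssim\epsilon^2(1+t)^{1+2\delta}$ for $|I|=N$, matching the first target. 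For the $\mathcal T\mathcal U$ estimate the null-frame bound $|w_T w_U|/|v|\lesssim\sqrt{|v||w_L|}\lesssim |v|z/(1+\tau+r)$ (Lemma \ref{lem_wwl}) yields an extra $(1+\tau+r)^{-1}$, and a direct pointwise computation shows $\omega_{2\gamma}^{1+\gamma}/[(1+\tau+r)^{2}\omega_{1/8}^{1/8}]\lesssim(1+\tau)^{-9/8+\gamma}$. Combined with Lemma \ref{lemL2tech}, the parameter restriction $20\delta<\gamma<1/20$ ensures $\int_0^t(1+\tau)^{-9/8+\gamma+2\delta}d\tau\lesssim 1$ and yields the uniform $\epsilon^2$ bound. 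The background part $(1+\tau+r)^{-1}$ of $A$ in $\mathcal J_{\text{low}}$ carries an additional $(1+\tau+r)^{-2}$ factor and is handled by exactly this computation.

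\medskip
\noindent\textbf{Lower-order terms with $h^1$.} I split on $|J|$. If $|J|\leq N-3$, the pointwise bound $|\mathcal L_Z^J h^1|\lesssim\sqrt\epsilon\,(1+\tau+r)^{\delta-1}\sqrt{1+|u|}$ from Proposition \ref{decaymetric} produces an extra factor $\epsilon(1+\tau+r)^{-2+2\delta}(1+|u|)$; absorbing $(1+|u|)$ into $z$ and repeating the above weight translation gives an $\mathcal O(\epsilon^3)$ contribution, negligible against $\epsilon^2(1+t)^{\max(\delta,1+2\delta)}$. If $|J|\geq N-2$ then $|K|\leq 2\leq N-8$, so the Klainerman--Sobolev estimate \eqref{eq:decayVlasov2} yields the pointwise bound $\int_v|\widehat Z^K f||v|dv\lesssim\epsilon(1+\tau)^{\delta/2}/[(1+\tau+r)^2(1+|u|)^{7/8}]$; putting this average in $L^\infty_{x,v}$ and $|\mathcal L_Z^J h^1|^2$ in $L^1_{x,\tau}$ via the Hardy inequality of Lemma \ref{lem_hardy}, the resulting integral is controlled by the bootstrap energies $\overline{\mathcal E}_{N-1}^{\gamma,1+2\gamma}[h^1]$ (when $|J|\leq N-1$) or $\mathring{\mathcal E}_{N}^{\gamma,2+2\gamma}[h^1]$ (when $|J|=N$); the convergence of the $\tau$-integral is guaranteed by $\delta\ll\gamma$, producing again an $\mathcal O(\epsilon^3)$ contribution.

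\medskip
\noindent\textbf{Main obstacle.} The most delicate point is the weight translation $\omega_a^b\to z^{?}\omega_{1/8}^{1/8}$ used to invoke Lemma \ref{lemL2tech}: after taking square roots, the induced exponent of $z$ must not exceed $1$ so as to stay within the single power of $z$ permitted by that lemma, and this is precisely what pins down $\gamma<1/20$. The uniform $\epsilon^2$ bound in the $\mathcal T\mathcal U$ estimate, rather than an $\epsilon^2(1+t)^{\cdots}$ loss, relies crucially on the hierarchical separation $\delta\ll\gamma$ so that the residual $\tau$-integral converges.
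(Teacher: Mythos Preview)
Your proof is correct and follows essentially the same route as the paper's: reduce via Proposition~\ref{lem_com_em-tensor}, insert a factor of $z$ through the inequality $1\lesssim z/(1+|u|)$ (or $|w_T|\lesssim |v|z/(1+t+r)$ for the $\mathcal{T}\mathcal{U}$ components), invoke Lemma~\ref{lemL2tech}, and for the $h^1$-terms split at $|J|\leq N-3$ (pointwise) versus $|J|\geq N-2$ (Vlasov pointwise, Hardy on $h^1$, bootstrap). The only quibble is your closing ``main obstacle'' paragraph: the weight translation $\omega_0^{1+2\gamma}\lesssim (1+|u|)^2\,\omega_{1/8}^{1/8}$ does not actually constrain $\gamma$ here (it holds for any $\gamma\leq 9/16$); the relevant smallness conditions in this proof are $2\gamma<1/8$ and $\gamma+2\delta<1/8$, both amply satisfied by the global hypothesis $20\delta<\gamma<1/20$, which is fixed elsewhere in the paper for other reasons.
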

\begin{proof}
According to Proposition \ref{lem_com_em-tensor} and Lemma \ref{lem_wwl}, giving $|w_T| \lesssim \frac{|v|z}{1+t+r}$ for any $T \in \mathcal{T}$ and $1  \lesssim \frac{z}{1+|t-r|}$, we have
\begin{align}
\left| \mathcal{L}_Z^I ( T[f]) \right|  \hspace{1mm} & \lesssim  \sum_{|J|+|K| \leq |I|} \frac{ 1+ \left| \mathcal{L}_Z^J (h^1) \right| }{1+|t-r|} \int_{\R^3_v} z \left| \widehat{Z}^K f \right| |v| \dr v, \label{lastone1} \\
\left| \mathcal{L}_Z^I ( T[f]) \right|_{\mathcal{T} \mathcal{U}}  \hspace{1mm} & \lesssim  \sum_{|J|+|K| \leq |I|} \left( \frac{1}{1+t+r}+ \frac{\left| \mathcal{L}_Z^J (h^1) \right|}{1+|t-r|} \right) \int_{\R^3_v} z \left|  \widehat{Z}^K f \right| |v| \dr v. \label{lastone2}
\end{align}
We are then led to bound the following three integrals, where $|J|+|K| \leq |I|$,
\begin{align*}
\mathcal{J}_1 \hspace{1mm}& := \hspace{1mm} \int_0^t \int_{\Sigma_{\tau}} \frac{1+\tau+r}{(1+|\tau-r|)^2} \left|  \int_{\R^3_v} z \left| \widehat{Z}^K f \right| |v| \dr v \right|^2 \omega_0^{2+2 \gamma}  \mathrm dx \mathrm d \tau , \\
\mathcal{J}_2 \hspace{1mm}& := \hspace{1mm} \int_0^t \int_{\Sigma_{\tau}} \frac{1+\tau+r}{(1+\tau+r)^2} \left|  \int_{\R^3_v} z \left| \widehat{Z}^K f \right| |v| \dr v \right|^2 \omega_{2\gamma}^{1+ \gamma}  \mathrm dx \mathrm d \tau , \\
\mathcal{J}_3 \hspace{1mm}& := \hspace{1mm} \int_0^t \int_{\Sigma_{\tau}} (1+\tau+r) \frac{\left| \mathcal{L}_Z^J(h^1) \right|^2}{(1+|\tau-r|)^2} \left|  \int_{\R^3_v} z \left| \widehat{Z}^K f \right| |v| \dr v \right|^2 \omega_0^{2+2 \gamma}  \mathrm dx \mathrm d \tau.
\end{align*} 
Applying Lemma \ref{lemL2tech}, we have, since $2\gamma < \frac{1}{8}$,
$$ \mathcal{J}_1 \hspace{1mm} \lesssim \hspace{1mm} \int_0^t \int_{\Sigma_{\tau}} (1+\tau+r) \left|  \int_{\R^3_v} z \left| \widehat{Z}^K f \right| |v| \dr v \right|^2 \omega_{\frac{1}{8}}^{\frac{1}{8}}  \mathrm dx \mathrm d \tau \hspace{1mm} \lesssim \hspace{1mm} \left\{ \begin{array}{ll} \epsilon^2 (1+t)^{\delta}, & \text{if $|K| < N$}, \\ \epsilon^2 (1+t)^{1+2\delta}, & \text{if $|K| = N$}. \end{array}\right.$$
Using $\frac{\omega_{2\gamma}^{1+\gamma}}{(1+\tau+r)^2} \leq \frac{1}{(1+\tau+r)^{\frac{9}{8}-\gamma}}\omega_{\frac{1}{8}}^{\frac{1}{8}}$ and then $\gamma+2\delta < \frac{1}{8}$,
$$ \mathcal{J}_2 \lesssim \int_0^t \frac{1}{(1+\tau)^{\frac{9}{8}-\gamma}}\int_{\Sigma_{\tau}} (1+\tau+r) \left|  \int_{\R^3_v} z \left| \widehat{Z}^K f \right| |v| \dr v \right|^2 \! \omega_{\frac{1}{8}}^{\frac{1}{8}}  \mathrm dx \mathrm d \tau \lesssim \int_0^t \frac{\epsilon^2 \, \dr \tau}{(1+\tau)^{\frac{9}{8}-\gamma-2\delta}} \lesssim \epsilon^2.$$
For $\mathcal{J}_3$, assume first that $|J| \leq N-3$. Using the pointwise decay estimates of Proposition \ref{decaymetric} and then Lemma \ref{lemL2tech}, we obtain
\begin{align*}
\mathcal{J}_3 \hspace{1mm}& \lesssim \hspace{1mm} \int_0^t \! \int_{\Sigma_{\tau}}(1+\tau+r) \frac{\epsilon }{(1+\tau+r)^{2-2\delta}\omega_{1}^{2\gamma}(1+|\tau-r|)^2} \left|  \int_{\R^3_v} z \left| \widehat{Z}^K f \right| |v| \dr v \right|^2 \omega^{2+2\gamma}_0   \mathrm dx \mathrm d \tau , \\
\hspace{1mm}& \lesssim \hspace{1mm} \int_0^t \! \frac{\epsilon}{(1+\tau)^{2-2\delta}} \int_{\Sigma_{\tau}} \! (1+\tau+r)  \left|  \int_{\R^3_v} \! z \left| \widehat{Z}^K f \right| |v| \dr v \right|^2 \! \omega_{1}^{0}   \mathrm dx \mathrm d \tau \hspace{1mm} \lesssim  \hspace{1mm} \int_0^t \! \frac{\epsilon^3 \, \dr \tau}{(1+\tau)^{2-4\delta}} \hspace{1mm} \lesssim  \hspace{1mm} \epsilon^3 .
\end{align*} 
Otherwise $|J| \geq N-2$ and we necessarily have $|K| \leq N-4$. Then, using successively the pointwise decay estimates \eqref{eq:decayVlasov2}, the Hardy inequality of Lemma \ref{lem_hardy} and the bootstrap assumption \eqref{boot2}, we obtain
\begin{align*}
\mathcal{J}_3 \hspace{0.7mm} & \lesssim \hspace{0.7mm} \epsilon^2 \int_0^t  \int_{\Sigma_{\tau}} \frac{| \mathcal{L}_Z^J(h^1) |^2}{(1+\tau+r)^{3-\delta}(1+|\tau-r|)^{2+\frac{7}{4}}}  \omega_0^{2+2 \gamma}  \dr x \dr \tau  , \\
& \lesssim \hspace{0.7mm}  \int_0^t \frac{\epsilon^2}{(1+\tau)^{2-\delta}} \int_{\Sigma_{\tau}} \frac{| \mathcal{L}_Z^J(h^1) |^2}{1+\tau+r} \frac{ \omega_{\gamma}^{2+2 \gamma}}{(1+|\tau-r|)^2}  \dr x \dr \tau \\
& \lesssim \hspace{0.7mm}  \int_0^t  \frac{\epsilon^2}{(1+\tau)^{2-\delta}} \int_{\Sigma_{\tau}}  \frac{|  \nabla \mathcal{L}_Z^J(h^1) |^2}{1+\tau+r} \omega_{\gamma}^{2+2 \gamma}  \dr x \dr \tau \hspace{1mm} \lesssim \hspace{0.7mm} \int_0^t \frac{\epsilon^2 \, \mathring{\mathcal{E}}^{\gamma,2+2\gamma}_N[h^1](\tau)}{(1+\tau)^{2-\delta}} \dr \tau \hspace{0.7mm} \lesssim \hspace{0.7mm}  \epsilon^3.
\end{align*}
The proof follows from \eqref{lastone1}-\eqref{lastone2} and the estimates obtained on $\mathcal{J}_1$, $\mathcal{J}_2$ and $\mathcal{J}_3$.
\end{proof}
\newpage

\renewcommand{\refname}{References}
\bibliographystyle{abbrv}
\bibliography{biblio}

\vspace{1cm} \noindent
L\'eo Bigorgne \\
Department of Pure Mathematics and Mathematical Statistics, University of Cambridge, Cambridge \\
{\em E-mail address:} {\tt lb847@cam.ac.uk}\\
\\
David Fajman \\
Gravitational Physics, Faculty of Physics, University of Vienna, Vienna \\
{\it E-mail address:} {\tt david.fajman@univie.ac.at} \\
\\
J\'er\'emie Joudioux \\
Max Planck Institute for Gravitational Physics, Potsdam \\
{\it E-mail address:} {\tt jeremie.joudioux@aei.mpg.de} \\
\\
Jacques Smulevici \\
Laboratoire Jacques-Louis Lions, Sorbonne Universit\'e, Paris \\
{\it E-mail address:} {\tt jacques.smulevici@upmc.fr} \\
\\
Maximilian Thaller \\
Department for Mathematical Sciences, Chalmers University of Technology, Gothenburg \\
{\it E-mail address:} {\tt maxtha@chalmers.se}

\end{document}